\numberwithin{equation}{section}
\theoremstyle{plain}
\newtheorem{thm}{Theorem}
\newtheorem{cor}{Corollary}
\newtheorem{TheoremAUC}{Theorem AUC\hspace{-0.5ex}}
\newtheorem{LemmaAUC}{Lemma AUC\hspace{-0.5ex}}
\newtheorem{LemmaA}{Lemma A\hspace{-1ex}}
\newtheorem{LemmaC}{Lemma B\hspace{-1ex}}
\newtheorem{LemmaD}{Lemma C\hspace{-1ex}}
\newtheorem{lem}{Lemma}
\theoremstyle{assumption}
\newtheorem{AssumptionA}{Assumption A\hspace{-1ex}}
\newtheorem{AssumptionB}{Assumption B\hspace{-1ex}}
\newtheorem{AssumptionC}{Assumption C\hspace{-1ex}}
\newtheorem{AssumptionAUC}{Assumption AUC\hspace{-1ex}}
\theoremstyle{definition}
\newtheorem{defn}{Definition}
\theoremstyle{remark}
\newtheorem{rem}{Remark}
\providecommand{\BOXEDSPECIAL}[4]{\hbox to #2{\raise #3\hbox to #2{\null #1\hfil}}}
\chardef\@x10\chardef\@xv60
\def\tcitime{
\def\@time{%
  \@minute\time\@hour\@minute\divide\@hour\@xv
  \ifnum\@hour<\@x 0\fi\the\@hour:%
  \multiply\@hour\@xv\advance\@minute-\@hour
  \ifnum\@minute<\@x 0\fi\the\@minute
  }}%
\def\QCTOpt[#1]#2{%
  \def\QCTOptB{#1}
  \def\QCTOptA{#2}
}
\def\QCTNOpt#1{%
  \def\QCTOptA{#1}
  \let\QCTOptB\empty
}
\def\Qct{%
  \@ifnextchar[{%
    \QCTOpt}{\QCTNOpt}
}
\def\QCBOpt[#1]#2{%
  \def\QCBOptB{#1}
  \def\QCBOptA{#2}
}
\def\QCBNOpt#1{%
  \def\QCBOptA{#1}
  \let\QCBOptB\empty
}
\def\Qcb{%
  \@ifnextchar[{%
    \QCBOpt}{\QCBNOpt}
}
\def\PrepCapArgs{%
  \ifx\QCBOptA\empty
    \ifx\QCTOptA\empty
      {}%
    \else
      \ifx\QCTOptB\empty
        {\QCTOptA}%
      \else
        [\QCTOptB]{\QCTOptA}%
      \fi
    \fi
  \else
    \ifx\QCBOptA\empty
      {}%
    \else
      \ifx\QCBOptB\empty
        {\QCBOptA}%
      \else
        [\QCBOptB]{\QCBOptA}%
      \fi
    \fi
  \fi
}
\def\GRAPHICSPS#1{%
 \ifcase\GRAPHICSTYPE
   \special{ps: #1}%
 \or
   \special{language "PS", include "#1"}%
 \fi
}%
\def\graffile#1#2#3#4#5{%
    \bgroup
    \leavevmode
    \@ifundefined{bbl@deactivate}{\def~{\string~}}{\activesoff}
    \raise -#4 \BOXTHEFRAME{%
       \BOXEDSPECIAL{#1}{#2}{#3}{#5}}%
    \egroup
}%
\def\draftbox#1#2#3#4{%
 \leavevmode\raise -#4 \hbox{%
  \frame{\rlap{\protect\tiny #1}\hbox to #2%
   {\vrule height#3 width\z@ depth\z@\hfil}%
  }%
 }%
}%
\newif\ifwasdraft
\def\GRAPHIC#1#2#3#4#5{%
 \ifnum\draft=\@ne\draftbox{#2}{#3}{#4}{#5}%
  \else\graffile{#1}{#3}{#4}{#5}{#2}%
  \fi
 }%
\def\addtoLaTeXparams#1{%
    \edef\LaTeXparams{\LaTeXparams #1}}%
\newif\ifBoxFrame \BoxFramefalse
\newif\ifOverFrame \OverFramefalse
\newif\ifUnderFrame \UnderFramefalse
\def\BOXTHEFRAME#1{%
   \hbox{%
      \ifBoxFrame
         \frame{#1}%
      \else
         {#1}%
      \fi
   }%
}
\def\doFRAMEparams#1{\BoxFramefalse\OverFramefalse\UnderFramefalse\readFRAMEparams#1\end}%
\def\readFRAMEparams#1{%
 \ifx#1\end%
  \let\next=\relax
  \else
  \ifx#1i\dispkind=\z@\fi
  \ifx#1d\dispkind=\@ne\fi
  \ifx#1f\dispkind=\tw@\fi
  \ifx#1t\addtoLaTeXparams{t}\fi
  \ifx#1b\addtoLaTeXparams{b}\fi
  \ifx#1p\addtoLaTeXparams{p}\fi
  \ifx#1h\addtoLaTeXparams{h}\fi
  \ifx#1X\BoxFrametrue\fi
  \ifx#1O\OverFrametrue\fi
  \ifx#1U\UnderFrametrue\fi
  \ifx#1w
    \ifnum\draft=1\wasdrafttrue\else\wasdraftfalse\fi
    \draft=\@ne
  \fi
  \let\next=\readFRAMEparams
  \fi
 \next
 }%
\def\IFRAME#1#2#3#4#5#6{%
      \bgroup
      \let\QCTOptA\empty
      \let\QCTOptB\empty
      \let\QCBOptA\empty
      \let\QCBOptB\empty
      #6%
      \parindent=0pt%
      \leftskip=0pt
      \rightskip=0pt
      \setbox0 = \hbox{\QCBOptA}%
      \@tempdima = #1\relax
      \ifOverFrame
          \typeout{This is not implemented yet}%
          \show\HELP
      \else
         \ifdim\wd0>\@tempdima
            \advance\@tempdima by \@tempdima
            \ifdim\wd0 >\@tempdima
               \textwidth=\@tempdima
               \setbox1 =\vbox{%
                  \noindent\hbox to \@tempdima{\hfill\GRAPHIC{#5}{#4}{#1}{#2}{#3}\hfill}\\%
                  \noindent\hbox to \@tempdima{\parbox[b]{\@tempdima}{\QCBOptA}}%
               }%
               \wd1=\@tempdima
            \else
               \textwidth=\wd0
               \setbox1 =\vbox{%
                 \noindent\hbox to \wd0{\hfill\GRAPHIC{#5}{#4}{#1}{#2}{#3}\hfill}\\%
                 \noindent\hbox{\QCBOptA}%
               }%
               \wd1=\wd0
            \fi
         \else
            \ifdim\wd0>0pt
              \hsize=\@tempdima
              \setbox1 =\vbox{%
                \unskip\GRAPHIC{#5}{#4}{#1}{#2}{0pt}%
                \break
                \unskip\hbox to \@tempdima{\hfill \QCBOptA\hfill}%
              }%
              \wd1=\@tempdima
           \else
              \hsize=\@tempdima
              \setbox1 =\vbox{%
                \unskip\GRAPHIC{#5}{#4}{#1}{#2}{0pt}%
              }%
              \wd1=\@tempdima
           \fi
         \fi
         \@tempdimb=\ht1
         \advance\@tempdimb by \dp1
         \advance\@tempdimb by -#2%
         \advance\@tempdimb by #3%
         \leavevmode
         \raise -\@tempdimb \hbox{\box1}%
      \fi
      \egroup%
}%
\def\DFRAME#1#2#3#4#5{%
 \begin{center}
     \let\QCTOptA\empty
     \let\QCTOptB\empty
     \let\QCBOptA\empty
     \let\QCBOptB\empty
     \ifOverFrame 
        #5\QCTOptA\par
     \fi
     \GRAPHIC{#4}{#3}{#1}{#2}{\z@}
     \ifUnderFrame 
        \nobreak\par\nobreak#5\QCBOptA
     \fi
 \end{center}%
 }%
\def\FFRAME#1#2#3#4#5#6#7{%
 \begin{figure}[#1]%
  \let\QCTOptA\empty
  \let\QCTOptB\empty
  \let\QCBOptA\empty
  \let\QCBOptB\empty
  \ifOverFrame
    #4
    \ifx\QCTOptA\empty
    \else
      \ifx\QCTOptB\empty
        \caption{\QCTOptA}%
      \else
        \caption[\QCTOptB]{\QCTOptA}%
      \fi
    \fi
    \ifUnderFrame\else
      \label{#5}%
    \fi
  \else
    \UnderFrametrue%
  \fi
  \begin{center}\GRAPHIC{#7}{#6}{#2}{#3}{\z@}\end{center}%
  \ifUnderFrame
    #4
    \ifx\QCBOptA\empty
      \caption{}%
    \else
      \ifx\QCBOptB\empty
        \caption{\QCBOptA}%
      \else
        \caption[\QCBOptB]{\QCBOptA}%
      \fi
    \fi
    \label{#5}%
  \fi
  \end{figure}%
 }%
\def\makeactives{
  \catcode`\"=\active
  \catcode`\;=\active
  \catcode`\:=\active
  \catcode`\'=\active
  \catcode`\~=\active
}
   \gdef\activesoff{%
      \def"{\string"}
      \def;{\string;}
      \def:{\string:}
      \def'{\string'}
    }
\def\FRAME#1#2#3#4#5#6#7#8{%
 \bgroup
 \ifnum\draft=\@ne
   \wasdrafttrue
 \else
   \wasdraftfalse%
 \fi
 \def\LaTeXparams{}%
 \dispkind=\z@
 \def\LaTeXparams{}%
 \doFRAMEparams{#1}%
 \ifnum\dispkind=\z@\IFRAME{#2}{#3}{#4}{#7}{#8}{#5}\else
  \ifnum\dispkind=\@ne\DFRAME{#2}{#3}{#7}{#8}{#5}\else
   \ifnum\dispkind=\tw@
    \edef\@tempa{\noexpand\FFRAME{\LaTeXparams}}%
    \@tempa{#2}{#3}{#5}{#6}{#7}{#8}%
    \fi
   \fi
  \fi
  \ifwasdraft\draft=1\else\draft=0\fi{}%
  \egroup
 }%
\def\TEXUX#1{"texux"}
\long\def\QQQ#1#2{%
     \long\expandafter\def\csname#1\endcsname{#2}}%
\long\def\QQA#1#2{}%
\newcommand{\QTR}[2]{\csname text#1\endcsname{#2}}
\def\EXPAND#1[#2]#3{}%
\def\NOEXPAND#1[#2]#3{}%
\def\LaTeXparent#1{}%
\def\ChildStyles#1{}%
\def\ChildDefaults#1{}%
\def\QTagDef#1#2#3{}%
  \providecommand{\UNICODE}[2][]{}
\def\QQfnmark#1{\footnotemark}
 \def\abstract{%
  \if@twocolumn
   \section*{Abstract (Not appropriate in this style!)}%
   \else \small 
   \begin{center}{\bf Abstract\vspace{-.5em}\vspace{\z@}}\end{center}%
   \quotation 
   \fi
  }%
   \def\registered{\relax\ifmmode{}\r@gistered
                    \else$\m@th\r@gistered$\fi}%
 \def\r@gistered{^{\ooalign
  {\hfil\raise.07ex\hbox{$\scriptstyle\rm\text{R}$}\hfil\crcr
  \mathhexbox20D}}}}{}%
\newdimen\theight
\def\Column{%
 \vadjust{\setbox\z@=\hbox{\scriptsize\quad\quad tcol}%
  \theight=\ht\z@\advance\theight by \dp\z@\advance\theight by \lineskip
  \kern -\theight \vbox to \theight{%
   \rightline{\rlap{\box\z@}}%
   \vss
   }%
  }%
 }%
\def\qed{%
 \ifhmode\unskip\nobreak\fi\ifmmode\ifinner\else\hskip5\p@\fi\fi
 \hbox{\hskip5\p@\vrule width4\p@ height6\p@ depth1.5\p@\hskip\p@}%
 }%
\def\miss{\hbox{\vrule height2\p@ width 2\p@ depth\z@}}%
\def\tcol#1{{\baselineskip=6\p@ \vcenter{#1}} \Column}  %
\def\newfmtname{LaTeX2e}
  \DeclareOldFontCommand{\rm}{\normalfont\rmfamily}{\mathrm}
  \DeclareOldFontCommand{\sf}{\normalfont\sffamily}{\mathsf}
  \DeclareOldFontCommand{\tt}{\normalfont\ttfamily}{\mathtt}
  \DeclareOldFontCommand{\bf}{\normalfont\bfseries}{\mathbf}
  \DeclareOldFontCommand{\it}{\normalfont\itshape}{\mathit}
  \DeclareOldFontCommand{\sl}{\normalfont\slshape}{\@nomath\sl}
  \DeclareOldFontCommand{\sc}{\normalfont\scshape}{\@nomath\sc}
  \newcounter{equationnumber}  
  \def\mathletters{%
     \addtocounter{equation}{1}
     \edef\@currentlabel{\theequation}%
     \setcounter{equationnumber}{\c@equation}
     \setcounter{equation}{0}%
     \edef\theequation{\@currentlabel\noexpand\alph{equation}}%
  }
    \def\BibTeX{{\rm B\kern-.05em{\sc i\kern-.025em b}\kern-.08em
                 T\kern-.1667em\lower.7ex\hbox{E}\kern-.125emX}}}{}%
\def\AmS{{\protect\usefont{OMS}{cmsy}{m}{n}%
                A\kern-.1667em\lower.5ex\hbox{M}\kern-.125emS}}}{}%
\def\@@eqncr{\let\@tempa\relax
    \ifcase\@eqcnt \def\@tempa{& & &}\or \def\@tempa{& &}%
      \else \def\@tempa{&}\fi
     \@tempa
     \if@eqnsw
        \iftag@
           \@taggnum
        \else
           \@eqnnum\stepcounter{equation}%
        \fi
     \fi
     \global\tag@false
     \global\@eqnswtrue
     \global\@eqcnt\z@\cr}
\def\TCItag{\@ifnextchar*{\@TCItagstar}{\@TCItag}}
\def\@TCItag#1{%
    \global\tag@true
    \global\def\@taggnum{(#1)}}
\def\@TCItagstar*#1{%
    \global\tag@true
    \global\def\@taggnum{#1}}
\def\dsum{\mathop{\displaystyle \sum }}%
\def\dbigcup{\mathop{\displaystyle \bigcup }}%
\begin{document}
\title[General Functional Inequalities]{Testing for a General Class of
Functional Inequalities}
\thanks{We would like to thank Emmanuel Guerre and participants at numerous
seminars and conferences for their helpful comments. We also thank Kyeongbae
Kim, Koohyun Kwon and Jaewon Lee for capable research assistance. Lee's work was supported by the European Research Council
(ERC-2009-StG-240910-ROMETA) and by the National Research  Foundation of Korea Grant funded by the Korean 
Government (NRF-2012S1A5A8023573).
Song acknowledges the financial support of
Social Sciences and Humanities Research Council of Canada. Whang's work was
supported by the SNU Creative Leading Researcher Grant.}
\date{16 June 2015}
\author[Lee]{Sokbae Lee$^{1,2}$}
\address{$^1$Department of Economics, Seoul National University, 1
Gwanak-ro, Gwanak-gu, Seoul, 151-742, Republic of Korea.}
\address{$^2$Centre for Microdata Methods and Practice, Institute for Fiscal
Studies, 7 Ridgmount Street, London, WC1E 7AE, UK.}
\email{sokbae@sokbae.ac.kr}
\author[Song]{Kyungchul Song$^3$}
\address{$^3$Vancouver School of Economics, University of British Columbia,
997 - 1873 East Mall, Vancouver, BC, V6T 1Z1, Canada}
\email{kysong@mail.ubc.ca}
\author[Whang]{Yoon-Jae Whang$^1$}
\email{whang@snu.ac.kr}

\begin{abstract}
In this paper, we propose a general method for testing inequality
restrictions on nonparametric functions. Our framework includes many
nonparametric testing problems in a unified framework, with a number of
possible applications in auction models, game theoretic models, wage
inequality, and revealed preferences. Our test involves a one-sided version
of $L_{p}$ functionals of kernel-type estimators $(1\leq p <\infty )$ and is
easy to implement in general, mainly due to its recourse to the bootstrap
method. The bootstrap procedure is based on nonparametric bootstrap applied
to kernel-based test statistics, with an option of estimating \textquotedblleft contact
sets.\textquotedblright\ We provide regularity conditions under which the
bootstrap test is asymptotically valid uniformly over a large class of
distributions, including the cases that the limiting distribution of the
test statistic is degenerate. Our bootstrap test is shown to exhibit good
power properties in Monte Carlo experiments, and we provide a general form
of the local power function.
As an illustration, we consider testing implications from auction theory,
provide primitive conditions for our test, and demonstrate its usefulness by applying our
test to real data. We supplement this example with the second empirical illustration in the
context of wage inequality. \bigskip

{\footnotesize \noindent \textsc{Key words.} Bootstrap, conditional moment
inequalities, kernel estimation, local polynomial estimation, $L_p$ norm,
nonparametric testing, partial identification, Poissonization, quantile
regression, uniform asymptotics } \bigskip

{\footnotesize \noindent \textsc{JEL Subject Classification.} C12, C14. }
\end{abstract}

\maketitle

\onehalfspacing

\clearpage

\section{Introduction}

In this paper, we propose a general method for testing inequality
restrictions on nonparametric functions. To describe our testing problem,
let $v_{\tau ,1},\ldots ,v_{\tau ,J}$ denote nonparametric real-valued
functions on $\mathbf{R}^{d}\ $ for each index $\tau \in \mathcal{T}$, where 
$\mathcal{T}$ is a subset of a finite dimensional space. We focus on testing 
\begin{equation}
\begin{split}
H_{0}& :\max \{v_{\tau ,1}(x),\cdot \cdot \cdot ,v_{\tau ,J}(x)\}\leq 0\text{
for all }(x,\tau )\in \mathcal{X}\times \mathcal{T}\text{, against} \\
H_{1}& :\max \{v_{\tau ,1}(x),\cdot \cdot \cdot ,v_{\tau ,J}(x)\}>0\text{
for some }(x,\tau )\in \mathcal{X\times \mathcal{T}},
\end{split}
\label{null}
\end{equation}%
where $\mathcal{X}\times \mathcal{T}$ is a domain of interest. We propose a
one-sided $L_{p}$ integrated test statistic based on nonparametric
estimators of $v_{\tau ,1},\ldots ,v_{\tau ,J}$. We provide general
asymptotic theory for the test statistic and suggest a bootstrap procedure
to compute critical values. We establish that our test has correct uniform
asymptotic size and is not conservative. We also determine the asymptotic
power of our test under fixed alternatives and some local alternatives.

We allow for a general class of nonparametric functions, including, as
special cases, conditional mean, quantile, hazard, and distribution
functions and their derivatives. For example, $v_{\tau ,j}(x)=P(Y_{j}\leq
\tau |X=x)$ can be the conditional distribution function of $Y_{j}$ given $%
X=x$, or $v_{\tau ,j}(x)$ can be the $\tau $-th quantile of $Y_{j}$
conditional on $X=x$. We can also allow for transformations of these
functions satisfying some regularity conditions. The nonparametric
estimators we consider are mainly kernel-type estimators but can be allowed
to be more general, provided that they satisfy certain Bahadur-type linear
expansions.

Inequality restrictions on nonparametric functions arise often as testable
implications from economic theory. For example, in first-price auctions, %
\citeasnoun{GPV} show that the quantiles of the observed equilibrium bid
distributions with different numbers of bidders should satisfy a set of
inequality restrictions (Equation (5) of \citeasnoun{GPV}). If the auctions
are heterogeneous so that the private values are affected by observed
characteristics, we may consider conditionally exogenous participation with
a conditional version of the restrictions (see Section 3.2 of %
\citeasnoun{GPV}). Such restrictions are in the form of multiple
inequalities for linear combinations of nonparametric conditional quantile
functions. Our test then can be used to test whether the restrictions hold
jointly uniformly over quantiles and observed characteristics in a certain
range.
In this paper, we use this auction example
to illustrate the usefulness of our general framework. To the best of our knowledge, there
does not exist an alternative test available in the literature for this kind of examples.

In addition to \citeasnoun[GPV hereafter]{GPV}, a large number of auction models are
associated with some forms of functional inequalities. See, for example, %
\citeasnoun{Haile/Tamer:03}, \citeasnoun{Haile/Hong/Shum:03}, %
\citeasnoun{AGQ:13a}, \citeasnoun{AGQ:13c}, and %
\citeasnoun{Krasnokutskaya/Song/Tang:13}, among others. Our method can be
used to make inference in their setups, while allowing for continuous
covariates.

Econometric models of games belong to a related but distinct branch of the
literature, compared to the auction models. In this literature, inference on
many game theoretic models are recently based on partial identification or
functional inequalities. For example, see \citeasnoun{Tamer:03}, %
\citeasnoun{Andrews/Berry/Jia:04}, \citeasnoun{Berry/Tamer:07}, %
\citeasnoun{Aradillas-Lopez/Tamer:08}, \citeasnoun{Ciliberto/Tamer:09}, %
\citeasnoun{Beresteanu/Molchanov/Molinari:11}, \citeasnoun{Galichon/Henry:11}%
, \citeasnoun{Cheser/Rosen:12}, and \citeasnoun{Aradillas-Lopez/Rosen:13},
among others. See \citeasnoun{dePaula:13} and references therein for a broad
recent development in this literature. Our general method provides
researchers in this field a new inference tool when they have continuous
covariates.

Inequality restrictions also arise in testing revealed preferences. %
\citeasnoun{Blundell/Browning/Crawford:08} used revealed preference
inequalities to provide the nonparametric bounds on average consumer
responses to price changes. In addition, %
\citeasnoun{Blundell/Kristensen/Matzkin:11} used the same inequalities to
bound quantile demand functions. It would be possible to use our framework
to test revealed preference inequalities either in average demand functions
or in quantile demand functions. See also \citeasnoun{Hoderlein:Stoye:13}
and \citeasnoun{Kitamura:Stoye:13} for related issues of testing revealed
preference inequalities.

In addition to the literature mentioned above, many results on partial
identification can be written as functional inequalities. See, e.g., %
\citeasnoun{Imbens/Manski:04}, \citeasnoun{Manski:03}, \citeasnoun{Manski:07}%
, \citeasnoun{Manski/Pepper:00}, \citeasnoun{Tamer:10}, %
\citeasnoun{Cheser/Rosen:13}, and references therein.

Our framework has several distinctive merits. First, our proposal is easy to
implement in general, mainly due to its recourse to the bootstrap method.
The bootstrap procedure is based on nonparametric bootstrap applied to
kernel-based test statistics. We establish the general asymptotic (uniform)
validity of the bootstrap procedure.

Second, our proposed test is shown to exhibit good power properties both in
finite and large samples. Good power properties can be achieved by the use
of critical values that adapt to the binding restrictions of functional
inequalities. This could be done in various ways; in this paper, we follow
the ``contact set'' approach of \citeasnoun{Linton/Song/Whang:10} and
propose bootstrap critical values. As is shown in this paper, the bootstrap
critical values yield significant power improvements. Furthermore, we find
through our local power analysis that this class of tests exhibits dual
convergence rates depending on Pitman directions, and in many cases, the
faster of the two rates achieves a parametric rate of $\sqrt{n}$, despite
the use of kernel-type test statistics.

Third, we establish the asymptotic validity of the proposed test uniformly
over a large class of distributions, without imposing restrictions on the
covariance structure among nonparametric estimates of $v_{\tau,j}(\cdot)$,
thereby allowing for degenerate cases. Such a uniformity result is crucial
for ensuring good finite sample properties for tests whose (pointwise)
limiting distribution under the null hypothesis exhibits various forms of
discontinuity. The discontinuity in the context of this paper is highly
complex, as the null hypothesis involves inequality restrictions on a
multiple number of (or even a continuum of) nonparametric functions. We
establish the uniform validity of the test in a way that covers these
various incidences of discontinuity. Our new uniform asymptotics may be of
independent interest in many other contexts.

Much of the recent literature on testing inequality restrictions focuses on
conditional moment inequalities.\footnote{%
There exists a large literature on inference on models with a finite number of
unconditional moment inequality restrictions. Some examples include %
\citeasnoun{Andrews/Jia:08}, \citeasnoun{Andrews/Guggenberger:09}, %
\citeasnoun{Andrews/Soares:10}, \citeasnoun{Beresteanu/Molinari:08}, %
\citeasnoun{Bugni:07}, \citeasnoun{Canay:07}, %
\citeasnoun{Chernozhukov/Hong/Tamer:07}, \citeasnoun{Galichon/Henry:06a}, %
\citeasnoun{Romano/Shaikh:06b}, \citeasnoun{Romano/Shaikh:06a}, and %
\citeasnoun{Rosen:05}, among others.} Researches on conditional moment
inequalities include \citeasnoun{Andrews/Shi:13a}, %
\citeasnoun{Andrews/Shi:JoE13}, \citeasnoun{Armstrong:11b}, %
\citeasnoun{Armstrong:11a}, \citeasnoun{Armstrong/Chan:13}, \citeasnoun{CLR}%
, \citeasnoun{Chetverikov:11}, \citeasnoun{Fan/Park:13}, %
\citeasnoun{Khan/Tamer:09}, \citeasnoun{Kim:08}, \citeasnoun{LSW}, %
\citeasnoun{Menzel:08}, and \citeasnoun{Ponomareva:10}, among others. In
contrast, this paper's approach naturally covers a wide class of inequality
restrictions among nonparametric functions that the moment inequality
framework does not (or at least is cumbersome to) apply. Such examples
include testing multiple inequalities that are defined by differences in
conditional quantile functions uniformly over covariates and quantiles.%
\footnote{%
A working paper version \cite{Andrews/Shi:09} of \citeasnoun{Andrews/Shi:13a}
covers testing moment inequalities indexed by $\tau \in \mathcal{T}$, but
their framework does not appear to be easily extendable to deal with
functions of multiple conditional quantiles such as differences in
conditional quantiles.} If we restrict our attention to the conditional
moment inequalities, then our approach is mostly comparable to the moment
selection approach of \citeasnoun{Andrews/Shi:13a}. Our general framework is
also related to testing qualitative nonparametric hypotheses such as
monotonicity in mean regression. See, for example, \citeasnoun{BSL:05}, %
\citeasnoun{Chetverikov:12}, \citeasnoun{DS:01}, and %
\citeasnoun{Ghosal/Sen/vanderVaart:00} among many others. See also %
\citeasnoun{Lee/Linton/Whang:06} and \citeasnoun{Delgado:Escanciano:12} for
testing stochastic monotonicity.

Among aforementioned papers, \citeasnoun{CLR} developed a sup-norm approach
in testing inequality restrictions on nonparametric functions using
pointwise asymptotics, and in principle, could be extended to test general
functional inequalities as in \eqref{null}.\footnote{%
Our test involves a one-sided version of $L_{p}$-type functionals of
nonparametric estimators $(1\leq p<\infty )$. We regard the sup-norm and $%
L_p $ norm approaches complementary, each with its own strength and
weakness. For example, our test and also the test of %
\citeasnoun{Andrews/Shi:13a} have higher power against relatively flat
alternatives, whereas the test of \citeasnoun{CLR} has higher power against
sharply-peaked alternatives. See the results of Monte Carlo experiments
reported in Appendix \ref{sec:mc:AS}. See also \citeasnoun{Andrews/Shi:13a}, %
\citeasnoun{Andrews/Shi:JoE13}, and \citeasnoun{CLR} for related discussions
and further Monte Carlo evidence.} Example 4 of \citeasnoun{CLR} considered
the case of one inequality with a conditional quantile function at a
particular quantile, but it is far from trivial to extend this example to
multiple inequalities of differences in conditional quantile functions
uniformly over a range of quantiles. As this paper demonstrates through an
empirical application, such testing problem can arise in empirical research
(see Section \ref{emp-example auction}).

The uniformity result in this paper is non-standard since our test is based
on asymptotically non-tight processes, in contrast to %
\citeasnoun{Andrews/Shi:13a} who convert conditional moment inequalities
into an infinite number of unconditional moment inequalities. This paper's
development of asymptotic theory draws on the method of Poissonization (see,
e.g., \citeasnoun{Horvath:91} and \citeasnoun{GMZ}). For applications of
this method, see \citeasnoun{Anderson/Linton/Whang:12} for inference on a
polarization measure, \citeasnoun{Chang/Lee/Whang:14} 
for testing for conditional treatment effects, and \citeasnoun{LSW} for
testing inequalities for nonparametric regression functions using the
numerator of the Nadaraya-Watson estimator (based on pointwise asymptotics).
Also, see \citeasnoun{Mason/Polonik:09} and \citeasnoun{BCMP:09} for support
estimation.

The remainder of the paper is as follows. Section \ref{sec:overview} gives
an informal description of our general framework by introducing test
statistics and critical values and by providing intuitions behind our
approach. In Section \ref{sec:theory}, we establish the uniform asymptotic
validity of our bootstrap test. We also provide a class of distributions for
which the asymptotic size is exact. In Section \ref{sec:power}, we establish
consistency of our test and its local power properties. In Section \ref%
{sec:mc:AS}, we report results of some Monte Carlo experiments. 
In Sections \ref{emp-example auction} and \ref{emp-example wage}, we give two empirical examples. The first empirical example in Section \ref{emp-example auction} is on testing auction models following GPV, and the second one in Section %
\ref{emp-example wage} is about testing functional inequalities via differences-in-differences in conditional quantiles, inspired by \citeasnoun{Acemoglu:Autor:11}. The empirical examples given in this section are not covered easily by existing inference methods; however, they are all special cases of our general framework.
Section \ref{sec:conclusion} concludes.
Appendices provide all the proofs of theorems with a roadmap of the
proofs to help readers.

\section{General Overview}

\label{sec:overview}

\subsection{Test Statistics}

We present a general overview of this paper's framework by introducing test
statistics and critical values. To ease the exposition, we confine our
attention to the case of $J=2$ here. The definitions and formal results for
general $J$ are given later in Section \ref{sec:theory}.

Throughout this paper, we assume that $\mathcal{T}$ is a compact subset of a
Euclidean space. This does not lose much generality because when $\mathcal{T}
$ is a finite set, we can redefine our test statistic by taking $\mathcal{T}$
as part of the finite index $j$ indexing the nonparametric functions.

For $j=1,2$, let $\hat{v}_{\tau ,j}(x)$ be a kernel-based nonparametric
estimator of $v_{\tau ,j}(x)$ and let its appropriately scaled version be 
\begin{equation*}
\hat{u}_{\tau ,j}(x)\equiv \frac{r_{n,j}\hat{v}_{\tau ,j}(x)}{\hat{\sigma}%
_{\tau ,j}(x)},
\end{equation*}%
where $r_{n,j}$ is an appropriate normalizing sequence that diverges to
infinity,\footnote{%
Permitting the convergence rate $r_{n,j}$ to differ across $j\in \mathbb{N}%
_{J}$ can be convenient, when the nonparametric estimators have different
convergence rates. For example, this accommodates a situation where one
jointly tests the non-negativity and monotonicity of a nonparametric
function.} and $\hat{\sigma}_{\tau ,j}(x)$ is an appropriate (possibly
data-dependent) scale normalization.\footnote{%
While our framework permits the case where $\hat{\sigma}_{\tau ,j}(x)$ is
simply chosen to be $1$, we allow for a more general case where $\hat{\sigma}%
_{\tau ,j}(x)$ is a consistent estimator for some nonparametric quantity.}
Then the inference is based on the following statistic:%
\begin{eqnarray}
\hat{\theta} &\equiv &\int_{\mathcal{T}}\int_{\mathcal{X}}\max \left\{ \hat{u%
}_{\tau ,1}(x),\hat{u}_{\tau ,2}(x),0\right\} ^{p}dxd\tau  \label{thetah} \\
&\equiv&\int_{\mathcal{X}\times \mathcal{T}}\max \left\{ \hat{u}_{\tau
,1}(x),\hat{u}_{\tau ,2}(x),0\right\} ^{p}dQ(x,\tau ),  \notag
\end{eqnarray}
where $Q$ is Lebesgue measure on $\mathcal{X}\times \mathcal{T}$. In this
overview section, we focus on the case of using the max function under the
integral in (\ref{thetah}). In addition, we consider the sum $\sum_{j=1}^2
\max \left\{ \hat{u}_{\tau ,j}(x),0\right\} ^{p}$ in general theory (see %
\eqref{lambda_p}).

\subsection{Bootstrap Critical Values}

\label{subsec:bootstrap-cv} As we shall see later, the asymptotic
distribution of the test statistic exhibits complex ways of discontinuities
as one perturbs the data generating processes. This suggests that the finite
sample properties of the asymptotic critical values may not be stable.
Furthermore, the location-scale normalization requires nonparametric
estimation and thus a further choice of tuning parameters. This can worsen
the finite sample properties of the critical values further. To address
these issues, this paper develops a bootstrap procedure.

In the following, we let $\hat{v}_{\tau ,j}^{\ast }(x)$ and $\hat{\sigma}_{\tau ,j}^{\ast }(x)$, $%
j=1,2$, denote the bootstrap counterparts of $\hat{v}_{\tau ,j}(x)$ and $%
\hat{\sigma}_{\tau ,j}(x),\ j=1,2.$ Let the bootstrap counterparts be
constructed in the same way as the nonparametric estimators $\hat{v}_{\tau
,j}(x)$ and $\hat{\sigma}_{\tau ,j}(x),\ j=1,2,$ with the bootstrap sample
independently drawn with replacement from the empirical distribution of the
original sample. We let%
\begin{equation}
\hat{s}_{\tau ,j}^{\ast }(x)\equiv \frac{r_{n,j}\{\hat{v}_{\tau ,j}^{\ast
}(x)-\hat{v}_{\tau ,j}(x)\}}{\hat{\sigma}_{\tau ,j}^{\ast }(x)},\text{ }%
j=1,2.  \label{shat*}
\end{equation}%
Note that $\hat{s}_{\tau ,j}^{\ast }(x)$ is a centered and scale normalized
version of the bootstrap quantity $\hat{v}_{\tau ,j}^{\ast }(x)$.
Using these bootstrap quantities, we consider two versions of bootstrap critical values: one based on the least favorable case and the other based on estimating a contact set.

\subsubsection{The Least Favorable Case}

Under the least favorable configuration (LFC),  we
construct a bootstrap version of the right hand side of (\ref{thetah}) as%
\begin{align*}
\hat{\theta}^{\ast }_{\mathrm{LFC}} &\equiv 
\int \max \left\{ \hat{s}_{\tau ,1}^{\ast
}(x),\hat{s}_{\tau ,2}^{\ast }(x),0\right\} ^{p}dQ(x,\tau ).
\end{align*}%
Under regularity conditions, bootstrap critical values based on the LFC can be shown to yield tests that are asymptotically valid \textit{uniformly}
in $P$. However, they are often too conservative in practice. 
As an alternative to the LFC-based bootstrap critical value, we propose a bootstrap critical value
that can be less conservative but at the expense of introducing an additional tuning parameter.

\subsubsection{Estimating a Contact Set}

As we shall show formally in a more general form in Lemma 1 in Section \ref%
{sec:theory} below, it is satisfied that under $H_0$, for each sequence $%
c_{n}\rightarrow \infty $ such that $\sqrt{\log n}/c_{n}\rightarrow 0$ as $%
n\rightarrow \infty $,%
\begin{eqnarray}
\hat{\theta} &=&\int_{B_{n,\{1\}}(c_{n})}\max \left\{ \hat{u}_{\tau
,1}(x),0\right\} ^{p}dQ(x,\tau )  \label{theta2} \\
&&+\int_{B_{n,\{2\}}(c_{n})}\max \left\{ \hat{u}_{\tau ,2}(x),0\right\}
^{p}dQ(x,\tau )  \notag \\
&&+\int_{B_{n,\{1,2\}}(c_{n})}\max \left\{ \hat{u}_{\tau ,1}(x),\hat{u}%
_{\tau ,2}(x),0\right\} ^{p}dQ(x,\tau ),  \notag
\end{eqnarray}%
with probability approaching one, where, letting $u_{n,\tau ,j}(x)\equiv
r_{n,j}v_{n,\tau ,j}(x)/\sigma _{n,\tau ,j}(x)$, i.e., a population version
of $\hat{u}_{\tau ,j}(x)$,\footnote{%
It is convenient for general development to let the population quantities $%
v_{n,\tau ,j}(x)$ and $\sigma _{n,\tau ,j}(x)$ depend on $n$.} we define%
\begin{eqnarray*}
B_{n,\{1\}}(c_{n}) &\equiv &\left\{ (x,\tau )\in \mathcal{X}\times \mathcal{T%
}:|u_{n,\tau ,1}(x)|\ \leq c_{n}\text{ and }u_{n,\tau ,2}(x)<-c_{n}\right\} 
\text{,} \\
B_{n,\{2\}}(c_{n}) &\equiv &\left\{ (x,\tau )\in \mathcal{X}\times \mathcal{T%
}:|u_{n,\tau ,2}(x)|\ \leq c_{n}\text{ and }u_{n,\tau ,1}(x)<-c_{n}\right\} 
\text{ and} \\
B_{n,\{1,2\}}(c_{n}) &\equiv &\left\{ (x,\tau )\in \mathcal{X}\times 
\mathcal{T}:|u_{n,\tau ,1}(x)|\ \leq c_{n}\text{ and }|u_{n,\tau ,2}(x)|\
\leq c_{n}\right\} .
\end{eqnarray*}%
For example, the set $B_{n,\{1\}}(c_{n})$ is a set of points $(x,\tau )$
such that $|v_{n,\tau ,1}(x)/\sigma _{n,\tau ,1}(x)|$ is close to zero, and $%
v_{n,\tau ,2}(x)/\sigma _{n,\tau ,2}(x)$ is negative and away from zero. We
call \textit{contact sets} such sets as $B_{n,\{1\}}(c_{n})$,$\
B_{n,\{2\}}(c_{n})$, and $B_{n,\{1,2\}}(c_{n})$.

Now, comparing (\ref{theta2}) with (\ref{thetah}) reveals that the limiting
distribution of $\hat{\theta}$ under the null hypothesis will not depend on
points outside the union of the contact sets. Thus it is natural to base the
bootstrap critical values on the quantity on the right hand side of (\ref%
{theta2}) instead of that on the last integral in (\ref{thetah}). As we will
explain shortly in the next subsection, this leads to a test that is
uniformly valid and exhibits substantial improvement in power.

\begin{figure}[tbph]
\caption{Contact Set Estimation}
\label{figure-cset}
\begin{center}
\makebox{
\includegraphics[origin=bl,scale=.40,angle=0]{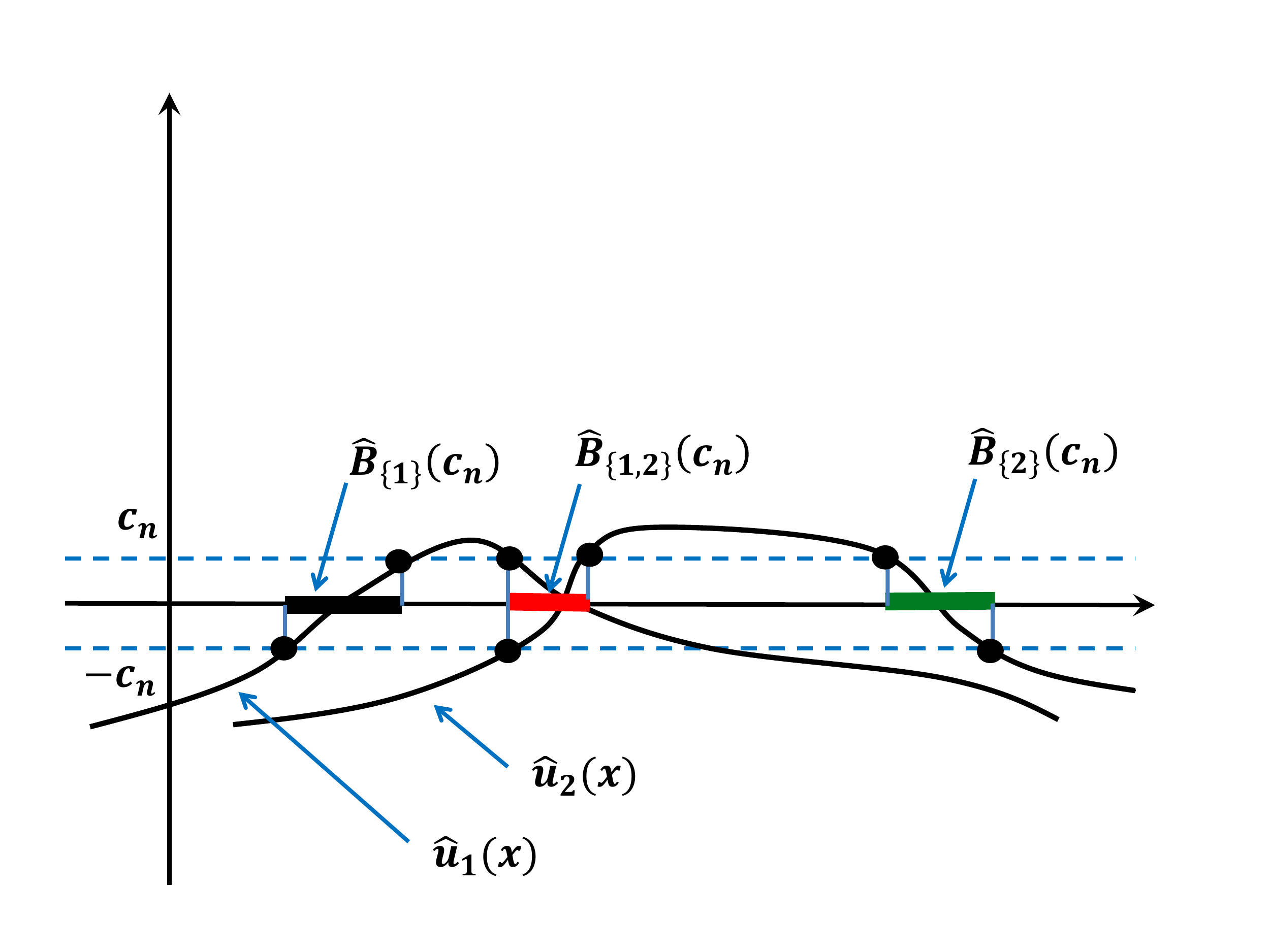}
}
\end{center}
\par
\parbox{5in}{
Note: This figure illustrates estimated contact sets when $J = 2$. The black, red, and green line segments on the x-axis represent estimated contact sets. }
\end{figure}


To construct bootstrap critical values, we introduce sample versions of the
contact sets:%
\begin{eqnarray*}
\hat{B}_{\{1\}}(c_{n}) &\equiv &\left\{ (x,\tau )\in \mathcal{X}\times 
\mathcal{T}:|\hat{u}_{\tau ,1}(x)|\ \leq c_{n}\text{ and }\hat{u}_{\tau
,2}(x)<-c_{n}\right\} \text{,} \\
\hat{B}_{\{2\}}(c_{n}) &\equiv &\left\{ (x,\tau )\in \mathcal{X}\times 
\mathcal{T}:|\hat{u}_{\tau ,2}(x)|\ \leq c_{n}\text{ and }\hat{u}_{\tau
,1}(x)<-c_{n}\right\} \text{ and} \\
\hat{B}_{\{1,2\}}(c_{n}) &\equiv &\left\{ (x,\tau )\in \mathcal{X}\times 
\mathcal{T}:|\hat{u}_{\tau ,1}(x)|\ \leq c_{n}\text{ and }|\hat{u}_{\tau
,2}(x)|\ \leq c_{n}\right\} .
\end{eqnarray*}%
See Figure \ref{figure-cset} for illustration of estimation of contact sets
when $J=2$.

Given the contact sets, we construct  a bootstrap version of the right hand side of (\ref{theta2}) as%
\begin{eqnarray}
\hat{\theta}^{\ast } &\equiv &\int_{\hat{B}_{\{1\}}(\hat{c}_{n})}\max
\left\{ \hat{s}_{\tau ,1}^{\ast }(x),0\right\} ^{p}dQ(x,\tau )  \label{crit}
\\
&&+\int_{\hat{B}_{\{2\}}(\hat{c}_{n})}\max \left\{ \hat{s}_{\tau ,2}^{\ast
}(x),0\right\} ^{p}dQ(x,\tau )  \notag \\
&&+\int_{\hat{B}_{\{1,2\}}(\hat{c}_{n})}\max \left\{ \hat{s}_{\tau ,1}^{\ast
}(x),\hat{s}_{\tau ,2}^{\ast }(x),0\right\} ^{p}dQ(x,\tau ),  \notag
\end{eqnarray}%
where $\hat{c}_{n}$ is a data dependent version of $c_{n}$. We will discuss
a way to construct $\hat{c}_{n}$ shortly. We also define 
\begin{equation*}
\hat{a}^{\ast }\equiv \mathbf{E}^{\ast }\hat{\theta}^{\ast },
\end{equation*}%
where $\mathbf{E}^{\ast }$ denotes the expectation under the bootstrap
distribution. Let $c_{\alpha }^{\ast }$ be the $(1-\alpha )$-th quantile
from the bootstrap distribution of $\hat{\theta}^{\ast }$. 
In practice, both quantities $\hat{a}^{\ast }$ and $c_{\alpha }^{\ast }$ are approximated by the sample mean and sample $(1-\alpha )$-th quantile, respectively,
from a large number of bootstrap repetitions.
Then for a small constant $%
\eta \equiv 10^{-3}$, we take $c_{\alpha ,\eta }^{\ast }\equiv
\max \{c_{\alpha }^{\ast },h^{d/2}\eta +\hat{a}^{\ast }\}$ as the critical
value to form the following test:%
\begin{equation}
\text{Reject }H_{0}\text{ if and only if }\hat{\theta}>c_{\alpha ,\eta
}^{\ast }.  \label{Test}
\end{equation}%
Then it is shown later that the test has asymptotically correct size, i.e.,%
\begin{equation}
\underset{n\rightarrow \infty }{\text{limsup}}\sup_{P\in \mathcal{P}_{0}}P\{%
\hat{\theta}>c_{\alpha ,\eta }^{\ast }\}\leq \alpha ,  \label{size}
\end{equation}%
where $\mathcal{P}_{0}$ is the collection of potential distributions that
satisfy the null hypothesis.
To implement the test, there are two important tuning parameters, namely the bandwidth $h$ used for nonparametric estimation and  the constant $\hat{c}_{n}$ for contact set estimation.\footnote{We fix the value of $\eta$ for the precise definition of the test statistic; however, its value does not matter in terms of the first-order asymptotic theory.}
We discuss how to obtain the latter in the context of  our Monte Carlo experiments in Section \ref{mc-tuning-parameters}.

\subsection{Discontinuity, Uniformity, and Power}

\label{subsec:discontinuity}

Many tests of inequality restrictions exhibit discontinuity in its limiting
distribution under the null hypothesis. When the inequality restrictions
involve nonparametric functions, this discontinuity takes a complex form, as
emphasized in Section 5 of \citeasnoun{Andrews/Shi:13a}.

To see the discontinuity problem in our context, let $\{(Y_{i},X_{i})^{\top
}\}_{i=1}^{n}$ be i.i.d. copies from an observable bivariate random vector, $%
(Y,X)^{\top }\in \mathbf{R}\times \mathbf{R}$, where $X_{i}$ is a continuous
random variable with density $f$. We consider a simple testing example: 
\begin{align}  \label{simple-example}
H_{0}:\mathbf{E}[Y|X=x]\leq 0\text{ for all $x\in \mathcal{X}$ \ vs. }H_{1}:%
\mathbf{E}[Y|X=x]>0\text{ for some $x\in \mathcal{X}$}.
\end{align}%
Here, with the subscript $\tau $ suppressed, we set $J=1$, $r_{n,1}=\sqrt{nh}
$, $p=d=1$, and define $[v]_{+}\equiv \max \{v,0\}$. Let 
\begin{align}  \label{v&s}
\hat{v}_{1}(x)& =\frac{1}{nh}\sum_{i=1}^{n}Y_{i}K\left( \frac{X_{i}-x}{h}%
\right)\ \text{and} \ \hat{\sigma}_{1}^{2}(x) =\frac{1}{nh}%
\sum_{i=1}^{n}Y_{i}^{2}K^{2}\left( \frac{X_{i}-x}{h}\right),
\end{align}%
where $K$ is a nonnegative, univariate kernel function with compact support
and $h$ is a bandwidth.

Assume that the density of $X$ is strictly positive on $\mathcal{X}$. Then,
in this example, $v_{n,1}(x)\equiv \mathbf{E}\hat{v}_{1}(x)\leq 0$ for
almost every $x$ in $\mathcal{X}$ whenever the null hypothesis is true.
Define 
\begin{equation*}
Z_{n,1}(x)=\sqrt{nh}\left\{ \frac{\hat{v}_{1}(x)-v_{n,1}(x)}{\hat{\sigma}%
_{1}(x)}\right\} \text{ and }B_{n,1}(0)=\left\{ x\in \mathcal{X}:\left\vert 
\sqrt{nh}v_{n,1}(x)\right\vert =0\right\} .
\end{equation*}%
We analyze the asymptotic properties of $\hat{\theta}$ as follows. We first
write 
\begin{eqnarray}
h^{-1/2}(\hat{\theta}-a_{n,1}) &=&h^{-1/2}\left\{ \int_{B_{n,1}(0)}\left[
Z_{n,1}(x)\right] _{+}dx-a_{n,1}\right\}  \label{dec78} \\
&&+h^{-1/2}\int_{\mathcal{X}\backslash B_{n,1}(0)}\left[ Z_{n,1}(x)+\frac{%
\sqrt{nh}v_{n,1}(x)}{\hat{\sigma}_{1}(x)}\right] _{+}dx,  \notag
\end{eqnarray}%
where 
\begin{equation*}
a_{n,1}=\mathbf{E}\left[ \int_{B_{n,1}(0)}\left[ Z_{n,1}(x)\right] _{+}dx%
\right] .
\end{equation*}%
When liminf$_{n\rightarrow \infty }Q\left( B_{n,1}(0)\right) >0$ with $%
Q(B_{n,1}(0))$ denoting Lebesgue measure of $B_{n,1}(0)$, we can show that
the leading term on the right hand side in (\ref{dec78}) becomes
asymptotically $N(0,\sigma _{0}^{2})$ for some $\sigma _{0}^{2}>0$. On the
other hand, the second term vanishes in probability as $n\rightarrow \infty $
under $H_{0}$ because for each $x\in \mathcal{X}\backslash B_{n,1}(0)$, 
\begin{equation*}
0>\sqrt{nh}v_{n,1}(x)\rightarrow -\infty
\end{equation*}%
as $n\rightarrow \infty $ under $H_{0}$. Thus we conclude that when liminf$%
_{n\rightarrow \infty }Q\left( B_{n,1}(0)\right) >0\ $under $H_{0},$ 
\begin{equation}
h^{-1/2}(\hat{\theta}-a_{n,1})\approx h^{-1/2}\left\{ \int_{B_{n,1}(0)}\left[
Z_{n,1}(x)\right] _{+}dx-a_{n,1}\right\} \rightarrow _{d}N(0,\sigma
_{0}^{2}).  \label{as}
\end{equation}

This asymptotic theory is pointwise in $P$ (with $P$ fixed and letting $%
n\rightarrow \infty $), and may not be adequate for finite sample
approximation. There are two sources of discontinuity. First, the pointwise
asymptotic theory essentially regards the drift component $\sqrt{nh}%
v_{n,1}(x)$ as $-\infty $, whereas in finite samples, the component can be
very negative, but not $-\infty $. Second, even if the nonparametric
function $\sqrt{nh}v_{n,1}(x)$ changes continuously, the contact set $%
B_{n,1}(0)$ may change discontinuously in response.\footnote{%
For example, take $\sqrt{nh}v_{n,1}(x) = -x^2/n$ on $\mathcal{X} = [-1,1]$.
Let $v_0(x) \equiv 0$. Then $\sqrt{nh}v_{n,1}(x)$ goes to $v_0(x)$ uniformly
in $x \in \mathcal{X}$ as $n \rightarrow \infty$. However, for each $n$, $%
B_{n,1}(0) = \{x \in \mathcal{X}: \sqrt{nh}v_{n,1}(x) = 0\} = \{ 0 \}$,
which does not converge in Hausdorff distance to $B_{1}(0) \equiv \{x \in 
\mathcal{X}: v_0(x) = 0\} = \mathcal{X}$.} While there is no discontinuity
in the finite sample distribution of the test statistic, there may arise
discontinuity in its pointwise asymptotic distribution. Furthermore, the
complexity of the discontinuity makes it harder to trace its source, when we
have $J>2$. As a result, the asymptotic validity of the test that is
established pointwise in $P$ is not a good justification of the test. We
need to establish the asymptotic validity that is \textit{uniform} in $P$
over a reasonable class of probabilities.

Recall that bootstrap critical values based on the least
favorable configuration use a bootstrap quantity such as 
\begin{equation}
\hat{\theta}_{\mathrm{LFC}}^{\ast }\equiv \int_{\mathcal{X}}\left[ \mathbf{%
\hat{s}}^{\ast }(x)\right] _{+}dx,\ \text{where }\mathbf{\hat{s}}^{\ast }(x)=%
\sqrt{nh}\left\{ \frac{\hat{v}_{1}^{\ast }(x)-\hat{v}_{1}(x)}{\hat{\sigma}%
_{1}^{\ast }(x)}\right\} ,  \label{LFC}
\end{equation}%
which can yield  tests that are asymptotically valid \textit{uniformly}
in $P$. However, using a
critical value based on 
\begin{equation*}
\hat{\theta}_{1}^{\ast }\equiv \int_{\hat{B}_{\{1\}}(c_{n})}\left[ \mathbf{%
\hat{s}}^{\ast }(x)\right] _{+}dx
\end{equation*}%
also yields an asymptotically valid test, and yet $\hat{\theta}_{\mathrm{LFC}%
}^{\ast }>\hat{\theta}_{1}^{\ast }$ in general. Thus the bootstrap tests
that use the contact set have better power properties than those that do
not. The power improvement is substantial in many simulation designs and can
be important in real-data applications.


Now, let us see how the choice of $c_{\alpha ,\eta }^{\ast }\equiv \max
\{c_{\alpha }^{\ast },h^{1/2}\eta +\hat{a}^{\ast }\}$ (with $d=1$ here)
leads to bootstrap inference that is valid even when the test statistic
becomes degenerate under the null hypothesis. The degeneracy arises when the
inequality restrictions hold with large slackness, so that the convergence
in (\ref{as}) holds with $\sigma _{0}^{2}=0$, and hence 
\begin{equation*}
h^{-1/2}(\hat{\theta}-a_{n,1})=o_{P}(1).
\end{equation*}%
For the bootstrap counterpart, note that%
\begin{eqnarray*}
h^{-1/2}(c_{\alpha ,\eta }^{\ast }-a_{n,1}) &=&h^{-1/2}\max \{c_{\alpha
}^{\ast }-a_{n,1},h^{1/2}\eta +\hat{a}^{\ast }-a_{n,1}\} \\
&\geq &\eta +h^{-1/2}(\hat{a}^{\ast }-a_{n,1}),
\end{eqnarray*}%
where it can be shown that $h^{-1/2}(\hat{a}^{\ast }-a_{n,1})=o_{P}(1)$.
Therefore, the bootstrap inference is designed to be asymptotically valid
even when the test statistic becomes degenerate.

Note that for the sake of validity only, one may replace $h^{1/2}\eta $ by a
fixed constant, say $\bar{\eta}>0$. However, this choice would render the
test asymptotically too conservative. The choice of $h^{1/2}\eta $ in this
paper makes the test asymptotically exact for a wide class of probabilities,
while preserving the uniform validity in both the cases of degeneracy and
nondegeneracy.\footnote{%
Our fixed positive constant $\eta $ plays a role similar to a fixed constant
in \citeasnoun{Andrews/Shi:13a}'s modification of the sample
variance-covariance matrix of unconditional moment conditions, transformed
by instruments ($\varepsilon $ in their notation in equation (3.5) of %
\citeasnoun{Andrews/Shi:13a}).} The precise class of probabilities under
which the test becomes asymptotically exact is presented in Section \ref%
{sec:theory}.

There are two remarkable aspects of the local power behavior of our
bootstrap test. First, the test exhibits two different kinds of convergence
rates along different directions of Pitman local alternatives. Second,
despite the fact that the test uses the approach of local smoothing by
kernel as in \citeasnoun{Haerdle/Mammen:93}, the faster of the two
convergence rates achieves a parametric rate of $\sqrt{n}$. To see this more
closely, let us return to the simple example in \eqref{simple-example}, and
consider the following local alternatives:%
\begin{equation}
v_{n}(x)=v_{0}(x)+\frac{\delta (x)}{b_{n}},  \label{loc1}
\end{equation}%
where $v_{0}(x)\leq 0$ for all $x\in \mathcal{X}$ and $\delta (x)>0$ for
some $x\in \mathcal{X}$, and $b_{n}\rightarrow \infty $ as $n\rightarrow
\infty $ such that $v_{n}(x)>0$ for some $x\in \mathcal{X}$. The function $%
\delta (\cdot )$ represents a Pitman direction of the local alternatives.
Suppose that the test has nontrivial local power against local alternatives
of the form in (\ref{loc1}), but trivial power whenever $b_{n}$ in (\ref%
{loc1}) is replaced by $b_{n}^{\prime }$ that diverges faster than $b_{n}$.
In this case, we say that the test has convergence rate equal to $b_{n}$
against the Pitman direction $\delta $.

As we show later, there exist two types of convergence rates of our test,
depending on the choice of $\delta (x)$. Let $B^{0}(0)\equiv \left\{ x\in 
\mathcal{X}:v_{0}(x)=0\right\} $ and $\sigma _{1}^{2}(x)\equiv \mathbf{E}%
[Y_{i}^{2}|X_{i}=x]f(x)\int K^{2}(u)du$. When $\delta (\cdot )$ is such that%
\begin{equation*}
\int_{B^{0}(0)}\frac{\delta (x)}{\sigma _{1}(x)}dx>0,
\end{equation*}%
the test achieves a parametric rate $b_{n}=\sqrt{n}$. On the other hand,
when $\delta (\cdot )$ is such that 
\begin{equation*}
\int_{B^{0}(0)}\frac{\delta (x)}{\sigma _{1}(x)}dx=0\text{ and }%
\int_{B^{0}(0)}\frac{\delta ^{2}(x)}{\sigma _{1}^{2}(x)}dx>0\text{,}
\end{equation*}%
the test achieves a slower rate $b_{n}=\sqrt{n}h^{1/4}$. See Section \ref%
{sec:lpa1} for heuristics behind the results. In Section \ref{sec:lpa2}, the
general form of local power functions is derived.

\section{Uniform Asymptotics under General Conditions}

\label{sec:theory}

In this section, we establish uniform asymptotic validity of our bootstrap
test. We also provide a class of distributions for which the asymptotic size
is exact. We first define the set of distributions we consider.

\begin{defn}
Let $\mathcal{P}$ denote the collection of the potential joint distributions
of the observed random vectors that satisfy Assumptions A\ref{assumption-A1}%
-A\ref{assumption-A6}, and B\ref{assumption-B1}-B\ref{assumption-B4} given
below. Let $\mathcal{P}_{0}\subset \mathcal{P}$ be the sub-collection of
potential distributions that satisfy the null hypothesis.
\end{defn}

Let $||\cdot ||$ denote the Euclidean norm throughout the paper. For any
given sequence of subcollections $\mathcal{P}_{n}\subset \mathcal{P}$, any
sequence of real numbers $b_{n}>0$, and any sequence of random vectors $%
Z_{n} $, we say that $Z_{n}/b_{n}\rightarrow _{P}0,$ $\mathcal{P}_{n}$%
-uniformly, or $Z_{n}=o_{P}(b_{n}),$ $\mathcal{P}_{n}$-uniformly, if for any 
$a>0$, 
\begin{equation*}
\underset{n\rightarrow \infty }{\text{limsup}}\sup_{P\in \mathcal{P}%
_{n}}P\left\{ ||Z_{n}||>ab_{n}\right\} =0\text{.}
\end{equation*}%
Similarly, we say that $Z_{n}=O_{P}(b_{n})$, $\mathcal{P}_{n}$-uniformly, if
for any $a>0$, there exists $M>0$ such that 
\begin{equation*}
\underset{n\rightarrow \infty }{\text{limsup}}\sup_{P\in \mathcal{P}%
_{n}}P\left\{ ||Z_{n}||>Mb_{n}\right\} <a\text{.}
\end{equation*}%
We also define their bootstrap counterparts. Let $P^\ast$ denote the
probability under the bootstrap distribution. For any given sequence of
subcollections $\mathcal{P}_{n}\subset \mathcal{P}$, any sequence of real
numbers $b_{n}>0$, and any sequence of random vectors $Z_{n}^{\ast }$, we
say that $Z_{n}^{\ast }/b_{n}\rightarrow _{P^{\ast }}0,$ $\mathcal{P}_{n}$%
-uniformly, or $Z_{n}^{\ast }=o_{P^{\ast }}(b_{n}),$ $\mathcal{P}_{n}$%
-uniformly, if for any $a>0$,%
\begin{equation*}
\underset{n\rightarrow \infty }{\text{limsup}}\sup_{P\in \mathcal{P}%
_{n}}P\left\{ P^{\ast }\left\{ ||Z_{n}^{\ast }||>ab_{n}\right\} >a\right\} =0%
\text{.}
\end{equation*}%
Similarly, we say that $Z_{n}^{\ast }=O_{P^{\ast }}(b_{n})$, $\mathcal{P}%
_{n} $-uniformly, if for any $a>0$, there exists $M>0$ such that 
\begin{equation*}
\underset{n\rightarrow \infty }{\text{limsup}}\sup_{P\in \mathcal{P}%
_{n}}P\left\{ P^{\ast }\left\{ ||Z_{n}^{\ast }||>Mb_{n}\right\} >a\right\} <a%
\text{.}
\end{equation*}
In particular, when we say $Z_{n}=o_{P}(b_{n})$ or $O_{P}(b_{n}),$ $\mathcal{%
P}$-uniformly, it means that the convergence holds uniformly over $P\in 
\mathcal{P}$, and when we say $Z_{n}=o_{P}(b_{n})$ or $O_{P}(b_{n}),$ $%
\mathcal{P}_{0}$-uniformly, it means that the convergence holds uniformly
over all the probabilities in $\mathcal{P}$ that satisfy the null hypothesis.

\subsection{Test Statistics and Critical Values in General Form}

\label{test-stat-general-form}

First, let us extend the test statistics and the bootstrap procedure to the
general case of $J\geq 1$. Let $\Lambda _{p}:\mathbf{R}^{J}\rightarrow
\lbrack 0,\infty )$ be a nonnegative, increasing function indexed by $p$ such that $1 \leq p < \infty$. 
While the theory of this paper can be extended to various general forms
of map $\Lambda _{p}$, we focus on the following type:%
\begin{equation}
\Lambda _{p}(v_{1},\cdot \cdot \cdot ,v_{J})=\left( \max \{[v_{1}]_{+},\cdot
\cdot \cdot ,[v_{J}]_{+}\}\right) ^{p}\text{ or }\Lambda _{p}(v_{1},\cdot
\cdot \cdot ,v_{J})=\sum_{j=1}^{J}[v_{j}]_{+}^{p},  \label{lambda_p}
\end{equation}%
where for $a\in \mathbf{R}$, $[a]_{+}=\max \{a,0\}$. The test statistic is
defined as%
\begin{equation*}
\hat{\theta}=\int_{\mathcal{X}\times \mathcal{T}}\Lambda _{p}\left( \hat{u}%
_{\tau ,1}(x),\cdot \cdot \cdot ,\hat{u}_{\tau ,J}(x)\right) dQ(x,\tau ).
\end{equation*}%
To motivate our bootstrap procedure, it is convenient to begin with the
following lemma. Let us introduce some notation. Define $\mathcal{N}%
_{J}\equiv 2^{\mathbb{N}_{J}}\backslash \{\varnothing \}$, i.e., the
collection of all the nonempty subsets of $\mathbb{N}_{J}\equiv \{1,2,\cdot
\cdot \cdot ,J\}$. For any $A\in \mathcal{N}_{J}$ and $\mathbf{v}%
=(v_{1},\cdot \cdot \cdot ,v_{J})^{\top }\in \mathbf{R}^{J}$, we define $%
\mathbf{v}_{A}$ to be $\mathbf{v}$ except that for each $j\in \mathbb{N}%
_{J}\backslash A$, the $j$-th entry of $\mathbf{v}_{A}$ is zero, and let%
\begin{equation}
\Lambda _{A,p}(\mathbf{v})\equiv \Lambda _{p}(\mathbf{v}_{A}).
\label{lamb_A}
\end{equation}%
That is, $\Lambda _{A,p}(\mathbf{v})$ is a \textquotedblleft
censoring\textquotedblright\ of $\Lambda _{p}(\mathbf{v})$ outside the index
set $A$. Now, we define a general version of contact sets: for $A\in 
\mathcal{N}_{J}$ and for $c_{n,1},c_{n,2}>0,$%
\begin{equation}
B_{n,A}(c_{n,1},c_{n,2})\equiv \left\{ (x,\tau )\in \mathcal{X}\times 
\mathcal{T}:%
\begin{array}{ll}
|r_{n,j}v_{n,\tau ,j}(x)/\sigma _{n,\tau ,j}(x)|\ \leq c_{n,1}, & \text{ for
all }j\in A \\ 
r_{n,j}v_{n,\tau ,j}(x)/\sigma _{n,\tau ,j}(x)\ <-c_{n,2}, & \text{ for all }%
j\in \mathbb{N}_{J}/A%
\end{array}%
\right\} ,  \label{BA0}
\end{equation}%
where $\sigma _{n,\tau ,j}(x)$ is a ``population'' version of $\hat{\sigma}%
_{\tau ,j}(x)$ (see e.g. Assumption A5 below.) When $c_{n,1}=c_{n,2}=c_{n}$
for some $c_{n}>0$, we write $B_{n,A}(c_{n})=B_{n,A}(c_{n,1},c_{n,2})$.

\begin{lem}
\label{nice-lemma} Suppose that Assumptions A1-A3 and A4(i) in Section \ref%
{subsec:high-level} hold. Suppose further that $c_{n,1}>0$ and $c_{n,2}>0$
are sequences such that 
\begin{equation*}
\sqrt{\log n}\{c_{n,1}^{-1}+c_{n,2}^{-1}\}\rightarrow 0,
\end{equation*}%
as $n\rightarrow \infty $. Then as $n\rightarrow \infty $,%
\begin{equation*}
\inf_{P\in \mathcal{P}_{0}}P\left\{ \hat{\theta}=\dsum\limits_{A\in \mathcal{%
N}_{J}}\int_{B_{n,A}(c_{n,1},c_{n,2})}\Lambda _{A,p}(\hat{u}_{\tau
,1}(x),\cdot \cdot \cdot ,\hat{u}_{\tau ,J}(x))dQ(x,\tau )\right\}
\rightarrow 1,
\end{equation*}%
where $\mathcal{P}_{0}$ is the set of potential distributions of the
observed random vector under the null hypothesis.
\end{lem}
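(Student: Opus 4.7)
My approach combines a uniform linearization of the estimator $\hat u_{\tau,j}$ about its population counterpart $u_{n,\tau,j}$ with a deterministic partition of $\mathcal{X}\times\mathcal{T}$ dictated by how negative the $u_{n,\tau,j}(x)$ are. Under $H_0$ every $u_{n,\tau,j}(x)\le 0$, and this partition makes $\Lambda_p(\hat u_\tau)$ collapse to $\Lambda_{A,p}(\hat u_\tau)$ on each $B_{n,A}$ with $\mathcal{P}_0$-uniform probability tending to one.

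First, using the Bahadur-type expansion of Assumption A for $\hat v_{\tau,j}-v_{n,\tau,j}$, together with the uniform consistency of $\hat\sigma_{\tau,j}$ for $\sigma_{n,\tau,j}$, I would establish the linearization bound
\[
\Delta_n \equiv \sup_{(x,\tau)\in\mathcal{X}\times\mathcal{T}}\max_{j\in\mathbb{N}_J}\bigl|\hat u_{\tau,j}(x)-u_{n,\tau,j}(x)\bigr|=O_P(\sqrt{\log n}),\qquad \mathcal{P}\text{-uniformly},
\]
where the $\sqrt{\log n}$ rate comes from standard maximal inequalities applied to the kernel-weighted empirical process indexed by a VC-type class with bounded envelope, while the scale ratio $\sigma_{n,\tau,j}/\hat\sigma_{\tau,j}$ tends uniformly to one and only contributes to lower-order terms. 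Define the event $E_n\equiv\{\Delta_n\le M_n\sqrt{\log n}\}$ for some slowly diverging $M_n$; then $\inf_{P\in\mathcal{P}}P(E_n)\to 1$, and the hypothesis $\sqrt{\log n}\,(c_{n,1}^{-1}+c_{n,2}^{-1})\to 0$ ensures that on $E_n$ one has $\Delta_n < \tfrac12(c_{n,1}\wedge c_{n,2})$ for all large $n$.

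Second, under $H_0$ I would classify each index $j$ at a given $(x,\tau)$ as \emph{near} if $|u_{n,\tau,j}(x)|\le c_{n,1}$ and \emph{far} if $u_{n,\tau,j}(x)<-c_{n,2}$, so that $B_{n,A}(c_{n,1},c_{n,2})$ is precisely the locus where the near indices form the subset $A$ and the remaining indices are all far. The family $\{B_{n,A}:A\subseteq\mathbb{N}_J\}$ then partitions $\mathcal{X}\times\mathcal{T}$ up to the residual band
\[
R_n\equiv\bigl\{(x,\tau):u_{n,\tau,j}(x)\in[-c_{n,2},-c_{n,1})\text{ for some }j\bigr\},
\]
which is empty whenever $c_{n,1}\ge c_{n,2}$. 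On $E_n$ every far index $j$ satisfies $\hat u_{\tau,j}(x)<-c_{n,2}/2<0$, so $[\hat u_{\tau,j}(x)]_+=0$. Consequently, for $A\in\mathcal{N}_J$, $\Lambda_p(\hat u_\tau(x))=\Lambda_p(\hat u_{\tau,A}(x))=\Lambda_{A,p}(\hat u_\tau(x))$ on $E_n\cap B_{n,A}$, while on $B_{n,\varnothing}$ every $\hat u_{\tau,j}(x)$ is negative and $\Lambda_p(\hat u_\tau)$ vanishes identically. On $R_n$ the same argument kills the positive parts of the indices falling in the gap; in the symmetric case $c_{n,1}=c_{n,2}$ actually used downstream in the paper, $R_n$ is empty and this step is vacuous, and more generally the residual contribution is absorbed by the regularity of $u_{n,\tau,j}$ built into Assumption A.

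Combining these pieces, on $E_n$ the test statistic equals
\[
\hat\theta=\sum_{A\in\mathcal{N}_J}\int_{B_{n,A}(c_{n,1},c_{n,2})}\Lambda_{A,p}\bigl(\hat u_{\tau,1}(x),\ldots,\hat u_{\tau,J}(x)\bigr)\,dQ(x,\tau),
\]
and the lemma follows from $\inf_{P\in\mathcal{P}_0}P(E_n)\to 1$. The hard part will be the uniform linearization in the first paragraph: ensuring that the $\sqrt{\log n}$ bound on the Bahadur remainder holds \emph{uniformly} in $P\in\mathcal{P}$ requires that the entropy and moment bounds feeding the maximal inequality be taken with constants depending only on the admissible class $\mathcal{P}$ and not on the particular $P$, and it is here that the full content of Assumptions A and B enters most heavily.
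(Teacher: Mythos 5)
Your strategy is the same as the paper's: the uniform $\sqrt{\log n}$ fluctuation of $\hat u_{\tau,j}$ is eventually dominated by $c_{n,1}\wedge c_{n,2}$, so that outside $\bigcup_{A\in\mathcal{N}_J}B_{n,A}$ every coordinate of $\hat{\mathbf u}_\tau$ is negative and the integrand vanishes, while on each $B_{n,A}$ the coordinates outside $A$ are negative and $\Lambda_p$ collapses to $\Lambda_{A,p}$. Two steps need repair. First, the additive linearization $\Delta_n\equiv\sup_{(x,\tau),j}|\hat u_{\tau,j}(x)-u_{n,\tau,j}(x)|=O_P(\sqrt{\log n})$ is not what the assumptions deliver and is false in general: Assumption A3 controls $r_{n,j}(\hat v_{\tau,j}-v_{n,\tau,j})/\hat\sigma_{\tau,j}$, whereas $\hat u_{\tau,j}-u_{n,\tau,j}$ also contains $r_{n,j}v_{n,\tau,j}(x)\bigl\{\hat\sigma_{\tau,j}(x)^{-1}-\sigma_{n,\tau,j}(x)^{-1}\bigr\}=|u_{n,\tau,j}(x)|\cdot o_P(1)$, which is unbounded exactly on the ``far'' region where $u_{n,\tau,j}$ is very negative. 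The paper instead treats the scale error multiplicatively, $r_{n,j}v_{n,\tau,j}/\hat\sigma_{\tau,j}=u_{n,\tau,j}(1+o_P(1))$, so that a far index still satisfies $\hat u_{\tau,j}\le O_P(\sqrt{\log n})-c_{n,2}(1-o_P(1))<0$; your sign conclusions survive only via this route, not via $\Delta_n$. Relatedly, the maximal-inequality work you flag as the hard part is not part of this lemma: the $\sqrt{\log n}$ rate is a hypothesis (A3), verified separately in the paper's Lemma 2.

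Second, your disposal of the residual band $R_n$ is not a proof. On $R_n$ some index $j_0$ has $u_{n,\tau,j_0}(x)\in[-c_{n,2},-c_{n,1})$, so the point belongs to no $B_{n,A}$; killing $[\hat u_{\tau,j_0}(x)]_+$ does not make $\Lambda_p(\hat{\mathbf u}_\tau(x))$ vanish there, because another coordinate can sit at zero and contribute a strictly positive term with non-vanishing probability. Nothing in Assumption A ``absorbs'' this. You have in fact isolated a point that the paper's own proof also passes over: its claim that $A_n(x,\tau)\neq\varnothing$ implies $(x,\tau)\in B_{n,A_n(x,\tau)}(c_{n,1},c_{n,2})$ uses the implication $u_{n,\tau,j}<-(c_{n,1}\wedge c_{n,2})\Rightarrow u_{n,\tau,j}<-c_{n,2}$ for $j\notin A_n(x,\tau)$, which is valid only when $c_{n,2}\le c_{n,1}$, i.e.\ exactly when $R_n=\varnothing$. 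So either restrict to that case or supply a genuine argument for $R_n$; appealing to ``regularity'' does not close the gap.
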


The lemma above shows that the test statistic $\hat{\theta}$ is uniformly
approximated by the integral with domain restricted to the contact sets $%
B_{n,A}(c_{n,1},c_{n,2})$ in large samples. Note that the result of Lemma 1
implies that the approximation error between $\hat{\theta}$ and the
expression on the right-hand side is $o_{P}(\varepsilon _{n})$ for any $%
\varepsilon _{n}\rightarrow 0$, thereby suggesting that one may consider a
bootstrap procedure that mimics the representation of $\hat{\theta}$ in
Lemma 1.

We begin by introducing a sample version of the contact sets. For $A\in 
\mathcal{N}_{J},$%
\begin{equation*}
\hat{B}_{A}(\hat{c}_{n})\equiv \left\{ (x,\tau )\in \mathcal{X}\times 
\mathcal{T}:%
\begin{array}{ll}
|r_{n,j}\hat{v}_{\tau ,j}(x)/\hat{\sigma}_{\tau ,j}(x)|\ \leq \hat{c}_{n}, & 
\text{ for all }j\in A \\ 
r_{n,j}\hat{v}_{\tau ,j}(x)/\hat{\sigma}_{\tau ,j}(x)\ <-\hat{c}_{n}, & 
\text{ for all }j\in \mathbb{N}_{J}\backslash A%
\end{array}%
\right\} .
\end{equation*}%
The explicit condition for $\hat{c}_{n}$ is found in Assumption A4 below.
Given the bootstrap counterparts, $\{[\hat{v}_{\tau ,j}^{\ast }(x),\hat{%
\sigma}_{\tau ,j}^{\ast }(x)]:j\in \mathbb{N}_{J}\},$ of $\{[\hat{v}_{\tau
,j}(x),\hat{\sigma}_{\tau ,j}(x)]:j\in \mathbb{N}_{J}\}$, we define our
bootstrap test statistic as follows:%
\begin{equation*}
\hat{\theta}^{\ast }\equiv \sum_{A\in \mathcal{N}_{J}}\int_{\hat{B}_{A}(\hat{%
c}_{n})}\Lambda _{A,p}(\hat{s}_{\tau ,1}^{\ast }(x),\cdot \cdot \cdot ,\hat{s%
}_{\tau ,J}^{\ast }(x))dQ(x,\tau ),
\end{equation*}%
where for $j\in \mathbb{N}_{J}$, $\hat{s}_{\tau ,j}^{\ast }(x)\equiv r_{n,j}(%
\hat{v}_{\tau ,j}^{\ast }(x)-\hat{v}_{\tau ,j}(x))/\hat{\sigma}_{\tau
,j}^{\ast }(x)$. We also define 
\begin{equation*}
\hat{a}^{\ast }\equiv \sum_{A\in \mathcal{N}_{J}}\int_{\hat{B}_{A}(\hat{c}%
_{n})}\mathbf{E}^{\ast }\Lambda _{A,p}(\hat{s}_{\tau ,1}^{\ast }(x),\cdot
\cdot \cdot ,\hat{s}_{\tau ,J}^{\ast }(x))dQ(x,\tau ).
\end{equation*}%
Let $c_{\alpha }^{\ast }$ be the $(1-\alpha )$-th quantile from the
bootstrap distribution of $\hat{\theta}^{\ast }$ and take 
\begin{equation*}
c_{\alpha ,\eta }^{\ast }=\max \{c_{\alpha }^{\ast },h^{d/2}\eta +\hat{a}%
^{\ast }\}
\end{equation*}%
as our critical value, where $\eta >0$ is a small fixed number.

One of the main technical contributions of this paper is to present precise
conditions under which this proposal of bootstrap test works. We present and
discuss them in subsequent sections.

To see the intuition for the bootstrap validity, first note that the uniform
convergence of $r_{n,j}\{\hat{v}_{\tau ,j}(x)-v_{n,\tau ,j}(x)\}$ over $%
(x,\tau )$ implies that 
\begin{equation}
B_{n,A}(c_{n,L},c_{n,U})\subset \hat{B}_{A}(\hat{c}_{n})\subset
B_{n,A}(c_{n,U},c_{n,L})  \label{contact}
\end{equation}%
with probability approaching one, whenever $P\left\{ c_{n,L}\leq \hat{c}%
_{n}\leq c_{n,U}\right\} \rightarrow 1$. Therefore, if $\sqrt{\log n}%
/c_{n,L}\rightarrow 0$, then, (letting $\hat{s}_{\tau ,j}\equiv r_{n,j}(\hat{%
v}_{\tau ,j}(x)-v_{n,\tau ,j}(x))/\hat{\sigma}_{\tau ,j}(x)$), we have%
\begin{equation}
\hat{\theta}\leq \sum_{A\in \mathcal{N}_{J}}\int_{B_{n,A}(c_{n,L},c_{n,U})}%
\Lambda _{A,p}\left( \hat{s}_{\tau ,1}(x),\cdot \cdot \cdot ,\hat{s}_{\tau
,J}(x)\right) dQ(x,\tau )\text{,}  \label{heuristics}
\end{equation}%
with probability approaching one, by Lemma 1 and the null hypothesis. When
the last sum has a nondegenerate limit, we can approximate its distribution
by the bootstrap distribution%
\begin{eqnarray*}
&&\sum_{A\in \mathcal{N}_{J}}\int_{B_{n,A}(c_{n,L},c_{n,U})}\Lambda
_{A,p}\left( \hat{s}_{\tau ,1}^{\ast }(x),\cdot \cdot \cdot ,\hat{s}_{\tau
,J}^{\ast }(x)\right) dQ(x,\tau ) \\
&\leq &\sum_{A\in \mathcal{N}_{J}}\int_{\hat{B}_{A}(\hat{c}_{n})}\Lambda
_{A,p}\left( \hat{s}_{\tau ,1}^{\ast }(x),\cdot \cdot \cdot ,\hat{s}_{\tau
,J}^{\ast }(x)\right) dQ(x,\tau )\equiv \hat{\theta}^{\ast },
\end{eqnarray*}%
where the inequality follows from (\ref{contact}).\footnote{%
In fact, the main challenge here is to prove the bootstrap approximation
using the method of Poissonization that is uniform in $P\in \mathcal{P}_{0}$.%
} Thus the critical value is read from the bootstrap distribution of $\hat{%
\theta}^{\ast }$. On the other hand, if the last sum in (\ref{heuristics})
has limiting distribution degenerate at zero, we simply take a small
positive number $\eta $ to control the size of the test. This results in our
choice of $c_{\alpha ,\eta }^{\ast }=\max \{c_{\alpha }^{\ast },h^{d/2}\eta +%
\hat{a}^{\ast }\}$.

\subsection{Assumptions}

\label{subsec:high-level}

In this section, we provide assumptions needed to develop general results.
We assume that $\mathcal{S}\equiv \mathcal{X}\times \mathcal{T}$ is a
compact subset of a Euclidean space. We begin with the following assumption.

\begin{AssumptionA}
\label{assumption-A1} (Asymptotic Linear Representation) For each $j\in 
\mathbb{N}_{J}\equiv \{1,\cdot \cdot \cdot ,J\},$ there exists a
nonstochastic function $v_{n,\tau ,j}(\cdot ):\mathbf{R}^{d}\rightarrow 
\mathbf{R}$ such that (a) $v_{n,\tau ,j}(x)\leq 0$ for all $(x,\tau )\in 
\mathcal{S}$ under the null hypothesis, and (b) as $n\rightarrow \infty $, 
\begin{equation}
\sup_{(x,\tau )\in \mathcal{S}}\left\vert r_{n,j}\left\{ \frac{\hat{v}_{\tau
,j}(x)-v_{n,\tau ,j}(x)}{\hat{\sigma}_{\tau ,j}(x)}\right\} -\sqrt{nh^{d}}\{%
\hat{g}_{\tau ,j}(x)-\mathbf{E}\hat{g}_{\tau ,j}(x)\}\right\vert =o_{P}(%
\sqrt{h^{d}}),\text{ }\mathcal{P}\text{-uniformly,}  \label{AL2}
\end{equation}%
where, with $\{(Y_{i}^{\top },X_{i}^{\top })\}_{i=1}^{n}$ being a random
sample such that $Y_{i}=(Y_{i1}^{\top },\ldots ,Y_{iJ}^{\top })^{\top }\in 
\mathbf{R}^{J\bar{L}}$, $Y_{ij}\in \mathbf{R}^{\bar{L}}$, $X_{i}\in \mathbf{R%
}^{d}$, and the distribution of $X_{i}$ is absolutely continuous with
respect to Lebesgue measure,\footnote{%
Throughout the paper, we assume that $X_{i}\in \mathbf{R}^{d}$ is a
continuous random vector. It is straightforward to extend the analysis to
the case where $X_{i}$ has a subvector of discrete random variables.} we
define%
\begin{equation*}
\hat{g}_{\tau ,j}(x)\equiv \frac{1}{nh^{d}}\sum_{i=1}^{n}\beta _{n,x,\tau
,j}\left( Y_{ij},\frac{X_{i}-x}{h}\right) ,
\end{equation*}%
and $\beta _{n,x,\tau ,j}:\mathbf{R}^{\bar{L}}\times \mathbf{R}%
^{d}\rightarrow \mathbf{R}$ is a function which may depend on $n\geq 1$.
\end{AssumptionA}

Assumption A\ref{assumption-A1} requires that there exist a nonparametric
function $v_{n,\tau ,j}(x)$ around which the asymptotic linear
representation holds uniformly in $P\in \mathcal{P}$, and $v_{n,\tau
,j}(x)\leq 0$ under the null hypothesis. The required rate of convergence in %
\eqref{AL2} is $o_{P}(h^{d/2})$ instead of $o_{P}(1)$. We need this stronger
convergence rate primarily because $\hat{\theta}-a_{n}$ is $O_{P}(h^{d/2})$
for some nonstochastic sequence $a_{n}$.\footnote{%
To see this more clearly, we assume that $\mathcal{T}=\{\tau \}$, $p=1$, and 
$J=1$, and suppress the subscripts $\tau $ and $j$ from the notation, and
take $\hat{\sigma}(x)=1$ for simplicity. We write (in the case where $%
v_{n}(x)=0$) 
\begin{eqnarray*}
h^{-d/2}\hat{\theta} &=&h^{-d/2}\int_{\mathcal{X}}\max \left\{ r_{n}\{\hat{v}%
(x)-v_{n}(x)\},0\right\} dx \\
&=&h^{-d/2}\int_{\mathcal{X}}\max \left\{ \sqrt{nh^{d}}\{\hat{g}(x)-\mathbf{E%
}\hat{g}(x)\},0\right\} dx+h^{-d/2}R_{n},
\end{eqnarray*}%
where $R_{n}$ is an error term that has at least the same convergence rate
as the convergence rate of the remainder term in the asymptotic linear
representation for $\hat{v}(x)$. Now we let 
\begin{equation*}
a_{n}=\mathbf{E}\left[ \int_{\mathcal{X}}\max \left\{ \sqrt{nh^{d}}\{\hat{g}%
(x)-\mathbf{E}\hat{g}(x)\},0\right\} dx\right]
\end{equation*}%
and write $h^{-d/2}\hat{\theta}-h^{-d/2}a_{n}$ as%
\begin{equation*}
h^{-d/2}\left( \int_{\mathcal{X}}\max \left\{ \sqrt{nh^{d}}\{\hat{g}(x)-%
\mathbf{E}\hat{g}(x)\},0\right\} dx-a_{n}\right) +h^{-d/2}R_{n}.
\end{equation*}%
It can be shown that the leading term is asymptotically normal using the
method of Poissonization. Hence $h^{-d/2}\hat{\theta}-h^{-d/2}a_{n}$ becomes
asymptotically normal, if $R_{n}=o_{P}(h^{d/2})$. This is where the faster
error rate in the asymptotic linear representation in Assumption A1(i) plays
a role.} 

When $\hat{v}_{\tau ,j}(x)$ is a sample mean of i.i.d. random quantities
involving nonnegative kernels and $\hat{\sigma}_{n,\tau }(x)=1$, we may take 
$v_{n,\tau ,j}(x)=\mathbf{E}\hat{v}_{\tau ,j}(x)$, and then $o_{P}(\sqrt{%
h^{d}})$ is in fact precisely equal to $0$. If the original nonparametric
function $v_{\tau ,j}(\cdot )$ satisfies some smoothness conditions, we may
take $v_{n,\tau ,j}(x)=v_{\tau ,j}(x)$, and handle the bias part $\mathbf{E}%
\hat{v}_{\tau ,j}(x)-v_{\tau ,j}(x)$ using the standard arguments to deduce
the error rate $o_{P}(\sqrt{h^{d}})$. Assumption A1 admits both set-ups. For
instance, consider the simple example in Section \ref{subsec:discontinuity}.
The asymptotic linear representation in Assumption 1 can be shown to hold
with 
\begin{equation*}
\beta _{n,x,1}\left( Y_{i},(X_{i}-x)/h\right) =Y_{i}K((X_{i}-x)/h)/\sigma
_{n,1}(x),
\end{equation*}%
where $\sigma _{n,1}^{2}(x)=\mathbf{E}[Y_{i}^{2}K^{2}((X_{i}-x)/h)]/h$, if $%
\hat{\sigma}_{n,1}(x)$ is chosen as in (\ref{v&s}).

The following assumption for $\beta _{n,x,\tau ,j}$ essentially defines the
scope of this paper's framework.

\begin{AssumptionA}
\label{assumption-A2}(Kernel-Type Condition) For some compact $\mathcal{K}%
_{0}\subset \mathbf{R}^{d}$ that does not depend on $P\in \mathcal{P}$ or $n$%
, it is satisfied that $\beta _{n,x,\tau ,j}(y,u)=0$ for all $u\in \mathbf{R}%
^{d}\backslash \mathcal{K}_{0}$ and all $(x,\tau ,y)\in \mathcal{X}\times 
\mathcal{T}\times \mathcal{Y}_{j}$ and all $j\in \mathbb{N}_{J}$, where $%
\mathcal{Y}_{j}$ denotes the support of $Y_{ij}$.
\end{AssumptionA}

Assumption A\ref{assumption-A2} can be immediately verified when the
asymptotic linear representation in (\ref{AL2}) is established. This
condition is satisfied in particular when the asymptotic linear
representation involves a multivariate kernel function with bounded support
in a multiplicative form. In such a case, the set $\mathcal{K}_{0}$ depends
only on the choice of the kernel function, not on any model primitives.

\begin{AssumptionA}
\label{assumption-A3}(Uniform Convergence Rate for Nonparametric Estimators)
For all $j\in \mathbb{N}_{J}$,%
\begin{equation*}
\sup_{(x,\tau )\in \mathcal{S}}r_{n,j}\left\vert \frac{\hat{v}_{\tau
,j}(x)-v_{n,\tau ,j}(x)}{\hat{\sigma}_{\tau ,j}(x)}\right\vert =O_{P}\left( 
\sqrt{\log n}\right) \text{, }\mathcal{P}\text{-uniformly.}
\end{equation*}
\end{AssumptionA}

Assumption A\ref{assumption-A3} (in combination with A\ref{assumption-A5}
below) requires that $\hat{v}_{\tau ,j}(x)-v_{n,\tau ,j}(x)$ have the
uniform convergence rate of $O_{P}(r_{n,j}^{-1}\sqrt{\log n})$ uniformly
over $P\in \mathcal{P}$. Lemma 2 in Section \ref{sec:suff-A3-B2} provides
some sufficient conditions for this convergence.

We now introduce conditions for the bandwidth $h$ and the tuning parameter $%
c_{n}$ for the contact sets.

\begin{AssumptionA}
\label{assumption-A4}(Rate Conditions for Tuning Parameters) (i) As $%
n\rightarrow \infty $, $h\rightarrow 0$, $\sqrt{\log n}/r_{n}\rightarrow 0$,
and $n^{-1/2}h^{-d-\nu _{1}}\rightarrow 0$ for some arbitrarily small $\nu
_{1}>0$, where $r_{n}\equiv \min_{j\in \mathbb{N}_{J}}r_{n,j}$.\newline
(ii) For each $n\geq 1,$ there exist nonstochastic sequences $c_{n,L}>0$ and 
$c_{n,U}>0 $ such that $c_{n,L}\leq c_{n,U}$, and%
\begin{equation*}
\inf_{P\in \mathcal{P}}P\left\{ c_{n,L}\leq \hat{c}_{n}\leq c_{n,U}\right\}
\rightarrow 1,\text{ and\ }\sqrt{\log n}/c_{n,L}+c_{n,U}/r_{n}\rightarrow 0,
\end{equation*}%
as $n\rightarrow \infty $.
\end{AssumptionA}

The requirement that $\sqrt{\log n}/r_{n}\rightarrow 0$ is satisfied easily
for most cases where $r_{n}$ increases at a polynomial order in $n$.
Assumption A4(ii) requires that $\hat{c}_{n}$ increase faster than $\sqrt{%
\log n}$ but slower than $r_{n}$ with probability approaching one.

\begin{AssumptionA}
\label{assumption-A5}(Regularity Conditions for $\hat{\sigma}_{\tau ,j}(x)$)
For each $(\tau ,j)\in \mathcal{T}\times \mathbb{N}_{J}$, there exists $%
\sigma _{n,\tau ,j}(\cdot ):\mathcal{X}\rightarrow (0,\infty )$ such that $%
\liminf_{n\rightarrow \infty }\inf_{(x,\tau )\in \mathcal{S}}\inf_{P\in 
\mathcal{P}}\sigma _{n,\tau ,j}(x)>0$, and%
\begin{equation*}
\sup_{(x,\tau )\in \mathcal{S}}\left\vert \hat{\sigma}_{\tau ,j}(x)-\sigma
_{n,\tau ,j}(x)\right\vert =o_{P}(1)\text{, }\mathcal{P}\text{-uniformly.}
\end{equation*}
\end{AssumptionA}

Assumption A\ref{assumption-A5} requires that the scale normalization $\hat{%
\sigma}_{\tau ,j}(x)$ should be asymptotically well defined. The condition
precludes the case where estimator $\hat{\sigma}_{\tau ,j}(x)$ converges to
a map that becomes zero at some point $(x,\tau )$ in $\mathcal{S}$.
Assumption A\ref{assumption-A5} is usually satisfied by an appropriate
choice of $\hat{\sigma}_{\tau ,j}(x)$. When one chooses $\hat{\sigma}_{\tau
,j}(x)=1$, which is permitted in our framework, Assumption A5 is immediately
satisfied with $\sigma _{n,\tau ,j}(x)=1$. Again, if we go back to the
simple example considered in Section \ref{subsec:discontinuity}, it is
straightforward to see that under regularity conditions, with the subscript $%
\tau $ suppressed, $\hat{\sigma}_{1}^{2}(x)=\sigma _{n,1}^{2}(x)+o_{P}(1)$
and $\sigma _{n,1}^{2}(x)=\sigma _{1}^{2}(x)+o(1)$, where $\sigma
_{1}^{2}(x)\equiv \mathbf{E}(Y^{2}|X=x)f(x)\int K^{2}(u)du,\ $as $%
n\rightarrow \infty $. The convergence can be strengthened to a uniform
convergence when $\sigma _{1}^{2}(x)$ is bounded away from zero uniformly
over $x\in \mathcal{X}$ and $P\in \mathcal{P}$, so that Assumption A\ref%
{assumption-A5} holds.

We introduce assumptions about the moment conditions for $\beta _{n,x,\tau
,j}(\cdot ,\cdot )$ and other regularity conditions. For $\tau \in \mathcal{T%
}$ and $\varepsilon _{1}>0,$ let $\mathcal{S}_{\tau }(\varepsilon
_{1})\equiv \{x+a:x\in \mathcal{S}_{\tau }$, $a\in \lbrack -\varepsilon
_{1},\varepsilon _{1}]^{d}\}$, where $\mathcal{S}_{\tau }\equiv \{x\in 
\mathcal{X}:(x,\tau )\in \mathcal{S}\}$ for each $\tau \in \mathcal{T}$. Let 
$\mathcal{U}\equiv \mathcal{K}_{0}+\mathcal{K}_{0}$ such that $\mathcal{U}$
contains $\{0\}$ in its interior and $\mathcal{K}_{0}$ is the same as
Assumption \ref{assumption-A2}. Here, $+$ denotes the Minkowski sum of sets.

\begin{AssumptionA}
\label{assumption-A6} (i) There exist $M\geq 2(p+2),\ C>0,$ and $\varepsilon
_{1}>0$ such that%
\begin{equation*}
\mathbf{E}[|\beta _{n,x,\tau ,j}\left( Y_{ij},u\right)
|^{M}|X_{i}=x]f(x)\leq C,
\end{equation*}%
for all $(x,u)\in \mathcal{S}_{\tau }(\varepsilon _{1})\times \mathcal{U}$, $%
\tau \in \mathcal{T}$, $j\in \mathbb{N}_{J},$ $n\geq 1$, and $P\in \mathcal{P%
}$, where $f(\cdot )$ is the density of $X_{i}$.\footnote{%
The conditional expectation $\mathbf{E}_{P}\left[ |\beta _{n,x,\tau
,j}\left( Y_{ij},u\right) |^{M}|X_{i}=x\right] $ is of type $\mathbf{E}\left[
f(Y,x)|X=x\right] $, which is not well defined according to Kolmogorov's
definition of conditional expectations. See, e.g. %
\citeasnoun{Proschan/Presnell:98} for this problem. Here we define the
conditional expectation in an elementary way by using conditional densities
or conditional probability mass functions of $(Y_{ij},Y_{ik})$ given $%
X_{i}=x $, depending on whether $(Y_{ij},Y_{ik})$ is continuous or discrete.}

\noindent (ii) For each $a\in (0,1/2)$, there exists a compact set $\mathcal{%
C}_{a}\subset \mathbf{R}^{d}$ such that 
\begin{equation*}
0<\inf_{P\in \mathcal{P}}P\{X_{i}\in \mathbf{R}^{d}\backslash \mathcal{C}%
_{a}\}\leq \sup_{P\in \mathcal{P}}P\{X_{i}\in \mathbf{R}^{d}\backslash 
\mathcal{C}_{a}\}<a.
\end{equation*}
\end{AssumptionA}

Assumption A\ref{assumption-A6}(i) requires that conditional moments of $%
\beta _{n,x,\tau ,j}\left( Y_{ij},z\right) $ be bounded. Assumption A\ref%
{assumption-A6}(ii) is a technical condition for the distribution of $X_{i}$%
. The third inequality in Assumption A\ref{assumption-A6}(ii) is satisfied
if the distribution of $X_{i}$ is uniformly tight in $\mathcal{P}$, and
follows, for example, if sup$_{P\in \mathcal{P}}\mathbf{E}||X_{i}||<\infty $%
. The first inequality in Assumption A\ref{assumption-A6}(ii) requires that
there be a common compact set outside which the distribution of $X_{i}$
still has positive probability mass uniformly over $P\in \mathcal{P}$. The
main thrust of Assumption A\ref{assumption-A6}(ii) lies in the requirement
that such a compact set be independent of $P\in \mathcal{P}$. While it is
necessary to make this technical condition explicit as stated here, the
condition itself appears very weak.


This paper's asymptotic analysis adopts the approach of Poissonization (see,
e.g., \citeasnoun{Horvath:91} and \citeasnoun{GMZ}). However, existing
methods of Poissonization are not readily applicable to our testing problem,
mainly due to the possibility of local or global redundancy among the
nonparametric functions. In particular, the conditional covariance matrix of 
$\beta _{n,x,\tau ,j}(Y_{ij},u)$'s across different $(x,\tau ,j)$'s given $%
X_{i}$ can be singular in the limit. Since the empirical researcher rarely
knows \textit{a priori} the local relations among nonparametric functions,
it is important that the validity of the test is not sensitive to the local
relations among them, i.e., the validity should be uniform in $P$.

This paper deals with this challenge in three steps. First, we introduce a
Poissonized version of the test statistic and apply a certain form of
regularization to facilitate the derivation of its limiting distribution
uniformly in $P\in \mathcal{P}$, i.e., regardless of singularity or
degeneracy in the original test statistic. Second, we use a
Berry-Esseen-type bound to compute the finite sample influence of the
regularization bias and let the regularization parameter go to zero
carefully, so that the bias disappears in the limit. Third, we translate
thus computed limiting distribution into that of the original test
statistic, using so-called de-Poissonization lemma. This is how the
uniformity issue in this complex situation is covered through the
Poissonization method combined with the method of regularization.

\subsection{Asymptotic Validity of Bootstrap Procedure}

Recall that $\mathbf{E}^{\ast }$ and $P^{\ast }$ denote the expectation and
the probability under the bootstrap distribution. We make the following
assumptions for $\hat{v}_{\tau ,j}^{\ast }(x)$.


\begin{AssumptionB}
\label{assumption-B1}(Bootstrap Asymptotic Linear Representation) For each $%
j\in \mathbb{N}_{J}$,%
\begin{equation*}
\sup_{(x,\tau )\in \mathcal{S}}\left\vert r_{n,j}\left\{ \frac{\hat{v}_{\tau
,j}^{\ast }(x)-\hat{v}_{\tau ,j}(x)}{\hat{\sigma}_{\tau ,j}^{\ast }(x)}%
\right\} -\sqrt{nh^{d}}\{\hat{g}_{\tau ,j}^{\ast }(x)-\mathbf{E}^{\ast }\hat{%
g}_{\tau ,j}^{\ast }(x)\}\right\vert =o_{P^{\ast }}(\sqrt{h^{d}}),\ \mathcal{%
P}\text{-uniformly,}
\end{equation*}%
where 
\begin{equation*}
\hat{g}_{\tau ,j}^{\ast }(x)\equiv \frac{1}{nh^{d}}\sum_{i=1}^{n}\beta
_{n,x,\tau ,j}\left( Y_{ij}^{\ast },\frac{X_{i}^{\ast }-x}{h}\right) ,
\end{equation*}%
and $\beta _{n,x,\tau ,j}$ is a real valued function introduced in
Assumption A1. \newline
\end{AssumptionB}

\begin{AssumptionB}
\label{assumption-B2} For all $j\in \mathbb{N}_{J},$%
\begin{equation*}
\sup_{(x,\tau )\in \mathcal{S}}r_{n,j}\left\vert \frac{\hat{v}_{\tau
,j}^{\ast }(x)-\hat{v}_{\tau ,j}(x)}{\hat{\sigma}_{\tau ,j}^{\ast }(x)}%
\right\vert =O_{P^{\ast }}(\sqrt{\log n}),\ \mathcal{P}\text{-uniformly}.
\end{equation*}
\end{AssumptionB}

\begin{AssumptionB}
\label{assumption-B3} For all $j\in \mathbb{N}_{J},$%
\begin{equation*}
\sup_{(x,\tau )\in \mathcal{S}}\left\vert \hat{\sigma}_{\tau ,j}^{\ast }(x)-%
\hat{\sigma}_{\tau ,j}(x)\right\vert =o_{P^{\ast }}(1),\ \mathcal{P}\text{%
-uniformly.}
\end{equation*}
\end{AssumptionB}

Assumption B\ref{assumption-B1} is the asymptotic linear representation of
the bootstrap estimator $\hat{v}_{\tau ,j}^{\ast }(x)$. The proof of the
asymptotic linear representation can be typically proceeded in a similar way
that one obtains the original asymptotic linear representation in Assumption
A1. Assumptions B2 and B3 are the bootstrap versions of Assumptions A\ref%
{assumption-A3} and A\ref{assumption-A5}.

\begin{AssumptionB}
\label{assumption-B4} (Bandwidth Condition) $n^{-1/2}h^{-\left( \frac{3M-4}{%
2M-4}\right) d-\nu _{2}}\rightarrow 0$ as $n\rightarrow \infty $, for some
small $\nu _{2}>0$ and for $M>0$ that appears in Assumption A\ref%
{assumption-A6}(i).
\end{AssumptionB}

When $\beta _{n,x,\tau ,j}\left( Y_{ij},u\right) $ is bounded uniformly over 
$(n,x,\tau ,j),$ the bandwidth condition in Assumption B\ref{assumption-B4}
can be reduced to $n^{-1/2}h^{-3d/2-\nu _{2}}\rightarrow 0$. If Assumption A%
\ref{assumption-A6}(i) holds with $M = 6$ and $p=1$, the bandwidth condition
in Assumption B\ref{assumption-B4} is reduced to $n^{-1/2}h^{-7d/4-\nu
_{2}}\rightarrow 0$.

Note that Assumption B\ref{assumption-B4} is stronger than the bandwidth
condition in Assumption A\ref{assumption-A4}(i). The main reason is that we
need to prove that for some $a_{\infty }>0$, we have $a_{n}=a_{\infty
}+o(h^{d/2})$ and $a_{n}^{\ast }=a_{\infty }+o_{P}(h^{d/2}), \; \mathcal{P}%
\text{-uniformly}$, where $a_{n}$ is an appropriate location normalizer of
the test statistic, and $a_{n}^{\ast }$ is a bootstrap counterpart of $a_{n}$%
. To show these, we utilize a Berry-Esseen-type bound for a nonlinear
transform of independent sum of random variables. Since the approximation
error depends on the moment bounds for the sum, the bandwidth condition in
Assumption B\ref{assumption-B4} takes a form that involves $M>0$ in
Assumption A\ref{assumption-A6}.


We now present the result of the uniform validity of our bootstrap test.

\begin{thm}
\label{Thm1} Suppose that Assumptions A\ref{assumption-A1}-A\ref%
{assumption-A6} and B\ref{assumption-B1}-B\ref{assumption-B4} hold. Then 
\begin{equation*}
\underset{n\rightarrow \infty }{\mathrm{limsup}}\sup_{P\in \mathcal{P}%
_{0}}P\{\hat{\theta}>c_{\alpha, \eta }^{\ast }\}\leq \alpha .
\end{equation*}
\end{thm}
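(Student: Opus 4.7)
The plan is to reduce both sides of the event $\{\hat\theta > c^*_{\alpha,\eta}\}$ to integrals over the population contact sets $B_{n,A}(c_{n,L},c_{n,U})$, then apply Poissonization to derive joint distributional limits of the resulting integrals for $\hat\theta$ and $\hat\theta^*$, and finally to argue that the critical-value floor $h^{d/2}\eta+\hat a^*$ neutralizes the degenerate case. The whole argument must be executed uniformly over $P\in\mathcal P_0$, which is the delicate part.

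First, by Lemma~\ref{nice-lemma} applied with $(c_{n,1},c_{n,2})=(c_{n,L},c_{n,U})$ and by Assumption A\ref{assumption-A1}(a) (which gives $v_{n,\tau,j}\le 0$ under $H_0$), we obtain, with probability tending to one uniformly in $P\in\mathcal P_0$,
\[
\hat\theta \;\le\; \sum_{A\in\mathcal N_J}\int_{B_{n,A}(c_{n,L},c_{n,U})}\Lambda_{A,p}\bigl(\hat s_{\tau,1}(x),\ldots,\hat s_{\tau,J}(x)\bigr)\,dQ(x,\tau),
\]
where $\hat s_{\tau,j}(x)\equiv r_{n,j}\{\hat v_{\tau,j}(x)-v_{n,\tau,j}(x)\}/\hat\sigma_{\tau,j}(x)$. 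On the bootstrap side, the sandwich $B_{n,A}(c_{n,L},c_{n,U})\subset \hat B_A(\hat c_n)\subset B_{n,A}(c_{n,U},c_{n,L})$ from equation~(\ref{contact}), justified via Assumptions A\ref{assumption-A3} and A\ref{assumption-A4}(ii), allows me to bound $\hat\theta^*$ from below by the same-domain integral with $\hat s^*_{\tau,j}$ in place of $\hat s_{\tau,j}$. Second, I substitute the asymptotic linear representations from Assumptions A\ref{assumption-A1} and B\ref{assumption-B1} to pass from $\hat s_{\tau,j}$ and $\hat s^*_{\tau,j}$ to the kernel-type sums $\sqrt{nh^d}\{\hat g_{\tau,j}-\mathbf E \hat g_{\tau,j}\}$ and their bootstrap analogues; the $o_P(h^{d/2})$ remainder terms are absorbed after rescaling by $h^{-d/2}$.

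Third, I Poissonize the sample by replacing the deterministic sample size with an independent $\mathrm{Poisson}(n)$ count, which makes contributions over disjoint bandwidth-scale blocks independent, and apply a Lindeberg CLT together with a Berry--Esseen refinement to derive, jointly for $h^{-d/2}(\hat\theta-a_n)$ and its Poissonized bootstrap counterpart, a Gaussian limit with a common variance $\sigma_\infty^2\ge 0$ and a common shift $a_\infty$ such that $a_n=a_\infty+o(h^{d/2})$ and $\hat a^*=a_\infty+o_P(h^{d/2})$, $\mathcal P_0$-uniformly; the $M\ge 2(p+2)$ moment bound in Assumption A\ref{assumption-A6}(i) and the stronger bandwidth rate in Assumption B\ref{assumption-B4} are used precisely here to control the Berry--Esseen remainder. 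A regularization step (perturbing the covariance kernel by a vanishing amount) is used to secure these limits even when the covariance operator is singular, and a de-Poissonization lemma in the style of Horv\'ath and Gin\'e--Mason--Zaitsev transfers the limits back to the original i.i.d.\ setting. Fourth, in the nondegenerate regime $\sigma_\infty^2>0$, the bootstrap quantile $c^*_\alpha$ consistently estimates the $(1-\alpha)$-quantile of the limit, while in the degenerate regime both $h^{-d/2}(\hat\theta-a_n)$ and $h^{-d/2}(\hat a^*-a_n)$ are $o_P(1)$, and then $c^*_{\alpha,\eta}\ge h^{d/2}\eta+\hat a^*$ dominates $\hat\theta$ with probability tending to one because $h^{-d/2}(c^*_{\alpha,\eta}-\hat\theta)\ge \eta + o_P(1)>0$.

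The main obstacle is achieving all of these approximations \emph{uniformly} in $P\in\mathcal P_0$ despite the limiting covariance being allowed to degenerate arbitrarily as $P$ varies. Pointwise-in-$P$ CLTs and quantile consistency arguments do not combine into a uniform statement without care: one must pair the Poissonization/regularization scheme with a Berry--Esseen bound whose constants depend only on the $\mathcal P$-uniform moment bounds of Assumption A\ref{assumption-A6}(i), and one must verify that the location centering $\hat a^*$ approximates $a_n$ at rate $o_P(h^{d/2})$ \emph{uniformly} in $P$. Once these two uniform pieces are in place, splitting $\mathcal P_0$ into the asymptotically nondegenerate part (handled by bootstrap quantile consistency) and the asymptotically degenerate part (handled by the $h^{d/2}\eta$ floor) yields the claim $\limsup_{n\to\infty}\sup_{P\in\mathcal P_0}P\{\hat\theta>c^*_{\alpha,\eta}\}\le\alpha$.
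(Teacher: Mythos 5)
Your proposal is correct and follows essentially the same route as the paper: Lemma 1 plus the null to bound $\hat\theta$ by the contact-set integral of $\hat s_{\tau,j}$, the sandwich $B_{n,A}(c_{n,L},c_{n,U})\subset\hat B_A(\hat c_n)$ to bound the bootstrap quantile from below, Poissonization with regularization and a uniform Berry--Esseen bound followed by de-Poissonization for the joint limits, and a case split between the nondegenerate regime (bootstrap quantile consistency) and the degenerate regime (the $h^{d/2}\eta+\hat a^*$ floor together with $\hat a^*-a_n=o_P(h^{d/2})$). The only cosmetic difference is that the paper implements your ``split of $\mathcal P_0$'' via a subsequence extraction along which $\sigma_{u_n}(c_{u_n,L},c_{u_n,U})$ converges, which is the standard way to make that dichotomy rigorous uniformly in $P$.
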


One might ask whether the bootstrap test $1\{\hat{\theta}>c_{\alpha, \eta
}^{\ast }\}$ can be asymptotically exact, i.e., whether the inequality in
Theorem \ref{Thm1} can hold as an equality. As we show below, the answer is
affirmative. The remaining issue is a precise formulation of a subset of $%
\mathcal{P}_{0}$ such that the rejection probability of the bootstrap test
achieves the level $\alpha $ asymptotically, uniformly over the subset.

To see when the test will have asymptotically exact size, we apply Lemma 1
to find that with probability approaching one,%
\begin{equation*}
\hat{\theta}=\sum_{A\in \mathcal{N}_{J}}\int_{B_{n,A}(c_{n,U},c_{n,L})}%
\Lambda _{A,p}\left( \mathbf{\hat{s}}_{\tau }(x)+\mathbf{u}_{n,\tau }(x;\hat{%
\sigma})\right) dQ(x,\tau ),
\end{equation*}%
where $\mathbf{\hat{s}}_{\tau }(x)\equiv \left[ r_{n,j}\{\hat{v}_{n,\tau
,j}(x)-v_{n,\tau ,j}(x)\}/\hat{\sigma}_{\tau ,j}(x)\right] _{j=1}^{J}$, and $%
\mathbf{u}_{n,\tau }(x;\hat{\sigma})\equiv \left[ r_{n,j}v_{n,\tau ,j}(x)/%
\hat{\sigma}_{\tau ,j}(x)\right] _{j=1}^{J},$ and $c_{n,U}>0$ and $c_{n,L}>0$
are nonstochastic sequences that satisfy Assumption A4(ii).
 We fix a positive sequence $q_{n}\rightarrow 0$, and write
the right hand side as 
\begin{eqnarray}
&&\sum_{A\in \mathcal{N}_{J}}\int_{B_{n,A}(q_{n})}\Lambda _{A,p}\left( 
\mathbf{\hat{s}}_{\tau }(x)+\mathbf{u}_{n,\tau }(x;\hat{\sigma})\right)
dQ(x,\tau )  \label{dec} \\
&&+\sum_{A\in \mathcal{N}_{J}}\int_{B_{n,A}(c_{n,U},c_{n,L})\backslash
B_{n,A}(q_{n})}\Lambda _{A,p}\left( \mathbf{\hat{s}}_{\tau }(x)+\mathbf{u}%
_{n,\tau }(x;\hat{\sigma})\right) dQ(x,\tau ).  \notag
\end{eqnarray}%
Under the null hypothesis, we have $v_{n,\tau ,j}(x)\leq 0$, and hence the
last sum is bounded by 
\begin{equation*}
\sum_{A\in \mathcal{N}_{J}}\int_{B_{n,A}(c_{n,U},c_{n,L})\backslash
B_{n,A}(q_{n})}\Lambda _{A,p}\left( \mathbf{\hat{s}}_{\tau }(x)\right)
dQ(x,\tau ),
\end{equation*}%
with probability approaching one. Using the uniform convergence rate in
Assumption A3, we find that as long as 
\begin{equation*}
Q(B_{n,A}(c_{n,U},c_{n,L})\backslash B_{n,A}(q_{n}))\rightarrow 0,
\end{equation*}%
fast enough, the second term in (\ref{dec}) vanishes in probability. As for
the first integral, since for all $x\in B_{n,A}(q_{n})$, we have $%
|r_{n,j}v_{n,\tau ,j}(x)/\hat{\sigma}_{\tau ,j}(x)|\leq q_{n}$ for all $j\in
A$, we use the Lipschitz continuity of the map $\Lambda _{A,p}$ on a compact
set, to approximate the leading sum in (\ref{dec}) by 
\begin{equation*}
\bar{\theta}_{1,n}(q_{n})\equiv \sum_{A\in \mathcal{N}_{J}}%
\int_{B_{n,A}(q_{n})}\Lambda _{A,p}\left( \mathbf{\hat{s}}_{\tau }(x)\right)
dQ(x,\tau ).
\end{equation*}%
Thus we let%
\begin{equation}
\mathcal{\tilde{P}}_{n}(\lambda _{n},q_{n})\equiv \left\{ P\in \mathcal{P}%
:Q\left( \dbigcup\limits_{A\in \mathcal{N}_{J}}B_{A,n}(c_{n,U},c_{n,L})%
\backslash B_{A,n}(q_{n})\right) \leq \lambda _{n}\right\} ,  \label{P}
\end{equation}%
and find that%
\begin{equation*}
\hat{\theta}=\bar{\theta}_{1,n}(q_{n})+o_{P}(h^{d/2}),\text{ }\mathcal{%
\tilde{P}}_{n}(\lambda _{n},q_{n})\cap \mathcal{P}_{0}\text{-uniformly,}
\end{equation*}%
as long as $\lambda _{n}$ and $q_{n}$ converge to zero fast enough. We will
specify the conditions in Theorem 2 below.

Let us deal with $\bar{\theta}_{1,n}(q_{n})$. First, it can be shown that
there are sequences of nonstochastic numbers $a_{n}(q_{n})\in \mathbf{R}$
and $\sigma _{n}(q_{n})>0$ that depend on $q_{n}$ such that%
\begin{equation}
h^{-d/2}\{\bar{\theta}_{1,n}(q_{n})-a_{n}(q_{n})\}/\sigma _{n}(q_{n})\overset%
{d}{\rightarrow }N(0,1),  \label{an}
\end{equation}%
if liminf$_{n\rightarrow \infty }\sigma _{n}(q_{n})>0$. We provide the
precise formulae for $\sigma _{n}(q_{n})$ and $a_{n}(q_{n})$ in Section \ref%
{sec:lpa2}. Since the distribution of $h^{-d/2}\{\bar{\theta}%
_{1,n}(q_{n})-a_{n}(q_{n})\}/\sigma _{n}(q_{n})$ is approximated by the
bootstrap distribution of $h^{-d/2}\{\hat{\theta}^{\ast
}-a_{n}(q_{n})\}/\sigma _{n}(q_{n})$ in large samples, we find that%
\begin{equation*}
\frac{h^{-d/2}\{c_{\alpha }^{\ast }-a_{n}(q_{n})\}}{\sigma _{n}(q_{n})}=\Phi
^{-1}(1-\alpha )+o_{P}(1).
\end{equation*}%
Hence the bootstrap critical value $c_{\alpha }^{\ast }$ will dominate $%
h^{-d/2}\eta +\hat{a}^{\ast }>0$, if for all $n\geq 1,$%
\begin{eqnarray*}
\Phi ^{-1}(1-\alpha ) &\geq &\frac{h^{-d/2}\{h^{d/2}\eta +\hat{a}^{\ast
}-a_{n}(q_{n})\}}{\sigma _{n}(q_{n})} \\
&=&\frac{\eta +h^{-d/2}\{\hat{a}^{\ast }-a_{n}(q_{n})\}}{\sigma _{n}(q_{n})}%
\text{.}
\end{eqnarray*}%
We can show that $\hat{a}^{\ast }-a_{n}(q_{n})=o_{P}(h^{d/2})$, which
follows if $\lambda _{n}$ in (\ref{P}) vanishes to zero sufficiently fast.
Hence if 
\begin{equation*}
\sigma _{n}(q_{n})\geq \eta /\Phi ^{-1}(1-\alpha ),
\end{equation*}%
we have $c_{\alpha }^{\ast }$ becomes approximately equal to our bootstrap
critical value $c_{\alpha ,\eta }^{\ast }.$ This leads to the following
formulation of probabilities.

\begin{defn}
Define%
\begin{equation*}
\mathcal{P}_{n}(\lambda _{n},q_{n})\equiv \left\{ P\in \mathcal{\tilde{P}}%
_{n}(\lambda _{n},q_{n}):\sigma _{n}(q_{n})\geq \eta /\Phi ^{-1}(1-\alpha
)\right\} ,
\end{equation*}%
where $\mathcal{\tilde{P}}_{n}(\lambda _{n},q_{n})$ is as defined in (\ref{P}%
).
\end{defn}

The following theorem establishes the asymptotic exactness of the size of
the bootstrap test over $P\in \mathcal{P}_{n}(\lambda _{n},q_{n})\cap 
\mathcal{P}_{0}$.

\begin{thm}
\label{Thm2} Suppose that Assumptions A\ref{assumption-A1}-A\ref%
{assumption-A6} and B\ref{assumption-B1}-B\ref{assumption-B4} hold. Let $%
\lambda _{n}\rightarrow 0$ and $q_{n}\rightarrow 0$ be positive sequences
such that%
\begin{eqnarray}
h^{-d/2}\left( \log n\right) ^{p/2}\lambda _{n} &\rightarrow &0\text{ and}
\label{lamq} \\
h^{-d/2}q_{n}\{(\log n)^{(p-1)/2}+q_{n}^{p-1}\} &\rightarrow &0\text{.} 
\notag
\end{eqnarray}%
Then%
\begin{equation*}
\underset{n\rightarrow \infty }{\mathrm{limsup}}\sup_{P\in \mathcal{P}%
_{n}(\lambda _{n},q_{n})\cap \mathcal{P}_{0}}\left\vert P\{\hat{\theta}%
>c_{\alpha ,\eta }^{\ast }\}-\alpha \right\vert =0.
\end{equation*}
\end{thm}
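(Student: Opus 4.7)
The plan is to sharpen the heuristic decomposition presented just before Theorem 2, and to show that on the subclass $\mathcal{P}_n(\lambda_n,q_n)\cap\mathcal{P}_0$ the statistic $\hat\theta$, its bootstrap analogue $\hat\theta^\ast$, and the mean $\hat a^\ast$ all concentrate, at the correct rate $h^{d/2}$, around a common nondegenerate Gaussian limit for which the floor $h^{d/2}\eta+\hat a^\ast$ becomes asymptotically inactive. First, I would apply Lemma 1 with $c_{n,1}=c_{n,U}$, $c_{n,2}=c_{n,L}$ to get the exact contact-set representation of $\hat\theta$, then split each integrand on $B_{n,A}(c_{n,U},c_{n,L})\setminus B_{n,A}(q_n)$ versus $B_{n,A}(q_n)$ as in equation~(\ref{dec}). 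On the complement, the null hypothesis lets me drop the negative drift $\mathbf u_{n,\tau}(x;\hat\sigma)$, and Assumption A3 bounds the integrand by $O_P((\log n)^{p/2})$ uniformly in $\mathcal{P}$; the $Q$-measure of that complement is at most $\lambda_n$ by the definition of $\tilde{\mathcal{P}}_n(\lambda_n,q_n)$, so the piece is $O_P(\lambda_n(\log n)^{p/2})=o_P(h^{d/2})$ by the first rate condition in~(\ref{lamq}). On $B_{n,A}(q_n)$, Lipschitz continuity of $\Lambda_{A,p}$ on the bounded set where $|\mathbf u_{n,\tau}(x;\hat\sigma)|\leq q_n$ together with the uniform bound $\|\hat{\mathbf s}_\tau(x)\|=O_P(\sqrt{\log n})$ yields a pointwise approximation error of order $q_n\{(\log n)^{(p-1)/2}+q_n^{p-1}\}$, which by the second rate condition in~(\ref{lamq}) is $o_P(h^{d/2})$ after integration. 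Hence $\hat\theta=\bar\theta_{1,n}(q_n)+o_P(h^{d/2})$, uniformly in $P\in\mathcal{P}_n(\lambda_n,q_n)\cap\mathcal{P}_0$.

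Next, I would invoke the Poissonization machinery underlying Assumption~A1 and Section~\ref{subsec:high-level} to produce nonstochastic sequences $a_n(q_n)$ and $\sigma_n(q_n)$ such that, $\mathcal{P}$-uniformly,
\begin{equation*}
h^{-d/2}\{\bar\theta_{1,n}(q_n)-a_n(q_n)\}/\sigma_n(q_n)\;\Rightarrow\;N(0,1),
\end{equation*}
whenever $\liminf_n\sigma_n(q_n)>0$. The regularization-plus-de-Poissonization argument already described for Theorem~1 applies verbatim here, the restriction of the integration domain to $B_{n,A}(q_n)$ only simplifying the variance computation. The same argument applied to the bootstrap statistic, using Assumptions~B1--B3 and the inclusion~(\ref{contact}), gives
\begin{equation*}
h^{-d/2}\{\hat\theta^\ast-a_n(q_n)\}/\sigma_n(q_n)\;\Rightarrow\;N(0,1) \quad\text{in }P^\ast,
\end{equation*}
$\mathcal{P}$-uniformly. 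Consequently $h^{-d/2}\{c_\alpha^\ast-a_n(q_n)\}/\sigma_n(q_n)=\Phi^{-1}(1-\alpha)+o_P(1)$ uniformly on $\mathcal{P}_n(\lambda_n,q_n)\cap\mathcal{P}_0$. A parallel Berry--Esseen bound of the type that already appears in the proof of Theorem~1, combined with $Q(B_{n,A}(c_{n,U},c_{n,L})\setminus B_{n,A}(q_n))\leq\lambda_n$ and the rate condition on $\lambda_n$, yields $\hat a^\ast-a_n(q_n)=o_P(h^{d/2})$, also uniformly.

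Finally I would use the explicit lower bound $\sigma_n(q_n)\geq\eta/\Phi^{-1}(1-\alpha)$ built into the definition of $\mathcal{P}_n(\lambda_n,q_n)$ to deduce that
\begin{equation*}
\frac{\eta+h^{-d/2}\{\hat a^\ast-a_n(q_n)\}}{\sigma_n(q_n)}\leq\Phi^{-1}(1-\alpha)+o_P(1),
\end{equation*}
so that $h^{d/2}\eta+\hat a^\ast\leq c_\alpha^\ast$ with probability tending to one, uniformly on the class; hence $c_{\alpha,\eta}^\ast=c_\alpha^\ast$ with probability approaching one. Combining this with the two distributional statements above and the approximation $\hat\theta=\bar\theta_{1,n}(q_n)+o_P(h^{d/2})$ gives
\begin{equation*}
P\{\hat\theta>c_{\alpha,\eta}^\ast\}=\Phi(-\Phi^{-1}(1-\alpha))+o(1)=\alpha+o(1),
\end{equation*}
uniformly over $P\in\mathcal{P}_n(\lambda_n,q_n)\cap\mathcal{P}_0$, which is the claim.

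The main obstacle is the uniform-in-$P$ Gaussian approximation for both $\bar\theta_{1,n}(q_n)$ and $\hat\theta^\ast$ when the contact-set geometry is allowed to vary with $P$ and $n$ and when the covariance structure of the integrand across $(x,\tau,j)$ is permitted to be locally or globally degenerate. This is where the Poissonization with regularization described at the end of Section~\ref{subsec:high-level} must be carried out carefully: one has to track the Berry--Esseen bound on the Poissonized statistic uniformly in $P$, let the regularization parameter vanish at a rate that dominates the resulting bias, and de-Poissonize, all while keeping the mean correction $a_n(q_n)$ close enough to $\hat a^\ast$ so that the $h^{d/2}\eta$ cushion never binds on the class $\mathcal{P}_n(\lambda_n,q_n)$. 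Everything else in the argument is a bookkeeping application of Lemma~1, Assumptions~A3, A5--A6, B2--B3, and the rate conditions~(\ref{lamq}).
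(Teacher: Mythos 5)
Your proposal is correct and follows essentially the same route as the paper's proof: the contact-set representation via Lemma 1 with the reversed ordering $(c_{n,U},c_{n,L})$, the split of each integral into $B_{n,A}(q_n)$ and its complement controlled by the two rate conditions in (\ref{lamq}), the reduction to $\bar\theta_n(q_n)$ and its bootstrap analogue, and the use of the lower bound $\sigma_n(q_n)\geq\eta/\Phi^{-1}(1-\alpha)$ together with $\hat a^\ast-a_n(q_n)=o_P(h^{d/2})$ to render the floor $h^{d/2}\eta+\hat a^\ast$ asymptotically inactive. The uniform Gaussian approximations you defer to the Poissonization-with-regularization machinery are exactly the content of the paper's Lemmas A1, A2, and A4.
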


Theorem \ref{Thm2} shows that the rejection probability of our bootstrap
test achieves exactly the level $\alpha $ uniformly over the set of
probabilities in $\mathcal{P}_{n}(\lambda _{n},q_{n})\cap \mathcal{P}_{0}$.
If $v_{n,\tau ,j}(x)\equiv 0$ for each $(x,\tau )$ and for each $j$ (the
least favorable case, say $P_{\mathrm{LFC}}$), then it is obvious that the
distribution $P_{\mathrm{LFC}}$ belongs to $\mathcal{P}_{n}(\lambda
_{n},q_{n})$ for any positive sequences $\lambda _{n}\rightarrow 0$ and $%
q_{n}\rightarrow 0$. This would be the only case of asymptotically exact
coverage if bootstrap critical values were obtained as in \eqref{LFC},
without contact set estimation. By estimating the contact sets and obtaining
a critical value based on them, Theorem \ref{Thm2} establishes the
asymptotically uniform exactness of the bootstrap test for distributions
such that they may not satisfy $v_{n,\tau ,j}(x)\equiv 0$ everywhere.

\subsection{Sufficient Conditions for Uniform Convergences in Assumptions A%
\protect\ref{assumption-A3} and B\protect\ref{assumption-B2}}

\label{sec:suff-A3-B2}

This subsection gives sufficient conditions that yield Assumptions A\ref%
{assumption-A3} and B\ref{assumption-B2}. The result is formalized in the
following lemma.

\begin{lem}
(i) Suppose that Assumptions A1-A2 hold and that for each $j\in \mathbb{N}%
_{J}$, there exist finite constants $C,\gamma _{j}>0,$ and a positive
sequence $\delta _{n,j}>0$ such that for all $n\geq 1,$ and all $(x_{1},\tau
_{1})\in \mathcal{S}$,%
\begin{equation}
\mathbf{E}\left[ \sup_{(x_{2},\tau _{2})\in \mathcal{S:}||x_{1}-x_{2}||+||%
\tau _{1}-\tau _{2}||\leq \lambda }\left( b_{n,ij}(x_{1},\tau
_{1})-b_{n,ij}(x_{2},\tau _{2})\right) ^{2}\right] \leq C\delta
_{n,j}^{2}\lambda ^{\gamma _{j}},\text{ for all }\lambda >0,  \label{Lp-Cont}
\end{equation}%
where $b_{n,ij}(x_{1},\tau _{1})\equiv \beta _{n,x_{1},\tau _{1},j}\left(
Y_{ij},(X_{i}-x_{1})/h\right) $ and $\limsup_{n\rightarrow \infty }\mathbf{E}%
[\sup_{(x,\tau )\in \mathcal{S}}b_{n,ij}^{4}(x,\tau )]\leq C$ and $\delta
_{n,j}=n^{s_{1,j}}$ and $h=n^{s_{2}}$ for some $s_{1,j},s_{2}\in \mathbf{R}$%
. Furthermore, assume that%
\begin{equation*}
n^{-1/2}h^{-d-\nu }\rightarrow 0,
\end{equation*}%
for some small $\nu >0$. Then, Assumption A\ref{assumption-A3} holds.\newline
(ii) Suppose further that Assumptions B\ref{assumption-B1} and B\ref%
{assumption-B3} hold. Then, Assumption B\ref{assumption-B2} holds.\bigskip
\end{lem}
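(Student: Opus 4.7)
The plan is to reduce both parts of the lemma to uniform deviation bounds for the centred kernel empirical processes $\hat g_{\tau,j}-\mathbf E\hat g_{\tau,j}$ and their bootstrap analogues, and then to control these by a Talagrand-type maximal inequality applied to the function class generated by the smoothed summands.

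For part (i), Assumption A1 gives the $\mathcal P$-uniform bound
\[
\sup_{(x,\tau)\in\mathcal S}\left|r_{n,j}\frac{\hat v_{\tau,j}(x)-v_{n,\tau,j}(x)}{\hat\sigma_{\tau,j}(x)}\right|\;\le\;\sqrt{nh^{d}}\sup_{(x,\tau)\in\mathcal S}\bigl|\hat g_{\tau,j}(x)-\mathbf E\hat g_{\tau,j}(x)\bigr|\;+\;o_P\!\bigl(\sqrt{h^{d}}\bigr),
\]
and since $\sqrt{h^{d}}=o(\sqrt{\log n})$ it suffices to establish
\[
\sup_{(x,\tau)\in\mathcal S}\bigl|\hat g_{\tau,j}(x)-\mathbf E\hat g_{\tau,j}(x)\bigr|=O_P\!\left(\sqrt{\tfrac{\log n}{nh^{d}}}\right),\quad \mathcal P\text{-uniformly}.
\]
I would view this as a concentration statement for the empirical process indexed by the class
\[
\mathcal F_{n,j}=\Bigl\{(y,z)\mapsto h^{-d}\beta_{n,x,\tau,j}\bigl(y,(z-x)/h\bigr):(x,\tau)\in\mathcal S\Bigr\}.
\]
Assumption A2 forces every member of $\mathcal F_{n,j}$ to vanish off a slice of $X$-probability $O(h^{d})$; together with the uniform fourth-moment bound $\limsup_n\mathbf E\sup_{(x,\tau)}b_{n,ij}^{4}\le C$ assumed in the lemma, this yields a $P$-uniform variance of order $h^{-d}$ and an envelope of order $h^{-d}$ for $\mathcal F_{n,j}$. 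The $L_2$-continuity hypothesis~(\ref{Lp-Cont}), with the polynomial scalings $\delta_{n,j}=n^{s_{1,j}}$ and $h=n^{s_2}$, produces a polynomial bracketing bound for $\mathcal F_{n,j}$. An Einmahl--Mason / Talagrand maximal inequality then delivers the $\sqrt{\log n/(nh^{d})}$ rate, with the bandwidth condition $n^{-1/2}h^{-d-\nu}\to 0$ being exactly what absorbs the Bernstein-type correction. This proves Assumption A3.

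Part (ii) follows the same blueprint with B1 in place of A1. Conditional on the sample, the bootstrap process $\hat g^{*}_{\tau,j}-\mathbf E^{*}\hat g^{*}_{\tau,j}$ is a normalized sum of i.i.d.\ draws from the empirical distribution; on a $\mathcal P$-uniformly high-probability event, the sample envelope and sample variance of $\mathcal F_{n,j}$ inherit the bounds established in part~(i) by applying that part to the appropriate squared class. The exchangeable-bootstrap version of Talagrand's inequality, applied conditionally on the data, then yields the same $O_{P^{*}}(\sqrt{\log n/(nh^{d})})$ rate, $\mathcal P$-uniformly. Combining this with B1 to linearize the bootstrap ratio and with B3 to replace $\hat\sigma^{*}_{\tau,j}$ proves Assumption B2.

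The main obstacle is maintaining $\mathcal P$-uniformity throughout. The class $\mathcal F_{n,j}$ depends on $P$ through the conditional law of $Y_{ij}$ given $X_i$, so one cannot simply invoke a fixed-class empirical-process bound. The key point is that the constants $C$ and $\gamma_j$ appearing in~(\ref{Lp-Cont}) and in the fourth-moment hypothesis are stipulated to be independent of $P\in\mathcal P$, which lets a single choice of bracketing-number and Bernstein constants in Talagrand's inequality work simultaneously for every $P\in\mathcal P$. For the bootstrap step the additional subtlety is to show that the empirical envelope and empirical variance of $\mathcal F_{n,j}$ replace their population counterparts at a $\mathcal P$-uniform rate, which itself is handled by another application of the argument in part~(i).
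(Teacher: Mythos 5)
Your overall architecture is the right one and close to the paper's: reduce to the centred kernel empirical process via Assumption A1, convert the local $L_2$-continuity condition \eqref{Lp-Cont} into a polynomial bracketing bound (exactly as the paper does, following the Chen--Linton--van Keilegom argument), apply a maximal inequality, and let the bandwidth condition absorb the Bernstein correction. For the bootstrap part you take a genuinely different route: the paper does not use a conditional exchangeable-bootstrap Talagrand inequality but instead applies Le~Cam's Poissonization lemma (Gin\'e--Zinn) to bound $\mathbf{E}[\mathbf{E}^{*}(\sup|\cdot|)]$ unconditionally by a Poissonized version of the original process, which is then handled by the part (i) argument. The Poissonization route is cleaner here precisely because it avoids having to control the random empirical envelope and empirical variance conditionally on the data, which is where your version would require extra work.

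There is, however, a concrete gap in your part (i): you assert that the uniform fourth-moment bound $\limsup_n \mathbf{E}[\sup_{(x,\tau)}b_{n,ij}^{4}(x,\tau)]\le C$ yields ``an envelope of order $h^{-d}$'' for $\mathcal F_{n,j}$. A moment bound is not an almost-sure bound, and the Talagrand/Einmahl--Mason and Massart inequalities you invoke require a uniformly bounded class. The class $\beta_{n,x,\tau,j}(Y_{ij},(X_i-x)/h)$ is not assumed bounded, so the maximal inequality cannot be applied directly. The paper closes this by truncating at $M_{n,j}=\sqrt{nh^{d}}/\sqrt{\log n}$, splitting $b_{n,ij}=b^{a}_{n,ij}+b^{b}_{n,ij}$ with $b^{a}_{n,ij}$ uniformly bounded by $M_{n,j}/2$; Massart's Theorem~6.8 is applied only to the truncated class, while the tail part is shown to be $o_P(\sqrt{\log n})$ by a Markov-type bound $\sqrt{n}(M_{n,j}/2)^{-3}\mathbf{E}[\sup b_{n,ij}^{4}]$, which is where the fourth-moment hypothesis and the condition $n^{-1/2}h^{-d-\nu}\to 0$ actually enter. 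Without this truncation step your argument as written does not go through; with it, the rest of your calculation (variance of order $h^{-d}$ per summand restricted to an $O(h^{d})$-probability slice, bracketing entropy from \eqref{Lp-Cont} with the polynomial scalings of $\delta_{n,j}$ and $h$) matches the paper's Step~2. The same envelope issue recurs in your bootstrap step and must be addressed there as well.
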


The condition (\ref{Lp-Cont}) is the local $L_{2}$-continuity condition for $%
\beta _{n,x,\tau ,j}\left( Y_{ij},(X_{i}-x)/h\right) $ in $(x,\tau )$. The
condition corresponds to what \citeasnoun{Andrews:04} called
\textquotedblleft Type IV class\textquotedblright . The condition is
satisfied by numerous maps that are continuous or discontinuous, as long as
regularity conditions for the random vector $(Y_{i},X_{i})$ are satisfied.%
\footnote{\citeasnoun[Theorem
3]{Chen/Linton/VanKeilegom:03} introduced its extension to functions indexed
partly by infinite dimensional parameters, and called it local uniform $%
L_{2} $-continuity. For further discussions, see \citeasnoun{Andrews:04} and %
\citeasnoun{Chen/Linton/VanKeilegom:03}.} Typically, $\delta _{n,j}$
diverges to infinity at a polynomial rate in $h^{-1}$. The constant $\gamma
_{j}$ is 2 or can be smaller than 2, depending on the smoothness of the
underlying function $b_{n,ij}(x,\tau )$. The value of $\gamma _{j}$ does not
affect the asymptotic theory of this paper, as long as it is strictly
positive.

\section{Power Properties}

\label{sec:power}

In this section, we consider the power properties of the bootstrap test. In
Section \ref{sec:consistency}, we establish the consistency of our test.
Section \ref{sec:lpa1} provides heuristic arguments behind local power
properties of our tests, and Section \ref{sec:lpa2} presents the local power
function in a general form.\footnote{%
The local power results in this section are more general than those of %
\citeasnoun{LSW}. In particular, the results accommodate a wider class of
local alternatives that may not converge to the least favorable case.}

\subsection{Consistency}

\label{sec:consistency}

First, to show consistency of our test, we make the following assumption.

\begin{AssumptionC}
For each $j\in \mathbb{N}_{J}$ and $(x,\tau )\in \mathcal{S}$, $v_{n,\tau
,j}(x)=v_{\tau ,j}(x)+o(1),$ and\textit{\ }%
\begin{equation}
\limsup_{n\rightarrow \infty }\sup_{(x,\tau )\in \mathcal{S}}|v_{n,\tau
,j}(x)|<\infty .  \label{dominance}
\end{equation}
\end{AssumptionC}

The pointwise convergence $v_{n,\tau ,j}(x)=v_{\tau ,j}(x)+o(1)$ holds
typically by an appropriate choice of $v_{n,\tau ,j}(x)$. In many examples,
condition (\ref{dominance}) is often implied by Assumptions A1-A6. If we
revisit the simple example considered in Section \ref{subsec:discontinuity},
it is straightforward to see that under Assumptions A1-A6, with the
subscript $\tau $ suppressed, $v_{n,1}(x)=v_{1}(x)+o(1)$, where $%
v_{n,1}(x)\equiv \mathbf{E}\hat{v}_{n,1}(x)$ and $v_{1}(x)\equiv \mathbf{E}%
(Y|X=x)f(x)$, and \eqref{dominance} holds easily.

We now establish the consistency of our proposed test as follows.

\begin{thm}
\label{Thm3} Suppose that Assumptions A1-A6, B\ref{assumption-B1}-B4, and C1
hold and that we are under a fixed alternative hypothesis such that 
\begin{equation*}
\int \Lambda _{p}\left( v_{\tau ,1}(x),\cdot \cdot \cdot ,v_{\tau
,J}(x)\right) dQ(x,\tau )>0.
\end{equation*}%
Then as $n\rightarrow \infty ,$ 
\begin{equation*}
P\{\hat{\theta}>c_{\alpha, \eta }^{\ast }\}\rightarrow 1.
\end{equation*}
\end{thm}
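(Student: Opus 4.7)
The plan is to compare the divergence rate of $\hat\theta$ with the growth rate of $c_{\alpha,\eta}^{\ast}$: I will show $\hat\theta$ is of order $r_n^p$ under the fixed alternative, whereas $c_{\alpha,\eta}^{\ast}$ is at most of order $(\log n)^{p/2}$. Since Assumption A\ref{assumption-A4}(i) guarantees $\sqrt{\log n}/r_n \to 0$, it follows that $\hat\theta/c_{\alpha,\eta}^{\ast}\to\infty$ in probability, whence $P\{\hat\theta>c_{\alpha,\eta}^{\ast}\}\to 1$.

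For the lower bound on $\hat\theta$, I would argue as follows. Because $\int\Lambda_p(v_{\tau,1}(x),\ldots,v_{\tau,J}(x))\,dQ(x,\tau)>0$ and $\Lambda_p\geq 0$, there exist $\delta>0$ and a measurable set $E\subset\mathcal{S}$ with $Q(E)>0$ on which $\max_j v_{\tau,j}(x)\geq\delta$. By Assumption C1 and the uniform dominance (\ref{dominance}), $v_{n,\tau,j}(x)\to v_{\tau,j}(x)$ pointwise with a uniform envelope, so either Egorov's theorem or dominated convergence applied to $\int\Lambda_p(v_{n,\tau,1},\ldots,v_{n,\tau,J})\,dQ$ yields a subset $E_0\subseteq E$ with $Q(E_0)>0$ on which $v_{n,\tau,j^{\ast}(x,\tau)}(x)\geq\delta/2$ for all large $n$, where $j^{\ast}(x,\tau)$ denotes the maximizing index. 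Assumption A\ref{assumption-A3} combined with A\ref{assumption-A5} controls $\hat v_{\tau,j}-v_{n,\tau,j}$ uniformly at rate $O_P(r_{n,j}^{-1}\sqrt{\log n})$ and makes $\hat\sigma_{\tau,j}$ close to the bounded, strictly positive $\sigma_{n,\tau,j}$; hence on $E_0$ we have $\hat u_{\tau,j^{\ast}}(x)\geq c\,r_{n,j^{\ast}}$ for some $c>0$ with probability approaching one. Since $r_{n,j^{\ast}}\geq r_n$, integrating gives $\hat\theta\geq c'\,r_n^p$ for some $c'>0$ with probability approaching one.

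For the upper bound on the critical value, Assumption B\ref{assumption-B2} provides $\sup_{(x,\tau),j}|\hat s_{\tau,j}^{\ast}(x)|=O_{P^{\ast}}(\sqrt{\log n})$, $\mathcal{P}$-uniformly. Since each contact set $\hat B_A(\hat c_n)$ is contained in the bounded region $\mathcal{S}$, the integrand in $\hat\theta^{\ast}$ is bounded above by a fixed constant times $(\log n)^{p/2}$, giving $\hat\theta^{\ast}=O_{P^{\ast}}((\log n)^{p/2})$. Consequently, both the bootstrap quantile $c_\alpha^{\ast}$ and the bootstrap mean $\hat a^{\ast}$ are $O_P((\log n)^{p/2})$, and together with $h^{d/2}\eta=o(1)$ we obtain $c_{\alpha,\eta}^{\ast}=O_P((\log n)^{p/2})$. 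Combining with the previous lower bound and the rate condition $r_n^p/(\log n)^{p/2}\to\infty$ then yields the consistency claim.

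The main obstacle lies in the lower-bound step: the pointwise hypothesis in Assumption C1 must be upgraded into a uniform lower bound on a set of positive $Q$-measure before one can absorb the stochastic fluctuations of order $\sqrt{\log n}/r_{n,j}$ without losing positivity of the signal $v_{n,\tau,j}$. Egorov's theorem applied inside $E$, supported by the dominance (\ref{dominance}), delivers exactly such an upgrade; alternatively, one may work directly with the convergence $\int\Lambda_p(v_{n,\tau,1},\ldots,v_{n,\tau,J})\,dQ\to\int\Lambda_p(v_{\tau,1},\ldots,v_{\tau,J})\,dQ>0$ and extract the needed set via sublevel-set arguments on the finite-$n$ integrand. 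The remaining steps are routine given the uniform rates already established in Assumptions A\ref{assumption-A3} and B\ref{assumption-B2}.
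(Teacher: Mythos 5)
Your proposal is correct in outline and reaches the conclusion by the same two-step strategy as the paper (make $\hat\theta$ diverge, make $c_{\alpha,\eta}^{\ast}$ grow slowly), but the implementation of both steps differs. For the lower bound the paper does not localize to a positive-measure set at all: it applies the convexity inequality $2f(b/2)\leq f(a+b)+f(-a)$ with $f=\Lambda_p$ to get
\begin{equation*}
\hat{\theta}\;\geq\;\frac{1}{2^{p-1}}\int \Lambda _{p}\left( \mathbf{u}_{\tau }(x;\hat{\sigma})\right) dQ(x,\tau )-\int \Lambda _{p}\left( -\mathbf{\hat{s}}_{\tau}(x)\right) dQ(x,\tau ),
\end{equation*}
bounds the second term by $O_P((\log n)^{p/2})$ via Assumption A3, and shows the first term is $\min_j r_{n,j}^p$ times a quantity with positive liminf by Assumption C1 and dominated convergence. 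Your Egorov-based extraction of a set $E_0$ of positive $Q$-measure where $v_{n,\tau,j^\ast}\geq\delta/2$ uniformly is a valid alternative and arguably more transparent; the paper's route avoids having to track the maximizing index $j^\ast(x,\tau)$ and the measurability of the sublevel sets, at the cost of the slightly less intuitive convexity trick. Both arguments implicitly use that $\sigma_{n,\tau,j}$ is bounded above (not just away from zero), so you are no worse off than the paper there. For the critical value, the paper obtains the sharper bound $c_\alpha^{\ast}=a_n+O_P(h^{d/2})=O_P(1)$ from the bootstrap CLT machinery (Lemmas A1 and A2(ii)), whereas your cruder $O_P((\log n)^{p/2})$ bound from Assumption B2 suffices because $\sqrt{\log n}/r_n\to 0$.

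One step you should tighten: you infer $\hat a^{\ast}=O_P((\log n)^{p/2})$ from $\hat\theta^{\ast}=O_{P^{\ast}}((\log n)^{p/2})$, but $\hat a^{\ast}=\mathbf{E}^{\ast}\hat\theta^{\ast}$ is a bootstrap \emph{expectation}, and an in-probability rate for $\hat\theta^{\ast}$ does not by itself bound its bootstrap mean. You need a bootstrap moment bound on $\|\hat{\mathbf{s}}^{\ast}_\tau(x)\|^p$ (available from Assumption A6(i) via the machinery behind Lemma A1, which gives $\hat a^{\ast}=O_P(1)$ directly), or else pass through $\mathbf{E}^{\ast}\Lambda_{A,p}(\hat{\mathbf{s}}^{\ast}_\tau(x))\leq C\,\mathbf{E}^{\ast}\|\hat{\mathbf{s}}^{\ast}_\tau(x)\|^{p}$ and control that expectation explicitly. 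With that patch the argument goes through.
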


\subsection{Local Power Analysis: Definitions and Heuristics}

\label{sec:lpa1}

In this section, we investigate the local power properties of our test. For
local power analysis, we formally define the space of Pitman directions. Let 
$\mathcal{D}$ be the collection of $\mathbf{R}^{J}$-valued bounded functions
on $\mathcal{X}\times \mathcal{T}$ such that for each $\delta =(\delta
_{1},\cdot \cdot \cdot ,\delta _{J})\in \mathcal{D}$, $Q\{(x,\tau) \in 
\mathcal{S}: \delta_j(x,\tau) \neq 0 \} > 0$ for some $j=1,\dots,J$. That
is, at least one of the components of any $\delta \in \mathcal{D}$ is a
non-zero function a.e. For each $\delta =(\delta _{1},\cdot \cdot \cdot
,\delta _{J})\in \mathcal{D}$, we write $\delta _{\tau ,j}(x)=\delta
_{j}(x,\tau )$, $j=1,\cdot \cdot \cdot ,J$.

For a given vector of sequences $b_{n}=(b_{n,1},\cdot \cdot \cdot ,b_{n,J})$%
, such that $b_{n,j}\rightarrow \infty $, and $\delta \in \mathcal{D},$ we
consider the following type of local alternatives:

\begin{equation}
H_{\delta }:v_{\tau ,j}(x)=v_{\tau ,j}^{0}(x)+\frac{\delta _{\tau ,j}(x)}{%
b_{n,j}},\ \text{for all }j\in \mathbb{N}_{J},  \label{la}
\end{equation}%
where $v_{\tau ,j}^{0}(x)\leq 0$ for all $(x,\tau ,j)\in \mathcal{X}\times 
\mathcal{T}\times \mathbb{N}_{J}$, $\delta _{\tau ,j}(x)>0$ for some $%
(x,\tau ,j)\in \mathcal{X}\times \mathcal{T}\times \mathbb{N}_{J}$ such that 
$v_{\tau ,j}(x)>0$ for some $(x,\tau ,j)\in \mathcal{X}\times \mathcal{T}%
\times \mathbb{N}_{J}$. Note that in \eqref{la}, $v_{\tau ,j}(x)$ is a
sequence of Pitman local alternatives that consist of three components: $%
v_{\tau ,j}^{0}(x)$, ${b}_{n}$, and ${\delta _{\tau ,j}(x)}$.

The first component $v_{\tau ,j}^{0}(x)$ determines where the sequence of
local alternatives converges to. For example, if $v_{\tau ,j}^{0}(x)\equiv 0$
for all $(x,\tau ,j)$, then we have a sequence of local alternatives that
converges to the least favorable case. We allow for negative values for $%
v_{\tau ,j}^{0}(x)$, so that we include the local alternatives that do not
converge to the least favorable case as well.

From here on, we assume the local alternative hypotheses of the form in (\ref%
{la}). We fix $v_{\tau ,j}^{0}(x)$ and identify each local alternative with
a pair $(b_{n},\delta )$ for each Pitman direction $\delta \in \mathcal{D}$.
The following definitions are useful to explain our local power results.

\begin{defn}
(i) Given a Pitman direction $\delta \in \mathcal{D}$, we say that an $%
\alpha $-level test, $1\{T>c_{\alpha }\},$ \textit{has nontrivial local
power against} $(b_{n},\delta )$, if under the local alternatives $%
(b_{n},\delta )$,%
\begin{equation*}
\text{liminf}_{n\rightarrow \infty }\ P\left\{ T>c_{\alpha }\right\} >\alpha,
\end{equation*}%
and say that the test has \textit{trivial local power against} $%
(b_{n},\delta )$, if under the local alternatives $(b_{n},\delta )$,%
\begin{equation*}
\text{limsup}_{n\rightarrow \infty }\ P\left\{ T>c_{\alpha }\right\} \leq
\alpha.
\end{equation*}
(ii) Given a collection $\mathcal{D}$, we say that a test \textit{has
convergence rate} $b_{n}$ \textit{against} $\mathcal{D}$, if the test has
nontrivial local power against $(b_{n},\delta )$ for some $\delta \in 
\mathcal{D}$, and has trivial local power against $(b_{n}^{\prime },\delta )$
for all $\delta \in \mathcal{D}$ and all $b_{n}^{\prime }$ such that $%
b_{n,j}^{\prime }/b_{n,j}\rightarrow \infty $ as $n\rightarrow \infty $, for
all $j=1,\ldots,J$.
\end{defn}

One of the remarkable aspects of the local power properties is that our test
has \textit{two types of convergence rates}. More specifically, there exists
a partition $(\mathcal{D}_{1},\mathcal{D}_{2})$ of $\mathcal{D}$, where our
test has a rate $b_{n}$ against $\mathcal{D}_{1}$ and another rate $%
b_{n}^{\prime }$ against $\mathcal{D}_{2}$. Furthermore, in many
nonparametric inequality testing environments, the faster of the two rates $%
b_{n}$ and $b_{n}^{\prime }$ achieves the parametric rate of $\sqrt{n}$.

To see this closely, let us assume the set-up of testing inequality
restrictions on a mean regression function in Section 2.4, and consider the
following local alternatives:%
\begin{equation}
v_{n,1}(x)=v_{0}(x)+\frac{\delta (x)}{b_{n}},  \label{la-e}
\end{equation}%
where $v_{0}(x)\leq 0$ for all $x\in \mathcal{X}$, and $\delta \in \mathcal{D%
}$.

First, we set $b_{n}=\sqrt{n}$. Then under this local alternative hypothesis 
$(b_{n},\delta )$, we can verify that with probability approaching one,%
\begin{equation}
h^{-1/2}(\hat{\theta}-a_{n,0})=h^{-1/2}\left\{ \int_{B_{n}^{0}(c_{n})}\left[
Z_{n,1}(x)+\frac{\sqrt{nh}v_{0}(x)}{\hat{\sigma}_{1}(x)}+\frac{h^{1/2}\delta
(x)}{\hat{\sigma}_{1}(x)}\right] _{+}dx-a_{n,0}\right\} ,  \label{dec59}
\end{equation}%
where $Z_{n,1}(x)=\sqrt{nh}\left\{ {\hat{v}_{1}(x)-v_{n,1}(x)}\right\}/{\hat{%
\sigma}_{1}(x)} $, $B_{n}^{0}(c_{n})=\left\{ x\in \mathcal{X}:\left\vert 
\sqrt{nh}v_{0}(x)\right\vert \leq c_{n}\right\} ,$ $c_{n}\rightarrow \infty $%
, $\sqrt{\log n}/c_{n}\rightarrow 0$, and 
\begin{equation*}
a_{n,0}=\mathbf{E}\left[ \int_{B_{n}^{0}(c_{n})}\left[ Z_{n,1}(x)\right]
_{+}dx\right] .
\end{equation*}%
Under regularity conditions, the right-hand side of (\ref{dec59}) is
approximated by 
\begin{equation}
h^{-1/2}\left\{ \int_{B^{0}(0)}\left[ Z_{n,1}(x)+\frac{h^{1/2}\delta (x)}{%
\sigma _{1}(x)}\right] _{+}dx-a_{n,\delta }\right\} +h^{-1/2}\left\{
a_{n,\delta }-a_{n,0}\right\} ,  \label{decomp22}
\end{equation}%
where $B^{0}(0)=\{x\in \mathcal{X}:v_{0}(x)=0\}$ and 
\begin{equation*}
a_{n,\delta }=\mathbf{E}\left[ \int_{B^{0}(0)}\left[ Z_{n,1}(x)+\frac{%
h^{1/2}\delta (x)}{\sigma _{1}(x)}\right] _{+}dx\right] .
\end{equation*}%
The leading term in (\ref{decomp22}) converges in distribution to $\mathbb{Z}%
_{1}\sim N(0,\sigma _{0}^{2})$ precisely as in (\ref{as}). Furthermore, we
can show that 
\begin{eqnarray*}
a_{n,\delta } &=&\int_{B^{0}(0)}\mathbf{E}\left[ \mathbb{Z}_{1}+\frac{%
h^{1/2}\delta (x)}{\sigma _{1}(x)}\right] _{+}dx+o(h^{1/2})\text{ and\ } \\
a_{n,0} &=&\int_{B^{0}(0)}\mathbf{E}\left[ \mathbb{Z}_{1}\right]
_{+}dx+o(h^{1/2}).
\end{eqnarray*}%
Therefore, as for the last term in (\ref{decomp22}), we find that%
\begin{eqnarray*}
h^{-1/2}\left\{ a_{n,\delta }-a_{n,0}\right\}
&=&\int_{B^{0}(0)}h^{-1/2}\left( \mathbf{E}\left[ \mathbb{Z}_{1}+\frac{%
h^{1/2}\delta (x)}{\sigma _{1}(x)}\right] _{+}-\mathbf{E}\left[ \mathbb{Z}%
_{1}\right] _{+}\right) dx \\
&=&2\phi (0)\int_{B^{0}(0)}\frac{\delta (x)}{\sigma _{1}(x)}dx+o(1),
\end{eqnarray*}%
where the last equality follows from expanding $h^{-1/2}\left\{ \mathbf{E}%
\left[ \mathbb{Z}_{1}+h^{1/2}\delta (x)/\sigma _{1}(x)\right] _{+}-\mathbf{E}%
\left[ \mathbb{Z}_{1}\right] _{+}\right\} $. We conclude that under the
local alternatives, we have%
\begin{equation*}
h^{-1/2}(\hat{\theta}-a_{n,0}) \ \ \rightarrow _{d}\ \ \mathbb{Z}_{1}+2\phi
(0)\int_{B^{0}(0)}\frac{\delta (x)}{\sigma _{1}(x)}dx.
\end{equation*}%
The magnitude of the last term in the limit determines the local power of
the test. Thus under Pitman local alternatives such that 
\begin{equation}
\int_{B^{0}(0)}\frac{\delta (x)}{\sigma _{1}(x)}dx>0,  \label{delta_int}
\end{equation}%
the test has nontrivial power against $\sqrt{n}$-converging Pitman local
alternatives. Note that the integral in \eqref{delta_int} is defined on the
population contact set $B^{0}(0)$. Thus, the test has nontrivial power,
unless the contact set has Lebesgue measure zero or $\delta (\cdot )$ is
\textquotedblleft too often negative\textquotedblright\ on the contact set.

When the integral in (\ref{delta_int}) is zero, we consider the local
alternatives $(b_{n},\delta )\ $with a slower convergence rate $%
b_{n}=n^{1/2}h^{1/4}$. Following similar arguments as before, we now have 
\begin{equation*}
h^{-1/2}(\hat{\theta}-a_{n,0}) \rightarrow _{d}\mathbb{Z}_{1}+\text{lim}%
_{n\rightarrow \infty }h^{-1/2}\left\{ \bar{a}_{n,\delta }-a_{n,0}\right\} ,
\end{equation*}%
where%
\begin{equation*}
\bar{a}_{n,\delta }=\int_{B^{0}(0)}\mathbf{E}\left[ Z_{n,1}(x)+\frac{%
h^{1/4}\delta (x)}{\sigma _{1}(x)}\right] _{+}dx,
\end{equation*}%
which can be shown again to be equal to 
\begin{align*}
\int_{B^{0}(0)} \mathbf{E}\left[ \mathbb{Z}_{1}+\frac{h^{1/4}\delta (x)}{%
\sigma _{1}(x)}\right] _{+}\ dx + o( h^{1/2}).
\end{align*}
However, observe that%
\begin{eqnarray*}
&&h^{-1/2}\int_{B^{0}(0)}\left\{ \mathbf{E}\left[ \mathbb{Z}_{1}+\frac{%
h^{1/4}\delta (x)}{\sigma _{1}(x)}\right] _{+}-\mathbf{E}\left[ \mathbb{Z}%
_{1}\right] _{+}\right\} dx \\
&=&h^{-1/4}2\phi (0)\int_{B^{0}(0)}\frac{\delta (x)}{\sigma _{1}(x)}dx+\frac{%
1}{2}\int_{B^{0}(0)}\frac{\delta ^{2}(x)}{\sigma _{1}^{2}(x)}dx+o(1) \\
&=&\frac{1}{2}\int_{B^{0}(0)}\frac{\delta ^{2}(x)}{\sigma _{1}^{2}(x)}%
dx+o(1),
\end{eqnarray*}%
because $\int_{B^{0}(0)}\{\delta (x)/\sigma _{1}(x)\}dx=0$. We find that
under the local alternative hypothesis in (\ref{la-e}) with $%
b_{n}=n^{1/2}h^{1/4}$,%
\begin{equation*}
h^{-1/2}(\hat{\theta}-a_{n,0}) \ \ \ \rightarrow _{d}\ \ \ \mathbb{Z}_{1}+%
\frac{1}{2}\int_{B^{0}(0)}\frac{\delta ^{2}(x)}{\sigma _{1}^{2}(x)}dx.
\end{equation*}%
Therefore, even when $\int_{B^{0}(0)}\{\delta (x)/\sigma _{1}(x)\}dx=0$, the
test still has nontrivial power against $n^{1/2}h^{1/4}$-converging Pitman
local alternatives, if the Pitman directions are such that 
\begin{equation*}
\int_{B^{0}(0)}\{\delta ^{2}(x)/\sigma _{1}^{2}(x)\}dx>0.
\end{equation*}%
Now let us consider the partition $(\mathcal{D}_{1},\mathcal{D}_{2})$ of $%
\mathcal{D}$, where%
\begin{eqnarray*}
\mathcal{D}_{1} &=&\left\{ \delta \in \mathcal{D}:\int_{B^{0}(0)}\delta
(x)/\sigma _{1}(x)dx\neq 0\right\} \text{ and} \\
\mathcal{D}_{2} &=&\left\{ \delta \in \mathcal{D}:\int_{B^{0}(0)}\delta
(x)/\sigma _{1}(x)dx=0\text{ and }\int_{B^{0}(0)}\{\delta ^{2}(x)/\sigma
_{1}^{2}(x)\}dx>0\right\} .
\end{eqnarray*}%
When inf$_{x\in \mathcal{X}}\sigma _{1}^{2}(x)>c>0$ for some $c>0$ (recall
Assumption A5) and $Q(B^{0}(0))>0$, we have $\int_{B^{0}(0)}\{\delta
^{2}(x)/\sigma _{1}^{2}(x)\}dx>0$ and the set $\{\mathcal{D}_{1},\mathcal{D}%
_{2}\}$ becomes a partition of $\mathcal{D}$. Thus the bootstrap test has a
convergence rate of $\sqrt{n}$ against $\mathcal{D}_{1}$ and $n^{1/2}h^{1/4}$%
-rate against $\mathcal{D}_{2}$. In the next section, Corollary \ref%
{partition-cor} provides a general result of this phenomenon of dual
convergence rates of our bootstrap test.

\subsection{Local Power Analysis: Results}

\label{sec:lpa2}

We now provide general local power functions explicitly. We first present
explicit forms of location and scale normalizers, $a_{n}(q_{n})$ and $\sigma
_{n}(q_{n})$ in (\ref{an}). Let for $j,k\in \mathbb{N}_{J},$ and $\tau
_{1},\tau _{2}\in \mathcal{T}$,%
\begin{equation}
\rho _{n,\tau _{1},\tau _{2},j,k}(x,u)\equiv \frac{1}{h^{d}}\mathbf{E}\left[
\beta _{n,x,\tau _{1},j}\left( Y_{ij},\frac{X_{i}-x}{h}\right) \beta
_{n,x,\tau _{2},k}\left( Y_{ik},\frac{X_{i}-x}{h}+u\right) \right] .
\label{rho}
\end{equation}%
This function approximates the asymptotic covariance between $\sqrt{n}(\hat{v%
}_{\tau ,j}(x)-v_{n,\tau ,j}(x))/\hat{\sigma}_{\tau ,j}(x)$ and $\sqrt{n}(%
\hat{v}_{\tau ,j}(x+uh)-v_{n,\tau ,j}(x+uh))/\hat{\sigma}_{\tau ,j}(x)$. We
define $\Sigma _{n,\tau _{1},\tau _{2}}(x,u)$ to be the $J$-dimensional
square matrix with $(j,k)$-th entry given by $\rho _{n,\tau _{1},\tau
_{2},j,k}(x,u)$.

Define for $\mathbf{v}\in \mathbf{R}^{J}$,%
\begin{equation*}
\bar{\Lambda}_{x,\tau }(\mathbf{v})\equiv \sum_{A\in \mathcal{N}_{J}}\Lambda
_{A,p}(\mathbf{v})1\left\{ (x,\tau )\in B_{n,A}(q_{n})\right\} .
\end{equation*}%
Then we define%
\begin{equation*}
a_{n}(q_{n})\equiv \int_{\mathcal{X}}\int_{\mathcal{T}}\mathbf{E}\left[ \bar{%
\Lambda}_{x,\tau _{1}}(\mathbb{W}_{n,\tau ,\tau }^{(1)}(x,0))\right] d\tau
dx,
\end{equation*}%
and%
\begin{equation}
\sigma _{n}^{2}(q_{n})\equiv \int_{\mathcal{U}}\int_{\mathcal{X}}\int_{%
\mathcal{T}}\int_{\mathcal{T}}C_{n,\tau _{1},\tau _{2}}(x,u)d\tau _{1}d\tau
_{2}dxdu,  \label{sig_A}
\end{equation}%
where%
\begin{equation*}
C_{n,\tau _{1},\tau _{2}}(x,u)\equiv Cov\left( \bar{\Lambda}_{x,\tau _{1}}(%
\mathbb{W}_{n,\tau _{1},\tau _{2}}^{(1)}(x,u)),\bar{\Lambda}_{x,\tau _{2}}(%
\mathbb{W}_{n,\tau _{1},\tau _{2}}^{(2)}(x,u))\right) ,
\end{equation*}%
and $[\mathbb{W}_{n,\tau _{1},\tau _{2}}^{(1)}(x,u)^{\top },\mathbb{W}%
_{n,\tau _{1},\tau _{2}}^{(2)}(x,u)^{\top }]^{\top }$ is a mean zero $%
\mathbf{R}^{2J}$-valued Gaussian random vector whose covariance matrix is
given by%
\begin{equation}
\left[ 
\begin{array}{c}
\Sigma _{n,\tau _{1},\tau _{1}}(x,0) \\ 
\Sigma _{n,\tau _{1},\tau _{2}}(x,u)^{\top }%
\end{array}%
\begin{array}{c}
\Sigma _{n,\tau _{1},\tau _{2}}(x,u) \\ 
\Sigma _{n,\tau _{2},\tau _{2}}(x+uh,0)%
\end{array}%
\right] .  \label{cov}
\end{equation}%
The multiple integral in (\ref{sig_A}) is nonnegative.

The limit of the quantity $\sigma _{n}^{2}(q_{n})$ as $n\rightarrow \infty ,$
if it is positive, is nothing but the asymptotic variance of the test
statistic $\hat{\theta}$ (after location-scale normalization). Not
surprisingly the asymptotic variance does not depend on points $(x,\tau )$
of $\mathcal{X}\times \mathcal{T}$ such that $v_{n,\tau ,j}(x)/\sigma
_{n,\tau ,j}(x)$ is away below zero, as is expressed through its dependence
on the contact sets $B_{n,A}(q_{n})$ and the \textquotedblleft truncated
map\textquotedblright\ $\bar{\Lambda}_{x,\tau }(\cdot )$ involving $A$'s
restricted to $\mathcal{N}_{J}$.

We first make the following assumptions.

\begin{AssumptionC}
(i) For each $(\tau ,j)\in \mathcal{T}\times \mathbb{N}_{J}$, there exists a
map $v_{n,\tau ,j}^{0}:\mathbf{R}^{d}\rightarrow \mathbf{R}$ such that for
each $x\in \mathcal{S}_{\tau }(\varepsilon _{1}),$ $v_{n,\tau ,j}^{0}(x)\leq
0$, and%
\begin{equation}
v_{n,\tau ,j}(x)=v_{n,\tau ,j}^{0}(x)+\frac{\delta _{\tau ,j}(x)}{b_{n,j}}%
\left( 1+o(1)\right) ,  \label{la4}
\end{equation}%
where $o(1)$ is uniform in $x\in \mathcal{S}_{\tau }$ and in $\tau \in 
\mathcal{T}$, as $n\rightarrow \infty $ and $b_{n,j}\rightarrow \infty $ is
the positive sequence in (\ref{la}).\newline
(ii) $\sup_{(x,\tau )\in \mathcal{S}}|\sigma _{n,\tau ,j}(x)-\sigma _{\tau
,j}(x)|=o(1)$, as $n\rightarrow \infty $, for some function $\sigma _{\tau
,j}(x)$ such that $\inf_{(x,\tau )\in \mathcal{S}}\sigma _{\tau ,j}(x)>0$.
\end{AssumptionC}

Assumption C2 can also be shown to hold in many examples. When appropriate
smoothness conditions for $v_{\tau ,j}(x)$ hold and a suitable (possibly
higher-order) kernel function is used, we can take $v_{n,\tau ,j}(x)$ in
Assumption A1 to be identical to $v_{\tau ,j}(x)$, and hence Assumption C2
is implied by (\ref{la}). For the simple example in Section 2.4, if we take $%
v_{n,j}(x)=\mathbf{E}\hat{v}_{j}(x)$, it follows that $%
v_{n,j}(x)=v_{n,j}^{0}(x)+b_{n,j}^{-1}\int \delta _{j}(x+zh)K(z)dz$, with $%
v_{n,j}^{0}(x)=\int v_{j}^{0}(x+zh)K(z)dz$. Hence when $\delta _{j}(x)$ is
uniformly continuous in $x$, we obtain Assumption C2.

The local asymptotic power function is based on the asymptotic normal
approximation of the distribution of $\hat{\theta}$ (after scale and
location normalization) under the local alternatives. For this purpose, we
define the sequence of probability sets that admit the normal approximation
under local alternatives. For $c_{1},c_{2}>0,$ let $B_{n}^{0}(c_{1},c_{2})$
and $B_{n,A}^{0}(c_{1},c_{2})$ denote $B_{n}(c_{1},c_{2})$ and $%
B_{n,A}(c_{1},c_{2})$ except that $v_{n,\tau ,j}(x)$'s are replaced by $%
v_{n,\tau ,j}^{0}(x)$'s in Assumption C2. As before, we write $B_n^{0}(c)
\equiv B_{n}^{0}(c,c)$.

\begin{defn}
For any positive sequence $\lambda _{n}\rightarrow 0,\ $define%
\begin{equation*}
\mathcal{P}_{n}^{0}(\lambda _{n})\equiv \left\{ P\in \mathcal{\tilde{P}}%
_{n}^{0}(\lambda _{n}):\sigma _{n}^{2}(0)\geq \eta /\Phi ^{-1}(1-\alpha
)\right\} ,
\end{equation*}%
where $\mathcal{\tilde{P}}_{n}^{0}(\lambda _{n})$ is equal to $\mathcal{%
\tilde{P}}_{n}(\lambda _{n},q_{n})$ except that $B_{n,A}(c_{n,U},c_{n,L})$
and $B_{n,A}(q_{n})$ are replaced by $B_{n,A}^{0}(c_{n,U},c_{n,L})$ and $%
B_{n,A}^{0}(q_{n})$ for all $A\in \mathcal{N}_{J}$, and $q_{n}$ is set to be
zero.
\end{defn}

To give a general form of the local power function, let us define $\psi
_{n,A,\tau }(\cdot ;x):\mathbf{R}^{J}\mathbf{\rightarrow \lbrack }0,\infty
), $ $(x,\tau )\in \mathcal{X}\times \mathcal{T}$ and $A\subset \mathbb{N}%
_{J}$, as%
\begin{equation*}
\psi _{n,A,\tau }(\mathbf{y};x)=\frac{1}{\sigma _{n}(0)}\mathbf{E}\left[
\Lambda _{A,p}\left( \mathbb{W}_{n,\tau ,\tau }^{(1)}(x,0)+\mathbf{y}\right) %
\right] \cdot 1\left\{ (x,\tau )\in B_{n,A}^{0}(0)\right\} .
\end{equation*}%
The local power properties of the bootstrap test are mainly determined by
the slope and the curvature of this function. So, we define%
\begin{equation}
\psi _{n,A,\tau }^{(1)}(\mathbf{y};x)\equiv \frac{\partial }{\partial 
\mathbf{y}}\psi _{n,A,\tau }(\mathbf{y};x)\text{ and }\psi _{n,A,\tau
}^{(2)}(\mathbf{y};x)\equiv \frac{\partial ^{2}}{\partial \mathbf{y}\partial 
\mathbf{y}^{\top }}\psi _{n,A,\tau }(\mathbf{y};x),  \label{psi1}
\end{equation}%
if the first derivatives and the second derivatives in the definition exist
respectively.

\begin{AssumptionC}
(i) There exists $\varepsilon _{1}>0$ such that for all $(\tau ,A)\in 
\mathcal{T}\times \mathcal{N}_{J}$ and all $x$\ in the interior of $\mathcal{%
S}_{\tau }(\varepsilon _{1})$, $\psi _{n,A,\tau }^{(1)}(\mathbf{0};x)$
exists for all $n\geq 1$ and%
\begin{equation*}
\psi _{A,\tau }^{(1)}(\mathbf{0};x)\equiv \lim_{n\rightarrow \infty }\psi
_{n,A,\tau }^{(1)}(\mathbf{0};x)
\end{equation*}%
exists, and $\lim \sup_{n\rightarrow \infty }\sup_{(x,\tau )\in \mathcal{S}%
}|\psi _{n,A,\tau }^{(1)}(0;x)|<C$\ \textit{for some }$C>0$.\newline
(ii) \textit{There exists }$\varepsilon _{1}>0$ \textit{such that for all }$%
(\tau ,A)\in \mathcal{T}\times \mathcal{N}_{J}$ \textit{and all }$x$\textit{%
\ in the interior of }$\mathcal{S}_{\tau }(\varepsilon _{1})$, $\psi
_{n,A,\tau }^{(2)}(\mathbf{0};x)$ exists for all $n\geq 1$ and%
\begin{equation*}
\psi _{A,\tau }^{(2)}(\mathbf{0};x)\equiv \lim_{n\rightarrow \infty }\psi
_{n,A,\tau }^{(2)}(\mathbf{0};x)
\end{equation*}%
\textit{exists, and }$\lim \sup_{n\rightarrow \infty }\sup_{(x,\tau )\in 
\mathcal{S}}|\psi _{n,A,\tau }^{(2)}(\mathbf{0};x)|<C$\textit{\ for some }$%
C>0$.
\end{AssumptionC}

To appreciate Assumption C3, consider the case where $J=2$, $A=\{1,2\}$, and 
$\mathbb{W}_{n,\tau ,\tau }^{(1)}(x,0)$ has a distribution denoted by $G_{n}$%
. Choose $y_{1}\geq y_{2}$ without loss of generality. We take $\Lambda
_{p}(v_{1},v_{2})=\max \{v_{1},v_{2},0\}^{p}$. Then we can write $\mathbf{E}%
[\Lambda _{A,p}(\mathbb{W}_{n,\tau ,\tau }^{(1)}(x,0)+\mathbf{y)}]$ as%
\begin{eqnarray*}
&&\int_{\mathbf{R}^{2}}(w_{1}+y_{1})^{p}1\left\{ w_{1}\in \lbrack
w_{2}+y_{2}-y_{1},\infty )\text{ and }w_{2}\in \lbrack -y_{2},\infty
)\right\} dG_{n}(w_{1},w_{2}) \\
&&+\int_{\mathbf{R}^{2}}\left( w_{2}+y_{2}\right) ^{p}1\left\{ w_{1}\in
(-\infty ,w_{2}+y_{2}-y_{1})\text{ and }w_{2}\in \lbrack -y_{2},\infty
)\right\} dG_{n}(w_{1},w_{2}) \\
&&+\int_{\mathbf{R}^{2}}(w_{1}+y_{1})^{p}1\left\{ w_{1}\in \lbrack
-y_{1},\infty )\text{ and }w_{2}\in (-\infty ,-y_{2})\right\}
dG_{n}(w_{1},w_{2}).
\end{eqnarray*}%
Certainly the three quantities are all differentiable in $(y_{1},y_{2})$.

The following theorem offers the local power function of the bootstrap test
in a general form.

\begin{thm}
\label{Thm4} \textit{Suppose that Assumptions A1-A6, B\ref{assumption-B1}%
-B4, C1-C2, and C3(i) hold and that}%
\begin{equation}
h^{-d/2}\left( \log n\right) ^{p/2}\lambda _{n}\rightarrow 0,  \label{egref2}
\end{equation}%
\textit{as }$n\rightarrow \infty $. \textit{Then for each sequence }$%
P_{n}\in \mathcal{P}_{n}^{0}(\lambda _{n})$, $n\geq 1,$\textit{\ which
satisfies the local alternative hypothesis }$(b_{n},\delta )$\textit{\ for
some }$\delta \in \mathcal{D}$\textit{\ with }$b_{n} =
(r_{n,j}h^{-d/2})_{j=1}^J$,%
\begin{equation*}
\lim_{n\rightarrow \infty }P_{n}\{\hat{\theta}>c_{\alpha ,\eta }^{\ast
}\}=1-\Phi \left( z_{1-\alpha }-\mu _{1}(\delta )\right) ,
\end{equation*}%
\textit{where }$\Phi $\textit{\ denotes the standard normal cdf,}%
\begin{equation*}
\mu _{1}(\delta )\equiv \sum_{A\in \mathcal{N}_{J}}\int \psi _{A,\tau
}^{(1)}(\mathbf{0};x)^{\top }\delta _{\tau ,\sigma }(x)dQ(x,\tau ),
\end{equation*}%
\textit{and} 
\begin{equation}
\delta _{\tau ,\sigma }(x)\equiv \left( \frac{\delta _{\tau ,1}(x)}{\sigma
_{\tau ,1}(x)},\cdot \cdot \cdot ,\frac{\delta _{\tau ,J}(x)}{\sigma _{\tau
,J}(x)}\right) .  \label{deltas}
\end{equation}
\end{thm}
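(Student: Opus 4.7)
The plan is to reduce the local-power question to a shifted version of the asymptotic normal approximation obtained in Theorem 2, by carefully accounting for how the Pitman drift $\delta_{\tau,j}/b_{n,j}$ enters the contact-set representation of $\hat{\theta}$ from Lemma 1. Using $b_{n,j}=r_{n,j}h^{-d/2}$, the population part of the argument of $\Lambda_{A,p}$ becomes
\[
\frac{r_{n,j}v_{n,\tau,j}(x)}{\hat{\sigma}_{\tau,j}(x)}=\frac{r_{n,j}v_{n,\tau,j}^{0}(x)}{\hat{\sigma}_{\tau,j}(x)}+h^{d/2}\frac{\delta_{\tau,j}(x)}{\hat{\sigma}_{\tau,j}(x)}(1+o(1)),
\]
so the drift contributed by the local alternative enters at the same order $h^{d/2}$ by which $\hat{\theta}-a_{n}(q_{n})$ is being normalized. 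This is precisely why $b_{n,j}=r_{n,j}h^{-d/2}$ is the right Pitman rate here.

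First, I would apply Lemma 1 to write $\hat{\theta}$ as a sum of integrals over $B_{n,A}(c_{n,U},c_{n,L})$. Using $P_{n}\in\mathcal{P}_{n}^{0}(\lambda_{n})$, Assumption C2, and Assumption A3 (exactly as in the derivation preceding Theorem 2), I would replace these sets by $B_{n,A}^{0}(q_{n})$ at the cost of $o_{P}(h^{d/2})$, $\mathcal{P}_{n}^{0}(\lambda_{n})$-uniformly, provided $\lambda_{n}$ and $q_{n}$ satisfy \eqref{lamq}. On each $B_{n,A}^{0}(q_{n})$ the $v_{n,\tau,j}^{0}$-contribution to the argument of $\Lambda_{A,p}$ is $O(q_{n})$, so after further approximation by a map with $v_{n,\tau,j}^{0}$ set to zero (using Lipschitz continuity of $\Lambda_{A,p}$ on compacta together with the uniform bound from A3), one obtains
\[
\hat{\theta}=\sum_{A\in\mathcal{N}_{J}}\int_{B_{n,A}^{0}(q_{n})}\Lambda_{A,p}\!\bigl(\hat{\mathbf{s}}_{\tau}(x)+h^{d/2}\delta_{\tau,\sigma}(x)\bigr)dQ(x,\tau)+o_{P}(h^{d/2}),
\]
$\mathcal{P}_{n}^{0}(\lambda_{n})$-uniformly. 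Centering by $a_{n}(q_{n})$ (which uses $\Lambda_{A,p}$ at argument $\mathbb{W}_{n,\tau,\tau}^{(1)}(x,0)$ and is built from the $v^{0}$-contact sets under $\mathcal{P}_{n}^{0}$), I would decompose the rescaled centered statistic into a mean-zero Gaussian piece plus a bias piece
\[
h^{-d/2}\!\sum_{A}\int_{B_{n,A}^{0}(q_{n})}\!\!\Bigl\{\mathbf{E}\Lambda_{A,p}\!\bigl(\mathbb{W}_{n,\tau,\tau}^{(1)}(x,0)+h^{d/2}\delta_{\tau,\sigma}(x)\bigr)-\mathbf{E}\Lambda_{A,p}\!\bigl(\mathbb{W}_{n,\tau,\tau}^{(1)}(x,0)\bigr)\Bigr\}dQ(x,\tau).
\]
A first-order Taylor expansion in the shift, using Assumption C3(i) and the definition of $\psi_{n,A,\tau}^{(1)}$, yields $\sigma_{n}(0)\,\mu_{1}(\delta)+o(1)$. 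Combined with the CLT piece, this gives
\[
\frac{h^{-d/2}(\hat{\theta}-a_{n}(q_{n}))}{\sigma_{n}(0)}\ \longrightarrow_{d}\ N\!\bigl(\mu_{1}(\delta),\,1\bigr).
\]

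Next I would argue that the bootstrap critical value $c_{\alpha,\eta}^{\ast}$ behaves under $(b_{n},\delta)$ exactly as it does under the null-LFC setup inside $\mathcal{P}_{n}^{0}(\lambda_{n})$. The bootstrap statistic $\hat{\theta}^{\ast}$ uses only $\hat{v}_{\tau,j}^{\ast}-\hat{v}_{\tau,j}$, which has no dependence on the Pitman drift to first order; moreover, the estimated contact sets $\hat{B}_{A}(\hat{c}_{n})$ are sandwiched between $B_{n,A}(c_{n,L},c_{n,U})$ and $B_{n,A}(c_{n,U},c_{n,L})$ by \eqref{contact}, and these differ from the corresponding $B_{n,A}^{0}$ sets by at most $\lambda_{n}$ in $Q$-measure under $\mathcal{P}_{n}^{0}$. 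Repeating the Theorem 2 argument for the bootstrap side, $h^{-d/2}(c_{\alpha}^{\ast}-a_{n}(q_{n}))/\sigma_{n}(0)\to_{P}z_{1-\alpha}$ and $h^{-d/2}(\hat{a}^{\ast}-a_{n}(q_{n}))=o_{P}(1)$, so the $h^{d/2}\eta+\hat{a}^{\ast}$ clause is dominated whenever $\sigma_{n}(0)\geq\eta/\Phi^{-1}(1-\alpha)$, which holds by the definition of $\mathcal{P}_{n}^{0}(\lambda_{n})$. Hence $c_{\alpha,\eta}^{\ast}=c_{\alpha}^{\ast}$ with probability approaching one, and
\[
P_{n}\{\hat{\theta}>c_{\alpha,\eta}^{\ast}\}=P_{n}\!\left\{\frac{h^{-d/2}(\hat{\theta}-a_{n}(q_{n}))}{\sigma_{n}(0)}>z_{1-\alpha}+o_{P}(1)\right\}\to 1-\Phi(z_{1-\alpha}-\mu_{1}(\delta)).
\]

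The principal obstacle is carrying the Taylor expansion of $\mathbf{E}[\Lambda_{A,p}(\mathbb{W}+h^{d/2}\delta_{\tau,\sigma})]$ uniformly across $(x,\tau)\in B_{n,A}^{0}(q_{n})$ and uniformly in $P\in\mathcal{P}_{n}^{0}(\lambda_{n})$, with a remainder that is $o(h^{d/2})$ in the $Q$-integrated sense after multiplication by $h^{-d/2}$. This requires Assumption C3(i) to hold with a uniform bound on $\psi_{n,A,\tau}^{(1)}(\mathbf{0};x)$, careful control of the continuity modulus of $\mathbf{y}\mapsto\mathbf{E}\Lambda_{A,p}(\mathbb{W}+\mathbf{y})$ on a neighborhood of $\mathbf{0}$ (using the $\mathbf{E}[\Lambda_{A,p}(\mathbb{W})]$ moment control inherited from Assumption A6 and the Gaussian structure of $\mathbb{W}$), and verification that $Q$-measure of $B_{n,A}^{0}(q_{n})$ remains $O(1)$ so the integrated drift $\mu_{1}(\delta)$ is finite; the boundary effects where $(x,\tau)$ moves in or out of the contact set as the shift is perturbed are handled through the definition of $\psi$ via its indicator over $B_{n,A}^{0}(0)$ and the $\lambda_{n}$-shrinking of symmetric differences built into $\mathcal{P}_{n}^{0}$. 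The Poissonization/de-Poissonization machinery developed for Theorem 2 supplies the underlying CLT, so no new probabilistic tools are required beyond these uniform expansions.
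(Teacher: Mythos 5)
Your overall architecture matches the paper's proof: reduce $\hat{\theta}$ via the contact-set representation to an integral over the zero-level set of $v^{0}$, absorb the Pitman drift as an $h^{d/2}$-order shift inside $\Lambda_{A,p}$, Taylor-expand the location normalizer using $\psi_{A,\tau}^{(1)}$, and show the bootstrap critical value is unaffected to first order. Two points, one cosmetic and one substantive. The cosmetic one: under $\mathcal{P}_{n}^{0}(\lambda_{n})$ the paper sets $q_{n}=0$, so the reduction is to $B_{n,A}^{0}(0)$ exactly and only the $\lambda_{n}$ condition \eqref{egref2} is needed; your use of a positive $q_{n}$ and \eqref{lamq} imports structure from Theorem 2 that is not part of this theorem's hypotheses.

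The substantive gap is the replacement of $\hat{\sigma}_{\tau,j}(x)$ by $\sigma_{\tau,j}(x)$ in the drift. The representation you actually obtain from Lemma A5 and Assumption C2 has the shift $h^{d/2}\delta_{\tau,\hat{\sigma}}(x)$, i.e.\ with the \emph{estimated} scale, whereas $\mu_{1}(\delta)$ is defined through $\delta_{\tau,\sigma}(x)$. Assumption A5 gives only $\hat{\sigma}_{\tau,j}=\sigma_{n,\tau,j}+o_{P}(1)$ uniformly, with no rate, so the discrepancy between the two drifts is $o_{P}(h^{d/2})$ pointwise but, after passing through the local Lipschitz constant of $\Lambda_{A,p}$ at $\hat{\mathbf{s}}_{\tau}(x)$ (which is $O_{P}((\log n)^{(p-1)/2})$ by A3), a direct substitution only yields an error of order $o_{P}(h^{d/2}(\log n)^{(p-1)/2})$, which is not $o_{P}(h^{d/2})$. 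The paper circumvents this by exploiting monotonicity of $\Lambda_{A,p}$: for fixed $\kappa>0$ it brackets $\delta_{\tau,\hat{\sigma}}$ between deterministic functions $\delta_{\tau,\sigma,\kappa}^{L}\leq\delta_{\tau,\hat{\sigma}}\leq\delta_{\tau,\sigma,\kappa}^{U}$ with probability approaching one, derives the limiting rejection probability for each bracket, and then sends $\kappa\to 0$ with dominated convergence. Your proof needs this (or an equivalent device) to be complete; as written, the step from the representation with $\delta_{\tau,\hat{\sigma}}$ to the display with $\delta_{\tau,\sigma}$ is unjustified.
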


Theorem \ref{Thm4} shows that if we take $b_{n}$ such that $%
b_{n,j}=r_{n,j}h^{-d/2}$ for each $j=1,\ldots,J$, the local asymptotic power
of the test against $(b_{n},\delta )$ is determined by the shift $\mu
_{1}(\delta )$. Thus, the bootstrap test has nontrivial local power against $%
(b_{n},\delta )$ if and only if%
\begin{equation*}
\mu _{1}(\delta )>0.
\end{equation*}%
The test is asymptotically biased against $(b_{n},\delta )$ such that $\mu
_{1}(\delta )<0$.

Suppose that 
\begin{equation}
\mu _{1}(\delta )=0,  \label{eq2}
\end{equation}%
for all $A\in \mathcal{N}_{J},$ i.e., when $\delta _{\tau ,\sigma }$ has
positive and negative parts which precisely cancels out in the integration.
Then, we show that the bootstrap test has nontrivial asymptotic power
against local alternatives that converges at a rate slower than $n^{-1/2}$
to the null hypothesis.

\begin{thm}
\label{Thm5} \textit{Suppose that the conditions of Theorem \ref{Thm4} and
Assumption C3(ii) hold. Then for each sequence }$P_{n}\in \mathcal{P}%
_{n}^{0}(\lambda _{n})$, $n\geq 1,$\textit{\ which satisfies the local
alternative hypothesis }$(b_{n},\delta )$\textit{\ for some }$\delta \in 
\mathcal{D}$\textit{\ such that }$\mu _{1}(\delta )=0$\textit{\ and }$%
b_{n}=(r_{n,j}h^{-d/4})_{j=1}^J$,%
\begin{equation*}
\lim_{n\rightarrow \infty }P_{n}\{\hat{\theta}>c_{\alpha ,\eta }^{\ast
}\}=1-\Phi \left( z_{1-\alpha }-\mu _{2}(\delta )\right) ,
\end{equation*}%
\textit{where} 
\begin{equation*}
\mu _{2}(\delta )\equiv \frac{1}{2}\sum_{A\in \mathcal{N}_{J}}\int \delta
_{\tau ,\sigma }^{\top }(x)\psi _{A,\tau }^{(2)}(\mathbf{0};x)\delta _{\tau
,\sigma }(x)dQ(x,\tau ).
\end{equation*}
\end{thm}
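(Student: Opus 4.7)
The plan is to parallel the proof of Theorem~\ref{Thm4}, but to carry the Taylor expansion of the integrated functional to \emph{second} order, since by hypothesis $\mu_{1}(\delta)=0$ kills the leading shift. Under the local alternative $(b_{n},\delta)$ with $b_{n,j}=r_{n,j}h^{-d/4}$, Assumption~C2 gives the scaled-drift decomposition
\begin{equation*}
\frac{r_{n,j}v_{n,\tau,j}(x)}{\sigma_{n,\tau,j}(x)}=\frac{r_{n,j}v_{n,\tau,j}^{0}(x)}{\sigma_{n,\tau,j}(x)}+h^{d/4}\frac{\delta_{\tau,j}(x)}{\sigma_{n,\tau,j}(x)}\bigl(1+o(1)\bigr),
\end{equation*}
uniformly in $(x,\tau,j)$. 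Because the perturbation is $O(h^{d/4})$ on the scaled level, the contact sets $B_{n,A}(c_{n,L},c_{n,U})$ coincide with their $v^{0}$-analogues $B_{n,A}^{0}(c_{n,L},c_{n,U})$ up to a set whose $Q$-measure is controlled by $\lambda_{n}$ on $\mathcal{P}_{n}^{0}(\lambda_{n})$. Applying Lemma~\ref{nice-lemma} together with the decomposition \eqref{dec} then yields, $\mathcal{P}_{n}^{0}(\lambda_{n})$-uniformly,
\begin{equation*}
\hat{\theta}=\sum_{A\in\mathcal{N}_{J}}\int_{B_{n,A}^{0}(q_{n})}\Lambda_{A,p}\bigl(\hat{\mathbf{s}}_{\tau}(x)+h^{d/4}\delta_{\tau,\sigma}(x)\bigr)\,dQ(x,\tau)+o_{P}(h^{d/2}),
\end{equation*}
with $\delta_{\tau,\sigma}(x)$ as in \eqref{deltas}.

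Next I would invoke the Poissonization-plus-regularization normal approximation already established in the proof of Theorem~\ref{Thm4}. Since the stochastic component $\hat{\mathbf{s}}_{\tau}(x)$ is unaffected by the drift perturbation, the same CLT gives
\begin{equation*}
h^{-d/2}\bigl\{\hat{\theta}-\bar{a}_{n,\delta}\bigr\}/\sigma_{n}(0)\overset{d}{\to}N(0,1),
\end{equation*}
uniformly over $P\in\mathcal{P}_{n}^{0}(\lambda_{n})$, where
\begin{equation*}
\bar{a}_{n,\delta}=\sigma_{n}(0)\sum_{A\in\mathcal{N}_{J}}\int\psi_{n,A,\tau}\bigl(h^{d/4}\delta_{\tau,\sigma}(x);x\bigr)\,dQ(x,\tau)+o(h^{d/2}).
\end{equation*}
A two-term Taylor expansion of $\psi_{n,A,\tau}(\mathbf{y};x)$ at $\mathbf{y}=\mathbf{0}$, justified by Assumption~C3(ii), produces
\begin{equation*}
\frac{\bar{a}_{n,\delta}-a_{n}(0)}{h^{d/2}\sigma_{n}(0)}=h^{-d/4}\mu_{1}(\delta)+\mu_{2}(\delta)+o(1)=\mu_{2}(\delta)+o(1),
\end{equation*}
where the first-order term vanishes by the hypothesis $\mu_{1}(\delta)=0$.

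Finally, the bootstrap critical value obeys $h^{-d/2}\{c_{\alpha,\eta}^{\ast}-a_{n}(0)\}/\sigma_{n}(0)=z_{1-\alpha}+o_{P}(1)$ by exactly the argument given in the proof of Theorem~\ref{Thm2}: the bootstrap statistic is always recentered at $\hat{v}_{\tau,j}$ and so tracks the null distribution regardless of the alternative, and the lower bound $\sigma_{n}(0)\geq\eta/\Phi^{-1}(1-\alpha)$ built into $\mathcal{P}_{n}^{0}(\lambda_{n})$ guarantees that the bootstrap quantile $c_{\alpha}^{\ast}$ dominates the degeneracy safeguard $h^{d/2}\eta+\hat{a}^{\ast}$. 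Combining these three ingredients yields $P_{n}\{\hat{\theta}>c_{\alpha,\eta}^{\ast}\}\to 1-\Phi(z_{1-\alpha}-\mu_{2}(\delta))$.

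The main obstacle is controlling the cubic Taylor remainder uniformly in $A\in\mathcal{N}_{J}$, in $(x,\tau)\in\mathcal{S}$, and in $P\in\mathcal{P}_{n}^{0}(\lambda_{n})$: because the shift is only $O(h^{d/4})$, the remainder must be shown to be $o(h^{d/2})$ rather than the easier $o(h^{d/4})$. This forces one to exploit the $C^{2}$-regularity of $\mathbf{y}\mapsto\mathbf{E}[\Lambda_{A,p}(\mathbb{W}_{n,\tau,\tau}^{(1)}(x,0)+\mathbf{y})]$ at the origin, which follows from the Gaussian smoothing of $\Lambda_{A,p}$ by the covariance matrix \eqref{cov} on the contact sets, combined with the uniform bound on $\psi_{n,A,\tau}^{(2)}(\mathbf{0};x)$ supplied by Assumption~C3(ii). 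A secondary technical point is verifying that the approximation of $\bar{a}_{n,\delta}$ by the $\psi_{n,A,\tau}$-based integral is accurate to order $o(h^{d/2})$ uniformly in $P$, which is handled by the same Berry--Esseen-type bound used to pass from $a_{n}$ to $a_{\infty}$ in Theorem~\ref{Thm1}.
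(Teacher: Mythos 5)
Your proposal is correct and follows essentially the same route as the paper's proof: reduce to the representation over the $v^{0}$-contact sets (the paper does this by noting that Lemmas A5--A7 continue to hold with the $O(h^{d/4})$ drift), then carry the expansion of the Gaussian-smoothed functional $\psi_{n,A,\tau}$ to second order, with the $h^{-d/4}\mu_{1}(\delta)$ term annihilated by hypothesis and the quadratic term delivering $\mu_{2}(\delta)$, while the bootstrap critical value converges to $z_{1-\alpha}$ exactly as in Theorems 2 and 4. The only presentational difference is that the paper finishes by sandwiching $\delta_{\tau,\sigma}$ between the $\kappa$-perturbed versions $\delta_{\tau,\sigma,\kappa}^{L}$ and $\delta_{\tau,\sigma,\kappa}^{U}$ to absorb the estimation error in $\hat{\sigma}_{\tau,j}$, a step you implicitly inherit by "paralleling Theorem 4."
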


The local power function depends on the limit of the curvature of the
function $\psi _{n,A,\tau }(\mathbf{y};x)$ at $\mathbf{y}=\mathbf{0},$ for
all $A\in \mathcal{N}_{J}$. When the function is strictly concave at $%
\mathbf{0}$ in the limit, $\psi _{A,\tau }^{(2)}(\mathbf{0};x)$ is positive
definite on $\mathcal{X}\times \mathcal{T}$, and in this case, the bootstrap
test has nontrivial power whenever $\delta _{\tau ,\sigma }(x)$ is nonzero
on a set whose intersection with $B_{n}^{0}(0)$ has Lebesgue measure greater
than $c>0$ for all $n\geq 1$, for some $c>0.$

From Theorems \ref{Thm4}\ and \ref{Thm5}, it is seen that the phenomenon of
dual convergence rates generally hold for our tests. To formally state the
result, define 
\begin{align*}
\mathcal{D}_{1} &\equiv \left\{ \delta \in \mathcal{D}:\mu _{1}(\delta )\neq
0\right\} \text{ and } \\
\mathcal{D}_{2} &\equiv \left\{ \delta \in \mathcal{D}:\mu _{1}(\delta )=0%
\text{ and }\mu _{2}(\delta )>0\right\} .
\end{align*}%
When $\liminf_{n \rightarrow \infty} Q(B_{n}^{0}(0))>0$, the set $\{\mathcal{%
D}_{1},\mathcal{D}_{2}\}$ becomes a partition of the space of Pitman
directions $\mathcal{D}$.

\begin{cor}
\label{partition-cor} Suppose that the conditions of Theorem 5 hold. Then
the bootstrap test has convergence rate $b_{n} = (r_{n,j}h^{-d/2})_{j=1}^J$
against $\mathcal{D}_{1}$, and convergence rate $%
b_{n}=(r_{n,j}h^{-d/4})_{j=1}^J$ against $\mathcal{D}_{2}$.
\end{cor}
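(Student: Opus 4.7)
The plan is to combine Theorems \ref{Thm4} and \ref{Thm5} with a simple rescaling argument that verifies both clauses (nontrivial and trivial power) of the definition of convergence rate.

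For the rate $b_n=(r_{n,j}h^{-d/2})_{j=1}^J$ against $\mathcal{D}_1$, I first exhibit nontrivial power. Since $\mu_1$ is linear in $\delta$ and both $\mathcal{D}$ and the condition $\mu_1(\delta)\neq 0$ are closed under negation, any non-empty $\mathcal{D}_1$ contains some $\delta$ with $\mu_1(\delta)>0$; Theorem \ref{Thm4} then yields
\begin{equation*}
\lim_{n\to\infty} P_n\{\hat{\theta}>c_{\alpha,\eta}^{\ast}\}=1-\Phi\left(z_{1-\alpha}-\mu_1(\delta)\right)>\alpha.
\end{equation*}
For trivial power at a strictly faster rate, fix any $\delta\in\mathcal{D}_1$ and any $b_n'$ with $a_{n,j}:=b_{n,j}'/b_{n,j}\to\infty$ for each $j$. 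The local alternative $(b_n',\delta)$ coincides with the local alternative $(b_n,\tilde{\delta}_n)$ for $\tilde{\delta}_{n,\tau,j}(x):=\delta_{\tau,j}(x)/a_{n,j}$, because the resulting $v_{\tau,j}(x)$ agree. Inspecting the proof of Theorem \ref{Thm4}, the drift $\mu_1(\delta)$ in the limiting normal distribution arises from a first-order expansion of $\psi_{n,A,\tau}(\mathbf{y};x)$ around $\mathbf{y}=\mathbf{0}$; feeding in $\tilde{\delta}_n$ instead of $\delta$ and using the uniform bound on $\psi_{n,A,\tau}^{(1)}(\mathbf{0};x)$ supplied by Assumption C3(i), one finds a drift of order $O(\max_j a_{n,j}^{-1})\to 0$. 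Hence $P_n\{\hat\theta>c_{\alpha,\eta}^{\ast}\}\to 1-\Phi(z_{1-\alpha})=\alpha$.

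For the rate $b_n=(r_{n,j}h^{-d/4})_{j=1}^J$ against $\mathcal{D}_2$, every $\delta\in\mathcal{D}_2$ satisfies $\mu_1(\delta)=0$ and $\mu_2(\delta)>0$ by definition, so Theorem \ref{Thm5} immediately gives
\begin{equation*}
\lim_{n\to\infty} P_n\{\hat{\theta}>c_{\alpha,\eta}^{\ast}\}=1-\Phi\left(z_{1-\alpha}-\mu_2(\delta)\right)>\alpha,
\end{equation*}
which is nontrivial power. For trivial power against any $(b_n',\delta)$ with $a_{n,j}:=b_{n,j}'/b_{n,j}\to\infty$, the same rescaling gives the equivalent direction $\tilde{\delta}_n=\delta/a_n$ (componentwise). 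Because $\mu_1(\delta)=0$, the first-order contribution vanishes identically; the second-order contribution, now driven by $\psi_{n,A,\tau}^{(2)}(\mathbf{0};x)$ whose uniform control comes from Assumption C3(ii), rescales as $\mu_2(\tilde\delta_n)=\mu_2(\delta)/(\min_j a_{n,j})^2\to 0$ up to lower order. So the rejection probability again tends to $\alpha$.

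The main technical point is the trivial-power step: Theorems \ref{Thm4} and \ref{Thm5} are stated for a fixed Pitman direction, yet to verify the second clause of the convergence-rate definition I must apply them along the vanishing sequence $\tilde{\delta}_n\to 0$. This is not a literal restatement of those theorems, but follows by tracking their proofs: the relevant drift is linear (respectively quadratic) in the Pitman direction, and Assumption C3 provides exactly the uniform control on $\psi_{n,A,\tau}^{(1)}(\mathbf{0};x)$ and $\psi_{n,A,\tau}^{(2)}(\mathbf{0};x)$ required for the Taylor remainder to remain uniformly negligible as the argument shrinks. Once this uniform remainder control is in place, the derivations of Theorems \ref{Thm4} and \ref{Thm5} go through verbatim with $\tilde\delta_n$ in place of $\delta$, delivering the claimed trivial-power conclusion and completing the proof of the corollary.
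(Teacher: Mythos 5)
Your proposal is correct and is essentially the argument the paper intends: the corollary is stated as an immediate consequence of Theorems \ref{Thm4} and \ref{Thm5}, and no separate proof is given in the appendix. The nontrivial-power halves follow exactly as you say (your negation-closure observation for producing a $\delta\in\mathcal{D}_1$ with $\mu_1(\delta)>0$ is fine, modulo the implicit requirement in (\ref{la}) that the negated direction still generate a genuine alternative).

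The only substantive content beyond a literal reading of Theorems \ref{Thm4} and \ref{Thm5} is the trivial-power clause, and you identify it correctly: reparametrizing $(b_n',\delta)$ as $(b_n,\tilde\delta_n)$ with $\tilde\delta_n=\delta/a_n\to 0$ and arguing that the drift — linear in the direction for $\mathcal{D}_1$, quadratic (with the linear term annihilated by $\mu_1(\delta)=0$) for $\mathcal{D}_2$ — vanishes. This does require that the expansions in the proofs of Theorems \ref{Thm4} and \ref{Thm5} hold with a remainder that is negligible uniformly as the Pitman direction shrinks; the uniform bounds on $\psi_{n,A,\tau}^{(1)}(\mathbf{0};x)$ and $\psi_{n,A,\tau}^{(2)}(\mathbf{0};x)$ in Assumption C3 are what make this work, and since the perturbation only gets smaller along $\tilde\delta_n$, the Taylor remainders are of order $o(a_n^{-1})$ and $o(a_n^{-2})$ respectively. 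Two cosmetic points: $\mu_2(\delta/a_n)$ is not literally $\mu_2(\delta)/(\min_j a_{n,j})^2$ when the $a_{n,j}$ differ across $j$ (the quadratic form has cross terms scaling as $1/(a_{n,j}a_{n,k})$), but it is bounded by a constant times $(\min_j a_{n,j})^{-2}\to 0$, which is all that is needed; and trivial power only requires $\limsup_n P_n\{\hat\theta>c^{\ast}_{\alpha,\eta}\}\le\alpha$, which your limit-equals-$\alpha$ conclusion delivers.
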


When $r_{n,j}$'s diverge to infinity at the usual nonparametric rate $%
r_{n,j}=n^{1/2}h^{d/2}$ as in many kernel-based estimators, the test has a
parametric rate of convergence $b_{n}=\sqrt{n}$ and nontrivial local power
against $\mathcal{D}_{1}$. However, the test has a convergence rate slower
than the parametric rate against $\mathcal{D}_{2}$.


\section{Monte Carlo Experiments}

\label{sec:mc:AS}

In this section, we report the finite-sample performance of our proposed
test for the Monte Carlo design considered in 
\citeasnoun[Section
10.3, hereafter AS]{Andrews/Shi:13a}. The null hypothesis has the form 
\begin{equation*}
H_{0}:\mathbf{E}(Y-\theta |X=x)\leq 0\;\;\text{for each $x\in \mathcal{X}$}
\end{equation*}%
with a fixed $\theta $. AS generated a random sample of $(Y,X)$ from the
following model: 
\begin{equation*}
Y=f(X)+U,
\end{equation*}%
where $X\sim \text{Unif}[-2,2]$, $U$ follows truncated normal such that $%
U_{i}=\min \{\max \{-3,\sigma \tilde{U}_{i}\},3\}$ with $\tilde{U}_{i}\sim
N(0,1)$ and $\sigma =1$, and $f(\cdot )$ is a function with an alternative
shape. AS considered two functions: 
\begin{align*}
f_{AS1}(x)& :=L\phi (x^{10}), \\
f_{AS2}(x)& :=L\cdot \max {\{\phi ((x-1.5)^{10}),\phi ((x+1.5)^{10})\}},
\end{align*}%
These two functions have steep slopes, $f_{AS1}$ being a roughly
plateau-shaped function and $f_{AS2}$ a roughly double-plateau-shaped
function, respectively. AS considered the following Monte Carlo designs: 
\begin{eqnarray*}
&&\text{DGP1: }f(x)=f_{AS1}(x)\text{ and }L=1\text{; }\ \ \ \text{DGP2: }%
f(x)=f_{AS1}(x)\text{ and }L=5\text{;} \\
&&\text{DGP3: }f(x)=f_{AS2}(x)\text{ and }L=1\text{; }\ \ \ \text{DGP4: }%
f(x)=f_{AS2}(x)\text{ and }L=5\text{.}
\end{eqnarray*}%
AS compared their tests with \citeasnoun[hereafter CLR]{CLR} and %
\citeasnoun{LSW}. The latter test uses conservative standard normal critical
values based on the least favorable configuration.

In this paper, we used the same statistic for \citeasnoun{LSW} as reported
in AS. Specifically, we used the $L_1$ version of the test with the inverse
standard error weight function. In implementing the test, we used $%
K(u)=(3/2)(1-(2u)^{2})I(|u|\leq 1/2)$ and $h= 2 \times \hat{s}_{X}\times
n^{-1/5}$, where $I(A)$ is the usual indicator function that has value one
if $A$ is true and zero otherwise and $\hat{s}_{X}$ is the sample standard
deviation of $X$. Thus, the only difference between the new test (which we
call LSW2) and \citeasnoun{LSW} (which we call LSW1) is the use of critical
values: LSW1 uses the standard normal critical values based on the least
favorable configuration, whereas LSW2 uses bootstrap critical values based
on the estimated contact set. See the next subsection for details regarding contact set estimation.

The experiments considered sample sizes of $n=100,250,500,1000$ and the
nominal level of $\alpha =0.05$. We performed 1000 Monte Carlo replications
in each experiment. The number of bootstrap replications was $200$.

The null hypothesis is tested on $\mathcal{X} = [-1.8,1.8]$. To compare
simulation results from AS, the coverage probability (CP) is computed at
nominal level 95\% when $\theta = \max_{x \in \mathcal{X}} f(x)$ and the
false coverage probability (FCP) is computed at nominal level 95\% when $%
\theta = \max_{x \in \mathcal{X}} f(x) - 0.02$.

\begin{table}[htbp]
\caption{Results for Monte Carlo Experiments: Coverage Probability}
\label{table-mc1}
\begin{center}
\begin{tabular}{cccccccccc}
\hline\hline
&  & (1) & (2) & (3) & (4) & (5) & (6) & (7) & (8) \\ 
&  & \multicolumn{2}{c}{AS} & \multicolumn{2}{c}{CLR} & LSW1 & 
\multicolumn{3}{c}{LSW2} \\ 
& $n$ & CvM & KS & series & local &  & \multicolumn{1}{c}{$C_{\text{cs}} =
0.4$} & \multicolumn{1}{c}{$C_{\text{cs}} = 0.5$} & \multicolumn{1}{c}{$C_{%
\text{cs}} = 0.6$} \\ 
&  &  &  &  & linear &  & \multicolumn{3}{c}{} \\ \hline
DGP1 & 100 & .986 & .986 & .707 & .804 & 1.00 & .980 & .990 & .999 \\ 
& 250 & .975 & .973 & .805 & .893 & 1.00 & .951 & .960 & .971 \\ 
& 500 & .975 & .970 & .872 & .925 & 1.00 & .968 & .976 & .977 \\ 
& 1000 & .971 & .966 & .909 & .935 & 1.00 & .962 & .971 & .973 \\ \hline
DGP2 & 100 & 1.00 & 1.00 & .394 & .713 & 1.00 & .996 & .999 & 1.00 \\ 
& 250 & 1.00 & 1.00 & .683 & .856 & 1.00 & .953 & .963 & .975 \\ 
& 500 & 1.00 & 1.00 & .833 & .908 & 1.00 & .963 & .972 & .976 \\ 
& 1000 & 1.00 & 1.00 & .900 & .927 & 1.00 & .965 & .968 & .968 \\ \hline
DGP3 & 100 & .970 & .969 & .620 & .721 & 1.00 & .987 & .991 & .993 \\ 
& 250 & .969 & .964 & .762 & .854 & 1.00 & .952 & .965 & .973 \\ 
& 500 & .963 & .957 & .854 & .900 & 1.00 & .966 & .971 & .976 \\ 
& 1000 & .969 & .963 & .901 & .927 & 1.00 & .949 & .957 & .962 \\ \hline
DGP3 & 100 & .998 & .999 & .321 & .655 & 1.00 & .998 & .999 & 1.00 \\ 
& 250 & .997 & .998 & .612 & .826 & 1.00 & .952 & .965 & .976 \\ 
& 500 & .994 & .994 & .808 & .890 & 1.00 & .964 & .971 & .973 \\ 
& 1000 & .994 & .991 & .893 & .918 & 1.00 & .943 & .950 & .958 \\ \hline
\end{tabular}%
\end{center}
\par
\parbox{5in}{Notes:
Figures in columns (1)-(5) are from Table V of \citeasnoun{Andrews/Shi:13a}, whereas those in columns  (6)-(8) are
based 1000 Monte Carlo replications in each
experiment, with the number of bootstrap replications being $200$.
LSW1 refers to the test of
\citeasnoun{LSW}, which uses
conservative  standard normal critical values based on the least favorable configuration.
LSW2 refers to this paper that uses bootstrap critical values based on the estimated contact set.
The tuning parameter is chosen by  the rule $\hat{c}_n = C_{\text{cs}} \log \log (n) q_{1-0.1/\log(n)}(S_n^\ast)$, where $C_{\text{cs}} \in \{ 0.4, 0.5, 0.6 \}$.
 }
\end{table}

\begin{table}[htbp]
\caption{Results for Monte Carlo Experiments: False Coverage Probability}
\label{table-mc2}
\begin{center}
\begin{tabular}{cccccccccc}
\hline\hline
&  & (1) & (2) & (3) & (4) & (5) & (6) & (7) & (8) \\ 
&  & \multicolumn{2}{c}{AS} & \multicolumn{2}{c}{CLR} & LSW1 & 
\multicolumn{3}{c}{LSW2} \\ 
& $n$ & CvM & KS & series & local &  & \multicolumn{1}{c}{$C_{\text{cs}} =
0.4$} & \multicolumn{1}{c}{$C_{\text{cs}} = 0.5$} & \multicolumn{1}{c}{$C_{%
\text{cs}} = 0.6$} \\ 
&  &  &  &  & linear &  & \multicolumn{3}{c}{} \\ \hline
DGP1 & 100 & .84 & .89 & .88 & .83 & .98 & .81 & .90 & .95 \\ 
& 250 & .57 & .67 & .82 & .69 & .92 & .44 & .49 & .54 \\ 
& 500 & .25 & .37 & .72 & .50 & .70 & .17 & .18 & .20 \\ 
& 1000 & .03 & .07 & .57 & .26 & .25 & .02 & .02 & .02 \\ \hline
DGP2 & 100 & 1.0 & 1.0 & .91 & .89 & .99 & .94 & .98 & 1.0 \\ 
& 250 & 1.0 & 1.0 & .85 & .73 & .96 & .48 & .54 & .62 \\ 
& 500 & .97 & .99 & .77 & .56 & .82 & .19 & .21 & .23 \\ 
& 1000 & .70 & .89 & .61 & .33 & .40 & .03 & .03 & .03 \\ \hline
DGP3 & 100 & .70 & .79 & .89 & .84 & .90 & .69 & .79 & .86 \\ 
& 250 & .30 & .46 & .83 & .66 & .65 & .27 & .32 & .35 \\ 
& 500 & .06 & .15 & .70 & .47 & .26 & .06 & .06 & .08 \\ 
& 1000 & .00 & .01 & .55 & .23 & .02 & .00 & .00 & .00 \\ \hline
DGP4 & 100 & .95 & .99 & .91 & .88 & .95 & .89 & .95 & .97 \\ 
& 250 & .66 & .83 & .86 & .70 & .75 & .30 & .35 & .42 \\ 
& 500 & .23 & .42 & .74 & .51 & .36 & .07 & .08 & .09 \\ 
& 1000 & .01 & .04 & .59 & .29 & .04 & .00 & .00 & .00 \\ \hline
\end{tabular}%
\end{center}
\par
\parbox{5in}{Notes:
See notes in Table \ref{table-mc1}.
Figures in columns (1)-(5) are
``CP-corrected'', where  those in columns  (6)-(8) are
not ``CP-corrected''.
 }
\end{table}

\subsection{Obtaining $\hat{c}_{n}$}\label{mc-tuning-parameters}

To construct $\hat{c}_{n}$, we suggest the following procedure. First, define%
\begin{equation*}
S_{n}^{\ast }\equiv \max \left\{ \sup_{(j,\tau ,x)} \hat{s}_{\tau ,j}^{\ast
}(x), \sqrt{\log n} \right\}.
\end{equation*}
Then, set 
\begin{align}  \label{cn-suggestion}
\hat{c}_{n}&=C_{\text{cs}}(\log \log n)q_{n}(S_{n}^{\ast }),
\end{align}%
where $q_{n}(S_{n}^{\ast })$ is the $(1- 0.1/\log n)$ quantile of the
bootstrap distribution of $S_{n}^{\ast }$, and  $C_{\text{cs}} \in \{ 0.4, 0.5, 0.6 \}$.

Although the rule-of-thumb for $\hat{c}_{n}$ in %
\eqref{cn-suggestion} is not completely data-driven, it has the advantage
that the scale of $\hat{u}_{\tau ,j}(x)$ is invariant, due to the term $%
q_{n}(S_{n}^{\ast })$; see \citeasnoun{CLR} for a similar idea.\footnote{%
Note that $q_{n}(S_{n}^{\ast })$ is the $(1- 0.1/\log n)$ quantile of the
supremum of $\hat{s}_{\tau ,j}^{\ast }(x)$ over $(j,\tau ,x)$ and that $(1-
0.1/\log n)$ converges to 1 as $n$ gets large. Thus, this observation leads
to the choice of $\hat{c}_{n} $ in \eqref{cn-suggestion} that is
proportional to $q_{n}(S_{n}^{\ast })$ times a very slowing growing term
such as $\log \log n$, to insure that $\hat{c}_{n}$ diverges to infinity but
as slowly as possible, while having the property of scale invariance.} This
data-dependent choice of $\hat{c}_{n}$ is encompassed by the theoretical
framework of this paper, while many other choices are also admitted.%
\footnote{%
See Assumption A4(ii) below for sufficient conditions for a data dependent
choice of $\hat{c}_{n}$. It is not hard to see that the conditions are
satisfied, once the uniform convergence rates of $\hat{v}_{\tau ,j}(x)$ and $%
\hat{\sigma}_{\tau ,j}(x)$ and their bootstrap versions hold as required in
Assumptions A3, A5, and B2 and B3.}

\subsection{Simulation results}

Tables \ref{table-mc1} and \ref{table-mc2} report the results of Monte Carlo
experiments. In each table, figures in columns (1)-(5) are from Table V of %
\citeasnoun{Andrews/Shi:13a}, whereas those in columns (6)-(8) are from our
Monte Carlo experiments. Table \ref{table-mc1} shows that coverage
probabilities of LSW2 are much closer to the nominal level than those of
LSW1. When $c=0.4$ and $n = 100$ or $250$, we see some under-coverage for
LSW2, but it disappears as $n$ gets larger. Table \ref{table-mc2} reports
the false coverage probabilities (FCPs). Figures in columns (1)-(5) are
``CP-corrected'' by AS, where those in columns (6)-(8) are not
``CP-corrected''. However, CP-correction would not change the results for
either $n \geq 500$ or $c \geq 0.5$ since in each of these cases, we have
over-coverage. We can see that in terms of FCPs, LSW2 performs much better
than LSW1 in all DGPs. Furthermore, the performance of LSW2 is equivalent to
that of AS for DGP1, DGP3, and DGP4, and is superior to AS for DGP2.
Overall, our simulation results show that our new test is a substantial
improved version of LSW1 and is now very much comparable to AS. The relative
poor performance of CLR in tables \ref{table-mc1} and \ref{table-mc2} are
mainly due to the experimental design. If the underlying function is sharply
peaked, as those in the reported simulations of \citeasnoun{CLR}, CLR
performs better than AS. In unreported simulations, we confirmed that CLR
performs better than LSW2 as well. This is very reasonable since CLR is
based on the sup-norm statistic, whereas ours is based on the one-sided $L_p$
norm. Therefore, we may conclude that AS, CLR, and LSW2 complement each
other.

\section{Empirical Example 1: Testing Functional Inequalities in Auction Models}\label{emp-example auction}

In this example, we go back to the auction environment of GPV mentioned earlier. We first state the testing problem formally, give the form of test statistic, and present empirical results.

\subsection{Testing Problem}

Suppose that the number $I$ of bidders can take two values, 2 and 3 (that
is, $I\in \{2,3\}$). For each $\tau $ such that $0<\tau <1$, let $q_{k}(\tau
|x)$ denote the $\tau $-th conditional quantile (given $X=x$) of the
observed equilibrium bid distribution when the number of bidders is $I=k$,
where $k=2,3$. A conditional version of Equation (5) of GPV (with $I_{1}=2$
and $I_{2}=3$ in their notation) provides the following testing
restrictions: 
\begin{equation}
\begin{split}
q_{2}(\tau |x)-q_{3}(\tau |x)& <0,\text{ and} \\
\underline{b}-2q_{2}(\tau |x)+q_{3}(\tau |x)& <0,
\end{split}
\label{test-intro-cond}
\end{equation}%
for any $\tau \in (0,1]$ and for any $x\in \text{supp}(X)$, where $\text{supp%
}(X)$ is the (common) support of $X$, and $\underline{b}$ is the left
endpoint of the support of the observed bids.\footnote{%
In GPV, it is assumed that for $I=k$, the support of the observed
equilibrium bid distribution is $[\underline{b},\overline{b}_{k}]\subset
\lbrack 0,\infty )$ with $\underline{b}<\overline{b}_{k}$, where $k=2,3$.
Note that $\underline{b}$ is common across $k$'s, while $\overline{b}_{k}$'s
are not.} The restrictions in (\ref{test-intro-cond}) are based on
conditionally exogenous participation for which the latent private value
distribution is independent of the number of bidders conditional on observed
characteristics ($X$), e.g. appraisal values. A slightly weaker version of %
\eqref{test-intro-cond} can be put into our general problem of testing the
null hypothesis:\footnote{%
If necessary, we may test the strict inequalities (3.1), instead of the weak
inequalities (3.2). However, such test would require a test statistic that
is different from ours and needs a separate treatment.}%
\begin{equation}
\begin{split}
v_{\tau ,1}(x)\equiv q_{2}(\tau |x)-q_{3}(\tau |x)& \leq 0,\text{ and} \\
v_{\tau ,2}(x)\equiv \underline{b}-2q_{2}(\tau |x)+q_{3}(\tau |x)& \leq 0,
\end{split}
\label{test-intro-cond1}
\end{equation}%
for any $(\tau ,x)\in \mathcal{T}\times \mathcal{X}\subset (0,1]\times \text{%
supp}(X)$.

The example in \eqref{test-intro-cond1} illustrates that in order to test
the implications of auction theory, it is essential to test the null
hypothesis uniformly in $\tau$ and $x$. More specifically, testing for a
wide range of $\tau$ is important because testable implications are
expressed in terms of conditional stochastic dominance relations.
Furthermore, testing the relations uniformly over $x$ is natural since
theoretical predictions given by conditionally exogenous participation
should hold for any realization of observed auction heterogeneity. It also
shows that it is important to go beyond the $J=1$ case and to include a
general $J > 1$. In fact, if the number of bidders can take more than two
values, there could be many more functional inequalities (see Corollary 1 of
GPV). Finally, we note that $v_{\tau,1}(x)$ and $v_{\tau,2}(x)$ are not
forms of conditional moment inequalities and each involves two different
conditional quantile functions indexed by $\tau$. Therefore, tests developed
for conditional moment inequalities are not directly applicable to this
empirical example. There exist related but distinct papers regarding this
empirical example. See, e.g., \citeasnoun{Marmer-et-al:13} who developed a
nonparametric test for selective entry, and \citeasnoun{Gimens/Guerre:13}
who proposed augmented quantile regression for first-price auction models.

\subsection{Test Statistic}

To implement the test, it is necessary to estimate conditional quantile
functions. In estimation of $q_{j}(\tau |x)$, $j=2,3$, we may use a local
polynomial quantile regression estimator, say $\widehat{q}_{j}(\tau |x)$.
Now write 
\begin{align*}
\hat{v}_{\tau ,1}(x)& =\hat{q}_{2}(\tau |x)-\hat{q}_{3}(\tau |x), \\
\hat{v}_{\tau ,2}(x)& =\hat{\underline{b}}-2\hat{q}_{2}(\tau |x)+\hat{q}%
_{3}(\tau |x),
\end{align*}%
where $\hat{\underline{b}}$ is a consistent estimator of $\underline{b}$.%
\footnote{%
In our application, we set $\hat{\underline{b}}$ to be the observed minimum
value.} Then testing \eqref{test-intro-cond1} can be carried out using $\{%
\hat{v}_{\tau ,j}(x):j=1,2\}$ based on our general framework. In this
application, our test statistics take the following forms: 
\begin{align}  \label{test-form-cqi}
\begin{split}
\hat{\theta}_{\text{sum}}& =\int_{\mathcal{X}\times \mathcal{T}}\left[ r_{n}%
\hat{v}_{\tau ,1}(x)\right] _{+}^{p}dQ(x,\tau ) + \int_{\mathcal{T}\times 
\mathcal{X}}\left[ r_{n}\hat{v}_{\tau ,2}(x)\right] _{+}^{p}dQ(x,\tau ),\ 
\text{or} \\
\hat{\theta}_{\text{max}}& =\int_{\mathcal{X}\times \mathcal{T}}\left( \max
\left\{ \left[ r_{n}\hat{v}_{\tau ,1}(x)\right] _{+},\left[ r_{n}\hat{v}%
_{\tau ,2}(x)\right] _{+}\right\} \right) ^{p}dQ(x,\tau ),
\end{split}%
\end{align}%
where $r_{n}=n^{1/2}h^{d/2}$.
Note that in \eqref{test-form-cqi}, we set $\hat{\sigma}_{\tau ,j}(x)\equiv
1 $.

\subsection{Details on Estimating Conditional Quantile Functions}

\label{auction-example-detail}

Assume that $q_{k}(\tau |x)$ is $(r+1)$-times
continuously differentiable with respect to $x$, where $r\geq 1$. We use a
local polynomial estimator $\widehat{q}_{k}(\tau |x)$. For $u\equiv
(u_{1},\ldots ,u_{d})$, a $d$-dimensional vector of nonnegative integers,
let $[u]=u_{1}+\cdots +u_{d}$. Let $A_{r}$ be the set of all $d$-dimensional
vectors $u$ such that $[u]\leq r$, and let $|A_{r}|$ denote the number of
elements in $A_{r}$. For $z=(z_{1},\cdot \cdot \cdot ,z_{d})^{\top }\in 
\mathbf{R}^{d}$ with $u=(u_{1},\cdot \cdot \cdot ,u_{d})^{\top }\in A_{r}$,
let $z^{u}=\prod_{m=1}^{d}z_{m}^{u_{m}}$. Now define $c(z)=(z^{u})_{u\in
A_{r}},$ for $z\in \mathbf{R}^{d}$. Note that $c(z)$ is a vector of
dimension $|A_{r}|$.

Let $\{(B_{\ell i},X_{i},L_{i}):\ell =1,\ldots ,L_{i},i=1,\ldots ,n\}$
denote the observed data, where $\{B_{\ell i}:\ell =1,\ldots ,L_{i}\}$
denotes the $L_{i}$ number of observed bids in the $i$-th auction, $X_{i}$ a
vector of observed characteristics for the $i$-th auction, and $L_{i}$ the
number of bids for the $i$-th auction, taking values from $\mathbb{N}%
_{L}\equiv \{2,\cdot \cdot \cdot ,\bar{L}\}$. In our application, $\bar{L}=3$%
.

Assume that the data $\{(B_{\ell i},X_{i},L_{i}):\ell =1,\ldots
,L_{i},i=1,\ldots ,n\}$ are i.i.d. over $i$ and that $B_{\ell i}$'s are also
i.i.d. over $\ell $ conditional on $X_{i}$ and $L_{i}$. To implement the
test, it is necessary to estimate $\underline{b}$. In our application, we
use $\underline{\hat{b}}=\min \{B_{\ell i}:\ell =1,\ldots ,L_{i},i=1,\ldots
,n\}$, that is the overall sample minimum.

For each $x=(x_{1},\ldots ,x_{d})$, the $r$-th order local polynomial
quantile regression estimator of $q_{k}(\tau |x)$ can be obtained by
minimizing 
\begin{equation*}
S_{n,x,\tau ,k}(\gamma )\equiv \sum_{i=1}^{n}1\{L_{i}=k\}\sum_{\ell
=1}^{L_{i}}l_{\tau }\left[ B_{\ell i}-\gamma ^{\top }c\left( \frac{X_{i}-x}{h%
}\right) \right] K\left( \frac{x-X_{i}}{h}\right)
\end{equation*}%
with respect to $\gamma \in \mathbf{R}^{|A_{r}|}$, where $l_{\tau }(u)\equiv
\{|u|+(2\tau -1)u\}/2$ for any $u\in \mathbf{R},$ and $K(\cdot )$ is a $d$%
-dimensional kernel function and $h$ a bandwidth. More specifically, let $%
\widehat{q}_{k}(\tau |x)=\mathbf{e}_{1}^{\top }\hat{\gamma}_{k}(x)$, where $%
\hat{\gamma}_{k}(x)\equiv \arg \min_{\gamma \in \mathbf{R}%
^{|A_{r}|}}S_{n,x,\tau ,k}(\gamma )$ and $\mathbf{e}_{1}$ is a column vector
whose first entry is one, and the rest zero. Note that all bids are combined
in each auction since we consider symmetric bidders.

\subsection{Primitive Conditions}

\label{auction-example-low-level-condition}

Let us present primitive conditions for the auction example of GPV. Let $%
\mathcal{P}$ denote the collection of the potential joint distributions of $%
(B^{\top },X^{\top },L)^{\top }$ and define $\mathcal{V}=\mathcal{T}\times 
\mathcal{P}$ as before. 

For $u=(u_{1},\cdot
\cdot \cdot ,u_{d})^{\top }\in A_{r}$, and $r+1$ times differentiable map $f$
on $\mathbf{R}^{d}$, we define the following derivative:%
\begin{equation*}
(D^{u}f)(x)\equiv \frac{\partial ^{\lbrack u]}}{\partial x_{1}^{u_{1}}\cdot
\cdot \cdot \partial x_{d}^{u_{d}}}f(x),
\end{equation*}%
where $[u]=u_{1}+\cdot \cdot \cdot +u_{d}$. Then we define $\gamma _{\tau
,k}(x)\equiv \left( \gamma _{\tau ,k,u}(x)\right) _{u\in A_{r}}$, where%
\begin{equation*}
\gamma _{\tau ,k,u}(x)\equiv \frac{1}{u_{1}!\cdot \cdot \cdot u_{d}!}%
D^{u}q_{k}(\tau |x).
\end{equation*}

In order to reduce the redundancy of the statements, let us introduce the
following definitions.

\begin{defn}
Let $\mathcal{G}$ be a set of functions $g_{v}:\mathbf{R}^{m}\rightarrow 
\mathbf{R}^{s}\ $indexed by a set $\mathcal{V}$, and let $S\subset \mathbf{R}%
^{m}$ be a given set and for $\varepsilon >0$, let $S_{v}(\varepsilon )$ be
an $\varepsilon $-enlargement of $S_{v}=\{x\in S:(x,v)\in S\times V\}$,
i.e., $S_{v}(\varepsilon )=\{x+a:x\in S$ and $a\in \lbrack -\varepsilon
,\varepsilon ]^{m}\}.$ Then we define the following conditions for $\mathcal{%
G}$: 
\begin{enumerate}
\item[(a)] B($S,\varepsilon $): $g_{v}$ is bounded on $S_{v}(\varepsilon
)\ $uniformly over $v\in \mathcal{V}.$
\item[(b)] BZ$(S,\varepsilon $): $g_{v}$ is bounded away from zero on $%
S_{v}(\varepsilon )\ $uniformly over $v\in \mathcal{V}.$
\item[(c)] BD($S,\varepsilon ,r$): $\mathcal{G}$ satisfies B$%
(S,\varepsilon )$ and $g_{v}$ is $r$ times continuously differentiable on $%
S_{v}(\varepsilon )$ with derivatives bounded on $S_{v}(\varepsilon )$
uniformly over $v\in \mathcal{V}.$
\item[(d)] BZD($S,\varepsilon ,r$): $\mathcal{G}$ satisfies BZ$%
(S,\varepsilon )$ and $g_{v}$ is $r$ times continuously differentiable on $%
S_{v}(\varepsilon )$ with derivatives bounded on $S_{v}(\varepsilon )$
uniformly over $v\in \mathcal{V}$.
\item[(e)] LC: $g_{v}$ is Lipschitz continuous with Lipschitz coefficient
bounded uniformly over $v\in V$.
\end{enumerate}
\end{defn}

Let $\mathcal{P}$ denote the collection of the potential joint distributions
of $(B^{\top },X^{\top },L)^{\top }$ and define $\mathcal{V}=\mathcal{T}%
\times \mathcal{P}$, and for each $k\in \mathbb{N}_{L},$%
\begin{eqnarray}
\mathcal{G}_{q}(k) &=&\left\{ q_{k}(\tau |\cdot ):(\tau ,P)\in \mathcal{V}%
\right\} ,  \label{fns} \\
\mathcal{G}_{f}(k) &=&\left\{ f_{\tau ,k}(\cdot |\cdot ):(\tau ,P)\in 
\mathcal{V}\right\} ,  \notag \\
\mathcal{G}_{L}(k) &=&\left\{ P\left\{ L_{i}=k|X_{i}=\cdot \right\} :P\in 
\mathcal{P}\right\} ,\text{ and}  \notag \\
\mathcal{G}_{f} &=&\left\{ f(\cdot ):P\in \mathcal{P}\right\} ,  \notag
\end{eqnarray}%
where $f_{\tau ,k}(0|x)$ being the conditional density of $B_{li}-q_{k}(\tau
|X_{i})$ given $X_{i}=x$ and $L_{i}=k.$ Also, define%
\begin{equation}
\mathcal{G}_{f,2}(k)=\left\{ f_{\cdot ,k}(\cdot |\cdot ):P\in \mathcal{P}%
\right\} \text{ and\ }\mathcal{G}_{\gamma }(k)=\left\{ \gamma _{\cdot
,k}(\cdot ):P\in \mathcal{P}\right\} .  \label{fns2}
\end{equation}%
We make the following assumptions.

\begin{AssumptionAUC}
(i) $\mathcal{G}_{f}$ satisfies BD($\mathcal{S},\varepsilon ,1)$.\newline
\noindent (ii) For each $k\in \mathbb{N}_{L}$, $\mathcal{G}_{f}(k)$ and $%
\mathcal{G}_{L}(k)\ $satisfy BD($\mathcal{S},\varepsilon ,1)$ and BZD($%
\mathcal{S},\varepsilon ,1).$\newline
\noindent (iii) For each $k\in \mathbb{N}_{L}$, $\mathcal{G}_{q}(k)\ $%
satisfies BD($\mathcal{S},\varepsilon ,r+1)$  for some $r>3d/2-1.$ \newline
\noindent (iv) For each $k\in \mathbb{N}_{L}$, $\mathcal{G}_{f,2}(k)$ and $%
\mathcal{G}_{\gamma }(k)\ $satisfy LC.
\end{AssumptionAUC}

Assumption AUC1(i) and (iii) are standard assumptions used in the local
polynomial approach where one approximates $q_{k}(\cdot |x)$ by a linear
combination of its derivatives through Taylor expansion, except only that
the approximation here is required to behave well uniformly over $P\in 
\mathcal{P}$. Assumption AUC1(ii) is made to prevent the degeneracy of the
asymptotic linear representation of $\hat{\gamma}_{\tau ,k}(x)-\gamma _{\tau
,k}(x)$ that is uniform over $x\in \mathcal{S}_{\tau }(\varepsilon ),\ \tau
\in \mathcal{T}$ and over $P\in \mathcal{P}$. Assumption AUC(iv) requires
that the conditional density function of $B_{li}-q_{k}(\tau |X_{i})$ given $%
X_{i}=x$ and $L_{i}=k$ and $\gamma _{\tau ,k}(\cdot )$ behave smoothly as we
perturb $\tau $ locally. This requirement is used to control the size of the
function spaces indexed by $\tau $, so that when the stochastic convergence
of random sequences holds, it is ensured to hold uniformly in $\tau $.

 Assumption
AUR2 lists conditions for the kernel function and the bandwidth.

\begin{AssumptionAUC}
(i)  $K$ is compact-supported, nonnegative, bounded, and Lipschitz continuous
on the interior of its support, $\int K(u)du=1$, and $\int K\left( u\right)
||u||^{2}du>0.$\newline
(ii) $n^{-1/2}h^{-3(d+\nu )/2}+\sqrt{n}h^{r+d+1}/\sqrt{\log n}\rightarrow 0,$
as $n\rightarrow \infty ,\ $for some small $\nu >0$, with $r$ in Assumption
AUC1(iii).
\end{AssumptionAUC}

As for Assumption AUC2(ii), the choice of $h=n^{-s}$ with the condition $%
1/(2(r+d+1))<s<1/(3(d+\nu ))$ satisfies the bandwidth condition. The small $%
\nu >0$ there is introduced to satisfy Assumption \ref{assumption-B4}.

\begin{AssumptionAUC}
$\underline{\hat{b}}=\underline{b}+o_{P}\left( n^{-1/2}\right) ,\ \mathcal{P}
$-uniformly.
\end{AssumptionAUC}

\begin{AssumptionAUC}
(i) There exist nonstochastic sequences $c_{n,L}>0$ and $c_{n,U}>0$ such
that $c_{n,L}<c_{n,U}$, and as $n\rightarrow \infty ,$%
\begin{equation*}
\inf_{P\in \mathcal{P}}P\left\{ \hat{c}_{n}\in \lbrack
c_{n,L},c_{n,U}]\right\} \rightarrow 1,\text{ and\ }\sqrt{\log n}%
/c_{n,L}+n^{-1/2}h^{-d/2}c_{n,U}\rightarrow 0.
\end{equation*}%
\newline
(ii) For each $a\in (0,1/2)$, there exists a compact set $\mathcal{C}%
_{a}\subset \mathbf{R}^{d}$ such that 
\begin{equation*}
0<\inf_{P\in \mathcal{P}}P\{X_{i}\in \mathbf{R}^{d}\backslash \mathcal{C}%
_{a}\}\leq \sup_{P\in \mathcal{P}}P\{X_{i}\in \mathbf{R}^{d}\backslash 
\mathcal{C}_{a}\}<a.
\end{equation*}
\end{AssumptionAUC}

 Assumption AUC3 holds
in general because the extreme order statistic is super-consistent with the $%
n^{-1}$ rate of convergence.  Assumption
AUC4(i) requires that $\hat{c}_{n}$ increase faster than $\sqrt{\log n}$ but
slower than $r_{n}$ with probability approaching one. Assumption AUC4(ii)
imposes some regularity on the behavior of the support of $X_{i}$ as $P$
moves around $\mathcal{P}$.

The following result establishes the uniform validity of the bootstrap test.

\begin{TheoremAUC}
\label{thm:AUC1} Suppose that Assumptions AUC1-AUC4 hold. Then 
\begin{equation*}
\limsup_{n\rightarrow \infty }\sup_{P\in \mathcal{P}_{0}}P\left\{
\hat{\theta}_{\textrm{sum}}>c_{\alpha ,\eta }^{\ast }\right\} \leq \alpha 
\; \text{ and } \;
\limsup_{n\rightarrow \infty }\sup_{P\in \mathcal{P}_{0}}P\left\{
\hat{\theta}_{\textrm{max}}>c_{\alpha ,\eta }^{\ast }\right\} \leq \alpha. 
\end{equation*}
\end{TheoremAUC}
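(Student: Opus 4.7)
The plan is to verify the high-level Assumptions A\ref{assumption-A1}--A\ref{assumption-A6} and B\ref{assumption-B1}--B\ref{assumption-B4} for the specific estimators $\hat{v}_{\tau,1}$ and $\hat{v}_{\tau,2}$; once this is done, the conclusion follows immediately from Theorem \ref{Thm1} applied with $J=2$ and either of the two admissible forms of $\Lambda_p$ in \eqref{lambda_p}. The normalization $\hat{\sigma}_{\tau,j}(x)\equiv 1$ makes Assumptions A\ref{assumption-A5} and B\ref{assumption-B3} automatic with $\sigma_{n,\tau,j}(x)\equiv 1$. Assumption A\ref{assumption-A6}(ii) is exactly AUC4(ii), and A\ref{assumption-A4} is supplied by AUC2(ii) together with AUC4(i); B\ref{assumption-B4} is implied by AUC2(ii) once $M$ is identified in A\ref{assumption-A6}(i), which can be taken arbitrarily large because the relevant $\beta_{n,x,\tau,j}$ will be bounded.

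The core step is the uniform Bahadur-type linear representation for $\hat{q}_k(\tau|x)$, which will yield Assumption A\ref{assumption-A1}. I would start from the local polynomial quantile normal equations and invert the local Hessian $H_{n,k}(x)\equiv h^{-d}\mathbf{E}[c(\frac{X_i-x}{h})c(\frac{X_i-x}{h})^\top f_{\tau,k}(0|X_i)P\{L_i=k|X_i\}K((X_i-x)/h)]$, whose smallest eigenvalue is bounded away from zero uniformly in $(x,\tau)\in\mathcal{S}$ and $P\in\mathcal{P}$ by AUC1(i)--(ii) and AUC2(i). A standard quantile regression linearization (in the spirit of Chaudhuri, Kong--Xia, and Guerre--Sabbah, but carried out $\mathcal{P}$-uniformly) then gives
\[
\hat{q}_k(\tau|x)-q_k(\tau|x)=\frac{1}{nh^d}\sum_{i=1}^{n}\alpha_{n,x,\tau,k}(B_{\ell i},X_i,L_i)+R_{n,k}(x,\tau),
\]
with $\alpha_{n,x,\tau,k}$ of the schematic form $\mathbf{e}_1^\top H_{n,k}(x)^{-1}c((X_i-x)/h)[\tau-1\{B_{\ell i}\le q_k(\tau|X_i)\}]1\{L_i=k\}K((X_i-x)/h)$, and with $\sup_{(x,\tau)\in\mathcal{S}}|R_{n,k}(x,\tau)|=o_P(h^{d/2}/r_n)$, $\mathcal{P}$-uniformly, under the smoothness $r>3d/2-1$ and bandwidth $n^{-1/2}h^{-3(d+\nu)/2}\to 0$. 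Linear combinations of the $\alpha$'s deliver $\beta_{n,x,\tau,1}$ and $\beta_{n,x,\tau,2}$ for $\hat{v}_{\tau,1}$ and $\hat{v}_{\tau,2}$; Assumption AUC3 folds $\hat{\underline{b}}-\underline{b}$ into the remainder because $r_n\cdot o_P(n^{-1/2})=o_P(h^{d/2})$. Assumption A\ref{assumption-A2} then holds with $\mathcal{K}_0$ the support of $K$, and A\ref{assumption-A6}(i) holds with arbitrary $M$ because the $\alpha$'s are bounded times indicators.

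For Assumption A\ref{assumption-A3} I would invoke Lemma \ref{nice-lemma}'s companion (Lemma 2 of the paper), checking the local $L_2$-continuity condition \eqref{Lp-Cont} with $\gamma_j=1$ and $\delta_{n,j}$ polynomial in $h^{-1}$; the score $\tau-1\{B\le q_k(\tau|y)\}$ is Lipschitz in $\tau$ in the $L_2$ sense by AUC1(iv), and $c((X_i-x)/h)K((X_i-x)/h)$ is Lipschitz in $x$ up to a factor $h^{-1}$ by AUC2(i), so the condition holds. The bootstrap Assumptions B\ref{assumption-B1} and B\ref{assumption-B2} are obtained by repeating the argument on the empirical measure: one derives the bootstrap Bahadur expansion using the same $\beta_{n,x,\tau,j}$, with the ``population'' Hessian replaced by its sample analogue (which converges $\mathcal{P}$-uniformly by A\ref{assumption-A3} and A\ref{assumption-A5}), and uses standard exchangeable-bootstrap empirical-process inequalities to control the remainder at the required $o_{P^\ast}(h^{d/2})$ rate.

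The main obstacle is the \emph{strengthened} uniform rate $o_P(h^{d/2})$ (rather than $o_P(1)$) for the Bahadur remainder, uniform in both $(x,\tau)\in\mathcal{S}$ and $P\in\mathcal{P}$. This is delicate because the quantile check function is nonsmooth and the linearization error must be controlled simultaneously over a continuum of quantile levels and covariate values, and without any single fixed distribution to anchor the empirical-process inequalities. The smoothness degree $r>3d/2-1$ in AUC1(iii) kills the bias at the needed rate through Taylor expansion of $q_k(\tau|\cdot)$, while the bandwidth condition $n^{-1/2}h^{-3(d+\nu)/2}\to 0$ in AUC2(ii) gives enough slack to bound the stochastic remainder uniformly via Talagrand-type maximal inequalities applied to the Type IV classes indexed by $(x,\tau,k)$. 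Once this uniform representation is in hand, all remaining assumptions are routine and Theorem \ref{Thm1} concludes.
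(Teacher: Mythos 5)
Your proposal follows essentially the same route as the paper: reduce Theorem AUC1 to Theorem \ref{Thm1} by verifying Assumptions A\ref{assumption-A1}--A\ref{assumption-A6} and B\ref{assumption-B1}--B\ref{assumption-B4}, with the uniform Bahadur representation (original and bootstrap) as the only substantive step and the remaining conditions checked exactly as you describe (boundedness of the influence functions for A\ref{assumption-A6}(i) and B\ref{assumption-B4}, Lemma 2 with the local $L_2$-continuity condition for A\ref{assumption-A3} and B\ref{assumption-B2}, super-consistency of $\hat{\underline{b}}$ absorbed into the remainder). The only difference is expository: the paper imports the $\mathcal{P}$-uniform Bahadur expansions from a companion paper rather than deriving them, whereas you sketch the derivation, correctly identifying the $o_P(h^{d/2})$ remainder rate as the delicate point.
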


Theorem AUC1 gives  the uniform asymptotic validity of the bootstrap
test. It is straightforward to characterize 
the class of distributions under the null
hypothesis renders the test asymptotically exact, using Theorem \ref{Thm2}. 
We omit the details for the brevity of the paper.

Our asymptotic approximation is based on plugging the asymptotic linear
expansion directly. There is a recent proposal by %
\citeasnoun{Mammen-et-al:13}, who developed nonparametric tests for
parametric specifications of regression quantiles and showed that
calculating moments of linear expansions of nonparametric quantile
regression estimators might work better in a sense that their approach
requires less stringent conditions for the dimension of covariates and the
choice of the bandwidth. It is an interesting future research topic whether
their ideas can be applied to our setup.

\subsection{Empirical Results}

We now present empirical results using the timber auction data used in %
\citeasnoun{Lu/Perrigne:08}.\footnote{%
The data are available on the \textit{Journal of Applied Econometrics}
website.} They used the timer auction data to estimate bidders' risk
aversion, taking advantage of bidding data from ascending auctions as well
as those from first-price sealed-bid auctions. In our empirical example, we
use only the latter auctions with 2 and 3 bidders, and we use the appraisal
value as the only covariate $X_{i}$ ($d=1$). Summary statistics and visual
presentation of data are given in Table \ref{table-auction-sum-stat} and
Figure \ref{figure-auction-1}. It can be seen from Table \ref%
{table-auction-sum-stat} that average bids become higher as the number of
bidders increases from 2 to 3. The top panel of Figure \ref{figure-auction-1}
suggests that this more aggressive bidding seems to be true, conditional on
appraisal values.

\begin{table}[htbp]
\caption{Summary Statistics for Empirical Example 1}
\label{table-auction-sum-stat}
\begin{center}
\begin{tabular}{lrrrr}
\hline\hline
& \multicolumn{2}{c}{2 bidders} & \multicolumn{2}{c}{3 bidders} \\ 
& \multicolumn{2}{c}{(Sample size $=107$)} & \multicolumn{2}{c}{(Sample size 
$=108$)} \\ \hline
&  & Standard &  & Standard \\ 
& Mean & Deviation & Mean & Deviation \\ \hline
Appraisal Value & 66.0 & 47.7 & 53.3 & 41.4 \\ 
Highest bid & 96.1 & 55.6 & 100.8 & 56.7 \\ 
Second highest bid & 80.9 & 49.2 & 83.1 & 51.5 \\ 
Third highest bid &  &  & 69.4 & 44.6 \\ \hline
\end{tabular}%
\end{center}
\par
\parbox{5in}{Notes: Bids and appraisal values are given in dollars per thousand
board-feet (MBF).
Source:  Timber auction data are from
 the \textit{Journal of Applied Econometrics} website.}
\end{table}

\begin{figure}[htbp]
\caption{Data for Empirical Illustration for Empirical Example 1}
\label{figure-auction-1}
\begin{center}
\hspace{1ex}
\par
\makebox{
\includegraphics[origin=bl,scale=.3,angle=90]{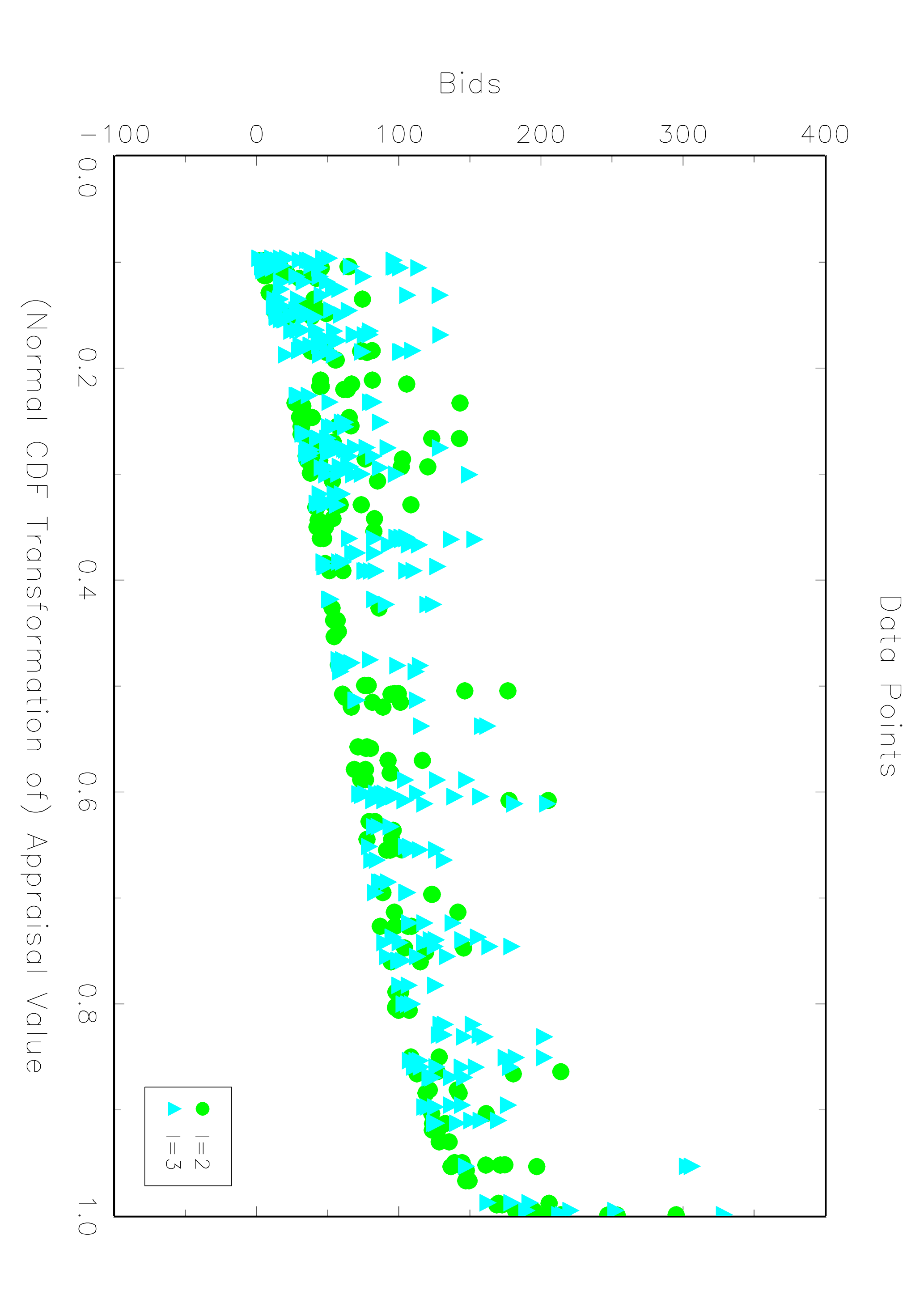}
}
\par
\hspace{1ex}
\par
\makebox{
\includegraphics[origin=bl,scale=.3,angle=90]{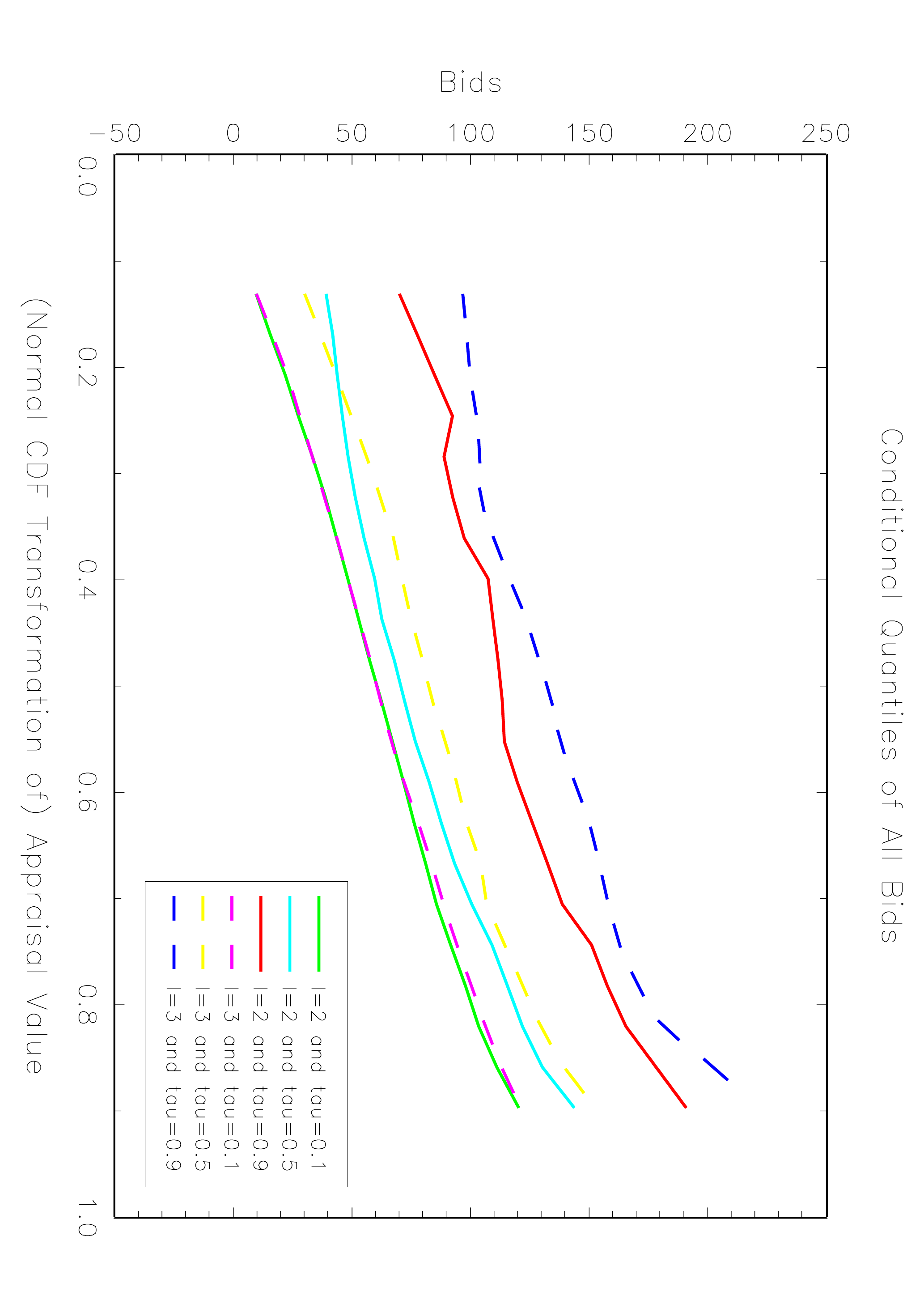}
}
\end{center}
\par
\parbox{5in}{
Note: The top panel of the figure shows observations and the bottom panel
depicts local linear quantile regression estimates.}
\end{figure}

\begin{figure}[htbp]
\caption{Estimates of ${v}_{\protect\tau,1}(x)$ and ${v}_{\protect\tau,2}(x)$
for Empirical Example 1}
\label{figure-auction-2}
\begin{center}
\hspace{1ex}
\par
\makebox{
\includegraphics[origin=bl,scale=.3,angle=90]{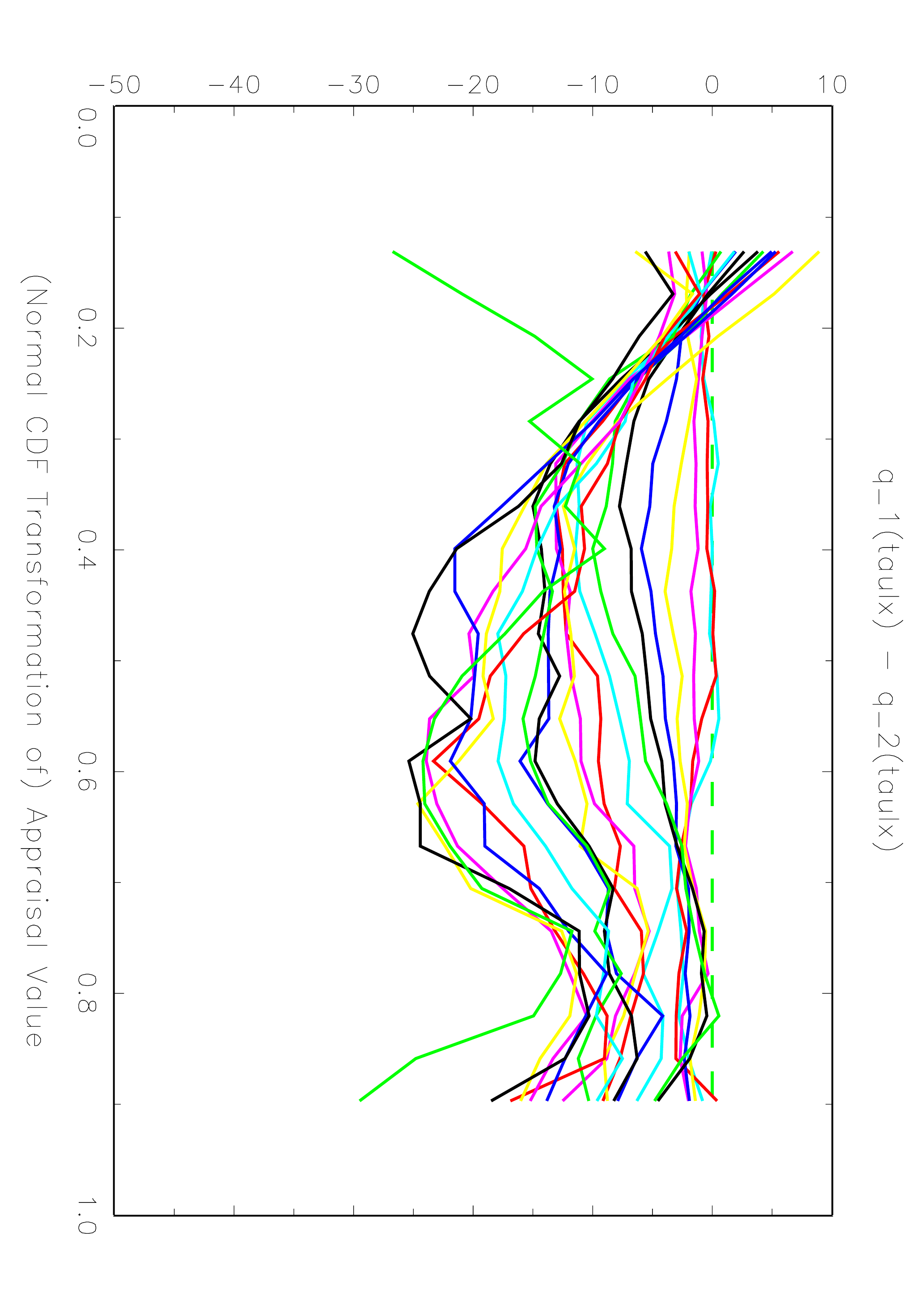}
}
\par
\hspace{1ex}
\par
\makebox{
\includegraphics[origin=bl,scale=.3,angle=90]{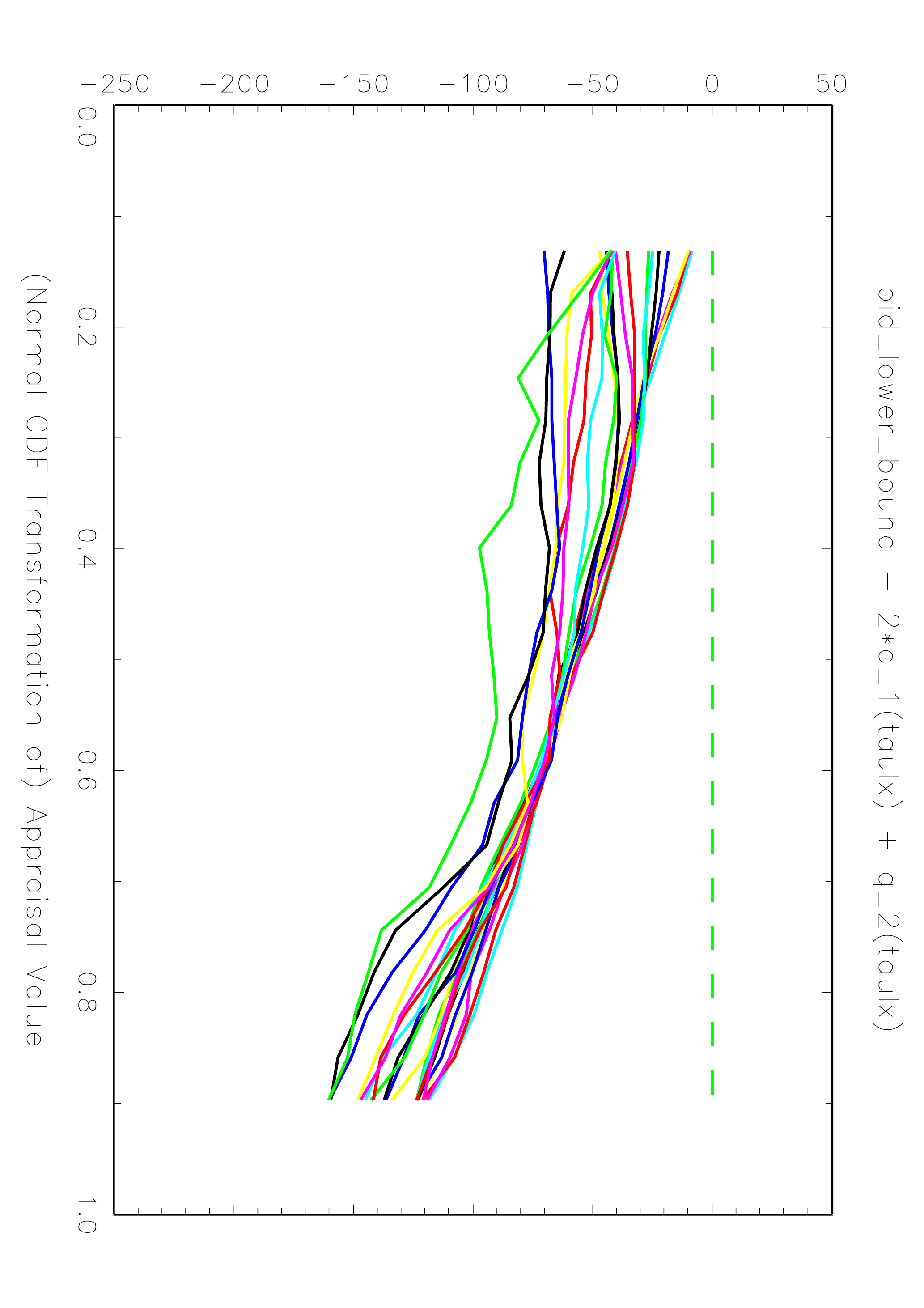}
}
\end{center}
\par
\parbox{5in}{
Note: The top and bottom panels of the figure show estimates of ${v}_{\tau,1}(x)$ and ${v}_{\tau,2}(x)$, respectively, where $\hat{v}_{\tau,1}(x) = \hat{q}_1(\tau|x) - \hat{q}_2(\tau|x)$ and $\hat{v}_{\tau,2}(x) = \underline{b} - 2 \hat{q}_1(\tau|x) + \hat{q}_2(\tau|x)$.
}
\end{figure}

Before estimation, the covariate was transformed to lie between 0 and 1 by
studentizing it and then applying the standard normal CDF transformation.
The bottom panel of Figure \ref{figure-auction-1} shows local linear
estimates of conditional quantile functions at $\tau = 0.1, 0.5, 0.9$.%
\footnote{%
Specifically, the conditional quantile functions ${q}_{2}(\tau |x)$ and ${q}%
_{3}(\tau |x)$ are estimated via the local linear quantile regression
estimator with the kernel function $K(u)=1.5[1-(2u)^{2}]\times 1\{|u|\leq
0.5\}$ and the bandwidth $h=0.6$. See Section \ref{auction-example-detail}
for more details on estimating conditional quantile functions.} In this
figure, estimates are only shown between the 10\% and 90\% sample quantiles
of the covariate. 

On one hand, the 10\% conditional quantiles are almost
identical between auctions with two bidders ($I=2$) and those with three
bidders ($I=3$). On the other hand, the 50\% and 90\% conditional quantiles
are higher with three bidders for most values of appraisal values. There is
a crossing of two conditional median curves at the lower end of appraisal
values.

To check whether inequalities in \eqref{test-intro-cond1} hold in this
empirical example, we plot estimates of ${v}_{\tau,1}(x)$ and ${v}%
_{\tau,2}(x)$ in Figure \ref{figure-auction-2}. The top panel of the figure
shows that 20 estimated curves of ${v}_{\tau,1}(x)$, each representing a
particular conditional quantile, ranging from the 10th percentile to the
90th percentile. There are strictly positive values of ${v}_{\tau,1}(x)$ at
the lower end of appraisal values. The bottom panel of Figure \ref%
{figure-auction-2} depicts 20 estimated curves of ${v}_{\tau,2}(x)$%
, showing that they are all strictly negative. The test based on %
\eqref{test-form-cqi} can tell formally whether positive values of ${v}%
_{\tau,1}(x)$ at the lower end of appraisal values can be viewed as evidence
against economic restrictions imposed by \eqref{test-intro-cond1}.

We considered both the $L_{1}$ and $L_{2}$ test statistics described in %
\eqref{test-form-cqi}. We set $\mathcal{T}$ to be the interval between the
10th and 90th percentiles of the covariate, and also set $\mathcal{X}
=[0.1,0.9]$. The contact set was estimated with $\hat{c}_n = C_{\text{cs}}
\log \log (n) q_{1-0.1/\log(n)}(S_n^\ast)$ with $r_{n}=\sqrt{nh}$. We
checked the sensitivity to the tuning parameters with $C_{\text{cs}}\in
\{0.5,1,1.5\}$ and $h\in \{0.3,0.6,0.9\}$. All cases resulted in bootstrap
p-values of 1, thereby suggesting that positive values of ${v}_{\tau ,1}(x)$
at the lower end of appraisal values cannot be interpreted as evidence
against the null hypothesis beyond random sampling errors. Therefore, we
have not found any evidence against economic implications imposed by %
\eqref{test-intro-cond1}.

\section{Empirical Example 2: Testing Functional Inequalities in the Context of Wage Inequality}\label{emp-example wage}

In this section, we give an empirical example regarding testing functional
inequalities via differences-in-differences in conditional quantiles,
inspired by \citeasnoun{Acemoglu:Autor:11}.

\subsection{Testing Problem}

Figures 9a-9c in \citeasnoun{Acemoglu:Autor:11} depict changes in log hourly
wages by percentile relative the median. Specifically, they consider the
following differences-in-differences in quantiles: 
\begin{align*}
\Delta_{t,s}(\tau,x)\equiv [q_t(\tau|x) - q_s(\tau|x)] - [q_t(0.5|x) -
q_s(0.5|x)]
\end{align*}
for time periods $t$ and $s$ and for quantiles $\tau$, where $q_t(\tau|x)$
denotes the $\tau$-quantile of log hourly wages conditional on $X=x$ in year 
$t$. \citeasnoun{Acemoglu:Autor:11} consider males and females together in
Figure 9a, males only in Figure 9b, and females only in Figure 9c. Thus, in
their setup, the only covariate $X$ is gender.

Figures 9a-9c in \citeasnoun{Acemoglu:Autor:11} suggest that (1) $%
\Delta_{1988,1974}(\tau,x) \geq 0$ for quantiles above the median, but $%
\Delta_{1988,1974}(\tau,x) \leq 0$ for quantiles below the median (hence,
widening the wage inequality, while the lower quantiles losing most), and
that (2) $\Delta_{2008,1988}(\tau,x) \geq 0$ for most of quantiles (hence,
`polarization' of wage growth, while middle quantiles losing most). In this
subsection, we consider testing 
\begin{align}  \label{test-CPS}
H_0: \Delta_{t,s}(\tau,x) \geq 0 \; \forall (x,\tau) \in \mathcal{X} \times 
\mathcal{T},
\end{align}
with a continuous covariate (age in our empirical example), where $(t,s) = (1988,1974)$ or $(t,s) =
(2008,1988)$.\footnote{%
Note that $H_0$ in \eqref{test-CPS} includes the case $\Delta_{t,s}(\tau,x)
\equiv 0$, which does not correspond to the notion of polarization. In view
of this, our null hypothesis in \eqref{test-CPS} can be regarded as a weak
form of polarization hypothesis, whereas a more strict version can be
written as the inequality in \eqref{test-CPS} holds strictly for some high
and low quantiles.} Note that degeneracy of the test statistic could occur
if the contact set consists of values of $(x,\tau)$ only around $\tau=0.5$.
Therefore, the uniformity of our test could be potentially important in this
example.

\subsection{Test Statistic}

To implement the test, we again use a local polynomial quantile regression
estimator, say $\hat{q}_t(\tau|x)$. Then $\Delta_{t,s}(\tau,x)$ can be
estimated by 
\begin{align*}
\hat{\Delta}_{t,s}(\tau,x)\equiv [\hat{q}_t(\tau|x) - \hat{q}_s(\tau|x)] - [%
\hat{q}_t(0.5|x) - \hat{q}_s(0.5|x)].
\end{align*}
Then testing \eqref{test-CPS} can be carried out using 
\begin{align}  \label{test-form-CPS}
\hat{\theta}_{t,s} &\equiv \int_{ \mathcal{X} \times \mathcal{T} } \left[
r_n \hat{v}_{\tau,t,s}(x) \right]_{+}^p dQ(x,\tau ),
\end{align}
where $\hat{v}_{\tau,t,s}(x) = -\hat{\Delta}_{t,s}(\tau,x)$.\footnote{%
Note that the null hypothesis is written as positivity in \eqref{test-CPS}.
Hence $\hat{v}_{\tau,t,s}(x)$ is defined accordingly.} Here, to reflect
different sample sizes between two time periods, we set 
\begin{align*}
r_n = \sqrt{ \frac{(n_t h_t) \times (n_s h_s)}{(n_t h_t) + (n_s h_s)}},
\end{align*}
where $n_j$ and $h_j$ are the sample size and the bandwidth used for
nonparametric estimation for year $j=t,s$.

\begin{table}[htbp]
\caption{Summary Statistics for Empirical Example 2}
\label{table-cps-sum-stat}
\begin{center}
\begin{tabular}{lrrr}
\hline\hline
Year & 1974 & 1988 & 2008 \\ \hline
Log Real Hourly Wages & 2.780 & 2.769 & 2.907 \\ 
Age in Years & 35.918 & 35.501 & 39.051 \\ 
Sample Size & 19575 & 64682 & 48341 \\ \hline
\end{tabular}%
\end{center}
\par
\parbox{5in}{
Notes:
The sample is restricted to white males, with age between 16 and 64.
Entries for log real hourly wages and age  show CPS sample weighted means.
Source: May/ORG CPS data extract from David Autor's web site.
 }
\end{table}

\begin{figure}[htbp]
\caption{Changes in Log Hourly Wages by Percentile Relative to the Median}
\label{figure-cps-1}
\begin{center}
\makebox{
\includegraphics[origin=bl,scale=.5,angle=90]{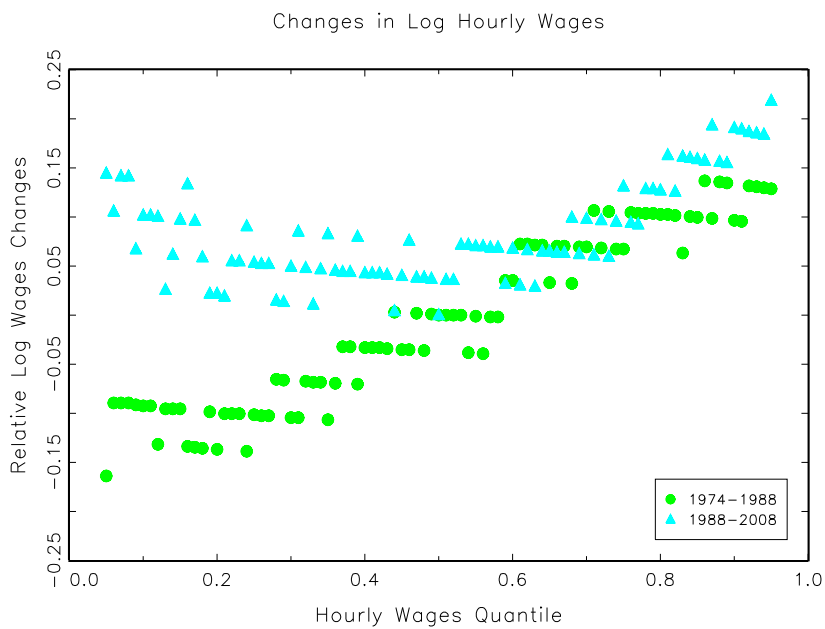}
}
\end{center}
\par
\parbox{5in}{
Notes: The figure shows differences-in-differences in quantiles of log hourly wages, measured by
$[q_t(\tau) - q_s(\tau)] - [q_t(0.5) - q_s(0.5)]$. Triangles correspond to changes from 1974 to 1988, whereas
circles those from 1988 to 2008.
All quantiles are computed using CPS sample weight.
Source: May/ORG CPS data extract from David Autor's web site.
 }
\end{figure}

\begin{figure}[htbp]
\caption{Estimates of $\hat{v}_{\protect\tau,t,s}(x)$}
\label{figure-cps-2}
\begin{center}
\makebox{
\includegraphics[origin=bl,scale=.3,angle=90]{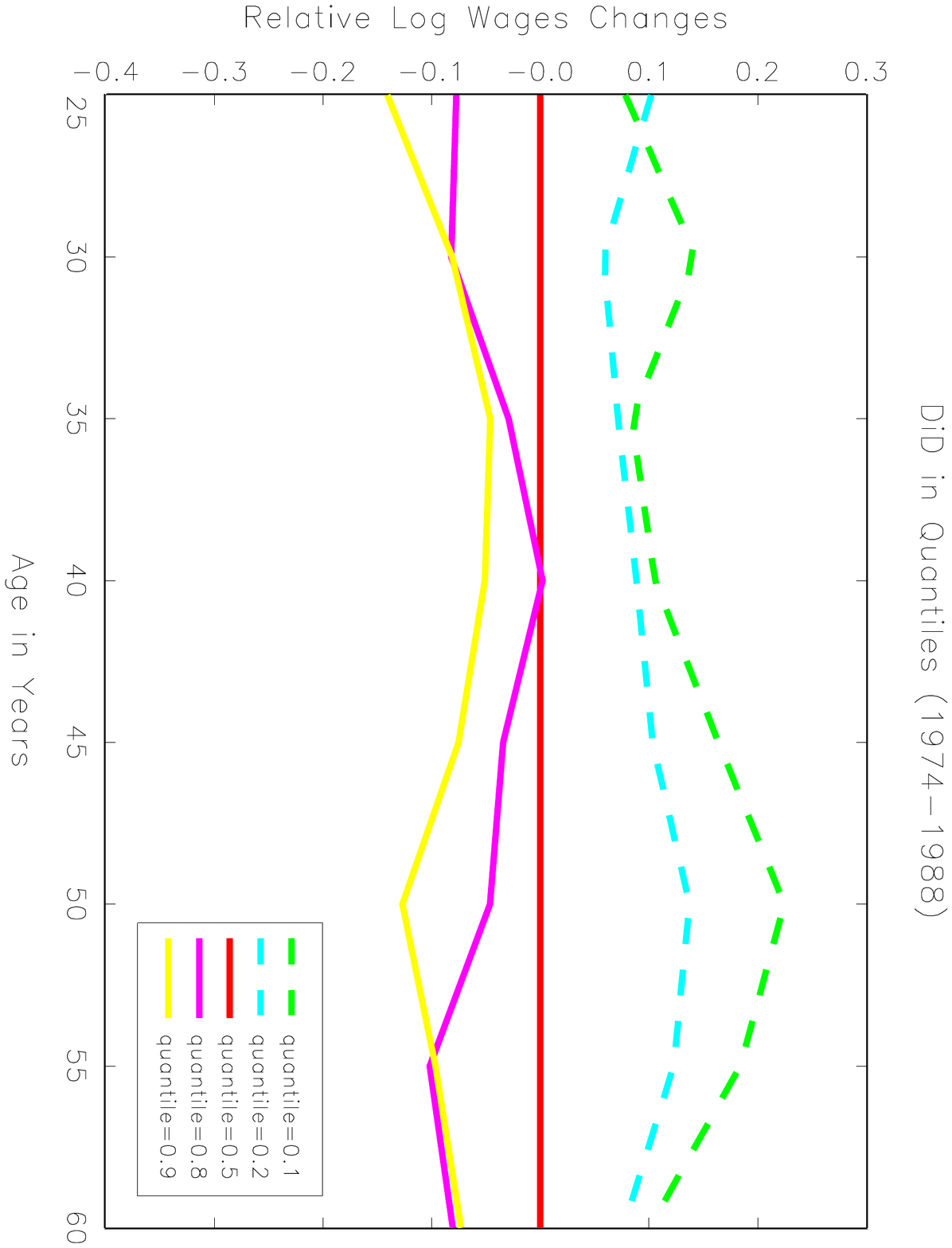}
} 
\makebox{
\includegraphics[origin=bl,scale=.3,angle=90]{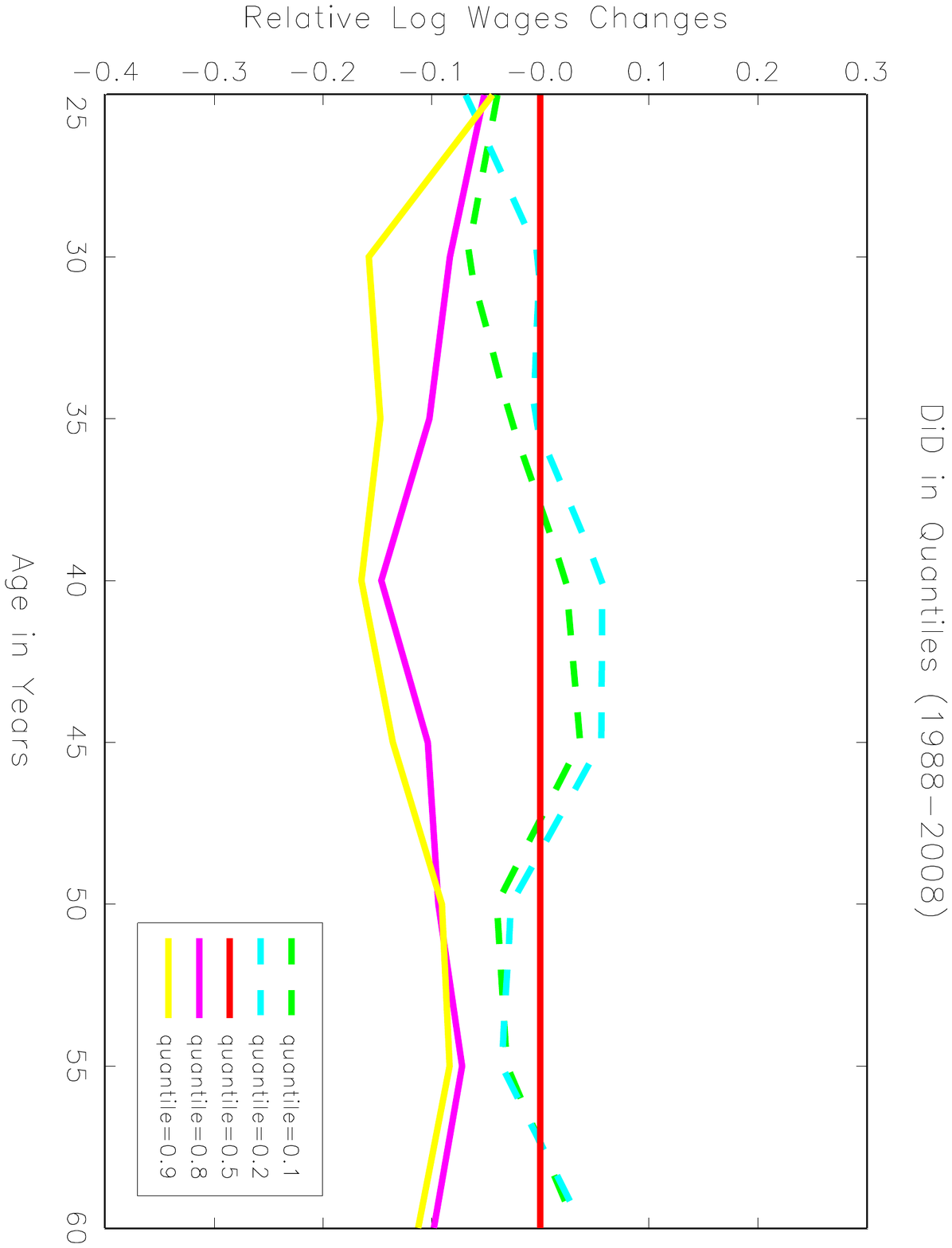}
}
\end{center}
\par
\parbox{5in}{
Note: The top and bottom panels of the figure show local linear estimates of
$-\Delta_{1988,1974}(\tau,x)$ and $-\Delta_{2008,1988}(\tau,x)$, respectively,
where $x$ is age in years.
}
\end{figure}

\subsection{Empirical Results}

We used the CPS data extract of \citeasnoun{Acemoglu:Autor:11}.\footnote{%
The data are available on David Autor's web site. We would like to thank him
for posting the data set on a public domain. They used three-year averages
around the year of interest to produce Figures 9a-9c in %
\citeasnoun{Acemoglu:Autor:11}; however, we used just annual data.} In our
empirical example, we use age in years as the only covariate. Summary
statistics and visual presentation of data are given in Table \ref%
{table-cps-sum-stat} and Figure \ref{figure-cps-1}. Note that Figure \ref%
{figure-cps-1} replicates the basic patterns of Figures 9 of %
\citeasnoun{Acemoglu:Autor:11}.

We now turn to the conditional version of Figure \ref{figure-cps-1}, using
age as a conditioning variable. As an illustration, let $\mathcal{X}$ be an
interval of ages between 25 and 60 and let $\mathcal{T} = [0.1,0.9]$. To
check whether inequalities in $\hat{\Delta}_{t,s}(\tau,x) \geq 0$ hold for
each value of $(x,\tau) \in \mathcal{X} \times \mathcal{T}$, we plot
estimates of $\hat{v}_{\tau,t,s}(x) = - \hat{\Delta}_{t,s}(\tau,x) $ in
Figure \ref{figure-cps-2}. The top panel of the figure shows that 5
estimated curves of $\hat{v}_{\tau,1988,1974}(x)$, each representing a
particular conditional quantile, and the bottom panel shows the
corresponding graph for period 1988-2008.\footnote{%
As before, underlying conditional quantile functions are estimated via the
local linear quantile regression estimator with the kernel function $%
K(u)=1.5[1-(2u)^{2}]\times 1\{|u|\leq 0.5\}$. One important difference from
the first empirical example is that we used the CPS sample weight, which
were incorporated by multiplying it to the kernel weight for each
observation. Finally, the bandwidth was $h=2.5$ for all years.} By
construction, the estimated curve is a flat line at zero when $\tau = 0.5$.
As consistent with Figure \ref{figure-cps-1}, the lower quantiles seem to
violate the null hypothesis, especially for the period 1974-1988. As before,
our test can tell formally whether positive values of $\hat{v}_{\tau,t,s}(x)$
lead to rejection of the null hypothesis of polarization of wage growth.

We considered both the $L_{1}$ and $L_{2}$ test statistics described in %
\eqref{test-form-CPS}. As before, the contact set was estimated with $\hat{c}%
_n = C_{\text{cs}} \log \log (n) q_{1-0.1/\log(n)}(S_n^\ast)$ with $r_{n}=%
\sqrt{nh}$.\footnote{%
To accommodate different sample sizes across years, we set $n =
(n_{1974}+n_{1988}+n_{2008})/3$ in computing $\hat{c}_n$.} We checked the
sensitivity to the tuning parameters with $C_{\text{cs}}\in \{0.5,1,1.5\}$.

For period 1974-1988, we rejected the null hypothesis at the 1\% level
across all three values of $C_{\text{cs}}$. However, for period 1988-2008,
we fail to reject the null hypothesis at the 5\% level for any value of $C_{%
\text{cs}}$. Therefore, the changing patterns of the US wage distribution
around 1988, reported in \citeasnoun{Acemoglu:Autor:11}, seem to hold up
conditionally on age as well.

\section{Conclusions}

\label{sec:conclusion}

In this paper, we have proposed a general method for testing inequality
restrictions on nonparametric functions and have illustrated its usefulness
by looking at two particular empirical applications. We regard our examples
as just some illustrative applications and believe that our framework can be
useful in a number of other settings.

Our bootstrap test is based on a one-sided version of $L_{p}$ functionals of
kernel-type estimators $(1\leq p <\infty )$. We have provided regularity
conditions under which the bootstrap test is asymptotically valid uniformly
over a large class of distributions and have also provided a class of
distributions for which the asymptotic size is exact. We have shown the
consistency of our test and have obtained a general form of the local power
function.

There are different notions of efficiency for nonparametric tests and hence
there is no compelling sense of an asymptotically optimal test for the
hypothesis considered in this paper. See \citeasnoun{Nikitin:95} and %
\citeasnoun{bickel2006} for a general discussion. It would be interesting to
consider a multiscale version of our test based on a range of bandwidths to
see if it achieves adaptive rate-optimality against a sequence of smooth
alternatives along the lines of \citeasnoun{Armstrong/Chan:13} and %
\citeasnoun{Chetverikov:11}.

\renewcommand\thesection{\Roman{section}} \setcounter{section}{0} %

\section*{\protect\large Appendices}

Appendix \ref{appendix-proof-A} gives the proofs of Theorems \ref{Thm1}-\ref%
{Thm5}, and \ref{appendix:C} and \ref{appendix:D} offer auxiliary results
for the proofs of Theorems \ref{Thm1}-\ref{Thm5}.
Finally, Appendix \ref{sec:appendix-D} contains the proof of Theorem AUC\ref{thm:AUC1}.


\appendix

\section{Proofs of Theorems \protect\ref{Thm1}-\protect\ref{Thm5}}

\label{appendix-proof-A}

The roadmap of Appendix A is as follows. Appendix A begins with the proofs
of Lemma 1 (the representation of $\hat{\theta}$) and Lemma 2 (the uniform
convergence of $\hat{v}_{\tau ,j}(x)$). Then we establish auxiliary results,
Lemmas A1-A4, to prepare for the proofs of Theorems 1-3. The brief
descriptions of these auxiliary results are given below.

Lemma A1 establishes asymptotic representation of the location normalizers
for the test statistic both in the population and in the bootstrap
distribution. The crucial implication is that the difference between the
population version and the bootstrap version is of order $o_{P}(h^{d/2})$, $%
\mathcal{P}$-uniformly. The result is in fact an immediate consequence of
Lemma C12 in Appendix C.

Lemma A2 establishes uniform asymptotic normality of the representation of $%
\hat{\theta}$ and its bootstrap version. The asymptotic normality results
use the method of Poissonization as in \citeasnoun{GMZ} and \citeasnoun{LSW}%
. However, in contrast to the preceding researches, the results established
here are much more general, and hold uniformly over a wide class of
probabilities. The lemma relies on Lemmas B7-B9 in Appendix B and their
bootstrap versions in Lemmas C7-C9 in Appendix C. These results are employed
to obtain the uniform asymptotic normality of the representation of $\hat{%
\theta}$ in Lemma A2.

Lemma A3 establishes that the estimated contact sets $\hat{B}_{A}(\hat{c}%
_{n})$ are covered by its enlarged population version, and covers its shrunk
population version with probability approaching one uniformly over $P\in 
\mathcal{P}$. In fact, this is an immediate consequence of the uniform
convergence results for $\hat{v}_{\tau ,j}(x)$ and $\hat{\sigma}_{\tau
,j}(x) $ in Assumptions 3 and 5. Lemma A3 is used later, when we replace the
estimated contact sets by their appropriate population versions, eliminating
the nuisance to deal with the estimation errors in $\hat{B}_{A}(\hat{c}_{n})$%
.

Lemma A4 presents the approximation result of the critical values for the
original and bootstrap test statistics in Lemma A2, by critical values from
the standard normal distribution uniformly over $P\in \mathcal{P}.$ Although
we do not propose using the normal critical values, the result is used as an
intermediate step for justifying the use of the bootstrap method in this
paper. Obviously, Lemma A4 follows as a consequence of Lemma A2.

Equipped with Lemmas A1-A4, we proceed to prove Theorem 1. For this, we
first use the representation result of Lemma 1 for $\hat{\theta}$. In doing
so, we use $B_{A}(c_{n,L},c_{n,U})$ as a population version of $\hat{B}_{A}(%
\hat{c}_{n})$. This is because%
\begin{equation*}
B_{A}(c_{n,L},c_{n,U})\subset \hat{B}_{A}(\hat{c}_{n})
\end{equation*}%
with probability approaching one by Lemma A3, and thus, makes the bootstrap
test statistic $\hat{\theta}^{\ast }$ dominate the one that involves $%
B_{A}(c_{n,L},c_{n,U})$ in place of $\hat{B}_{A}(\hat{c}_{n})$. The
distribution of the latter bootstrap version with $B_{A}(c_{n,L},c_{n,U})$
is asymptotically equivalent to the representation of $\hat{\theta}$ with $%
B_{A}(c_{n,L},c_{n,U})$ after location-scale normalization, as long as the
limiting distribution is nondegenerate. When the limiting distribution is
degenerate, we use the second component $h^{d/2}\eta +\hat{a}^{\ast }$ in
the definition of $c_{\alpha ,\eta }^{\ast }$ to ensure the asymptotic
validity of the bootstrap procedure. For both cases of degenerate and
nondegenerate limiting distributions, Lemma A1 which enables one to replace $%
\hat{a}^{\ast }$ by an appropriate population version is crucial.

The proof of Theorem 2 that shows the asymptotic exactness of the bootstrap
test modifies the proof of Theorem 1 substantially. Instead of using the
representation result of Lemma 1 for $\hat{\theta}$ with $%
B_{n,A}(c_{n,L},c_{n,U})$, we now use the same version but with $%
B_{n,A}(c_{n,U},c_{n,L})$. This is because for asymptotic exactness, we need
to approximate the original and bootstrap quantities by versions using $%
B_{n,A}(q_{n})$ for small $q_{n}$, and to do this, we need to control the
remainder term in the bootstrap statistic with the integral domain $\hat{B}%
_{A}(\hat{c}_{n})\backslash B_{n,A}(q_{n})$. By our choice of $%
B_{n,A}(c_{n,U},c_{n,L})$ and by the fact that we have 
\begin{equation*}
\hat{B}_{A}(\hat{c}_{n})\subset B_{n,A}(c_{n,U},c_{n,L}),
\end{equation*}%
with probability approaching one by Lemma A3, we can bound the remainder
term with a version with the integral domain $B_{n,A}(c_{n,U},c_{n,L})%
\backslash B_{n,A}(q_{n})$. Thus this remainder term vanishes by the
condition for $\lambda _{n}$ and $q_{n}$ in the definition of $\mathcal{P}%
_{n}(\lambda _{n},q_{n})$.

The rest of the proofs are devoted to proving the power properties of the
bootstrap procedure. Theorem 3 establishes consistency of the bootstrap
test. Theorems 4 and 5 establish local power functions under Pitman local
drifts. The proofs of Theorems 4-5 are similar to the proof of Theorem 2, as
we need to establish the asymptotically exact form of the rejection
probability for the bootstrap test statistic. Nevertheless, we need to
employ some delicate arguments to deal with the Pitman local alternatives,
and need to expand the rejection probability to obtain the final results.
For this, we first establish Lemmas A5-A7. Essentially, Lemma A5 is a
version of the representation result of Lemma 1 under local alternatives.
Lemma A6 and Lemma A7 parallel Lemma A1 and Lemma 2 under local
alternatives.\bigskip

Let us begin by proving Lemma 1. First, recall the following definitions%
\begin{equation}
\mathbf{\hat{s}}_{\tau }(x)\equiv \left[ \frac{r_{n,j}\{\hat{v}_{\tau
,j}(x)-v_{n,\tau ,j}(x)\}}{\hat{\sigma}_{\tau ,j}(x)}\right] _{j\in \mathbb{N%
}_{J}}\text{ and\ }\mathbf{\hat{s}}_{\tau }^{\ast }(x)\equiv \left[ \frac{%
r_{n,j}\{\hat{v}_{\tau ,j}^{\ast }(x)-\hat{v}_{,\tau ,j}(x)\}}{\hat{\sigma}%
_{\tau ,j}^{\ast }(x)}\right] _{j\in \mathbb{N}_{J}}.  \label{s and s_star}
\end{equation}%
Also, define 
\begin{equation}
\mathbf{\hat{u}}_{\tau }(x)\equiv \left[ \frac{r_{n,j}\hat{v}_{\tau ,j}(x)}{%
\hat{\sigma}_{\tau ,j}(x)}\right] _{j\in \mathbb{N}_{J}}\text{ and\ }\mathbf{%
u}_{\tau }(x;\hat{\sigma})\equiv \left[ \frac{r_{n,j}v_{n,\tau ,j}(x)}{\hat{%
\sigma}_{\tau ,j}(x)}\right] _{j\in \mathbb{N}_{J}}.  \label{v}
\end{equation}

\begin{proof}[Proof of Lemma 1]
It suffices to show the following two statements:

\textbf{Step 1:} As\textit{\ }$n\rightarrow \infty ,$%
\begin{equation*}
\inf_{P\in \mathcal{P}_{0}}P\left\{ \int_{\mathcal{S}\backslash
B_{n}(c_{n,1},c_{n,2})}\Lambda _{p}\left( \mathbf{\hat{u}}_{\tau }(x)\right)
dQ(x,\tau )=0\right\} \rightarrow 1\text{\textit{,}}
\end{equation*}%
where we recall $B_{n}(c_{n,1},c_{n,2})\equiv \cup _{A\in \mathcal{N}%
_{J}}B_{n,A}(c_{n,1},c_{n,2})$.

\textbf{Step 2:} For each $A\in \mathcal{N}_{J}$, as $n\rightarrow \infty ,$%
\begin{equation*}
\inf_{P\in \mathcal{P}_{0}}P\left\{ \int_{B_{n,A}(c_{n,1},c_{n,2})}\left\{
\Lambda _{p}\left( \mathbf{\hat{u}}_{\tau }(x)\right) -\Lambda _{A,p}\left( 
\mathbf{\hat{u}}_{\tau }(x)\right) \right\} dQ(x,\tau )=0\right\}
\rightarrow 1\text{.}
\end{equation*}%
First, we prove Step 1. We write the integral in the probability as%
\begin{equation}
\int_{\mathcal{S}\backslash B_{n}(c_{n,1},c_{n,2})}\Lambda _{p}\left( 
\mathbf{\hat{s}}_{\tau }(x)+\mathbf{u}_{\tau }(x;\hat{\sigma})\right)
dQ(x,\tau ).  \label{eq35}
\end{equation}
Let 
\begin{equation*}
A_{n}(x,\tau )\equiv \left\{ j\in \mathbb{N}_{J}:\frac{r_{n,j}v_{n,\tau
,j}(x)}{\sigma _{n,\tau ,j}(x)}\geq - (c_{n,1}\wedge c_{n,2}) \right\} .
\end{equation*}
We first show that when $(x,\tau )\in \mathcal{S}\backslash
B_{n}(c_{n,1},c_{n,2})$, we have $A_{n}(x,\tau )=\varnothing $ under the
null hypothesis. Suppose that $(x,\tau )\in \mathcal{S}\backslash
B_{n}(c_{n,1},c_{n,2})$ but to the contrary, $A_{n}(x,\tau )$ is nonempty.
By the definition of $A_{n}(x,\tau )$, we have $(x,\tau )\in
B_{n,A_{n}(x,\tau )}(c_{n,1},c_{n,2})$. However, since%
\begin{equation*}
\mathcal{S}\backslash B_{n}(c_{n,1},c_{n,2})=\mathcal{S}\cap \left( \cap
_{A\in \mathcal{N}_{J}}B_{n,A}^{c}(c_{n,1},c_{n,2})\right) \subset
B_{n,A_{n}(x,\tau )}^{c}(c_{n,1},c_{n,2}),
\end{equation*}%
this contradicts the fact that $(x,\tau )\in \mathcal{S}\backslash
B_{n}(c_{n,1},c_{n,2})$. Hence whenever $(x,\tau )\in \mathcal{S}\backslash
B_{n}(c_{n,1},c_{n,2})$, we have $A_{n}(x,\tau )=\varnothing $.

Note that%
\begin{equation*}
\frac{v_{n,\tau ,j}(x)}{\hat{\sigma}_{\tau ,j}(x)}=\frac{v_{n,\tau ,j}(x)}{%
\sigma _{n,\tau ,j}(x)}\left\{ 1+\frac{\sigma _{n,\tau ,j}(x)-\hat{\sigma}%
_{\tau ,j}(x)}{\hat{\sigma}_{\tau ,j}(x)}\right\} =\frac{v_{n,\tau ,j}(x)}{%
\sigma _{n,\tau ,j}(x)}\left\{ 1+o_{P}(1)\right\} ,
\end{equation*}%
where $o_{P}(1)$ is uniform over $(x,\tau )\in \mathcal{S}$ and over $P\in 
\mathcal{P}$ by Assumption A\ref{assumption-A5}. Fix a small $\varepsilon >0$%
. We have for all $j\in \mathbb{N}_{J},$%
\begin{eqnarray*}
&&\inf_{P\in \mathcal{P}_{0}}P\left\{ \frac{r_{n,j}v_{n,\tau ,j}(x)}{\hat{%
\sigma}_{\tau ,j}(x)}<-\frac{c_{n,1}\wedge c_{n,2}}{1+\varepsilon }\text{
for all }(x,\tau )\in \mathcal{S}\backslash B_{n}(c_{n,1},c_{n,2})\right\} \\
&\geq &\inf_{P\in \mathcal{P}_{0}}P\left\{ \frac{r_{n,j}v_{n,\tau ,j}(x)}{%
\sigma _{n,\tau ,j}(x)}<-\frac{c_{n,1}\wedge c_{n,2}}{(1+\varepsilon
)\left\{ 1+o_{P}(1)\right\} }\text{ for all }(x,\tau )\in \mathcal{S}%
\backslash B_{n}(c_{n,1},c_{n,2})\right\} \rightarrow 1,
\end{eqnarray*}%
as $n\rightarrow \infty $, where the last convergence follows because $%
A_{n}(x,\tau )=\varnothing $ for all $(x,\tau )\in \mathcal{S}\backslash
B_{n}(c_{n,1},c_{n,2})$. Therefore, with probability approaching one, the
term in (\ref{eq35}) is bounded by%
\begin{equation}
\int_{\mathcal{S}\backslash B_{n}(c_{n,1},c_{n,2})}\Lambda _{p}\left( 
\mathbf{\hat{s}}_{\tau }(x)-\left( \frac{c_{n,1}\wedge c_{n,2}}{%
1+\varepsilon }\right) \mathbf{1}_{J}\right) dQ(x,\tau ),  \label{inet}
\end{equation}%
where $\mathbf{1}_{J}$ is a $J$-dimensional vector of ones. Using the
definition of $\Lambda _{p}(\mathbf{v})$, bound the above integral by%
\begin{equation}
J^{p/2}\left( \sum_{j=1}^{J}\left[ r_{n,j}\sup_{(x,\tau )\in \mathcal{S}%
}\left\vert \frac{\hat{v}_{\tau ,j}(x)-v_{n,\tau ,j}(x)}{\hat{\sigma}_{\tau
,j}(x)}\right\vert -\frac{c_{n,1}\wedge c_{n,2}}{1+\varepsilon }\right]
_{+}^{2}\right) ^{p/2}.  \label{dec45}
\end{equation}%
Note that by Assumption A\ref{assumption-A3}, 
\begin{equation*}
r_{n,j}\sup_{(x,\tau )\in \mathcal{S}}\left\vert \frac{\hat{v}_{\tau
,j}(x)-v_{n,\tau ,j}(x)}{\hat{\sigma}_{\tau ,j}(x)}\right\vert =O_{P}\left( 
\sqrt{\log n}\right) .
\end{equation*}%
Fix any arbitrarily large $M>0$ and denote by $E_{n}$ the event that%
\begin{equation*}
r_{n,j}\sup_{(x,\tau )\in \mathcal{S}}\left\vert \frac{\hat{v}_{\tau
,j}(x)-v_{n,\tau ,j}(x)}{\hat{\sigma}_{\tau ,j}(x)}\right\vert \leq M\sqrt{%
\log n}.
\end{equation*}%
The term (\ref{dec45}), when restricted to this event $E_{n},$ is bounded by%
\begin{equation*}
J^{p/2}\left( \sum_{j=1}^{J}\left[ M\sqrt{\log n}-\frac{c_{n,1}\wedge c_{n,2}%
}{1+\varepsilon }\right] _{+}^{2}\right) ^{p/2}
\end{equation*}%
which becomes zero from some large $n$ on, given that $(c_{n,1}\wedge
c_{n,2})/\sqrt{\log n}\rightarrow \infty $. Since sup$_{P\in \mathcal{P}%
_{0}}PE_{n}^{c}\rightarrow 0$ as $n\rightarrow \infty $ and then $%
M\rightarrow \infty $ by Assumption A3, we obtain the desired result of Step
1.

As for Step 2, we have for any small $\varepsilon >0$, and for all $j\in 
\mathbb{N}_{J}\backslash A$,%
\begin{eqnarray}
&&P\left\{ \frac{r_{n,j}v_{n,\tau ,j}(x)}{\hat{\sigma}_{\tau ,j}(x)}<-\frac{%
c_{n,1}\wedge c_{n,2}}{1+\varepsilon }\text{ for all }(x,\tau )\in
B_{n,A}(c_{n,1},c_{n,2})\right\}  \label{ineq54} \\
&\geq &P\left\{ \frac{r_{n,j}v_{n,\tau ,j}(x)}{\sigma _{n,\tau ,j}(x)}<-%
\frac{c_{n,1}\wedge c_{n,2}}{(1+\varepsilon )\left\{ 1+o_{P}(1)\right\} }%
\text{ for all }(x,\tau )\in B_{n,A}(c_{n,1},c_{n,2})\right\} \rightarrow 1,
\notag
\end{eqnarray}%
similarly as before. Let $\mathbf{\bar{s}}_{\tau ,A}(x)$ be a $J$%
-dimensional vector whose $j$-th entry is $r_{n,j}\hat{v}_{n,\tau ,j}(x)/%
\hat{\sigma}_{\tau ,j}(x)$ if $j\in A$, and $r_{n,j}\{\hat{v}_{n,\tau
,j}(x)-v_{n,\tau ,j}(x)\}/\hat{\sigma}_{\tau ,j}(x)$ if $j\in \mathbb{N}%
_{J}\backslash A$. Since by Assumption A5, we have 
\begin{equation*}
\inf_{P\in \mathcal{P}_{0}}P\left\{ \mathbf{u}_{\tau }(x;\hat{\sigma})\leq 0%
\text{ for all }(x,\tau )\in \mathcal{S}\right\} \rightarrow 1,
\end{equation*}%
as $n\rightarrow \infty $, using either definition of $\Lambda _{p}(\mathbf{v%
})$ in \eqref{lambda_p}, 
\begin{eqnarray}
&&\int_{B_{n,A}(c_{n,1},c_{n,2})}\Lambda _{A,p}\left( \mathbf{\hat{u}}_{\tau
}(x)\right) dQ(x,\tau )  \label{ineqs2} \\
&\leq &\int_{B_{n,A}(c_{n,1},c_{n,2})}\Lambda _{p}\left( \mathbf{\hat{u}}%
_{\tau }(x)\right) dQ(x,\tau )  \notag \\
&\leq &\int_{B_{n,A}(c_{n,1},c_{n,2})}\Lambda _{p}\left( \mathbf{\bar{s}}%
_{\tau ,A}(x)-\frac{c_{n,1}\wedge c_{n,2}}{1+\varepsilon }\mathbf{1}%
_{-A}\right) dQ(x,\tau ),  \notag
\end{eqnarray}%
where $\mathbf{1}_{-A}$ is the $J$-dimensional vector whose $j$-th entry is
zero if $j\in A$ and one if $j\in \mathbb{N}_{J}\backslash A$, and the last
inequality holds with probability approaching one by (\ref{ineq54}). Note
that by Assumption A\ref{assumption-A3} and by the assumption that $\sqrt{%
\log n}\{c_{n,1}^{-1}+c_{n,2}^{-1}\}\rightarrow \infty $, we deduce that for
any $j\in \mathbb{N}_{J},$%
\begin{equation*}
\inf_{P\in \mathcal{P}_{0}}P\left\{ r_{n,j}\sup_{(x,\tau )\in \mathcal{S}%
}\left\vert \frac{\hat{v}_{\tau ,j}(x)-v_{n,\tau ,j}(x)}{\hat{\sigma}_{\tau
,j}(x)}\right\vert \leq \frac{c_{n,1}\wedge c_{n,2}}{1+\varepsilon }\right\}
\rightarrow 1,
\end{equation*}%
as $n\rightarrow \infty $. Hence, as $n\rightarrow \infty ,$%
\begin{equation*}
\inf_{P\in \mathcal{P}_{0}}P\left\{ 
\begin{array}{c}
\int_{B_{n,A}(c_{n,1},c_{n,2})}\Lambda _{p}\left( \mathbf{\bar{s}}_{\tau
,A}(x)-((c_{n,1}\wedge c_{n,2})/(1+\varepsilon ))\mathbf{1}_{-A}\right)
dQ(x,\tau ) \\ 
=\int_{B_{n,A}(c_{n,1},c_{n,2})}\Lambda _{A,p}\left( \mathbf{\bar{s}}_{\tau
,A}(x)\right) dQ(x,\tau )%
\end{array}%
\right\} \rightarrow 1.
\end{equation*}%
Since%
\begin{equation*}
\int_{B_{n,A}(c_{n,1},c_{n,2})}\Lambda _{A,p}\left( \mathbf{\bar{s}}_{\tau
,A}(x)\right) dQ(x,\tau )=\int_{B_{n,A}(c_{n,1},c_{n,2})}\Lambda
_{A,p}\left( \mathbf{\hat{u}}_{\tau }(x)\right) dQ(x,\tau ),
\end{equation*}%
we obtain the desired result from (\ref{ineqs2}).
\end{proof}

Now let us turn to the proof of Lemma 2 in Section 4.4.

\begin{proof}[Proof of Lemma 2]
(i) Recall the definition $b_{n,ij}(x,\tau )\equiv \beta _{n,x,\tau
,j}\left( Y_{ij},(X_{i}-x)/h\right) )$. Take $M_{n,j}\equiv \sqrt{nh^{d}}/%
\sqrt{\log n},$ and let%
\begin{equation*}
b_{n,ij}^{a}(x,\tau )\equiv b_{n,ij}(x,\tau )1_{n,ij}\text{ and\ }%
b_{n,ij}^{b}(x,\tau )\equiv b_{n,ij}(x,\tau )\left( 1-1_{n,ij}\right) ,
\end{equation*}%
where $1_{n,ij}\equiv 1\{$sup$_{(x,\tau )\in \mathcal{S}}|b_{n,ij}(x,\tau
)|\leq M_{n,j}/2\}.$ First, note that by Assumption A1,%
\begin{eqnarray}
&&r_{n,j}\sqrt{h^{d}}\sup_{(x,\tau )\in \mathcal{S}}\left\vert \frac{\hat{v}%
_{\tau ,j}(x)-v_{n,\tau ,j}(x)}{\hat{\sigma}_{\tau ,j}(x)}\right\vert
\label{decomp2} \\
&\leq &\sup_{(x,\tau )\in \mathcal{S}}\left\vert \frac{1}{\sqrt{n}}%
\sum_{i=1}^{n}\left( b_{n,ij}^{a}(x,\tau )-\mathbf{E}\left[
b_{n,ij}^{a}(x,\tau )\right] \right) \right\vert  \notag \\
&&+\sup_{(x,\tau )\in \mathcal{S}}\left\vert \frac{1}{\sqrt{n}}%
\sum_{i=1}^{n}\left( b_{n,ij}^{b}(x,\tau )-\mathbf{E}\left[
b_{n,ij}^{b}(x,\tau )\right] \right) \right\vert +o_{P}(1),\ \mathcal{P}%
\text{-uniformly.}
\end{eqnarray}%
We now prove part (i) by proving the following two steps.\newline
\textbf{Step 1:}%
\begin{equation*}
\sup_{(x,\tau )\in \mathcal{S}}\left\vert \frac{1}{\sqrt{nh^{d}}}%
\sum_{i=1}^{n}\left( b_{n,ij}^{b}(x,\tau )-\mathbf{E}\left[
b_{n,ij}^{b}(x,\tau )\right] \right) \right\vert =o_{P}(\sqrt{\log n}),\ 
\mathcal{P}\text{-uniformly.}
\end{equation*}%
\newline
\textbf{Step 2:}%
\begin{equation*}
\sup_{(x,\tau )\in \mathcal{S}}\left\vert \frac{1}{\sqrt{nh^{d}}}%
\sum_{i=1}^{n}\left( b_{n,ij}^{a}(x,\tau )-\mathbf{E}\left[
b_{n,ij}^{a}(x,\tau )\right] \right) \right\vert =O_{P}(\sqrt{\log n}),\ 
\mathcal{P}\text{-uniformly.}
\end{equation*}%
\newline
Step 1 is carried out by some elementary moment calculations, whereas Step 2
is proved using a maximal inequality of \citeasnoun[Theorem
6.8]{Massart:07}.

\textbf{Proof of Step 1:} It is not hard to see that%
\begin{eqnarray*}
&&\mathbf{E}\left[ \sup_{(x,\tau )\in \mathcal{S}}\left\vert \frac{1}{\sqrt{n%
}}\sum_{i=1}^{n}\left( b_{n,ij}^{b}(x,\tau )-\mathbf{E}\left[
b_{n,ij}^{b}(x,\tau )\right] \right) \right\vert \right] \\
&\leq &2\sqrt{n}\mathbf{E}\left[ \sup_{(x,\tau )\in \mathcal{S}}\left\vert
b_{n,ij}(x,\tau )\right\vert \left( 1-1_{n,ij}\right) \right] \\
&\leq &C\sqrt{n}\left( \frac{M_{n,j}}{2}\right) ^{-3}\mathbf{E}\left[
\sup_{(x,\tau )\in \mathcal{S}}\left\vert b_{n,ij}(x,\tau )\right\vert ^{4}%
\right] \leq C_{1}\sqrt{n}\left( \frac{M_{n,j}}{2}\right) ^{-3},
\end{eqnarray*}%
for some $C_{1}>0$, $C>0$. The last bound follows by the uniform fourth
moment bound for $b_{n,ij}(x,\tau )$ assumed in Lemma 2. Note that 
\begin{equation*}
\sqrt{n}\left( M_{n,j}\right) ^{-3}=n^{-1}h^{-3d/2}\left( \log n\right)
^{3/2}=o\left( \sqrt{\log n}h^{d/2}\right) ,
\end{equation*}%
by the condition that $n^{-1/2}h^{-d-\nu }\rightarrow 0$ for some small $\nu
>0$.\newline
\textbf{Proof of Step 2: }For each $j\in \mathbb{N}_{J}$, let $\mathcal{F}%
_{n,j}\equiv \{\beta _{n,x,\tau ,j}^{a}(\cdot ,(\cdot -x)/h)/M_{n,j}:(x,\tau
)\in \mathcal{S}\}$, where $\beta _{n,x,\tau
,j}^{a}(Y_{ij},(X_{i}-x)/h)\equiv b_{n,ij}^{a}(x,\tau )$. Note that the
indicator function $1_{n,ij}$ in the definition of $\beta _{n,x,\tau ,j}^{a}$
does not depend on $(x,\tau )$ of $\beta _{n,x,\tau ,j}^{a}$. Using (\ref%
{Lp-Cont}) in Lemma 2 and following (part of) the arguments in the proof of
Theorem 3 of \citeasnoun{Chen/Linton/VanKeilegom:03}, we find that there
exist $C_{1}>0$ and $C_{2,j}>0$ such that for all $\varepsilon >0,$%
\begin{equation*}
N_{[]}\left( \varepsilon ,\mathcal{F}_{n,j},L_{2}(P)\right) \leq N\left(
\left( \frac{\varepsilon M_{n,j}}{\delta _{n,j}}\right) ^{2/\gamma _{j}},%
\mathcal{X}\times \mathcal{T},||\cdot ||\right) \leq C_{1}\left( \frac{%
\varepsilon M_{n,j}}{\delta _{n,j}}\wedge 1\right) ^{-C_{2,j}},
\end{equation*}%
where $N_{[]}\left( \varepsilon ,\mathcal{F}_{n,j},L_{2}(P)\right) $ denotes
the $\varepsilon $-bracketing number of the class $\mathcal{F}_{n,j}$ with
respect to the $L_{2}(P)$-norm and $N\left( \varepsilon ,\mathcal{X}\times 
\mathcal{T},||\cdot ||\right) $ denotes the $\varepsilon $-covering number
of the space $\mathcal{X}\times \mathcal{T}$ with respect to the Euclidean
norm $||\cdot ||$. The last inequality follows by the assumption that $%
\mathcal{X}$ and $\mathcal{T}$ are compact subsets of a Euclidean space. The
class $\mathcal{F}_{n,j}$ is uniformly bounded by $1/2$. \newline
Let $\{[\beta _{n,x_{k},\tau _{k},j}^{a}(\cdot ,(\cdot
-x_{k})/h)/M_{n,j}-\Delta _{k}(\cdot ,\cdot )/M_{n,j},\beta _{n,x_{k},\tau
_{k},j}^{a}(\cdot ,(\cdot -x_{k})/h)/M_{n,j}+\Delta _{k}(\cdot ,\cdot
)/M_{n,j}]:k=1,\cdot \cdot \cdot ,N_{n,j}\}$ constitutes $\varepsilon $%
-brackets, where $\Delta _{k}(Y_{ij},X_{i})\equiv \sup |\beta _{n,x,\tau
,j}^{a}(Y_{ij},(X_{i}-x)/h)-\beta _{n,x_{k},\tau
_{k},j}^{a}(Y_{ij},(X_{i}-x_{k})/h)|$ and the supremum is over $(x,\tau )\in 
\mathcal{S}$ such that 
\begin{equation*}
\sqrt{||x-x_{k}||^{2}+||\tau -\tau _{k}||^{2}}\leq C_{1}(\varepsilon
M_{n,j}/\delta _{n,j})^{2/\gamma _{j}}.
\end{equation*}%
By the previous covering number bound, we can take $N_{n,j}\leq C_{1}\left(
(\varepsilon M_{n,j}/\delta _{n,j})\wedge 1\right) ^{-C_{2,j}},$ and 
\begin{equation*}
\mathbf{E}\Delta _{k}^{2}(Y_{ij},X_{i})M_{n,j}^{-2}<\varepsilon ^{2}.
\end{equation*}%
\newline
Note that for any $k\geq 2,$%
\begin{equation*}
\mathbf{E}\left[ |b_{n,ij}^{a}(x,\tau )/M_{n,j}|^{k}\right] \leq \mathbf{E}%
\left[ b_{n,ij}^{2}(x,\tau )\right] /M_{n,j}^{2}\leq
CM_{n,j}^{-2}h^{d}=C(\log n)/n\text{,}
\end{equation*}%
by the fact that $|b_{n,ij}^{a}(x,\tau )/M_{n,j}|\leq 1/2$. Furthermore,%
\begin{equation*}
\mathbf{E}\left[ |\Delta _{k}(Y_{ij},X_{i})/M_{n,j}|^{k}\right] \leq \mathbf{%
E}\left[ \Delta _{k}^{2}(Y_{ij},X_{i})/M_{n,j}^{2}\right] \leq \varepsilon
^{2},
\end{equation*}%
where the first inequality follows because $|\Delta
_{k}(Y_{ij},X_{i})/M_{n,j}|\leq 1$. Therefore, by Theorem 6.8 of %
\citeasnoun{Massart:07}, we have (from sufficiently large $n$ on)%
\begin{eqnarray}
&&\mathbf{E}\left[ \sup_{(x,\tau )\in \mathcal{S}}\left\vert \frac{1}{M_{n,j}%
\sqrt{n}}\sum_{i=1}^{n}\left( b_{n,ij}^{a}(x,\tau )-\mathbf{E}\left[
b_{n,ij}^{a}(x,\tau )\right] \right) \right\vert \right]  \label{bd5} \\
&\leq &C_{1}\int_{0}^{\frac{C_{2}h^{d/2}}{M_{n,j}}}\left\{ \left( -C_{3}\log
\left( \frac{\varepsilon M_{n,j}}{\delta _{n,j}}\wedge 1\right) \right)
\wedge n\right\} ^{1/2}d\varepsilon -\frac{C_{4}}{\sqrt{n}}\log \left( \frac{%
\sqrt{\log n}}{\sqrt{n}}\right) ,  \notag
\end{eqnarray}%
where $C_{1},C_{2},C_{3}$, and $C_{4}$ are positive constants. (The
inequality above follows because$\sqrt{\log n}/\sqrt{n}\rightarrow 0$ as $%
n\rightarrow \infty $.) The leading integral has a domain restricted to $%
[0,\delta _{n,j}/M_{n,j}]$, so that it is equal to%
\begin{eqnarray*}
&&C_{1}\int_{0}^{\frac{C_{2}h^{d/2}}{M_{n,j}}\wedge \frac{\delta _{n,j}}{%
M_{n,j}}}\left\{ \left( -C_{3}\log \left( \frac{\varepsilon M_{n,j}}{\delta
_{n,j}}\right) \right) \wedge n\right\} ^{1/2}d\varepsilon \\
&=&\frac{C_{1}\delta _{n,j}}{M_{n,j}}\int_{0}^{\frac{C_{2}h^{d/2}}{\delta
_{n,j}}\wedge 1}\sqrt{\left( -C_{3}\log \varepsilon \right) \wedge n}%
d\varepsilon \\
&=&O\left( \frac{\delta _{n,j}}{M_{n,j}}\left( \frac{h^{d/2}}{\delta _{n,j}}%
\wedge 1\right) \sqrt{-\log \left( \frac{h^{d/2}}{\delta _{n,j}}\wedge
1\right) }\right) .
\end{eqnarray*}%
After multiplying by $M_{n,j}/h^{d/2}$, the last term is of order%
\begin{equation*}
O\left( \left( 1\wedge \frac{\delta _{n,j}}{h^{d/2}}\right) \sqrt{-\log
\left( \frac{h^{d/2}}{\delta _{n,j}}\wedge 1\right) }\right) =O\left( \sqrt{%
-\log \left( \frac{h^{d/2}}{\delta _{n,j}}\wedge 1\right) }\right) =O(\sqrt{%
\log n}),
\end{equation*}%
because $\delta _{n,j}=n^{s_{1,j}}$ and $h=n^{s_{2}}$ for some $%
s_{1,j},s_{2}\in \mathbf{R}$. \newline
Also, note that after multiplying by $M_{n,j}/h^{d/2}=\sqrt{n}/\sqrt{\log n}$%
, the last term in (\ref{bd5}) (with minus sign) becomes%
\begin{equation*}
-\frac{C_{4}}{\sqrt{\log n}}\log \left( \frac{\sqrt{\log n}}{\sqrt{n}}%
\right) \leq \frac{C_{4}\sqrt{\log n}}{2}-\frac{C_{4}\log \sqrt{\log n}}{%
\sqrt{\log n}}=O\left( \sqrt{\log n}\right) ,
\end{equation*}%
where the inequality follows because $\sqrt{\log n}\geq 1$ for all $n\geq
e\equiv \exp (1)$. Collecting the results for both the terms on the right
hand side of (\ref{bd5}), we obtain the desired result of Step 2.

\noindent (ii) Define $b_{n,ij}^{\ast }(x,\tau )\equiv \beta _{n,x,\tau
,j}(Y_{ij}^{\ast },(X_{i}^{\ast }-x)/h)$. By Assumptions B\ref{assumption-B1}
and B3, it suffices to show that%
\begin{equation*}
\sup_{(x,\tau )\in \mathcal{S}}\left\vert \frac{1}{\sqrt{nh^{d}}}%
\sum_{i=1}^{n}\left( b_{n,ij}^{\ast }(x,\tau )-\mathbf{E}^{\ast }\left[
b_{n,ij}^{\ast }(x,\tau )\right] \right) \right\vert =O_{P^{\ast }}(\sqrt{%
\log n}),\text{ }\mathcal{P}\text{-uniformly.}
\end{equation*}%
Using Le Cam's Poissonization lemma in \citeasnoun{Gine/Zinn:90}
(Proposition 2.2 on p.855) and following the arguments in the proof of
Theorem 2.2 of \citeasnoun{Gine:97}, we deduce that%
\begin{eqnarray*}
&&\mathbf{E}\left[ \mathbf{E}^{\ast }\left( \sup_{(x,\tau )\in \mathcal{S}%
}\left\vert \frac{1}{\sqrt{nh^{d}}}\sum_{i=1}^{n}\left( b_{n,ij}^{\ast
}(x,\tau )-\mathbf{E}^{\ast }\left[ b_{n,ij}^{\ast }(x,\tau )\right] \right)
\right\vert \right) \right] \\
&\leq &\frac{e}{e-1}\mathbf{E}\left[ \sup_{(x,\tau )\in \mathcal{S}%
}\left\vert \frac{1}{\sqrt{nh^{d}}}\sum_{i=1}^{n}\left( N_{i}-1\right)
\left\{ b_{n,ij}(x,\tau )-\frac{1}{n}\sum_{k=1}^{n}b_{n,kj}(x,\tau )\right\}
\right\vert \right] ,
\end{eqnarray*}%
where $N_{i}$'s are i.i.d. Poisson random variables with mean $1$ and
independent of $\{(X_{i},Y_{i})\}_{i=1}^{n}$. The last expectation is
bounded by%
\begin{eqnarray*}
&&\mathbf{E}\left[ \sup_{(x,\tau )\in \mathcal{S}}\left\vert \frac{1}{\sqrt{%
nh^{d}}}\sum_{i=1}^{n}\left\{ \left( N_{i}-1\right) b_{n,ij}(x,\tau )-%
\mathbf{E}\left[ \left( N_{i}-1\right) b_{n,ij}(x,\tau )\right] \right\}
\right\vert \right] \\
&&+\mathbf{E}\left[ \sup_{(x,\tau )\in \mathcal{S}}\left\vert \frac{1}{n}%
\sum_{i=1}^{n}\left( N_{i}-1\right) \right\vert \left\vert \frac{1}{\sqrt{%
nh^{d}}}\sum_{k=1}^{n}\left( b_{n,kj}(x,\tau )-\mathbf{E}\left[
b_{n,kj}(x,\tau )\right] \right) \right\vert \right] .
\end{eqnarray*}%
Using the same arguments as in the proof of (i), we find that the first
expectation is $O\left( \sqrt{\log n}\right) $ uniformly in $P\in \mathcal{P}
$. Using independence, we write the second expectation as 
\begin{equation*}
\mathbf{E}\left[ \left\vert \frac{1}{n}\sum_{i=1}^{n}\left( N_{i}-1\right)
\right\vert \right] \cdot \mathbf{E}\left[ \sup_{(x,\tau )\in \mathcal{S}%
}\left\vert \frac{1}{\sqrt{nh^{d}}}\sum_{k=1}^{n}\left( b_{n,kj}(x,\tau )-%
\mathbf{E}\left[ b_{n,kj}(x,\tau )\right] \right) \right\vert \right]
\end{equation*}%
which, as shown in the proof of part (i), is $O(\sqrt{\log n})$, uniformly
in $P\in \mathcal{P}$.
\end{proof}

For further proofs, we introduce new notation. Define for any positive
sequences $c_{n,1}$ and $c_{n,2}$, and any $\mathbf{v}\in \mathbf{R}^{J}$,%
\begin{equation}
\bar{\Lambda}_{x,\tau }(\mathbf{v})\equiv \sum_{A\in \mathcal{N}_{J}}\Lambda
_{A,p}(\mathbf{v})1\{(x,\tau )\in B_{n,A}(c_{n,1},c_{n,2})\}.  \label{lambe}
\end{equation}%
We let%
\begin{eqnarray}
a_{n}^{R}(c_{n,1},c_{n,2}) &\equiv &\int_{\mathcal{X}\times \mathcal{T}}%
\mathbf{E}\left[ \bar{\Lambda}_{x,\tau }(\sqrt{nh^{d}}\mathbf{z}_{N,\tau
}(x))\right] dQ(x,\tau ),\text{ and}  \label{atilde} \\
a_{n}^{R\ast }(c_{n,1},c_{n,2}) &\equiv &\int_{\mathcal{X}\times \mathcal{T}}%
\mathbf{E}^{\ast }\left[ \bar{\Lambda}_{x,\tau }(\sqrt{nh^{d}}\mathbf{z}%
_{N,\tau }^{\ast }(x))\right] dQ(x,\tau ),  \notag
\end{eqnarray}%
where $\mathbf{z}_{N,\tau }(x)$ and $\mathbf{z}_{N,\tau }^{\ast }(x)$ are
random vectors whose $j$-th entry is respectively given by 
\begin{eqnarray*}
z_{N,\tau ,j}(x) &\equiv &\frac{1}{nh^{d}}\sum_{i=1}^{N}\left( \beta
_{n,x,\tau ,j}\left( Y_{ij},\frac{X_{i}-x}{h}\right) -\mathbf{E}\left[ \beta
_{n,x,\tau ,j}\left( Y_{ij},\frac{X_{i}-x}{h}\right) \right] \right) \text{
and} \\
z_{N,\tau ,j}^{\ast }(x) &\equiv &\frac{1}{nh^{d}}\sum_{i=1}^{N}\left( \beta
_{n,x,\tau ,j}\left( Y_{ij}^{\ast },\frac{X_{i}^{\ast }-x}{h}\right) -%
\mathbf{E}^{\ast }\left[ \beta _{n,x,\tau ,j}\left( Y_{ij}^{\ast },\frac{%
X_{i}^{\ast }-x}{h}\right) \right] \right) ,
\end{eqnarray*}%
and $N$ is a Poisson random variable with mean $n$ and independent of $%
\{Y_{i},X_{i}\}_{i=1}^{\infty }$. We also define%
\begin{equation*}
a_{n}(c_{n,1},c_{n,2})\equiv \int \mathbf{E}\left[ \bar{\Lambda}_{x,\tau }(%
\mathbb{W}_{n,\tau ,\tau }^{(1)}(x,0))\right] dQ(x,\tau ).
\end{equation*}%
(See Section 6.3 for the definition of $\mathbb{W}_{n,\tau ,\tau
}^{(1)}(x,u) $.)

\begin{LemmaA}
Suppose that Assumptions A\ref{assumption-A6}(i) and B\ref{assumption-B4}
hold and let $c_{n,1}$ and $c_{n,2}$ be any nonnegative sequences. Then%
\begin{eqnarray*}
\left\vert a_{n}^{R}(c_{n,1},c_{n,2})-a_{n}(c_{n,1},c_{n,2})\right\vert
&=&o(h^{d/2})\text{, uniformly in }P\in \mathcal{P}\text{, and} \\
\left\vert a_{n}^{R\ast }(c_{n,1},c_{n,2})-a_{n}(c_{n,1},c_{n,2})\right\vert
&=&o_{P}(h^{d/2}),\ \mathcal{P}\text{-uniformly.}
\end{eqnarray*}
\end{LemmaA}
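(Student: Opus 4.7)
The plan is to apply a Berry--Esseen-type comparison pointwise in $(x,\tau)$ and then integrate over the compact set $\mathcal{X} \times \mathcal{T}$. For each fixed $(x,\tau)$, the random vector $\sqrt{nh^d}\mathbf{z}_{N,\tau}(x)$ is a Poissonized, mean-zero sum whose covariance matrix equals $\Sigma_{n,\tau,\tau}(x,0)$ by a direct Wald computation, matching exactly that of $\mathbb{W}_{n,\tau,\tau}^{(1)}(x,0)$. By Assumption A\ref{assumption-A2} the summands $\beta_{n,x,\tau,j}(Y_{ij},(X_i-x)/h)$ are supported in the compact set $\mathcal{K}_{0}$ (so each summand is nonzero with probability $O(h^d)$), and by Assumption A\ref{assumption-A6}(i) their conditional $M$-th moments are uniformly bounded in $P \in \mathcal{P}$ with $M \geq 2(p+2)$. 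These two ingredients feed directly into Lemma C12 of Appendix C, which packages the smooth-function Berry--Esseen bound for Poissonized sums against a Gaussian with matching covariance and is designed exactly for the polynomially-growing, piecewise-smooth functional $\bar{\Lambda}_{x,\tau}$ of degree $p$.

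Given Lemma C12, the first claim follows by applying the pointwise bound and integrating. The pointwise error can be shown to be of order $n^{-1/2}h^{-d(3M-4)/(2M-4)}$ times constants that depend only on the $\mathcal{P}$-uniform moment bound of A\ref{assumption-A6}(i) and on $\mathcal{K}_0$ from A\ref{assumption-A2}. Assumption B\ref{assumption-B4} then delivers this rate as $o(h^{d/2})$, uniformly in $P \in \mathcal{P}$, and integrating against the finite Lebesgue measure on the compact $\mathcal{X}\times \mathcal{T}$ preserves the rate.

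For the bootstrap statement I run the same argument conditional on the data. The bootstrap summands $\beta_{n,x,\tau,j}(Y_{ij}^*,(X_i^*-x)/h)$ have the identical functional form, and their conditional $M$-th moments under the empirical measure are bounded in probability ($\mathcal{P}$-uniformly) because the empirical moments converge to the population moments by a standard uniform law built from A\ref{assumption-A6}(i). The conditional covariance of $\sqrt{nh^d}\mathbf{z}_{N,\tau}^*(x)$ likewise converges to $\Sigma_{n,\tau,\tau}(x,0)$. Applying Lemma C12 in the bootstrap world therefore yields, after integrating, $|a_n^{R\ast}(c_{n,1},c_{n,2})-a_n(c_{n,1},c_{n,2})| = o_{P}(h^{d/2})$, $\mathcal{P}$-uniformly, completing the lemma.

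The substantive difficulty is not at the level of Lemma A1 itself but is absorbed into Lemma C12: making the Berry--Esseen step uniform in $P \in \mathcal{P}$ and in $(x,\tau)$ despite possible near-degeneracy of $\Sigma_{n,\tau,\tau}(x,0)$. This is handled in C12 by a small regularization (an independent Gaussian perturbation whose bias is controlled by A\ref{assumption-A6}(i)) so that the Berry--Esseen constants do not depend on the minimal eigenvalue of the covariance. Once C12 is in hand, Lemma A1 is essentially bookkeeping between the moment assumption and the bandwidth condition in B\ref{assumption-B4}.
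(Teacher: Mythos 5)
Your proposal is correct and follows essentially the same route as the paper: the paper's proof of Lemma A1 simply defers to (the proof of) Lemma C12, which carries out exactly the pointwise regularized Berry--Esseen comparison via Sweeting's theorem that you describe, with the moment bounds of Lemmas C10--C11 and Assumption B4 delivering the $o(h^{d/2})$ rate, and integration over the compact $\mathcal{S}$ preserving it. The only cosmetic imprecision is that the pointwise Berry--Esseen error is of order $n^{-1/2}h^{-d(M-1)/(M-2)-\nu}$; the exponent $(3M-4)/(2M-4)$ in B4 arises only after dividing by $h^{d/2}$.
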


\begin{proof}[Proof of Lemma A1]
The proof is essentially the same as the proof of Lemma C12 in Appendix C.
\end{proof}

For any given nonnegative sequences $c_{n,1},c_{n,2},$ we define%
\begin{equation}
\sigma _{n}^{2}(c_{n,1},c_{n,2})\equiv \int_{\mathcal{T}}\int_{\mathcal{T}%
}\int_{\mathcal{X}}\bar{C}_{\tau _{1},\tau _{2}}(x)dxd\tau _{1}d\tau _{2},
\label{sigmae}
\end{equation}%
where 
\begin{equation*}
\bar{C}_{\tau _{1},\tau _{2}}(x)\equiv \int_{\mathcal{U}}Cov\left( \bar{%
\Lambda}_{n,x,\tau _{1}}(\mathbb{W}_{n,\tau _{1},\tau _{2}}^{(1)}(x,u)),\bar{%
\Lambda}_{n,x,\tau _{2}}(\mathbb{W}_{n,\tau _{1},\tau
_{2}}^{(2)}(x,u))\right) du\text{.}
\end{equation*}

Let%
\begin{equation}
\bar{\theta}_{n}(c_{n,1},c_{n,2})\equiv \int \bar{\Lambda}_{x,\tau }\left( 
\mathbf{\hat{s}}_{\tau }(x)\right) dQ(x,\tau )\text{,}  \label{theta_hate}
\end{equation}%
and%
\begin{equation}
\bar{\theta}_{n}^{\ast }(c_{n,1},c_{n,2})\equiv \int \bar{\Lambda}_{x,\tau
}\left( \mathbf{\hat{s}}_{\tau }^{\ast }(x)\right) dQ(x,\tau ).
\label{theta*e}
\end{equation}

From here on, for any sequence of random quantities $Z_{n}$ and a random
vector $Z$, we write 
\begin{equation*}
Z_{n}\overset{d}{\rightarrow }N(0,1)\text{, }\mathcal{P}_{0}\text{-uniformly,%
}
\end{equation*}%
if for each $t>0$,%
\begin{equation*}
\sup_{P\in \mathcal{P}_{0}}\left\vert P\left\{ Z_{n}\leq t\right\} -\Phi
(t)\right\vert =o(1)\text{.}
\end{equation*}%
And for any sequence of bootstrap quantities $Z_{n}^{\ast }$ and a random
vector $Z$, we write 
\begin{equation*}
Z_{n}^{\ast }\overset{d^{\ast }}{\rightarrow }N(0,1)\text{, }\mathcal{P}_{0}%
\text{-uniformly,}
\end{equation*}%
if for each $t>0$,%
\begin{equation*}
\left\vert P^{\ast }\left\{ Z_{n}^{\ast }\leq t\right\} -\Phi (t)\right\vert
=o_{P^{\ast }}(1)\text{, }\mathcal{P}_{0}\text{-uniformly.}
\end{equation*}

\begin{LemmaA}
(i) \textit{Suppose that Assumptions }A\ref{assumption-A1}-A\ref%
{assumption-A3}, A4(i)\textit{, and }A5-A6\textit{\ are satisfied. Then for
any sequences }$c_{n,1},c_{n,2}>0$\textit{\ such that }$\liminf_{n%
\rightarrow \infty }\inf_{P\in \mathcal{P}_{0}}\sigma
_{n}^{2}(c_{n,1},c_{n,2})>0$ and $\sqrt{\log n}/c_{n,2}\rightarrow 0$, as $%
n\rightarrow \infty ,$%
\begin{equation*}
h^{-d/2}\left( \frac{\bar{\theta}%
_{n}(c_{n,1},c_{n,2})-a_{n}^{R}(c_{n,1},c_{n,2})}{\sigma
_{n}(c_{n,1},c_{n,2})}\right) \overset{d}{\rightarrow }N(0,1),\text{ }%
\mathcal{P}_{0}\text{-\textit{uniformly}}.
\end{equation*}

\noindent (ii) \textit{Suppose that Assumptions A\ref{assumption-A1}-A\ref%
{assumption-A3}, A4(i), A5-A6, B\ref{assumption-B1} and B\ref{assumption-B4}
are satisfied. Then for any sequences }$c_{n,1},c_{n,2}\geq 0$\textit{\ such
that }$\liminf_{n\rightarrow \infty }\inf_{P\in \mathcal{P}_{0}}\sigma
_{n}^{2}(c_{n,1},c_{n,2})>0$ and $\sqrt{\log n}/c_{n,2}\rightarrow 0$, as $%
n\rightarrow \infty ,$%
\begin{equation*}
h^{-d/2}\left( \frac{\bar{\theta}_{n}^{\ast }(c_{n,1},c_{n,2})-a_{n}^{R\ast
}(c_{n,1},c_{n,2})}{\sigma _{n}(c_{n,1},c_{n,2})}\right) \overset{d^{\ast }}{%
\rightarrow }N(0,1)\text{, }\mathcal{P}_{0}\text{-\textit{uniformly}}.
\end{equation*}
\end{LemmaA}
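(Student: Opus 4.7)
The approach follows the Poissonization programme outlined in the introduction and used in Lemma 2(ii). In broad strokes: (a) use Assumption A\ref{assumption-A1} to replace $\hat{\mathbf{s}}_\tau(x)$ inside the integrand by the kernel-type linear piece $\sqrt{nh^d}\{\hat g_{\tau,j}(x)-\mathbf{E}\hat g_{\tau,j}(x)\}$; (b) substitute the non-random $n$ by an independent Poisson$(n)$ count $N$, obtaining a Poissonized statistic whose integrand depends on $\mathbf{z}_{N,\tau}(x)$; (c) show, via a blocking argument exploiting the compact kernel support from Assumption A\ref{assumption-A2}, that the Poissonized quantity is a sum of essentially independent spatial contributions that satisfies a Lindeberg-type CLT uniformly in $P\in\mathcal{P}_0$; (d) de-Poissonize to transfer the limit back to $\bar\theta_n$. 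The target centering is $a_n^R(c_{n,1},c_{n,2})$, which is precisely the Poissonized mean defined in \eqref{atilde}, and the scale is $\sigma_n(c_{n,1},c_{n,2})$ defined in \eqref{sigmae}.

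\textbf{Part (i).} First, by Assumption A\ref{assumption-A1}, uniformly in $(x,\tau)\in\mathcal{S}$ and in $P\in\mathcal{P}$,
\[
\hat{\mathbf{s}}_\tau(x)=\sqrt{nh^d}(\hat{\mathbf g}_\tau(x)-\mathbf{E}\hat{\mathbf g}_\tau(x))+R_{n,\tau}(x),\qquad \sup_{(x,\tau)}\|R_{n,\tau}(x)\|=o_P(\sqrt{h^d}).
\]
Using the Lipschitz property of $v\mapsto\bar\Lambda_{x,\tau}(v)$ on compact sets together with Assumption A\ref{assumption-A3} (which bounds $\|\hat{\mathbf s}_\tau(x)\|$ by $O_P(\sqrt{\log n})$) and the fact that $\bar\Lambda_{x,\tau}(\cdot)$ is supported on the contact sets $B_{n,A}(c_{n,1},c_{n,2})$, I would absorb $R_{n,\tau}$ into an $o_P(h^{d/2})$ term. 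Next, invoking Lemmas B7--B9 (stated in Appendix B), which are precisely the Poissonized CLT and de-Poissonization statements, together with the variance computation in \eqref{sigmae}, one obtains
\[
h^{-d/2}\{\bar\theta_n(c_{n,1},c_{n,2})-a_n^R(c_{n,1},c_{n,2})\}/\sigma_n(c_{n,1},c_{n,2})\stackrel{d}{\longrightarrow}N(0,1),\quad\mathcal{P}_0\text{-uniformly.}
\]
The condition $\sqrt{\log n}/c_{n,2}\to0$ guarantees that the contact set truncation does not interfere with the uniform convergence rate in A\ref{assumption-A3}, while $\liminf\sigma_n^2>0$ rules out degeneracy so that the CLT delivers a proper normal limit. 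Uniformity in $P$ is obtained by a Berry--Esseen bound using the moment condition A\ref{assumption-A6}(i) together with Assumption B\ref{assumption-B4}, exactly as described in the overview to Appendix A.

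\textbf{Part (ii).} The bootstrap version proceeds along the same route. First, by Assumption B\ref{assumption-B1} replace $\hat{\mathbf s}_\tau^\ast(x)$ by its linear expansion $\sqrt{nh^d}\{\hat g_{\tau,j}^\ast(x)-\mathbf{E}^\ast\hat g_{\tau,j}^\ast(x)\}$ up to a remainder of order $o_{P^\ast}(\sqrt{h^d})$ that, combined with Assumption B\ref{assumption-B2}, is absorbed into the $h^{d/2}$ scale. Then apply Le Cam's Poissonization lemma (as was done in the proof of Lemma 2(ii), citing \citeasnoun{Gine/Zinn:90}) to replace the bootstrap empirical distribution by a Poisson bootstrap process; this reduces the analysis of $\bar\theta_n^\ast(c_{n,1},c_{n,2})$ to a conditional central limit theorem for Poissonized bootstrap sums of the original observations. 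The bootstrap analogues Lemmas C7--C9 in Appendix C then deliver the asymptotic normality of the Poissonized bootstrap statistic, $P^\ast$-conditionally and $\mathcal{P}_0$-uniformly. De-Poissonization gives the statement for $\bar\theta_n^\ast$. The centering is $a_n^{R\ast}(c_{n,1},c_{n,2})$ and the scale is the same $\sigma_n(c_{n,1},c_{n,2})$, which is the consequence of the conditional covariance of the bootstrap Poisson process matching that of the population Poisson process in the limit (this is where Lemma A1 implicitly plays a role in aligning centerings).

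\textbf{Main obstacles.} The principal technical difficulty is controlling the CLT uniformly in $P\in\mathcal{P}_0$ in the presence of potentially degenerate covariance among the $J$ nonparametric estimators, and across $\tau\in\mathcal{T}$. Since we are assuming $\liminf\sigma_n^2>0$, the global variance is nondegenerate, but local degeneracy across components or indices $\tau$ still requires the three-step regularization/Berry--Esseen/de-Poissonization programme described in the paragraph just before Section 3.3 of the main text to be executed carefully. A second subtlety is verifying that the integrand $\bar\Lambda_{x,\tau}(\cdot)$, which depends on $(x,\tau)$ only through the indicator selecting the active contact subset $A$, behaves nicely as $(x,\tau)$ varies across the boundary of different contact sets; uniform integrability, together with the rate $\sqrt{\log n}/c_{n,2}\to0$, handles this. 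Finally, achieving the bootstrap CLT $\mathcal{P}_0$-uniformly in the weak sense defined earlier (with an extra outer $P$-probability) requires that the Berry--Esseen bound in Appendix C furnish a rate that converges $P$-uniformly, which is where the stronger bandwidth condition B\ref{assumption-B4} (involving the moment index $M$ from A\ref{assumption-A6}) is consumed.
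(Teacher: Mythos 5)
Your plan matches the paper's proof of Lemma A2: linearize $\hat{\mathbf{s}}_\tau(x)$ via Assumption A1, pass to the Poissonized quantity $\mathbf{z}_{N,\tau}(x)$, split the contact sets into a part on the compact set $\mathcal{C}_l$ from Assumption A6(ii) and its complement (the complement being killed by Lemma B7), apply the Poissonization/de-Poissonization machinery of Lemma B9(i) (resp.\ Lemmas C7 and C9(i) for the bootstrap), and use the variance convergence to $\sigma_n^2(c_{n,1},c_{n,2})$. One small correction: part (i) does not assume B4, and the uniformity there rests only on A4(i) and A6 through the conditions of Lemmas B6--B9; otherwise the route is the same as the paper's.
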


\begin{proof}[Proof of Lemma A2]
(i) By Lemma 1, we have (with probability approaching one)%
\begin{equation*}
\bar{\theta}_{n}(c_{n,1},c_{n,2})=\sum_{A\in \mathcal{N}_{J}}%
\int_{B_{n,A}(c_{n,1},c_{n,2})}\Lambda _{p}(\mathbf{\hat{s}}_{\tau
}(x))dQ(x,\tau )=\sum_{A\in \mathcal{N}_{J}}\int_{B_{n,A}(c_{n,1},c_{n,2})}%
\Lambda _{A,p}(\mathbf{\hat{s}}_{\tau }(x))dQ(x,\tau ).
\end{equation*}%
Note that $a_{n}^{R}(c_{n,1},c_{n,2})=\sum_{A\in \mathcal{N}%
_{J}}a_{n,A}^{R}(c_{n,1},c_{n,2})$, where%
\begin{equation*}
a_{n,A}^{R}(c_{n,1},c_{n,2})\equiv \int_{B_{n,A}(c_{n,1},c_{n,2})}\mathbf{E}%
\left[ \Lambda _{A,p}(\sqrt{nh^{d}}\mathbf{z}_{N,\tau }(x))\right] dQ(x,\tau
).
\end{equation*}%
Using Assumption A1, we find that $h^{-d/2}\{\bar{\theta}%
_{n}(c_{n,1},c_{n,2})-a_{n}^{R}(c_{n,1},c_{n,2})\}$ is equal to%
\begin{equation*}
h^{-d/2}\sum_{A\in \mathcal{N}_{J}}\{\zeta _{n,A}(B_{n,A}(c_{n,1},c_{n,2}))-%
\mathbf{E}\zeta _{N,A}(B_{n,A}(c_{n,1},c_{n,2}))\}+o_{P}(1),
\end{equation*}%
where for any Borel set $B\subset \mathcal{S}$,%
\begin{eqnarray*}
\zeta _{n,A}(B) &\equiv &\int_{B}\Lambda _{A,p}(\sqrt{nh^{d}}\mathbf{z}%
_{n,\tau }(x))dQ(x,\tau ), \\
\zeta _{N,A}(B) &\equiv &\int_{B}\Lambda _{A,p}(\sqrt{nh^{d}}\mathbf{z}%
_{N,\tau }(x))dQ(x,\tau ),
\end{eqnarray*}%
and%
\begin{equation*}
\mathbf{z}_{n,\tau }(x)\equiv \frac{1}{nh^{d}}\sum_{i=1}^{n}\beta _{n,x,\tau
}(Y_{i},(X_{i}-x)/h)-\frac{1}{h^{d}}\mathbf{E}\left[ \beta _{n,x,\tau
}(Y_{i},(X_{i}-x)/h)\right] \text{,}
\end{equation*}%
with 
\begin{equation*}
\beta _{n,x,\tau }(Y_{i},(X_{i}-x)/h)=(\beta _{n,x,\tau
,1}(Y_{i1},(X_{i}-x)/h),\cdot \cdot \cdot ,\beta _{n,x,\tau
,J}(Y_{iJ},(X_{i}-x)/h))^{\top }.
\end{equation*}%
We take $0<\bar{\varepsilon}_{l}\rightarrow 0$ as $l\rightarrow \infty $ and
take $\mathcal{C}_{l}\subset \mathbf{R}^{d}$ such that%
\begin{equation*}
0<P\left\{ X_{i}\in \mathbf{R}^{d}\backslash \mathcal{C}_{l}\right\} \leq 
\bar{\varepsilon}_{l},
\end{equation*}%
and $Q((\mathcal{X}\backslash \mathcal{C}_{l})\times \mathcal{T})\rightarrow
0$ as $l\rightarrow \infty $. Such a sequence $\{\bar{\varepsilon}%
_{l}\}_{l=1}^{\infty }$ exists by Assumption A6(ii) by the condition that $%
\mathcal{S}$ is compact. We write%
\begin{eqnarray}
&&\frac{h^{-d/2}\sum_{A\in \mathcal{N}_{J}}\{\zeta
_{n,A}(B_{n,A}(c_{n,1},c_{n,2}))-\mathbf{E}\zeta
_{N,A}(B_{n,A}(c_{n,1},c_{n,2}))\}}{\sigma _{n}^{2}(c_{n,1},c_{n,2})}
\label{dec56} \\
&=&\frac{h^{-d/2}\sum_{A\in \mathcal{N}_{J}}\{\zeta
_{n,A}(B_{n,A}(c_{n,1},c_{n,2})\cap (\mathcal{C}_{l}\times \mathcal{T}))-%
\mathbf{E}\zeta _{N,A}(B_{n,A}(c_{n,1},c_{n,2})\cap (\mathcal{C}_{l}\times 
\mathcal{T}))\}}{\sigma _{n}^{2}(c_{n,1},c_{n,2})}  \notag \\
&&+\frac{h^{-d/2}\sum_{A\in \mathcal{N}_{J}}\{\zeta
_{n,A}(B_{n,A}(c_{n,1},c_{n,2})\backslash (\mathcal{C}_{l}\times \mathcal{T}%
))-\mathbf{E}\zeta _{N,A}(B_{n,A}(c_{n,1},c_{n,2})\backslash (\mathcal{C}%
_{l}\times \mathcal{T}))\}}{\sigma _{n}^{2}(c_{n,1},c_{n,2})}  \notag \\
&=&A_{1n}+A_{2n}\text{, say.}  \notag
\end{eqnarray}%
As for $A_{2n}$, we apply Lemma B7 in Appendix B, and the condition that $Q((%
\mathcal{X}\backslash \mathcal{C}_{l})\times \mathcal{T})\rightarrow 0$, as $%
l\rightarrow \infty $, and 
\begin{equation*}
\text{liminf}_{n\rightarrow \infty }\text{inf}_{P\in \mathcal{P}_{0}}\sigma
_{n}(c_{1n},c_{2n})>0\text{,}
\end{equation*}%
to deduce that $A_{2n}=o_{P}(1)$, as $n\rightarrow \infty $ and then $%
l\rightarrow \infty $. As for $A_{1n}$, first observe that as $n\rightarrow
\infty $ and then $l\rightarrow \infty ,$%
\begin{equation}
\left\vert \sigma _{n}^{2}(c_{n,1},c_{n,2})-\bar{\sigma}%
_{n,l}^{2}(c_{n,1},c_{n,2})\right\vert \rightarrow 0,  \label{conv}
\end{equation}%
where $\bar{\sigma}_{n,l}^{2}(c_{n,1},c_{n,2})$ is equal to $\sigma
_{n}^{2}(c_{n,1},c_{n,2})$ except that $B_{n,A}(c_{n,1},c_{n,2})$'s are
replaced by $B_{n,A}(c_{n,1},c_{n,2})\cap (\mathcal{C}_{l}\times \mathcal{T}%
) $. The convergence follows by Assumption 6(i). Also by Lemma B9(i) and the
convergence in (\ref{conv}) and the fact that 
\begin{equation*}
\liminf_{n\rightarrow \infty }\inf_{P\in \mathcal{P}_{0}}\sigma
_{n}^{2}(c_{n,1},c_{n,2})>0,
\end{equation*}%
we have%
\begin{equation*}
A_{1n}\overset{d}{\rightarrow }N(0,1),\text{ }\mathcal{P}_{0}\text{%
-uniformly,}
\end{equation*}%
as $n\rightarrow \infty $ and as $l\rightarrow \infty $. Hence we obtain (i).

\noindent (ii) The proof can be done in the same way as in the proof of (i),
using Lemmas C7 and C9(i) in Appendix C instead of Lemmas B7 and B9(i) in
Appendix B.
\end{proof}

\begin{LemmaA}
\textit{Suppose that Assumptions A\ref{assumption-A1}-A\ref{assumption-A5}
hold. Then for any sequences }$c_{n,L},c_{n,U}>0$\textit{\ satisfying
Assumption A4(ii), and for each} $A\in \mathcal{N}_{J},$%
\begin{equation*}
\inf_{P\in \mathcal{P}}P\left\{ B_{n,A}(c_{n,L},c_{n,U})\subset \hat{B}_{A}(%
\hat{c}_{n})\subset B_{n,A}(c_{n,U},c_{n,L})\right\} \rightarrow 1\text{%
\textit{, as} }n\rightarrow \infty \text{.}
\end{equation*}
\end{LemmaA}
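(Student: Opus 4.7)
The plan is to derive the two set containments from a uniform approximation of the standardized sample quantities by their population counterparts and then exploit the sandwich condition on $\hat c_n$. Write $\hat u_{\tau,j}(x)\equiv r_{n,j}\hat v_{\tau,j}(x)/\hat\sigma_{\tau,j}(x)$ and $u_{n,\tau,j}(x)\equiv r_{n,j}v_{n,\tau,j}(x)/\sigma_{n,\tau,j}(x)$. First, I would establish the representation
\begin{equation*}
\hat u_{\tau,j}(x)-u_{n,\tau,j}(x)\;=\;\frac{r_{n,j}[\hat v_{\tau,j}(x)-v_{n,\tau,j}(x)]}{\hat\sigma_{\tau,j}(x)}\;+\;u_{n,\tau,j}(x)\Bigl[\frac{\sigma_{n,\tau,j}(x)}{\hat\sigma_{\tau,j}(x)}-1\Bigr],
\end{equation*}
where by Assumption A\ref{assumption-A3} the first summand is $O_P(\sqrt{\log n})$, $\mathcal P$-uniformly, and by Assumption A\ref{assumption-A5} together with $\liminf_{n}\inf_{(x,\tau),P}\sigma_{n,\tau,j}(x)>0$ the multiplicative factor $\sigma_{n,\tau,j}/\hat\sigma_{\tau,j}-1$ is $o_P(1)$, $\mathcal P$-uniformly.

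Second, I would restrict attention to the event $E_n$ on which (i) $c_{n,L}\le\hat c_n\le c_{n,U}$ (Assumption A4(ii)), (ii) the bound $\sup_{(x,\tau),j}|r_{n,j}(\hat v_{\tau,j}-v_{n,\tau,j})/\hat\sigma_{\tau,j}|\le M\sqrt{\log n}$ holds for some large constant $M$, and (iii) $\sup_{(x,\tau),j}|\sigma_{n,\tau,j}/\hat\sigma_{\tau,j}-1|\le\varepsilon$ for a small $\varepsilon>0$. By Assumptions A\ref{assumption-A3}, A\ref{assumption-A4}(ii), and A\ref{assumption-A5}, $\inf_{P\in\mathcal P}P(E_n)\to 1$. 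On $E_n$ I would check both containments directly.

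For $B_{n,A}(c_{n,L},c_{n,U})\subset\hat B_A(\hat c_n)$: take $(x,\tau)$ in the left-hand set. For $j\in A$ we have $|u_{n,\tau,j}|\le c_{n,L}$, hence $|\hat u_{\tau,j}|\le c_{n,L}(1+\varepsilon)+M\sqrt{\log n}=c_{n,L}(1+o(1))$, which is at most $\hat c_n$ once $n$ is large because $\sqrt{\log n}/c_{n,L}\to 0$ and $\hat c_n\ge c_{n,L}$. For $j\notin A$, $u_{n,\tau,j}<-c_{n,U}$ gives $\hat u_{\tau,j}<-c_{n,U}(1-\varepsilon)+M\sqrt{\log n}<-\hat c_n$ since $\hat c_n\le c_{n,U}$ and again $\sqrt{\log n}/c_{n,U}\to 0$. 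The reverse inclusion $\hat B_A(\hat c_n)\subset B_{n,A}(c_{n,U},c_{n,L})$ is argued symmetrically: for $(x,\tau)\in\hat B_A(\hat c_n)$ and $j\in A$, $|\hat u_{\tau,j}|\le\hat c_n\le c_{n,U}$ gives $|u_{n,\tau,j}|\le c_{n,U}(1+o(1))\le c_{n,U}$; and for $j\notin A$, $\hat u_{\tau,j}<-\hat c_n\le-c_{n,L}$ gives $u_{n,\tau,j}<-c_{n,L}(1+o(1))\le-c_{n,L}$ for $n$ large.

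\textbf{Main obstacle.} The delicate point is bookkeeping the multiplicative error $|u_{n,\tau,j}|\cdot o_P(1)$ coming from the scale normalization, which could in principle be amplified when $|u_{n,\tau,j}|$ is large. On the $A$-coordinates this is controlled because $|u_{n,\tau,j}|$ is bounded by $c_{n,L}$ or $c_{n,U}$ on the relevant sets, while on the $\mathbb N_J\setminus A$ coordinates the inequality to verify is one-sided ($\hat u_{\tau,j}<-\hat c_n$) and any large negative value of $u_{n,\tau,j}$ only reinforces it. Apart from this, the conditions $\sqrt{\log n}/c_{n,L}\to 0$ and $c_{n,L}\le\hat c_n\le c_{n,U}$ of Assumption A\ref{assumption-A4}(ii) are precisely tailored to absorb the $O_P(\sqrt{\log n})$ approximation error into a strict inequality uniformly in $P\in\mathcal P$.
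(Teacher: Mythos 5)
Your decomposition of $\hat u_{\tau,j}-u_{n,\tau,j}$, the high-probability event $E_n$, and the observation that the inequality for $j\in\mathbb{N}_J\setminus A$ is one-sided are all the right ingredients. The genuine gap is in the threshold bookkeeping, not in the multiplicative scale error you single out as the "main obstacle." In the first containment, for $j\in A$ you reach $|\hat u_{\tau,j}|\le c_{n,L}(1+\varepsilon)+M\sqrt{\log n}$ and assert this is at most $\hat c_n$ for large $n$ "because $\sqrt{\log n}/c_{n,L}\to 0$ and $\hat c_n\ge c_{n,L}$." But $c_{n,L}(1+\varepsilon)+M\sqrt{\log n}$ is strictly larger than $c_{n,L}$, and $c_{n,L}$ is the only lower bound available for $\hat c_n$; rewriting the bound as $c_{n,L}(1+o(1))$ does not make it $\le c_{n,L}$. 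The same failure occurs in all four inequalities: for $j\notin A$ you would need $-c_{n,U}(1-\varepsilon)+M\sqrt{\log n}\le -c_{n,U}$, which is impossible, and the reverse containment needs $c_{n,U}(1+o(1))\le c_{n,U}$. The problem is not cosmetic. Assumption A4(ii) permits $c_{n,L}=c_{n,U}=\hat c_n$ deterministic, in which case the asserted sandwich forces $\hat B_A(\hat c_n)=B_{n,A}(c_n,c_n)$ with probability approaching one; this fails whenever $u_{n,\tau,j}$ sits at or near the threshold on a set of positive measure, a configuration compatible with $\mathcal P$ since $c_n/r_n\to 0$. (The lemma as literally stated is therefore also overreaching; the paper omits its proof, deferring to Linton, Song, and Whang (2010).)

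The standard repair, and what the cited argument of Theorem 2, Claim 1 of Linton, Song, and Whang (2010) actually delivers, is to build multiplicative slack into the population sets: prove instead that $B_{n,A}(c_{n,L}/2,\,2c_{n,U})\subset\hat B_A(\hat c_n)\subset B_{n,A}(2c_{n,U},\,c_{n,L}/2)$ with probability approaching one, uniformly in $P$. Then for $j\in A$ the bound becomes $(c_{n,L}/2)(1+\varepsilon)+M\sqrt{\log n}\le c_{n,L}\le\hat c_n$ for $\varepsilon$ small and $n$ large, and for $j\notin A$ one gets $-2c_{n,U}(1-\varepsilon)+M\sqrt{\log n}<-c_{n,U}\le-\hat c_n$; the reverse containment is symmetric. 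Since the pair $(c_{n,L}/2,2c_{n,U})$ also satisfies Assumption A4(ii), and the lemma is only used downstream to sandwich integrals between population sets with the right rate properties, this adjusted version suffices for the rest of the paper. With that modification your argument closes; as written, it does not.
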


\begin{proof}[Proof of Lemma A3]
By using Assumptions A\ref{assumption-A3}-A\ref{assumption-A5}, and
following the proof of Theorem 2, Claim 1 in %
\citeasnoun{Linton/Song/Whang:10}, we can complete the proof. Details are
omitted.
\end{proof}

Define for $c_{n,1},c_{n,2}>0,$%
\begin{eqnarray*}
T_{n}(c_{n,1},c_{n,2}) &\equiv &h^{-d/2}\left( \frac{\bar{\theta}%
_{n}(c_{n,1},c_{n,2})-a_{n}(c_{n,1},c_{n,2})}{\sigma _{n}(c_{n,1},c_{n,2})}%
\right) \text{ and} \\
T_{n}^{\ast }(c_{n,1},c_{n,2}) &\equiv &h^{-d/2}\left( \frac{\bar{\theta}%
_{n}^{\ast }(c_{n,1},c_{n,2})-a_{n}(c_{n,1},c_{n,2})}{\sigma
_{n}(c_{n,1},c_{n,2})}\right) .
\end{eqnarray*}%
We introduce critical values for the finite sample distribution of $\hat{%
\theta}$ as follows:%
\begin{equation*}
\gamma _{n}^{\alpha }(c_{n,1},c_{n,2})\equiv \inf \left\{ c\in \mathbf{R}%
:P\left\{ T_{n}(c_{n,1},c_{n,2})\leq c\right\} >1-\alpha \right\} \text{.}
\end{equation*}%
Similarly, let us introduce bootstrap critical values:%
\begin{equation}
\gamma _{n}^{\alpha \ast }(c_{n,1},c_{n,2})\equiv \inf \left\{ c\in \mathbf{R%
}:P^{\ast }\left\{ T_{n}^{\ast }(c_{n,1},c_{n,2})\leq c\right\} >1-\alpha
\right\} \text{.}  \label{c_tildee*}
\end{equation}%
Finally, we introduce asymptotic critical values:\ $\gamma _{\infty
}^{\alpha }\equiv \Phi ^{-1}(1-\alpha ),$ where $\Phi $ denotes the standard
normal CDF.

\begin{LemmaA}
Suppose that Assumptions A\ref{assumption-A1}-A\ref{assumption-A3}, A4(i)%
\textit{, and }A5-A6 hold. Then the following holds.

\noindent (i)\textit{\ }For any $c_{n,1},c_{n,2}\rightarrow \infty $ such
that 
\begin{equation*}
\liminf_{n\rightarrow \infty }\inf_{P\in \mathcal{P}}\sigma
_{n}^{2}(c_{n,1},c_{n,2})>0,
\end{equation*}%
it is satisfied that%
\begin{equation*}
\sup_{P\in \mathcal{P}}\left\vert \gamma _{n}^{\alpha
}(c_{n,1},c_{n,2})-\gamma _{\infty }^{\alpha }\right\vert \rightarrow 0,\ 
\text{as\ }n\rightarrow \infty .
\end{equation*}

\noindent (ii) \textit{Suppose further that Assumptions B\ref{assumption-B1}
and B\ref{assumption-B4} hold. Then }for any $c_{n,1},c_{n,2}\rightarrow
\infty $ such that 
\begin{equation*}
\liminf_{n\rightarrow \infty }\inf_{P\in \mathcal{P}}\sigma
_{n}^{2}(c_{n,1},c_{n,2})>0,
\end{equation*}%
it is satisfied that%
\begin{equation*}
\sup_{P\in \mathcal{P}}\left\vert \gamma _{n}^{\alpha \ast
}(c_{n,1},c_{n,2})-\gamma _{\infty }^{\alpha }\right\vert \rightarrow 0,\ 
\text{as\ }n\rightarrow \infty .
\end{equation*}
\end{LemmaA}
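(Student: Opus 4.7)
The plan is to reduce both parts to Lemma A2, first using Lemma A1 to replace the recentering $a_{n}^{R}$ (respectively $a_{n}^{R\ast}$) by $a_{n}$ at a negligible cost, and then to convert the resulting $\mathcal{P}$-uniform asymptotic normality into $\mathcal{P}$-uniform convergence of the $(1-\alpha)$-quantiles via a uniform version of Polya's theorem. Throughout, the hypothesis $\liminf_{n}\inf_{P\in \mathcal{P}}\sigma _{n}^{2}(c_{n,1},c_{n,2})>0$ ensures that dividing by $\sigma _{n}(c_{n,1},c_{n,2})$ is harmless, so the only substantive content is the location normalization and the CLT.

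For part (i), decompose
\begin{equation*}
T_{n}(c_{n,1},c_{n,2}) = h^{-d/2}\frac{\bar{\theta}_{n}(c_{n,1},c_{n,2})-a_{n}^{R}(c_{n,1},c_{n,2})}{\sigma _{n}(c_{n,1},c_{n,2})} + h^{-d/2}\frac{a_{n}^{R}(c_{n,1},c_{n,2})-a_{n}(c_{n,1},c_{n,2})}{\sigma _{n}(c_{n,1},c_{n,2})}.
\end{equation*}
Lemma A1 renders the second summand deterministically $o(1)$, uniformly in $P\in \mathcal{P}$. The first summand is $N(0,1)$ $\mathcal{P}$-uniformly: although Lemma A2(i) is stated over $\mathcal{P}_{0}$, its proof never uses the null, because $\bar{\theta}_{n}(c_{n,1},c_{n,2})$ already carries the contact-set indicator through $\bar{\Lambda}_{x,\tau }$, and the Poissonization/CLT step invoking Lemmas B7--B9 is null-free. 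Combining the two pieces yields $T_{n}(c_{n,1},c_{n,2})\overset{d}{\to }N(0,1)$, $\mathcal{P}$-uniformly.

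Part (ii) follows from the parallel decomposition of $T_{n}^{\ast }(c_{n,1},c_{n,2})$ into $(\bar{\theta}_{n}^{\ast }-a_{n}^{R\ast })/(h^{d/2}\sigma _{n})$ and $(a_{n}^{R\ast }-a_{n})/(h^{d/2}\sigma _{n})$. By Lemma A1 the second term is $o_{P}(1)$, $\mathcal{P}$-uniformly, and Lemma A2(ii) gives the first term $\overset{d^{\ast }}{\to }N(0,1)$, $\mathcal{P}$-uniformly, again using that the proof of Lemma A2(ii) via Lemmas C7--C9 does not invoke the null. Hence $T_{n}^{\ast }(c_{n,1},c_{n,2})$ is $\mathcal{P}$-uniformly bootstrap-asymptotically standard normal in the sense introduced preceding Lemma A2.

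The remaining step, which I expect to be the main subtlety, is to pass from uniform-in-$P$, pointwise-in-$t$ convergence of CDFs to uniform-in-$P$ convergence of quantiles. The argument is a uniform Polya bound: fix $\varepsilon>0$, pick a finite grid containing $\gamma _{\infty }^{\alpha }\pm \varepsilon $, and use the $\mathcal{P}$-uniform convergence at the grid points together with monotonicity of each CDF and continuity of $\Phi $ to deduce $\sup_{P\in \mathcal{P}}\sup_{t}|P\{T_{n}\leq t\}-\Phi (t)|\to 0$. Choosing $\delta>0$ so that $\Phi (\gamma _{\infty }^{\alpha }-\varepsilon)<1-\alpha -2\delta$ and $\Phi (\gamma _{\infty }^{\alpha }+\varepsilon)>1-\alpha +2\delta$, this forces $\gamma _{n}^{\alpha }(c_{n,1},c_{n,2})\in [\gamma _{\infty }^{\alpha }-\varepsilon ,\gamma _{\infty }^{\alpha }+\varepsilon ]$ for all sufficiently large $n$ and all $P\in \mathcal{P}$, giving (i). For (ii) the same grid argument is run conditionally on the sample: the bootstrap CDF $F_{n}^{\ast }(t)\equiv P^{\ast }\{T_{n}^{\ast }\leq t\}$ satisfies $|F_{n}^{\ast }(t)-\Phi (t)|=o_{P^{\ast }}(1)$, $\mathcal{P}$-uniformly, at each of finitely many grid points, which transfers to $\sup_{t}|F_{n}^{\ast }(t)-\Phi (t)|=o_{P^{\ast }}(1)$, $\mathcal{P}$-uniformly, and hence to $\mathcal{P}$-uniform convergence of $\gamma _{n}^{\alpha \ast }(c_{n,1},c_{n,2})$ to $\gamma _{\infty }^{\alpha }$ by the quantile squeeze above.
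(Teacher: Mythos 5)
Your proposal is correct and follows essentially the same route as the paper: the paper also deduces part (i) directly from Lemma A2(i) plus Lemma A1, and proves part (ii) by the same quantile-squeeze logic, phrased as a contradiction on the events $\{\gamma_n^{\alpha\ast}-\gamma_\infty^\alpha<-a\}$ and $\{\gamma_n^{\alpha\ast}-\gamma_\infty^\alpha>a\}$ rather than via an explicit uniform Polya bound. Your remark that Lemma A2's proof is null-free (so the $\mathcal{P}_0$-uniform statement extends to $\mathcal{P}$, as the lemma requires) is a point the paper passes over silently, and is worth having made explicit.
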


\begin{proof}[Proof of Lemma A4]
(i) The statement immediately follows from the first statement of Lemma
A2(i) and Lemma A1.

\noindent (ii) We show only the second statement. Fix $a>0$. Let us
introduce two events:%
\begin{equation*}
E_{n,1}\equiv \left\{ \gamma _{n}^{\alpha \ast }(c_{n,1},c_{n,2})-\gamma
_{\infty }^{\alpha }<-a\right\} \text{ and }E_{n,2}\equiv \left\{ \gamma
_{n}^{\alpha \ast }(c_{n,1},c_{n,2})-\gamma _{\infty }^{\alpha }>a\right\} .
\end{equation*}%
On the event $E_{n,1},$ we have%
\begin{eqnarray*}
\alpha &=&P^{\ast }\left\{ h^{-d/2}\left( \frac{\bar{\theta}_{n}^{\ast
}(c_{n,1},c_{n,2})-a_{n}(c_{n,1},c_{n,2})}{\sigma _{n}(c_{n,1},c_{n,2})}%
\right) >\gamma _{n}^{\alpha \ast }(c_{n,1},c_{n,2})\right\} \\
&\geq &P^{\ast }\left\{ h^{-d/2}\left( \frac{\bar{\theta}_{n}^{\ast
}(c_{n,1},c_{n,2})-a_{n}(c_{n,1},c_{n,2})}{\sigma _{n}(c_{n,1},c_{n,2})}%
\right) >\gamma _{\infty }^{\alpha }-a\right\} .
\end{eqnarray*}%
By Lemma A2(ii) and Lemma A1, the last probability is equal to%
\begin{equation*}
1-\Phi \left( \gamma _{\infty }^{\alpha }-a\right) +o_{P}(1)>\alpha
+o_{P}(1),
\end{equation*}%
where $o_{P}(1)$ is uniform over $P\in \mathcal{P}$ and the last strict
inequality follows by the definition of $\gamma _{\infty }^{\alpha }$ and $%
a>0.$ Hence $\sup_{P\in \mathcal{P}}PE_{n,1}\rightarrow 0$ as $n\rightarrow
\infty $. Similarly, on the event $E_{n,2}$, we have%
\begin{eqnarray*}
\alpha &=&P^{\ast }\left\{ h^{-d/2}\left( \frac{\bar{\theta}_{n}^{\ast
}(c_{n,1},c_{n,2})-a_{n}(c_{n,1},c_{n,2})}{\sigma _{n}(c_{n,1},c_{n,2})}%
\right) >\gamma _{n}^{\alpha \ast }(c_{n,1},c_{n,2})\right\} \\
&\leq &P^{\ast }\left\{ h^{-d/2}\left( \frac{\bar{\theta}_{n}^{\ast
}(c_{n,1},c_{n,2})-a_{n}(c_{n,1},c_{n,2})}{\sigma _{n}(c_{n,1},c_{n,2})}%
\right) >\gamma _{\infty }^{\alpha }+a\right\} .
\end{eqnarray*}%
By the first statement of Lemma A2(ii) and Lemma A1, the last bootstrap
probability is bounded by%
\begin{equation*}
1-\Phi \left( \gamma _{\infty }^{\alpha }+a\right) +o_{P}(1)<\alpha
+o_{P}(1),
\end{equation*}%
so that we have $\sup_{P\in \mathcal{P}}PE_{n,2}\rightarrow 0$ as $%
n\rightarrow \infty $.\textbf{\ }We conclude that%
\begin{equation*}
\sup_{P\in \mathcal{P}}P\left\{ |\gamma _{n}^{\alpha \ast
}(c_{n,1},c_{n,2})-\gamma _{\infty }^{\alpha }|>a\right\} =\sup_{P\in 
\mathcal{P}}\left( PE_{n,1}+PE_{n,2}\right) \rightarrow 0,
\end{equation*}%
as $n\rightarrow \infty $, obtaining the desired result.
\end{proof}

\begin{proof}[Proof of Theorem \protect\ref{Thm1}]
By Lemma 1, we have 
\begin{equation*}
\inf_{P\in \mathcal{P}_{0}}P\left\{ \hat{\theta}=\sum_{A\in \mathcal{N}%
_{J}}\int_{B_{n,A}(c_{n,L},c_{n,U})}\Lambda _{A,p}\left( \mathbf{\hat{u}}%
_{\tau }(x)\right) dQ(x,\tau )\right\} \rightarrow 1,
\end{equation*}%
as $n\rightarrow \infty $. Since under the null hypothesis, we have $%
v_{n,\tau ,j}(\cdot )/\hat{\sigma}_{\tau ,j}(\cdot )\leq 0$ for all $j\in 
\mathbb{N}_{J},$ with probability approaching one by Assumption A5, we have%
\begin{eqnarray*}
&&\sum_{A\in \mathcal{N}_{J}}\int_{B_{n,A}(c_{n,L},c_{n,U})}\Lambda
_{A,p}\left( \mathbf{\hat{u}}_{\tau }(x)\right) dQ(x,\tau ) \\
&\leq &\sum_{A\in \mathcal{N}_{J}}\int_{B_{n,A}(c_{n,L},c_{n,U})}\Lambda
_{A,p}\left( \mathbf{\hat{s}}_{\tau }(x)\right) dQ(x,\tau )\equiv \bar{\theta%
}_{n}(c_{n,L},c_{n,U}).
\end{eqnarray*}%
Thus, we have as $n\rightarrow \infty ,$%
\begin{equation}
\inf_{P\in \mathcal{P}_{0}}P\left\{ \hat{\theta}\leq \bar{\theta}%
_{n}(c_{n,L},c_{n,U})\right\} \rightarrow 1.  \label{ineq42b}
\end{equation}

Let the $(1-\alpha )$-th percentile of the bootstrap distribution of%
\begin{equation*}
\bar{\theta}_{n}^{\ast }(c_{n,L},c_{n,U})=\sum_{A\in \mathcal{N}%
_{J}}\int_{B_{n,A}(c_{n,L},c_{n,U})}\Lambda _{A,p}(\mathbf{\hat{s}}_{\tau
}^{\ast }(x))dQ(x,\tau )
\end{equation*}%
be denoted by $\bar{c}_{n,L}^{\alpha \ast }$. By Lemma A3 and Assumption
A4(ii), with probability approaching one,%
\begin{equation}
\sum_{A\in \mathcal{N}_{J}}\int_{B_{n,A}(c_{n,L},c_{n,U})}\Lambda
_{A,p}\left( \mathbf{\hat{s}}_{\tau }^{\ast }(x)\right) dQ(x,\tau )\leq
\sum_{A\in \mathcal{N}_{J}}\int_{\hat{B}_{A}(\hat{c}_{n})}\Lambda
_{A,p}\left( \mathbf{\hat{s}}_{\tau }^{\ast }(x)\right) dQ(x,\tau ).
\label{comp}
\end{equation}%
This implies that as $n\rightarrow \infty ,$%
\begin{equation}
\inf_{P\in \mathcal{P}}P\left\{ c_{\alpha }^{\ast }\geq \bar{c}%
_{n,L}^{\alpha \ast }\right\} \rightarrow 1\text{.}  \label{comp2}
\end{equation}%
There exists a sequence of probabilities $\{P_{n}\}_{n\geq 1}\subset 
\mathcal{P}_{0}$ such that 
\begin{eqnarray}
\underset{n\rightarrow \infty }{\text{limsup}}\sup_{P\in \mathcal{P}%
_{0}}P\left\{ \hat{\theta}>c_{\alpha ,\eta }^{\ast }\right\} &=&\ \underset{%
n\rightarrow \infty }{\text{limsup}}P_{n}\left\{ \hat{\theta}>c_{\alpha
,\eta }^{\ast }\right\}  \label{limit} \\
&=&\text{lim}_{n\rightarrow \infty }P_{w_{n}}\left\{ \hat{\theta}%
_{w_{n}}>c_{w_{n},\alpha ,\eta }^{\ast }\right\} ,  \notag
\end{eqnarray}%
where $\{w_{n}\}\subset \{n\}$ is a certain subsequence, and $\hat{\theta}%
_{w_{n}}$ and $c_{w_{n},\alpha ,\eta }^{\ast }$ are the same as $\hat{\theta}
$ and $c_{\alpha ,\eta }^{\ast }$ except that the sample size $n$ is now
replaced by $w_{n}$.\newline
By Assumption A6(i), $\{\sigma _{n}(c_{n,L},c_{n,U})\}_{n\geq 1}$ is a
bounded sequence. Therefore, there exists a subsequence $\{u_{n}\}_{n\geq
1}\subset \{w_{n}\}_{n\geq 1},$ such that $\sigma
_{u_{n}}(c_{u_{n},L},c_{u_{n},U})$ converges. We consider two cases:

\textbf{Case 1: }lim$_{n\rightarrow \infty }\sigma
_{u_{n}}(c_{u_{n},L},c_{u_{n},U})>0,$ and

\textbf{Case 2:} lim$_{n\rightarrow \infty }\sigma
_{u_{n}}(c_{u_{n},L},c_{u_{n},U})=0.$

In both case, we will show below that 
\begin{equation}
\underset{n\rightarrow \infty }{\text{limsup}}P_{u_{n}}\{\hat{\theta}%
_{u_{n}}>c_{u_{n},\alpha ,\eta }^{\ast }\}\leq \alpha .  \label{control}
\end{equation}%
Since along $\{w_{n}\}$, $P_{w_{n}}\{\hat{\theta}_{w_{n}}>c_{w_{n},\alpha
,\eta }^{\ast }\}$ converges, it does so along any subsequence of $\{w_{n}\}$%
. Therefore, the above limsup is equal to the last limit in (\ref{limit}).
This completes the proof.\newline
\textbf{Proof of (\ref{control}) in Case 1: }We write $P_{u_{n}}\{\hat{\theta%
}_{u_{n}}>c_{u_{n},\alpha ,\eta }^{\ast }\}$ as%
\begin{eqnarray*}
&&P_{u_{n}}\left( h^{-d/2}\left( \frac{\hat{\theta}%
_{u_{n}}-a_{u_{n}}(c_{u_{n},L},c_{u_{n},U})}{\sigma
_{u_{n}}(c_{u_{n},L},c_{u_{n},U})}\right) >h^{-d/2}\left( \frac{%
c_{u_{n},\alpha ,\eta }^{\ast }-a_{u_{n}}(c_{u_{n},L},c_{u_{n},U})}{\sigma
_{u_{n}}(c_{u_{n},L},c_{u_{n},U})}\right) \right) \\
&\leq &P_{u_{n}}\left( h^{-d/2}\left( \frac{\hat{\theta}%
_{u_{n}}-a_{u_{n}}(c_{u_{n},L},c_{u_{n},U})}{\sigma
_{u_{n}}(c_{u_{n},L},c_{u_{n},U})}\right) >h^{-d/2}\left( \frac{\bar{c}%
_{u_{n},L}^{\alpha \ast }-a_{u_{n}}(c_{u_{n},L},c_{u_{n},U})}{\sigma
_{u_{n}}(c_{u_{n},L},c_{u_{n},U})}\right) \right) +o(1),
\end{eqnarray*}%
where the inequality follows by the fact that $c_{\alpha ,\eta }^{\ast }\geq
c_{\alpha }^{\ast }\geq \bar{c}_{n,L}^{\alpha \ast }$ with probability
approaching one by (\ref{comp2}). Using (\ref{ineq42b}), we bound the last
probability by 
\begin{equation}
P_{u_{n}}\left\{ h^{-d/2}\left( \frac{\bar{\theta}%
_{u_{n}}(c_{u_{n},L},c_{u_{n},U})-a_{u_{n}}(c_{u_{n},L},c_{u_{n},U})}{\sigma
_{u_{n}}(c_{u_{n},L},c_{u_{n},U})}\right) >h^{-d/2}\left( \frac{\bar{c}%
_{u_{n},L}^{\alpha \ast }-a_{u_{n}}(c_{u_{n},L},c_{u_{n},U})}{\sigma
_{u_{n}}(c_{u_{n},L},c_{u_{n},U})}\right) \right\} +o(1).  \label{probs}
\end{equation}%
Therefore, since lim$_{n\rightarrow \infty }\sigma
_{u_{n}}(c_{u_{n},L},c_{u_{n},U})>0$, by Lemmas A2 and A4, we rewrite the
last probability in (\ref{probs}) as%
\begin{eqnarray*}
&&P_{u_{n}}\left\{ h^{-d/2}\left( \frac{\bar{\theta}%
_{u_{n}}(c_{u_{n},L},c_{u_{n},U})-a_{u_{n}}(c_{u_{n},L},c_{u_{n},U})}{\sigma
_{u_{n}}(c_{u_{n},L},c_{u_{n},U})}\right) >\gamma _{u_{n}}^{\alpha \ast
}(c_{u_{n},L},c_{u_{n},U})\right\} +o(1) \\
&=&P_{u_{n}}\left\{ h^{-d/2}\left( \frac{\bar{\theta}%
_{u_{n}}(c_{u_{n},L},c_{u_{n},U})-a_{u_{n}}(c_{u_{n},L},c_{u_{n},U})}{\sigma
_{u_{n}}(c_{u_{n},L},c_{u_{n},U})}\right) >\gamma _{\infty }^{\alpha
}\right\} +o(1)=\alpha +o(1).
\end{eqnarray*}%
This completes the proof of Step 1.\bigskip \newline
\textbf{Proof of (\ref{control}) in Case 2:} First, observe that%
\begin{equation*}
a_{u_{n}}^{\ast }(c_{u_{n},L},c_{u_{n},U})\leq a_{u_{n}}^{\ast }(\hat{c}%
_{u_{n}}),
\end{equation*}%
with probability approaching one by Lemma A3. Hence using this and (\ref%
{ineq42b}),%
\begin{eqnarray*}
P_{u_{n}}\left\{ \hat{\theta}_{u_{n}}>c_{u_{n},\alpha ,\eta }^{\ast
}\right\} &=&P_{u_{n}}\left\{ h^{-d/2}\left( \hat{\theta}%
_{u_{n}}-a_{u_{n}}(c_{u_{n},L},c_{u_{n},U})\right) >h^{-d/2}\left(
c_{u_{n},\alpha ,\eta }^{\ast }-a_{u_{n}}(c_{u_{n},L},c_{u_{n},U})\right)
\right\} \\
&\leq &P_{u_{n}}\left\{ 
\begin{array}{c}
h^{-d/2}\left( \bar{\theta}%
_{u_{n}}(c_{u_{n},L},c_{u_{n},U})-a_{u_{n}}(c_{u_{n},L},c_{u_{n},U})\right)
\\ 
>h^{-d/2}\left( h^{d/2}\eta +a_{u_{n}}^{\ast
}(c_{u_{n},L},c_{u_{n},U})-a_{u_{n}}(c_{u_{n},L},c_{u_{n},U})\right)%
\end{array}%
\right\} +o(1).
\end{eqnarray*}%
By Lemma A1, the leading probability is equal to%
\begin{equation*}
P_{u_{n}}\left\{ h^{-d/2}\left( \bar{\theta}%
_{u_{n}}(c_{u_{n},L},c_{u_{n},U})-a_{u_{n}}(c_{u_{n},L},c_{u_{n},U})\right)
>\eta +o_{P}(1)\right\} +o(1).
\end{equation*}%
Since $\eta >0$ and lim$_{n\rightarrow \infty }\sigma
_{u_{n}}(c_{u_{n},L},c_{u_{n},U})=0$, the leading probability vanishes by
Lemma B9(ii).
\end{proof}

\begin{proof}[Proof of Theorem \protect\ref{Thm2}]
We focus on probabilities $P\in \mathcal{P}_{n}(\lambda _{n},q_{n})\cap 
\mathcal{P}_{0}$. Recalling the definition of $\mathbf{u}_{n,\tau }(x;\hat{%
\sigma})\equiv \left[ r_{n,j}v_{n,\tau ,j}(x)/\hat{\sigma}_{\tau ,j}(x)%
\right] _{j\in \mathbb{N}_{J}}$ and applying Lemma 1 along with the
condition that%
\begin{equation*}
\sqrt{\log n}/c_{n,U}<\sqrt{\log n}/c_{n,L}\rightarrow 0,
\end{equation*}%
as $n\rightarrow \infty $, we find that with probability approaching one,%
\begin{eqnarray*}
\hat{\theta} &=&\sum_{A\in \mathcal{N}_{J}}\int_{B_{n,A}(c_{n,U},c_{n,L})}%
\Lambda _{A,p}\left( \mathbf{\hat{s}}_{\tau }(x)+\mathbf{u}_{n,\tau }(x;\hat{%
\sigma})\right) dQ(x,\tau ) \\
&=&\sum_{A\in \mathcal{N}_{J}}\int_{B_{n,A}(q_{n})}\Lambda _{A,p}\left( 
\mathbf{\hat{s}}_{\tau }(x)+\mathbf{u}_{n,\tau }(x;\hat{\sigma})\right)
dQ(x,\tau ) \\
&&+\sum_{A\in \mathcal{N}_{J}}\int_{B_{n,A}(c_{n,U},c_{n,L})\backslash
B_{n,A}(q_{n})}\Lambda _{A,p}\left( \mathbf{\hat{s}}_{\tau }(x)+\mathbf{u}%
_{n,\tau }(x;\hat{\sigma})\right) dQ(x,\tau ).
\end{eqnarray*}%
Since under $P\in \mathcal{P}_{0}$, $\mathbf{u}_{n,\tau }(x;\hat{\sigma}%
)\leq 0$ for all $x\in \mathcal{S}$, with probability approaching one by
Assumption 5, the last term multiplied by $h^{-d/2}$ is bounded by (from
some large $n$ on)%
\begin{eqnarray*}
&&h^{-d/2}\sum_{A\in \mathcal{N}_{J}}\int_{B_{n,A}(c_{n,U},c_{n,L})%
\backslash B_{n,A}(q_{n})}\Lambda _{A,p}\left( \mathbf{\hat{s}}_{\tau
}(x)\right) dQ(x,\tau ) \\
&\leq &h^{-d/2}\sum_{A\in \mathcal{N}_{J}}\left( \sup_{(x,\tau )\in \mathcal{%
S}}||\mathbf{\hat{s}}_{\tau }(x)||\right) ^{p}Q\left(
B_{n,A}(c_{n,U},c_{n,L})\backslash B_{n,A}(q_{n})\right) \\
&=&O_{P}\left( h^{-d/2}(\log n)^{p/2}\lambda _{n}\right) =o_{P}(1),
\end{eqnarray*}%
where the second to the last equality follows because $Q\left(
B_{n,A}(c_{n,U},c_{n,L})\backslash B_{n,A}(q_{n})\right) \leq \lambda _{n}$
by the definition of $\mathcal{P}_{n}(\lambda _{n},q_{n})$, and the last
equality follows by (\ref{lamq}).

On the other hand,%
\begin{eqnarray*}
&&h^{-d/2}\sum_{A\in \mathcal{N}_{J}}\int_{B_{n,A}(q_{n})}\Lambda
_{A,p}\left( \mathbf{\hat{s}}_{\tau }(x)+\mathbf{u}_{n,\tau }(x;\hat{\sigma}%
)\right) dQ(x,\tau ) \\
&=&h^{-d/2}\sum_{A\in \mathcal{N}_{J}}\int_{B_{n,A}(q_{n})}\Lambda
_{A,p}\left( \mathbf{\hat{s}}_{\tau }(x)\right) dQ(x,\tau ) \\
&&+h^{-d/2}\sum_{A\in \mathcal{N}_{J}}\int_{B_{n,A}(q_{n})}\Lambda
_{A,p}\left( \mathbf{\hat{s}}_{\tau }(x)+\mathbf{u}_{n,\tau }(x;\hat{\sigma}%
)\right) dQ(x,\tau ) \\
&&-h^{-d/2}\sum_{A\in \mathcal{N}_{J}}\int_{B_{n,A}(q_{n})}\Lambda
_{A,p}\left( \mathbf{\hat{s}}_{\tau }(x)\right) dQ(x,\tau ).
\end{eqnarray*}%
From the definition of $\Lambda _{p}$ in (\ref{lambda_p}), the last
difference (in absolute value) is bounded by%
\begin{eqnarray*}
&&Ch^{-d/2}\sum_{A\in \mathcal{N}_{J}}\int_{B_{n,A}(q_{n})}\left\Vert [%
\mathbf{u}_{n,\tau }(x;\hat{\sigma})]_{A}\right\Vert \left\Vert [\mathbf{%
\hat{s}}_{\tau }(x)]_{A}\right\Vert ^{p-1}dQ(x,\tau ) \\
&&+Ch^{-d/2}\sum_{A\in \mathcal{N}_{J}}\int_{B_{n,A}(q_{n})}\left\Vert [%
\mathbf{u}_{n,\tau }(x;\hat{\sigma})]_{A}\right\Vert \left\Vert [\mathbf{u}%
_{n,\tau }(x;\hat{\sigma})]_{A}\right\Vert ^{p-1}dQ(x,\tau ),
\end{eqnarray*}%
where $[a]_{A}$ is a vector $a$ with the $j$-th entry is set to be zero for
all $j\in \mathbb{N}_{J}\backslash A$ and $C>0$ is a constant that does not
depend on $n\geq 1$ or $P\in \mathcal{P}$. We have sup$_{(x,\tau )\in
B_{n,A}(q_{n})}\left\Vert [\mathbf{u}_{n,\tau }(x;\hat{\sigma}%
)]_{A}\right\Vert \leq q_{n}(1+o_{P}(1))$, by the null hypothesis and by
Assumption A5. Also, by Assumptions A3 and A5, 
\begin{equation*}
\text{sup}_{(x,\tau )\in B_{n,A}(q_{n})}\left\Vert [\mathbf{\hat{s}}_{\tau
}(x)]_{A}\right\Vert =O_{P}\left( \sqrt{\log n}\right) .
\end{equation*}%
Therefore, we conclude that%
\begin{eqnarray*}
&&h^{-d/2}\sum_{A\in \mathcal{N}_{J}}\int_{B_{n,A}(q_{n})}\Lambda
_{A,p}\left( \mathbf{\hat{s}}_{\tau }(x)+\mathbf{u}_{n,\tau }(x;\hat{\sigma}%
)\right) dQ(x,\tau ) \\
&=&h^{-d/2}\sum_{A\in \mathcal{N}_{J}}\int_{B_{n,A}(q_{n})}\Lambda
_{A,p}\left( \mathbf{\hat{s}}_{\tau }(x)\right) dQ(x,\tau )+O_{P}\left(
h^{-d/2}q_{n}\{(\log n)^{(p-1)/2}+q_{n}^{p-1}\}\right) .
\end{eqnarray*}%
The last $O_{P}(1)$ term is $o_{P}(1)$ by the condition for $q_{n}$ in (\ref%
{lamq}). Thus we find that%
\begin{equation}
\hat{\theta}=\bar{\theta}_{n}(q_{n})+o_{P}(h^{d/2}),  \label{cv33}
\end{equation}%
where $\bar{\theta}_{n}(q_{n})=\sum_{A\in \mathcal{N}_{J}}%
\int_{B_{n,A}(q_{n})}\Lambda _{A,p}\left( \mathbf{\hat{s}}_{\tau }(x)\right)
dQ(x,\tau ).$

Now let us consider the bootstrap statistic. We write%
\begin{eqnarray*}
\hat{\theta}^{\ast } &=&\sum_{A\in \mathcal{N}_{J}}\int_{\hat{B}_{A}(\hat{c}%
_{n})}\Lambda _{A,p}\left( \mathbf{\hat{s}}_{\tau }^{\ast }(x)\right)
dQ(x,\tau ) \\
&=&\sum_{A\in \mathcal{N}_{J}}\int_{B_{n,A}(q_{n})}\Lambda _{A,p}\left( 
\mathbf{\hat{s}}_{\tau }^{\ast }(x)\right) dQ(x,\tau )+\sum_{A\in \mathcal{N}%
_{J}}\int_{\hat{B}_{A}(\hat{c}_{n})\backslash B_{n,A}(q_{n})}\Lambda
_{A,p}\left( \mathbf{\hat{s}}_{\tau }^{\ast }(x)\right) dQ(x,\tau ).
\end{eqnarray*}%
By Lemma A3, we find that 
\begin{equation*}
\inf_{P\in \mathcal{P}}P\left\{ \hat{B}_{n,A}(\hat{c}_{n})\subset
B_{n,A}(c_{n,U},c_{n,L})\right\} \rightarrow 1,\text{ as }n\rightarrow
\infty ,
\end{equation*}%
so that%
\begin{equation*}
\sum_{A\in \mathcal{N}_{J}}\int_{\hat{B}_{A}(\hat{c}_{n})\backslash
B_{n,A}(q_{n})}\Lambda _{A,p}\left( \mathbf{\hat{s}}_{\tau }^{\ast
}(x)\right) dQ(x,\tau )\leq \sum_{A\in \mathcal{N}_{J}}%
\int_{B_{n,A}(c_{n,U},c_{n,L})\backslash B_{n,A}(q_{n})}\Lambda _{A,p}\left( 
\mathbf{\hat{s}}_{\tau }^{\ast }(x)\right) dQ(x,\tau ),
\end{equation*}%
with probability approaching one. The last term multiplied by $h^{-d/2}$ is
bounded by%
\begin{eqnarray*}
&&h^{-d/2}\left( \sup_{(x,\tau )\in \mathcal{S}}||\mathbf{\hat{s}}_{\tau
}^{\ast }(x)||\right) ^{p}\sum_{A\in \mathcal{N}_{J}}Q\left(
B_{n,A}(c_{n,U},c_{n,L})\backslash B_{n,A}(q_{n})\right) \\
&=&O_{P^{\ast }}\left( h^{-d/2}(\log n)^{p/2}\lambda _{n}\right) =o_{P^{\ast
}}(1),\text{ }\mathcal{P}_{n}(\lambda _{n},q_{n})\text{-uniformly,}
\end{eqnarray*}%
where the second to the last equality follows by Assumption B2 and the
definition of $\mathcal{P}_{n}(\lambda _{n},q_{n})$, and the last equality
follows by (\ref{lamq}). Thus, we conclude that%
\begin{equation}
\frac{h^{-d/2}(\hat{\theta}^{\ast }-a_{n}(q_{n}))}{\sigma _{n}(q_{n})}=\frac{%
h^{-d/2}\left( \bar{\theta}_{n}^{\ast }(q_{n})-a_{n}(q_{n})\right) }{\sigma
_{n}(q_{n})}+o_{P^{\ast }}(1),\text{ }\mathcal{P}_{n}(\lambda _{n},q_{n})%
\text{-uniformly,}  \label{cv22}
\end{equation}%
where%
\begin{equation*}
\bar{\theta}^{\ast }(q_{n})\equiv \sum_{A\in \mathcal{N}_{J}}%
\int_{B_{n,A}(q_{n})}\Lambda _{A,p}\left( \mathbf{\hat{s}}_{\tau }^{\ast
}(x)\right) dQ(x,\tau ).
\end{equation*}

Using the same arguments, we also observe that 
\begin{equation}
\hat{a}^{\ast }=\hat{a}^{\ast
}(q_{n})+o_{P}(h^{d/2})=a_{n}(q_{n})+o_{P}(h^{d/2}),  \label{a}
\end{equation}%
where the last equality uses Lemma A1. Let the $(1-\alpha )$-th percentile
of the bootstrap distribution of $\bar{\theta}^{\ast }(q_{n})$ be denoted by 
$\bar{c}_{n}^{\alpha \ast }(q_{n})$. Then by (\ref{cv22}), we have 
\begin{equation}
\frac{h^{-d/2}\left( c_{\alpha }^{\ast }-a_{n}(q_{n})\right) }{\sigma
_{n}(q_{n})}=\frac{h^{-d/2}\left( \bar{c}_{n}^{\alpha \ast
}(q_{n})-a_{n}(q_{n})\right) }{\sigma _{n}(q_{n})}+o_{P^{\ast }}(1),\text{ }%
\mathcal{P}_{n}(\lambda _{n},q_{n})\text{-uniformly.}  \label{cv7}
\end{equation}%
By Lemma A4(ii) and by the condition that $\sigma _{n}(q_{n})\geq \eta /\Phi
^{-1}(1-\alpha )$, the leading term on the right hand side is equal to%
\begin{equation*}
\Phi ^{-1}(1-\alpha )+o_{P^{\ast }}(1)\text{, }\mathcal{P}_{n}(\lambda
_{n},q_{n})\text{-uniformly.}
\end{equation*}%
Note that 
\begin{equation}
c_{\alpha }^{\ast }\geq h^{d/2}\eta +\hat{a}_{n}^{\ast }+o_{P}(h^{d/2}),
\label{c_ehta}
\end{equation}%
by the restriction $\sigma _{n}(q_{n})\geq \eta /\Phi ^{-1}(1-\alpha )$ in
the definition of $\mathcal{P}_{n}(\lambda _{n},q_{n})$ and (\ref{a}). Using
this, and following the proof of Step 1 in the proof of Theorem 2, we deduce
that%
\begin{eqnarray*}
&&P\left\{ h^{-d/2}\left( \frac{\hat{\theta}-a_{n}(q_{n})}{\sigma _{n}(q_{n})%
}\right) >h^{-d/2}\left( \frac{c_{\alpha ,\eta }^{\ast }-a_{n}(q_{n})}{%
\sigma _{n}(q_{n})}\right) \right\} \\
&=&P\left\{ h^{-d/2}\left( \frac{\bar{\theta}_{n}(q_{n})-a_{n}(q_{n})}{%
\sigma _{n}(q_{n})}\right) >h^{-d/2}\left( \frac{c_{\alpha }^{\ast
}-a_{n}(q_{n})}{\sigma _{n}(q_{n})}\right) \right\} +o(1) \\
&=&P\left\{ h^{-d/2}\left( \frac{\bar{\theta}_{n}(q_{n})-a_{n}(q_{n})}{%
\sigma _{n}(q_{n})}\right) >h^{-d/2}\left( \frac{\bar{c}_{n}^{\alpha \ast
}(q_{n})-a_{n}(q_{n})}{\sigma _{n}(q_{n})}\right) \right\} +o(1),
\end{eqnarray*}%
where the first equality uses (\ref{cv33}), (\ref{c_ehta}), and the second
equality uses (\ref{cv7}). Since $\sigma _{n}(q_{n})\geq \eta /\Phi
^{-1}(1-\alpha )>0$ for all $P\in \mathcal{P}_{n}(\lambda _{n},q_{n})\cap 
\mathcal{P}_{0}$ by definition, using the same arguments in the proof of
Lemma A4, we obtain that the last probability is equal to%
\begin{equation*}
\alpha +o(1),
\end{equation*}%
uniformly over $P\in \mathcal{P}_{n}(\lambda _{n},q_{n})\cap \mathcal{P}_{0}$%
.
\end{proof}

\begin{proof}[Proof of Theorem \protect\ref{Thm3}]
For any convex nonnegative map $f$ on $\mathbf{R}^{J},$ we have $2f(b/2)\leq
f(a+b)+f(-a)$. Hence we find that%
\begin{eqnarray*}
\hat{\theta} &=&\int \Lambda _{p}\left( \mathbf{\hat{s}}_{\tau }(x)+\mathbf{u%
}_{\tau }(x;\hat{\sigma})\right) dQ(x,\tau ) \\
&\geq &\frac{1}{2^{p-1}}\int \Lambda _{p}\left( \mathbf{u}_{\tau }(x;\hat{%
\sigma})\right) dQ(x,\tau )-\int \Lambda _{p}\left( -\mathbf{\hat{s}}_{\tau
}(x)\right) dQ(x,\tau ).
\end{eqnarray*}%
From Assumption A\ref{assumption-A3}, the last term is $O_{P}(\left( \log
n\right) ^{p/2})$. Using Assumption A3, we bound the leading integral from
below by%
\begin{equation}
\min_{j\in \mathbb{N}_{J}}r_{n,j}^{p}\left( \int \Lambda _{p}\left( \mathbf{%
\tilde{v}}_{n,\tau }(x)\right) dQ(x,\tau )\left\{ \frac{\int \Lambda
_{p}\left( \mathbf{v}_{n,\tau }(x)\right) dQ(x,\tau )}{\int \Lambda
_{p}\left( \mathbf{\tilde{v}}_{n,\tau }(x)\right) dQ(x,\tau )}-1\right\}
+o_{P}(1)\right) ,  \label{min}
\end{equation}%
where $\mathbf{v}_{n,\tau }(x)\equiv \left[ v_{n,\tau ,j}(x)/\sigma _{n,\tau
,j}(x)\right] _{j\in \mathbb{N}_{J}}$ and $\mathbf{\tilde{v}}_{n,\tau
}(x)\equiv \left[ v_{\tau ,j}(x)/\sigma _{n,\tau ,j}(x)\right] _{j\in 
\mathbb{N}_{J}}$. Since 
\begin{equation*}
\text{liminf}_{n\rightarrow \infty }\int \Lambda _{p}\left( \mathbf{\tilde{v}%
}_{n,\tau }(x)\right) dQ(x,\tau )>0,
\end{equation*}%
we use Assumption C1 and apply the Dominated Convergence Theorem to write (%
\ref{min}) as%
\begin{equation*}
\min_{j\in \mathbb{N}_{J}}r_{n,j}^{p}\int \Lambda _{p}\left( \mathbf{\tilde{v%
}}_{n,\tau }(x)\right) dQ(x,\tau )\left( 1+o_{P}(1)\right) .
\end{equation*}%
Since $\min_{j\in \mathbb{N}_{J}}r_{n,j}\rightarrow \infty $ as $%
n\rightarrow \infty $ and liminf$_{n\rightarrow \infty }\int \Lambda
_{p}\left( \mathbf{\tilde{v}}_{n,\tau }(x)\right) dQ(x,\tau )>0$, we have
for any $M>0$,%
\begin{equation*}
P\left\{ \frac{1}{2^{p-1}}\int \Lambda _{p}\left( \mathbf{u}_{\tau }(x;\hat{%
\sigma})\right) dQ(x,\tau )>M\right\} \rightarrow 1,
\end{equation*}%
as $n\rightarrow \infty $. Also since $\sqrt{\log n}/\min_{j\in \mathbb{N}%
_{J}}r_{n,j}\rightarrow 0$ (Assumption A4(i)), Assumption A\ref%
{assumption-A3} implies that for any $M>0$,%
\begin{equation*}
P\left\{ \hat{\theta}>M\right\} \rightarrow 1.
\end{equation*}%
Also, note that by Lemma A2(ii), $h^{-d/2}(c_{\alpha }^{\ast }-a_{n})/\sigma
_{n}=O_{P}(1)$. Hence 
\begin{equation*}
c_{\alpha }^{\ast }=a_{n}+O_{P}(h^{d/2})=O_{P}(1)\text{.}
\end{equation*}%
Given that $c_{\alpha }^{\ast }=O_{P}(1)$ and $\hat{a}^{\ast }=O_{P}(1)$ by
Lemma A1 and Assumption A6(i), we obtain that $P\{\hat{\theta}>c_{\alpha
,\eta }^{\ast }\}\rightarrow 1$, as $n\rightarrow \infty $.
\end{proof}

\begin{LemmaA}
Suppose that the conditions of Theorem 4 or Theorem 5 hold. Then \textit{as }%
$n\rightarrow \infty ,$ the following holds: for any $c_{n,1},c_{n,2}>0$
such that 
\begin{equation*}
\sqrt{\log n}/c_{n,2}\rightarrow 0,
\end{equation*}%
as $n\rightarrow \infty $. Then%
\begin{equation*}
\inf_{P\in \mathcal{P}_{n}^{0}(\lambda _{n})}P\left\{ \int_{\mathcal{S}%
\backslash B_{n}^{0}(c_{n,1},c_{n,2})}\Lambda _{p}\left( \mathbf{\hat{u}}%
_{\tau }(x)\right) dQ(x,\tau )=0\right\} \rightarrow 1\text{\textit{.}}
\end{equation*}%
Furthermore, we have for any $A\in \mathcal{N}_{J},$%
\begin{equation*}
\inf_{P\in \mathcal{P}_{n}^{0}(\lambda _{n})}P\left\{
\int_{B_{n,A}^{0}(c_{n,1},c_{n,2})}\left\{ \Lambda _{p}\left( \mathbf{\hat{u}%
}_{\tau }(x)\right) -\Lambda _{A,p}\left( \mathbf{\hat{u}}_{\tau }(x)\right)
\right\} dQ(x,\tau )=0\right\} \rightarrow 1\text{.}
\end{equation*}
\end{LemmaA}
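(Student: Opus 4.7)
The plan is to mirror the two-step proof of Lemma 1, with the principal modification being that all ``null-side'' inequalities $v_{n,\tau ,j}\leq 0$ are replaced by the weaker inequalities $v_{n,\tau ,j}^{0}\leq 0$ from Assumption C2, and the drift contribution $\delta _{\tau ,j}/b_{n,j}$ is shown to be asymptotically negligible relative to the diverging threshold $c_{n,1}\wedge c_{n,2}$. Concretely, I will establish (Step 1) that $\int_{\mathcal{S}\setminus B_{n}^{0}(c_{n,1},c_{n,2})}\Lambda _{p}(\mathbf{\hat{u}}_{\tau }(x))\,dQ(x,\tau )=0$ with probability approaching one $\mathcal{P}_{n}^{0}(\lambda _{n})$-uniformly, and (Step 2) that on each $B_{n,A}^{0}(c_{n,1},c_{n,2})$ the integrands $\Lambda _{p}(\mathbf{\hat{u}}_{\tau }(x))$ and $\Lambda _{A,p}(\mathbf{\hat{u}}_{\tau }(x))$ coincide with probability tending to one.

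For Step 1, I introduce the set $A_{n}^{0}(x,\tau )\equiv \{j\in \mathbb{N}_{J}:r_{n,j}v_{n,\tau ,j}^{0}(x)/\sigma _{n,\tau ,j}(x)\geq -(c_{n,1}\wedge c_{n,2})\}$ and argue by contradiction exactly as in Lemma 1: if $(x,\tau )\in \mathcal{S}\setminus B_{n}^{0}(c_{n,1},c_{n,2})$ but $A_{n}^{0}(x,\tau )\neq \varnothing $, then $v_{n,\tau ,j}^{0}\leq 0$ forces $|r_{n,j}v_{n,\tau ,j}^{0}/\sigma _{n,\tau ,j}|\leq c_{n,1}$ on $A_{n}^{0}(x,\tau )$ while $r_{n,j}v_{n,\tau ,j}^{0}/\sigma _{n,\tau ,j}<-c_{n,2}$ off it, placing $(x,\tau )$ in $B_{n,A_{n}^{0}(x,\tau )}^{0}(c_{n,1},c_{n,2})$, a contradiction. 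Hence $r_{n,j}v_{n,\tau ,j}^{0}(x)/\sigma _{n,\tau ,j}(x)<-(c_{n,1}\wedge c_{n,2})$ for every $j$. Using Assumption C2, I then write $r_{n,j}v_{n,\tau ,j}(x)=r_{n,j}v_{n,\tau ,j}^{0}(x)+(r_{n,j}/b_{n,j})\delta _{\tau ,j}(x)(1+o(1))$, where $r_{n,j}/b_{n,j}$ equals $h^{d/2}$ or $h^{d/4}$ under the $b_{n}$'s of Theorems 4-5 and $\delta _{\tau ,j}$ is bounded on $\mathcal{S}$; combined with $\sqrt{\log n}/(c_{n,1}\wedge c_{n,2})\rightarrow 0$ and Assumption A5, this yields $r_{n,j}v_{n,\tau ,j}(x)/\hat{\sigma}_{\tau ,j}(x)<-(c_{n,1}\wedge c_{n,2})/(1+\varepsilon )$ on $\mathcal{S}\setminus B_{n}^{0}(c_{n,1},c_{n,2})$, with probability approaching one $\mathcal{P}_{n}^{0}(\lambda _{n})$-uniformly. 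Adding and subtracting $v_{n,\tau ,j}$ in $\hat{u}_{\tau ,j}$, bounding $\Lambda _{p}$ by the explicit polynomial expression as in the proof of Lemma 1 (equation (\ref{dec45})), and invoking Assumption A3 (so that $r_{n,j}\sup |\hat{v}_{\tau ,j}-v_{n,\tau ,j}|/\hat{\sigma}_{\tau ,j}=O_{P}(\sqrt{\log n})$) yields that the integrand is identically zero on the relevant event.

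For Step 2, the argument proceeds exactly as in the proof of the second step of Lemma 1, starting from the fact that for $j\in \mathbb{N}_{J}\setminus A$, $r_{n,j}v_{n,\tau ,j}^{0}(x)/\sigma _{n,\tau ,j}(x)<-c_{n,2}$ on $B_{n,A}^{0}(c_{n,1},c_{n,2})$. Incorporating the drift bound from Step 1 gives $r_{n,j}v_{n,\tau ,j}(x)/\hat{\sigma}_{\tau ,j}(x)<-c_{n,2}/(1+\varepsilon )$ uniformly on $B_{n,A}^{0}$ with probability tending to one. Together with the uniform rate $O_{P}(\sqrt{\log n})$ for $\hat{v}_{\tau ,j}-v_{n,\tau ,j}$ and the condition $\sqrt{\log n}/c_{n,2}\rightarrow 0$, the $j$-th component of $\mathbf{\hat{u}}_{\tau }(x)$ is strictly negative and diverging to $-\infty $ for every $j\notin A$. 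For either form of $\Lambda _{p}$ in (\ref{lambda_p}), this implies $\Lambda _{p}(\mathbf{\hat{u}}_{\tau }(x))=\Lambda _{A,p}(\mathbf{\hat{u}}_{\tau }(x))$ on $B_{n,A}^{0}(c_{n,1},c_{n,2})$ with probability approaching one, and both inequalities are uniform over $\mathcal{P}_{n}^{0}(\lambda _{n})$ by the uniform versions of Assumptions A3, A5, and C2.

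The main (mild) obstacle is ensuring that the Pitman drift $(r_{n,j}/b_{n,j})\delta _{\tau ,j}(x)$ is uniformly negligible relative to the diverging threshold $c_{n,1}\wedge c_{n,2}$; this relies on $r_{n,j}/b_{n,j}\rightarrow 0$ under the specific rates $b_{n,j}=r_{n,j}h^{-d/2}$ or $r_{n,j}h^{-d/4}$ of Theorems 4-5 together with $(c_{n,1}\wedge c_{n,2})/\sqrt{\log n}\rightarrow \infty $, and on the boundedness of $\delta $ built into the definition of $\mathcal{D}$. Beyond this bookkeeping, no new machinery is required; the rest of the argument is a verbatim transcription of the proof of Lemma 1 with $v_{n,\tau ,j}$ and $B_{n,A}$ replaced by $v_{n,\tau ,j}^{0}$ and $B_{n,A}^{0}$, and $\mathcal{P}_{0}$ replaced by $\mathcal{P}_{n}^{0}(\lambda _{n})$.
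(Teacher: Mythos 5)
Your proposal is correct and follows essentially the same route as the paper: the paper's proof decomposes $\mathbf{\hat{u}}_{\tau}(x)$ as $\mathbf{\hat{s}}_{\tau}(x)+\mathbf{u}_{\tau}^{0}(x;\hat{\sigma})+h^{\lambda}\delta_{\tau,\hat{\sigma}}(x)$ with $\lambda\in\{d/2,d/4\}$, observes that the drift term only adds $O_{P}(h^{\lambda})$ to the $O_{P}(\sqrt{\log n})$ uniform bound on the centered process, and then reruns the two-step argument of Lemma 1 verbatim with $v_{n,\tau,j}^{0}$ and $B_{n,A}^{0}$ in place of $v_{n,\tau,j}$ and $B_{n,A}$. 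Your write-up merely makes explicit the bookkeeping that the paper compresses into ``using the same arguments as in the proof of Lemma 1.''
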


\begin{proof}[Proof of Lemma A5]
Consider the first statement. Let $\lambda $ be either $d/2$ or $d/4$. We
write%
\begin{eqnarray*}
&&\int_{\mathcal{S}\backslash B_{n}^{0}(c_{n,1},c_{n,2})}\Lambda _{p}\left( 
\mathbf{\hat{u}}_{\tau }(x)\right) dQ(x,\tau ) \\
&=&\int_{\mathcal{S}\backslash B_{n}^{0}(c_{n,1},c_{n,2})}\Lambda _{p}\left( 
\mathbf{\hat{s}}_{\tau }(x)+(\mathbf{u}_{\tau }(x;\hat{\sigma})\right)
dQ(x,\tau ). \\
&=&\int_{\mathcal{S}\backslash B_{n}^{0}(c_{n,1},c_{n,2})}\Lambda _{p}\left( 
\mathbf{\hat{s}}_{\tau }(x)+\mathbf{u}_{\tau }^{0}(x;\hat{\sigma}%
)+h^{\lambda }\delta _{\tau ,\hat{\sigma}}(x)\right) dQ(x,\tau ),
\end{eqnarray*}%
where $\mathbf{u}_{\tau }^{0}(x;\hat{\sigma})\equiv (r_{n,1}v_{n,\tau
,1}^{0}(x)/\hat{\sigma}_{\tau ,1}(x),\cdot \cdot \cdot ,r_{n,J}v_{n,\tau
,J}^{0}(x)/\hat{\sigma}_{\tau ,J}(x))$ and%
\begin{equation}
\delta _{\tau ,\hat{\sigma}}(x)\equiv \left( \frac{\delta _{\tau ,1}(x)}{%
\hat{\sigma}_{\tau ,1}(x)},\cdot \cdot \cdot ,\frac{\delta _{\tau ,J}(x)}{%
\hat{\sigma}_{\tau ,J}(x)}\right) \mathbf{.}  \label{v_sigh_0}
\end{equation}%
Note that $\delta _{\tau ,\hat{\sigma}}(x)$ is bounded with probability
approaching one by Assumption A3. Also note that for each $j\in \mathbb{N}%
_{J}$,%
\begin{align}
\sup_{(x,\tau )\in \mathcal{S}}\left\vert \frac{r_{n,j}\{\hat{v}_{n,\tau
,j}(x)-v_{n,\tau ,j}^{0}(x)\}}{\hat{\sigma}_{\tau ,j}(x)}\right\vert & \leq
\sup_{(x,\tau )\in \mathcal{S}}\left\vert \frac{r_{n,j}\{\hat{v}_{n,\tau
,j}(x)-v_{n,\tau ,j}(x)\}}{\hat{\sigma}_{\tau ,j}(x)}\right\vert +h^{\lambda
}\sup_{(x,\tau )\in \mathcal{S}}\left\vert \frac{\delta _{\tau ,j}(x)}{\hat{%
\sigma}_{\tau ,j}(x)}\right\vert  \label{vrate} \\
& =O_{P}\left( \sqrt{\log n}+h^{\lambda }\right) =O_{P}\left( \sqrt{\log n}%
\right) ,  \notag
\end{align}%
by Assumption A\ref{assumption-A3}. Hence we obtain the desired result,
using the same arguments as in the proof of Lemma 1.

Given that we have (\ref{vrate}), the proof of the second statement can be
proceeded in the same way as the proof of the first statement.
\end{proof}

Recall the definitions of $\bar{\Lambda}_{x,\tau }(\mathbf{v})$ in (\ref%
{lambe}). We define for $\mathbf{v}\in \mathbf{R}^{J},$ $\bar{\Lambda}%
_{x,\tau }^{0}(\mathbf{v})$ to be $\bar{\Lambda}_{x,\tau }(\mathbf{v})$
except that $B_{n,A}(c_{n,1},c_{n,2})$ is replaced by $%
B_{n,A}^{0}(c_{n,1},c_{n,2})$. Define for $\lambda \in \{0,d/4,d/2\},$%
\begin{equation}
\hat{\theta}_{\delta }(c_{n,1},c_{n,2};\lambda )\equiv \int \bar{\Lambda}%
_{x,\tau }^{0}\left( \mathbf{\hat{s}}_{\tau }(x)+h^{\lambda }\delta _{\tau
,\sigma }(x)\right) dQ(x,\tau ).  \label{theta_del_e}
\end{equation}%
Let%
\begin{equation*}
a_{n,\delta }^{R}(c_{n,1},c_{n,2};\lambda )\equiv \int \mathbf{E}\left[ \bar{%
\Lambda}_{x,\tau }^{0}\left( \sqrt{nh^{d}}\mathbf{z}_{N,\tau }(x)+h^{\lambda
}\delta _{\tau ,\sigma }(x)\right) \right] dQ(x,\tau )\text{,}
\end{equation*}%
\begin{equation*}
\hat{\theta}_{\delta }^{\ast }(c_{n,1},c_{n,2};\lambda )\equiv \int \bar{%
\Lambda}_{x,\tau }^{0}\left( \mathbf{\hat{s}}_{\tau }^{\ast }(x)+h^{\lambda
}\delta _{\tau ,\sigma }(x)\right) dQ(x,\tau ),
\end{equation*}%
and%
\begin{equation}
a_{n,\delta }^{R\ast }(c_{n,1},c_{n,2};\lambda )\equiv \int \mathbf{E}^{\ast
}\left[ \bar{\Lambda}_{x,\tau }^{0}\left( \sqrt{nh^{d}}\mathbf{z}_{N,\tau
}^{\ast }(x)+h^{\lambda }\delta _{\tau ,\sigma }(x)\right) \right] dQ(x,\tau
).  \label{atilde2}
\end{equation}%
We also define%
\begin{equation*}
a_{n,\delta }(c_{n,1},c_{n,2};\lambda )\equiv \int \mathbf{E}\left[ \bar{%
\Lambda}_{x,\tau }^{0}(\mathbb{W}_{n,\tau ,\tau }^{(1)}(x,0)+h^{\lambda
}\delta _{\tau ,\sigma }(x))\right] dQ(x,\tau ).
\end{equation*}%
When $c_{n,1}=c_{n,2}=c_{n}$, we simply write $a_{n,\delta
}^{R}(c_{n};\lambda ),$ $a_{n,\delta }^{R\ast }(c_{n};\lambda )$, and $%
a_{n,\delta }(c_{n};\lambda )$, instead of writing $a_{n,\delta
}^{R}(c_{n},c_{n};\lambda ),$ $a_{n,\delta }^{R\ast }(c_{n},c_{n};\lambda )$%
, and $a_{n,\delta }(c_{n},c_{n};\lambda )$.

\begin{LemmaA}
\textit{Suppose that the conditions of Assumptions A\ref{assumption-A6}(i)
and B\ref{assumption-B4} hold. Then for each }$P\in \mathcal{P}$\textit{\
such that the local alternatives in (\ref{la}) hold with }$%
b_{n,j}=r_{n,j}h^{-\lambda }$, $j=1,\cdot \cdot \cdot ,J$, \textit{for some }%
$\lambda \in \{0,d/4,d/2\}$, \textit{and for each nonnegative sequences }$%
c_{n,1},c_{n,2},$%
\begin{eqnarray*}
\left\vert a_{n,\delta }^{R}(c_{n,1},c_{n,2};\lambda )-a_{n,\delta
}(c_{n,1},c_{n,2};\lambda )\right\vert &=&o(h^{d/2}),\text{\textit{\ and}} \\
\left\vert a_{n,\delta }^{R\ast }(c_{n,1},c_{n,2};\lambda )-a_{n,\delta
}(c_{n,1},c_{n,2};\lambda )\right\vert &=&o_{P}(h^{d/2}).
\end{eqnarray*}
\end{LemmaA}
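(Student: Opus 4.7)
The proof proposal is to reduce Lemma A6 to Lemma A1 by absorbing the deterministic Pitman shift $h^{\lambda}\delta_{\tau,\sigma}(x)$ into the map $\bar\Lambda_{x,\tau}^{0}$, then repeat verbatim the Poissonization/Berry--Esseen argument that underlies Lemma C12. Specifically, define the translated censored map
\begin{equation*}
\tilde\Lambda_{x,\tau}(\mathbf{v}) \equiv \bar\Lambda_{x,\tau}^{0}\bigl(\mathbf{v} + h^{\lambda}\delta_{\tau,\sigma}(x)\bigr).
\end{equation*}
Since $\delta\in\mathcal{D}$ is bounded on $\mathcal{X}\times\mathcal{T}$ and $\sigma_{n,\tau,j}$ is bounded away from zero uniformly over $P\in\mathcal{P}$ (Assumption A5 together with C2(ii)), the shift $h^{\lambda}\delta_{\tau,\sigma}(x)$ is bounded uniformly in $(x,\tau)$ and in $n$. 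Hence $\tilde\Lambda_{x,\tau}(\cdot)$ satisfies the same polynomial growth ($p$-th order), local Lipschitz, and measurability properties as $\bar\Lambda_{x,\tau}^{0}(\cdot)$ that are exploited in Lemma C12, with constants that do not depend on $n$ or $P$.

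With this identification, the two quantities to be compared become
\begin{equation*}
a_{n,\delta}^{R}(c_{n,1},c_{n,2};\lambda) = \int \mathbf{E}\bigl[\tilde\Lambda_{x,\tau}\bigl(\sqrt{nh^{d}}\mathbf{z}_{N,\tau}(x)\bigr)\bigr]\,dQ(x,\tau),\quad
a_{n,\delta}(c_{n,1},c_{n,2};\lambda) = \int \mathbf{E}\bigl[\tilde\Lambda_{x,\tau}\bigl(\mathbb{W}_{n,\tau,\tau}^{(1)}(x,0)\bigr)\bigr]\,dQ(x,\tau),
\end{equation*}
which has the same structural form as the comparison in Lemma A1 but with $\tilde\Lambda$ replacing $\bar\Lambda$. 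I would then invoke the proof of Lemma C12 step by step: use a multivariate Berry--Esseen-type bound to show that the Poissonized normalized sum $\sqrt{nh^{d}}\mathbf{z}_{N,\tau}(x)$ is close in distribution to $\mathbb{W}_{n,\tau,\tau}^{(1)}(x,0)$ at rate determined by the moment bound in Assumption A6(i), and then integrate this pointwise closeness against the (locally Lipschitz, polynomially growing) map $\tilde\Lambda_{x,\tau}$ over the bounded region $\mathcal{X}\times\mathcal{T}$. The bandwidth condition in Assumption B4 was chosen precisely so that the resulting approximation error, aggregated over $(x,\tau)$, is of order $o(h^{d/2})$ uniformly in $P\in\mathcal{P}$.

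The bootstrap counterpart is handled by the same argument with $\mathbf{z}_{N,\tau}(x)$ replaced by $\mathbf{z}_{N,\tau}^{\ast}(x)$ and population moments replaced by empirical moments. One applies the conditional Berry--Esseen bound in probability, then uses uniform convergence of the bootstrap conditional moments of $\beta_{n,x,\tau,j}(Y_{ij}^{\ast},(X_{i}^{\ast}-x)/h)$ to their population analogues (which follows from Assumption A6(i) and standard U-statistic-type bounds, $\mathcal{P}$-uniformly) to reach the same covariance limit that defines $\mathbb{W}_{n,\tau,\tau}^{(1)}(x,0)$. This gives the $o_{P}(h^{d/2})$ rate $\mathcal{P}$-uniformly.

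The main obstacle, as in the proof of Lemma C12, is verifying that the Berry--Esseen remainder, after integration against a $p$-th power map on a region whose Lebesgue measure is of fixed order, is controlled at the demanding rate $o(h^{d/2})$ rather than $o(1)$; this is what dictates the moment requirement $M\geq 2(p+2)$ in Assumption A6(i) and the stronger bandwidth restriction in Assumption B4. Here the only new element beyond Lemma A1 is to check that translating the argument of $\bar\Lambda_{x,\tau}^{0}$ by the bounded deterministic vector $h^{\lambda}\delta_{\tau,\sigma}(x)$ does not enlarge the relevant moment bounds or covering numbers beyond a multiplicative constant -- which follows immediately from the boundedness of $\delta$ and of $1/\sigma_{n,\tau,j}$, and so requires no new estimates.
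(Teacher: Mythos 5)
Your proposal is correct and follows essentially the same route as the paper: the paper's own proof of this lemma is a one-line reduction to Lemma C12 in Appendix C, which is already stated for the translated map $\Lambda_{A,p}(\cdot + h^{\lambda}\delta(x,\tau))$ over arbitrary Borel sets $B_n$, and is proved by exactly the Poissonization/Berry--Esseen argument (via Lemmas C10--C11 and Sweeting's theorem) that you describe. Your additional observation that the bounded deterministic shift $h^{\lambda}\delta_{\tau,\sigma}(x)$ does not affect the moment bounds or Lipschitz constants is precisely the (implicit) verification needed for that reduction.
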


\begin{proof}[Proof of Lemma A6]
The result follows immediately from Lemma C12 in Appendix C.
\end{proof}

\begin{LemmaA}
\textit{Suppose that the conditions of Theorem 4 are satisfied. Then for
each }$\lambda \in \{0,d/4,d/2\}$, \textit{for each }$P\in \mathcal{P}%
_{n}^{0}(\lambda _{n})$\textit{\ such that the local alternatives in (\ref%
{la}) hold, }%
\begin{eqnarray*}
&&h^{-d/2}\left( \frac{\bar{\theta}_{n,\delta }(c_{n,U},c_{n,L};\lambda
)-a_{n,\delta }^{R}(c_{n,U},c_{n,L};\lambda )}{\sigma _{n}(c_{n,U},c_{n,L})}%
\right) \overset{d}{\rightarrow }N(0,1)\text{ and} \\
&&h^{-d/2}\left( \frac{\bar{\theta}_{n,\delta }^{\ast
}(c_{n,U},c_{n,L};\lambda )-a_{n,\delta }^{R\ast }(c_{n,U},c_{n,L};\lambda )%
}{\sigma _{n}(c_{n,U},c_{n,L})}\right) \overset{d^{\ast }}{\rightarrow }%
N(0,1),\text{ }\mathcal{P}_{n}^{0}(\lambda _{n})\text{-uniformly.}
\end{eqnarray*}
\end{LemmaA}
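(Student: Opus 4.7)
The plan is to mirror the proof of Lemma A2, adapting it to accommodate (i) the deterministic drift $h^{\lambda}\delta_{\tau,\sigma}(x)$ from the Pitman local alternatives, and (ii) the switch from the contact sets $B_{n,A}(\cdot)$ to their null-anchored counterparts $B_{n,A}^{0}(\cdot)$. First, I would invoke Lemma A5 to reduce $\bar{\theta}_{n,\delta}(c_{n,U},c_{n,L};\lambda)$ to a finite sum of integrals of $\Lambda_{A,p}$ over the contact sets $B_{n,A}^{0}(c_{n,U},c_{n,L})$. Substituting the asymptotic linear representation from Assumption A1, and replacing $\hat{\sigma}$ by $\sigma_{n}$ in the drift via Assumption A5, the centered quantity $\bar{\theta}_{n,\delta}(c_{n,U},c_{n,L};\lambda)-a_{n,\delta}^{R}(c_{n,U},c_{n,L};\lambda)$ is, up to an $o_{P}(h^{d/2})$ remainder that is $\mathcal{P}_{n}^{0}(\lambda_{n})$-uniform, equal to
\begin{equation*}
\sum_{A\in\mathcal{N}_{J}}\bigl\{\zeta_{n,A,\delta}(B_{n,A}^{0}(c_{n,U},c_{n,L}))-\mathbf{E}\,\zeta_{N,A,\delta}(B_{n,A}^{0}(c_{n,U},c_{n,L}))\bigr\},
\end{equation*}
where $\zeta_{n,A,\delta}(B)\equiv\int_{B}\Lambda_{A,p}\bigl(\sqrt{nh^{d}}\mathbf{z}_{n,\tau}(x)+h^{\lambda}\delta_{\tau,\sigma}(x)\bigr)\,dQ(x,\tau)$ and $\zeta_{N,A,\delta}$ is its Poissonized analog. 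This matches the structure appearing in the proof of Lemma A2, with an extra bounded deterministic shift sitting inside $\Lambda_{A,p}$.

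Next, exactly as in the proof of Lemma A2, I would split the integration into a compact part over $\mathcal{C}_{l}\times\mathcal{T}$ and a tail part, using the family $\{\mathcal{C}_{l}\}$ supplied by Assumption A6(ii) with $Q((\mathcal{X}\setminus\mathcal{C}_{l})\times\mathcal{T})\to 0$. The tail piece is controlled by Lemma B7 and is $o_{P}(1)$ after sending $n\to\infty$ and then $l\to\infty$; the drift causes no enlargement here because $\delta_{\tau,\sigma}$ is bounded uniformly in $(x,\tau,P)$ by Assumption C2(ii). For the compact part, I would apply the Poissonization-based uniform CLT of Lemma B9(i) to the shifted integrand, obtaining asymptotic normality with limiting variance $\sigma_{n}^{2}(c_{n,U},c_{n,L})$ after passing $l\to\infty$. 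Non-degeneracy follows from the restriction $\sigma_{n}^{2}(0)\geq\eta/\Phi^{-1}(1-\alpha)>0$ built into $\mathcal{P}_{n}^{0}(\lambda_{n})$, combined with $\liminf_{n}\sigma_{n}^{2}(c_{n,U},c_{n,L})\geq\liminf_{n}\sigma_{n}^{2}(0)$, which holds because enlarging the contact sets only adds nonnegative contributions to the covariance integral in (\ref{sigmae}). Lemma A6 then allows us to re-center by $a_{n,\delta}$ in place of $a_{n,\delta}^{R}$ at the price of an $o(h^{d/2})$ error.

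The bootstrap statement is obtained by a parallel argument, substituting $\mathbf{z}_{n,\tau}^{\ast}$, the starred $\zeta_{n,A,\delta}^{\ast}$, Lemma C7, and Lemma C9(i) for their non-starred analogs. Assumptions B1 and B3 supply the bootstrap linearization and scale approximation, while the nonstochastic drift $h^{\lambda}\delta_{\tau,\sigma}(x)$ passes through $\mathbf{E}^{\ast}$ unchanged, so the same decomposition-and-CLT argument goes through $\mathcal{P}_{n}^{0}(\lambda_{n})$-uniformly.

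The principal technical obstacle is to verify that the extra deterministic shift $h^{\lambda}\delta_{\tau,\sigma}(x)$ does not disturb the Lipschitz-in-$\mathbf{v}$ chaining bounds underlying Lemmas B9(i) and C9(i). Since $\delta_{\tau,\sigma}$ is bounded and the map $\mathbf{v}\mapsto\bar{\Lambda}_{x,\tau}^{0}(\mathbf{v}+h^{\lambda}\delta_{\tau,\sigma}(x))$ is locally Lipschitz on any bounded set with a constant independent of $(n,x,\tau,P)$, those chaining estimates carry over with only cosmetic changes. A secondary subtlety is that the normalization uses the undrifted variance $\sigma_{n}(c_{n,U},c_{n,L})$: for the cases $\lambda\in\{d/4,d/2\}$ of interest in Theorems \ref{Thm4} and \ref{Thm5}, $h^{\lambda}\to 0$, so by dominated convergence applied to the covariance kernel in (\ref{cov}) the drifted and undrifted covariance integrals differ by $o(1)$, and this normalization remains asymptotically correct.
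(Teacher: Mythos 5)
Your proposal is correct and follows essentially the same route as the paper, whose entire proof of this lemma consists of observing that $\liminf_{n}\inf_{P\in\mathcal{P}_{n}^{0}(\lambda_{n})}\sigma_{n}^{2}(c_{n,U},c_{n,L})\geq\eta/\Phi^{-1}(1-\alpha)$ by the definition of $\mathcal{P}_{n}^{0}(\lambda_{n})$ and then stating that one can follow the proof of Lemma A2; you have simply filled in the details of that adaptation (the bounded deterministic shift, the $B_{n,A}^{0}$ contact sets, and the Poissonization lemmas B7/B9 and C7/C9) explicitly and correctly.
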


\begin{proof}[Proof of Lemma A7]
Note that by the definition of $\mathcal{P}_{n}^{0}(\lambda _{n})$, we have%
\begin{equation*}
\liminf_{n\rightarrow \infty }\inf_{P\in \mathcal{P}_{n}^{0}(\lambda
_{n})}\sigma _{n}^{2}(c_{n,U},c_{n,L})\geq \frac{\eta }{\Phi ^{-1}(1-\alpha )%
}.
\end{equation*}%
Hence we can follow the proof of Lemma A2 to obtain the desired results.%
\textit{\ }
\end{proof}

\begin{proof}[Proof of Theorem \protect\ref{Thm4}]
Using Lemma A5, we find that%
\begin{equation*}
\hat{\theta}=\sum_{A\in \mathcal{N}_{J}}\int_{B_{n,A}^{0}(c_{n,U},c_{n,L})}%
\Lambda _{A,p}\left( \mathbf{\hat{s}}_{\tau }(x)+\mathbf{u}_{\tau }(x;\hat{%
\sigma})\right) dQ(x,\tau )
\end{equation*}%
with probability approaching one. We write the leading sum as 
\begin{equation*}
\sum_{A\in \mathcal{N}_{J}}\int_{B_{n,A}^{0}(0)}\Lambda _{A,p}\left( \mathbf{%
\hat{s}}_{\tau }(x)+\mathbf{u}_{\tau }(x;\hat{\sigma})\right) dQ(x,\tau
)+R_{n},
\end{equation*}%
where 
\begin{equation*}
R_{n}\equiv \sum_{A\in \mathcal{N}_{J}}\int_{B_{n,A}^{0}(c_{n,U},c_{n,L})%
\backslash B_{n,A}^{0}(0)}\Lambda _{A,p}\left( \mathbf{\hat{s}}_{\tau }(x)+%
\mathbf{u}_{\tau }(x;\hat{\sigma})\right) dQ(x,\tau ).
\end{equation*}%
We write $h^{-d/2}R_{n}$ as 
\begin{eqnarray*}
&&h^{-d/2}\sum_{A\in \mathcal{N}_{J}}\int_{B_{n,A}^{0}(c_{n,U},c_{n,L})%
\backslash B_{n,A}^{0}(0)}\Lambda _{A,p}\left( 
\begin{array}{c}
\mathbf{\hat{s}}_{\tau }(x)+\mathbf{u}_{\tau }^{0}(x;\hat{\sigma}) \\ 
+h^{d/2}\delta _{\tau ,\hat{\sigma}}(x)(1+o(1))%
\end{array}%
\right) dQ(x,\tau ) \\
&\leq &h^{-d/2}\sum_{A\in \mathcal{N}_{J}}\int_{B_{n,A}^{0}(c_{n,U},c_{n,L})%
\backslash B_{n,A}^{0}(0)}\Lambda _{A,p}\left( \mathbf{\hat{s}}_{\tau
}(x)+h^{d/2}\delta _{\tau ,\hat{\sigma}}(x)(1+o(1))\right) dQ(x,\tau ),
\end{eqnarray*}%
by Assumption C2. We bound the last sum as 
\begin{equation*}
Ch^{-d/2}\sum_{A\in \mathcal{N}_{J}}\left( \sup_{(x,\tau )\in \mathcal{S}}||%
\mathbf{\hat{s}}_{\tau }(x)||\right) ^{p}Q\left(
B_{n,A}^{0}(c_{n,U},c_{n,L})\backslash B_{n,A}^{0}(0)\right) =O_{P}\left(
h^{-d/2}\left( \log n\right) ^{p/2}\lambda _{n}\right) =o_{P}(1)
\end{equation*}%
using Assumption A\ref{assumption-A3} and the rate condition in (\ref{lamq}%
). We conclude that%
\begin{eqnarray}
h^{-d/2}\hat{\theta} &=&h^{-d/2}\sum_{A\in \mathcal{N}_{J}}%
\int_{B_{n,A}^{0}(0)}\Lambda _{A,p}\left( \mathbf{\hat{s}}_{\tau }(x)+%
\mathbf{u}_{\tau }(x;\hat{\sigma})\right) dQ(x,\tau )+o_{P}(1)  \label{eqs}
\\
&=&h^{-d/2}\sum_{A\in \mathcal{N}_{J}}\int_{B_{n,A}^{0}(0)}\Lambda
_{A,p}\left( \mathbf{\hat{s}}_{\tau }(x)+h^{d/2}\delta _{\tau ,\hat{\sigma}%
}(x)\right) dQ(x,\tau )+o_{P}(1),  \notag
\end{eqnarray}%
where the second equality follows by Assumption C2 and by the definition of $%
B_{n,A}^{0}(0)$.

Fix small $\kappa >0$ and define%
\begin{eqnarray*}
\delta _{\tau ,\sigma ,\kappa ,j}^{L}(x) &\equiv &\left\{ 
\begin{array}{c}
\frac{\delta _{\tau ,j}(x)}{(1+\kappa )\sigma _{n,\tau ,j}(x)}\text{ if }%
\delta _{\tau ,j}(x)\geq 0 \\ 
\frac{\delta _{\tau ,j}(x)}{(1-\kappa )\sigma _{n,\tau ,j}(x)}\text{ if }%
\delta _{\tau ,j}(x)<0%
\end{array}%
\right. \text{ and} \\
\delta _{\tau ,\sigma ,\kappa ,j}^{U}(x) &\equiv &\left\{ 
\begin{array}{c}
\frac{\delta _{\tau ,j}(x)}{(1-\kappa )\sigma _{n,\tau ,j}(x)}\text{ if }%
\delta _{\tau ,j}(x)\geq 0 \\ 
\frac{\delta _{\tau ,j}(x)}{(1+\kappa )\sigma _{n,\tau ,j}(x)}\text{ if }%
\delta _{\tau ,j}(x)<0%
\end{array}%
\right. .
\end{eqnarray*}%
Define $\delta _{\tau ,\sigma ,\kappa }^{L}(x)$ and $\delta _{\tau ,\sigma
,\kappa }^{U}(x)$ to be $\mathbf{R}^{J}$-valued maps whose $j$-th entries
are given by $\delta _{\tau ,\sigma ,\kappa ,j}^{L}(x)$ and $\delta _{\tau
,\sigma ,\kappa ,j}^{U}(x)$ respectively. By construction, Assumptions A3
and C2(ii), we have%
\begin{equation*}
P\left\{ \delta _{\tau ,\sigma ,\kappa }^{L}(x)\leq \delta _{\tau ,\hat{%
\sigma}}(x)\leq \delta _{\tau ,\sigma ,\kappa }^{U}(x)\right\} \rightarrow 1,
\end{equation*}%
as $n\rightarrow \infty $. Therefore, with probability approaching one,%
\begin{eqnarray}
\hat{\theta}_{\delta ,L}(0;d/2) &\equiv &\sum_{A\in \mathcal{N}%
_{J}}\int_{B_{n,A}^{0}(0)}\Lambda _{A,p}\left( \mathbf{\hat{s}}_{\tau
}(x)+h^{d/2}\delta _{\tau ,\sigma ,\kappa }^{L}(x)\right) dQ(x,\tau )
\label{three_integrals} \\
&\leq &\sum_{A\in \mathcal{N}_{J}}\int_{B_{n,A}^{0}(0)}\Lambda _{A,p}\left( 
\mathbf{\hat{s}}_{\tau }(x)+h^{d/2}\delta _{\tau ,\hat{\sigma}}(x)\right)
dQ(x,\tau )  \notag \\
&\leq &\sum_{A\in \mathcal{N}_{J}}\int_{B_{n,A}^{0}(0)}\Lambda _{A,p}\left( 
\mathbf{\hat{s}}_{\tau }(x)+h^{d/2}\delta _{\tau ,\sigma ,\kappa
}^{U}(x)\right) dQ(x,\tau )\equiv \hat{\theta}_{\delta ,U}(0;d/2).  \notag
\end{eqnarray}%
We conclude from (\ref{eqs}) that%
\begin{equation}
\hat{\theta}_{\delta ,L}(0;d/2)+o_{P}(h^{d/2})\leq \hat{\theta}\leq \hat{%
\theta}_{\delta ,U}(0;d/2)+o_{P}(h^{d/2}).  \label{cv14}
\end{equation}

As for the bootstrap counterpart, note that since $\delta _{\tau ,j}(x)$ is
bounded and $\sigma _{n,\tau ,j}(x)$ is bounded away from zero uniformly
over $(x,\tau )\in \mathcal{S}$ and $n\geq 1$, and hence 
\begin{equation}
\sup_{(x,\tau )\in \mathcal{S}}\left\vert \frac{1}{h^{-d/2}}\frac{\delta
_{\tau ,j}(x)}{\sigma _{n,\tau ,j}(x)}\right\vert \leq Ch^{d/2}\rightarrow 0,
\label{cv6}
\end{equation}%
as $n\rightarrow \infty $. By (\ref{cv6}), the difference between $%
r_{n,j}v_{n,\tau ,j}(x)/\sigma _{n,\tau ,j}(x)$ and $r_{n,j}v_{n,\tau
,j}^{0}(x)/\sigma _{n,\tau ,j}(x)$ vanishes uniformly over $(x,\tau )\in 
\mathcal{S}$. Therefore, combining this with Lemma A3, we find that%
\begin{equation}
P\left\{ \hat{B}_{n}(\hat{c}_{n})\subset B_{n}^{0}(c_{n,U},c_{n,L})\right\}
\rightarrow 1,  \label{cv5}
\end{equation}%
as $n\rightarrow \infty $.

Now with probability approaching one,%
\begin{eqnarray}
\hat{\theta}^{\ast } &=&\sum_{A\in \mathcal{N}_{J}}\int_{\hat{B}_{A}(\hat{c}%
_{n})}\Lambda _{A,p}\left( \mathbf{\hat{s}}_{\tau }^{\ast }(x)\right)
dQ(x,\tau )  \label{decomp} \\
&=&\sum_{A\in \mathcal{N}_{J}}\int_{B_{n,A}^{0}(0)}\Lambda _{A,p}\left( 
\mathbf{\hat{s}}_{\tau }^{\ast }(x)\right) dQ(x,\tau )  \notag \\
&&+\sum_{A\in \mathcal{N}_{J}}\int_{\hat{B}_{A}(\hat{c}_{n})\backslash
B_{n,A}^{0}(0)}\Lambda _{A,p}\left( \mathbf{\hat{s}}_{\tau }^{\ast
}(x)\right) dQ(x,\tau ).  \notag
\end{eqnarray}%
As for the last sum, it is bounded by%
\begin{equation*}
\sum_{A\in \mathcal{N}_{J}}\int_{B_{n,A}^{0}(c_{n,U},c_{n,L})\backslash
B_{n,A}^{0}(0)}\Lambda _{A,p}\left( \mathbf{\hat{s}}_{\tau }^{\ast
}(x)\right) dQ(x,\tau ),
\end{equation*}%
with probability approaching one by (\ref{cv5}). The above sum multiplied by 
$h^{-d/2}$ is bounded by%
\begin{eqnarray*}
&&h^{-d/2}\left( \sup_{(x,\tau )\in \mathcal{S}}||\mathbf{\hat{s}}_{\tau
}^{\ast }(x)||\right) ^{p}\sum_{A\in \mathcal{N}_{J}}Q\left(
B_{n,A}^{0}(c_{n,U},c_{n,L})\backslash B_{n,A}^{0}(0)\right) \\
&=&O_{P^{\ast }}\left( h^{-d/2}(\log n)^{p/2}\lambda _{n}\right) =o_{P^{\ast
}}(1),\ \mathcal{P}\text{-uniformly,}
\end{eqnarray*}%
by Assumption B2 and the rate condition for $\lambda _{n}$. Thus, we
conclude that%
\begin{equation}
\hat{\theta}^{\ast }=\bar{\theta}^{\ast }(0)+o_{P^{\ast }}(h^{d/2}),\text{ }%
\mathcal{P}_{n}^{0}(\lambda _{n})\text{-uniformly,}  \label{cv15}
\end{equation}%
where%
\begin{equation*}
\bar{\theta}^{\ast }(0)\equiv \sum_{A\in \mathcal{N}_{J}}%
\int_{B_{n,A}^{0}(0)}\Lambda _{A,p}\left( \mathbf{\hat{s}}_{\tau }^{\ast
}(x)\right) dQ(x,\tau ).
\end{equation*}%
Let $\bar{c}_{n}^{\alpha \ast }(0)$ be the $(1-\alpha )$-th quantile of the
bootstrap distribution of $\bar{\theta}^{\ast }(0)$ and let $\gamma
_{n}^{\alpha \ast }(0)$ be the $(1-\alpha )$-th quantile of the bootstrap
distribution of 
\begin{equation}
h^{-d/2}\left( \frac{\bar{\theta}^{\ast }(0)-a_{n}^{R\ast }(0)}{\sigma
_{n}(0)}\right) .  \label{c*tilde0}
\end{equation}%
By the definition of $\mathcal{P}_{n}^{0}(\lambda _{n})$, we have $\sigma
_{n}^{2}(0)>\eta /\Phi ^{-1}(1-\alpha )$. Let $a_{\delta ,U}^{R}(0;d/2)$ and 
$a_{\delta ,L}^{R}(0;d/2)$ be $a_{n,\delta }^{R}(0;d/2)$ except that $\delta
_{\tau ,\sigma }$ is replaced by $\delta _{\tau ,\sigma ,\kappa }^{U}$ and $%
\delta _{\tau ,\sigma ,\kappa }^{L}$ respectively. Also, let $a_{\delta
,U}(0;d/2)$ and $a_{\delta ,L}(0;d/2)$ be $a_{n,\delta }(0;d/2)$ except that 
$\delta _{\tau ,\sigma }$ is replaced by $\delta _{\tau ,\sigma ,\kappa
}^{U} $ and $\delta _{\tau ,\sigma ,\kappa }^{L}$ respectively. We bound $P\{%
\hat{\theta}>c_{\alpha ,\eta }^{\ast }\}$ by%
\begin{eqnarray*}
&&P\left\{ h^{-d/2}\left( \frac{\hat{\theta}_{\delta ,U}(0;d/2)-a_{\delta
,U}^{R}(0;d/2)}{\sigma _{n}(0)}\right) >h^{-d/2}\left( \frac{c_{\alpha
}^{\ast }-a_{\delta ,U}^{R}(0;d/2)}{\sigma _{n}(0)}\right) \right\} +o(1) \\
&=&P\left\{ h^{-d/2}\left( \frac{\hat{\theta}_{\delta ,U}(0;d/2)-a_{\delta
,U}^{R}(0;d/2)}{\sigma _{n}(0)}\right) >h^{-d/2}\left( \frac{\bar{c}%
_{n}^{\alpha \ast }(0)-a_{\delta ,U}^{R}(0;d/2)}{\sigma _{n}(0)}\right)
\right\} +o(1),
\end{eqnarray*}%
where the equality uses (\ref{cv15}). Then we observe that%
\begin{eqnarray*}
\frac{\bar{c}_{n}^{\alpha \ast }(0)-a_{\delta ,U}^{R}(0;d/2)}{\sigma _{n}(0)}
&=&\frac{\bar{c}_{n}^{\alpha \ast }(0)-a_{n}^{R\ast }(0)}{\sigma _{n}(0)}+%
\frac{a_{n}^{R\ast }(0)-a_{\delta ,U}^{R}(0;d/2)}{\sigma _{n}(0)} \\
&=&h^{d/2}\gamma _{n}^{\alpha \ast }(0)+\frac{a_{n}^{R\ast }(0)-a_{\delta
,U}^{R}(0;d/2)}{\sigma _{n}(0)}.
\end{eqnarray*}%
As for the last term, we use Lemmas A1 and A6 to deduce that%
\begin{eqnarray*}
a_{n}^{R\ast }(0)-a_{\delta ,U}^{R}(0;d/2) &=&a_{n}^{R}(0)-a_{\delta
,U}^{R}(0;d/2)+o_{P}(h^{d/2}) \\
&=&a_{n}(0)-a_{\delta ,U}(0;d/2)+o_{P}(h^{d/2}).
\end{eqnarray*}%
As for $a_{n}(0)-a_{\delta ,U}(0;d/2)$, we observe that%
\begin{eqnarray}
&&\sigma _{n}(0)^{-1}h^{-d/2}\left\{ \mathbf{E}\left[ \Lambda _{A,p}\left( 
\mathbb{W}_{n,\tau ,\tau }^{(1)}(x,0)+h^{d/2}\delta _{\tau ,\sigma ,\kappa
}^{U}(x)\right) \right] -\mathbf{E}\left[ \Lambda _{A,p}(\mathbb{W}_{n,\tau
,\tau }^{(1)}(x,0))\right] \right\}  \label{eq55} \\
&=&\sigma _{n}(0)^{-1}h^{-d/2}\left\{ \mathbf{E}\left[ \Lambda _{A,p}\left( 
\mathbb{W}_{n,\tau ,\tau }^{(1)}(x,0)+h^{d/2}\delta _{\tau ,\sigma ,\kappa
}^{U}(x)\right) \right] -\mathbf{E}\left[ \Lambda _{A,p}(\mathbb{W}_{n,\tau
,\tau }^{(1)}(x,0))\right] \right\}  \notag \\
&=&\psi _{n,A,\tau }^{(1)}(\mathbf{0};x)^{\top }\delta _{\tau ,\sigma
,\kappa }^{U}(x)+O\left( h^{d/2}\right) ,  \notag
\end{eqnarray}%
so that%
\begin{eqnarray*}
\frac{h^{-d/2}\left( a_{n}(0)-a_{\delta ,U}(0)\right) }{\sigma _{n}(0)}
&=&-\sum_{A\in \mathcal{N}_{J}}\int \psi _{n,A,\tau }^{(1)}(\mathbf{0}%
;x)^{\top }\delta _{\tau ,\sigma ,\kappa }^{U}(x)dQ(x,\tau )+o(1) \\
&=&-\sum_{A\in \mathcal{N}_{J}}\int \psi _{A,\tau }^{(1)}(\mathbf{0}%
;x)^{\top }\delta _{\tau ,\sigma ,\kappa }^{U}(x)dQ(x,\tau )+o(1),
\end{eqnarray*}%
where the last equality follows by the Dominated Convergence Theorem. On the
other hand, by Lemma A7, we have 
\begin{equation*}
h^{-d/2}\left( \frac{\hat{\theta}_{\delta ,U}(0;d/2)-a_{\delta ,U}^{R}(0;d/2)%
}{\sigma _{n}(0)}\right) \overset{d}{\rightarrow }N(0,1).
\end{equation*}%
Since $\gamma _{n}^{\alpha \ast }(0)=\gamma _{\alpha ,\infty }+o_{P}(1)$ by
Lemma A4, we use this result to deduce that%
\begin{eqnarray*}
&&\lim_{n\rightarrow \infty }P\left\{ h^{-d/2}\left( \frac{\hat{\theta}%
_{\delta ,U}(0;d/2)-a_{\delta ,U}^{R}(0;d/2)}{\sigma _{n}(0)}\right)
>h^{-d/2}\left( \frac{\bar{c}_{n}^{\alpha \ast }(0)-a_{\delta ,U}^{R}(0;d/2)%
}{\sigma _{n}(0)}\right) \right\} \\
&=&1-\Phi \left( z_{1-\alpha }-\sum_{A\in \mathcal{N}_{J}}\int \psi _{A,\tau
}^{(1)}(\mathbf{0};x)^{\top }\delta _{\tau ,\sigma ,\kappa }^{U}(x)dQ(x,\tau
)\right) .
\end{eqnarray*}%
Similarly, we also use (\ref{cv14}) to bound $P\left\{ \hat{\theta}%
>c_{\alpha ,\eta }^{\ast }\right\} $ from below by%
\begin{equation*}
P\left\{ h^{-d/2}\left( \frac{\hat{\theta}_{\delta ,L}(0;d/2)-a_{\delta
,L}^{R}(0;d/2)}{\sigma _{n}(0)}\right) >h^{-d/2}\left( \frac{\bar{c}%
_{n}^{\alpha \ast }(0)-a_{\delta ,L}^{R}(0;d/2)}{\sigma _{n}(0)}\right)
\right\} +o(1),
\end{equation*}%
and using similar arguments as before, we obtain that%
\begin{eqnarray*}
&&\lim_{n\rightarrow \infty }P\left\{ h^{-d/2}\left( \frac{\hat{\theta}%
_{\delta ,L}(0;d/2)-a_{\delta ,L}^{R}(0;d/2)}{\sigma _{n}(0)}\right)
>h^{-d/2}\left( \frac{\bar{c}_{n}^{\alpha \ast }(0)-a_{\delta ,L}^{R}(0;d/2)%
}{\sigma _{n}(0)}\right) \right\} \\
&=&1-\Phi \left( z_{1-\alpha }-\sum_{A\in \mathcal{N}_{J}}\int \psi _{A,\tau
}^{(1)}(\mathbf{0};x)^{\top }\delta _{\tau ,\sigma ,\kappa }^{L}(x)dQ(x,\tau
)\right) .
\end{eqnarray*}%
We conclude from this and (\ref{three_integrals}) that for any small $\kappa
>0$, 
\begin{eqnarray*}
&&1-\Phi \left( z_{1-\alpha }-\sum_{A\in \mathcal{N}_{J}}\int \psi _{A,\tau
}^{(1)}(\mathbf{0};x)^{\top }\delta _{\tau ,\sigma ,\kappa }^{L}(x)dQ(x,\tau
)\right) +o(1) \\
&\leq &P\left\{ \hat{\theta}>c_{\alpha ,\eta }^{\ast }\right\} \leq 1-\Phi
\left( z_{1-\alpha }-\sum_{A\in \mathcal{N}_{J}}\int \psi _{A,\tau }^{(1)}(%
\mathbf{0};x)^{\top }\delta _{\tau ,\sigma ,\kappa }^{U}(x)dQ(x,\tau
)\right) +o(1).
\end{eqnarray*}%
Note that $\psi _{A,\tau }^{(1)}(\mathbf{0};x)^{\top }\delta _{\tau ,\sigma
,\kappa }^{U}(x)$ and $\psi _{A,\tau }^{(1)}(\mathbf{0};x)^{\top }\delta
_{\tau ,\sigma ,\kappa }^{L}(x)$ are bounded maps in $(x,\tau )$ by the
assumption of the theorem, and that 
\begin{equation*}
\lim_{\kappa \rightarrow 0}\delta _{\tau ,\sigma ,\kappa
}^{L}(x)=\lim_{\kappa \rightarrow 0}\delta _{\tau ,\sigma ,\kappa
}^{U}(x)=\delta _{\tau ,\sigma }(x),
\end{equation*}%
for each $(x,\tau )\in \mathcal{S}$. Hence by sending $\kappa \rightarrow 0$
and applying the Dominated Convergence Theorem to both the bounds above, we
obtain the desired result.
\end{proof}

\begin{proof}[Proof of Theorem \protect\ref{Thm5}]
First, observe that Lemma A5 continues to hold. This can be seen by
following the proof of Lemma A5 and noting that (\ref{vrate}) becomes here 
\begin{equation*}
\sup_{(x,\tau )\in \mathcal{S}}\left\vert \frac{r_{n,j}\{\hat{v}_{n,\tau
,j}(x)-v_{n,\tau ,j}^{0}(x)\}}{\hat{\sigma}_{\tau ,j}(x)}\right\vert
=O_{P}\left( \sqrt{\log n}+h^{d/4}\right) =O_{P}\left( \sqrt{\log n}\right) ,
\end{equation*}%
yielding the same convergence rate. The rest of the proof is the same.
Similarly, Lemma A6 continues to hold also under the modified local
alternatives of (\ref{la}) with $b_{n,j}=r_{n,j}h^{-d/4}$. We define 
\begin{equation}
\tilde{\delta}_{\tau ,\sigma }(x)\equiv h^{-d/4}\delta _{\tau ,\sigma }(x).
\label{delta_tilde}
\end{equation}%
We follow the proof of Theorem 4 and take up arguments from (\ref{eq55}).
Observe that%
\begin{eqnarray*}
&&\sigma _{n}(0)^{-1}h^{-d/2}\left\{ \mathbf{E}\left[ \Lambda _{A,p}\left( 
\mathbb{W}_{n,\tau ,\tau }^{(1)}(x,0)+h^{d/2}\tilde{\delta}_{\tau ,\sigma
}(x)\right) \right] -\mathbf{E}\left[ \Lambda _{A,p}(\mathbb{W}_{n,\tau
,\tau }^{(1)}(x,0))\right] \right\} \\
&=&\sigma _{n}(0)^{-1}h^{-d/2}\left\{ \mathbf{E}\left[ \Lambda _{A,p}\left( 
\mathbb{W}_{n,\tau ,\tau }^{(1)}(x,0)+h^{d/2}\tilde{\delta}_{\tau ,\sigma
}(x)\right) \right] -\mathbf{E}\left[ \Lambda _{A,p}(\mathbb{W}_{n,\tau
,\tau }^{(1)}(x,0))\right] \right\} \\
&=&\psi _{n,A,\tau }^{(1)}(\mathbf{0};x)^{\top }\tilde{\delta}_{\tau ,\sigma
}(x)+h^{d/2}\tilde{\delta}_{\tau ,\sigma }(x)^{\top }\psi _{n,A,\tau }^{(2)}(%
\mathbf{0};x)\tilde{\delta}_{\tau ,\sigma }(x)/2.
\end{eqnarray*}%
By the Dominated Convergence Theorem, 
\begin{eqnarray*}
\int \psi _{n,A,\tau }^{(1)}(\mathbf{0};x)^{\top }\tilde{\delta}_{\tau
,\sigma }(x)dQ(x,\tau ) &=&\int \psi _{A,\tau }^{(1)}(\mathbf{0};x)^{\top }%
\tilde{\delta}_{\tau ,\sigma }(x)dQ(x,\tau )+o(1)\text{ and} \\
\int \psi _{n,A,\tau }^{(2)}(\mathbf{0};x)^{\top }\tilde{\delta}_{\tau
,\sigma }(x)dQ(x,\tau ) &=&\int \psi _{A,\tau }^{(2)}(\mathbf{0};x)^{\top }%
\tilde{\delta}_{\tau ,\sigma }(x)dQ(x,\tau )+o(1).
\end{eqnarray*}%
Since $\sum_{A\in \mathcal{N}_{J}}\int \psi _{A,\tau }^{(1)}(\mathbf{0}%
;x)^{\top }\tilde{\delta}_{\tau ,\sigma }(x)dQ(x,\tau )=0$, by the condition
for $\delta _{\tau ,\sigma }(x)$ in the theorem,%
\begin{eqnarray*}
&&\sum_{A\in \mathcal{N}_{J}}\int h^{-d/2}\left\{ 
\begin{array}{c}
\mathbf{E}\left[ \Lambda _{A,p}\left( \mathbb{W}_{n,\tau ,\tau
}^{(1)}(x,0)+h^{d/2}\tilde{\delta}_{\tau ,\sigma }(x)\right) \right] \\ 
-\mathbf{E}\left[ \Lambda _{A,p}(\mathbb{W}_{n,\tau ,\tau }^{(1)}(x,0))%
\right]%
\end{array}%
\right\} dQ(x,\tau ) \\
&=&\frac{1}{2}\sum_{A\in \mathcal{N}_{J}}\int \delta _{\tau ,\sigma
}(x)^{\top }\psi _{A,\tau }^{(2)}(\mathbf{0};x)\delta _{\tau ,\sigma
}(x)dQ(x,\tau )+o(1).
\end{eqnarray*}%
Now we can use the above result by replacing $\delta _{\tau ,\sigma }(x)$ by 
$\delta _{\tau ,\sigma ,\kappa }^{U}(x)$ and $\delta _{\tau ,\sigma ,\kappa
}^{L}(x)$ and follow the proof of Theorem 4 to obtain the desired result.
\end{proof}

\section{Proofs of Auxiliary Results for Lemmas A2(i), Lemma A4(i), and
Theorem 1}

\label{appendix:C}

The eventual result in this appendix is Lemma B9 which is used to show the
asymptotic normality of the location-scale normalized representation of $%
\hat{\theta}$ and its bootstrap version, and to establish its asymptotic
behavior in the degenerate case. For this, we first prepare Lemmas B1-B3. To
obtain uniformity that covers the case of degeneracy, this paper uses a
method of regularization, where the covariance matrix of random quantities
is added by a diagonal matrix of small diagonal elements. The regularized
random quantities having this covariance matrix does not suffer from
degeneracy in the limit, even when the original quantities have covariate
matrix that is degenerate in the limit. Thus, for these regularized
quantities, we can obtain uniform asymptotic theory using an appropriate
Berry-Esseen-type bound. Then, we need to deal with the difference between
the regularized covariance matrix and the original one. Lemma B1 is a simple
result of linear algebra that is used to control this discrepancy.

Lemma B2 has two sub-results from which one can deduce a uniform version of
Levy's continuity theorem. We have not seen any such results in the
literature or monographs, so we provide its full proof. The result has two
functions. First, the result enables one to deduce convergence in
distribution in terms of convergence of cumulative distribution functions
and convergence in distribution in terms of convergence of characteristic
functions in a manner that is uniform over a given collection of
probabilities. The original form of convergence in distribution due to the
Poissonization method in \citeasnoun{GMZ} is convergence of characteristic
functions. Certainly pointwise in $P$, this convergence implies convergence
of cumulative distribution functions, but it is not clear under what
conditions this implication is uniform over a given class of probabilities.
Lemma B2 essentially clarifies this issue.

Lemma B3 is an extension of de-Poissonization lemma that appeared in %
\citeasnoun{Beirlant/Mason:95}. The proof is based on the proof of their
same result in \citeasnoun{GMZ}, but involves a substantial modification,
because unlike their results, we need a version that holds uniformly over $%
P\in \mathcal{P}$. This de-Poissonization lemma is used to transform the
asymptotic distribution theory for the Poissonized version of the test
statistic into that for the original test statistic.

Lemmas B4-B5 establish some moment bounds for a normalized sum of
independent quantities. This moment bound is later used to control a
Berry-Esseen-type bound, when we approximate those sums by corresponding
centered normal random vectors.

Lemma B6 obtains an approximate version for the scale normalizer $\sigma
_{n} $. The approximate version involves a functional of a Gaussian random
vector, which stems from approximating a normalized sum of independent
random vectors by a Gaussian random vector through using a Berry-Esseen-type
bound. For this result, we use the regularization method that we mentioned
before. Due to the regularization, we are able to cover the degenerate case
eventually.

Lemma B7 is an auxiliary result that is used to establish Lemma B9 in
combination with the de-Poissonization lemma (Lemma B3). And Lemma B8
establishes asymptotic normality of the Poissonized version of the test
statistics. The asymptotic normality for the Poissonized statistic involves
the discretization of the integrals, thereby, reducing the integral to a sum
of 1-dependent random variables, and then applies the Berry-Esseen-type
bound in \citeasnoun{Shergin:93}. Note that by the moment bound in Lemmas
B4-B5 that is uniform over $P\in \mathcal{P}$, we obtain the asymptotic
approximation that is uniform over $P\in \mathcal{P}$. The lemma also
presents a corresponding result for the degenerate case.

Finally, Lemma B9 combines the asymptotic distribution theory for the
Poissonized test statistic in Lemma B7 with the de-Poissonization lemma
(Lemma B3) to obtain the asymptotic distribution theory for the original
test statistic. The result of Lemma B9 is used to establish the asymptotic
normality result in Lemma A7.

The following lemma provides some inequality of matrix algebra.

\begin{LemmaC}
\label{lem-c1} \textit{For any} $J\times J$ \textit{positive semidefinite
symmetric matrix} $\Sigma $ \textit{and any} $\varepsilon >0,$%
\begin{equation*}
\left\Vert \left( \Sigma +\varepsilon I\right) ^{1/2}-\Sigma
^{1/2}\right\Vert \leq \sqrt{J\varepsilon },
\end{equation*}%
\textit{where} $\left\Vert A\right\Vert =\sqrt{tr(AA^{\prime })}$ \textit{%
for any square matrix} $A$.
\end{LemmaC}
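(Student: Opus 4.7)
The plan is to exploit the spectral decomposition of $\Sigma$ so that the problem reduces to a scalar inequality on each eigenvalue. Specifically, since $\Sigma$ is symmetric and positive semidefinite, I would write $\Sigma = U D U^{\top}$ with $U$ orthogonal and $D = \mathrm{diag}(\lambda_1,\ldots,\lambda_J)$, $\lambda_i \geq 0$. Then $\Sigma + \varepsilon I = U(D+\varepsilon I)U^{\top}$, and by functional calculus for symmetric matrices, $(\Sigma+\varepsilon I)^{1/2} = U(D+\varepsilon I)^{1/2}U^{\top}$ and $\Sigma^{1/2} = U D^{1/2} U^{\top}$. This gives
\begin{equation*}
(\Sigma+\varepsilon I)^{1/2} - \Sigma^{1/2} = U\bigl[(D+\varepsilon I)^{1/2} - D^{1/2}\bigr]U^{\top}.
\end{equation*}

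Next I would invoke the orthogonal invariance of the Frobenius norm $\|\cdot\|$, which yields
\begin{equation*}
\bigl\|(\Sigma+\varepsilon I)^{1/2} - \Sigma^{1/2}\bigr\|^{2} = \sum_{i=1}^{J}\bigl(\sqrt{\lambda_i+\varepsilon}-\sqrt{\lambda_i}\bigr)^{2}.
\end{equation*}
The proof is then completed by the scalar bound $(\sqrt{a+\varepsilon}-\sqrt{a})^{2}\leq \varepsilon$ valid for all $a\geq 0$. This follows by rationalizing: $\sqrt{a+\varepsilon}-\sqrt{a} = \varepsilon/(\sqrt{a+\varepsilon}+\sqrt{a}) \leq \varepsilon/\sqrt{\varepsilon} = \sqrt{\varepsilon}$, where I used that $\sqrt{a+\varepsilon}+\sqrt{a} \geq \sqrt{\varepsilon}$. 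Summing over the $J$ eigenvalues and taking square roots gives the claimed bound $\sqrt{J\varepsilon}$.

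There is really no main obstacle here; the result is a standard operator-monotone-type inequality for the square root and the argument is purely a few lines of linear algebra once the eigendecomposition has been invoked. The only minor point to be careful about is that $\|\cdot\|$ is defined in the statement as the Frobenius norm (since $\|A\|=\sqrt{\mathrm{tr}(AA^{\top})}$), so I would explicitly note that Frobenius norm is unitarily invariant before pulling the orthogonal factors $U,U^{\top}$ out of the norm.
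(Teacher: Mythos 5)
Your proof is correct, but it takes a different route from the paper's. You diagonalize $\Sigma=UDU^{\top}$, note that $\varepsilon I$ commutes with $\Sigma$ so both square roots are computed in the same eigenbasis, pull the orthogonal factors out of the Frobenius norm, and finish with the scalar bound $\bigl(\sqrt{\lambda+\varepsilon}-\sqrt{\lambda}\bigr)^{2}\leq\varepsilon$. The paper instead expands the trace directly, writing $tr\{(\Sigma+\varepsilon I)^{1/2}-\Sigma^{1/2}\}^{2}=tr(2\Sigma+\varepsilon I)-2\,tr\bigl((\Sigma+\varepsilon I)^{1/2}\Sigma^{1/2}\bigr)$, and then bounds the cross term from below via operator monotonicity of the square root ($\Sigma\leq\Sigma+\varepsilon I$ implies $\Sigma^{1/2}\leq(\Sigma+\varepsilon I)^{1/2}$) together with the fact that $tr(AB)\geq 0$ for positive semidefinite $A,B$, yielding $tr(\Sigma)\leq tr\bigl((\Sigma+\varepsilon I)^{1/2}\Sigma^{1/2}\bigr)$ and hence the bound $\varepsilon J$. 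Your argument is more elementary in that it avoids invoking operator monotonicity, but it leans on the fact that the perturbation is a multiple of the identity, so that $\Sigma$ and $\Sigma+\varepsilon I$ are simultaneously diagonalizable; the paper's trace argument does not use commutativity and extends verbatim to an arbitrary positive semidefinite perturbation $\Delta$, giving $\|(\Sigma+\Delta)^{1/2}-\Sigma^{1/2}\|\leq\sqrt{tr(\Delta)}$. For the lemma as stated, both approaches are complete; the one point you flagged, unitary invariance of the Frobenius norm, is indeed the only hypothesis your route needs to make explicit.
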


\begin{rem}
The main point of Lemma B1 is that the bound $\sqrt{J\varepsilon }$ is
independent of the matrix $\Sigma $. Such a uniform bound is crucially used
for the derivation of asymptotic validity of the test uniform in $P\in 
\mathcal{P}$.
\end{rem}

\begin{proof}[Proof of Lemma B\protect\ref{lem-c1}]
First observe that%
\begin{eqnarray}
&&tr\{\left( \Sigma +\varepsilon I\right) ^{1/2}-\Sigma ^{1/2}\}^{2}
\label{eq} \\
&=&tr\left( 2\Sigma +\varepsilon I\right) -2tr(\left( \Sigma +\varepsilon
I\right) ^{1/2}\Sigma ^{1/2}).  \notag
\end{eqnarray}%
Since $\Sigma \leq \Sigma +\varepsilon I$, we have $\Sigma ^{1/2}\leq \left(
\Sigma +\varepsilon I\right) ^{1/2}.$ For any positive semidefinite matrices 
$A$ and $B,$ $tr(AB)\geq 0$ (see e.g. \citeasnoun{Abadir:Magnus:05}).
Therefore, $tr(\Sigma )\leq tr(\left( \Sigma +\varepsilon I\right)
^{1/2}\Sigma ^{1/2}).$ From (\ref{eq}), we find that 
\begin{eqnarray*}
&&tr\left( 2\Sigma +\varepsilon I\right) -2tr(\left( \Sigma +\varepsilon
I\right) ^{1/2}\Sigma ^{1/2}) \\
&\leq &tr\left( 2\Sigma +\varepsilon I\right) -2tr(\Sigma )=\varepsilon J.
\end{eqnarray*}
\end{proof}

The following lemma can be used to derive a version of Levy's Continuity
Theorem that is uniform in $P\in \mathcal{P}$.

\begin{LemmaC}
\label{lem-c2} \textit{Suppose that }$V_{n}\in \mathbf{R}^{d}$ \textit{is a
sequence of random vectors and} $V\in \mathbf{R}^{d}$ \textit{is a random
vector}. \textit{We assume without loss of generality that }$V_{n}$\textit{\
and }$V$ \textit{live on the same measure space }$(\Omega ,\mathcal{F})$, 
\textit{and $\mathcal{P}$ is a given collection of probabilities on }$%
(\Omega ,\mathcal{F})$. \textit{Furthermore define}%
\begin{eqnarray*}
\varphi _{n}(t) &\equiv &\mathbf{E}\left[ \exp (it^{\top }V_{n})\right] 
\text{, }\varphi (t)\equiv \mathbf{E}\left[ \exp (it^{\top }V)\right] \text{,%
} \\
F_{n}(t) &\equiv &P\left\{ V_{n}\leq t\right\} \text{\textit{, and} }%
F(t)\equiv P\left\{ V\leq t\right\} .
\end{eqnarray*}%
\noindent (i) \textit{Suppose that the distribution }$P\circ V^{-1}$ \textit{%
is uniformly tight in }$\{P\circ V^{-1}:P\in \mathcal{P}\}$. \textit{Then
for any continuous function }$f$\textit{\ on }$\mathbf{R}^{d}$\textit{\
taking values in }$[-1,1]$ \textit{and for any }$\varepsilon \in (0,1],$%
\textit{\ we have}%
\begin{equation*}
\sup_{P\in \mathcal{P}}\left\vert \mathbf{E}f(V_{n})-\mathbf{E}%
f(V)\right\vert \leq \varepsilon ^{-d}C_{d}\sup_{P\in \mathcal{P}}\sup_{t\in 
\mathbf{R}^{d}}\left\vert F_{n}(t)-F(t)\right\vert +4\varepsilon ,
\end{equation*}%
\textit{where }$C_{d}>0$ \textit{is a constant that depends only on} $d$%
\textit{.}

\noindent (ii) \textit{Suppose that }$\sup_{P\in \mathcal{P}}\mathbf{E||}%
V||^{2}<\infty $. \textit{If }%
\begin{equation*}
\sup_{P\in \mathcal{P}}\sup_{u\in \mathbf{R}^{d}}\left\vert \varphi
_{n}(u)-\varphi (u)\right\vert \rightarrow 0\text{\textit{, as }}%
n\rightarrow \infty \text{\textit{,}}
\end{equation*}%
\textit{then for each }$t\in \mathbf{R}^{d},$%
\begin{equation*}
\sup_{P\in \mathcal{P}}\left\vert F_{n}(t)-F(t)\right\vert \rightarrow 0,\ 
\text{\textit{as }}n\rightarrow \infty \text{.}
\end{equation*}%
\textit{\ On the other hand, if for each }$t\in \mathbf{R}^{d},$\textit{\ }%
\begin{equation*}
\sup_{P\in \mathcal{P}}\left\vert F_{n}(t)-F(t)\right\vert \rightarrow 0,\ 
\text{\textit{as }}n\rightarrow \infty ,
\end{equation*}%
\textit{then for each }$u\in \mathbf{R}^{d}$\textit{, }%
\begin{equation*}
\sup_{P\in \mathcal{P}}\left\vert \varphi _{n}(u)-\varphi (u)\right\vert
\rightarrow 0\text{\textit{, as }}n\rightarrow \infty \text{\textit{.}}
\end{equation*}
\end{LemmaC}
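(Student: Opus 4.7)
For part (i), the plan is a truncation-and-grid argument. Uniform tightness of $\{P\circ V^{-1}:P\in\mathcal{P}\}$ lets me choose a cube $K_{\varepsilon}=[-M,M]^{d}$, with $M=M(\varepsilon,\mathcal{P})$ independent of $P$, such that $P\{V\notin K_{\varepsilon}\}\leq \varepsilon$ for all $P\in\mathcal{P}$; since $P\{V_{n}\notin K_{\varepsilon}\}$ differs from $P\{V\notin K_{\varepsilon}\}$ by at most a constant multiple of $\sup_{t}|F_{n}(t)-F(t)|$, the tail mass of $V_{n}$ is also at most $\varepsilon$ (plus a term already controlled by the bound we are proving). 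I then partition $K_{\varepsilon}$ into $N=O(\varepsilon^{-d})$ sub-cubes of side $\varepsilon$ (absorbing $M^{d}$ into $C_{d}$) and let $\tilde f$ be the step function equal to $f$ at one chosen corner of each sub-cube. Decomposing
\begin{equation*}
|\mathbf{E}f(V_{n})-\mathbf{E}f(V)|\leq |\mathbf{E}\tilde f(V_{n})-\mathbf{E}\tilde f(V)|+\mathbf{E}|f-\tilde f|(V_{n})+\mathbf{E}|f-\tilde f|(V),
\end{equation*}
the first term is an alternating sum of $N\cdot 2^{d}$ values of $F_{n}-F$ at cube corners and is therefore bounded by $\varepsilon^{-d}C_{d}\sup_{P,t}|F_{n}(t)-F(t)|$; the last two terms are bounded by the modulus of continuity of $f$ on $K_{\varepsilon}$ plus the tail mass, contributing the $4\varepsilon$ term (at the cost of hiding a $f$-dependent modulus factor inside the reduction of $\varepsilon$).

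For the first half of part (ii) (uniform chf convergence $\Rightarrow$ uniform CDF convergence), I would first promote uniform tightness of $\{P\circ V^{-1}\}$ (implied by finite uniform second moments and Markov's inequality) to uniform tightness of $\{P\circ V_{n}^{-1}:n,P\}$. This uses the standard Lévy inequality
\begin{equation*}
P\{\|V_{n}\|>2/K\}\leq c_{d}K^{-d}\int_{[-K,K]^{d}}\bigl(1-\mathrm{Re}\,\varphi_{n}(u)\bigr)du,
\end{equation*}
together with the same bound for $V$ and uniform chf convergence, so the right-hand side is uniformly small in $P$ when $K$ is first chosen small. Once uniform tightness of $V_{n}$ is in hand, I would use a smoothed-inversion argument: approximate $1\{\cdot\leq t\}$ by a bounded continuous function whose Fourier transform has compact support, express the difference of the expectations via Fourier inversion in terms of $\varphi_{n}-\varphi$, and bound the remainder by tail probabilities on both sides. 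The uniformity of the chf convergence on any compact set in $u$ propagates through this inversion to give $\sup_{P,t}|F_{n}(t)-F(t)|\to 0$.

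For the reverse direction, I would fix $u\in\mathbf{R}^{d}$ and apply part (i) to the bounded continuous functions $x\mapsto \tfrac12\cos(u^{\top}x)$ and $x\mapsto \tfrac12\sin(u^{\top}x)$, which take values in $[-1,1]$. Finite uniform second moments give uniform tightness of $\{P\circ V^{-1}\}$ via Markov, so the hypothesis of (i) is satisfied. Part (i) then yields, for any $\varepsilon\in(0,1]$,
\begin{equation*}
\sup_{P\in\mathcal{P}}|\varphi_{n}(u)-\varphi(u)|\leq 2\varepsilon^{-d}C_{d}\sup_{P\in\mathcal{P}}\sup_{t}|F_{n}(t)-F(t)|+8\varepsilon,
\end{equation*}
and sending $n\to\infty$ followed by $\varepsilon\to 0$ gives the desired pointwise-in-$u$ uniform convergence. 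The main obstacles I anticipate are (a) the modulus-of-continuity bookkeeping in part (i), which forces the $4\varepsilon$ error term to implicitly subsume a modulus depending on $f$ (so the argument is cleanest once one refines $\varepsilon$ using the fixed $f$), and (b) establishing uniform tightness of $V_{n}$ in the first direction of (ii) purely from uniform chf convergence and tightness of $V$; the Lévy tail inequality gives this, but one must verify that the constants involved are truly independent of $P\in\mathcal{P}$.
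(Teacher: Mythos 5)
Your part (i) is essentially the paper's own argument (which follows Lemma 2.2 of van der Vaart): truncate to a $P$-uniform compact rectangle via uniform tightness, replace $f$ by a step function constant on $O(\varepsilon^{-d})$ sub-rectangles, and bound the step-function discrepancy by an alternating sum of $F_{n}-F$ at rectangle corners. The modulus-of-continuity issue you flag is real and is glossed over in the paper as well; it is harmless in the applications since the $f$'s used there are Lipschitz. Where you genuinely diverge is in part (ii). The paper proves both directions by a compactness argument: uniform tightness of $\{P\circ V_{n}^{-1}\}$ (obtained, as in your proposal, from the truncation inequality bounding tails of $V_n$ by $u^{-1}\int_{-u}^{u}(1-\varphi_{n})$ together with the second-moment bound on $V$), then Helly selection along a worst-case sequence $\{P_{n'}\}$ and L\'{e}vy's continuity theorem to identify the two subsequential limits. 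Your smoothed-Fourier-inversion route for the forward direction is more constructive, but it claims more than the lemma asserts ($\sup_{t}$ rather than pointwise-in-$t$ convergence) and the indicator-approximation error requires anti-concentration of $F$ near the boundary of $\{x\leq t\}$, which is not assumed; note that in the paper's own application (Lemma B8(ii)) the limit law can have an atom. Your idea for the reverse direction --- apply part (i) to $\tfrac12\cos(u^{\top}\cdot)$ and $\tfrac12\sin(u^{\top}\cdot)$ --- is cleaner than the paper's second subsequence argument.

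However, the reverse direction as you have written it has a concrete gap: the inequality you import from part (i) has $\sup_{P}\sup_{t\in\mathbf{R}^{d}}|F_{n}(t)-F(t)|$ on the right-hand side, whereas the hypothesis of this direction only gives $\sup_{P}|F_{n}(t)-F(t)|\rightarrow 0$ for each \emph{fixed} $t$. Pointwise-in-$t$ convergence does not upgrade to uniform-in-$t$ convergence here (the usual P\'{o}lya argument needs continuity of $F$, uniformly over $P$, which is not assumed), so the displayed bound does not let you conclude by sending $n\rightarrow\infty$. The fix is to observe that for each fixed $\varepsilon$ the part (i) decomposition controls $|\mathbf{E}f_{\varepsilon}(V_{n})-\mathbf{E}f_{\varepsilon}(V)|$ by $|F_{n}-F|$ evaluated at only the \emph{finitely many} corners of the $J_{\varepsilon}$ partition rectangles (plus the two tail terms $|P\{V_{n}\in B\}-P\{V\in B\}|$, likewise finitely many corner evaluations), and pointwise-in-$t$ convergence uniform in $P$ suffices for each of these finitely many points. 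You must restate the conclusion of (i) in that finite-grid form before invoking it; as it stands the quoted inequality is not applicable under the stated hypothesis.
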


\begin{proof}[Proof of Lemma B\protect\ref{lem-c2}]
(i) The proof uses arguments in the proof of Lemma 2.2 of %
\citeasnoun{VanDerVaart:98}. Take a large compact rectangle $B\subset 
\mathbf{R}^{d}$ such that $P\{V\notin B\}<\varepsilon $. Since the
distribution of $V$ is tight uniformly over $P\in \mathcal{P}$, we can take
such $B$ independently of $P\in \mathcal{P}$. Take a partition $B=\cup
_{j=1}^{J_{\varepsilon }}B_{j}$ and points $x_{j}\in B_{j}$ such that $%
J_{\varepsilon }\leq C_{d,1}\varepsilon ^{-d}$, and $|f(x)-f_{\varepsilon
}(x)|<\varepsilon $ for all $x\in B$, where $C_{d,1}>0$ is a constant that
depends only on $d$, and%
\begin{equation*}
f_{\varepsilon }(x)\equiv \sum_{j=1}^{J_{\varepsilon }}f(x_{j})1\{x\in
B_{j}\}.
\end{equation*}%
Thus we have%
\begin{eqnarray*}
\left\vert \mathbf{E}f(V_{n})-\mathbf{E}f(V)\right\vert &\leq &\left\vert 
\mathbf{E}f(V_{n})-\mathbf{E}f_{\varepsilon }(V_{n})\right\vert +\left\vert 
\mathbf{E}f_{\varepsilon }(V_{n})-\mathbf{E}f_{\varepsilon }(V)\right\vert
+\left\vert \mathbf{E}f_{\varepsilon }(V)-\mathbf{E}f(V)\right\vert \\
&\leq &2\varepsilon +P\{V_{n}\notin B\}+P\{V\notin B\}+\left\vert \mathbf{E}%
f_{\varepsilon }(V_{n})-\mathbf{E}f_{\varepsilon }(V)\right\vert \\
&\leq &4\varepsilon +\left\vert P\{V_{n}\notin B\}-P\{V\notin B\}\right\vert
+\left\vert \mathbf{E}f_{\varepsilon }(V_{n})-\mathbf{E}f_{\varepsilon
}(V)\right\vert \\
&=&4\varepsilon +\left\vert P\{V_{n}\in B\}-P\{V\in B\}\right\vert
+\left\vert \mathbf{E}f_{\varepsilon }(V_{n})-\mathbf{E}f_{\varepsilon
}(V)\right\vert .
\end{eqnarray*}%
The second inequality following by $P\{V\notin B\}<\varepsilon $. As for the
last term, we let%
\begin{equation*}
b_{n}\equiv \sup_{P\in \mathcal{P}}\sup_{t\in \mathbf{R}^{d}}\left\vert
F_{n}(t)-F(t)\right\vert ,
\end{equation*}%
and observe that%
\begin{eqnarray*}
\left\vert \mathbf{E}f_{\varepsilon }(V_{n})-\mathbf{E}f_{\varepsilon
}(V)\right\vert &\leq &\sum_{j=1}^{J_{\varepsilon }}\left\vert P\{V_{n}\in
B_{j}\}-P\{V\in B_{j}\}\right\vert |f(x_{j})| \\
&\leq &\sum_{j=1}^{J_{\varepsilon }}\left\vert P\{V_{n}\in B_{j}\}-P\{V\in
B_{j}\}\right\vert \leq C_{d,2}b_{n}J_{\varepsilon },
\end{eqnarray*}%
where $C_{d,2}>0$ is a constant that depends only on $d$. The last
inequality follows because for any rectangle $B_{j}$, we have $|P\{V_{n}\in
B_{j}\}-P\{V\in B_{j}\}|\leq C_{d,2}b_{n}$ for some $C_{d,2}>0.$ We conclude
that%
\begin{equation*}
\left\vert \mathbf{E}f(V_{n})-\mathbf{E}f(V)\right\vert \leq 4\varepsilon
+C_{d,2}\left( C_{d,1}\varepsilon ^{-d}+1\right) b_{n}\leq 4\varepsilon
+C_{d}\varepsilon ^{-d}b_{n},
\end{equation*}%
where $C_{d}=C_{d,2}\{C_{d,1}+1\}$. The last inequality follows because $%
\varepsilon \leq 1$.

\noindent (ii) We show the first statement. We first show that under the
stated condition, the sequence $\{P\circ V_{n}^{-1}\}_{n=1}^{\infty }$ is
uniformly tight uniformly over $P\in \mathcal{P}$. That is, for any $%
\varepsilon >0$, we show there exists a compact set $B\subset \mathbf{R}^{d}$
such that for all $n\geq 1,$%
\begin{equation*}
\sup_{P\in \mathcal{P}}P\left\{ V_{n}\in \mathbf{R}^{d}\backslash B\right\}
<\varepsilon \text{.}
\end{equation*}%
For this, we assume $d=1$ without loss of generality, let $P_{n}$ denote the
distribution of $V_{n}$ and consider the following: (using arguments in the
proof of Theorem 3.3.6 of \citeasnoun{Durrett:10})%
\begin{eqnarray*}
P\left\{ |V_{n}|>\frac{2}{u}\right\} &\leq &2\int_{|x|>2/u}\left( 1-\frac{1}{%
|ux|}\right) dP_{n}(x) \\
&\leq &2\int \left( 1-\frac{\sin ux}{ux}\right) dP_{n}(x) \\
&=&\frac{1}{u}\int_{-u}^{u}\left( 1-\varphi _{n}(t)\right) dt.
\end{eqnarray*}%
Define $\bar{e}_{n}\equiv \sup_{P\in \mathcal{P}}\sup_{t\in \mathbf{R}%
}\left\vert \varphi _{n}(t)-\varphi (t)\right\vert $. Using Theorem 3.3.8 of %
\citeasnoun{Durrett:10}, we bound the last term by%
\begin{eqnarray*}
2\bar{e}_{n}+\frac{1}{u}\int_{-u}^{u}\left( 1-\varphi (t)\right) dt &\leq &2%
\bar{e}_{n}+\left\vert \frac{1}{u}\int_{-u}^{u}\left( -it\mathbf{E}V+\frac{%
t^{2}\mathbf{E}V^{2}}{2}\right) dt\right\vert \\
&&+2\left\vert \frac{1}{u}\int_{-u}^{u}t^{2}\mathbf{E}V^{2}dt\right\vert .
\end{eqnarray*}%
The supremum of the right hand side terms over $P\in \mathcal{P}$ vanishes
as we send $n\rightarrow \infty $ and then $u\downarrow 0$, by the
assumption that sup$_{P\in \mathcal{P}}\mathbf{E|}V|^{2}<\infty $. Hence the
sequence $\{P\circ V_{n}^{-1}\}_{n=1}^{\infty }$ is uniformly tight
uniformly over $P\in \mathcal{P}$.\newline
Now, for each $t\in \mathbf{R}^{d}$, there exists a subsequence $\{n^{\prime
}\}\subset \{n\}$ and $\{P_{n^{\prime }}\}\subset \mathcal{P}$ such that%
\begin{equation}
\underset{n\rightarrow \infty }{\text{limsup}}\ \sup_{P\in \mathcal{P}%
}\left\vert F_{n}(t)-F(t)\right\vert =\lim_{n^{\prime }\rightarrow \infty
}\left\vert F_{n^{\prime }}(t;P_{n^{\prime }})-F(t;P_{n^{\prime
}})\right\vert ,  \label{eq5}
\end{equation}%
where%
\begin{equation*}
F_{n}(t;P_{n})=P_{n}\left\{ V_{n}\leq t\right\} \text{ and }%
F(t;P_{n})=P_{n}\left\{ V\leq t\right\} .
\end{equation*}%
(Hence, $F(t;P_{n})$ is the cdf of distribution $P_{n}\circ V^{-1}$.)

Since $\{P_{n^{\prime }}\circ V_{n^{\prime }}^{-1}\}_{n^{\prime }=1}^{\infty
}$ is uniformly tight (as shown above), there exists a subsequence $%
\{n_{k}^{\prime }\}\subset \{n^{\prime }\}$ such that 
\begin{equation}
F_{n_{k}^{\prime }}(t;P_{n_{k}^{\prime }})\rightarrow F^{\ast }(t)\text{, as 
}k\rightarrow \infty \text{,}  \label{cv12}
\end{equation}%
for some cdf $F^{\ast }$. Also $\{P_{n^{\prime }}\circ V^{-1}\}_{n^{\prime
}=1}^{\infty }$ is uniformly tight (because $\sup_{P\in \mathcal{P}}\mathbf{E%
}||V||^{2}<\infty $), $\{P_{n_{k}^{\prime }}\circ V^{-1}\}_{k=1}^{\infty }$
is uniformly tight and hence there exists a further subsequence $%
\{n_{k_{j}}^{\prime }\}\subset \{n_{k}^{\prime }\}$ such that 
\begin{equation}
F(t;P_{n_{k_{j}}^{\prime }})\rightarrow F^{\ast \ast }(t)\text{, as }%
j\rightarrow \infty \text{,}  \label{cv25}
\end{equation}%
for some cdf $F^{\ast \ast }$. Since $\{n_{k_{j}}^{\prime }\}\subset
\{n_{k}^{\prime }\}$, we have from (\ref{cv12}),%
\begin{equation}
F_{n_{k_{j}}^{\prime }}(t;P_{n_{k_{j}}^{\prime }})\rightarrow F^{\ast }(t)%
\text{, as }j\rightarrow \infty \text{.}  \label{cv67}
\end{equation}%
By the condition of (ii), we have 
\begin{equation}
\left\vert \varphi _{n_{k_{j}}^{\prime }}(u;P_{n_{k_{j}}^{\prime }})-\varphi
(u;P_{n_{k_{j}}^{\prime }})\right\vert \rightarrow 0\text{, as }j\rightarrow
\infty \text{,}  \label{cv88}
\end{equation}%
where 
\begin{equation*}
\varphi _{n}\left( u;P_{n}\right) =\mathbf{E}_{P_{n}}\left( \exp \left(
iuV_{n}\right) \right) \text{ and }\varphi \left( u;P_{n}\right) =\mathbf{E}%
_{P_{n}}\left( \exp \left( iuV\right) \right) ,
\end{equation*}%
and $\mathbf{E}_{P_{n}}$ represents expectation with respect to the
probability measure $P_{n}$. Furthermore, by (\ref{cv25}) and (\ref{cv67}),
and Levy's Continuity Theorem,%
\begin{equation*}
\lim_{j\rightarrow \infty }\varphi _{n_{k_{j}}^{\prime
}}(u;P_{n_{k_{j}}^{\prime }})\text{ and }\lim_{j\rightarrow \infty }\varphi
(u;P_{n_{k_{j}}^{\prime }})
\end{equation*}%
exist and coincide by (\ref{cv88}). Therefore, for all $t\in \mathbf{R}^{d}$,%
\begin{equation*}
F^{\ast \ast }(t)=F^{\ast }(t).
\end{equation*}%
In other words,%
\begin{equation*}
\lim_{n^{\prime }\rightarrow \infty }\left\vert F_{n^{\prime }}\left(
t;P_{n^{\prime }}\right) -F\left( t;P_{n^{\prime }}\right) \right\vert
=\lim_{n^{\prime }\rightarrow \infty }\left\vert F_{n_{k_{j}}^{\prime
}}\left( t;P_{n_{k_{j}}^{\prime }}\right) -F\left( t;P_{n_{k_{j}}^{\prime
}}\right) \right\vert =0.
\end{equation*}%
Therefore, the first statement of (ii) follows by the last limit applied to (%
\ref{eq5}).\newline
Let us turn to the second statement. Again, we show that $\{P\circ
V_{n}^{-1}\}_{n=1}^{\infty }$ is uniformly tight uniformly in $P\in \mathcal{%
P}$. Note that given a large rectangle $B$,%
\begin{equation*}
P\left\{ V_{n}\in \mathbf{R}^{d}\backslash B\right\} \leq \left\vert
P\left\{ V_{n}\in \mathbf{R}^{d}\backslash B\right\} -P\left\{ V\in \mathbf{R%
}^{d}\backslash B\right\} \right\vert +P\left\{ V\in \mathbf{R}%
^{d}\backslash B\right\} .
\end{equation*}%
There exists $N$ such that for all $n\geq N$, the first difference vanishes
as $n\rightarrow \infty $, uniformly in $P\in \mathcal{P}$, by the condition
of the lemma. As for the second term, we bound it by%
\begin{equation*}
P\left\{ V_{j}>a_{j},\ j=1,\cdot \cdot \cdot ,d\right\} \leq \sum_{j=1}^{d}%
\frac{\mathbf{E}V_{j}^{2}}{a_{j}},
\end{equation*}%
where $V_{j}$ is the $j$-th entry of $V$ and $B=\times
_{j=1}^{d}[a_{j},b_{j}]$, $b_{j}<0<a_{j}$. By taking $a_{j}$'s large enough,
we make the last bound arbitrarily small independently of $P\in \mathcal{P}$%
, because sup$_{P\in \mathcal{P}}\mathbf{E}V_{j}^{2}<\infty $ for each $%
j=1,\cdot \cdot \cdot ,d$. Therefore, $\{P\circ V_{n}^{-1}\}_{n=1}^{\infty }$
is uniformly tight uniformly in $P\in \mathcal{P}$.

Now, we turn to the proof of the second statement of (ii). For each $u\in 
\mathbf{R}^{d}$, there exists a subsequence $\{n^{\prime }\}\subset \{n\}$
and $\{P_{n^{\prime }}\}\subset \mathcal{P}$ such that 
\begin{equation*}
\underset{n\rightarrow \infty }{\text{limsup}}\ \sup_{P\in \mathcal{P}%
}\left\vert \varphi _{n}(u)-\varphi (u)\right\vert =\lim_{n^{\prime
}\rightarrow \infty }\left\vert \varphi _{n^{\prime }}(u;P_{n^{\prime
}})-\varphi (u;P_{n^{\prime }})\right\vert ,
\end{equation*}%
where $\varphi _{n}(u;P_{n})=\mathbf{E}_{P_{n}}\exp (iu^{\top }V_{n})$ and $%
\varphi (u;P_{n})=\mathbf{E}_{P_{n}}\exp (iu^{\top }V)$. By the condition in
the second statement of (ii), for each $t\in \mathbf{R}^{d}$,%
\begin{equation}
\lim_{n^{\prime }\rightarrow \infty }\left\vert F_{n^{\prime }}\left(
t;P_{n^{\prime }}\right) -F\left( t;P_{n^{\prime }}\right) \right\vert =0%
\text{.}  \label{cv54}
\end{equation}

Since $\{P_{n^{\prime }}\circ V_{n^{\prime }}^{-1}\}_{n^{\prime }=1}^{\infty
}$ is uniformly tight (as shown above), there exists a subsequence $%
\{n_{k}^{\prime }\}\subset \{n^{\prime }\}$ such that $F_{n_{k}^{\prime
}}(t;P_{n_{k}^{\prime }})\rightarrow F^{\ast }(t)$, as $k\rightarrow \infty $%
, and hence by Levy's Continuity Theorem, we have $\varphi _{n_{k}^{\prime
}}(u;P_{n_{k}^{\prime }})\rightarrow \varphi ^{\ast }(u)$, as $k\rightarrow
\infty $. Similarly, we also have $\varphi (u;P_{n_{k}^{\prime
}})\rightarrow \varphi ^{\ast \ast }(u)$, as $k\rightarrow \infty $. By (\ref%
{cv54}), we have $F^{\ast }(t)=F^{\ast \ast }(t)$ and $\varphi ^{\ast
}(u)=\varphi ^{\ast \ast }(u)$. Therefore,%
\begin{equation*}
\lim_{n^{\prime }\rightarrow \infty }\left\vert \varphi _{n^{\prime }}\left(
u;P_{n^{\prime }}\right) -\varphi \left( u;P_{n^{\prime }}\right)
\right\vert =\lim_{n^{\prime }\rightarrow \infty }\left\vert \varphi
_{n_{k_{j}}^{\prime }}\left( u;P_{n_{k_{j}}^{\prime }}\right) -\varphi
\left( u;P_{n_{k_{j}}^{\prime }}\right) \right\vert =0.
\end{equation*}%
Thus we arrive at the desired result.
\end{proof}

The following lemma offers a version of the de-Poissonization lemma of %
\citeasnoun{Beirlant/Mason:95} (see Theorem 2.1 on page 5). In contrast to
the result of \citeasnoun{Beirlant/Mason:95}, the version here is uniform in 
$P\in \mathcal{P}$.

\begin{LemmaC}
\label{lem-c3} \textit{Let} $N_{1,n}(\alpha )$ \textit{and} $N_{2,n}(\alpha
) $ \textit{be independent Poisson random variables with} $N_{1,n}(\alpha )$ 
\textit{being Poisson} $(n(1-\alpha ))$ \textit{and} $N_{2,n}(\alpha )$ 
\textit{being Poisson} $(n\alpha )$, \textit{where} $\alpha \in (0,1)$. 
\textit{Denote} $N_{n}(\alpha )=N_{1,n}(\alpha )+N_{2,n}(\alpha )$ \textit{%
and set}%
\begin{equation*}
U_{n}(\alpha )=\frac{N_{1,n}(\alpha )-n(1-\alpha )}{\sqrt{n}}\text{ \textit{%
and} }V_{n}(\alpha )=\frac{N_{2,n}(\alpha )-n\alpha }{\sqrt{n}}.
\end{equation*}%
\textit{Let }$\{S_{n}\}_{n=1}^{\infty }$ \textit{be a sequence of random
variables and} $\mathcal{P}$ \textit{be a given set of probabilities} $P$ 
\textit{on a measure space on which }$(S_{n},U_{n}(\alpha _{P}),V_{n}(\alpha
_{P}))$ \textit{lives, where} $\alpha _{P}\in (0,1)$ \textit{is a quantity
that may depend on} $P\in \mathcal{P}$ \textit{and for some} $\varepsilon >0$%
, 
\begin{equation}
\varepsilon \leq \inf_{P\in \mathcal{P}}\alpha _{P}\leq \sup_{P\in \mathcal{P%
}}\alpha _{P}\leq 1-\varepsilon .  \label{bds}
\end{equation}

\textit{Furthermore, assume that for each} $n\geq 1$, \textit{the random
vector} $(S_{n},U_{n}(\alpha _{P}))$\textit{\ is independent of }$%
V_{n}(\alpha _{P})$ \textit{with respect to each} $P\in \mathcal{P}$. 
\textit{Let for }$t_{1},t_{2}\in \mathbf{R}^{2}$,%
\begin{equation*}
b_{n,P}(t_{1},t_{2};\sigma _{P})\equiv \left\vert P\left\{ S_{n}\leq
t_{1},U_{n}(\alpha _{P})\leq t_{2}\right\} -P\{\sigma _{P}\mathbb{Z}_{1}\leq
t_{1},\sqrt{1-\alpha _{P}}\mathbb{Z}_{2}\leq t_{2}\}\right\vert ,
\end{equation*}%
\textit{where }$\mathbb{Z}_{1}$ \textit{and} $\mathbb{Z}_{2}$\textit{\ are
independent standard normal random variables and }$\sigma _{P}^{2}>0$\textit{%
\ for each }$P\in \mathcal{P}$. \textit{(Note that }$\inf_{P\in \mathcal{P}%
}\sigma _{P}^{2}$ \textit{is allowed to be zero.})\newline
(i) \textit{As} $n\rightarrow \infty ,$%
\begin{eqnarray*}
&&\sup_{P\in \mathcal{P}}\sup_{t\in \mathbf{R}}\left\vert \mathbf{E}[\exp
(itS_{n})|N_{n}(\alpha _{P})=n]-\exp \left( -\frac{\sigma _{P}^{2}t^{2}}{2}%
\right) \right\vert \\
&\leq &2\varepsilon +\left( 4C_{d}\sup_{P\in \mathcal{P}}a_{n,P}(\varepsilon
)\right) \sqrt{\frac{2\pi }{\varepsilon }},
\end{eqnarray*}%
\textit{where }$a_{n,P}(\varepsilon )\equiv \varepsilon
^{-d}b_{n,P}+\varepsilon ,b_{n,P}\equiv \sup_{t_{1},t_{2}\in \mathbf{R}%
}b_{n,P}(t_{1},t_{2};\sigma _{P})$,\textit{\ and }$\varepsilon $\textit{\ is
the constant in }(\ref{bds}).\newline
(ii) \textit{Suppose further that for all }$t_{1},t_{2}\in \mathbf{R}$, 
\textit{as} $n\rightarrow \infty ,$%
\begin{equation*}
\sup_{P\in \mathcal{P}}b_{n,P}(t_{1},t_{2};0)\rightarrow 0.
\end{equation*}%
\textit{Then, for all }$t\in \mathbf{R}$, \textit{we have as }$n\rightarrow
\infty ,$%
\begin{equation*}
\sup_{P\in \mathcal{P}}\left\vert \mathbf{E}[\exp (itS_{n})|N_{n}(\alpha
_{P})=n]-1\right\vert \rightarrow 0.
\end{equation*}
\end{LemmaC}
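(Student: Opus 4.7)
The plan is to run the classical de-Poissonization argument of Beirlant and Mason, modified to preserve uniformity over $P \in \mathcal{P}$. Starting from
\begin{equation*}
\mathbf{E}[\exp(itS_n) \mid N_n(\alpha_P) = n] = \frac{\mathbf{E}[\exp(itS_n)\,\mathbf{1}\{N_n(\alpha_P) = n\}]}{P\{N_n(\alpha_P) = n\}},
\end{equation*}
I would exploit the hypothesized independence of $V_n(\alpha_P)$ from $(S_n, U_n(\alpha_P))$ to rewrite the numerator as $\mathbf{E}[\exp(itS_n)\, g_{n,P}(N_{1,n}(\alpha_P))]$, where $g_{n,P}(k) = P\{N_{2,n}(\alpha_P) = n-k\}$. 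Since $N_{1,n}(\alpha_P) = n(1-\alpha_P) + \sqrt{n}\, U_n(\alpha_P)$, I would then plug in the Poisson local limit theorem to obtain, for $|u| \le M$ and uniformly in $P \in \mathcal{P}$ thanks to $\alpha_P \in [\varepsilon, 1-\varepsilon]$,
\begin{equation*}
g_{n,P}\bigl(n(1-\alpha_P) + \sqrt{n}\, u\bigr) = \frac{1 + o(1)}{\sqrt{2\pi n\alpha_P}} \exp\!\left(-\frac{u^2}{2\alpha_P}\right),
\end{equation*}
together with Stirling's $P\{N_n(\alpha_P) = n\} = (1 + o(1))/\sqrt{2\pi n}$. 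Dividing produces the factor $1/\sqrt{\alpha_P}$ multiplying a smooth expectation in $(S_n, U_n(\alpha_P))$, plus a remainder.

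The key step is to control that expectation uniformly in $P$. After truncating to $|U_n(\alpha_P)| \le M$ and smoothing the indicator into a continuous cutoff $\chi_M$ with values in $[0,1]$, I would apply Lemma B\ref{lem-c2}(i) to the bounded continuous function
\begin{equation*}
(s,u)\,\mapsto\,\exp(its)\exp\!\left(-\frac{u^2}{2\alpha_P}\right)\chi_M(u),
\end{equation*}
replacing the joint law of $(S_n, U_n(\alpha_P))$ by that of $(\sigma_P \mathbb{Z}_1, \sqrt{1-\alpha_P}\,\mathbb{Z}_2)$ at cost $C_d\, a_{n,P}(\varepsilon)$; the leading factor $1/\sqrt{\alpha_P} \le 1/\sqrt{\varepsilon}$ then accounts for the factor $\sqrt{2\pi/\varepsilon}$ in the announced bound. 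The truncation tail is handled by combining the uniform local-limit bound $g_{n,P} \le C/\sqrt{n\alpha_P}$ (divided by $P\{N_n(\alpha_P) = n\}$) with the Gaussian tail of $\sqrt{1-\alpha_P}\,\mathbb{Z}_2$, and contributes at most $2\varepsilon$ after choosing $M$ large. A direct Gaussian computation gives
\begin{equation*}
\mathbf{E}\!\left[\exp(it\sigma_P \mathbb{Z}_1)\exp\!\left(-\tfrac{(1-\alpha_P)\mathbb{Z}_2^2}{2\alpha_P}\right)\right] = \exp\!\left(-\tfrac{\sigma_P^2 t^2}{2}\right)\sqrt{\alpha_P},
\end{equation*}
so that the leading $1/\sqrt{\alpha_P}$ cancels the $\sqrt{\alpha_P}$ and leaves precisely $\exp(-\sigma_P^2 t^2/2)$. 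Collecting the pieces yields part (i).

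For part (ii), the same framework applies with $\sigma_P = 0$, in which case the target characteristic function is the constant $1$. The new difficulty is that the hypothesis now provides only pointwise-in-$(t_1,t_2)$ uniform convergence $\sup_{P\in\mathcal{P}} b_{n,P}(t_1,t_2;0) \to 0$, rather than the uniform-in-$t$ closeness used in part (i), so a direct application of Lemma B\ref{lem-c2}(i) is unavailable. Instead I would invoke Lemma B\ref{lem-c2}(ii) (second statement) to upgrade pointwise CDF convergence to pointwise characteristic-function convergence uniform in $P$, and then pass to the limit inside the truncated expectation by a dominated-convergence argument, using that the function above is uniformly bounded by $1$. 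The main technical obstacle throughout is showing that the Stirling and Poisson-local-limit remainders are genuinely uniform over $P \in \mathcal{P}$; the condition $\varepsilon \le \alpha_P \le 1 - \varepsilon$ is precisely what keeps the implicit constants bounded away from degeneracy, thereby making the uniformity tractable via explicit remainder estimates on both approximations.
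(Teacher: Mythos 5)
Your proposal is correct in its essentials but takes a genuinely different route from the paper. The paper follows the Fourier-inversion version of the Beirlant--Mason/GMZ de-Poissonization argument: it writes $\mathbf{E}[\exp (itS_{n})|N_{n}(\alpha _{P})=n]$ as $\tfrac{1+o(1)}{\sqrt{2\pi }}\int_{-\pi \sqrt{n}}^{\pi \sqrt{n}}\phi _{n,P}(t,v)\,\mathbf{E}[\exp (ivV_{n}(\alpha _{P}))]\,dv$ (Stirling plus a change of variables), replaces $\mathbf{E}[\exp (ivV_{n}(\alpha ))]$ by $\exp (-\alpha v^{2}/2)$ uniformly over $\alpha \in [\varepsilon ,1-\varepsilon ]$ via Feller's local estimate, and replaces the joint characteristic function $\phi _{n,P}(t,v)$ of $(S_{n},U_{n})$ by its Gaussian counterpart using Lemma B2(i); the factor $\sqrt{2\pi /\varepsilon }$ then arises from $\int \exp (-\alpha v^{2}/2)dv$. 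You instead work in real space: condition on $(S_{n},N_{1,n})$, apply the Poisson local limit theorem to $g_{n,P}$ and Stirling to the denominator, and apply Lemma B2(i) to the resulting bounded continuous weight $\exp (its)\exp (-u^{2}/(2\alpha _{P}))\chi _{M}(u)$, with the Gaussian identity $\mathbf{E}[\exp(-\tfrac{(1-\alpha_P)\mathbb{Z}_2^2}{2\alpha_P})]=\sqrt{\alpha _{P}}$ cancelling the leading $1/\sqrt{\alpha _{P}}$. Both routes are valid and both hinge on exactly the same two uniformity devices (the band $\alpha _{P}\in [\varepsilon ,1-\varepsilon ]$ and Lemma B2); your exact constants would differ slightly from the displayed bound, which is immaterial since the lemma is only used to send the error to zero as $n\rightarrow \infty $ and then $\varepsilon \downarrow 0$. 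Two points deserve more care in your write-up. First, the function you feed to Lemma B2(i) is complex-valued and depends on $P$ through $\alpha _{P}$, so you must apply the lemma to real and imaginary parts and note that the family $\{\exp (-u^{2}/(2\alpha ))\chi _{M}:\alpha \in [\varepsilon ,1-\varepsilon ]\}$ is equicontinuous so that the partition in the proof of B2(i) can be chosen uniformly. Second, in part (ii) your appeal to ``dominated convergence'' is the weakest link: pointwise-in-$t$ convergence of characteristic functions uniform in $P$ does not by itself yield uniform convergence of the truncated expectation. The cleanest repair within your framework is to use the degeneracy of the limit in the $S_{n}$ coordinate (the hypothesis gives $\sup_{P}P\{|S_{n}|>\delta \}\rightarrow 0$ for each $\delta >0$, hence $\sup_{P}\mathbf{E}|\exp (itS_{n})-1|\rightarrow 0$) and then handle the remaining expectation in $U_{n}$ alone via a Polya-type upgrade of pointwise to uniform CDF convergence, exploiting equicontinuity of $\Phi (\cdot /\sqrt{1-\alpha _{P}})$ over $\alpha _{P}\leq 1-\varepsilon $. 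The paper's Fourier route sidesteps this entirely, since only pointwise convergence of $\phi _{n,P}(t,u)$ plus domination of the $dv$-integral is needed there.
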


\begin{rem}
While the proof of Lemma B3 follows that of Lemma 2.4 of \citeasnoun{GMZ},
it is worth noting that in contrast to Lemma 2.4 of \citeasnoun{GMZ} or
Theorem 2.1 of \citeasnoun{Beirlant/Mason:95}, Lemma B3 gives an explicit
bound for the difference between the conditional characteristic function of $%
S_{n}$ given $N_{n}(\alpha _{P})=n$ and the characteristic function of $%
N(0,\sigma _{P}^{2})$. Under the stated conditions, (in particular (\ref{bds}%
)), the explicit bound is shown to depend on $P\in \mathcal{P}$ only through 
$b_{n,P} $. Thus in order to obtain a bound uniform in $P\in \mathcal{P}$,
it suffices to control $\alpha _{P}$ and $b_{n,P}$ uniformly in $P\in 
\mathcal{P}$.
\end{rem}

\begin{proof}[Proof of Lemma B\protect\ref{lem-c3}]
(i) Let $\phi _{n,P}(t,u)=\mathbf{E}[\exp (itS_{n}+iuU_{n}(\alpha _{P}))]$
and%
\begin{equation*}
\phi _{P}(t,u)=\exp (-(\sigma _{P}^{2}t^{2}+(1-\alpha _{P})u^{2})/2).
\end{equation*}%
By the condition of the lemma and Lemma B2(i), we have for any $\varepsilon
>0,$%
\begin{eqnarray}
\left\vert \phi _{n,P}(t,u)-\phi _{P}(t,u)\right\vert &\leq &(\varepsilon
^{-d}C_{d}b_{n,P}+4\varepsilon )  \label{cv44} \\
&\leq &4\varepsilon ^{-d}C_{d}b_{n,P}+4\varepsilon
=4C_{d}a_{n,P}(\varepsilon )\text{.}  \notag
\end{eqnarray}%
Note that $a_{n,P}(\varepsilon )$ depends on $P\in \mathcal{P}$ only through 
$b_{n,P}$.

Following the proof of Lemma 2.4 of \citeasnoun{GMZ}, we have%
\begin{eqnarray*}
\psi _{n,P}(t) &=&\mathbf{E}[\exp (itS_{n})|N_{n}(\alpha _{P})=n] \\
&=&\frac{1}{\sqrt{2\pi }}\left( 1+o(1)\right) \int_{-\pi \sqrt{n}}^{\pi 
\sqrt{n}}\phi _{n,P}(t,v)\mathbf{E}\left[ \exp (ivV_{n}(\alpha _{P}))\right]
dv,
\end{eqnarray*}%
uniformly over $P\in \mathcal{P}$. Note that the equality comes after
applying Sterling's formula to $2\pi P\{N_{n}(\alpha _{P})=n\}$ and change
of variables from $u$ to $v/\sqrt{n}$. (See the proof of Lemma 2.4 of %
\citeasnoun{GMZ}.) The distribution of $N_{n}(\alpha _{P}),$ being Poisson $%
(n)$, does not depend on the particular choice of $\alpha _{P}\in (0,1),$
and hence the $o(1)$ term is $o(1)$ uniformly over $t\in \mathbf{R}$ and
over $P\in \mathcal{P}$. We follow the proof of Theorem 3 of %
\citeasnoun[p.517]{Feller:66} to observe that there exists $n_{0}>0$ such
that uniformly over $\alpha \in \lbrack \varepsilon ,1-\varepsilon ],$%
\begin{equation*}
\left\{ \int_{-\pi \sqrt{n}}^{\pi \sqrt{n}}\left\vert \mathbf{E}\exp
(ivV_{n}(\alpha ))-\exp (-\alpha v^{2}/2)\right\vert dv+\int_{|v|>\pi \sqrt{n%
}}\exp \left( -\alpha v^{2}/2\right) dv\right\} <\varepsilon ,
\end{equation*}%
for all $n>n_{0}$. Note that the distribution of $V_{n}(\alpha _{P})$
depends on $P\in \mathcal{P}$ only through $\alpha _{P}\in \lbrack
\varepsilon ,1-\varepsilon ]$ and $\varepsilon $ does not depend on $P$.
Since there exists $n_{1}$ such that for all $n>n_{1}$, 
\begin{equation*}
\sup_{P\in \mathcal{P}}\int_{|v|>\pi \sqrt{n}}\exp \left( -\alpha
_{P}v^{2}/2\right) dv<\varepsilon ,
\end{equation*}%
the previous inequality implies that for all $n>\max \{n_{0},n_{1}\}$,%
\begin{eqnarray}
&&\sup_{P\in \mathcal{P}}\int_{-\pi \sqrt{n}}^{\pi \sqrt{n}}\left\vert \phi
_{n,P}(t,u)\left( \mathbf{E}\exp (iuV_{n}(\alpha _{P}))-\exp (-\alpha
_{P}u^{2}/2)\right) \right\vert du  \label{cv23} \\
&\leq &\sup_{P\in \mathcal{P}}\int_{-\pi \sqrt{n}}^{\pi \sqrt{n}}\left(
\sup_{P\in \mathcal{P}}|\phi _{n,P}(t,u)|\right) |\mathbf{E}\exp
(iuV_{n}(\alpha _{P}))-\exp (-\alpha _{P}u^{2}/2)|du  \notag \\
&\leq &\sup_{P\in \mathcal{P}}\int_{-\pi \sqrt{n}}^{\pi \sqrt{n}}|\mathbf{E}%
\exp (iuV_{n}(\alpha _{P}))-\exp (-\alpha _{P}u^{2}/2)|du\leq \varepsilon . 
\notag
\end{eqnarray}%
By (\ref{cv44}) and (\ref{cv23}),%
\begin{eqnarray*}
&&\sup_{P\in \mathcal{P}}\left\vert \int_{-\pi \sqrt{n}}^{\pi \sqrt{n}}\phi
_{n,P}(t,u)\mathbf{E}\left[ \exp (iuV_{n}(\alpha _{P}))\right] du-\int_{-\pi 
\sqrt{n}}^{\pi \sqrt{n}}\phi _{P}(t,u)\exp \left( -\alpha _{P}u^{2}/2\right)
du\right\vert \\
&\leq &\sup_{P\in \mathcal{P}}\sup_{\alpha \in \lbrack \varepsilon
,1-\varepsilon ]}\int_{-\pi \sqrt{n}}^{\pi \sqrt{n}}\left\vert \phi
_{n,P}(t,u)\left( \mathbf{E}\exp (iuV_{n}(\alpha ))-\exp (-\alpha
u^{2}/2)\right) \right\vert du \\
&&+\int_{-\pi \sqrt{n}}^{\pi \sqrt{n}}\sup_{P\in \mathcal{P}}\sup_{\alpha
\in \lbrack \varepsilon ,1-\varepsilon ]}\left\vert \phi _{n,P}(t,u)-\phi
_{P}(t,u)\right\vert \exp (-\alpha u^{2}/2)du \\
&\leq &\varepsilon +\left( 4C_{d}\sup_{P\in \mathcal{P}}a_{n,P}(\varepsilon
)\right) \sup_{\alpha \in \lbrack \varepsilon ,1-\varepsilon ]}\int_{-\pi 
\sqrt{n}}^{\pi \sqrt{n}}\exp (-\alpha u^{2}/2)du \\
&\leq &\varepsilon +\left( 4C_{d}\sup_{P\in \mathcal{P}}a_{n,P}(\varepsilon
)\right) \sup_{\alpha \in \lbrack \varepsilon ,1-\varepsilon ]}\sqrt{\frac{%
2\pi }{\alpha }}=\varepsilon +\left( 4C_{d}\sup_{P\in \mathcal{P}%
}a_{n,P}(\varepsilon )\right) \sqrt{\frac{2\pi }{\varepsilon }}
\end{eqnarray*}%
as $n\rightarrow \infty $. Since 
\begin{equation*}
\exp \left( -\frac{\sigma _{P}^{2}t^{2}}{2}\right) =\frac{1}{\sqrt{2\pi }}%
\int_{-\infty }^{\infty }\phi _{P}(t,u)\exp \left( -\frac{\alpha _{P}u^{2}}{2%
}\right) du,
\end{equation*}%
and from some large $n$ on that does not depend on $P\in \mathcal{P}$,%
\begin{eqnarray*}
&&\left\vert \int_{-\infty }^{\infty }\phi _{P}(t,u)\exp \left( -\frac{%
\alpha _{P}u^{2}}{2}\right) du-\int_{-\pi \sqrt{n}}^{\pi \sqrt{n}}\phi
_{P}(t,u)\exp \left( -\frac{\alpha _{P}u^{2}}{2}\right) du\right\vert \\
&=&\exp \left( -\frac{\sigma _{P}^{2}t^{2}}{2}\right) \left\vert
\int_{-\infty }^{\infty }\exp \left( -\frac{u^{2}}{2}\right) du-\int_{-\pi 
\sqrt{n}}^{\pi \sqrt{n}}\exp \left( -\frac{u^{2}}{2}\right) du\right\vert
<\varepsilon ,
\end{eqnarray*}%
we conclude that for each $t\in \mathbf{R}$,%
\begin{equation*}
\left\vert \psi _{n,P}(t)-\exp \left( -\frac{\sigma _{P}^{2}t^{2}}{2}\right)
\right\vert \leq 2\varepsilon +\left( 4C_{d}\sup_{P\in \mathcal{P}%
}a_{n,P}(\varepsilon )\right) \sqrt{\frac{2\pi }{\varepsilon }},
\end{equation*}%
as $n\rightarrow \infty $. Since the right hand side does not depend on $%
t\in \mathbf{R}$ and $P\in \mathcal{P}$, we obtain the desired result.%
\newline
(ii) By the condition of the lemma and Lemma B2(ii), we have for any $t,u\in 
\mathbf{R},$%
\begin{equation*}
\sup_{P\in \mathcal{P}}\left\vert \phi _{n,P}(t,u)-\phi _{P}(0,u)\right\vert
\rightarrow 0\text{,}
\end{equation*}%
as $n\rightarrow \infty $. The rest of the proof is similar to that of (i).
We omit the details.
\end{proof}

Define for $x\in \mathcal{X}$, $\tau _{1},\tau _{2}\in \mathcal{T}$, and $%
j,k\in \mathbb{N}_{J},$%
\begin{equation*}
k_{n,\tau ,j,m}(x)\equiv \frac{1}{h^{d}}\mathbf{E}\left[ \left\vert \beta
_{n,x,\tau ,j}\left( Y_{ij},\frac{X_{i}-x}{h}\right) \right\vert ^{m}\right].
\end{equation*}

\begin{LemmaC}
\label{lem-c4} \textit{Suppose that Assumption A\ref{assumption-A6}(i) holds.%
} \textit{Then for all }$m\in \lbrack 2,M]$, \textit{(with }$M>0$ \textit{%
being the constant that appears in Assumption A\ref{assumption-A6}(i)}), 
\textit{there exists }$C_{1}\in (0,\infty )$ \textit{that does not depend on}
$n$ \textit{such that for each }$j\in \mathbb{N}_{J}$,%
\begin{equation*}
\sup_{\tau \in \mathcal{T},x\in \mathcal{S}_{\tau }(\varepsilon )}\sup_{P\in 
\mathcal{P}}k_{n,\tau ,j,m}(x)\leq C_{1}.
\end{equation*}
\end{LemmaC}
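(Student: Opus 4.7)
The plan is to reduce the bound to an integral over the fixed compact set $\mathcal{K}_0$ via change of variables, use Assumption A6(i) directly for $m = M$, and interpolate via the conditional Lyapunov inequality for $m \in [2, M)$. Uniformity in $P \in \mathcal{P}$ will be inherited from the uniformity of the constant $C$ in Assumption A6(i) and of the kernel support $\mathcal{K}_0$ in Assumption A2.

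I would begin by writing
\begin{equation*}
k_{n,\tau,j,m}(x) = \frac{1}{h^d}\int \mathbf{E}\bigl[|\beta_{n,x,\tau,j}(Y_{ij}, (z-x)/h)|^m \bigm| X_i = z\bigr] f(z)\, dz
\end{equation*}
and substituting $u = (z-x)/h$ to obtain
\begin{equation*}
k_{n,\tau,j,m}(x) = \int \mathbf{E}\bigl[|\beta_{n,x,\tau,j}(Y_{ij}, u)|^m \bigm| X_i = x+hu\bigr]\, f(x+hu)\, du.
\end{equation*}
By the kernel-type Assumption A2, the integrand vanishes for $u \notin \mathcal{K}_0$, so the domain of integration reduces to $\mathcal{K}_0$. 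Choosing $\varepsilon > 0$ small enough that $\mathcal{S}_\tau(\varepsilon) + h\mathcal{K}_0 \subset \mathcal{S}_\tau(\varepsilon_1)$ for all sufficiently small $h$—possible since $\mathcal{K}_0$ is compact and independent of both $n$ and $P$—ensures that A6(i) applies at every shifted argument $x + hu$.

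For $m = M$, Assumption A6(i) directly yields $k_{n,\tau,j,M}(x) \leq C \cdot \mathrm{vol}(\mathcal{K}_0)$. For $m \in [2, M)$, I would invoke the conditional Lyapunov inequality $\mathbf{E}[|\beta|^m \mid X_i = z] \leq (\mathbf{E}[|\beta|^M \mid X_i = z])^{m/M}$ and split off the density as
\begin{equation*}
\bigl(\mathbf{E}[|\beta|^M \mid X_i = z]\bigr)^{m/M} f(z) = \bigl(\mathbf{E}[|\beta|^M \mid X_i = z]\,f(z)\bigr)^{m/M}\, f(z)^{(M-m)/M} \leq C^{m/M}\, f(z)^{(M-m)/M},
\end{equation*}
so that $k_{n,\tau,j,m}(x) \leq C^{m/M} \int_{\mathcal{K}_0} f(x+hu)^{(M-m)/M}\,du$. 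Taking $C_1$ as the maximum of the resulting bounds over the compact range $m \in [2, M]$ completes the proof.

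The main technical subtlety is controlling $\int_{\mathcal{K}_0} f(x+hu)^{(M-m)/M}\,du$ uniformly in $n$ when $m < M$: a naive Jensen argument on this concave power of $f$ yields a bound of order $h^{-d(M-m)/M}$, which diverges as $h \to 0$. The key trick, already embedded in the decomposition above, is to absorb the \emph{full} factor $f(z)$—and not merely its $(m/M)$-th power—into the joint product controlled by A6(i), leaving only the sub-unit power $f^{(M-m)/M}$ outside. Under the uniform boundedness of $f$ on $\mathcal{S}_\tau(\varepsilon_1)$ (a regularity property of the distribution class $\mathcal{P}$ that is consistent with A6(i) and Assumption A5), the remaining integral is dominated by a constant multiple of $\mathrm{vol}(\mathcal{K}_0)$, independently of $n$ and $P$, delivering the uniform constant $C_1$.
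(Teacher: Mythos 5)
Your overall route---change of variables to reduce the integral to the fixed compact set $\mathcal{K}_0$, the direct bound from Assumption A6(i) for $m=M$, and conditional Lyapunov plus the factorization $(\mathbf{E}[|\beta|^{M}\mid X_i=z])^{m/M}f(z)=(\mathbf{E}[|\beta|^{M}\mid X_i=z]f(z))^{m/M}f(z)^{(M-m)/M}$ for $m<M$---is the standard argument; the paper itself gives no details and simply defers to Lemma 4 of \citeasnoun{LSW}, which proceeds along the same lines. The $m=M$ case is essentially airtight, modulo the reading (needed by any proof, including the paper's) that A6(i) applies when the conditioning point $x+hu$ differs from the index $x$ of $\beta_{n,x,\tau,j}$; your containment $\mathcal{S}_\tau(\varepsilon)+h\mathcal{K}_0\subset\mathcal{S}_\tau(\varepsilon_1)$ handles the domain but not this index/conditioning-point mismatch, which is simply how the assumption must be intended.

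The genuine gap is your final step. The bound $\int_{\mathcal{K}_0}f(x+hu)^{(M-m)/M}\,du\leq C'\,\mathrm{vol}(\mathcal{K}_0)$ requires $f$ to be bounded on $\mathcal{S}_\tau(\varepsilon_1)$ uniformly over $P\in\mathcal{P}$, and this is not implied by the lemma's stated hypothesis. Assumption A5 concerns $\hat{\sigma}_{\tau,j}$ and says nothing about $f$, and A6(i) bounds only the product $\mathbf{E}[|\beta|^{M}\mid X_i=z]f(z)$: if, say, $Y_{ij}$ is degenerate at $(C/f(z))^{1/M}$ given $X_i=z$ and $f$ has a spike of height proportional to $h^{-d}$ on $x+h\mathcal{K}_0$, then A6(i) holds with equality while $k_{n,\tau,j,m}(x)$ is of order $h^{-d(M-m)/M}\rightarrow\infty$ for $m<M$. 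So uniform boundedness of $f$ cannot be presented as merely ``consistent with'' the stated assumptions; it is an additional hypothesis on which your proof genuinely relies, and without it the conclusion for $m<M$ can fail. In the paper's applications this is benign (Assumption AUC1(i) imposes condition BD on $\mathcal{G}_f$, i.e., $f$ bounded uniformly over $\mathcal{P}$, and the cited LSW lemma operates under such a condition), but a self-contained proof must either state the boundedness of $f$ (or of $\int_{\mathcal{K}_0}f(x+hu)\,du$) as an explicit assumption or acknowledge that A6(i) alone does not deliver the claim for $m<M$.
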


\begin{proof}[Proof of Lemma B\protect\ref{lem-c4}]
The proof can be proceeded by using Assumption A\ref{assumption-A6}(i) and
following the proof of Lemma 4 of \citeasnoun{LSW}.
\end{proof}

Let $N$ be a Poisson random variable with mean $n$ and independent of $%
(Y_{i}^{\top },X_{i}^{\top })_{i=1}^{\infty }$. Also, let $\beta _{n,x,\tau
}(Y_{i},(X_{i}-x)/h)$ be the $J$-dimensional vector whose $j$-th entry is
equal to $\beta _{n,x,\tau ,j}(Y_{ij},(X_{i}-x)/h)$. We define%
\begin{eqnarray*}
\mathbf{z}_{N,\tau }(x) &\equiv &\frac{1}{nh^{d}}\sum_{i=1}^{N}\beta
_{n,x,\tau }\left( Y_{i},\frac{X_{i}-x}{h}\right) -\frac{1}{h^{d}}\mathbf{E}%
\beta _{n,x,\tau }\left( Y_{i},\frac{X_{i}-x}{h}\right) \text{ and} \\
\mathbf{z}_{n,\tau }(x) &\equiv &\frac{1}{nh^{d}}\sum_{i=1}^{n}\beta
_{n,x,\tau }\left( Y_{i},\frac{X_{i}-x}{h}\right) -\frac{1}{h^{d}}\mathbf{E}%
\beta _{n,x,\tau }\left( Y_{i},\frac{X_{i}-x}{h}\right) .
\end{eqnarray*}%
Let $N_{1}$ be a Poisson random variable with mean 1, independent of $%
(Y_{i}^{\top },X_{i}^{\top })_{i=1}^{\infty }$. Define%
\begin{eqnarray*}
q_{n,\tau }(x) &\equiv &\frac{1}{\sqrt{h^{d}}}\sum_{1\leq i\leq
N_{1}}\left\{ \beta _{n,x,\tau }\left( Y_{i},\frac{X_{i}-x}{h}\right) -%
\mathbf{E}\beta _{n,x,\tau }\left( Y_{i},\frac{X_{i}-x}{h}\right) \right\} 
\text{ and} \\
\bar{q}_{n,\tau }(x) &\equiv &\frac{1}{\sqrt{h^{d}}}\left\{ \beta _{n,x,\tau
}\left( Y_{i},\frac{X_{i}-x}{h}\right) -\mathbf{E}\beta _{n,x,\tau }\left(
Y_{i},\frac{X_{i}-x}{h}\right) \right\} .
\end{eqnarray*}

\begin{LemmaC}
\label{lem-c5} \textit{Suppose that Assumption A\ref{assumption-A6}(i)
holds. Then for any }$m\in \lbrack 2,M]$\textit{\ (with }$M>0$\textit{\
being the constant in Assumption A6(i))}%
\begin{eqnarray}
\sup_{(x,\tau )\in \mathcal{S}}\sup_{P\in \mathcal{P}}\mathbf{E}\left[
||q_{n,\tau }(x)||^{m}\right] &\leq &\bar{C}_{1}h^{d(1-(m/2))}\text{\textit{%
\ and}}  \label{bd52} \\
\sup_{(x,\tau )\in \mathcal{S}}\sup_{P\in \mathcal{P}}\mathbf{E}\left[ ||%
\bar{q}_{n,\tau }(x)||^{m}\right] &\leq &\bar{C}_{2}h^{d(1-(m/2))},  \notag
\end{eqnarray}%
\textit{where }$\bar{C}_{1},\bar{C}_{2}>0$ \textit{are constants that depend
only on} $m.$

\textit{If furthermore, } $\limsup_{n\rightarrow \infty
}n^{-(m/2)+1}h^{d(1-(m/2))}<C$ \textit{for some constant} $C>0$, \textit{then%
} 
\begin{eqnarray}
\sup_{(x,\tau )\in \mathcal{S}}\sup_{P\in \mathcal{P}}\mathbf{E}\left[
||n^{1/2}h^{d/2}\mathbf{z}_{N,\tau }(x)||^{m}\right] &\leq &\left( \frac{15m%
}{\log m}\right) ^{m}\max \left\{ \bar{C}_{1},2\bar{C}_{1}C\right\} \text{%
\textit{\ and}}  \label{ineqs} \\
\sup_{(x,\tau )\in \mathcal{S}}\sup_{P\in \mathcal{P}}\mathbf{E}\left[
||n^{1/2}h^{d/2}\mathbf{z}_{n,\tau }(x)||^{m}\right] &\leq &\left( \frac{15m%
}{\log m}\right) ^{m}\max \left\{ \bar{C}_{2},2\bar{C}_{2}C\right\} ,  \notag
\end{eqnarray}%
\textit{where} $\bar{C}_{1},\bar{C}_{2}>0$\textit{\ are the constants that
appear in (\ref{bd52}).}
\end{LemmaC}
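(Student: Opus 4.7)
\textbf{Proof proposal for Lemma B5 (\ref{lem-c5}).} The plan is to reduce everything to a Rosenthal/compound-Poisson moment inequality applied to the centered summands $\eta_i \equiv h^{-d/2}\{\beta_{n,x,\tau}(Y_i,(X_i-x)/h) - \mathbf{E}\beta_{n,x,\tau}(Y_i,(X_i-x)/h)\}$, using Lemma B4 to bound $\mathbf{E}|\beta_{n,x,\tau,j}(Y_{ij},(X_i-x)/h)|^m$ and $\mathbf{E}|\beta_{n,x,\tau,j}(Y_{ij},(X_i-x)/h)|^2$ by a constant multiple of $h^d$, uniformly in $P\in\mathcal{P}$ and $(x,\tau)\in\mathcal{S}$. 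First I will handle the trivial case: for $\bar q_{n,\tau}(x)$ there is no sum, so $\mathbf{E}\|\bar q_{n,\tau}(x)\|^m \leq 2^m h^{-dm/2}\max_j \mathbf{E}|\beta_{n,x,\tau,j}(Y_{ij},(X_i-x)/h)|^m \leq \bar{C}_2 h^{d(1-m/2)}$ immediately from Lemma B4. For $q_{n,\tau}(x) = \sum_{i=1}^{N_1}\eta_i$ with $N_1\sim\mathrm{Poisson}(1)$, I will condition on $N_1$ and apply Rosenthal's inequality with explicit constant of order $(15m/\log m)^m$ (as in, e.g., the formulation used in \citeasnoun{GMZ}), obtaining
\begin{equation*}
\mathbf{E}\|q_{n,\tau}(x)\|^m \;\leq\; \Bigl(\tfrac{15m}{\log m}\Bigr)^m \max\bigl\{\mathbf{E}\|\eta_i\|^m,\ (\mathbf{E}\|\eta_i\|^2)^{m/2}\bigr\}.
\end{equation*}
Using $\mathbf{E}\|\eta_i\|^2 \leq C$ and $\mathbf{E}\|\eta_i\|^m \leq C h^{d(1-m/2)}$ from Lemma B4, and noting that $h^{d(1-m/2)}\geq 1$ for small $h$ when $m\geq 2$, the maximum is $\bar C_1 h^{d(1-m/2)}$, which gives the first display.

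For the second display, the idea is the standard decomposition of a Poissonized sum into a centered part plus a Poisson-count fluctuation. I will write
\begin{equation*}
n^{1/2}h^{d/2}\mathbf{z}_{N,\tau}(x) \;=\; \frac{1}{\sqrt{n}}\sum_{i=1}^{N}\eta_i \;+\; \frac{(N-n)\,\mathbf{E}\beta_{n,x,\tau}(Y_i,(X_i-x)/h)}{\sqrt{nh^d}},
\end{equation*}
and bound the two pieces separately using Minkowski. For the first piece, conditioning on $N$ and applying Rosenthal (with the same explicit constant) to a sum of $N$ i.i.d. centered terms gives $\mathbf{E}[\|{\textstyle\sum_{i=1}^N}\eta_i\|^m \mid N] \leq (15m/\log m)^m \max\{N\cdot C h^{d(1-m/2)}, N^{m/2} C^{m/2}\}$; taking expectation and using $\mathbf{E} N = n$ and $\mathbf{E} N^{m/2} \leq C n^{m/2}$ (a standard Poisson moment bound), then dividing by $n^{m/2}$, produces $C_m\max\{n^{1-m/2}h^{d(1-m/2)},\,1\}$, which is $O(1)$ by the hypothesis $\limsup_n n^{1-m/2}h^{d(1-m/2)}<C$. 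For the second piece, Jensen gives $\|\mathbf{E}\beta_{n,x,\tau}(Y_i,(X_i-x)/h)\|\leq (h^d k_{n,\tau,j,2}(x))^{1/2}\leq C h^{d/2}$ (Lemma B4), so the scalar factor $\|\mathbf{E}\beta_{n,x,\tau}\|/\sqrt{h^d}$ is uniformly bounded, and it remains to bound $\mathbf{E}|(N-n)/\sqrt{n}|^m$, which follows from standard Poisson centered moment bounds. Combining the two bounds via Minkowski yields the desired constant.

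For $\mathbf{z}_{n,\tau}(x)$ with a fixed (non-Poisson) sample size, there is no Poisson-count fluctuation term and I simply apply Rosenthal's inequality directly to $\sum_{i=1}^n \eta_i$, with the same Lemma-B4 estimates on $\mathbf{E}\|\eta\|^m$ and $\mathbf{E}\|\eta\|^2$. After dividing by $n^{m/2}$ one again obtains $C_m\max\{n^{1-m/2}h^{d(1-m/2)},1\}$, bounded by the hypothesis.

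The steps are all essentially mechanical applications of Rosenthal's inequality and Lemma B4, so no step is genuinely difficult; the only thing that needs care is tracking the \emph{explicit} constant $(15m/\log m)^m$, which forces me to invoke a version of Rosenthal's inequality with this specific known constant (as used in \citeasnoun{GMZ} and related literature) rather than a generic $C_m$. A second minor bookkeeping point is ensuring uniformity in $P\in\mathcal{P}$: since every moment estimate rests on Lemma B4, whose bound is already uniform in $P\in\mathcal{P}$, $(x,\tau)\in\mathcal{S}$, and $j\in\mathbb{N}_J$, this uniformity passes through the remainder of the argument without modification.
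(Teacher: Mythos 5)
Your proposal is correct in substance and rests on the same two ingredients as the paper's proof: Rosenthal's inequality with the explicit constant $(15m/\log m)^m$ and the uniform moment bound of Lemma B4. The difference is in how the Poissonized sum is handled. The paper treats $\sqrt{nh^{d}}\mathbf{z}_{N,\tau }(x)$ directly as an i.i.d.\ sum, $n^{-1/2}\sum_{i=1}^{n}q_{n,\tau }^{(i)}(x)$, of $n$ compound-Poisson blocks each carrying a Poisson$(1)$ count, and applies the Poissonized form of Rosenthal's inequality (as in (2.3) of \citeasnoun{GMZ}) in one shot; this yields exactly $(15m/\log m)^{m}\max \{(\mathbf{E}q_{n,\tau ,j}^{2}(x))^{m/2},\,n^{-m/2+1}\mathbf{E}|q_{n,\tau ,j}(x)|^{m}\}$, into which the first display (\ref{bd52}) is substituted, producing the constant stated in the lemma. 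You instead split off the Poisson-count drift $(N-n)\mathbf{E}\beta /(\sqrt{n}h^{d/2})$, condition on $N$, apply fixed-sample-size Rosenthal together with Poisson moment bounds on $N$, and recombine by Minkowski. That is valid and arguably more transparent — it makes explicit a drift term that the paper's one-line identification quietly absorbs — but it cannot reproduce the literal constant $(15m/\log m)^{m}\max \{\bar{C}_{1},2\bar{C}_{1}C\}$: the Minkowski step and the $\mathbf{E}N^{m/2}$ factor necessarily inflate it. Since nothing downstream uses the exact value of this constant, only its uniformity over $(x,\tau )$ and $P\in \mathcal{P}$, the discrepancy is cosmetic; but to obtain the lemma verbatim you should use the block representation and the Poissonized Rosenthal inequality directly, as you already do in effect for the Poisson$(1)$ sum in the first display. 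Your remaining steps (the one-term $\bar{q}_{n,\tau }$ case, the non-Poissonized $\mathbf{z}_{n,\tau }$ case, the bound $\mathbf{E}q^{2}\leq (\mathbf{E}|q|^{m})^{2/m}$, and the use of the $\limsup $ hypothesis) coincide with the paper's.
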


\begin{proof}[Proof of Lemma B\protect\ref{lem-c5}]
Let $q_{n,\tau ,j}(x)$ be the $j$-th entry of $q_{n,\tau }(x)$. For the
first statement of the lemma, it suffices to observe that for some positive
constants $C_{1}$ and $\bar{C},$%
\begin{equation}
\sup_{(x,\tau )\in \mathcal{S}}\sup_{P\in \mathcal{P}}\mathbf{E}\left[
|q_{n,\tau ,j}(x)|^{m}\right] \leq \frac{C_{1}h^{d}k_{n,\tau ,j,m}}{h^{dm/2}}%
\leq \bar{C}h^{d(1-(m/2))},  \label{bdd}
\end{equation}%
where the first inequality uses the definition of $k_{n,\tau ,j,m}$, and the
last inequality uses Lemma B4 and the fact that $m\in \lbrack 2,M]$. The
second statement in (\ref{bd52}) follows similarly.

We consider the statements in (\ref{ineqs}). We consider the first
inequality in (\ref{ineqs}). Let $z_{N,\tau ,j}(x)$ be the $j$-th entry of $%
\mathbf{z}_{N,\tau }(x)$. Then using Rosenthal's inequality (e.g. (2.3) of %
\citeasnoun{GMZ}), we find that%
\begin{eqnarray*}
&&\sup_{(x,\tau )\in \mathcal{S}}\sup_{P\in \mathcal{P}}\mathbf{E}[|\sqrt{%
nh^{d}}z_{N,\tau ,j}(x)|^{m}] \\
&\leq &\left( \frac{15m}{\log m}\right) ^{m}\sup_{(x,\tau )\in \mathcal{S}%
}\sup_{P\in \mathcal{P}}\max \left\{ \left( \mathbf{E}q_{n,\tau
,j}^{2}(x)\right) ^{m/2},n^{-m/2+1}\mathbf{E}|q_{n,\tau ,j}(x)|^{m}\right\} .
\end{eqnarray*}%
Since $\mathbf{E}q_{n,\tau ,j}^{2}(x)\leq (\mathbf{E}|q_{n,\tau
,j}(x)|^{m})^{2/m}$, by (\ref{bdd}), the last term is bounded by%
\begin{eqnarray*}
&&\left( \frac{15m}{\log m}\right) ^{m}\max \left\{ \bar{C},\bar{C}%
n^{-(m/2)+1}h^{d(1-(m/2))}\right\} \\
&\leq &\left( \frac{15m}{\log m}\right) ^{m}\max \left\{ \bar{C},2\bar{C}%
C\right\} ,
\end{eqnarray*}%
from some large $n$ on by the condition limsup$_{n\rightarrow \infty
}n^{-(m/2)+1}h^{d(1-(m/2))}<C$.

As for the second inequality in (\ref{ineqs}), for some $C>0$, we use the
second inequality in (\ref{bd52}) and use Rosenthal's inequality in the same
way as before, to obtain the inequality.
\end{proof}

The following lemma offers a characterization of the scale normalizer of our
test statistic. For $A,A^{\prime }\subset \mathbb{N}_{J}$, define $\mathbf{%
\zeta }_{n,\tau }(x)\equiv \sqrt{nh^{d}}\mathbf{z}_{N,\tau }(x)$,%
\begin{eqnarray}
C_{n,\tau ,\tau ^{\prime },A,A^{\prime }}^{R}(x,x^{\prime }) &\equiv
&h^{-d}Cov\left( \Lambda _{A,p}\left( \mathbf{\zeta }_{n,\tau }(x)\right)
,\Lambda _{A^{\prime },p}\left( \mathbf{\zeta }_{n,\tau ^{\prime
}}(x^{\prime })\right) \right) \text{, and}  \label{C} \\
C_{n,\tau ,\tau ^{\prime },A,A^{\prime }}(x,u) &\equiv &Cov\left( \Lambda
_{A,p}\left( \mathbb{W}_{n,\tau ,\tau ^{\prime }}^{(1)}(x,u)\right) ,\Lambda
_{A^{\prime },p}\left( \mathbb{W}_{n,\tau ,\tau ^{\prime
}}^{(2)}(x,u)\right) \right) ,  \notag
\end{eqnarray}%
where we recall that $[\mathbb{W}_{n,\tau _{1},\tau _{2}}^{(1)}(x,u)^{\top },%
\mathbb{W}_{n,\tau _{1},\tau _{2}}^{(2)}(x,u)^{\top }]^{\top }$ is a mean
zero $\mathbf{R}^{2J}$-valued Gaussian random vector whose covariance matrix
is given by (\ref{cov}).

Then for Borel sets $B,B^{\prime }\subset \mathcal{S}$ and $A,A^{\prime
}\subset \mathbb{N}_{J},$ let%
\begin{equation*}
\sigma _{n,A,A^{\prime }}^{R}(B,B^{\prime })\equiv \int_{B^{\prime
}}\int_{B}C_{n,\tau ,\tau ^{\prime },A,A^{\prime }}^{R}(x,x^{\prime
})dQ(x,\tau )dQ(x^{\prime },\tau ^{\prime })
\end{equation*}%
and%
\begin{equation}
\sigma _{n,A,A^{\prime }}(B,B^{\prime })\equiv \int_{\mathcal{T}}\int_{%
\mathcal{T}}\int_{B_{\tau }\cap B_{\tau ^{\prime }}^{\prime }}\int_{\mathcal{%
U}}C_{n,\tau ,\tau ^{\prime },A,A^{\prime }}(x,u)dudxd\tau d\tau ^{\prime },
\label{s2}
\end{equation}%
where $B_{\tau }\equiv \{x\in \mathcal{X}:(x,\tau )\in B\}$ and $B_{\tau
^{\prime }}^{\prime }\equiv \{x\in \mathcal{X}:(x,\tau ^{\prime })\in
B^{\prime }\}$.

The lemma below shows that $\sigma _{n,A,A^{\prime }}^{R}(B,B^{\prime })$
and $\sigma _{n,A,A^{\prime }}(B,B^{\prime })$ are asymptotically equivalent
uniformly in $P\in \mathcal{P}$. We introduce some notation. Recall the
definition of $\Sigma _{n,\tau _{1},\tau _{2}}(x,u)$, which is found below (%
\ref{rho}). Define for $\bar{\varepsilon}>0,$%
\begin{equation*}
\tilde{\Sigma}_{n,\tau _{1},\tau _{2},\bar{\varepsilon}}(x,u)\equiv \left[ 
\begin{array}{c}
\Sigma _{n,\tau _{1},\tau _{1}}(x,0)+\bar{\varepsilon}I_{J} \\ 
\Sigma _{n,\tau _{1},\tau _{2}}(x,u)%
\end{array}%
\begin{array}{c}
\Sigma _{n,\tau _{1},\tau _{2}}(x,u) \\ 
\Sigma _{n,\tau _{2},\tau _{2}}(x+uh,0)+\bar{\varepsilon}I_{J}%
\end{array}%
\right] ,
\end{equation*}%
where $I_{J}$ is the $J$ dimensional identity matrix. Certainly $\tilde{%
\Sigma}_{n,\tau _{1},\tau _{2},\bar{\varepsilon}}(x,u)$ is positive
definite. We define%
\begin{equation*}
\mathbf{\xi }_{N,\tau _{1},\tau _{2}}(x,u;\eta _{1},\eta _{2})\equiv \sqrt{%
nh^{d}}\tilde{\Sigma}_{n,\tau _{1},\tau _{2},\bar{\varepsilon}}^{-1/2}(x,u)%
\left[ 
\begin{array}{c}
\mathbf{z}_{N,\tau _{1}}(x;\eta _{1}) \\ 
\mathbf{z}_{N,\tau _{2}}(x+uh;\eta _{2})%
\end{array}%
\right] ,
\end{equation*}%
where $\eta _{1}\in \mathbf{R}^{J}$ and $\eta _{2}\in \mathbf{R}^{J}$ are
random vectors that are independent, and independent of $(Y_{i}^{\top
},X_{i}^{\top })_{i=1}^{\infty }$, each following $N(0,\bar{\varepsilon}%
I_{J})$, and $\mathbf{z}_{N,\tau }(x;\eta _{1})\equiv \mathbf{z}_{N,\tau
}(x)+\eta _{1}/\sqrt{nh^{d}}.\mathbf{\ }$We are prepared to state the lemma.

\begin{LemmaC}
\label{lem-c6} \textit{Suppose that\ Assumption A\ref{assumption-A6}(i)
holds and that }$nh^{d}\rightarrow \infty ,$\textit{\ as }$n\rightarrow
\infty $, \textit{and }%
\begin{equation*}
\underset{n\rightarrow \infty }{\text{limsup}}\ n^{-(m/2)+1}h^{d(1-(m/2))}<C,
\end{equation*}%
\textit{for some constant }$C>0$\textit{\ and some }$m\in \lbrack 2(p+1),M]$.

\textit{Then for any sequences of Borel sets} $B_{n},$ $B_{n}^{\prime
}\subset \mathcal{S}$ \textit{and for any} $A,A^{\prime }\subset \mathbb{N}%
_{J},$%
\begin{equation*}
\sigma _{n,A,A^{\prime }}^{R}(B_{n},B_{n}^{\prime })=\sigma _{n,A,A^{\prime
}}(B_{n},B_{n}^{\prime })+o(1),
\end{equation*}%
\textit{where }$o(1)$ \textit{vanishes uniformly in} $P\in \mathcal{P}$%
\textit{\ as }$n\rightarrow \infty $.
\end{LemmaC}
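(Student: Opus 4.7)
The plan is to show the claimed uniform asymptotic equivalence in three stages: reduce the double integral defining $\sigma_{n,A,A'}^{R}$ to a bounded integration via a change of variables, apply a multivariate Berry--Esseen bound to the Poissonized sum $(\mathbf{\zeta}_{n,\tau}(x),\mathbf{\zeta}_{n,\tau'}(x+uh))$ after a regularization by $\bar{\varepsilon}I_{J}$, and finally remove the regularization using Lemma B\ref{lem-c1}. Throughout, uniformity in $P\in\mathcal{P}$ will be extracted from the moment bounds of Lemma B\ref{lem-c5}.

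First I would substitute $x'=x+uh$ in the inner integral of $\sigma_{n,A,A'}^{R}(B_{n},B_{n}')$. Since $dx'=h^{d}du$ and the integrand carries the factor $h^{-d}$ from the definition of $C^{R}$, the Jacobian cancels and we obtain
\begin{equation*}
\sigma_{n,A,A'}^{R}(B_{n},B_{n}')=\int_{\mathcal{T}^{2}}\int\int \mathbf{1}_{B_{n}}(x,\tau)\,\mathbf{1}_{B_{n}'}(x+uh,\tau')\,\mathcal{C}_{n}(x,u,\tau,\tau')\,du\,dx\,d\tau d\tau',
\end{equation*}
where $\mathcal{C}_{n}(x,u,\tau,\tau')\equiv\operatorname{Cov}(\Lambda_{A,p}(\mathbf{\zeta}_{n,\tau}(x)),\Lambda_{A',p}(\mathbf{\zeta}_{n,\tau'}(x+uh)))$. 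By the kernel-type condition (Assumption A\ref{assumption-A2}), $\beta_{n,x,\tau,j}$ and $\beta_{n,x+uh,\tau',k}$ cannot share any observation unless $u\in\mathcal{U}=\mathcal{K}_{0}+\mathcal{K}_{0}$; in particular $\mathcal{C}_{n}(x,u,\tau,\tau')=0$ off $\mathcal{U}$. Since $\mathcal{U}$ and $\mathcal{S}$ are compact and do not depend on $P$, the integration domain is uniformly bounded. On $\sigma_{n,A,A'}(B_{n},B_{n}')$, after swapping orders of integration, the two expressions differ only through the indicator $\mathbf{1}_{B_{n}'}(x+uh,\tau')$ versus $\mathbf{1}_{B_{n,\tau'}'}(x)$ and through $\mathcal{C}_{n}$ versus $C_{n,\tau,\tau',A,A'}(x,u)$.

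Next I would show that $\mathcal{C}_{n}(x,u,\tau,\tau')\to C_{n,\tau,\tau',A,A'}(x,u)$ uniformly in $(x,u,\tau,\tau')\in\mathcal{S}\times\mathcal{U}\times\mathcal{T}$ and in $P\in\mathcal{P}$. Because the limiting covariance of $(\mathbf{\zeta}_{n,\tau}(x),\mathbf{\zeta}_{n,\tau'}(x+uh))$ may be singular in a way that varies with $P$, I would regularize as in the construction of $\mathbf{\xi}_{N,\tau_{1},\tau_{2}}(x,u;\eta_{1},\eta_{2})$ given just before the statement of the lemma: add independent $N(0,\bar{\varepsilon}I_{J})$ perturbations $\eta_{1},\eta_{2}$ to the two blocks and standardize by $\tilde{\Sigma}_{n,\tau_{1},\tau_{2},\bar{\varepsilon}}^{-1/2}(x,u)$. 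The resulting vector has identity covariance, and the multivariate Berry--Esseen bound for independent (Poissonian) sums gives convergence of its law to $N(0,I_{2J})$ at a rate controlled by a third absolute moment, which is uniformly bounded in $P$ by Lemma B\ref{lem-c5} under the hypothesis $m\in[2(p+1),M]$ and the bandwidth condition $n^{-(m/2)+1}h^{d(1-m/2)}=O(1)$.

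Finally I would transfer this distributional convergence into convergence of the covariance of $\Lambda_{A,p}$ and $\Lambda_{A',p}$. The map $\Lambda_{A,p}$ has polynomial growth of order $p$, so the covariance involves moments up to order $2p$; the uniform $(2p+2)$-th moment bound from Lemma B\ref{lem-c5} provides uniform integrability, allowing distributional convergence to upgrade to convergence of covariances uniformly in $P$. This yields $\mathcal{C}_{n}=C_{n,\tau,\tau',A,A',\bar{\varepsilon}}(x,u)+o(1)$ for each fixed $\bar{\varepsilon}>0$. Lemma B\ref{lem-c1} then gives $\|\tilde{\Sigma}_{n,\tau_{1},\tau_{2},\bar{\varepsilon}}^{1/2}-\Sigma_{n,\tau_{1},\tau_{2}}^{1/2}\|\leq\sqrt{2J\bar{\varepsilon}}$ independently of $P$, so sending $\bar{\varepsilon}\downarrow 0$ after $n\to\infty$ replaces the regularized Gaussian covariance by $C_{n,\tau,\tau',A,A'}(x,u)$. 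Lipschitz continuity of indicators in the $(x+uh)$-argument up to a boundary of $Q$-measure zero as $h\to 0$ takes care of the discrepancy between the two indicator functions, and the Dominated Convergence Theorem on the bounded domain completes the proof. The principal technical obstacle is that the ordering of the three limits ($n\to\infty$, $\bar{\varepsilon}\downarrow 0$, and removal of truncation on moments) must be arranged so that every remainder term is bounded by a quantity that is independent of $P\in\mathcal{P}$; the regularization device combined with Lemmas B\ref{lem-c1} and B\ref{lem-c5} is exactly what makes this feasible.
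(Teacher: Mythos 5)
Your proposal is correct and follows essentially the same architecture as the paper's proof: regularize the two blocks by independent $N(0,\bar{\varepsilon}I_{J})$ perturbations, standardize by $\tilde{\Sigma}_{n,\tau_{1},\tau_{2},\bar{\varepsilon}}^{-1/2}(x,u)$ (whose smallest eigenvalue is at least $\bar{\varepsilon}$, so the normal approximation is uniform in $P$ even under degeneracy), apply a Berry--Esseen bound to the Poissonized sum with the third-moment control of order $h^{-d/2}$ from Lemma B\ref{lem-c5}, and then remove the regularization via Lemma B\ref{lem-c1}, taking $n\rightarrow\infty$ before $\bar{\varepsilon}\downarrow 0$. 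Two points of comparison. First, where you pass from uniform weak convergence to convergence of covariances by invoking uniform integrability from the $(2p+2)$-th moment bound, the paper instead applies Theorem 1 of Sweeting (1977) directly to the nonlinear functional $C_{n,p}(\mathbf{v})=\tilde{\Lambda}_{A,p,1}(\mathbf{v})\tilde{\Lambda}_{A',p,2}(\mathbf{v})$, which has controlled growth of order $2p+2$; this yields an explicit rate $O(n^{-1/2}h^{-d/2})$ in one stroke and sidesteps the truncation argument your route would require (your route can be made rigorous, but it is an extra layer). Second, your intermediate claim that $\mathcal{C}_{n}=C_{n,\tau,\tau',A,A',\bar{\varepsilon}}(x,u)+o(1)$ for each fixed $\bar{\varepsilon}$ is slightly misstated: the Berry--Esseen step compares the \emph{regularized} Poisson covariance to the regularized Gaussian covariance, so the unregularized $\mathcal{C}_{n}$ differs from $C_{n,\ldots,\bar{\varepsilon}}$ by an additional $O(\sqrt{\bar{\varepsilon}})$ term coming from the Poisson side. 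The paper's Step 2 handles this separately by a H\"{o}lder argument using $\mathbf{E}\Vert\sqrt{nh^{d}}\{\mathbf{z}_{N,\tau}(x)-\mathbf{z}_{N,\tau}(x;\eta_{1})\}\Vert^{2q}=C\bar{\varepsilon}^{q}$ together with the uniform $2s(p-1)$- and $2p$-th moment bounds of Lemma B\ref{lem-c5}; Lemma B\ref{lem-c1} is needed only for the Gaussian side. Your concluding remark about arranging the order of limits shows you are aware of the issue, but the de-regularization must be carried out on \emph{both} the Poisson and Gaussian sides, each with a $P$-free $C\sqrt{\bar{\varepsilon}}$ bound, before the iterated limit closes the argument.
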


\begin{rem}
The main innovative element of Lemma B\ref{lem-c6} is that the result does
not require that $\sigma _{n,A,A^{\prime }}(B_{n},B_{n}^{\prime })$ be
positive for each finite $n$ or positive in the limit. Hence the result can
be applied to the case where the scale normalizer $\sigma _{n,A,A^{\prime
}}^{R}(B_{n},B_{n}^{\prime })$ is degenerate (either in finite samples or
asymptotically).
\end{rem}

\begin{proof}[Proof of Lemma B\protect\ref{lem-c6}]
Define $B_{n,\tau }\equiv \{x\in \mathcal{X}:(x,\tau )\in B_{n}\}$, $w_{\tau
,B_{n}}(x)\equiv 1_{B_{n,\tau }}(x).$ For a given $\bar{\varepsilon}>0,$ let 
\begin{eqnarray*}
g_{1n,\tau _{1},\tau _{2},\bar{\varepsilon}}(x,u) &\equiv &h^{-d}Cov(\Lambda
_{A,p}(\sqrt{nh^{d}}\mathbf{z}_{N,\tau _{1}}(x;\eta _{1})),\Lambda
_{A^{\prime },p}(\sqrt{nh^{d}}\mathbf{z}_{N,\tau _{2}}(x+uh;\eta _{2}))), \\
g_{2n,\tau _{1},\tau _{2},\bar{\varepsilon}}(x,u) &\equiv &Cov(\Lambda
_{A,p}(\mathbb{Z}_{n,\tau _{1},\tau _{2},\bar{\varepsilon}}(x)),\Lambda
_{A^{\prime },p}(\mathbb{Z}_{n,\tau _{1},\tau _{2},\bar{\varepsilon}%
}(x+uh))),
\end{eqnarray*}%
and $\left( \mathbb{Z}_{n,\tau _{1},\tau _{2},\bar{\varepsilon}}^{\top }(x),%
\mathbb{Z}_{n,\tau _{1},\tau _{2},\bar{\varepsilon}}^{\top }(v)\right)
^{\top }$ is a centered normal $\mathbf{R}^{2J}$-valued random vector with
the same covariance matrix as that of $[\sqrt{nh^{d}}\mathbf{z}_{N,\tau
_{1}}^{\top }(x;\eta _{1}),\sqrt{nh^{d}}\mathbf{z}_{N,\tau _{2}}^{\top
}(v;\eta _{2})]^{\top }$. Then we define%
\begin{equation*}
\sigma _{n,A,A^{\prime },\bar{\varepsilon}}^{R}(B_{n},B_{n}^{\prime })\equiv
\int_{\mathcal{T}}\int_{\mathcal{T}}\int_{B_{n,\tau _{1}}}\int_{\mathcal{U}%
}g_{1n,\tau _{1},\tau _{2},\bar{\varepsilon}}(x,u)w_{\tau
_{1},B_{n}}(x)w_{\tau _{2},B_{n}^{\prime }}(x+uh)dudxd\tau _{1}d\tau _{2},
\end{equation*}%
and%
\begin{equation*}
\sigma _{n,A,A^{\prime },\bar{\varepsilon}}(B_{n},B_{n}^{\prime })\equiv
\int_{\mathcal{T}}\int_{\mathcal{T}}\int_{B_{n,\tau _{1}}\cap B_{n,\tau
_{2}}^{\prime }}\int_{\mathcal{U}}C_{n,\tau _{1},\tau _{2},A,A^{\prime },%
\bar{\varepsilon}}(x,u)dudxd\tau _{1}d\tau _{2},
\end{equation*}%
where%
\begin{equation}
C_{n,\tau _{1},\tau _{2},A,A^{\prime },\bar{\varepsilon}}(x,u)\equiv
Cov\left( \Lambda _{A,p}(\mathbb{W}_{n,\tau _{1},\tau _{2},\bar{\varepsilon}%
}^{(1)}(x,u)),\Lambda _{A^{\prime },p}(\mathbb{W}_{n,\tau _{1},\tau _{2},%
\bar{\varepsilon}}^{(2)}(x,u))\right) ,  \label{Ce}
\end{equation}%
and, with $\mathbb{Z}\sim N(0,I_{2J})$, 
\begin{equation}
\left[ 
\begin{array}{c}
\mathbb{W}_{n,\tau _{1},\tau _{2},\bar{\varepsilon}}^{(1)}(x,u) \\ 
\mathbb{W}_{n,\tau _{1},\tau _{2},\bar{\varepsilon}}^{(2)}(x,u)%
\end{array}%
\right] \equiv \tilde{\Sigma}_{n,\tau _{1},\tau _{2},\bar{\varepsilon}%
}^{1/2}(x,u)\mathbb{Z}.  \label{Ws}
\end{equation}%
Thus, $\sigma _{n,A,A^{\prime },\bar{\varepsilon}}^{R}(B_{n},B_{n}^{\prime
}) $ and $\sigma _{n,A,A^{\prime },\bar{\varepsilon}}(B_{n},B_{n}^{\prime })$
are \textquotedblleft regularized\textquotedblright\ versions of $\sigma
_{n,A,A^{\prime }}^{R}(B_{n},B_{n}^{\prime })$ and $\sigma _{n,A,A^{\prime
}}(B_{n},B_{n}^{\prime })$. We also define%
\begin{equation*}
\tau _{n,A,A^{\prime },\bar{\varepsilon}}(B_{n},B_{n}^{\prime })\equiv \int_{%
\mathcal{T}}\int_{\mathcal{T}}\int_{B_{n,\tau _{1}}}\int_{\mathcal{U}%
}g_{2n,\tau _{1},\tau _{2},\bar{\varepsilon}}(x,u)w_{\tau
_{1},B_{n}}(x)w_{\tau _{2},B_{n}^{\prime }}(x+uh)dudxd\tau _{1}d\tau _{2}.
\end{equation*}%
Then it suffices for the lemma to show the following two statements. \newline
\textbf{Step 1: }As $n\rightarrow \infty ,$%
\begin{eqnarray*}
\sup_{P\in \mathcal{P}}\left\vert \sigma _{n,A,A^{\prime },\bar{\varepsilon}%
}^{R}(B_{n},B_{n}^{\prime })-\tau _{n,A,A^{\prime },\bar{\varepsilon}%
}(B_{n},B_{n}^{\prime })\right\vert &\rightarrow &0,\text{ and} \\
\sup_{P\in \mathcal{P}}\left\vert \tau _{n,A,A^{\prime },\bar{\varepsilon}%
}(B_{n},B_{n}^{\prime })-\sigma _{n,A,A^{\prime },\bar{\varepsilon}%
}(B_{n},B_{n}^{\prime })\right\vert &\rightarrow &0.
\end{eqnarray*}%
\newline
\textbf{Step 2: }For some $C>0$ that does not depend on $\bar{\varepsilon}\ $%
or $n,$ 
\begin{eqnarray*}
\sup_{P\in \mathcal{P}}|\sigma _{n,A,A^{\prime },\bar{\varepsilon}%
}^{R}(B_{n},B_{n}^{\prime })-\sigma _{n,A,A^{\prime
}}^{R}(B_{n},B_{n}^{\prime })| &\leq &C\sqrt{\bar{\varepsilon}},\text{ and}
\\
\sup_{P\in \mathcal{P}}\left\vert \sigma _{n,A,A^{\prime },\bar{\varepsilon}%
}(B_{n},B_{n}^{\prime })-\sigma _{n,A,A^{\prime }}(B_{n},B_{n}^{\prime
})\right\vert &\leq &C\sqrt{\bar{\varepsilon}}.
\end{eqnarray*}

Then the desired result follows by sending $n\rightarrow \infty $ and then $%
\bar{\varepsilon}\downarrow 0$, while chaining Steps 1 and 2. \newline
\textbf{Proof of Step 1:} We first focus on the first statement. For any
vector $\mathbf{v}=[\mathbf{v}_{1}^{\top },\mathbf{v}_{2}^{\top }]^{\top
}\in \mathbf{R}^{2J}$, we define%
\begin{eqnarray*}
\tilde{\Lambda}_{A,p,1}\left( \mathbf{v}\right) &\equiv &\Lambda
_{A,p}\left( \left[ \tilde{\Sigma}_{n,\tau _{1},\tau _{2},\bar{\varepsilon}%
}^{1/2}(x,u)\mathbf{v}\right] _{1}\right) , \\
\tilde{\Lambda}_{A^{\prime },p,2}\left( \mathbf{v}\right) &\equiv &\Lambda
_{A^{\prime },p}\left( \left[ \tilde{\Sigma}_{n,\tau _{1},\tau _{2},\bar{%
\varepsilon}}^{1/2}(x,u)\mathbf{v}\right] _{2}\right) ,
\end{eqnarray*}%
and%
\begin{equation}
C_{n,p}(\mathbf{v})\equiv \tilde{\Lambda}_{A,p,1}\left( \mathbf{v}\right) 
\tilde{\Lambda}_{A^{\prime },p,2}\left( \mathbf{v}\right) ,  \label{lambda_t}
\end{equation}%
where $[a]_{1}$ of a vector $a\in \mathbf{R}^{2J}$ indicates the vector of
the first $J$ entries of $a$, and $[a]_{2}$ the vector of the remaining $J$
entries of $a.$ By Theorem 9 of \citeasnoun[p. 208]{Magnus:Neudecker:01},%
\begin{eqnarray}
\lambda _{\min }\left( \tilde{\Sigma}_{n,\tau _{1},\tau _{2},\bar{\varepsilon%
}}(x,u)\right) &\geq &\lambda _{\min }\left( \left[ 
\begin{array}{c}
\Sigma _{n,\tau _{1},\tau _{2}}(x,0) \\ 
\Sigma _{n,\tau _{1},\tau _{2}}^{\top }(x,u)%
\end{array}%
\begin{array}{c}
\Sigma _{n,\tau _{1},\tau _{2}}(x,u) \\ 
\Sigma _{n,\tau _{2},\tau _{2}}(x+uh,0)%
\end{array}%
\right] \right)  \label{lb2} \\
&&+\lambda _{\min }\left( \left[ 
\begin{array}{c}
\bar{\varepsilon}I_{J} \\ 
0%
\end{array}%
\begin{array}{c}
0 \\ 
\bar{\varepsilon}I_{J}%
\end{array}%
\right] \right)  \notag \\
&\geq &\lambda _{\min }\left( \left[ 
\begin{array}{c}
\bar{\varepsilon}I_{J} \\ 
0%
\end{array}%
\begin{array}{c}
0 \\ 
\bar{\varepsilon}I_{J}%
\end{array}%
\right] \right) =\bar{\varepsilon}.  \notag
\end{eqnarray}%
Let $q_{n,\tau ,j}(x;\eta _{1j})\equiv p_{n,\tau ,j}(x)+\eta _{1j},$ where 
\begin{equation*}
p_{n,\tau ,j}(x)\equiv \frac{1}{\sqrt{h^{d}}}\sum_{1\leq i\leq N_{1}}\left\{
\beta _{n,x,\tau ,j}\left( Y_{ij},\frac{X_{i}-x}{h}\right) -\mathbf{E}\left[
\beta _{n,x,\tau ,j}\left( Y_{ij},\frac{X_{i}-x}{h}\right) \right] \right\} ,
\end{equation*}%
$\eta _{1j}$ is the $j$-th entry of $\eta _{1}$, and $N_{1}$ is a Poisson
random variable with mean $1$ and $((\eta _{1j})_{j\in \mathbb{N}%
_{J}},N_{1}) $ is independent of $\{(Y_{i}^{\top },X_{i}^{\top
})\}_{i=1}^{\infty }.$ Let $p_{n,\tau }(x)$ be the column vector of entries $%
p_{n,\tau ,j}(x)$ with $j$ running in the set $\mathbb{N}_{J}$. Let $%
[p_{n,\tau _{1}}^{(i)}(x),p_{n,\tau _{2}}^{(i)}(x+uh)]$ be i.i.d. copies of $%
[p_{n,\tau _{1}}(x),p_{n,\tau _{2}}(x+uh)]$ and $\eta _{1}^{(i)}$ and $\eta
_{2}^{(i)}$ be also i.i.d. copies of $\eta _{1}$ and $\eta _{2}$. Define 
\begin{equation*}
q_{n,\tau ,1}^{(i)}(x)\equiv p_{n,\tau }^{(i)}(x)+\eta _{1}^{(i)}\text{ and }%
q_{n,\tau ,2}^{(i)}(x+uh)\equiv p_{n,\tau }^{(i)}(x+uh)+\eta _{2}^{(i)}.
\end{equation*}%
Note that%
\begin{equation*}
\frac{1}{\sqrt{n}}\sum_{i=1}^{n}\left[ 
\begin{array}{c}
q_{n,\tau _{1},1}^{(i)}(x) \\ 
q_{n,\tau _{2},2}^{(i)}(x+uh)%
\end{array}%
\right] =\frac{1}{\sqrt{n}}\sum_{i=1}^{n}\left[ 
\begin{array}{c}
p_{n,\tau _{1}}^{(i)}(x) \\ 
p_{n,\tau _{2}}^{(i)}(x+uh)%
\end{array}%
\right] +\frac{1}{\sqrt{n}}\sum_{i=1}^{n}\left[ 
\begin{array}{c}
\eta _{1}^{(i)} \\ 
\eta _{2}^{(i)}%
\end{array}%
\right] .
\end{equation*}%
The last sum has the same distribution as $[\eta _{1}^{\top },\eta
_{2}^{\top }]^{\top }$ and the leading sum on the right-hand side has the
same distribution as that of $[\mathbf{z}_{N,\tau _{1}}^{\top }(x),\mathbf{z}%
_{N,\tau _{2}}^{\top }(x+uh)]^{\top }$. Therefore, we conclude that%
\begin{equation*}
\mathbf{\xi }_{N,\tau _{1},\tau _{2}}(x,u;\eta _{1},\eta _{2})\overset{d}{=}%
\frac{1}{\sqrt{n}}\sum_{i=1}^{n}\tilde{W}_{n,\tau _{1},\tau _{2}}^{(i)}(x,u),
\end{equation*}%
where%
\begin{equation*}
\tilde{W}_{n,\tau _{1},\tau _{2}}^{(i)}(x,u)\equiv \tilde{\Sigma}_{n,\tau
_{1},\tau _{2},\bar{\varepsilon}}^{-1/2}(x,u)\left[ 
\begin{array}{c}
q_{n,\tau _{1},1}^{(i)}(x) \\ 
q_{n,\tau _{2},2}^{(i)}(x+uh)%
\end{array}%
\right] .
\end{equation*}%
Now we invoke the Berry-Esseen-type bound of 
\citeasnoun[Theorem
1]{Sweeting:77} to prove Step 1. By Lemma B5, we deduce that%
\begin{equation}
\sup_{(x,\tau )\in \mathcal{S}}\sup_{P\in \mathcal{P}}\mathbf{E}||q_{n,\tau
,1}^{(i)}(x)||^{3}\leq Ch^{-d/2},  \label{bd35}
\end{equation}%
for some $C>0$. Also, recall the definition of $\rho _{n,\tau _{1},\tau
_{1},j,j}(x,0)$ in (\ref{rho}) and note that%
\begin{eqnarray}
&&\sup_{\tau \in \mathcal{T}}\sup_{(x,u)\in \mathcal{S}_{\tau }(\varepsilon
)\times \mathcal{U}}\sup_{P\in \mathcal{P}}tr\left( \tilde{\Sigma}_{n,\tau
_{1},\tau _{2},\bar{\varepsilon}}(x,u)\right)  \label{trace} \\
&\leq &\sup_{\tau \in \mathcal{T},x\in \mathcal{S}_{\tau }(\varepsilon
)}\sup_{P\in \mathcal{P}}\sum_{j\in J}\left( \rho _{n,\tau _{1},\tau
_{1},j,j}(x,0)+\rho _{n,\tau _{2},\tau _{2},j,j}(x,0)+2\bar{\varepsilon}%
\right) \leq C,  \notag
\end{eqnarray}%
for some $C>0$ that depends only on $J$ and $\bar{\varepsilon}$ by Lemma B4.
Observe that by the definition of $C_{n,p}$ in (\ref{lambda_t}), and (\ref%
{trace}),%
\begin{equation*}
\sup_{\mathbf{v}\in \mathbf{R}^{2J}}\frac{\left\vert C_{n,p}(\mathbf{v}%
)-C_{n,p}(0)\right\vert }{1+||\mathbf{v}||^{2p+2}\min \left\{ ||\mathbf{v}%
||,1\right\} }\leq C.
\end{equation*}%
We find that for each $u\in \mathcal{U},$ $||\tilde{W}_{n,\tau _{1},\tau
_{2}}^{(i)}(x,u)||^{2}$ is equal to%
\begin{eqnarray}
&&tr\left( \tilde{\Sigma}_{n,\tau _{1},\tau _{2},\bar{\varepsilon}%
}^{-1/2}(x,u)\left[ 
\begin{array}{c}
q_{n,\tau _{1},1}^{(i)}(x) \\ 
q_{n,\tau _{1},2}^{(i)}(x+uh)%
\end{array}%
\right] \left[ 
\begin{array}{c}
q_{n,\tau _{2},1}^{(i)}(x) \\ 
q_{n,\tau _{2},2}^{(i)}(x+uh)%
\end{array}%
\right] ^{\top }\tilde{\Sigma}_{n,\tau _{1},\tau _{2},\bar{\varepsilon}%
}^{-1/2}(x,u)\right)  \label{der5} \\
&\leq &\lambda _{\max }\left( \tilde{\Sigma}_{n,\tau _{1},\tau _{2},\bar{%
\varepsilon}}^{-1}(x,u)\right) tr\left( \left[ 
\begin{array}{c}
q_{n,\tau _{1},1}^{(i)}(x) \\ 
q_{n,\tau _{1},2}^{(i)}(x+uh)%
\end{array}%
\right] \left[ 
\begin{array}{c}
q_{n,\tau _{2},1}^{(i)}(x) \\ 
q_{n,\tau _{2},2}^{(i)}(x+uh)%
\end{array}%
\right] ^{\top }\right) .  \notag
\end{eqnarray}%
Therefore, $\mathbf{E}||\tilde{W}_{n,\tau _{1},\tau _{2}}^{(i)}(x,u)||^{3}$
is bounded by%
\begin{equation*}
\lambda _{\max }^{3/2}\left( \tilde{\Sigma}_{n,\tau _{1},\tau _{2},\bar{%
\varepsilon}}^{-1}(x,u)\right) \mathbf{E}\left\Vert \left[ 
\begin{array}{c}
q_{n,\tau _{1},1}^{(i)}(x) \\ 
q_{n,\tau _{2},2}^{(i)}(x+uh)%
\end{array}%
\right] \right\Vert ^{3}.
\end{equation*}%
From (\ref{lb2}),%
\begin{equation*}
\lambda _{\max }^{3/2}(\tilde{\Sigma}_{n,\tau _{1},\tau _{2},\bar{\varepsilon%
}}^{-1}(x,u))=\lambda _{\min }^{-3/2}(\tilde{\Sigma}_{n,\tau _{1},\tau _{2},%
\bar{\varepsilon}}(x,u))\leq \bar{\varepsilon}^{-3/2}.
\end{equation*}%
Therefore, we conclude that%
\begin{eqnarray*}
&&\sup_{\tau \in \mathcal{T}}\sup_{(x,u)\in \mathcal{S}_{\tau }(\varepsilon
)\times \mathcal{U}}\sup_{P\in \mathcal{P}}\mathbf{E}||\tilde{W}_{n,\tau
_{1},\tau _{2}}^{(i)}(x,u)||^{3} \\
&\leq &C_{1}\bar{\varepsilon}^{-3/2}\cdot \sup_{\tau \in \mathcal{T},x\in 
\mathcal{S}_{\tau }(\varepsilon )}\sup_{P\in \mathcal{P}}\mathbf{E}%
||q_{n,\tau _{1},1}^{(i)}(x)||^{3} \\
&&+C_{1}\bar{\varepsilon}^{-3/2}\cdot \sup_{\tau \in \mathcal{T}%
}\sup_{(x,u)\in \mathcal{S}_{\tau }(\varepsilon )\times \mathcal{U}%
}\sup_{P\in \mathcal{P}}\mathbf{E}||q_{n,\tau _{2},2}^{(i)}(x+uh)||^{3}\leq
C_{2}\bar{\varepsilon}^{-3/2}/\sqrt{h^{d}},
\end{eqnarray*}%
where $C_{1}>0$ and $C_{2}>0$ are constants depending only on $J$, and the
last bound follows by (\ref{bd35}). Therefore, by Theorem 1 of %
\citeasnoun{Sweeting:77}, we find that with $\bar{\varepsilon}>0$ fixed and $%
n\rightarrow \infty $,%
\begin{eqnarray}
&&\sup_{\tau \in \mathcal{T}}\sup_{(x,u)\in \mathcal{S}_{\tau }(\varepsilon
)\times \mathcal{U}}\sup_{P\in \mathcal{P}}\left\vert \mathbf{E}%
C_{n,p}\left( \frac{1}{\sqrt{n}}\sum_{i=1}^{n}\tilde{W}_{n,\tau _{1},\tau
_{2}}^{(i)}(x,u)\right) -\mathbf{E}C_{n,p}\left( \mathbb{\tilde{Z}}_{n,\tau
_{1},\tau _{2}}(x,u)\right) \right\vert  \label{cv4} \\
&=&O\left( n^{-1/2}h^{-d/2}\right) =o(1),  \notag
\end{eqnarray}%
where $\mathbb{\tilde{Z}}_{n,\tau _{1},\tau _{2}}(x,u)=[\mathbb{Z}_{n,\tau
_{1},\tau _{2},\bar{\varepsilon}}(x)^{\top },\mathbb{Z}_{n,\tau _{1},\tau
_{2},\bar{\varepsilon}}(x+uh)^{\top }]^{\top }.$

Using similar arguments, we also deduce that for $j=1,2,$ and $A\subset 
\mathbb{N}_{J}$,%
\begin{equation*}
\sup_{\tau \in \mathcal{T}}\sup_{(x,u)\in \mathcal{S}_{\tau }(\varepsilon
)\times \mathcal{U}}\sup_{P\in \mathcal{P}}\left\vert \mathbf{E}\tilde{%
\Lambda}_{A,p,j}\left( \frac{1}{\sqrt{n}}\sum_{i=1}^{n}\tilde{W}_{n,\tau
_{1},\tau _{2}}^{(i)}(x,u)\right) -\mathbf{E}\tilde{\Lambda}_{A,p,j}\left( 
\mathbb{\tilde{Z}}_{n,\tau _{1},\tau _{2}}(x,u)\right) \right\vert =o(1).
\end{equation*}

For some $C>0,$%
\begin{eqnarray*}
&&\sup_{\tau \in \mathcal{T}}\sup_{(x,u)\in \mathcal{S}_{\tau }(\varepsilon
)\times \mathcal{U}}\sup_{P\in \mathcal{P}}Cov\left( \Lambda _{p}(\mathbb{Z}%
_{n,\tau _{1},\tau _{2},\bar{\varepsilon}}(x)),\Lambda _{p}(\mathbb{Z}%
_{n,\tau _{1},\tau _{2},\bar{\varepsilon}}(x+uh))\right) \\
&\leq &\sup_{\tau \in \mathcal{T}}\sup_{(x,u)\in \mathcal{S}_{\tau
}(\varepsilon )\times \mathcal{U}}\sup_{P\in \mathcal{P}}\sqrt{\mathbf{E}%
\left\Vert \mathbb{Z}_{n,\tau _{1},\tau _{2},\bar{\varepsilon}%
}(x)\right\Vert ^{2p}}\sqrt{\mathbf{E}\left\Vert \mathbb{Z}_{n,\tau
_{1},\tau _{2},\bar{\varepsilon}}(x+uh)\right\Vert ^{2p}}<C.
\end{eqnarray*}%
The last inequality follows because $\mathbb{Z}_{n,\tau _{1},\tau _{2},\bar{%
\varepsilon}}(x)$ and $\mathbb{Z}_{n,\tau _{1},\tau _{2},\bar{\varepsilon}%
}(x+uh)$ are centered normal random vectors with a covariance matrix that
has a finite Euclidean norm by Lemma B4. Hence we apply the Dominated
Convergence Theorem to deduce the first statement of Step 1 from (\ref{cv4}).%
\newline
We turn to the second statement of Step 1. The statement immediately follows
because for each $u\in \mathcal{U},$ the covariance matrix of $\tilde{\Sigma}%
_{n,\tau _{1},\tau _{2},\bar{\varepsilon}}^{-1/2}(x,u)\mathbf{\xi }_{n,\tau
_{1},\tau _{2},\bar{\varepsilon}}(x,u)$ is equal to the covariance matrix of 
$[\mathbb{W}_{n,\tau _{1},\tau _{2},\bar{\varepsilon}}^{(1)\top }(x,u),%
\mathbb{W}_{n,\tau _{1},\tau _{2},\bar{\varepsilon}}^{(2)\top }(x,u)]^{\top
}and$%
\begin{equation*}
\left\vert w_{\tau _{1},B_{n}}(x)w_{\tau _{2},B_{n}^{\prime }}(x+uh)-w_{\tau
_{1},B_{n}}(x)w_{\tau _{2},B_{n}^{\prime }}(x)\right\vert \rightarrow 0,
\end{equation*}%
as $n\rightarrow \infty $, for each $u\in \mathcal{U}$, and for almost every 
$x\in \mathcal{X}$ (with respect to Lebesgue measure.)\newline
\textbf{Proof of Step 2: }We consider the first statement. First, we write%
\begin{eqnarray}
&&\left\vert \left( \sigma _{n,A,A^{\prime },\bar{\varepsilon}%
}^{R}(B_{n},B_{n}^{\prime })\right) ^{2}-\left( \sigma _{n,A,A^{\prime
}}^{R}(B_{n},B_{n}^{\prime })\right) ^{2}\right\vert  \label{ineq44} \\
&\leq &\int_{\mathcal{T}}\int_{\mathcal{T}}\int_{B_{n}}\int_{\mathcal{U}%
}\left\vert \Delta _{n,\tau _{1},\tau _{2},1}^{\eta }(x,u)\right\vert
w_{\tau _{1},B_{n}}(x)w_{\tau _{2},B_{n}^{\prime }}(x+uh)dudxd\tau _{1}d\tau
_{2}  \notag \\
&&+\int_{\mathcal{T}}\int_{\mathcal{T}}\int_{B_{n}}\int_{\mathcal{U}%
}\left\vert \Delta _{n,\tau _{1},\tau _{2},2}^{\eta }(x,u)\right\vert
w_{\tau _{1},B_{n}}(x)w_{\tau _{2},B_{n}^{\prime }}(x+uh)dudxd\tau _{1}d\tau
_{2},  \notag
\end{eqnarray}%
where%
\begin{eqnarray*}
\Delta _{n,\tau _{1},\tau _{2},1}^{\eta }(x,u) &=&\mathbf{E}\Lambda _{A,p}(%
\sqrt{nh^{d}}\mathbf{z}_{N,\tau _{1}}(x))\mathbf{E}\Lambda _{A^{\prime },p}(%
\sqrt{nh^{d}}\mathbf{z}_{N,\tau _{2}}(x+uh)) \\
&&-\mathbf{E}\Lambda _{A,p}(\sqrt{nh^{d}}\mathbf{z}_{N,\tau _{1}}(x;\eta
_{1}))\mathbf{E}\Lambda _{A^{\prime },p}(\sqrt{nh^{d}}\mathbf{z}_{N,\tau
_{2}}(x+uh;\eta _{2})),
\end{eqnarray*}%
and%
\begin{eqnarray*}
\Delta _{n,\tau _{1},\tau _{2},2}^{\eta }(x,u) &=&\mathbf{E}\Lambda _{A,p}(%
\sqrt{nh^{d}}\mathbf{z}_{N,\tau _{1}}(x))\Lambda _{A^{\prime },p}(\sqrt{%
nh^{d}}\mathbf{z}_{N,\tau _{2}}(x+uh)) \\
&&-\mathbf{E}\Lambda _{A,p}(\sqrt{nh^{d}}\mathbf{z}_{N,\tau _{1}}(x;\eta
_{1}))\Lambda _{A^{\prime },p}(\sqrt{nh^{d}}\mathbf{z}_{N,\tau
_{2}}(x+uh;\eta _{2})).
\end{eqnarray*}%
By H\"{o}lder inequality, for $C>0$ that depends only on $P$,%
\begin{equation*}
\left\vert \Delta _{n,\tau _{1},\tau _{2},2}^{\eta }(x,u)\right\vert \leq
CA_{1n}(x,u)+CA_{2n}(x,u)\text{,}
\end{equation*}%
where, if $p=1$ then we set $s=2$, and $q=1$, and if $p>1$, we set $%
s=(p+1)/(p-1)$ and $q=(1-1/s)^{-1},$%
\begin{eqnarray*}
A_{1n}(x,u) &=&(nh^{d})^{p}\left\{ \mathbf{E}\left\Vert \mathbf{z}_{N,\tau
_{1}}(x)-\mathbf{z}_{N,\tau _{1}}(x;\eta _{1})\right\Vert ^{2q}\right\} ^{%
\frac{1}{2q}} \\
&&\times \left( \left\{ \mathbf{E}\left\Vert \mathbf{z}_{N,\tau
_{1}}(x)\right\Vert ^{2s(p-1)}\right\} ^{\frac{1}{2s}}+\left\{ \mathbf{E}%
\left\Vert \mathbf{z}_{N,\tau _{1}}(x;\eta _{1})\right\Vert
^{2s(p-1)}\right\} ^{\frac{1}{2s}}\right) \\
&&\times \sqrt{\mathbf{E}\left( \left\Vert \mathbf{z}_{N,\tau
_{2}}(x+uh)\right\Vert ^{2p}\right) },
\end{eqnarray*}%
and%
\begin{eqnarray*}
A_{2n}(x,u) &=&(nh^{d})^{p}\left\{ \mathbf{E}\left\Vert \mathbf{z}_{N,\tau
_{2}}(x+uh)-\mathbf{z}_{N,\tau _{2}}(x+uh;\eta _{2})\right\Vert
^{2q}\right\} ^{\frac{1}{2q}} \\
&&\times \left( \left\{ \mathbf{E}\left\Vert \mathbf{z}_{N,\tau
_{2}}(x+uh)\right\Vert ^{2s(p-1)}\right\} ^{\frac{1}{2s}}+\left\{ \mathbf{E}%
\left\Vert \mathbf{z}_{N,\tau _{2}}(x+uh;\eta _{2})\right\Vert
^{2s(p-1)}\right\} ^{\frac{1}{2s}}\right) \\
&&\times \sqrt{\mathbf{E}\left( \left\Vert \mathbf{z}_{N,\tau _{1}}(x;\eta
_{1})\right\Vert ^{2p}\right) }.
\end{eqnarray*}%
Now,%
\begin{equation*}
\sup_{(x,\tau )\in \mathcal{S}}\sup_{P\in \mathcal{P}}\mathbf{E}\left\Vert 
\sqrt{nh^{d}}\{\mathbf{z}_{N,\tau }(x)-\mathbf{z}_{N,\tau }(x;\eta
_{1})\}\right\Vert ^{2q}=\mathbf{E}\left\Vert \sqrt{\bar{\varepsilon}}%
\mathbb{Z}\right\Vert ^{2q}=C\bar{\varepsilon}^{q}\text{,}
\end{equation*}%
where $\mathbb{Z}\in \mathbf{R}^{J}$ is a centered normal random vector with
identity covariance matrix $I_{J}$. Also, we deduce that for some $C>0,$%
\begin{equation*}
\sup_{(x,\tau )\in \mathcal{S}}\sup_{P\in \mathcal{P}}\mathbf{E}\left\Vert 
\sqrt{nh^{d}}\mathbf{z}_{N,\tau }(x)\right\Vert ^{2s(p-1)}\leq C,
\end{equation*}%
by (\ref{ineqs}) of Lemma B5 and by the fact that $2s(p-1)=2(p+1)\leq M$.
Similarly, from some large $n$ on,%
\begin{eqnarray*}
&&\sup_{(x,\tau )\in \mathcal{S}}\sup_{P\in \mathcal{P}}\mathbf{E}\left(
\left\Vert \sqrt{nh^{d}}\mathbf{z}_{N,\tau }(x+uh;\eta _{2})\right\Vert
^{2p}\right) \\
&\leq &\sup_{\tau \in \mathcal{T},x\in \mathcal{S}_{\tau }(\varepsilon
)}\sup_{P\in \mathcal{P}}\mathbf{E}\left( \left\Vert \sqrt{nh^{d}}\mathbf{z}%
_{N,\tau }(x;\eta _{2})\right\Vert ^{2p}\right) <C,
\end{eqnarray*}%
for some $C>0$. Thus we conclude that for some $C>0$,%
\begin{equation*}
\sup_{(\tau _{1},\tau _{2})\in \mathcal{T}\times \mathcal{T}}\sup_{(x,u)\in (%
\mathcal{S}_{\tau _{1}}(\varepsilon )\cup \mathcal{S}_{\tau
_{2}}(\varepsilon ))\times \mathcal{U}}\sup_{P\in \mathcal{P}}\left(
A_{1n}(x,u)+A_{2n}(x,u)\right) \leq C\sqrt{\bar{\varepsilon}},
\end{equation*}%
and that for some $C>0,$%
\begin{equation*}
\sup_{(\tau _{1},\tau _{2})\in \mathcal{T}\times \mathcal{T}}\sup_{(x,u)\in (%
\mathcal{S}_{\tau _{1}}(\varepsilon )\cup \mathcal{S}_{\tau
_{2}}(\varepsilon ))\times \mathcal{U}}\sup_{P\in \mathcal{P}}\left\vert
\Delta _{n,\tau _{1},\tau _{2},2}^{\eta }(x,u)\right\vert \leq C\sqrt{\bar{%
\varepsilon}}.
\end{equation*}%
Using similar arguments, we also find that for some $C>0,$%
\begin{equation*}
\sup_{(\tau _{1},\tau _{2})\in \mathcal{T}\times \mathcal{T}}\sup_{(x,u)\in (%
\mathcal{S}_{\tau _{1}}(\varepsilon )\cup \mathcal{S}_{\tau
_{2}}(\varepsilon ))\times \mathcal{U}}\sup_{P\in \mathcal{P}}\left\vert
\Delta _{n,\tau _{1},\tau _{2},1}^{\eta }(x,u)\right\vert \leq C\sqrt{\bar{%
\varepsilon}}.
\end{equation*}%
Therefore, there exist $C_{1}>0$ and $C_{2}>0$ such that from some large $n$
on,%
\begin{eqnarray*}
&&\sup_{P\in \mathcal{P}}\left\vert \sigma _{n,A,A^{\prime },\bar{\varepsilon%
}}^{2}(B_{n},B_{n}^{\prime })-\sigma _{n,A,A^{\prime
}}^{2}(B_{n},B_{n}^{\prime })\right\vert \\
&\leq &C_{1}\sqrt{\bar{\varepsilon}}\int_{\mathcal{T}}\int_{\mathcal{T}%
}\int_{B_{n}}\int_{\mathcal{U}}w_{\tau _{1},B_{n}}(x)w_{\tau
_{2},B_{n}^{\prime }}(x+uh)dudxd\tau _{1}d\tau _{2}.
\end{eqnarray*}%
Since the last multiple integral is finite, we obtain the first statement of
Step 2.

We turn to the second statement of Step 2. Similarly as before, we write%
\begin{eqnarray*}
&&\left\vert \sigma _{n,A,A^{\prime },\bar{\varepsilon}}^{2}(B_{n},B_{n}^{%
\prime })-\sigma _{n,A,A^{\prime }}^{2}(B_{n},B_{n}^{\prime })\right\vert \\
&\leq &\int_{\mathcal{T}}\int_{\mathcal{T}}\int_{B_{n}}\int_{\mathcal{U}%
}\left\vert \Delta _{1,\tau _{1},\tau _{2}}^{\eta }(x,u)\right\vert w_{\tau
_{1},B_{n}}(x)w_{\tau _{2},B_{n}^{\prime }}(x+uh)dudxd\tau _{1}d\tau _{2} \\
&&+\int_{\mathcal{T}}\int_{\mathcal{T}}\int_{B_{n}}\int_{\mathcal{U}%
}\left\vert \Delta _{2,\tau _{1},\tau _{2}}^{\eta }(x,u)\right\vert w_{\tau
_{1},B_{n}}(x)w_{\tau _{2},B_{n}^{\prime }}(x+uh)dudxd\tau _{1}d\tau _{2},
\end{eqnarray*}%
where%
\begin{eqnarray*}
\Delta _{1,\tau _{1},\tau _{2}}^{\eta }(x,u) &=&\mathbf{E}\Lambda _{A,p}(%
\mathbb{W}_{n,\tau _{1},\tau _{2}}^{(1)}(x,u))\mathbf{E}\Lambda _{A^{\prime
},p}(\mathbb{W}_{n,\tau _{1},\tau _{2}}^{(2)}(x,u)) \\
&&-\mathbf{E}\Lambda _{A,p}(\mathbb{W}_{n,\tau _{1},\tau _{2},\bar{%
\varepsilon}}^{(1)}(x,u))\mathbf{E}\Lambda _{A^{\prime },p}(\mathbb{W}%
_{n,\tau _{1},\tau _{2},\bar{\varepsilon}}^{(2)}(x,u)),
\end{eqnarray*}%
and%
\begin{eqnarray*}
\Delta _{2,\tau _{1},\tau _{2}}^{\eta }(x,u) &=&\mathbf{E}\Lambda _{A,p}(%
\mathbb{W}_{n,\tau _{1},\tau _{2}}^{(1)}(x,u))\Lambda _{A^{\prime },p}(%
\mathbb{W}_{n,\tau _{1},\tau _{2}}^{(2)}(x,u)) \\
&&-\mathbf{E}\Lambda _{A,p}(\mathbb{W}_{n,\tau _{1},\tau _{2},\bar{%
\varepsilon}}^{(1)}(x,u))\Lambda _{A^{\prime },p}(\mathbb{W}_{n,\tau
_{1},\tau _{2},\bar{\varepsilon}}^{(2)}(x,u)).
\end{eqnarray*}%
Now, observe that for $C>0$ that does not depend on $\bar{\varepsilon},$ we
have by Lemma B1(i),%
\begin{equation*}
\sup_{(x,u)\in (\mathcal{S}_{\tau _{1}}(\varepsilon )\cup \mathcal{S}_{\tau
_{2}}(\varepsilon ))\times \mathcal{U}}\sup_{P\in \mathcal{P}}\left\Vert 
\tilde{\Sigma}_{n,\tau _{1},\tau _{2},\bar{\varepsilon}}^{1/2}(x,u)-\left[ 
\begin{array}{c}
\Sigma _{n,\tau _{1}}(x,0) \\ 
\Sigma _{n,\tau _{1},\tau _{2}}(x,u)%
\end{array}%
\begin{array}{c}
\Sigma _{n,\tau _{1},\tau _{2}}(x,u) \\ 
\Sigma _{n,\tau _{2}}(x+uh)%
\end{array}%
\right] ^{1/2}\right\Vert \leq C\sqrt{\bar{\varepsilon}}\text{.}
\end{equation*}%
Using this, recalling the definitions of $\mathbb{W}_{n,\tau _{1},\tau
_{2}}^{(1)}(x,u)$ and $\mathbb{W}_{n,\tau _{1},\tau _{2}}^{(2)}(x,u)$ in (%
\ref{Ws}), and following the previous arguments, we obtain the second
statement of Step 2.
\end{proof}

\begin{LemmaC}
\label{lem-c7} \textit{Suppose that for some small }$\nu _{1}>0$, $%
n^{-1/2}h^{-d-\nu _{1}}\rightarrow 0$\textit{, as }$n\rightarrow \infty $%
\textit{\ and the conditions of Lemma B6 hold. Then there exists} $C>0$%
\textit{\ such that for any sequence of Borel sets }$B_{n}\subset \mathcal{S}%
,$ \textit{and }$A\subset \mathbb{N}_{J},$ \textit{from some large }$n$%
\textit{\ on,}%
\begin{eqnarray*}
&&\sup_{P\in \mathcal{P}}\mathbf{E}\left[ \left\vert
h^{-d/2}\int_{B_{n}}\left\{ \Lambda _{A,p}(\sqrt{nh^{d}}\mathbf{z}_{n,\tau
}(x))-\mathbf{E}\left[ \Lambda _{A,p}(\sqrt{nh^{d}}\mathbf{z}_{N,\tau }(x))%
\right] \right\} dQ(x,\tau )\right\vert \right] \\
&\leq &C\sqrt{Q(B_{n})}.
\end{eqnarray*}
\end{LemmaC}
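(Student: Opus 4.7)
The plan is to apply Jensen's inequality to reduce the problem to a variance bound, then exploit the compact support of $\beta_{n,x,\tau,j}(y,\cdot)$ (Assumption A\ref{assumption-A2}) to extract the $\sqrt{Q(B_n)}$ rate; a separate pointwise bias estimate is needed because the centering uses the Poissonized mean rather than the natural one. Write $Y_{n,\tau}(x) \equiv \Lambda_{A,p}(\sqrt{nh^d}\,\mathbf{z}_{n,\tau}(x))$ and $Y_{N,\tau}(x) \equiv \Lambda_{A,p}(\sqrt{nh^d}\,\mathbf{z}_{N,\tau}(x))$, couple them using the same infinite i.i.d.\ sequence $\{(Y_i,X_i)\}_{i\ge 1}$ truncated at $n$ and at $N$, and invoke the triangle inequality to reduce the claim to bounding
\begin{align*}
V_n \equiv \mathbf{E}\left|h^{-d/2}\int_{B_n}(Y_{N,\tau}(x) - \mathbf{E}\,Y_{N,\tau}(x))\,dQ\right|
\quad \text{and} \quad
D_n \equiv \mathbf{E}\left|h^{-d/2}\int_{B_n}(Y_{n,\tau}(x) - Y_{N,\tau}(x))\,dQ\right|.
\end{align*}

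For $V_n$, Jensen's inequality gives $V_n \le h^{-d/2}\sqrt{\operatorname{Var}(\int_{B_n}Y_{N,\tau}(x)\,dQ)}$. The crucial observation is that by Assumption A\ref{assumption-A2}, $\beta_{n,x_j,\tau_j}(Y_i,(X_i-x_j)/h)$ vanishes off $\{X_i \in x_j + h\mathcal{K}_0\}$, so each summand contributes to the covariance between $Y_{N,\tau_1}(x_1)$ and $Y_{N,\tau_2}(x_2)$ only when $x_2 - x_1$ lies in a set of order $h\mathcal{U}$. Expanding the variance and changing variable $u = (x_2-x_1)/h$ (so $dx_2 = h^d\,du$) cancels the $h^{-d}$ prefactor and yields
\begin{align*}
V_n^2 \le \int_{B_n}\int_{\mathcal{T}}\int_{\mathcal{U}}\bigl|\operatorname{Cov}\bigl(Y_{N,\tau_1}(x_1),\,Y_{N,\tau_2}(x_1+hu)\bigr)\bigr|\,du\,d\tau_2\,dQ(x_1,\tau_1).
\end{align*}
By Cauchy--Schwarz the covariance is bounded by $\mathbf{E}\|\sqrt{nh^d}\,\mathbf{z}_{N,\tau}(x)\|^{2p}$, which Lemma B\ref{lem-c5} bounds uniformly in $(x,\tau)\in\mathcal{S}$ and $P\in\mathcal{P}$ (the hypothesis $n^{-1/2}h^{-d-\nu_1}\to 0$ supplies the limsup condition on $n^{-(m/2)+1}h^{d(1-m/2)}$ for $m=2p+2 \le M$). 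Compactness of $\mathcal{T}$ and $\mathcal{U}$ then gives $V_n \le C\sqrt{Q(B_n)}$.

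For $D_n$, I would prove the pointwise bound $|\mathbf{E}\,Y_{n,\tau}(x) - \mathbf{E}\,Y_{N,\tau}(x)| = O(h^{d/2})$ uniformly in $(x,\tau)\in\mathcal{S}$ and $P\in\mathcal{P}$; given that, $D_n \le h^{-d/2}\cdot C h^{d/2}\cdot Q(B_n) \le C\sqrt{Q(\mathcal{S})}\sqrt{Q(B_n)}$. Conditioning on $N$,
$\sqrt{nh^d}\,(\mathbf{z}_{N,\tau}(x) - \mathbf{z}_{n,\tau}(x))$
equals $(nh^d)^{-1/2}\sum_{i\in I(N,n)}\beta_{n,x,\tau}(Y_i,(X_i-x)/h) - (N-n)(nh^d)^{-1/2}\mathbf{E}\,\beta_{n,x,\tau}$, where $I(N,n)$ is the symmetric difference of $\{1,\dots,N\}$ and $\{1,\dots,n\}$. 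Combining the local polynomial-growth inequality $|\Lambda_{A,p}(v) - \Lambda_{A,p}(v')| \le C(\|v\|^{p-1}+\|v'\|^{p-1})\|v-v'\|$ with H\"older's inequality, the moment bounds of Lemmas B\ref{lem-c4}--B\ref{lem-c5}, and the Poisson fluctuation estimates $\mathbf{E}|N-n|^k \le C_k n^{k/2}$, should deliver the $O(h^{d/2})$ rate. The technical core, and the main obstacle, is precisely this bias estimate: naively, $\operatorname{Var}(\sqrt{nh^d}(\mathbf{z}_{N,\tau}-\mathbf{z}_{n,\tau}))$ is of order one, so the required $h^{d/2}$ gain must come from cancellation, after averaging over both the sample and the independent $N$, between the two pieces above, and this cancellation must be shown uniformly in $P\in\mathcal{P}$ using only Assumption A\ref{assumption-A6}(i) and the bandwidth condition B\ref{assumption-B4}.
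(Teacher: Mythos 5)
Your decomposition into $V_n$ and $D_n$ is exactly the paper's Step 2 and Step 1, and your treatment of $V_n$ --- Jensen, then the variance expansion in which Assumption A\ref{assumption-A2} confines the covariance to $x_2-x_1\in h\,\mathcal{U}$ so that the change of variables cancels the $h^{-d}$ prefactor --- is the paper's argument (their Step 2, which leans on the covariance computation of Lemma B\ref{lem-c6}). Two things in your $D_n$ discussion need fixing.

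First, a logical slip: you propose to prove the \emph{bias} bound $|\mathbf{E}\,Y_{n,\tau}(x)-\mathbf{E}\,Y_{N,\tau}(x)|=O(h^{d/2})$ and deduce $D_n\le h^{-d/2}\cdot Ch^{d/2}\cdot Q(B_n)$. That implication is false as stated: in $D_n$ the absolute value sits inside the outer expectation, so controlling the difference of the means does not control $\mathbf{E}\bigl|\int_{B_n}(Y_{n,\tau}-Y_{N,\tau})\,dQ\bigr|$. What you actually need --- and what the computation sketched in your last paragraph (Lipschitz-type inequality for $\Lambda_{A,p}$ plus H\"older plus the moment bounds) delivers --- is the $L^1$ bound $\sup_{(x,\tau),P}\mathbf{E}\,|Y_{n,\tau}(x)-Y_{N,\tau}(x)|=O(h^{d/2})$, after which $D_n\le CQ(B_n)\le C'\sqrt{Q(B_n)}$.

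Second, the "main obstacle" you identify is not an obstacle. Writing $\mathbf{z}_{n,\tau}(x)-\mathbf{z}_{N,\tau}(x)=\mathbf{v}_{n,\tau}(x)+\mathbf{s}_{n,\tau}(x)$ with $\mathbf{v}_{n,\tau}(x)=\frac{n-N}{n}\,h^{-d}\mathbf{E}\beta_{n,x,\tau}$ and $\mathbf{s}_{n,\tau}(x)=(nh^{d})^{-1}\sum_{i=N+1}^{n}\{\beta_{n,x,\tau}-\mathbf{E}\beta_{n,x,\tau}\}$, each piece is individually small, term by term and uniformly in $P$: $\mathbf{E}\|\sqrt{nh^{d}}\,\mathbf{v}_{n,\tau}(x)\|^{2}=O(h^{d})$ because $h^{-d}\mathbf{E}\beta_{n,x,\tau}=O(1)$ and $\mathbf{E}|(n-N)/n|^{2}=n^{-1}$, while $\mathbf{E}\|\sqrt{nh^{d}}\,\mathbf{s}_{n,\tau}(x)\|^{2}=O(n^{-1/2})$ because, conditional on $N$, it is a centered sum of $|N-n|$ terms each of variance $O(h^{d})$ and $\mathbf{E}|N-n|=O(\sqrt{n})$. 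So $\mathrm{Var}(\sqrt{nh^{d}}(\mathbf{z}_{N,\tau}-\mathbf{z}_{n,\tau}))$ is $O(h^{d}+n^{-1/2})$, not of order one; no cancellation between the two pieces is used or needed, and the residual $n^{-1/4}$ from the second piece is $o(h^{d/2})$ precisely because of the hypothesis $n^{-1/2}h^{-d-\nu_{1}}\to 0$. This is exactly what Claims 1 and 2 in the paper's proof of Lemma B\ref{lem-c7} establish (the paper additionally regularizes $\mathbf{s}_{n,\tau}$ with an independent Gaussian perturbation and uses a moment inequality for Poisson-stopped sums to make Claim 2 uniform over $\mathcal{P}$ --- a detail your sketch would also have to supply). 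With these two corrections your proposal coincides with the paper's proof.
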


\begin{rem}
The result is in the same spirit as Lemma 6.2 of Gin\'{e}, Mason, and
Zaitsev (2003). (Also see Lemma A8 of Lee, Song and Whang (2013).) However,
unlike these results, the location normalization here involves\ $\mathbf{E}%
[\Lambda _{A,p}(\sqrt{nh^{d}}\mathbf{z}_{N,\tau }(x))]$\ instead of $\mathbf{%
E}[\Lambda _{A,p}(\sqrt{nh^{d}}\mathbf{z}_{n,\tau }(x))]$. We can obtain the
same result with $\mathbf{E}[\Lambda _{A,p}(\sqrt{nh^{d}}\mathbf{z}_{N,\tau
}(x))]$ replaced by $\mathbf{E}[\Lambda _{A,p}(\sqrt{nh^{d}}\mathbf{z}%
_{n,\tau }(x))]$, but with a stronger bandwidth condition.
\end{rem}

Like Lemma B6, the result of Lemma B7 does not require that the quantities $%
\sqrt{nh^{d}}\mathbf{z}_{n,\tau }(x)$ and $\sqrt{nh^{d}}\mathbf{z}_{N,\tau
}(x)$ have a (pointwise in $x$) nondegenerate limit distribution.

\begin{proof}[Proof of Lemma B\protect\ref{lem-c7}]
As in the proof of Lemma A8 of \citeasnoun{LSW}, it suffices to show that
there exists $C>0$ such that $C$ does not depend on $n$ and for any Borel
set $B\subset \mathbf{R}$,\newline
\textbf{Step 1:}%
\begin{equation*}
\sup_{P\in \mathcal{P}}\mathbf{E}\left[ \left\vert
h^{-d/2}\int_{B_{n}}\left\{ \Lambda _{A,p}(\sqrt{nh^{d}}\mathbf{z}_{n,\tau
}(x))-\Lambda _{A,p}(\sqrt{nh^{d}}\mathbf{z}_{N,\tau }(x))\right\} dQ(x,\tau
)\right\vert \right] \leq CQ(B_{n}),\text{ and}
\end{equation*}%
\textbf{Step 2:}%
\begin{equation*}
\sup_{P\in \mathcal{P}}\mathbf{E}\left[ \left\vert
h^{-d/2}\int_{B_{n}}\left\{ \Lambda _{A,p}(\sqrt{nh^{d}}\mathbf{z}_{N,\tau
}(x))-\mathbf{E}\left[ \Lambda _{A,p}(\sqrt{nh^{d}}\mathbf{z}_{N,\tau }(x))%
\right] \right\} dQ(x,\tau )\right\vert \right] \leq C\sqrt{Q(B_{n})}.
\end{equation*}

By chaining Steps 1 and 2, we obtain the desired result.\newline
\textbf{Proof of Step 1: }Similarly as in (2.13) of \citeasnoun{Horvath:91},
we first write%
\begin{equation}
\mathbf{z}_{n,\tau }(x)=\mathbf{z}_{N,\tau }(x)+\mathbf{v}_{n,\tau }(x)+%
\mathbf{s}_{n,\tau }(x),  \label{dec55}
\end{equation}%
where, for $\beta _{n,x,\tau }(Y_{i},(X_{i}-x)/h)$ defined prior to Lemma B5,%
\begin{eqnarray*}
\mathbf{v}_{n,\tau }(x) &\equiv &\left( \frac{n-N}{n}\right) \cdot \frac{1}{%
h^{d}}\mathbf{E}\left[ \beta _{n,x,\tau }\left( Y_{i},\frac{X_{i}-x}{h}%
\right) \right] \text{ and} \\
\mathbf{s}_{n,\tau }(x) &\equiv &\frac{1}{nh^{d}}\sum_{i=N+1}^{n}\left\{
\beta _{n,x,\tau }\left( Y_{i},\frac{X_{i}-x}{h}\right) -\mathbf{E}\left[
\beta _{n,x,\tau }\left( Y_{i},\frac{X_{i}-x}{h}\right) \right] \right\} ,
\end{eqnarray*}%
and we write $N=n,\ \sum_{i=N+1}^{n}=0,$ and if $N>n$, $\sum_{i=N+1}^{n}=-%
\sum_{i=n+1}^{N}$.

Using (\ref{dec55}), we deduce that for some $C_{1},C_{2}>0$ that depend
only on $P$,%
\begin{eqnarray}
&&\int_{B_{n}}\left\vert \Lambda _{A,p}\left( \mathbf{z}_{n,\tau }(x)\right)
-\Lambda _{A,p}\left( \mathbf{z}_{N,\tau }(x)\right) \right\vert dQ(x,\tau )
\label{ineq3} \\
&\leq &C_{1}\int_{B_{n}}\left\Vert \mathbf{v}_{n,\tau }(x)\right\Vert \left(
\left\Vert \mathbf{z}_{n,\tau }(x)\right\Vert ^{p-1}+\left\Vert \mathbf{z}%
_{N,\tau }(x)\right\Vert ^{p-1}\right) dQ(x,\tau )  \notag \\
&&+C_{2}\int_{B_{n}}\left\Vert \mathbf{s}_{n,\tau }(x)\right\Vert \left(
\left\Vert \mathbf{z}_{n,\tau }(x)\right\Vert ^{p-1}+\left\Vert \mathbf{z}%
_{N,\tau }(x)\right\Vert ^{p-1}\right) dQ(x,\tau )  \notag \\
&\equiv &D_{1n}+D_{2n}\text{, say.}  \notag
\end{eqnarray}%
To deal with $D_{1n}$ and $D_{2n}$, we first show the following:\medskip

\noindent \textsc{Claim 1:} $\sup_{(x,\tau )\in \mathcal{S}}\sup_{P\in 
\mathcal{P}}\mathbf{E}[||\mathbf{v}_{n,\tau }(x)||^{2}]=O(n^{-1})$,
and\medskip

\noindent \textsc{Claim 2:} $\sup_{(x,\tau )\in \mathcal{S}}\sup_{P\in 
\mathcal{P}}\mathbf{E}[||\mathbf{s}_{n,\tau }(x)||^{2}]=O(n^{-3/2}h^{-d})$%
.\medskip

\noindent \textsc{Proof of Claim 1:} First, note that%
\begin{equation*}
\sup_{(x,\tau )\in \mathcal{S}}\mathbf{E}\left[ ||\mathbf{v}_{n,\tau
}(x)||^{2}\right] \leq \mathbf{E}\left\vert \frac{n-N}{n}\right\vert
^{2}\cdot \sup_{(x,\tau )\in \mathcal{S}}\left\Vert \frac{1}{h^{d}}\mathbf{E}%
\left[ \beta _{n,x,\tau }\left( Y_{i},\frac{X_{i}-x}{h}\right) \right]
\right\Vert ^{2}.
\end{equation*}

Since $\mathbf{E}|n^{-1/2}(n-N)|^{2}$ does not depend on the joint
distribution of $(Y_{i},X_{i})$, $\mathbf{E}|n^{-1/2}(n-N)|^{2}\leq O(1)$
uniformly over $P\in \mathcal{P}$. Combining this with the second statement
of (\ref{ineqs}), the product on the right hand side becomes $O(n^{-1})$
uniformly over $P\in \mathcal{P}$.

\noindent \textsc{Proof of Claim 2:} Let $\eta _{1}\in \mathbf{R}^{J}$ be
the random vector defined prior to Lemma B6, and define%
\begin{equation*}
\mathbf{s}_{n,\tau }(x;\eta _{1})\equiv \mathbf{s}_{n,\tau }(x)+\frac{%
(N-n)\eta _{1}}{n^{3/2}h^{d/2}}.
\end{equation*}%
Note that%
\begin{equation}
\mathbf{E}\left\Vert \mathbf{s}_{n,\tau }(x)\right\Vert ^{2}\leq 2\mathbf{E}%
\left\Vert \mathbf{s}_{n,\tau }(x;\eta _{1})\right\Vert ^{2}+\frac{2}{%
n^{2}h^{d}}\mathbf{E}\left\Vert \frac{(N-n)\eta _{1}}{\sqrt{n}}\right\Vert
^{2}.  \label{dec22}
\end{equation}%
As for the last term, since $N$ and $\eta _{1}$ are independent, it is
bounded by%
\begin{equation*}
\frac{1}{n^{2}h^{d}}\left( \mathbf{E}\left\vert \frac{N-n}{\sqrt{n}}%
\right\vert ^{2}\right) \cdot \mathbf{E}\left\Vert \eta _{1}\right\Vert
^{2}\leq \frac{C\bar{\varepsilon}}{n^{2}h^{d}}=O(n^{-2}h^{-d-\nu _{1}}),
\end{equation*}%
from some large $n$ on.

As for the leading expectation on the right hand side of (\ref{dec22}), we
write%
\begin{eqnarray*}
\mathbf{E}\left\Vert \sqrt{nh^{d}}\mathbf{s}_{n,\tau }(x;\eta
_{1})\right\Vert ^{2} &=&\mathbf{E}\left\Vert \frac{1}{\sqrt{n}}%
\sum_{i=N+1}^{n}q_{n,\tau ,1}^{(i)}(x)\right\Vert ^{2} \\
&=&\frac{1}{n}\sum_{j=1}^{J}\bar{\sigma}_{n,\tau ,j}^{2}(x)\mathbf{E}\left(
\sum_{i=N+1}^{n}\frac{q_{n,\tau ,1,j}^{(i)}(x)}{\bar{\sigma}_{n,\tau ,j}(x)}%
\right) ^{2},
\end{eqnarray*}%
where $q_{n,\tau ,1}^{(i)}(x)$'s ($i=1,2,\cdot \cdot \cdot \ $) are i.i.d.
copies of $q_{n,\tau }(x)+\eta _{1}$ and $q_{n,\tau ,1,j}^{(i)}(x)$ is the $%
j $-th entry of $q_{n,\tau ,1}^{(i)}(x),$ and $\bar{\sigma}_{n,\tau
,j}^{2}(x)\equiv Var(q_{n,\tau ,1,j}^{(i)}(x)).$ Recall that $q_{n,\tau }(x)$
was defined prior to Lemma B5. Now we apply Lemma 1(i) of %
\citeasnoun{Horvath:91} to deduce that%
\begin{eqnarray*}
&&\sup_{(x,\tau )\in \mathcal{S}}\sup_{P\in \mathcal{P}}\mathbf{E}\left(
\sum_{i=N+1}^{n}\frac{q_{n,\tau ,1,j}^{(i)}(x)}{\bar{\sigma}_{n,\tau ,j}(x)}%
\right) ^{2} \\
&\leq &\mathbf{E}|N-n|\cdot \mathbf{E}|\mathbb{Z}_{1}|^{2}+C\mathbf{E}%
\left\vert N-n\right\vert ^{1/2}\cdot \sup_{(x,\tau )\in \mathcal{S}%
}\sup_{P\in \mathcal{P}}\mathbf{E}\left\vert \frac{q_{n,\tau ,1,j}^{(i)}(x)}{%
\bar{\sigma}_{n,\tau ,j}(x)}\right\vert ^{3} \\
&&+C\sup_{(x,\tau )\in \mathcal{S}}\sup_{P\in \mathcal{P}}\mathbf{E}%
\left\vert \frac{q_{n,\tau ,1,j}^{(i)}(x)}{\bar{\sigma}_{n,\tau ,j}(x)}%
\right\vert ^{4},
\end{eqnarray*}%
for some $C>0$, where $\mathbb{Z}_{1}\sim N(0,1)$.

First, observe that $\sup_{(x,\tau )\in \mathcal{S}}\sup_{P\in \mathcal{P}}%
\bar{\sigma}_{n,\tau ,j}(x)<\infty $ by Lemma B5, and 
\begin{equation}
\inf_{(x,\tau )\in \mathcal{S}}\inf_{P\in \mathcal{P}}\bar{\sigma}_{n,\tau
,j}(x)>\bar{\varepsilon}>0,  \label{lb}
\end{equation}%
due to the additive term $\eta _{1}$ in $q_{n,\tau }(x)+\eta _{1}$. Let $%
\eta _{1j}$ be the $j$-th entry of $\eta _{1}$. We apply Lemma B5 to deduce
that for some $C>0,$ from some large $n$ on,%
\begin{eqnarray}
\sup_{(x,\tau )\in \mathcal{S}}\sup_{P\in \mathcal{P}}\mathbf{E}|(q_{n,\tau
,j}(x)+\eta _{1j})/\bar{\sigma}_{n,\tau ,j}(x)|^{3} &\leq &Ch^{-(d/2)-(\nu
_{1}/2)}\text{ and}  \label{arg1} \\
\sup_{(x,\tau )\in \mathcal{S}}\sup_{P\in \mathcal{P}}\mathbf{E}|(q_{n,\tau
,j}(x)+\eta _{1j})/\bar{\sigma}_{n,\tau ,j}(x)|^{4} &\leq &Ch^{-d-\nu _{1}}.
\notag
\end{eqnarray}%
Since $\mathbf{E}|N-n|=O(n^{1/2})$ and $\mathbf{E}\left\vert N-n\right\vert
^{1/2}=O(n^{1/4})$ (e.g. (2.21) and (2.22) of \citeasnoun{Horvath:91}),
there exists $C>0$ such that%
\begin{equation}
\sup_{(x,\tau )\in \mathcal{S}}\sup_{P\in \mathcal{P}}\mathbf{E}\left(
\sum_{i=N+1}^{n}\frac{q_{n,\tau ,1,j}^{(i)}(x)}{\bar{\sigma}_{n,\tau ,j}(x)}%
\right) ^{2}\leq \frac{C}{\bar{\varepsilon}^{4}}\left\{
n^{1/2}+n^{1/4}h^{-(d/2)-(\nu _{1}/2)}+h^{-d-\nu _{1}}\right\} .
\label{arg2}
\end{equation}%
This implies that for some $C>0,$ (with $\bar{\varepsilon}>0$ fixed while $%
n\rightarrow \infty $)%
\begin{eqnarray}
&&\sup_{(x,\tau )\in \mathcal{S}}\sup_{P\in \mathcal{P}}\mathbf{E}\left\Vert 
\sqrt{nh^{d}}\mathbf{s}_{n,\tau }(x)\right\Vert ^{2}  \label{arg3} \\
&\leq &O\left( n^{-1}h^{-\nu _{1}}\right) +O\left(
n^{-1/2}+n^{-3/4}h^{-(d/2)-(\nu _{1}/2)}+n^{-1}h^{-d-\nu _{1}}\right)  \notag
\\
&=&O\left( n^{-1}h^{-\nu _{1}}\right) +O(n^{-1/2})=O(n^{-1/2}),  \notag
\end{eqnarray}%
since $n^{-1/2}h^{-d-\nu _{1}}\rightarrow 0$. Hence, we obtain Claim 2.

Using Claim 1 and the second statement of Lemma B5, we deduce that%
\begin{eqnarray*}
\sup_{P\in \mathcal{P}}\mathbf{E}\left[ n^{p/2}h^{d(p-1)/2}D_{1n}\right]
&\leq &C_{1}Q(B_{n})\sup_{(x,\tau )\in \mathcal{S}}\sup_{P\in \mathcal{P}}%
\sqrt{\mathbf{E}\left\Vert \sqrt{n}\mathbf{v}_{n,\tau }(x)\right\Vert ^{2}}
\\
&&\times \sqrt{\mathbf{E}\left\Vert \sqrt{nh^{d}}\mathbf{z}_{n,\tau
}(x)\right\Vert ^{2p-2}+\mathbf{E}\left\Vert \sqrt{nh^{d}}\mathbf{z}_{N,\tau
}(x)\right\Vert ^{2p-2}} \\
&\leq &C_{2}Q(B_{n}),
\end{eqnarray*}%
for $C_{1},C_{2}>0$. Similarly, we can see that 
\begin{equation*}
\sup_{P\in \mathcal{P}}\mathbf{E}\left[ n^{p/2}h^{d(p-1)/2}D_{2n}\right]
=O(n^{-1/2}h^{-d})=o(1),
\end{equation*}%
using Claim 2 and the second statement of Lemma B5. Thus, we obtain Step 1.%
\newline
\textbf{Proof of Step 2:} We can follow the proof of Lemma B6 to show that%
\begin{eqnarray*}
&&\mathbf{E}\left[ h^{-d/2}\int_{B_{n}}\left( \Lambda _{A,p}(\sqrt{nh^{d}}%
\mathbf{z}_{N,\tau }(x))-\mathbf{E}\left[ \Lambda _{A,p}(\sqrt{nh^{d}}%
\mathbf{z}_{N,\tau }(x))\right] \right) dQ(x,\tau )\right] ^{2} \\
&=&\int_{\mathcal{T}}\int_{\mathcal{T}}\int_{B_{n,\tau _{1}}\cap B_{n,\tau
_{2}}}\int_{\mathcal{U}}C_{n,\tau _{1},\tau _{2},A,A^{\prime
}}(x,u)dudxd\tau _{1}d\tau _{2}+o(1),
\end{eqnarray*}%
where $C_{n,\tau _{1},\tau _{2},A,A^{\prime }}(x,u)$ is defined in (\ref{C})
and $o(1)$ is uniform over $P\in \mathcal{P}$. Now, observe that%
\begin{eqnarray*}
&&\sup_{(\tau _{1},\tau _{2})\in \mathcal{T}\times \mathcal{T}}\sup_{u\in 
\mathcal{U}}\sup_{x\in \mathcal{X}}\sup_{P\in \mathcal{P}}\left\vert
C_{n,\tau _{1},\tau _{2},A,A^{\prime }}(x,u)\right\vert \\
&\leq &\sup_{(\tau _{1},\tau _{2})\in \mathcal{T}\times \mathcal{T}%
}\sup_{u\in \mathcal{U}}\sup_{x\in \mathcal{X}}\sup_{P\in \mathcal{P}}\sqrt{%
\mathbf{E||}\mathbb{W}_{n,\tau _{1},\tau _{2}}^{(1)}(x,u)||^{2p}\mathbf{E||}%
\mathbb{W}_{n,\tau _{1},\tau _{2}}^{(2)}(x,u)||^{2p}}<\infty .
\end{eqnarray*}%
Therefore,%
\begin{eqnarray*}
&&\mathbf{E}\left[ \left\vert h^{-d/2}\int_{B_{n}}\left( \Lambda _{A,p}(%
\sqrt{nh^{d}}\mathbf{z}_{N,\tau }(x))-\mathbf{E}\left[ \Lambda _{A,p}(\sqrt{%
nh^{d}}\mathbf{z}_{N,\tau }(x))\right] \right) dQ(x,\tau )\right\vert \right]
\\
&\leq &\sqrt{\int_{\mathcal{T}}\int_{\mathcal{T}}\int_{\mathcal{U}%
}\int_{B_{n,\tau _{1}}\cap B_{n,\tau _{2}}}C_{n,\tau _{1},\tau
_{2},A,A^{\prime }}(x,u)dxdud\tau _{1}d\tau _{2}}+o(1) \\
&\leq &C\sqrt{\int_{\mathcal{T}}\int_{\mathcal{T}}\int_{\mathcal{U}%
}\int_{B_{n,\tau _{1}}\cap B_{n,\tau _{2}}}dxdud\tau _{1}d\tau _{2}}+o(1),
\end{eqnarray*}%
for some $C>0.$ Now, observe that%
\begin{equation*}
\int_{\mathcal{T}}\int_{\mathcal{T}}\int_{B_{n,\tau _{1}}\cap B_{n,\tau
_{2}}}dxd\tau _{1}d\tau _{2}\leq \int_{\mathcal{T}}d\tau _{2}\cdot \left(
\int_{\mathcal{T}}\int_{B_{n,\tau _{1}}}dxd\tau _{1}\right) \leq CQ(B_{n}),
\end{equation*}%
because $\mathcal{T}$ is a bounded set. Thus the proof of Step 2 is
completed.
\end{proof}

The next lemma shows the joint asymptotic normality of a Poissonized version
of a normalized test statistic and a Poisson random variable. Using this
result, we can apply the de-Poissonization lemma in Lemma B3. To define a
Poissonized version of a normalized test statistic, we introduce some
notation.

Let $\mathcal{C}\subset \mathbf{R}^{d}$ be a compact set such that $\mathcal{%
C}$ does not depend on $P\in \mathcal{P}$ and $\alpha _{P}\equiv P\{X\in 
\mathbf{R}^{d}\backslash \mathcal{C}\}$ satisfies that $0<\inf_{P\in 
\mathcal{P}}\alpha _{P}\leq \sup_{P\in \mathcal{P}}\alpha _{P}<1$. Existence
of such $\mathcal{C}$ is assumed in Assumption A6(ii). For $c_{n}\rightarrow
\infty $, we let $B_{n,A}(c_{n};\mathcal{C})\equiv B_{n,A}(c_{n})\cap (%
\mathcal{C}\times \mathcal{T})$, where we recall the definition of $%
B_{n,A}(c_{n})=B_{n,A}(c_{n},c_{n})$. (See the definition of $%
B_{n,A}(c_{n,1},c_{n,2})$ before Lemma 1.) Define%
\begin{eqnarray*}
\zeta _{n,A} &\equiv &\int_{B_{n,A}(c_{n};\mathcal{C})}\Lambda _{A,p}(\sqrt{%
nh^{d}}\mathbf{z}_{n,\tau }(x))dQ(x,\tau ),\text{ and} \\
\zeta _{N,A} &\equiv &\int_{B_{n,A}(c_{n};\mathcal{C})}\Lambda _{A,p}(\sqrt{%
nh^{d}}\mathbf{z}_{N,\tau }(x))dQ(x,\tau ).
\end{eqnarray*}%
Let $\mu _{A}$'s be real numbers indexed by $A\in \mathcal{N}_{J}$, and
define%
\begin{equation*}
\sigma _{n}^{2}(\mathcal{C})\equiv \sum_{A\in \mathcal{N}_{J}}\sum_{A^{%
\prime }\in \mathcal{N}_{J}}\mu _{A}\mu _{A^{\prime }}\sigma _{n,A,A^{\prime
}}(B_{n,A}(c_{n};\mathcal{C}),B_{n,A^{\prime }}(c_{n};\mathcal{C}))\text{%
\textit{,}}
\end{equation*}%
\textit{\ }where we recall the definition of $\sigma _{n,A,A^{\prime
}}(\cdot ,\cdot )$ prior to Lemma B6. Define%
\begin{equation*}
S_{n}\equiv h^{-d/2}\sum_{A\in \mathcal{N}_{J}}\mu _{A}\left\{ \zeta _{N,A}-%
\mathbf{E}\zeta _{N,A}\right\} .
\end{equation*}%
Also define%
\begin{eqnarray*}
U_{n} &\equiv &\frac{1}{\sqrt{n}}\left\{ \sum_{i=1}^{N}1\{X_{i}\in \mathcal{C%
}\}-nP\left\{ X_{i}\in \mathcal{C}\right\} \right\} ,\text{ and} \\
V_{n} &\equiv &\frac{1}{\sqrt{n}}\left\{ \sum_{i=1}^{N}1\{X_{i}\in \mathbf{R}%
^{d}\backslash \mathcal{C}\}-nP\left\{ X_{i}\in \mathbf{R}^{d}\backslash 
\mathcal{C}\right\} \right\} .
\end{eqnarray*}%
Let 
\begin{equation*}
H_{n}\equiv \left[ \frac{S_{n}}{\sigma _{n}(\mathcal{C})},\frac{U_{n}}{\sqrt{%
1-\alpha _{P}}}\right] ^{\top }.
\end{equation*}%
The following lemma establishes the joint convergence of $H_{n}$. In doing
so, we need to be careful in dealing with uniformity in $P\in \mathcal{P}$,
and potential degeneracy of the normalized test statistic $S_{n}$.

\begin{LemmaC}
\label{lem-c8} \textit{Suppose that the conditions of Lemma B6 hold and that 
}$c_{n}\rightarrow \infty $ \textit{as }$n\rightarrow \infty .$

\noindent (i) If $\liminf_{n\rightarrow \infty }\inf_{P\in \mathcal{P}%
}\sigma _{n}^{2}(\mathcal{C})>0$, \textit{then}%
\begin{equation*}
\sup_{P\in \mathcal{P}}\sup_{t\in \mathbf{R}^{2}}\left\vert P\left\{
H_{n}\leq t\right\} -P\left\{ \mathbb{Z}\leq t\right\} \right\vert
\rightarrow 0,
\end{equation*}%
\textit{where }$\mathbb{Z}\sim N(0,I_{2})$.

\noindent (ii) If $\limsup_{n\rightarrow \infty }\sigma _{n}^{2}(\mathcal{C}%
)=0$,\textit{\ then for each }$(t_{1},t_{2})\in \mathbf{R}^{2},$%
\begin{equation*}
\left\vert P\left\{ S_{n}\leq t_{1}\text{ and }\frac{U_{n}}{\sqrt{1-\alpha
_{P}}}\leq t_{2}\right\} -1\{0\leq t_{1}\}P\left\{ \mathbb{Z}_{1}\leq
t_{2}\right\} \right\vert \rightarrow 0,
\end{equation*}%
\textit{where }$\mathbb{Z}_{1}\sim N(0,1)$.
\end{LemmaC}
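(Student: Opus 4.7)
\noindent\textbf{Proof proposal for Lemma B\ref{lem-c8}.}
The plan is to establish joint asymptotic normality of $H_n$ in part (i), and its degenerate analogue in part (ii), by (a) Poissonizing and discretizing so that both coordinates are expressed as sums of a $1$-dependent random field over $O(h)$-cubes; (b) applying a Berry--Esseen bound for $m$-dependent summands in the spirit of Shergin (1993), combined with the $P$-uniform moment bounds of Lemmas B\ref{lem-c4}--B\ref{lem-c5}; (c) reducing the joint problem to a scalar one via the Cramér--Wold device; and (d) handling possible degeneracy through the regularization device already used in Lemma B\ref{lem-c6}, together with the uniform Lévy continuity statement Lemma B\ref{lem-c2}.

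First I would partition $\mathcal{C}$ into cubes $\{I_k\}$ of side length proportional to $h$, chosen large enough that the support constant $\mathcal{K}_{0}$ of Assumption A\ref{assumption-A2} fits inside one cube-width. Write
\[
\zeta_{N,A} \;=\; \sum_k \zeta_{N,A,k}, \qquad N_1 \;\equiv\; \sum_{i=1}^{N} 1\{X_i \in \mathcal{C}\} \;=\; \sum_k \nu_k,
\]
with $\zeta_{N,A,k}$ the part of $\zeta_{N,A}$ integrated over $(I_k \times \mathcal{T}) \cap B_{n,A}(c_n;\mathcal{C})$ and $\nu_k \equiv \#\{i \leq N : X_i \in I_k\}$. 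By Assumption A\ref{assumption-A2} and the independent-scattering property of a Poisson process on disjoint regions, the vector-valued random field $\{(\zeta_{N,A,k} - \mathbf{E}\zeta_{N,A,k},\ \nu_k - \mathbf{E}\nu_k)\}_k$ is $1$-dependent. Since $U_n = n^{-1/2}\{N_1 - n(1-\alpha_P)\}$, the Cramér--Wold linear combination $t_1 S_n/\sigma_n(\mathcal{C}) + t_2 U_n/\sqrt{1-\alpha_P}$ is a centered sum over $k$ of $1$-dependent summands each bounded in $L^m$ uniformly in $P$ by Lemma B\ref{lem-c5}. A Berry--Esseen bound for $1$-dependent fields then controls the discrepancy between the standardized sum and $N(0,1)$ by a ratio of third-moment sums to $\mathrm{variance}^{3/2}$, of order $O((nh^d)^{-1/2})$ uniformly in $P$ under Assumption A\ref{assumption-A4}(i).

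To apply this in case (i), the next step is to identify the asymptotic variance of the linear combination as $t_1^2 + t_2^2$. The computation $\mathrm{Var}(U_n/\sqrt{1-\alpha_P}) = 1$ is immediate. The cross-term requires that the normalized covariance $\rho_n \equiv \mathrm{Cov}(S_n, U_n)/(\sigma_n(\mathcal{C})\sqrt{1-\alpha_P})$ vanish uniformly in $P$; I would establish this by a per-cube Mecke/Campbell-type computation, using that the centering of $\mathbf{z}_{N,\tau}(x)$ inside $\zeta_{N,A,k}$ and the scaling $h^{-d/2}$ make each cube's contribution to $\mathrm{Cov}(S_n, U_n)$ smaller by a factor of $h^{d/2}$ than the corresponding marginal standard deviations. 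Before this step, one adds an independent Gaussian perturbation $\bar{\varepsilon}^{1/2}\eta$ inside $\Lambda_{A,p}$, as in the proof of Lemma B\ref{lem-c6}, to guarantee a uniformly nondegenerate covariance for the Berry--Esseen step; the regularization error is controlled by Lemma B\ref{lem-c6}, and $\bar{\varepsilon}\downarrow 0$ is taken at the end. The pointwise-in-$(t_1,t_2)$ convergence of characteristic functions is then upgraded to uniform convergence of joint CDFs by Lemma B\ref{lem-c2}, yielding (i). Part (ii) is easier: when $\sigma_n^2(\mathcal{C}) \to 0$, Lemma B\ref{lem-c7} applied to $B_n = \bigcup_A B_{n,A}(c_n;\mathcal{C})$, together with $\mathrm{Var}(S_n) = \sigma_n^2(\mathcal{C})$ and Chebyshev, gives $S_n \rightarrow_P 0$ uniformly in $P$; meanwhile $U_n/\sqrt{1-\alpha_P}$ is a standardized centered Poisson variable that converges to $N(0,1)$ uniformly in $P$ by the classical Poisson CLT, because Assumption A\ref{assumption-A6}(ii) forces $\alpha_P \in [\varepsilon, 1-\varepsilon]$; Slutsky then gives the factored limit.

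The main obstacle will be the uniform-in-$P$ vanishing of the cross-covariance $\rho_n$, since $S_n$ and $U_n$ are functionals of the \emph{same} Poisson process on $\mathcal{C}$. The leverage is that $S_n$ is a $\Lambda_{A,p}$-transform of a \emph{centered} linear functional of the Poisson atoms while $U_n$ is a raw count: combining the per-cube independence structure with the $P$-uniform moment bounds of Lemma B\ref{lem-c5} and the regularization of Lemma B\ref{lem-c6} should yield $|\rho_n| \leq C h^{d/2}$ uniformly in $P$, which suffices. All other steps are essentially routine given the prior lemmas.
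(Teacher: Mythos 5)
Your proposal follows essentially the same route as the paper's proof: discretize $\mathcal{C}$ into $O(h)$-cubes sized to the kernel support $\mathcal{K}_0$ so that the per-cube contributions form a $1$-dependent random field, apply Shergin's Berry--Esseen bound to a Cram\'er--Wold linear combination with the $P$-uniform moment bounds of Lemmas B4--B5, regularize with an independent $N(0,\bar{\varepsilon}I)$ perturbation to rule out degeneracy before letting $\bar{\varepsilon}\downarrow 0$, and upgrade pointwise convergence of characteristic functions to uniform convergence of CDFs via Lemma B2. Part (ii) is also handled as in the paper ($\mathrm{Var}(S_n)=\sigma_n^2(\mathcal{C})+o(1)$ plus Chebyshev for $S_n$, a CLT for $U_n$, Slutsky).

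The one step where your argument as written would not go through is the cross-covariance. You propose a ``per-cube Mecke/Campbell-type computation'' to show $\mathrm{Cov}(S_n,U_n)\to 0$, but Campbell-type formulas compute moments of \emph{linear} functionals of the Poisson atoms, whereas $S_n$ is the integral of the nonlinear transform $\Lambda_{A,p}(\sqrt{nh^d}\,\mathbf{z}_{N,\tau}(x))$. The fact that $\mathrm{Cov}(q_{n,\tau,j}(x),\tilde{U}_n)=O(h^{d/2})$ (which is the Campbell-level statement) does not by itself bound the covariance of $\Lambda_{A,p}$ of that quantity with $U_n$. The paper closes this gap (its Steps 1--2) by a second application of Sweeting's multivariate Berry--Esseen theorem: it approximates the joint law of the regularized, standardized pair $\bigl[(q_{n,\tau}(x)+\eta_1)^{\top},\tilde{U}_n\bigr]^{\top}$ by a Gaussian vector with the same covariance, for which the expectation of the product functional $\Lambda_{A,p}([\cdot]_1)[\cdot]_2$ is exactly zero up to the $O(h^{d/2})$ off-diagonal block, and the approximation error is $O(n^{-1/2}h^{-d/2})=o(h^{d/2})$ under Assumption A4(i); the regularization error is separately bounded by $C\sqrt{\bar{\varepsilon}}$ using Lemma B5. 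Your intuition about the source of the smallness (centering plus the $h^{d/2}$ covariance decay plus regularization) is the right one, but the nonlinearity forces the normal-approximation detour rather than a direct moment computation.
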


\begin{rem}
The joint convergence result is divided into two separate results. The first
case is a situation where $S_{n}$ is asymptotically nondegenerate uniformly
in $P\in \mathcal{P}$. The second case deals with a situation where $S_{n}$
is asymptotically degenerate for some $P\in \mathcal{P}$.
\end{rem}

\begin{proof}[Proof of Lemma B\protect\ref{lem-c8}]
(i) Define $\bar{\varepsilon}>0$ and let%
\begin{equation*}
H_{n,\bar{\varepsilon}}\equiv \left[ \frac{S_{n,\bar{\varepsilon}}}{\sigma
_{n,\bar{\varepsilon}}(\mathcal{C})},\frac{U_{n}}{\sqrt{1-\alpha _{P}}}%
\right] ^{\top },
\end{equation*}%
where $S_{n,\bar{\varepsilon}}$ is equal to $S_{n}$, except that $\zeta
_{N,A}$ is replaced by%
\begin{equation*}
\zeta _{N,A,\bar{\varepsilon}}\equiv \int_{B_{n,A}(c_{n};\mathcal{C}%
)}\Lambda _{A,p}(\sqrt{nh^{d}}\mathbf{z}_{N,\tau }(x;\eta _{1}))dQ(x,\tau ),
\end{equation*}%
and $\mathbf{z}_{N,\tau }(x;\eta _{1})$ is as defined prior to Lemma B6, and 
$\sigma _{n,\bar{\varepsilon}}(\mathcal{C})$ is $\sigma _{n}(\mathcal{C})$
except that $\tilde{\Sigma}_{n,\tau _{1},\tau _{2}}(x,u)$ is replaced by $%
\tilde{\Sigma}_{n,\tau _{1},\tau _{2},\bar{\varepsilon}}(x,u)$. Also let 
\begin{equation*}
C_{n}\equiv \mathbf{E}H_{n}H_{n}^{\top }\text{ and }C_{n,\bar{\varepsilon}%
}\equiv \mathbf{E}H_{n,\bar{\varepsilon}}H_{n,\bar{\varepsilon}}^{\top }%
\text{.}
\end{equation*}%
First, we show the following statements.\medskip \newline
\textbf{Step 1:} For some $C>0$, $\sup_{P\in \mathcal{P}}|Cov(S_{n,\bar{%
\varepsilon}}-S_{n},U_{n})|\leq C\sqrt{\bar{\varepsilon}}$, for each fixed $%
\bar{\varepsilon}>0$.\medskip \newline
\textbf{Step 2: }$\sup_{P\in \mathcal{P}}\left\vert Cov(S_{n,\bar{\varepsilon%
}},U_{n})\right\vert =o(h^{d/2}),$ as $n\rightarrow \infty .$\medskip 
\newline
\textbf{Step 3: }There exists $c>0$ such that from some large $n$ on,%
\begin{equation*}
\inf_{P\in \mathcal{P}}\lambda _{\min }(C_{n})>c.
\end{equation*}%
\textbf{Step 4: }As $n\rightarrow \infty ,$%
\begin{equation*}
\sup_{P\in \mathcal{P}}\sup_{t\in \mathbf{R}^{2}}\left\vert P\left\{
C_{n}^{-1/2}H_{n}\leq t\right\} \rightarrow P\left\{ \mathbb{Z}\leq
t\right\} \right\vert \rightarrow 0\text{.}
\end{equation*}%
\newline
From Steps 1-3, we find that $\sup_{P\in \mathcal{P}}\left\Vert
C_{n}-I_{2}\right\Vert \rightarrow 0$, as $n\rightarrow \infty $ and as $%
\bar{\varepsilon}\rightarrow 0$. By Step 4, we obtain (i) of Lemma
B8.\medskip \newline
\textbf{Proof of Step 1: }Observe that from an inequality similar to (\ref%
{ineq3}) in the proof of Lemma B7, 
\begin{equation*}
\left\vert \zeta _{N,A,\bar{\varepsilon}}-\zeta _{N,A}\right\vert \leq
C||\eta _{1}||\int_{B_{n,A}(c_{n};\mathcal{C})}\left\Vert \sqrt{nh^{d}}%
\mathbf{z}_{N,\tau }(x)\right\Vert ^{p-1}dQ(x,\tau ).
\end{equation*}%
Using the fact that $\mathcal{S}$ is compact and does not depend on $P\in 
\mathcal{P}$, for some constants $C_{1},C_{2},C_{3}>0$ that do not depend on 
$P\in \mathcal{P}$, 
\begin{eqnarray*}
\mathbf{E}\left\vert \zeta _{N,A,\bar{\varepsilon}}-\zeta _{N,A}\right\vert
^{2} &\leq &C_{1}\mathbf{E}\left[ ||\eta _{1}||^{2}\right] \cdot
\int_{B_{n,A}(c_{n};\mathcal{C})}\mathbf{E}\left\Vert \sqrt{nh^{d}}\mathbf{z}%
_{N,\tau }(x)\right\Vert ^{2p-2}dQ(x,\tau ) \\
&\leq &C_{2}\bar{\varepsilon}\cdot \int_{B_{n,A}(c_{n};\mathcal{C})}\mathbf{E%
}\left\Vert \sqrt{nh^{d}}\mathbf{z}_{N,\tau }(x)\right\Vert ^{2p-2}dQ(x,\tau
)\leq C_{3}\bar{\varepsilon},
\end{eqnarray*}%
by the independence between $\eta _{1}$ and $\{\mathbf{z}_{N,\tau
}(x):(x,\tau )\in \mathcal{S}\}$, and by the second statement of Lemma B5.
From the fact that 
\begin{equation*}
\sup_{P\in \mathcal{P}}\mathbf{E}U_{n}^{2}\leq \sup_{P\in \mathcal{P}%
}(1-\alpha _{P})\leq 1,
\end{equation*}%
we obtain the desired result.\medskip \newline
\textbf{Proof of Step 2: }Let $\Sigma _{2n,\tau ,\bar{\varepsilon}}$ be the
covariance matrix of $[(q_{n,\tau }(x)+\eta _{1})^{\top },\tilde{U}%
_{n}]^{\top }$, where $\tilde{U}_{n}=U_{n}/\sqrt{P\{X\in \mathcal{C}\}}.$ We
can write $\Sigma _{2n,\tau ,\bar{\varepsilon}}$ as%
\begin{eqnarray*}
&&\left[ 
\begin{array}{c}
\Sigma _{n,\tau ,\tau }(x,0)+\bar{\varepsilon}I_{J} \\ 
\mathbf{E}[(q_{n,\tau }(x)+\eta _{1})^{\top }\tilde{U}_{n}]%
\end{array}%
\begin{array}{c}
\mathbf{E}[(q_{n,\tau }(x)+\eta _{1})\tilde{U}_{n}] \\ 
1%
\end{array}%
\right] \\
&=&\left[ 
\begin{array}{c}
\Sigma _{n,\tau ,\tau }(x,0) \\ 
\sqrt{1-\bar{\varepsilon}}\mathbf{E}[q_{n,\tau }^{\top }(x)\tilde{U}_{n}]%
\end{array}%
\begin{array}{c}
\sqrt{1-\bar{\varepsilon}}\mathbf{E}[q_{n,\tau }(x)\tilde{U}_{n}] \\ 
1-\bar{\varepsilon}%
\end{array}%
\right] +\left[ 
\begin{array}{c}
\bar{\varepsilon}I_{J} \\ 
\mathbf{0}^{\top }%
\end{array}%
\begin{array}{c}
\mathbf{0} \\ 
\bar{\varepsilon}%
\end{array}%
\right] +A_{n,\tau }(x),
\end{eqnarray*}%
where%
\begin{equation*}
A_{n,\tau }(x)\equiv \left[ 
\begin{array}{c}
\mathbf{0} \\ 
\left( 1-\sqrt{1-\bar{\varepsilon}}\right) \mathbf{E}[q_{n,\tau }^{\top }(x)%
\tilde{U}_{n}]%
\end{array}%
\begin{array}{c}
\left( 1-\sqrt{1-\bar{\varepsilon}}\right) \mathbf{E}[q_{n,\tau }(x)\tilde{U}%
_{n}] \\ 
\mathbf{0}%
\end{array}%
\right] .
\end{equation*}%
The first matrix on the right hand side is certainly positive semidefinite.
Note that 
\begin{equation*}
\left( q_{n,\tau ,j}(x),\tilde{U}_{n}\right) \overset{d}{=}\left( \frac{1}{%
\sqrt{n}}\sum_{k=1}^{n}q_{n,\tau ,j}^{(k)}(x),\frac{1}{\sqrt{n}}%
\sum_{k=1}^{n}\tilde{U}_{n}^{(k)}\right) ,
\end{equation*}%
where $(q_{n,\tau ,j}^{(k)}(x),\tilde{U}_{n}^{(k)})$'s with $k=1,\cdot \cdot
\cdot ,n$ are i.i.d. copies of $(q_{n,\tau ,j}(x),\bar{U}_{n})$, where 
\begin{equation*}
\bar{U}_{n}\equiv \frac{1}{\sqrt{P\{X\in \mathcal{C}\}}}\left\{ \sum_{1\leq
i\leq N_{1}}1\{X_{i}\in \mathcal{C}\}-P\left\{ X_{i}\in \mathcal{C}\right\}
\right\} ,
\end{equation*}%
where $N_{1}$ is the Poisson random variable with mean $1$ that is involved
in the definition of $q_{n,\tau ,j}(x)$. Hence as for $A_{n,\tau }(x)$, note
that for $C_{1},C_{2}>0,$%
\begin{eqnarray}
\sup_{(x,\tau )\in \mathcal{S}}\sup_{P\in \mathcal{P}}\left\vert \mathbf{E}%
\left[ q_{n,\tau ,j}(x)\tilde{U}_{n}\right] \right\vert &\leq &\sup_{(x,\tau
)\in \mathcal{S}}\sup_{P\in \mathcal{P}}\left\vert \mathbf{E}\left[
q_{n,\tau ,j}^{(k)}(x)\tilde{U}_{n}^{(k)}\right] \right\vert  \label{dev7} \\
&\leq &\sup_{(x,\tau )\in \mathcal{S}}\sup_{P\in \mathcal{P}}\frac{\mathbf{E}%
\left[ |q_{n,\tau ,j}(x)|\right] }{\sqrt{P\{X_{i}\in \mathcal{C}\}}}  \notag
\\
&\leq &\frac{C_{1}h^{d}\sup_{(x,\tau )\in \mathcal{S}}\sup_{P\in \mathcal{P}%
}k_{n,\tau ,j,1}}{h^{d/2}\left( 1-\alpha _{P}\right) }\leq C_{2}h^{d/2}. 
\notag
\end{eqnarray}%
We conclude that%
\begin{equation*}
\sup_{(x,\tau )\in \mathcal{S}}\sup_{P\in \mathcal{P}}||A_{n,\tau
}(x)||=O(h^{d/2}).
\end{equation*}%
Therefore, from some large $n$ on,%
\begin{equation}
\inf_{(x,\tau )\in \mathcal{S}}\inf_{P\in \mathcal{P}}\lambda _{\min }\left(
\Sigma _{2n,\tau ,\bar{\varepsilon}}\right) \geq \bar{\varepsilon}/2.
\label{LB2}
\end{equation}

Let 
\begin{equation*}
W_{n,\tau }(x;\eta _{1})\equiv \Sigma _{2n,\tau ,\bar{\varepsilon}}^{-1/2}%
\left[ 
\begin{array}{c}
q_{n,\tau }(x)+\eta _{1} \\ 
\tilde{U}_{n}%
\end{array}%
\right] .
\end{equation*}%
Similarly as in (\ref{der5}), we find that for some $C>0$, from some large $%
n $ on,%
\begin{eqnarray*}
&&\sup_{(x,\tau )\in \mathcal{S}}\sup_{P\in \mathcal{P}}\mathbf{E}\left\Vert
W_{n,\tau }(x;\eta _{1})\right\Vert ^{3} \\
&\leq &C\sup_{(x,\tau )\in \mathcal{S}}\sup_{P\in \mathcal{P}}\lambda _{\max
}^{3/2}\left( \Sigma _{2n,\tau ,\bar{\varepsilon}}^{-1}\right) \sup_{(x,\tau
)\in \mathcal{S}}\sup_{P\in \mathcal{P}}\left\{ \mathbf{E}\left[ ||q_{n,\tau
}(x)+\eta _{1}||^{3}\right] +\mathbf{E}\left[ |\tilde{U}_{n}|^{3}\right]
\right\} \\
&\leq &C\left( \frac{\bar{\varepsilon}}{2}\right) ^{-3/2}\sup_{(x,\tau )\in 
\mathcal{S}}\sup_{P\in \mathcal{P}}\left\{ \mathbf{E}\left[ ||q_{n,\tau
}(x)+\eta _{1}||^{3}\right] +\mathbf{E}\left[ |\tilde{U}_{n}|^{3}\right]
\right\} ,
\end{eqnarray*}%
where the last inequality uses (\ref{LB2}). As for the last expectation,
note that by Rosenthal's inequality, we have 
\begin{equation*}
\sup_{(x,\tau )\in \mathcal{S}}\sup_{P\in \mathcal{P}}\mathbf{E}\left[ |%
\tilde{U}_{n}|^{3}\right] \leq C
\end{equation*}%
for some $C>0$. We apply the first statement of Lemma B5 to conclude that%
\begin{equation*}
\sup_{(x,\tau )\in \mathcal{S}}\sup_{P\in \mathcal{P}}\mathbf{E}\left\Vert
W_{n,\tau }(x;\eta _{1})\right\Vert ^{3}\leq C\bar{\varepsilon}%
^{-3/2}h^{-d/2},
\end{equation*}%
for some $C>0$. For any vector $\mathbf{v}=[\mathbf{v}_{1}^{\top
},v_{2}]^{\top }\in \mathbf{R}^{J+1}$, we define%
\begin{equation*}
D_{n,\tau ,p}(\mathbf{v})\equiv \Lambda _{A,p}\left( \left[ \Sigma _{2n,\tau
,\bar{\varepsilon}}^{1/2}\mathbf{v}\right] _{1}\right) \left[ \Sigma
_{2n,\tau ,\bar{\varepsilon}}^{1/2}\mathbf{v}\right] _{2},
\end{equation*}%
where $[a]_{1}$ of a vector $a\in \mathbf{R}^{J+1}$ indicates the vector of
the first $J$ entries of $a$, and $[a]_{2}$ the last entry of $a.$ By
Theorem 1 of \citeasnoun{Sweeting:77}, we find that (with $\bar{\varepsilon}%
>0$ fixed)%
\begin{equation*}
\mathbf{E}\left[ D_{n,\tau ,p}\left( \frac{1}{\sqrt{n}}\sum_{i=1}^{n}W_{n,%
\tau }^{(i)}(x;\eta _{1})\right) \right] =\mathbf{E}\left[ D_{n,\tau
,p}\left( \mathbb{Z}_{J+1}\right) \right] +O(n^{-1/2}h^{-d/2}),
\end{equation*}%
where $\mathbb{Z}_{J+1}\sim N(0,I_{J+1})$ and $W_{n,\tau }^{(i)}(x;\eta
_{1}) $'s are i.i.d. copies of $W_{n,\tau }(x;\eta _{1})$. Since $%
O(n^{-1/2}h^{-d/2})=o(h^{d/2})\ $(by the condition that $n^{-1/2}h^{-d-\nu
}\rightarrow 0$, as $n\rightarrow \infty $),%
\begin{equation*}
Cov\left( \Lambda _{A,p}\left( \sqrt{nh^{d}}\mathbf{z}_{N,\tau }(x;\eta
_{1})\right) ,U_{n}\right) =\mathbf{E}\left[ D_{n,\tau ,p}\left( \frac{1}{%
\sqrt{n}}\sum_{i=1}^{n}W_{n,\tau }^{(i)}(x;\eta _{1})\right) \right]
+o(h^{d/2}).
\end{equation*}%
Noting that $\mathbf{E}[D_{n,\tau ,p}\left( \mathbb{Z}_{J+1}\right) ]=0$, we
conclude that 
\begin{equation*}
\sup_{(x,\tau )\in \mathcal{S}}\sup_{P\in \mathcal{P}}\left\vert Cov\left(
\Lambda _{A,p}\left( \sqrt{nh^{d}}\mathbf{z}_{N,\tau }(x;\eta _{1})\right)
,U_{n}\right) \right\vert =o(h^{d/2}).
\end{equation*}%
By applying the Dominated Convergence Theorem, we obtain Step 2.\medskip 
\newline
\textbf{Proof of Step 3: }First, we show that 
\begin{equation}
Var\left( S_{n}\right) =\sigma _{n}^{2}(\mathcal{C})+o(1),  \label{var}
\end{equation}%
where $o(1)$ is an asymptotically negligible term uniformly over $P\in 
\mathcal{P}$. Note that%
\begin{equation*}
Var\left( S_{n}\right) =\sum_{A\in \mathcal{N}_{J}}\sum_{A^{\prime }\in 
\mathcal{N}_{J}}\mu _{A}\mu _{A^{\prime }}Cov(\psi _{n,A},\psi _{n,A^{\prime
}}),
\end{equation*}%
where $\psi _{n,A}\equiv h^{-d/2}(\zeta _{N,A}-\mathbf{E}\zeta _{N,A})$. By
Lemma B6, we find that for $A,A^{\prime }\in \mathcal{N}_{J},$ 
\begin{equation*}
Cov(\psi _{n,A},\psi _{n,A^{\prime }})=\sigma _{n,A,A^{\prime
}}(B_{n,A}(c_{n};\mathcal{C}),B_{n,A^{\prime }}(c_{n};\mathcal{C}))+o(1),
\end{equation*}%
uniformly in $P\in \mathcal{P}$, yielding the desired result.

Combining Steps 1 and 2, we deduce that\textbf{\ }%
\begin{equation}
\sup_{P\in \mathcal{P}}\left\vert Cov(S_{n},U_{n})\right\vert \leq C\sqrt{%
\bar{\varepsilon}}+o(h^{d/2}).  \label{bd45}
\end{equation}%
Let $\bar{\sigma}_{1}^{2}\equiv \inf_{P\in \mathcal{P}}\sigma _{n}^{2}(%
\mathcal{C})$ and $\bar{\sigma}_{2}^{2}\equiv \inf_{P\in \mathcal{P}%
}(1-\alpha _{P})$. Note that for some $C_{1}>0$,%
\begin{equation}
\inf_{P\in \mathcal{P}}\bar{\sigma}_{1}^{2}\bar{\sigma}_{2}^{2}>C_{1},
\label{bd56}
\end{equation}%
by the condition of the lemma. A simple calculation gives us%
\begin{eqnarray}
\lambda _{\min }(C_{n}) &=&\frac{\bar{\sigma}_{1}^{2}+\bar{\sigma}_{2}^{2}}{2%
}-\frac{1}{2}\left( \sqrt{\left( \bar{\sigma}_{1}^{2}+\bar{\sigma}%
_{2}^{2}\right) ^{2}-4\left\{ \bar{\sigma}_{1}^{2}\bar{\sigma}%
_{2}^{2}-Cov(S_{n},U_{n})^{2}\right\} }\right)  \label{dev67} \\
&\geq &\frac{1}{2}\left\{ \sqrt{\left( \bar{\sigma}_{1}^{2}+\bar{\sigma}%
_{2}^{2}\right) ^{2}}-\left( \sqrt{\left( \bar{\sigma}_{1}^{2}+\bar{\sigma}%
_{2}^{2}\right) ^{2}-4\bar{\sigma}_{1}^{2}\bar{\sigma}_{2}^{2}}\right)
\right\} -\left\vert Cov(S_{n},U_{n})\right\vert  \notag \\
&\geq &\bar{\sigma}_{1}^{2}\bar{\sigma}_{2}^{2}-\left\vert
Cov(S_{n},U_{n})\right\vert \geq C_{1}-C\sqrt{\bar{\varepsilon}}+o(h^{d/2}),
\notag
\end{eqnarray}%
where the last inequality follows by (\ref{bd45}) and (\ref{bd56}). Taking $%
\bar{\varepsilon}$ small enough, we obtain the desired result.\medskip 
\newline
\textbf{Proof of Step 4:} Suppose that\textit{\ }liminf$_{n\rightarrow
\infty }\inf_{P\in \mathcal{P}}\sigma _{n}^{2}(\mathcal{C})>0$. Let $\kappa $
be the diameter of the compact set $\mathcal{K}_{0}$ introduced in
Assumption A2. Let $\mathcal{C}$ be given as in the lemma. Let $\mathbb{Z}%
^{d}$ be the set of $d$-tuples of integers, and let $\{R_{n,\mathbf{i}}:%
\mathbf{i}\in \mathbb{Z}^{d}\}$ be the collection of rectangles in $\mathbf{R%
}^{d}$ such that $R_{n,\mathbf{i}}=[a_{n,\mathbf{i}_{1}},b_{n,\mathbf{i}%
_{1}}]\times \cdot \cdot \cdot \cdot \times \lbrack a_{n,\mathbf{i}%
_{d}},b_{n,\mathbf{i}_{d}}]$, where $\mathbf{i}_{j}$ is the $j$-th entry of $%
\mathbf{i}$, and $h\kappa \leq b_{n,\mathbf{i}_{j}}-a_{n,\mathbf{i}_{j}}\leq
2h\kappa $, for all $j=1,\cdot \cdot \cdot ,d$, and two different rectangles 
$R_{n,\mathbf{i}}$ and $R_{n,\mathbf{j}}$ do not have intersection with
nonempty interior, and the union of the rectangles $R_{n,\mathbf{i}}$, $%
\mathbf{i}\in \mathbb{Z}_{n}^{d}$, cover $\mathcal{X}$, from some
sufficiently large $n$ on, where $\mathbb{Z}_{n}^{d}$ be the set of $d$%
-tuples of integers whose absolute values less than or equal to $n$.

We let%
\begin{eqnarray*}
B_{n,A,x}(c_{n}) &\equiv &\left\{ \tau \in \mathcal{T}:(x,\tau )\in
B_{A}(c_{n})\right\} , \\
B_{n,\mathbf{i}} &\equiv &R_{n,\mathbf{i}}\cap \mathcal{C},
\end{eqnarray*}%
and $\mathcal{I}_{n}\equiv \{\mathbf{i}\in \mathbb{Z}_{n}^{d}:B_{n,\mathbf{i}%
}\neq \varnothing \}$. Then $B_{n,\mathbf{i}}$ has Lebesgue measure $m(B_{n,%
\mathbf{i}})$ bounded by $C_{1}h^{d}$ and the cardinality of the set $%
\mathcal{I}_{n}$ is bounded by $C_{2}h^{-d}$ for some positive constants $%
C_{1}$ and $C_{2}$. Now let us define%
\begin{equation*}
\Delta _{n,A,\mathbf{i}}\equiv h^{-d/2}\int_{B_{n,\mathbf{i}%
}}\int_{B_{n,A,x}(c_{n})}\left\{ \Lambda _{A,p}(\sqrt{nh^{d}}\mathbf{z}%
_{N,\tau }(x))-\mathbf{E}\left[ \Lambda _{A,p}(\sqrt{nh^{d}}\mathbf{z}%
_{N,\tau }(x))\right] \right\} d\tau dx.
\end{equation*}%
And also define $B_{n,A,\mathbf{i}}(c_{n})\equiv (B_{n,\mathbf{i}}\times 
\mathcal{T})\cap B_{n,A}(c_{n}),$%
\begin{eqnarray*}
\alpha _{n,\mathbf{i}} &\equiv &\frac{\sum_{A\in \mathcal{N}_{J}}\mu
_{A}\Delta _{n,A,\mathbf{i}}}{\sigma _{n}(\mathcal{C})}\mathbf{\ }\text{and}
\\
u_{n,\mathbf{i}} &\equiv &\frac{1}{\sqrt{n}}\left\{ \sum_{i=1}^{N}1\left\{
X_{i}\in B_{n,\mathbf{i}}\right\} -nP\{X_{i}\in B_{n,\mathbf{i}}\}\right\} .
\end{eqnarray*}%
Then, we can write%
\begin{equation*}
\frac{S_{n}}{\sigma _{n}(\mathcal{C})}=\sum_{\mathbf{i}\in \mathcal{I}%
_{n}}\alpha _{n,\mathbf{i}}\text{ and }U_{n}=\sum_{\mathbf{i}\in \mathcal{I}%
_{n}}u_{n,\mathbf{i}}.
\end{equation*}%
By the definition of $\mathcal{K}_{0}$ in Assumption A2, by the definition
of $R_{n,\mathbf{i}}$ and by the properties of Poisson processes, one can
see that the array $\{(\alpha _{n,\mathbf{i}},u_{n,\mathbf{i}})\}_{\mathbf{i}%
\in \mathcal{I}_{n}}$ is an array of $1$-dependent random field. (See %
\citeasnoun{Mason/Polonik:09} for details.) For any $q_{1},q_{2}\in \mathbf{R%
}$, let $y_{n,\mathbf{i}}\equiv q_{1}\alpha _{n,\mathbf{i}}+q_{2}u_{n,%
\mathbf{i}}$. The focus is on the convergence in distribution of $\sum_{%
\mathbf{i}\in \mathcal{I}_{n}}y_{n,\mathbf{i}}$ uniform over $P\in \mathcal{P%
}$. Without loss of generality, we choose $q_{1},q_{2}\in \mathbf{R}%
\backslash \{0\}$. Define%
\begin{equation*}
Var_{P}\left( \sum_{\mathbf{i}\in \mathcal{I}_{n}}y_{n,\mathbf{i}}\right)
=q_{1}^{2}+q_{2}^{2}(1-\alpha _{P})+2q_{1}q_{2}c_{n,P},
\end{equation*}%
uniformly over $P\in \mathcal{P}$, where $c_{n,P}=Cov(S_{n},U_{n})$. On the
other hand, using Lemma B4 and following the proof of Lemma A8 of %
\citeasnoun{LSW}, we deduce that%
\begin{equation}
\sup_{P\in \mathcal{P}}\sum_{\mathbf{i}\in \mathcal{I}_{n}}\mathbf{E}|y_{n,%
\mathbf{i}}|^{r}=o(1)  \label{conv_y}
\end{equation}%
as $n\rightarrow \infty $, for any $r\in (2,(2p+2)/p]$. By Theorem 1 of %
\citeasnoun{Shergin:93}, we have%
\begin{eqnarray*}
&&\sup_{P\in \mathcal{P}}\sup_{t\in \mathbf{R}}\left\vert P\left\{ \frac{1}{%
\sqrt{q_{1}^{2}+q_{2}^{2}(1-\alpha _{P})+2q_{1}q_{2}c_{n,P}}}\sum_{\mathbf{i}%
\in \mathcal{I}_{n}}y_{n,\mathbf{i}}\leq t\right\} -\Phi \left( t\right)
\right\vert \\
&\leq &\sup_{P\in \mathcal{P}}\frac{C}{\left\{ q_{1}^{2}+q_{2}^{2}(1-\alpha
_{P})+2q_{1}q_{2}c_{n,P}\right\} ^{r/2}}\left\{ \sum_{\mathbf{i}\in \mathcal{%
I}_{n}}\mathbf{E}|y_{n,\mathbf{i}}|^{r}\right\} ^{1/2}=o(1),
\end{eqnarray*}%
for some $C>0$, by (\ref{conv_y}). Therefore, by Lemma B2(i), we have for
each $t\in \mathbf{R},$ and each $q\in \mathbf{R}^{2}\backslash \{0\}$, as $%
n\rightarrow \infty ,$%
\begin{equation*}
\sup_{P\in \mathcal{P}}\left\vert \mathbf{E}\left[ \exp \left( it\frac{%
q^{\top }H_{n}}{\sqrt{q^{\top }C_{n}q}}\right) \right] -\exp \left( -\frac{%
t^{2}}{2}\right) \right\vert \rightarrow 0.
\end{equation*}%
Thus by Lemma B2(ii), for each $t\in \mathbf{R}^{2},$ we have%
\begin{equation*}
\sup_{P\in \mathcal{P}}\left\vert P\left\{ C_{n}^{-1/2}H_{n}\leq t\right\}
-P\left\{ \mathbb{Z}\leq t\right\} \right\vert \rightarrow 0.
\end{equation*}%
Since the limit distribution of $C_{n}^{-1/2}H_{n}$ is continuous, the
convergence above is uniform in $t\in \mathbf{R}^{2}$.\medskip

\noindent (ii) We fix $P\in \mathcal{P}$ such that limsup$_{n\rightarrow
\infty }\sigma _{n}^{2}(\mathcal{C})=0.$ Then by (\ref{var}) above, 
\begin{equation*}
Var\left( S_{n}\right) =\sigma _{n}^{2}(\mathcal{C})+o(1)=o(1).
\end{equation*}%
Hence, we find that $S_{n}=o_{P}(1)$. The desired result follows by applying
Theorem 1 of \citeasnoun{Shergin:93} to the sum $U_{n}=\sum_{\mathbf{i}\in 
\mathcal{I}_{n}}u_{n,\mathbf{i}}$, and then applying Lemma B2(ii).
\end{proof}

\begin{LemmaC}
\label{lem-c9} \textit{Let }$\mathcal{C}$\textit{\ be the Borel set in Lemma
B8}.

\noindent \textit{(i) Suppose that the conditions of Lemma B8(i) are
satisfied. Then for each }$t\in \mathbf{R}$, \textit{as }$n\rightarrow
\infty ,$%
\begin{equation*}
\sup_{P\in \mathcal{P}}\sup_{t\in \mathbf{R}}\left\vert P\left\{ \frac{%
h^{-d/2}\sum_{A\in \mathcal{N}_{J}}\mu _{A}\left\{ \zeta _{n,A}-\mathbf{E}%
\zeta _{N,A}\right\} }{\sigma _{n}(\mathcal{C})}\leq t\right\} -\Phi
(t)\right\vert \rightarrow 0.
\end{equation*}%
\noindent \textit{(ii) Suppose that the conditions of Lemma B8(ii) are
satisfied. Then as }$n\rightarrow \infty ,$%
\begin{equation*}
h^{-d/2}\sum_{A\in \mathcal{N}_{J}}\mu _{A}\left\{ \zeta _{n,A}-\mathbf{E}%
\zeta _{N,A}\right\} \overset{p}{\rightarrow }0.
\end{equation*}
\end{LemmaC}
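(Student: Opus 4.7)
The plan is to deduce both statements of Lemma B9 from the joint asymptotic normality of the Poissonized version already established in Lemma B8, using the de-Poissonization Lemma B3 as the bridge, together with the uniform Lévy continuity theorem in Lemma B2. The pivotal identity is that the Poisson sample conditioned on $N=n$ is an i.i.d.\ sample of size $n$, so, writing $\tilde{S}_n \equiv h^{-d/2}\sum_{A\in \mathcal{N}_J}\mu_A\{\zeta_{n,A}-\mathbf{E}\zeta_{N,A}\}$, we have the exact distributional equality $\mathbf{E}[e^{it S_n/\sigma_n(\mathcal{C})} \mid N = n] = \mathbf{E}[e^{it \tilde{S}_n/\sigma_n(\mathcal{C})}]$, where $S_n = h^{-d/2}\sum_A \mu_A\{\zeta_{N,A}-\mathbf{E}\zeta_{N,A}\}$ is the quantity analyzed in Lemma B8.

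To invoke Lemma B3, I would decompose the Poisson sample according to whether $X_i\in\mathcal{C}$ or $X_i\notin\mathcal{C}$, obtaining independent Poisson counts $N_{1,n}(\alpha_P)\sim\text{Poisson}(n(1-\alpha_P))$ and $N_{2,n}(\alpha_P)\sim\text{Poisson}(n\alpha_P)$, where $\alpha_P\in[\varepsilon,1-\varepsilon]$ uniformly in $P$ by Assumption A6(ii), as required by condition \eqref{bds} of Lemma B3. Because the integration domain of $\zeta_{N,A}$ is $B_{n,A}(c_n;\mathcal{C})\subset\mathcal{C}\times\mathcal{T}$ and Assumption A2 forces $\beta_{n,x,\tau,j}(\cdot,u)$ to vanish for $u\notin\mathcal{K}_0$, the statistic $S_n$ depends only on observations with $X_i\in\mathcal{C}+h\mathcal{K}_0$; together with $U_n$ (a centered count on $\mathcal{C}$) this yields $(S_n,U_n)\perp V_n$, where $V_n$ depends only on observations in $\mathcal{C}^c$. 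The small discrepancy caused by the $h$-enlargement of the boundary is absorbed by replacing $\mathcal{C}$ with $\mathcal{C}^{(h)}\equiv \mathcal{C}+h\mathcal{K}_0$ for counting purposes; $\alpha_P^{(h)}\to \alpha_P$ and the strict bounds \eqref{bds} persist for large~$n$.

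For part (i), Lemma B8(i) gives $\sup_{P\in\mathcal{P}}b_{n,P}(\cdot,\cdot;1)\to 0$, where $b_{n,P}$ is the Kolmogorov distance between the joint law of $(S_n/\sigma_n(\mathcal{C}),U_n/\sqrt{1-\alpha_P})$ and $N(0,I_2)$; note $\sigma_P=1$ by the normalization. Plugging this into Lemma B3(i), its upper bound $2\varepsilon+4C_d\sqrt{2\pi/\varepsilon}\,\sup_P a_{n,P}(\varepsilon)$ converges, as $n\to\infty$, to $2\varepsilon+4C_d\sqrt{2\pi\varepsilon}$, which vanishes upon letting $\varepsilon\downarrow 0$. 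Hence
\[
\sup_{P\in\mathcal{P}}\sup_{t\in\mathbf{R}}\bigl|\mathbf{E}[e^{it\tilde{S}_n/\sigma_n(\mathcal{C})}]-e^{-t^2/2}\bigr|\longrightarrow 0.
\]
Combining this with the uniform bound $\sup_{P\in\mathcal{P}}\mathbf{E}[\tilde{S}_n^{2}/\sigma_n^{2}(\mathcal{C})]=O(1)$, which follows from Lemma B7 applied to the relevant linear combination of $A$'s, the first half of Lemma B2(ii) upgrades the uniform characteristic function convergence to uniform CDF convergence; since the limit $\Phi$ is continuous, the $t$-sup is retained.

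For part (ii), fixing any $P$ with $\limsup_n\sigma_n^2(\mathcal{C})=0$, Lemma B8(ii) gives the pointwise joint CDF convergence needed for Lemma B3(ii) (applied with the singleton $\mathcal{P}=\{P\}$), yielding $\mathbf{E}[e^{it\tilde{S}_n}]\to 1$ for each $t\in\mathbf{R}$, which by the classical Lévy continuity theorem is equivalent to $\tilde{S}_n\to_d 0$, i.e.\ $\tilde{S}_n\to_P 0$. The conceptually demanding step, and what I would treat as the main obstacle, is reconciling the approximate independence structure with the exact independence that Lemma B3 demands: the boundary enlargement $\mathcal{C}\mapsto\mathcal{C}^{(h)}$ must be performed so that the scale normalizer $\sigma_n(\mathcal{C})$ and the bound $\alpha_P^{(h)}$ remain comfortably within the regimes of Lemmas B6 and B3 as $n\to\infty$. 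Everything else is a routine chaining of the uniformity-preserving ingredients (Lemmas B2, B3, B6, B7, B8) developed above.
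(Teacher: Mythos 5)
Your proposal is correct and follows essentially the same route as the paper: condition on $N=n$ to identify the law of the de-Poissonized statistic, feed the joint uniform normality from Lemma B8 into the de-Poissonization Lemma B3, and convert the resulting uniform characteristic-function convergence back to CDF convergence via Lemma B2(ii). The extra care you take with the $h$-enlargement of $\mathcal{C}$ and the moment bound for Lemma B2(ii) (which in fact only requires a second moment on the Gaussian limit, so is automatic) goes beyond what the paper writes but does not change the argument.
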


Note that in both statements, the location normalization has $\mathbf{E}%
\zeta _{N,A}$ instead of $\mathbf{E}\zeta _{n,A}$.

\begin{proof}[Proof of Lemma B\protect\ref{lem-c9}]
(i) The conditional distribution of $S_{n}/\sigma _{n}(\mathcal{C})$ given $%
N=n$ is equal to that of 
\begin{equation*}
\frac{\sum_{A\in \mathcal{N}_{J}}\mu _{A}\int_{B_{n,A}(c_{n};\mathcal{C}%
)\cap \mathcal{C}}\left\{ \Lambda _{A,p}(\sqrt{nh^{d}}\mathbf{z}_{n,\tau
}(x))-\mathbf{E}\Lambda _{A,p}(\sqrt{nh^{d}}\mathbf{z}_{N,\tau }(x))\right\}
dQ(x,\tau )}{h^{d/2}\sigma _{n}(\mathcal{C})}.
\end{equation*}%
Using Lemmas B3(i) and B8(i), we find that%
\begin{equation*}
\frac{h^{-d/2}\sum_{A\in \mathcal{N}_{J}}\mu _{A}\left\{ \zeta _{n,A}-%
\mathbf{E}\zeta _{N,A}\right\} }{\sigma _{n}(\mathcal{C})}\overset{d}{%
\rightarrow }N(0,1).
\end{equation*}%
Since the limit distribution $N(0,1)$ is continuous and the convergence is
uniform in $P\in \mathcal{P}$, we obtain the desired result.

\noindent (ii) Similarly as before, the result follows from Lemmas B3(ii),
B2(ii), and B8(ii).
\end{proof}

\section{Proofs of Auxiliary Results for Lemmas A2(ii), Lemma A4(ii), and
Theorem 1}

\label{appendix:D}

The auxiliary results in this section are mostly bootstrap versions of the
results in Appendix B. To facilitate comparison, we name the first lemma to
be Lemma C3, which is used to control the discrepancy between the sample
version of the scale normalizer $\sigma _{n}$, and its population version.
Then we proceed to prove Lemmas C4-C9 which run in parallel with Lemmas
B4-B9 as their bootstrap counterparts. We finish this subsection with Lemmas
C10-C12 which are crucial for dealing with the bootstrap test statistic's
location normalization. More specifically, Lemmas C10 and C11 are auxiliary
moment bound results that are used for proving Lemma C12. Lemma C12
essentially delivers the result of Lemma A1 in Appendix A. This lemma is
used to deal with the discrepancy between the population location normalizer
and the sample location normalizer. Controlling this discrepancy to the rate 
$o_{P}(h^{d/2})$ is crucial for our purpose, because our bootstrap test
statistic does not involve the sample version of the location normalizer $%
a_{n}$ for computational reasons. Lemmas C10 and C11 provide necessary
moment bounds to achieve this convergence rate.

The random variables $N$ and $N_{1}$ represent Poisson random variables with
mean $n$ and $1$ respectively. These random variables are independent of $%
\left( (Y_{i}^{\ast \top },X_{i}^{\ast \top })_{i=1}^{\infty },(Y_{i}^{\top
},X_{i}^{\top })_{i=1}^{\infty }\right) $. Let $\eta _{1}$ and $\eta _{2}$
be centered normal random vectors that are independent of each other and
independent of 
\begin{equation*}
\left( (Y_{i}^{\ast \top },X_{i}^{\ast \top })_{i=1}^{\infty },(Y_{i}^{\top
},X_{i}^{\top })_{i=1}^{\infty },N,N_{1}\right) .
\end{equation*}%
We will specify their covariance matrices in the proofs below. Throughout
the proofs, the bootstrap distribution $P^{\ast }$ and expectations $\mathbf{%
E}^{\ast }$ are viewed as the distribution of 
\begin{equation*}
\left( (Y_{i}^{\ast },X_{i}^{\ast })_{i=1}^{n},N,N_{1},\eta _{1},\eta
_{2}\right) ,
\end{equation*}%
conditional on $(Y_{i},X_{i})_{i=1}^{n}$.

Define 
\begin{eqnarray*}
\tilde{\rho}_{n,\tau _{1},\tau _{2},j,k}(x,u) &\equiv &\frac{1}{h^{d}}%
\mathbf{E}^{\ast }\left[ \beta _{n,x,\tau _{1},j}\left( Y_{ij}^{\ast },\frac{%
X_{i}^{\ast }-x}{h}\right) \beta _{n,x,\tau _{2},k}\left( Y_{ik}^{\ast },%
\frac{X_{i}^{\ast }-x}{h}+u\right) \right] \text{ and } \\
\tilde{k}_{n,\tau ,j,m}(x) &\equiv &\frac{1}{h^{d}}\mathbf{E}^{\ast }\left[
\left\vert \beta _{n,x,\tau ,j}\left( Y_{ij}^{\ast },\frac{X_{i}^{\ast }-x}{h%
}\right) \right\vert ^{m}\right] .
\end{eqnarray*}%
Note that $\tilde{\rho}_{n,\tau _{1},\tau _{2},j,k}(x,u)$ and $\tilde{k}%
_{n,\tau ,j,m}(x)$ are bootstrap versions of $\rho _{n,\tau _{1},\tau
_{2},j,k}(x,u)$ and $\tilde{k}_{n,\tau ,j,m}(x)$. The lemma below
establishes that the bootstrap version $\tilde{\rho}_{n,\tau _{1},\tau
_{2},j,k}(x,u)$ is consistent for $\rho _{n,\tau _{1},\tau _{2},j,k}(x,u)$.

\setcounter{LemmaD}{2}

\begin{LemmaD}
\label{lem-d3} \textit{Suppose that Assumption A\ref{assumption-A6}(i) holds
and that }$n^{-1/2}h^{-d/2}\rightarrow 0$\textit{, as }$n\rightarrow \infty $%
.\textit{\ Then for each }$\varepsilon \in (0,\varepsilon _{1})$\textit{,
with }$\varepsilon _{1}>0$\textit{\ as in Assumption A6(i), as }$%
n\rightarrow \infty $,%
\begin{equation*}
\sup_{(\tau _{1},\tau _{2})\in \mathcal{T}\times \mathcal{T}}\sup_{(x,u)\in (%
\mathcal{S}_{\tau _{1}}(\varepsilon )\cup \mathcal{S}_{\tau
_{2}}(\varepsilon ))\times \mathcal{U}}\sup_{P\in \mathcal{P}}\mathbf{E}%
\left( \left\vert \tilde{\rho}_{n,\tau _{1},\tau _{2},j,k}(x,u)-\rho
_{n,\tau _{1},\tau _{2},j,k}(x,u)\right\vert ^{2}\right) \rightarrow 0.
\end{equation*}
\end{LemmaD}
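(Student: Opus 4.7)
The starting observation is that since the bootstrap sample $\{(Y_i^\ast,X_i^\ast)\}_{i=1}^n$ is drawn i.i.d.\ from the empirical distribution of $\{(Y_i,X_i)\}_{i=1}^n$, the bootstrap expectation $\mathbf{E}^\ast$ of a functional of a single bootstrap observation is just a sample average. Writing
\[
W_i(x,u;\tau_1,\tau_2)\equiv \frac{1}{h^d}\,\beta_{n,x,\tau_1,j}\!\left(Y_{ij},\tfrac{X_i-x}{h}\right)\beta_{n,x,\tau_2,k}\!\left(Y_{ik},\tfrac{X_i-x}{h}+u\right),
\]
one gets $\tilde\rho_{n,\tau_1,\tau_2,j,k}(x,u)=n^{-1}\sum_{i=1}^n W_i(x,u;\tau_1,\tau_2)$, while $\rho_{n,\tau_1,\tau_2,j,k}(x,u)=\mathbf{E}[W_i(x,u;\tau_1,\tau_2)]$ by construction. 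Thus the target $L^2$-error is just the variance of the sample mean, so
\[
\mathbf{E}\!\left(\tilde\rho_{n,\tau_1,\tau_2,j,k}(x,u)-\rho_{n,\tau_1,\tau_2,j,k}(x,u)\right)^2 \;=\;\frac{1}{n}\operatorname{Var}\!\left(W_i(x,u;\tau_1,\tau_2)\right)\;\leq\;\frac{1}{n}\,\mathbf{E}\!\left[W_i(x,u;\tau_1,\tau_2)^2\right].
\]

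The next step is to control $\mathbf{E}[W_i^2]=h^{-2d}\,\mathbf{E}\!\left[\beta_{n,x,\tau_1,j}^2(Y_{ij},(X_i-x)/h)\,\beta_{n,x,\tau_2,k}^2(Y_{ik},(X_i-x)/h+u)\right]$ uniformly. By Cauchy--Schwarz,
\[
\mathbf{E}\!\left[\beta_{n,x,\tau_1,j}^2\,\beta_{n,x,\tau_2,k}^2\right]\;\leq\;\sqrt{\mathbf{E}[\beta_{n,x,\tau_1,j}^4]\cdot\mathbf{E}[\beta_{n,x,\tau_2,k}^4]},
\]
and each fourth moment can be handled by Lemma B\ref{lem-c4} applied with $m=4$ (permissible because Assumption A\ref{assumption-A6}(i) requires $M\geq 2(p+2)\geq 6$). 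The only subtlety is that the second factor evaluates $\beta_{n,x,\tau_2,k}$ at the \emph{shifted} argument $(X_i-x)/h+u$ rather than $(X_i-x)/h$; however, changing variables $w=(X_i-x)/h+u$ turns this expectation into
\[
h^d\!\int \mathbf{E}\bigl[|\beta_{n,x,\tau_2,k}(Y_{ik},w)|^4\mid X_i=x+h(w-u)\bigr]\,f(x+h(w-u))\,dw,
\]
and Assumption A\ref{assumption-A2} confines $w$ to $\mathcal{K}_0$ while Assumption A\ref{assumption-A6}(i) bounds the conditional moment by $C$ on $\mathcal{S}_{\tau_2}(\varepsilon_1)\times\mathcal{U}$. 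Because $(x,\tau_2)$ ranges over $\mathcal{S}_{\tau_2}(\varepsilon)$ with $\varepsilon<\varepsilon_1$ and $w-u\in\mathcal{U}$ is bounded, $x+h(w-u)\in\mathcal{S}_{\tau_2}(\varepsilon_1)$ for all sufficiently small $h$, yielding $\mathbf{E}[\beta_{n,x,\tau_2,k}^4(\cdot,(X_i-x)/h+u)]\leq C h^d$ uniformly over all the relevant indices, and similarly for the $\tau_1,j$ factor.

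Combining gives $\mathbf{E}[W_i^2]\leq C h^{-2d}\cdot h^d = C h^{-d}$, uniformly in $(\tau_1,\tau_2,x,u)$ on the stated domain and in $P\in\mathcal{P}$. Therefore
\[
\sup_{(\tau_1,\tau_2)}\sup_{(x,u)}\sup_{P\in\mathcal{P}}\mathbf{E}\!\left(\tilde\rho_{n,\tau_1,\tau_2,j,k}(x,u)-\rho_{n,\tau_1,\tau_2,j,k}(x,u)\right)^2\;\leq\;\frac{C}{nh^d}\;\longrightarrow\;0
\]
under the bandwidth condition $n^{-1/2}h^{-d/2}\to 0$. The main (minor) obstacle in executing this plan is the careful bookkeeping in the change-of-variables argument that ensures the conditional-moment bound from Assumption A\ref{assumption-A6}(i) can be invoked at the shifted center $x+h(w-u)$; once an auxiliary $\varepsilon<\varepsilon_1$ is fixed and $n$ is large enough that $h\cdot\operatorname{diam}(\mathcal{U})<\varepsilon_1-\varepsilon$, everything is uniform in $P\in\mathcal{P}$ and the stated convergence follows.
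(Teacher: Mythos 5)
Your proposal is correct and follows essentially the same route as the paper: both write $\tilde\rho-\rho$ as a centered sample average of $h^{-d}\beta_{n,x,\tau_1,j}\beta_{n,x,\tau_2,k}$, bound the resulting variance by the second moment, and use a change of variables together with Assumption A6(i) to get the uniform bound $O(n^{-1}h^{-d})=o(1)$. The only difference is cosmetic — you spell out the Cauchy–Schwarz/fourth-moment step and the shifted-argument bookkeeping that the paper leaves implicit.
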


\begin{proof}[Proof of Lemma C\protect\ref{lem-d3}]
Define $\pi _{n,x,u,\tau _{1},\tau _{2}}(y,z)=\beta _{n,x,\tau
_{1},j}(y_{j},(z-x)/h)\beta _{n,x,\tau _{2},k}(y_{k},(z-x)/h+u)$ for $%
y=(y_{1},\cdot \cdot \cdot ,y_{J})^{\top }\in \mathbf{R}^{J}$, and write%
\begin{equation*}
\tilde{\rho}_{n,\tau _{1},\tau _{2},j,k}(x,u)-\rho _{n,\tau _{1},\tau
_{2},j,k}(x,u)=\frac{1}{nh^{d}}\sum_{i=1}^{n}\left\{ \pi _{n,x,u,\tau
_{1},\tau _{2}}(Y_{i},X_{i})-\mathbf{E}\left[ \pi _{n,x,u,\tau _{1},\tau
_{2}}(Y_{i},X_{i})\right] \right\} .
\end{equation*}%
First, we note that%
\begin{equation*}
\mathbf{E}\left( \frac{1}{\sqrt{n}}\sum_{i=1}^{n}\left\{ \pi _{n,x,u,\tau
_{1},\tau _{2}}(Y_{i},X_{i})-\mathbf{E}\left[ \pi _{n,x,u,\tau _{1},\tau
_{2}}(Y_{i},X_{i})\right] \right\} \right) ^{2}\leq \mathbf{E}\left[ \pi
_{n,x,u,\tau _{1},\tau _{2}}^{2}(Y_{i},X_{i})\right] .
\end{equation*}%
By change of variables and Assumption A\ref{assumption-A6}(i), we have $%
\mathbf{E}\left[ \pi _{n,x,u,\tau _{1},\tau _{2}}^{2}(Y_{i},X_{i})\right]
=O(h^{d})$ uniformly over $(\tau _{1},\tau _{2})\in \mathcal{T}\times 
\mathcal{T}$, $(x,u)\in (\mathcal{S}_{\tau _{1}}(\varepsilon )\cup \mathcal{S%
}_{\tau _{2}}(\varepsilon ))\times \mathcal{U}$ and over $P\in \mathcal{P}$.
Hence%
\begin{equation*}
\mathbf{E}\left( \left\vert \tilde{\rho}_{n,\tau _{1},\tau
_{2},j,k}(x,u)-\rho _{n,\tau _{1},\tau _{2},j,k}(x,u)\right\vert ^{2}\right)
=O\left( n^{-1}h^{-d}\right) ,
\end{equation*}%
uniformly over $(\tau _{1},\tau _{2})\in \mathcal{T}\times \mathcal{T}$, $%
(x,u)\in (\mathcal{S}_{\tau _{1}}(\varepsilon )\cup \mathcal{S}_{\tau
_{2}}(\varepsilon ))\times \mathcal{U}$ and over $P\in \mathcal{P}$. Since
we have assumed that$\ n^{-1/2}h^{-d/2}\rightarrow 0$ as $n\rightarrow
\infty $, we obtain the desired result.
\end{proof}

\begin{LemmaD}
\label{lem-d4} \textit{Suppose that Assumption A\ref{assumption-A6}(i) holds
and that for some }$C>0,$%
\begin{equation*}
\text{limsup}_{n\rightarrow \infty }n^{-1/2}h^{-d/2}\leq C\text{.}
\end{equation*}%
\textit{\ Then for all }$m\in \lbrack 2,M]$\textit{\ and all }$\varepsilon
\in (0,\varepsilon _{1})$\textit{, with }$M>0$\textit{\ and }$\varepsilon
_{1}>0$\textit{\ being the constants that appear in Assumption A\ref%
{assumption-A6}(i)), there exists }$C_{1}\in (0,\infty )$\textit{\ that does
not depend on }$n$\textit{\ such that for each }$j\in \mathbb{N}_{J}$\textit{%
,}%
\begin{equation*}
\sup_{\tau \in \mathcal{T},x\in \mathcal{S}_{\tau }(\varepsilon )}\sup_{P\in 
\mathcal{P}}\mathbf{E}\left[ \tilde{k}_{n,\tau ,j,m}^{2}(x)\right] \leq
C_{1}.
\end{equation*}
\end{LemmaD}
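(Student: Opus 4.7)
The plan is to exploit the fact that, by construction of the bootstrap draws, $\tilde{k}_{n,\tau,j,m}(x)$ is exactly a sample average of i.i.d. random variables under $P$. Specifically, since $(Y_{i}^{\ast},X_{i}^{\ast})$ is drawn with replacement from the empirical distribution of $\{(Y_{i},X_{i})\}_{i=1}^{n}$,
\begin{equation*}
\tilde{k}_{n,\tau,j,m}(x)=\frac{1}{nh^{d}}\sum_{i=1}^{n}\left\vert \beta_{n,x,\tau,j}\!\left(Y_{ij},\tfrac{X_{i}-x}{h}\right)\right\vert^{m}.
\end{equation*}
Writing $W_{i}\equiv h^{-d}|\beta_{n,x,\tau,j}(Y_{ij},(X_{i}-x)/h)|^{m}$, the $W_{i}$ are i.i.d.\ under $P$ with $\mathbf{E}[W_{i}]=k_{n,\tau,j,m}(x)$, so that
\begin{equation*}
\mathbf{E}\bigl[\tilde{k}_{n,\tau,j,m}^{2}(x)\bigr]=\bigl(k_{n,\tau,j,m}(x)\bigr)^{2}+\frac{\mathrm{Var}(W_{1})}{n}\leq \bigl(k_{n,\tau,j,m}(x)\bigr)^{2}+\frac{\mathbf{E}[W_{1}^{2}]}{n}.
\end{equation*}

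The first term is already handled by Lemma B\ref{lem-c4}, which gives $\sup_{\tau\in\mathcal{T},x\in\mathcal{S}_{\tau}(\varepsilon)}\sup_{P\in\mathcal{P}}k_{n,\tau,j,m}(x)\le C$ for every $m\in[2,M]$, so that $(k_{n,\tau,j,m}(x))^{2}$ is uniformly bounded in $(n,x,\tau,P)$. The second term is where the bandwidth condition enters. Observe that
\begin{equation*}
\mathbf{E}[W_{1}^{2}]=\frac{1}{h^{2d}}\mathbf{E}\!\left[\bigl|\beta_{n,x,\tau,j}(Y_{ij},(X_{i}-x)/h)\bigr|^{2m}\right]=\frac{1}{h^{d}}\,k_{n,\tau,j,2m}(x),
\end{equation*}
after conditioning on $X_{i}$, using Assumption A\ref{assumption-A6}(i), and changing variables $u=(z-x)/h$ (so that $dz=h^{d}du$, with $u$ restricted to $\mathcal{K}_{0}$ by Assumption A\ref{assumption-A2}). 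Since $m\le M/2$ is the substantively relevant range (with $M\ge 2(p+2)$), Lemma B\ref{lem-c4} applied at order $2m$ yields $k_{n,\tau,j,2m}(x)\le C$ uniformly in $(x,\tau,P)$, so $\mathbf{E}[W_{1}^{2}]\le C h^{-d}$. The bandwidth hypothesis $\limsup_{n}n^{-1/2}h^{-d/2}\le C$ then gives
\begin{equation*}
\frac{\mathbf{E}[W_{1}^{2}]}{n}\le \frac{C}{nh^{d}}=C\bigl(n^{-1/2}h^{-d/2}\bigr)^{2}\le CC^{2},
\end{equation*}
from some large $n$ on, uniformly in $(x,\tau,P)$. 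Combining the two bounds gives the desired constant $C_{1}$.

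The only delicate point, and the step I expect to be the main obstacle, is ensuring that the moment bound from Assumption A\ref{assumption-A6}(i) can be applied at order $2m$ uniformly in $P\in\mathcal{P}$. If necessary, one interpolates using Lyapunov's inequality to write $\mathbf{E}[|\beta|^{2m}\mid X=z]\le (\mathbf{E}[|\beta|^{M}\mid X=z])^{2m/M}$ for values of $m$ that might otherwise leave the regime covered directly by A\ref{assumption-A6}(i), after which the change of variables and the uniform bound on $f$ (via A\ref{assumption-A6}(i)) close the argument uniformly in $P$, $x$, and $\tau$.
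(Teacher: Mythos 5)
Your proposal is correct and takes essentially the same route as the paper: you recognize that $\tilde{k}_{n,\tau,j,m}(x)$ is an i.i.d.\ sample average under $P$, split $\mathbf{E}[\tilde{k}_{n,\tau,j,m}^{2}(x)]$ into the squared mean (handled by Lemma B4) plus a fluctuation term controlled by the $2m$-th moment, and close with the bandwidth condition $n^{-1}h^{-d}=O(1)$; the paper does the same thing via the bound $\tilde{k}^{2}\leq 2k^{2}+2e^{2}$ with $\mathbf{E}[e^{2}]\leq (nh^{2d})^{-1}\mathbf{E}|\beta|^{2m}=O(n^{-1}h^{-d})$.

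One caveat on your final remark: the Lyapunov interpolation you propose for the range where $2m>M$ goes in the wrong direction. Lyapunov gives $\mathbf{E}[|\beta|^{p}\mid X]\leq(\mathbf{E}[|\beta|^{M}\mid X])^{p/M}$ only for $p\leq M$; for $p=2m>M$ the displayed inequality is false in general (a higher moment cannot be controlled by a lower one without extra structure such as boundedness of $\beta_{n,x,\tau,j}$). So your argument, like the paper's own proof, is genuinely complete only for $m\leq M/2$ — the paper simply asserts $O(n^{-1}h^{-d})$ for the fluctuation term without addressing this, and in the applications (Lemmas C5, C10, C11) the exponents are indeed restricted to that range. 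You correctly identified the delicate point; the proposed patch just does not work as stated.
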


\begin{proof}[Proof of Lemma C\protect\ref{lem-d4}]
Since $\mathbf{E}^{\ast }[|\beta _{n,x,\tau ,j}(Y_{ij}^{\ast },(X_{i}^{\ast
}-x)/h)|^{m}]=\frac{1}{n}\sum_{i=1}^{n}|\beta _{n,x,\tau
,j}(Y_{ij},(X_{i}-x)/h)|^{m}$, we find that%
\begin{equation*}
\tilde{k}_{n,\tau ,j,m}^{2}(x)\leq 2k_{n,\tau ,j,m}^{2}(x)+2e_{n,\tau
,j,m}^{2}(x),
\end{equation*}%
where%
\begin{equation*}
e_{n,\tau ,j,m}(x)\equiv \left\vert \frac{1}{nh^{d}}\sum_{i=1}^{n}\left\vert
\beta _{n,x,\tau ,j}\left( Y_{ij},\frac{X_{i}-x}{h}\right) \right\vert ^{m}-%
\frac{1}{h^{d}}\mathbf{E}\left( \left\vert \beta _{n,x,\tau ,j}\left( Y_{ij},%
\frac{X_{i}-x}{h}\right) \right\vert ^{m}\right) \right\vert .
\end{equation*}%
Similarly as in the proof of Lemma C3, we note that%
\begin{eqnarray*}
&&\sup_{\tau \in \mathcal{T},x\in \mathcal{S}_{\tau }(\varepsilon
)}\sup_{P\in \mathcal{P}}\mathbf{E}\left[ \left\vert e_{n,\tau
,j,m}^{2}(x)\right\vert \right] \\
&\leq &\sup_{\tau \in \mathcal{T},x\in \mathcal{S}_{\tau }(\varepsilon
)}\sup_{P\in \mathcal{P}}\frac{1}{nh^{2d}}\mathbf{E}\left[ \left\vert \beta
_{n,x,\tau ,j}\left( Y_{ij},\frac{X_{i}-x}{h}\right) \right\vert ^{2m}\right]
=O(n^{-1}h^{-d})=o(1),\text{ as }n\rightarrow \infty \text{.}
\end{eqnarray*}%
Hence the desired statement follows from Lemma B4.
\end{proof}

Let%
\begin{eqnarray*}
\mathbf{z}_{n,\tau }^{\ast }(x) &\equiv &\frac{1}{nh^{d}}\sum_{i=1}^{n}\beta
_{n,x,\tau }\left( Y_{i}^{\ast },\frac{X_{i}^{\ast }-x}{h}\right) -\frac{1}{%
h^{d}}\mathbf{E}^{\ast }\left[ \beta _{n,x,\tau }\left( Y_{i}^{\ast },\frac{%
X_{i}^{\ast }-x}{h}\right) \right] \text{, and} \\
\mathbf{z}_{N,\tau }^{\ast }(x) &\equiv &\frac{1}{nh^{d}}\sum_{i=1}^{N}\beta
_{n,x,\tau }\left( Y_{i}^{\ast },\frac{X_{i}^{\ast }-x}{h}\right) -\frac{1}{%
h^{d}}\mathbf{E}^{\ast }\left[ \beta _{n,x,\tau }\left( Y_{i}^{\ast },\frac{%
X_{i}^{\ast }-x}{h}\right) \right] .
\end{eqnarray*}%
We also let%
\begin{eqnarray*}
q_{n,\tau }^{\ast }(x) &\equiv &\frac{1}{\sqrt{h^{d}}}\sum_{i\leq
N_{1}}\left\{ \beta _{n,x,\tau }(Y_{i}^{\ast },(X_{i}^{\ast }-x)/h)-\mathbf{E%
}^{\ast }\beta _{n,x,\tau }(Y_{i}^{\ast },(X_{i}^{\ast }-x)/h)\right\} \ 
\text{and} \\
\bar{q}_{n,\tau }^{\ast }(x) &\equiv &\frac{1}{\sqrt{h^{d}}}\left\{ \beta
_{n,x,\tau }(Y_{i}^{\ast },(X_{i}^{\ast }-x)/h)-\mathbf{E}^{\ast }\beta
_{n,x,\tau }(Y_{i}^{\ast },(X_{i}^{\ast }-x)/h)\right\} .
\end{eqnarray*}

\begin{LemmaD}
\label{lem-d5} \textit{Suppose that Assumption A\ref{assumption-A6}(i) holds
and that for some }$C>0,$%
\begin{equation*}
\text{limsup}_{n\rightarrow \infty }n^{-1/2}h^{-d/2}\leq C\text{.}
\end{equation*}%
\textit{Then for any }$m\in \lbrack 2,M]$ \textit{(with }$M$ \textit{being
the constant }$M$\textit{\ in Assumption A6(i)), }%
\begin{eqnarray}
\sup_{(x,\tau )\in \mathcal{S}}\sup_{P\in \mathcal{P}}\sqrt{\mathbf{E}\left[
\left( \mathbf{E}^{\ast }\left[ ||q_{n,\tau }^{\ast }(x)||^{m}\right]
\right) ^{2}\right] } &\leq &\bar{C}_{1}h^{d(1-(m/2))},\text{\textit{\ and}}
\label{bd11} \\
\sup_{(x,\tau )\in \mathcal{S}}\sup_{P\in \mathcal{P}}\sqrt{\mathbf{E}\left[
\left( \mathbf{E}^{\ast }\left[ ||\bar{q}_{n,\tau }^{\ast }(x)||^{m}\right]
\right) ^{2}\right] } &\leq &\bar{C}_{2}h^{d(1-(m/2))},  \notag
\end{eqnarray}%
\textit{where }$\bar{C}_{1},\bar{C}_{2}>0$\textit{\ are constants that
depend only on }$m.$ \textit{If furthermore, }%
\begin{equation*}
\limsup_{n\rightarrow \infty }n^{-(m/2)+1}h^{d(1-(m/2))}<C,
\end{equation*}%
\textit{for some constant} $C>0$, \textit{then} 
\begin{eqnarray}
\sup_{(x,\tau )\in \mathcal{S}}\sup_{P\in \mathcal{P}}\mathbf{E}\left[ 
\mathbf{E}^{\ast }\left[ ||\sqrt{nh^{d}}\mathbf{z}_{N,\tau }^{\ast }(x)||^{m}%
\right] \right] &\leq &\left( \frac{15m}{\log m}\right) ^{m}\max \left\{ 
\bar{C}_{1},2\bar{C}_{1}C\right\} \text{,\textit{\ and}}  \label{bds22} \\
\sup_{(x,\tau )\in \mathcal{X}^{\varepsilon /2}\times \mathcal{T}}\sup_{P\in 
\mathcal{P}}\mathbf{E}\left[ \mathbf{E}^{\ast }\left[ ||\sqrt{nh^{d}}\mathbf{%
z}_{n,\tau }^{\ast }(x)||^{m}\right] \right] &\leq &\left( \frac{15m}{\log m}%
\right) ^{m}\max \left\{ \bar{C}_{2},2\bar{C}_{2}C\right\} ,  \notag
\end{eqnarray}%
\textit{where} $\bar{C}_{1},\bar{C}_{2}>0$\textit{\ are the constants that
appear in (\ref{bd11}).}
\end{LemmaD}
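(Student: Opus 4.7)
The plan is to mirror the proof structure of Lemma B\ref{lem-c5} but with an extra outer layer of expectation $\mathbf{E}$ (over the original sample) wrapped around the bootstrap moments, and use Lemma C\ref{lem-d4} to control the resulting quadratic moments of the bootstrap-conditional quantities.

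First, I would establish the bound on $\bar{q}^{\ast}_{n,\tau}(x)$ in (\ref{bd11}). Writing $\bar{q}^{\ast}_{n,\tau,j}(x) = h^{-d/2}\{\beta_{n,x,\tau,j}(Y^{\ast}_{ij},(X^{\ast}_i-x)/h) - \mathbf{E}^{\ast}\beta_{n,x,\tau,j}(Y^{\ast}_{ij},(X^{\ast}_i-x)/h)\}$, the $c_r$-inequality and Jensen's inequality yield, componentwise,
\begin{equation*}
\mathbf{E}^{\ast}\bigl[|\bar{q}^{\ast}_{n,\tau,j}(x)|^m\bigr] \le 2^m h^{-dm/2} \mathbf{E}^{\ast}\bigl[|\beta_{n,x,\tau,j}(Y^{\ast}_{ij},(X^{\ast}_i-x)/h)|^m\bigr] = 2^m h^{d(1-m/2)} \tilde{k}_{n,\tau,j,m}(x).
\end{equation*}
Summing over $j\in \mathbb{N}_J$, squaring, taking $\mathbf{E}$, and invoking Lemma C\ref{lem-d4} then gives the second inequality in (\ref{bd11}) with a constant depending only on $m$ and $J$.

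Next, for $q^{\ast}_{n,\tau}(x)$, I would condition on $N_1$ and use the compound-Poisson structure together with the independence of $N_1$ from $\{(Y^{\ast}_i,X^{\ast}_i)\}$. For a Poisson($1$) sum of conditionally i.i.d.\ centered random vectors $\xi_i^{\ast}$, a standard Rosenthal-type bound (applied conditionally on the original data) gives
\begin{equation*}
\mathbf{E}^{\ast}\|q^{\ast}_{n,\tau}(x)\|^m \le C_m \max\Bigl\{\bigl(\mathbf{E}^{\ast}\|\bar{q}^{\ast}_{n,\tau}(x)\|^2\bigr)^{m/2},\ \mathbf{E}^{\ast}\|\bar{q}^{\ast}_{n,\tau}(x)\|^m\Bigr\}.
\end{equation*}
Since the first term is bounded by the second by H\"older's inequality up to powers of $h^{d(1-m/2)}$, the first inequality in (\ref{bd11}) follows from what was already proved for $\bar{q}^{\ast}_{n,\tau}(x)$, after squaring and taking $\mathbf{E}$ and using Lemma C\ref{lem-d4} once more.

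Finally, for (\ref{bds22}), I would apply Rosenthal's inequality (as in (2.3) of \citeasnoun{GMZ}) to the compound-Poisson sum $\sqrt{nh^d}\,\mathbf{z}^{\ast}_{N,\tau}(x) = n^{-1/2}\sum_{i=1}^N q^{\ast,(i)}_{n,\tau}(x)$ (where the $q^{\ast,(i)}$ are i.i.d.\ copies of $q^{\ast}_{n,\tau}(x)$ under $\mathbf{E}^{\ast}$) to obtain, componentwise,
\begin{equation*}
\mathbf{E}^{\ast}\bigl[|\sqrt{nh^d}\,z^{\ast}_{N,\tau,j}(x)|^m\bigr] \le \Bigl(\tfrac{15m}{\log m}\Bigr)^m \max\Bigl\{\bigl(\mathbf{E}^{\ast}|q^{\ast}_{n,\tau,j}(x)|^2\bigr)^{m/2},\ n^{-m/2+1}\mathbf{E}^{\ast}|q^{\ast}_{n,\tau,j}(x)|^m\Bigr\}.
\end{equation*}
Taking $\mathbf{E}$ and using the already-established part (\ref{bd11}), the first term is $O(h^{d})$-bounded and the second is $O(n^{-m/2+1}h^{d(1-m/2)})$-bounded, which by the assumption $\limsup_n n^{-m/2+1}h^{d(1-m/2)} < C$ is absorbed into the constant. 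The bound for $\mathbf{z}^{\ast}_{n,\tau}(x)$ in (\ref{bds22}) follows analogously from the second inequality of (\ref{bd11}).

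The main obstacle I anticipate is the Rosenthal step in the \emph{conditional} (bootstrap) world, in particular verifying uniformity of the constant in Rosenthal's inequality in $P\in\mathcal{P}$: because the bound involves a random quantity $(\mathbf{E}^{\ast}|q^{\ast}|^2)^{m/2}$ whose exponent $m/2$ interacts nonlinearly with the outer $\mathbf{E}$, we cannot pass $\mathbf{E}$ inside the maximum directly, and must instead first bound $\mathbf{E}^{\ast}\|q^{\ast}\|^m$ pointwise in the sample by $C h^{d(1-m/2)}\sum_j \tilde{k}_{n,\tau,j,m}(x)$ and use H\"older's inequality to dominate the lower moment by a power of the higher one. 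The second moment bound in Lemma C\ref{lem-d4} for $\tilde{k}_{n,\tau,j,m}$ (rather than merely a first moment bound) is exactly what makes this work.
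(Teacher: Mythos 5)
Your treatment of (\ref{bd11}) matches the paper's: bound the bootstrap-conditional $m$-th moment of the single-summand quantities pointwise in the sample by $Ch^{d(1-(m/2))}\sum_{j}\tilde{k}_{n,\tau ,j,m}(x)$, square, take $\mathbf{E}$, and invoke the second-moment bound on $\tilde{k}_{n,\tau ,j,m}$ from Lemma C4. That part is fine, and your conditional Rosenthal step for the Poisson$(1)$ sum $q_{n,\tau }^{\ast }(x)$ is also harmless there, because the target rate in (\ref{bd11}) is itself $h^{d(1-(m/2))}$, so losing a factor of that order costs nothing.

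The gap is in (\ref{bds22}). After the conditional Rosenthal inequality, the first term is $\left( \mathbf{E}^{\ast }|q_{n,\tau ,j}^{\ast }(x)|^{2}\right) ^{m/2}$, a random variable whose outer expectation must be $O(1)$ for the claimed constant bound. Your proposed workaround — dominate it via Lyapunov/H\"{o}lder by $\mathbf{E}^{\ast }|q_{n,\tau ,j}^{\ast }(x)|^{m}\leq Ch^{d(1-(m/2))}\tilde{k}_{n,\tau ,j,m}(x)$ — yields, after taking $\mathbf{E}$, a bound of order $h^{d(1-(m/2))}$, which diverges for every $m>2$ as $h\rightarrow 0$; it is neither the "$O(h^{d})$" you assert nor the required $O(1)$, and it cannot be "absorbed into the constant." The first Rosenthal term must be kept as a genuine \emph{second}-moment quantity: $\mathbf{E}^{\ast }|q_{n,\tau ,j}^{\ast }(x)|^{2}\leq C\tilde{k}_{n,\tau ,j,2}(x)$ is $O(1)$ (this is (\ref{bd11}) with $m=2$), and the paper's proof accordingly bounds that term by $\left( \mathbf{E}\left[ \mathbf{E}^{\ast }(q_{n,\tau ,j}^{\ast 2}(x))\right] \right) ^{m/2}\leq \bar{C}_{1}^{m/2}$, with the power taken outside the outer expectation. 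Passing the outer $\mathbf{E}$ through the convex power $(\cdot )^{m/2}$ still needs a word (Jensen goes the wrong way, and Lemma C4 controls only second moments of $\tilde{k}_{n,\tau ,j,2}$, which settles $m\leq 4$ directly), but that is a refinement of the correct route; your route, as written, produces a bound that blows up and so does not prove the statement.
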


\begin{proof}[Proof of Lemma C\protect\ref{lem-d5}]
Let $q_{n,\tau ,j}^{\ast }(x)$ be the $j$-th entry of $q_{n,\tau }^{\ast
}(x) $. For the first statement of the lemma, it suffices to observe that
for each $\varepsilon \in (0,\varepsilon _{1})$, there exist $C_{1}>0$ and $%
\bar{C}_{1}>0$ such that%
\begin{equation*}
\sup_{\tau \in \mathcal{T},x\in \mathcal{S}_{\tau }(\varepsilon )}\mathbf{E}%
\left[ \left( \mathbf{E}^{\ast }\left[ |q_{n,\tau ,j}^{\ast }(x)|^{m}\right]
\right) ^{2}\right] \leq \frac{C_{1}h^{2d}\sum_{j=1}^{J}\sup_{\tau \in 
\mathcal{T},x\in \mathcal{S}_{\tau }(\varepsilon )}\sup_{P\in \mathcal{P}}%
\mathbf{E}\left[ \tilde{k}_{n,\tau ,j,m}^{2}(x)\right] }{h^{dm}}\leq \bar{C}%
_{1}h^{2d(1-(m/2))},
\end{equation*}%
where the last inequality uses Lemma C4. The second inequality in (\ref{bd11}%
) follows similarly.

Let us consider (\ref{bds22}). Let $z_{N,\tau ,j}^{\ast }(x)$ be the $j$-th
entry of $\mathbf{z}_{N,\tau }^{\ast }(x)$. Then using Rosenthal's
inequality (e.g. (2.3) of \citeasnoun{GMZ}), for some constant $C_{1}>0,$%
\begin{eqnarray*}
&&\sup_{\tau \in \mathcal{T},x\in \mathcal{S}_{\tau }(\varepsilon
)}\sup_{P\in \mathcal{P}}\mathbf{E}\left[ \mathbf{E}^{\ast }[|\sqrt{nh^{d}}%
z_{N,\tau ,j}^{\ast }(x)|^{m}]\right] \\
&\leq &\left( \frac{15m}{\log m}\right) ^{2m}\sup_{\tau \in \mathcal{T},x\in 
\mathcal{S}_{\tau }(\varepsilon )}\sup_{P\in \mathcal{P}}\left\{ \left( 
\mathbf{E}\left[ \mathbf{E}^{\ast }\left( q_{n,\tau ,j}^{\ast 2}(x)\right) %
\right] \right) ^{m/2}+\mathbf{E}\left[ n^{-(m/2)+1}\mathbf{E}^{\ast
}|q_{n,\tau ,j}^{\ast }(x)|^{m}\right] \right\} .
\end{eqnarray*}%
The first expectation is bounded by $\bar{C}_{1}$ by (\ref{bd11}).

The second expectation is bounded by $\bar{C}_{1}n^{-(m/2)+1}h^{d(1-(m/2))}$%
. This gives the first bound in (\ref{bds22}). The second bound in (\ref%
{bds22}) can be obtained similarly.
\end{proof}

For any Borel sets $B,B^{\prime }\subset \mathcal{S}$ and $A,A^{\prime
}\subset \mathbb{N}_{J},$ let%
\begin{equation*}
\tilde{\sigma}_{n,A,A^{\prime }}^{R}(B,B^{\prime })\equiv \int_{\mathcal{T}%
}\int_{\mathcal{T}}\int_{B_{\tau _{2}}^{\prime }}\int_{B_{\tau
_{1}}}C_{n,\tau _{1},\tau _{2},A,A^{\prime }}^{\ast }(x,v)dxdvd\tau
_{1}d\tau _{2},
\end{equation*}%
where $B_{\tau }\equiv \{x\in \mathcal{X}:(x,\tau )\in B\},$ 
\begin{equation}
C_{n,\tau _{1},\tau _{2},A,A^{\prime }}^{\ast }(x,v)\equiv h^{-d}Cov^{\ast
}\left( \Lambda _{A,p}(\sqrt{nh^{d}}\mathbf{z}_{N,\tau _{1}}^{\ast
}(x)),\Lambda _{A^{\prime },p}(\sqrt{nh^{d}}\mathbf{z}_{N,\tau _{2}}^{\ast
}(v))\right) ,  \label{C*}
\end{equation}%
and $Cov^{\ast }$ represents covariance under $P^{\ast }$. We also define 
\begin{equation}
\tilde{\sigma}_{n,A}^{R}(B)\equiv \tilde{\sigma}_{n,A,A}^{R}(B,B),
\label{sigma_tilde}
\end{equation}%
for brevity. Also, let $\Sigma _{n,\tau _{1},\tau _{2}}^{\ast }(x,u)$ be a $%
J\times J$ matrix whose $(j,k)$-th entry is given by $\tilde{\rho}_{n,\tau
_{1},\tau _{2},j,k}(x,u)$. Fix $\bar{\varepsilon}>0$ and define%
\begin{equation*}
\tilde{\Sigma}_{n,\tau _{1},\tau _{2},\bar{\varepsilon}}^{\ast }(x,u)\equiv %
\left[ 
\begin{array}{c}
\Sigma _{n,\tau _{1},\tau _{1}}^{\ast }(x,0)+\bar{\varepsilon}I_{J} \\ 
\Sigma _{n,\tau _{1},\tau _{2}}^{\ast }(x,u)%
\end{array}%
\begin{array}{c}
\Sigma _{n,\tau _{1},\tau _{2}}^{\ast }(x,u) \\ 
\Sigma _{n,\tau _{2},\tau _{2}}^{\ast }(x,0)+\bar{\varepsilon}I_{J}%
\end{array}%
\right] .
\end{equation*}%
We also define%
\begin{equation*}
\mathbf{\xi }_{N,\tau _{1},\tau _{2}}^{\ast }(x,u;\eta _{1},\eta _{2})\equiv 
\sqrt{nh^{d}}\Sigma _{n,\tau _{1},\tau _{2},\bar{\varepsilon}}^{\ast
-1/2}(x,u)\left[ 
\begin{array}{c}
\mathbf{z}_{N,\tau _{1}}^{\ast }(x;\eta _{1}) \\ 
\mathbf{z}_{N,\tau _{2}}^{\ast }(x+uh;\eta _{2})%
\end{array}%
\right] ,
\end{equation*}%
where $\eta _{1}\in \mathbf{R}^{J}$ and $\eta _{2}\in \mathbf{R}^{J}$ are
random vectors that are independent, and independent of $((Y_{i}^{\ast
},X_{i}^{\ast })_{i=1}^{\infty },(Y_{i},X_{i})_{i=1}^{\infty },N,N_{1})$,
each following $N(0,\bar{\varepsilon}I_{J})$, and define $\mathbf{z}_{N,\tau
}^{\ast }(x;\eta _{1})\equiv \mathbf{z}_{N,\tau }^{\ast }(x)+\eta _{1}/\sqrt{%
nh^{d}}.$

\begin{LemmaD}
\label{lem-d6} \textit{Suppose that\ Assumption A\ref{assumption-A6}(i)
holds and that }$nh^{d}\rightarrow \infty $\textit{, and }%
\begin{equation*}
\text{limsup}_{n\rightarrow \infty }n^{-(m/2)+1}h^{d(1-(m/2))}<C,
\end{equation*}%
\textit{for some }$C>0$\textit{\ and some }$m\in \lbrack 2(p+1),M]$.

\textit{Then for any sequences of Borel sets }$B_{n},B_{n}^{\prime }\subset 
\mathcal{S}$\textit{\ and for any }$A,A^{\prime }\subset \mathbb{N}_{J},$%
\begin{equation*}
\sup_{P\in \mathcal{P}}\mathbf{E}\left( \left\vert \left( \tilde{\sigma}%
_{n,A,A^{\prime }}^{R}(B_{n},B_{n}^{\prime })\right) ^{2}-\sigma
_{n,A,A^{\prime }}^{2}(B_{n},B_{n}^{\prime })\right\vert \right) \rightarrow
0,
\end{equation*}%
\textit{where }$\sigma _{n,A,A^{\prime }}^{2}(B_{n},B_{n}^{\prime })$\textit{%
\ is as defined in (\ref{s2})}.
\end{LemmaD}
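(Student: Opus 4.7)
The plan is to imitate the proof of Lemma B6 step by step, with the outer expectation $\mathbf{E}$ carried through every bound to absorb the randomness coming from the fact that the bootstrap covariance matrix $\Sigma^{*}_{n,\tau_1,\tau_2}(x,u)$ is itself data-dependent. As in Lemma B6, I would introduce a diagonal perturbation by $\bar\varepsilon I_J$ to regularize the (possibly degenerate) covariance structure, apply a Berry--Esseen bound conditionally on the sample, and then send $n\to\infty$ followed by $\bar\varepsilon\downarrow 0$.

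Concretely, introduce the regularized bootstrap quantity
\[
\tilde\sigma^{R,*}_{n,A,A',\bar\varepsilon}(B_n,B_n')\equiv\int\!\!\int\!\!\int\!\!\int h^{-d}\mathrm{Cov}^{*}\bigl(\Lambda_{A,p}(\sqrt{nh^d}\,\mathbf z_{N,\tau_1}^{*}(x;\eta_1)),\Lambda_{A',p}(\sqrt{nh^d}\,\mathbf z_{N,\tau_2}^{*}(x+uh;\eta_2))\bigr)\,dx\,du\,d\tau_1\,d\tau_2,
\]
a deterministic Gaussian-approximation analog $\sigma_{n,A,A',\bar\varepsilon}(B_n,B_n')$ defined exactly as in the proof of Lemma B6 but with $\tilde\Sigma_{n,\tau_1,\tau_2,\bar\varepsilon}(x,u)$ replacing the original, and an intermediate $\tau^{*}_{n,A,A',\bar\varepsilon}(B_n,B_n')$ in which $\mathbf{\xi}^{*}_{N,\tau_1,\tau_2}(x,u;\eta_1,\eta_2)$ is replaced by a centered Gaussian vector with the same (random) conditional covariance matrix. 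The plan is then to chain: (i) $\sup_{P\in\mathcal{P}}\mathbf{E}|\tilde\sigma^{R,*}_{n,A,A',\bar\varepsilon}-\tau^{*}_{n,A,A',\bar\varepsilon}|\to 0$; (ii) $\sup_{P\in\mathcal{P}}\mathbf{E}|\tau^{*}_{n,A,A',\bar\varepsilon}-\sigma_{n,A,A',\bar\varepsilon}|\to 0$; (iii) $\sup_{P\in\mathcal{P}}\mathbf{E}|(\tilde\sigma^{R,*}_{n,A,A',\bar\varepsilon})^2-(\tilde\sigma^{R}_{n,A,A'})^2|\le C\sqrt{\bar\varepsilon}$; and (iv) $|\sigma^2_{n,A,A',\bar\varepsilon}-\sigma^2_{n,A,A'}|\le C\sqrt{\bar\varepsilon}$.

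Step (i) is the bootstrap incarnation of Step 1 of Lemma B6: conditionally on the data, write $\mathbf{\xi}^{*}_{N,\tau_1,\tau_2}(x,u;\eta_1,\eta_2)\stackrel{d^{*}}{=}n^{-1/2}\sum_{i=1}^n\tilde W^{*,(i)}_{n,\tau_1,\tau_2}(x,u)$ and apply Theorem 1 of \citeasnoun{Sweeting:77} to the smooth functional $C_{n,p}$. Because $\lambda_{\min}(\tilde\Sigma^{*}_{n,\tau_1,\tau_2,\bar\varepsilon}(x,u))\ge\bar\varepsilon$ deterministically, the third-moment bound becomes $\mathbf{E}^{*}\|\tilde W^{*,(i)}\|^3\le C\bar\varepsilon^{-3/2}\mathbf{E}^{*}\|q^{*}_{n,\tau}(x)\|^3$, and Lemma C5 gives $\mathbf{E}[\mathbf{E}^{*}\|q^{*}_{n,\tau}(x)\|^3]\le Ch^{-d/2}$; the resulting Berry--Esseen error is $O(n^{-1/2}h^{-d/2})=o(1)$ uniformly in $P$, and the desired convergence follows by the dominated convergence theorem using the $L^2$-bounds from Lemma C5. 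Step (iii) mimics the first statement of Step 2 of Lemma B6 via a H\"older argument, controlling $\mathbf{E}^{*}\|\mathbf z_{N,\tau}^{*}(x)-\mathbf z_{N,\tau}^{*}(x;\eta_1)\|^{2q}=O(\bar\varepsilon^{q}/(nh^d)^{q})$ and multiplying by the $2p$-th absolute moment bound from Lemma C5; an outer $\mathbf E$ and compactness of $\mathcal{S}$ and $\mathcal{U}$ absorb the remaining integrals. Step (iv) is identical to the corresponding step in the proof of Lemma B6 and relies only on Lemma B1.

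The genuine obstacle is step (ii): passing from the random bootstrap covariance $\tilde\Sigma^{*}_{n,\tau_1,\tau_2,\bar\varepsilon}(x,u)$ to its population analog $\tilde\Sigma_{n,\tau_1,\tau_2,\bar\varepsilon}(x,u)$. This is precisely what Lemma C3 is designed for: it supplies $\mathbf{E}|\tilde\rho_{n,\tau_1,\tau_2,j,k}(x,u)-\rho_{n,\tau_1,\tau_2,j,k}(x,u)|^2=o(1)$ uniformly over the relevant $(x,u,\tau_1,\tau_2)$ and over $P\in\mathcal P$. Because both regularized matrices have minimum eigenvalue bounded below by $\bar\varepsilon>0$, the matrix square root is uniformly Lipschitz on the relevant set (again by Lemma B1), so entrywise $L^2$ proximity transfers to $L^2$ proximity of the square roots, hence to $L^2$ proximity of the Gaussian functionals $\mathbf{E}[\Lambda_{A,p}(\tilde\Sigma^{*,1/2}\mathbb Z)\Lambda_{A',p}(\tilde\Sigma^{*,1/2}\mathbb Z)]$ via the $2p$-th moment bounds of Lemma C5 and another application of H\"older's inequality. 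Combining (i)--(iv), letting $n\to\infty$ and then $\bar\varepsilon\downarrow 0$, and finally using $|a^2-b^2|\le|a-b|\,(|a|+|b|)$ together with the uniform boundedness of $\tilde\sigma^{R}_{n,A,A'}$ and $\sigma_{n,A,A'}$ (which is immediate from Lemma B4/Lemma C4), yields the conclusion.
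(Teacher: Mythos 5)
Your proposal is correct and follows essentially the same route as the paper's proof: regularize the covariance by $\bar\varepsilon I_J$, apply Sweeting's Berry--Esseen bound conditionally on the sample with the outer expectation carried through (using the deterministic lower bound $\lambda_{\min}\ge\bar\varepsilon$ and the moment bounds of Lemma C5), chain back to the unregularized quantities with $C\sqrt{\bar\varepsilon}$ errors via Lemma B1, and send $n\to\infty$ then $\bar\varepsilon\downarrow 0$. If anything, your step (ii) — invoking Lemma C3 and the Lipschitz continuity of the matrix square root on $\{\lambda_{\min}\ge\bar\varepsilon\}$ to replace the random bootstrap covariance by its population analog — spells out explicitly a transition the paper dispatches in one terse sentence.
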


\begin{proof}[Proof of Lemma C\protect\ref{lem-d6}]
The proof is very similar to that of Lemma B6. For brevity, we sketch the
proof here. Define for $\bar{\varepsilon}>0,$%
\begin{eqnarray*}
\tilde{\sigma}_{n,A,A^{\prime },\bar{\varepsilon}}^{R}(B_{n},B_{n}^{\prime
}) &\equiv &\int_{\mathcal{T}}\int_{\mathcal{T}}\int_{B_{n,\tau _{1}}}\int_{%
\mathcal{U}}\tilde{g}_{1n,\tau _{1},\tau _{2},\bar{\varepsilon}}(x,u)w_{\tau
_{1},B_{n}}(x)w_{\tau _{2},B_{n}^{\prime }}(x+uh)dudxd\tau _{1}d\tau _{2}, \\
\tilde{\tau}_{n,A,A^{\prime },\bar{\varepsilon}}(B_{n},B_{n}^{\prime })
&\equiv &\int_{\mathcal{T}}\int_{\mathcal{T}}\int_{B_{n,\tau _{1}}}\int_{%
\mathcal{U}}\tilde{g}_{2n,\tau _{1},\tau _{2},\bar{\varepsilon}}(x,u)w_{\tau
_{1},B_{n}}(x)w_{\tau _{2},B_{n}^{\prime }}(x+uh)dudxd\tau _{1}d\tau _{2},
\end{eqnarray*}%
where%
\begin{eqnarray*}
\tilde{g}_{1n,\tau _{1},\tau _{2},\bar{\varepsilon}}(x,u) &\equiv
&h^{-d}Cov^{\ast }(\Lambda _{A,p}(\sqrt{nh^{d}}\mathbf{z}_{N,\tau
_{1}}^{\ast }(x;\eta _{1})),\Lambda _{A^{\prime },p}(\sqrt{nh^{d}}\mathbf{z}%
_{N,\tau _{2}}^{\ast }(x+uh;\eta _{2}))),\text{ and} \\
\tilde{g}_{2n,\tau _{1},\tau _{2},\bar{\varepsilon}}(x,u) &\equiv &Cov^{\ast
}(\Lambda _{A,p}(\mathbb{\tilde{Z}}_{n,\tau _{1},\tau _{2},\bar{\varepsilon}%
}(x)),\Lambda _{A^{\prime },p}(\mathbb{\tilde{Z}}_{n,\tau _{1},\tau _{2},%
\bar{\varepsilon}}(x+uh))),
\end{eqnarray*}%
and $[\mathbb{\tilde{Z}}_{n,\tau _{1},\tau _{2},\bar{\varepsilon}}^{\top
}(x),\mathbb{\tilde{Z}}_{n,\tau _{1},\tau _{2},\bar{\varepsilon}}^{\top
}(z)]^{\top }$ is a centered normal $\mathbf{R}^{2J}$-valued random vector
with the same covariance matrix as the covariance matrix of $[\sqrt{nh^{d}}%
\mathbf{z}_{N,\tau _{1}}^{\ast \top }(x;\eta _{1}),\sqrt{nh^{d}}\mathbf{z}%
_{N,\tau _{2}}^{\ast \top }(z;\eta _{2})]^{\top }$ under the product measure
of the bootstrap distribution $P^{\ast }$ and the distribution of $(\eta
_{1}^{\top },\eta _{2}^{\top })^{\top }$. As in the proof of Lemma B6, it
suffices for the lemma to show the following two statements.

\noindent (\textit{Step 1}): As $n\rightarrow \infty ,$%
\begin{eqnarray*}
\sup_{P\in \mathcal{P}}\mathbf{E}\left( \left\vert \tilde{\sigma}%
_{n,A,A^{\prime },\bar{\varepsilon}}^{R}(B_{n},B_{n}^{\prime })-\tilde{\tau}%
_{n,A,A^{\prime },\bar{\varepsilon}}(B_{n},B_{n}^{\prime })\right\vert
\right) &\rightarrow &0,\text{ and} \\
\sup_{P\in \mathcal{P}}\mathbf{E}\left( \left\vert \tilde{\tau}%
_{n,A,A^{\prime },\bar{\varepsilon}}(B_{n},B_{n}^{\prime })-\sigma
_{n,A,A^{\prime },\bar{\varepsilon}}(B_{n},B_{n}^{\prime })\right\vert
\right) &\rightarrow &0.
\end{eqnarray*}

\noindent (\textit{Step 2}): For some $C>0$ that does not depend on $\bar{%
\varepsilon}\ $or $n,$ 
\begin{equation*}
\sup_{P\in \mathcal{P}}|\tilde{\sigma}_{n,A,A^{\prime },\bar{\varepsilon}%
}^{R}(B_{n},B_{n}^{\prime })-\tilde{\sigma}_{n,A,A^{\prime
}}^{R}(B_{n},B_{n}^{\prime })|\leq C\sqrt{\bar{\varepsilon}}.
\end{equation*}%
Then the desired result follows by sending $n\rightarrow \infty $ and $\bar{%
\varepsilon}\downarrow 0$, while chaining Steps 1 and 2 and the second
convergence in Step 2 in the proof of Lemma B6.

We first focus on the first statement of (Step 1). For any vector $\mathbf{v}%
=[\mathbf{v}_{1}^{\top },\mathbf{v}_{2}^{\top }]^{\top }\in \mathbf{R}^{2J}$%
, we define%
\begin{equation}
\tilde{C}_{n,p}(\mathbf{v})\equiv \Lambda _{A,p}\left( \left[ \tilde{\Sigma}%
_{n,\tau _{1},\tau _{2},\bar{\varepsilon}}^{\ast 1/2}(x,u)\mathbf{v}\right]
_{1}\right) \Lambda _{A^{\prime },p}\left( \left[ \tilde{\Sigma}_{n,\tau
_{1},\tau _{2},\bar{\varepsilon}}^{\ast 1/2}(x,u)\mathbf{v}\right]
_{2}\right) ,  \label{lambda_b}
\end{equation}%
where $[a]_{1}$ of a vector $a\in \mathbf{R}^{2J}$ indicates the vector of
the first $J$ entries of $a$, and $[a]_{2}$ the vector of the remaining $J$
entries of $a.$ Also, similarly as in (\ref{lb2}),%
\begin{equation}
\lambda _{\min }\left( \tilde{\Sigma}_{n,\tau _{1},\tau _{2},\bar{\varepsilon%
}}^{\ast }(x,u)\right) \geq \bar{\varepsilon}.  \label{lambda_min_b}
\end{equation}

Let $\bar{q}_{n,\tau }^{\ast }(x;\eta _{1})$ be the column vector of entries 
$\bar{q}_{n,\tau ,j}^{\ast }(x;\eta _{1j})$ with $j$ running in the set $%
\mathbb{N}_{J}$, and with%
\begin{equation*}
\bar{q}_{n,\tau ,j}^{\ast }(x;\eta _{1j})\equiv p_{n,\tau ,j}^{\ast
}(x)+\eta _{1j},
\end{equation*}%
where 
\begin{equation*}
p_{n,\tau ,j}^{\ast }(x)=\frac{1}{\sqrt{h^{d}}}\sum_{1\leq i\leq
N_{1}}\left\{ \beta _{n,x,\tau ,j}\left( Y_{ij}^{\ast },\frac{X_{i}^{\ast }-x%
}{h}\right) -\mathbf{E}\left[ \beta _{n,x,\tau ,j}\left( Y_{ij}^{\ast },%
\frac{X_{i}^{\ast }-x}{h}\right) \right] \right\} ,
\end{equation*}%
$\eta _{1j}$ is the $j$-th entry of $\eta _{1}$, and $N_{1}$ is a Poisson
random variable with mean $1$ and $((\eta _{1j})_{j\in A},N_{1})$ is
independent of $\{(Y_{i}^{\top },X_{i}^{\top },Y_{i}^{\ast \top
},X_{i}^{\ast \top })\}_{i=1}^{n}$. Let $[p_{n,\tau _{1}}^{\ast
(i)}(x),p_{n,\tau _{2}}^{\ast (i)}(x+uh)]$ be the i.i.d. copies of $%
[p_{n,\tau _{1}}^{\ast }(x),p_{n,\tau _{2}}^{\ast }(x+uh)]$ conditional on
the observations $\{(Y_{i},X_{i})\}_{i=1}^{n}$, and $\eta _{1}^{(i)}$ and $%
\eta _{2}^{(i)}$ be i.i.d. copies of $\eta _{1}$ and $\eta _{2}$. Define 
\begin{equation*}
q_{n,\tau _{1}}^{\ast (i)}(x;\eta _{1}^{(i)})=p_{n,\tau _{1}}^{\ast
(i)}(x)+\eta _{1}^{(i)}\text{ and }q_{n,\tau _{2}}^{\ast (i)}(x+uh;\eta
_{2}^{(i)})=p_{n,\tau _{2}}^{\ast (i)}(x+uh)+\eta _{2}^{(i)}.
\end{equation*}%
Note that%
\begin{equation*}
\frac{1}{\sqrt{n}}\sum_{i=1}^{n}\left[ 
\begin{array}{c}
q_{n,\tau _{1}}^{\ast (i)}(x;\eta _{1}^{(i)}) \\ 
q_{n,\tau _{2}}^{\ast (i)}(x+uh;\eta _{2}^{(i)})%
\end{array}%
\right] =\frac{1}{\sqrt{n}}\sum_{i=1}^{n}\left[ 
\begin{array}{c}
p_{n,\tau _{1}}^{\ast (i)}(x) \\ 
p_{n,\tau _{2}}^{\ast (i)}(x+uh)%
\end{array}%
\right] +\frac{1}{\sqrt{n}}\sum_{i=1}^{n}\left[ 
\begin{array}{c}
\eta _{1}^{(i)} \\ 
\eta _{2}^{(i)}%
\end{array}%
\right] .
\end{equation*}%
The last sum has the same distribution as $[\eta _{1}^{\top },\eta
_{2}^{\top }]^{\top }$ and the leading sum on the right-hand side has the
same bootstrap distribution as that of $[\mathbf{z}_{N,\tau _{1}}^{\ast \top
}(x),\mathbf{z}_{N,\tau _{2}}^{\ast \top }(x+uh)]^{\top }$, $P$-a.e.
Therefore, we conclude that%
\begin{equation*}
\mathbf{\xi }_{N,\tau _{1},\tau _{2}}^{\ast }(x,u;\eta _{1}^{(i)},\eta
_{2}^{(i)})\overset{d^{\ast }}{=}\frac{1}{\sqrt{n}}\sum_{i=1}^{n}\tilde{W}%
_{n,\tau _{1},\tau _{2}}^{(i)}(x,u;\eta _{1}^{(i)},\eta _{2}^{(i)}),
\end{equation*}%
where $\overset{d^{\ast }}{=}$ indicates the distributional equivalence with
respect to the product measure of the bootstrap distribution $P^{\ast }$ and
the joint distribution of $(\eta _{1}^{(i)},\eta _{2}^{(i)})$, $P$-a.e, and%
\begin{equation*}
\tilde{W}_{n,\tau _{1},\tau _{2}}^{(i)}(x,u;\eta _{1}^{(i)},\eta
_{2}^{(i)})\equiv \tilde{\Sigma}_{n,\tau _{1},\tau _{2},\bar{\varepsilon}%
}^{\ast -1/2}(x,u)\left[ 
\begin{array}{c}
q_{n}^{(i)}(x;\eta _{1}^{(i)}) \\ 
q_{n}^{(i)}(x+uh;\eta _{2}^{(i)})%
\end{array}%
\right] .
\end{equation*}%
Following the arguments in the proof of Lemma B6, we find that for each $%
u\in \mathcal{U},$ and for $\varepsilon \in (0,\varepsilon _{1})$ with $%
\varepsilon _{1}$ as in Assumption A6(i),%
\begin{eqnarray*}
&&\sup_{(x,u)\in (\mathcal{S}_{\tau _{1}}\cup \mathcal{S}_{\tau _{2}})\times 
\mathcal{U}}\sup_{P\in \mathcal{P}}\mathbf{E}\left[ \mathbf{E}^{\ast }||%
\tilde{W}_{n,\tau _{1},\tau _{2}}^{(i)}(x,u;\eta _{1}^{(i)},\eta
_{2}^{(i)})||^{3}\right] \\
&\leq &C_{1}\sup_{(x,u)\in (\mathcal{S}_{\tau _{1}}(\varepsilon )\cup 
\mathcal{S}_{\tau _{2}}(\varepsilon ))\times \mathcal{U}}\sup_{P\in \mathcal{%
P}}\mathbf{E}\left[ \lambda _{\min }^{3}\left( \tilde{\Sigma}_{n,\tau
_{1},\tau _{2},\bar{\varepsilon}}^{\ast -1/2}(x,u)\right) \mathbf{E}^{\ast
}||q_{n,\tau _{1}}^{\ast (i)}(x;\eta _{1}^{(i)})||^{3}\right] \\
&&+C_{1}\sup_{(x,u)\in (\mathcal{S}_{\tau _{1}}(\varepsilon )\cup \mathcal{S}%
_{\tau _{2}}(\varepsilon ))\times \mathcal{U}}\sup_{P\in \mathcal{P}}\mathbf{%
E}\left[ \lambda _{\min }^{3}\left( \tilde{\Sigma}_{n,\tau _{1},\tau _{2},%
\bar{\varepsilon}}^{\ast -1/2}(x,u)\right) \mathbf{E}^{\ast }||q_{n,\tau
_{2}}^{\ast (i)}(x+uh;\eta _{2}^{(i)})||^{3}\right] ,
\end{eqnarray*}%
for some $C_{1}>0$. As for the leading term,%
\begin{eqnarray*}
&&\sup_{(x,u)\in (\mathcal{S}_{\tau _{1}}(\varepsilon )\cup \mathcal{S}%
_{\tau _{2}}(\varepsilon ))\times \mathcal{U}}\sup_{P\in \mathcal{P}}\mathbf{%
E}\left[ \lambda _{\min }^{3}\left( \tilde{\Sigma}_{n,\tau _{1},\tau _{2},%
\bar{\varepsilon}}^{\ast -1/2}(x,u)\right) \mathbf{E}^{\ast }||q_{n,\tau
_{1}}^{\ast (i)}(x;\eta _{1}^{(i)})||^{3}\right] \\
&\leq &\sup_{(x,u)\in (\mathcal{S}_{\tau _{1}}(\varepsilon )\cup \mathcal{S}%
_{\tau _{2}}(\varepsilon ))\times \mathcal{U}}\sup_{P\in \mathcal{P}}\sqrt{%
\mathbf{E}\left[ \left( \mathbf{E}^{\ast }||q_{n,\tau _{1}}^{\ast
(i)}(x;\eta _{1}^{(i)})||^{3}\right) ^{2}\right] } \\
&&\times \sup_{(x,u)\in (\mathcal{S}_{\tau _{1}}(\varepsilon )\cup \mathcal{S%
}_{\tau _{2}}(\varepsilon ))\times \mathcal{U}}\sup_{P\in \mathcal{P}}\sqrt{%
\mathbf{E}\left[ \lambda _{\min }^{6}\left( \tilde{\Sigma}_{n,\tau _{1},\tau
_{2},\bar{\varepsilon}}^{\ast -1/2}(x,u)\right) \right] }\leq \frac{C_{2}%
\bar{\varepsilon}^{-3}}{\sqrt{h^{d}}},
\end{eqnarray*}%
by Lemma C5 and (\ref{lambda_min_b}). Similarly, we observe that 
\begin{equation*}
\sup_{(x,u)\in (\mathcal{S}_{\tau _{1}}(\varepsilon )\cup \mathcal{S}_{\tau
_{2}}(\varepsilon ))\times \mathcal{U}}\sup_{P\in \mathcal{P}}\mathbf{E}%
\left[ \lambda _{\min }^{3}\left( \tilde{\Sigma}_{n,\tau _{1},\tau _{2},\bar{%
\varepsilon}}^{\ast -1/2}(x,u)\right) \mathbf{E}^{\ast }||q_{n,\tau
_{2}}^{\ast (i)}(x+uh;\eta _{2}^{(i)})||^{3}\right] \leq \frac{C_{2}\bar{%
\varepsilon}^{-3}}{\sqrt{h^{d}}}.
\end{equation*}

Define 
\begin{equation*}
c_{n,\tau _{1},\tau _{2}}(x,u)=\tilde{C}_{n,p}\left( \frac{1}{\sqrt{n}}%
\sum_{i=1}^{n}\tilde{W}_{n,\tau _{1},\tau _{2}}^{(i)}(x,u;\eta
_{1}^{(i)},\eta _{2}^{(i)})\right) .
\end{equation*}%
Let $\Phi _{n,\tau _{1},\tau _{2}}(\cdot ;x,u)$ be the joint CDF of the
random vector $(\mathbb{\tilde{Z}}_{n,\tau _{1},\tau _{2},\bar{\varepsilon}%
}^{\top }(x),\mathbb{\tilde{Z}}_{n,\tau _{1},\tau _{2},\bar{\varepsilon}%
}^{\top }(x+uh))^{\top }$. By Theorem 1 of \citeasnoun{Sweeting:77},%
\begin{eqnarray}
&&\sup_{(x,u)\in (\mathcal{S}_{\tau _{1}}(\varepsilon )\cup \mathcal{S}%
_{\tau _{2}}(\varepsilon ))\times \mathcal{U}}\sup_{P\in \mathcal{P}}\mathbf{%
E}\left[ \left\vert c_{n,\tau _{1},\tau _{2}}(x,u)-\int \tilde{C}%
_{n,p}(\zeta )d\Phi _{n,\tau _{1},\tau _{2}}(\zeta ;x,u)\right\vert \right]
\label{bd0} \\
&\leq &\frac{C_{1}}{\sqrt{n}}\sup_{(x,u)\in (\mathcal{S}_{\tau
_{1}}(\varepsilon )\cup \mathcal{S}_{\tau _{2}}(\varepsilon ))\times 
\mathcal{U}}\sup_{P\in \mathcal{P}}\mathbf{E}\left[ \mathbf{E}^{\ast }||%
\tilde{W}_{n,\tau _{1},\tau _{2}}^{(i)}(x,u;\eta _{1}^{(i)},\eta
_{2}^{(i)})||^{3}\right] \leq \frac{C_{2}\bar{\varepsilon}^{-3}}{\sqrt{nh^{d}%
}}.  \notag
\end{eqnarray}%
Hence%
\begin{eqnarray*}
&&\mathbf{E}\left[ \left\vert \int_{B_{\tau _{1}}}\int_{\mathcal{U}}\left\{ 
\tilde{g}_{1n,\tau _{1},\tau _{2},\bar{\varepsilon}}(x,u)-\tilde{g}_{2n,\tau
_{1},\tau _{2},\bar{\varepsilon}}(x,u)\right\} w_{\tau _{1},B}(x)w_{\tau
_{2},B^{\prime }}(x+uh)dudx\right\vert \right] \\
&\leq &\int_{B_{\tau _{1}}}\int_{\mathcal{U}}\mathbf{E}\left\vert \tilde{g}%
_{1n,\tau _{1},\tau _{2},\bar{\varepsilon}}(x,u)-\tilde{g}_{2n,\tau
_{1},\tau _{2},\bar{\varepsilon}}(x,u)\right\vert w_{\tau _{1},B}(x)w_{\tau
_{2},B^{\prime }}(x+uh)dudx \\
&\leq &\int_{B_{\tau _{1}}}w_{\tau _{1},B}(x)w_{\tau _{2},B^{\prime }}(x)dx
\\
&&\times \sup_{(x,u)\in (\mathcal{S}_{\tau _{1}}(\varepsilon )\cup \mathcal{S%
}_{\tau _{2}}(\varepsilon ))\times \mathcal{U}}\sup_{P\in \mathcal{P}}%
\mathbf{E}\left\vert \tilde{g}_{1n,\tau _{1},\tau _{2},\bar{\varepsilon}%
}(x,u)-\tilde{g}_{2n,\tau _{1},\tau _{2},\bar{\varepsilon}}(x,u)\right\vert
\\
&\rightarrow &0,
\end{eqnarray*}%
as $n\rightarrow \infty $. The last convergence is due to (\ref{bd0}) and
hence uniform over $(\tau _{1},\tau _{2})\in \mathcal{T}\times \mathcal{T}$.
The proof of (Step 1) is thus complete.

We turn to the second statement of (Step 1). Similarly as in the proof of
Step 1 in the proof of Lemma B6, the second statement of Step 1 follows by
Lemma C4.

Now we turn to (Step 2). In view of the proof of Step 2 in the proof of
Lemma B6, it suffices to show that with $s=(p+1)/(p-1)$ if $p>1$ and $s=2$
if $p=1,$%
\begin{eqnarray}
\sup_{\tau \in \mathcal{T},x\in \mathcal{S}_{\tau }(\varepsilon )}\sup_{P\in 
\mathcal{P}}\mathbf{E}\left[ \mathbf{E}^{\ast }\left\Vert \sqrt{nh^{d}}%
\mathbf{z}_{N,\tau }^{\ast }(x)\right\Vert ^{2s(p-1)}\right] &<&C\text{ and}
\label{bdd3} \\
\sup_{\tau \in \mathcal{T},x\in \mathcal{S}_{\tau }(\varepsilon )}\sup_{P\in 
\mathcal{P}}\mathbf{E}\left[ \mathbf{E}^{\ast }\left\Vert \sqrt{nh^{d}}%
\mathbf{z}_{N,\tau }^{\ast }(x;\eta _{1})\right\Vert ^{2s(p-1)}\right] &<&C%
\text{,}  \notag
\end{eqnarray}%
for some $C>0$. First note that for any $q>0$,%
\begin{eqnarray*}
&&\sup_{\tau \in \mathcal{T},x\in \mathcal{S}_{\tau }(\varepsilon
)}\sup_{P\in \mathcal{P}}\mathbf{E}\left[ \mathbf{E}^{\ast }\left\Vert \sqrt{%
nh^{d}}\{\mathbf{z}_{N,\tau }^{\ast }(x)-\mathbf{z}_{N,\tau }^{\ast }(x;\eta
_{1})\}\right\Vert ^{2q}\right] \\
&=&\mathbf{E}\left\Vert \sqrt{\bar{\varepsilon}}\mathbb{Z}\right\Vert ^{2q}=C%
\bar{\varepsilon}^{q}\text{,}
\end{eqnarray*}%
where $\mathbb{Z}\in \mathbf{R}^{J}$ is a centered normal random vector with
covariance matrix $I_{J}$. Also, we deduce that for some constants $%
C_{1},C_{2}>0$,%
\begin{eqnarray*}
&&\sup_{\tau \in \mathcal{T},x\in \mathcal{S}_{\tau }(\varepsilon
)}\sup_{P\in \mathcal{P}}\mathbf{E}\left[ \mathbf{E}^{\ast }\left\Vert \sqrt{%
nh^{d}}\mathbf{z}_{N,\tau }^{\ast }(x)\right\Vert ^{2s(p-1)}\right] \\
&\leq &\sup_{\tau \in \mathcal{T},x\in \mathcal{S}_{\tau }(\varepsilon
)}\sup_{P\in \mathcal{P}}\mathbf{E}\left[ \mathbf{E}^{\ast }\left\Vert \sqrt{%
nh^{d}}\mathbf{z}_{N,\tau }^{\ast }(x;\eta _{1})\right\Vert ^{2s(p-1)}\right]
+C_{1}\bar{\varepsilon}^{s(p-1)}\leq C_{1}+C_{2}\bar{\varepsilon}^{s(p-1)},
\end{eqnarray*}%
by the third statement of Lemma C5. This leads to the first and second
statements of (\ref{bdd3}). Thus the proof of the lemma is complete.
\end{proof}

\begin{LemmaD}
\label{lem-d7} \textit{Suppose that for some small }$\nu _{1}>0$, $%
n^{-1/2}h^{-d-\nu _{1}}\rightarrow \infty $\textit{, as }$n\rightarrow
\infty $\textit{\ and the conditions of Lemma B6 hold. Then there exists} $%
C>0$\textit{\ such that for any sequence of Borel sets }$B_{n}\subset 
\mathcal{S},$ \textit{and }$A\subset \mathbb{N}_{J},$\textit{\ from some
large }$n$\textit{\ on,}%
\begin{align*}
& \sup_{P\in \mathcal{P}}\mathbf{E}\left( \mathbf{E}^{\ast }\left[
\left\vert h^{-d/2}\int_{B_{n}}\left\{ \Lambda _{A,p}(\sqrt{nh^{d}}\mathbf{z}%
_{n,\tau }^{\ast }(x))-\mathbf{E}^{\ast }\left[ \Lambda _{A,p}(\sqrt{nh^{d}}%
\mathbf{z}_{N,\tau }^{\ast }(x))\right] \right\} dQ(x,\tau )\right\vert %
\right] \right) \\
& \leq C\sqrt{Q(B_{n})}.
\end{align*}
\end{LemmaD}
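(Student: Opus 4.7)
My plan is to imitate the two-step structure used in the proof of Lemma B7, adapted to the bootstrap setting where all quantities carry an extra layer of randomness (so the outer $\mathbf{E}$ on top of $\mathbf{E}^{\ast}$ plays the role that $\mathbf{E}$ alone did in Lemma B7, and bounds must hold after taking this double expectation). The target will follow from chaining the following two estimates, both holding from some large $n$ on uniformly over $P\in\mathcal{P}$:
\begin{align*}
&\text{(Step 1)}\quad \mathbf{E}\mathbf{E}^{\ast}\!\left[\,h^{-d/2}\!\int_{B_n}\!\bigl|\Lambda_{A,p}(\sqrt{nh^d}\mathbf{z}^{\ast}_{n,\tau}(x))-\Lambda_{A,p}(\sqrt{nh^d}\mathbf{z}^{\ast}_{N,\tau}(x))\bigr|\,dQ(x,\tau)\right]\leq C\,Q(B_n),\\
&\text{(Step 2)}\quad \mathbf{E}\mathbf{E}^{\ast}\!\left[\left|h^{-d/2}\!\int_{B_n}\!\bigl\{\Lambda_{A,p}(\sqrt{nh^d}\mathbf{z}^{\ast}_{N,\tau}(x))-\mathbf{E}^{\ast}\Lambda_{A,p}(\sqrt{nh^d}\mathbf{z}^{\ast}_{N,\tau}(x))\bigr\}dQ(x,\tau)\right|\right]\leq C\sqrt{Q(B_n)}.
\end{align*}

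For Step 1, I would write $\mathbf{z}^{\ast}_{n,\tau}(x)=\mathbf{z}^{\ast}_{N,\tau}(x)+\mathbf{v}^{\ast}_{n,\tau}(x)+\mathbf{s}^{\ast}_{n,\tau}(x)$, the exact bootstrap analogue of the Poissonization decomposition used in (C.5) of Lemma B7, with $\mathbf{v}^{\ast}_{n,\tau}(x)\equiv\frac{n-N}{nh^d}\mathbf{E}^{\ast}\beta_{n,x,\tau}(Y_i^{\ast},(X_i^{\ast}-x)/h)$ and $\mathbf{s}^{\ast}_{n,\tau}(x)$ the tail sum of the remaining bootstrap terms. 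Applying H\"older's inequality exactly as in Lemma B7 reduces the bound to controlling $\mathbf{E}\mathbf{E}^{\ast}\|\mathbf{v}^{\ast}_{n,\tau}(x)\|^2$, $\mathbf{E}\mathbf{E}^{\ast}\|\mathbf{s}^{\ast}_{n,\tau}(x)\|^2$, and the $(2p-2)$-th moments of $\sqrt{nh^d}\mathbf{z}^{\ast}_{n,\tau}(x)$ and $\sqrt{nh^d}\mathbf{z}^{\ast}_{N,\tau}(x)$. The latter are provided by Lemma C5, while the two claims should give bootstrap analogues $\mathbf{E}\mathbf{E}^{\ast}\|\mathbf{v}^{\ast}_{n,\tau}(x)\|^2=O(n^{-1})$ and $\mathbf{E}\mathbf{E}^{\ast}\|\mathbf{s}^{\ast}_{n,\tau}(x)\|^2=O(n^{-3/2}h^{-d})$, proved by the same Rosenthal/Horv\'ath $|N-n|$-moment arguments, now with all expectations taken over the product of $P$ and $P^{\ast}$ and all bootstrap moment inputs supplied by Lemmas C4 and C5 (with $\eta_1$-regularization used exactly as in the proof of Lemma B7 to keep the denominator $\bar\sigma_{n,\tau,j}(x)$ uniformly bounded below).

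For Step 2, I would pass from the inner $L^1$ bootstrap norm to the $L^2$ bootstrap norm by Jensen, so that the inner quantity equals the bootstrap variance of the integrated object. Expanding this variance produces an integral of the bootstrap covariance kernel $C^{\ast}_{n,\tau_1,\tau_2,A,A}(x,v)$ defined in (\ref{C*}); by Lemma C6, its $\mathbf{E}$-expectation is asymptotically equivalent to $\sigma^2_{n,A,A}(B_n,B_n)$ uniformly over $P\in\mathcal{P}$. A standard Cauchy--Schwarz bound on the Gaussian covariance together with Lemma B4 gives $|C_{n,\tau_1,\tau_2,A,A}(x,u)|\leq C$ uniformly in $(x,u,\tau_1,\tau_2,P)$, so that $\sigma^2_{n,A,A}(B_n,B_n)\leq C\,Q(B_n)$ (using the compactness of $\mathcal T$ and of the support $\mathcal U$ of $u$, as in Step 2 of Lemma B7). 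Taking square roots and combining with Jensen yields the $\sqrt{Q(B_n)}$ bound; chaining Steps 1 and 2 via the triangle inequality completes the proof.

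The main obstacle I anticipate is the bootstrap version of Claim 2, i.e.\ showing $\mathbf{E}\mathbf{E}^{\ast}\|\sqrt{nh^d}\mathbf{s}^{\ast}_{n,\tau}(x)\|^2=O(n^{-1/2})$ uniformly in $P\in\mathcal{P}$. The Horv\'ath-style bound for a random-indexed sum produces third and fourth bootstrap moments of $q^{\ast}_{n,\tau,j}(x)+\eta_{1j}$ divided by the regularized bootstrap variance $\bar\sigma^{\ast}_{n,\tau,j}(x)$, which are random in the original sample. To keep everything uniform over $P\in\mathcal{P}$ I would (i) regularize by $\eta_1\sim N(0,\bar\varepsilon I_J)$ so that $\bar\sigma^{\ast}_{n,\tau,j}(x)\geq\bar\varepsilon$ deterministically, and (ii) bound the third and fourth bootstrap moments of $q^{\ast}_{n,\tau,j}(x)$ in expected value using Lemma C5, finally letting $\bar\varepsilon$ play its usual role and using the bandwidth condition $n^{-1/2}h^{-d-\nu_1}\to 0$ to absorb the resulting $h^{-d-\nu_1}$-order factors exactly as in (C.9) of the proof of Lemma B7.
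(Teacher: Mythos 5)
Your proposal is correct and follows essentially the same route as the paper's own proof: the same Poissonization decomposition $\mathbf{z}^{\ast}_{n,\tau}=\mathbf{z}^{\ast}_{N,\tau}+\mathbf{v}^{\ast}_{n,\tau}+\mathbf{s}^{\ast}_{n,\tau}$ with the two claims $\mathbf{E}\mathbf{E}^{\ast}\|\mathbf{v}^{\ast}_{n,\tau}(x)\|^{2}=O(n^{-1})$ and $\mathbf{E}\mathbf{E}^{\ast}\|\mathbf{s}^{\ast}_{n,\tau}(x)\|^{2}=O(n^{-3/2}h^{-d})$ for Step 1 (via Lemmas C4--C5, the Horv\'ath bound, and the $\eta_{1}$-regularization), and the same variance computation via the bootstrap covariance kernel and Lemma C6 for Step 2. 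No gaps.
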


\begin{proof}[Proof of Lemma C\protect\ref{lem-d7}]
We follow the proof of Lemma B7 and show that for some $C>0$, we have the
following:\newline
\textbf{Step 1:} $\sup_{P\in \mathcal{P}}\mathbf{E}\left( \mathbf{E}^{\ast }%
\left[ \left\vert h^{-d/2}\int_{B_{n}}\left\{ \Lambda _{A,p}(\sqrt{nh^{d}}%
\mathbf{z}_{n,\tau }^{\ast }(x))-\Lambda _{A,p}(\sqrt{nh^{d}}\mathbf{z}%
_{N,\tau }^{\ast }(x))\right\} dQ(x,\tau )\right\vert \right] \right) \leq
CQ(B_{n}),$\newline
\textbf{Step 2:}%
\begin{eqnarray*}
&&\sup_{P\in \mathcal{P}}\mathbf{E}\left( \mathbf{E}^{\ast }\left[
\left\vert h^{-d/2}\int_{B_{n}}\left\{ \Lambda _{A,p}(\sqrt{nh^{d}}\mathbf{z}%
_{n,\tau }^{\ast }(x))-\Lambda _{A,p}(\sqrt{nh^{d}}\mathbf{z}_{N,\tau
}^{\ast }(x))\right\} dQ(x,\tau )\right\vert \right] \right) \\
&\leq &C\sqrt{Q(B_{n})}.
\end{eqnarray*}

\noindent \textbf{Proof of Step 1: }Similarly as in the proof of Step 1 in
the proof of Lemma B7, we first write%
\begin{equation*}
\mathbf{z}_{n,\tau }^{\ast }(x)=\mathbf{z}_{N,\tau }^{\ast }(x)+\mathbf{v}%
_{n,\tau }^{\ast }(x)+\mathbf{s}_{n,\tau }^{\ast }(x),
\end{equation*}%
where%
\begin{eqnarray*}
\mathbf{v}_{n,\tau }^{\ast }(x) &\equiv &\left( \frac{n-N}{n}\right) \cdot 
\frac{1}{h^{d}}\mathbf{E}^{\ast }\left[ \beta _{n,x,\tau }\left( Y_{i}^{\ast
},\frac{X_{i}^{\ast }-x}{h}\right) \right] \text{ and} \\
\mathbf{s}_{n,\tau }^{\ast }(x) &\equiv &\frac{1}{nh^{d}}\sum_{i=N+1}^{n}%
\left\{ \beta _{n,x,\tau }\left( Y_{i}^{\ast },\frac{X_{i}^{\ast }-x}{h}%
\right) -\mathbf{E}^{\ast }\left[ \beta _{n,x,\tau }\left( Y_{i}^{\ast },%
\frac{X_{i}^{\ast }-x}{h}\right) \right] \right\} .
\end{eqnarray*}

Similarly as in the proof of Lemma B7, we deduce that for some $%
C_{1},C_{2}>0,$%
\begin{eqnarray*}
\lefteqn{\left\vert \int_{B_{n}}\left\{ \Lambda _{A,p}\left( \mathbf{z}%
_{n,\tau }^{\ast }(x)\right) -\Lambda _{A,p}\left( \mathbf{z}_{N,\tau
}^{\ast }(x)\right) \right\} dQ(x,\tau )\right\vert } \\
&\leq &C_{1}\int_{B_{n}}\left\Vert \mathbf{v}_{n,\tau }^{\ast
}(x)\right\Vert \left( \left\Vert \mathbf{z}_{n,\tau }^{\ast }(x)\right\Vert
^{p-1}+\left\Vert \mathbf{z}_{N,\tau }^{\ast }(x)\right\Vert ^{p-1}\right)
dQ(x,\tau ) \\
&&+C_{2}\int_{B_{n}}\left\Vert \mathbf{s}_{n,\tau }^{\ast }(x)\right\Vert
\left( \left\Vert \mathbf{z}_{n,\tau }^{\ast }(x)\right\Vert
^{p-1}+\left\Vert \mathbf{z}_{N,\tau }^{\ast }(x)\right\Vert ^{p-1}\right)
dQ(x,\tau ) \\
&=&D_{1n}^{\ast }+D_{2n}^{\ast },\text{ say.}
\end{eqnarray*}%
To deal with $D_{1n}^{\ast }$ and $D_{2n}^{\ast }$, we first show the
following:\medskip

\noindent \textsc{Claim 1:} $\sup_{(x,\tau )\in \mathcal{S}}\sup_{P\in 
\mathcal{P}}\mathbf{E}\left( \mathbf{E}^{\ast }[||\mathbf{v}_{n,\tau }^{\ast
}(x)||^{2}]\right) =O(n^{-1}).$\medskip

\noindent \textsc{Claim 2:} $\sup_{(x,\tau )\in \mathcal{S}}\sup_{P\in 
\mathcal{P}}\mathbf{E}\left( \mathbf{E}^{\ast }[||\mathbf{s}_{n,\tau }^{\ast
}(x)||^{2}]\right) =O(n^{-3/2}h^{-d})$.\medskip

\noindent \textsc{Proof of Claim 1:} Similarly as in the proof of Lemma B7,
we note that%
\begin{equation*}
\mathbf{E}\left( \mathbf{E}^{\ast }[||\mathbf{v}_{n,\tau }^{\ast
}(x)||^{2}]\right) \leq \mathbf{E}\left\vert \left( \frac{n-N}{n}\right)
\right\vert ^{2}\mathbf{E}\left[ \left\Vert \frac{1}{h^{d}}\mathbf{E}^{\ast }%
\left[ \beta _{n,x,\tau }\left( Y_{i}^{\ast },\frac{X_{i}^{\ast }-x}{h}%
\right) \right] \right\Vert ^{2}\right] .
\end{equation*}%
By the first statement of Lemma C5, we have%
\begin{equation*}
\sup_{(x,\tau )\in \mathcal{S}}\sup_{P\in \mathcal{P}}\mathbf{E}\left[
\left\Vert \frac{1}{h^{d}}\mathbf{E}^{\ast }\left[ \beta _{n,x,\tau }\left(
Y_{i}^{\ast },\frac{X_{i}^{\ast }-x}{h}\right) \right] \right\Vert ^{2}%
\right] =O(1).
\end{equation*}%
Since $\mathbf{E}\left\vert (n-N)/n\right\vert ^{2}=O(n^{-1})$, we obtain
Claim 1.\medskip

\noindent \textsc{Proof of Claim 2:} Let 
\begin{equation*}
\mathbf{s}_{n,\tau }^{\ast }(x;\eta _{1})=\mathbf{s}_{n,\tau }^{\ast }(x)+%
\frac{(N-n)\eta _{1}}{n^{3/2}h^{d/2}},
\end{equation*}%
where $\eta _{1}$ is a random vector independent of $((Y_{i}^{\ast
},X_{i}^{\ast })_{i=1}^{n},(Y_{i},X_{i})_{i=1}^{n},N)$ and follows $N(0,\bar{%
\varepsilon}I_{J})$. Note that%
\begin{eqnarray*}
&&\sup_{(x,\tau )\in \mathcal{S}}\sup_{P\in \mathcal{P}}\mathbf{E}\left( 
\mathbf{E}^{\ast }\left\Vert \sqrt{nh^{d}}\mathbf{s}_{n,\tau }^{\ast
}(x)\right\Vert ^{2}\right) \\
&\leq &2\sup_{(x,\tau )\in \mathcal{S}}\sup_{P\in \mathcal{P}}\mathbf{E}%
\left( \mathbf{E}^{\ast }\left\Vert \sqrt{nh^{d}}\mathbf{s}_{n,\tau }^{\ast
}(x;\eta _{1})\right\Vert ^{2}\right) +\frac{2}{n}\mathbf{E}\left\Vert \frac{%
(N-n)\eta _{1}}{\sqrt{n}}\right\Vert ^{2} \\
&\leq &2\sup_{(x,\tau )\in \mathcal{S}}\sup_{P\in \mathcal{P}}\mathbf{E}%
\left( \mathbf{E}^{\ast }\left\Vert \sqrt{nh^{d}}\mathbf{s}_{n,\tau }^{\ast
}(x;\eta _{1})\right\Vert ^{2}\right) +\frac{C\bar{\varepsilon}^{2}}{n},
\end{eqnarray*}%
as in the proof of Lemma B7. As for the leading expectation on the right
hand side of (\ref{dec22}), we let $C_{1}>0$ be as in Lemma C4 and note that%
\begin{eqnarray*}
\mathbf{E}\left( \mathbf{E}^{\ast }\left\Vert \sqrt{nh^{d}}\mathbf{s}%
_{n,\tau }^{\ast }(x;\eta _{1})\right\Vert ^{2}\right) &=&\sum_{j\in \mathbb{%
N}_{J}}\mathbf{E}\left( \mathbf{E}^{\ast }\left( \frac{1}{\sqrt{n}}%
\sum_{i=N+1}^{n}q_{n,\tau ,j}^{\ast (i)}(x;\eta _{1j}^{(i)})\right)
^{2}\right) \\
&=&\frac{1}{n}\sum_{j\in \mathbb{N}_{J}}\mathbf{E}\left( \tilde{\sigma}%
_{n,\tau ,j}^{2}(x)\mathbf{E}^{\ast }\left( \sum_{i=N+1}^{n}\frac{q_{n,\tau
,j}^{\ast (i)}(x;\eta _{1j}^{(i)})}{\tilde{\sigma}_{n,\tau ,j}(x)}\right)
^{2}\right) ,
\end{eqnarray*}%
where $q_{n,\tau }^{\ast (i)}(x;\eta _{1}^{(i)})$'s ($i=1,2,\cdot \cdot
\cdot \ $) are as defined in the proof of Lemma C6 and $q_{n,\tau ,j}^{\ast
(i)}(x;\eta _{1j}^{(i)})$ is the $j$-th entry of $q_{n,\tau }^{\ast
(i)}(x;\eta _{1}^{(i)})$ and $\tilde{\sigma}_{n,\tau ,j}^{2}(x)=Var^{\ast
}(q_{n,\tau ,j}^{\ast (i)}(x;\eta _{1j}^{(i)}))>0$ and $Var^{\ast }$ denotes
the variance with respect to the joint distribution of $((Y_{i}^{\ast
},X_{i}^{\ast })_{i=1}^{n},\eta _{1j}^{(i)})$ conditional on $%
(Y_{i},X_{i})_{i=1}^{n}.$ We apply Lemma 1(i) of \citeasnoun{Horvath:91} to
deduce that%
\begin{eqnarray}
\mathbf{E}^{\ast }\left( \sum_{i=N+1}^{n}\frac{q_{n,\tau ,j}^{\ast
(i)}(x;\eta _{1j}^{(i)})}{\tilde{\sigma}_{n,\tau ,j}(x)}\right) ^{2} &\leq &C%
\sqrt{n}+C\mathbf{E}^{\ast }\left( \left\vert \frac{q_{n,\tau ,j}^{\ast
(i)}(x;\eta _{1j}^{(i)})}{\tilde{\sigma}_{n,\tau ,j}(x)}\right\vert
^{3}\right)  \label{dev4} \\
&&+C\mathbf{E}^{\ast }\left( \left\vert \frac{q_{n,\tau ,j}^{\ast
(i)}(x;\eta _{1j}^{(i)})}{\tilde{\sigma}_{n,\tau ,j}(x)}\right\vert
^{4}\right) ,  \notag
\end{eqnarray}%
for some $C>0$. Using this, Lemma C5, and following arguments similarly as
in (\ref{arg1}), (\ref{arg2}) and (\ref{arg3}), we conclude that 
\begin{eqnarray*}
\sup_{(x,\tau )\in \mathcal{S}}\sup_{P\in \mathcal{P}}\mathbf{E}\left( 
\mathbf{E}^{\ast }\left\Vert \sqrt{nh^{d}}\mathbf{s}_{n,\tau }^{\ast
}(x)\right\Vert ^{2}\right) &\leq &O\left( n^{-1}h^{-\nu _{1}}\right)
+O\left( n^{-1/2}+n^{-3/4}h^{-d/2-\nu _{1}}+n^{-1}h^{-d-\nu _{1}}\right) \\
&=&O\left( n^{-1}h^{-\nu _{1}}\right) +O\left( n^{-1/2}\right) ,
\end{eqnarray*}%
since $n^{-1/2}h^{-d-\nu _{1}}\rightarrow 0$. This delivers Claim 2.

Using Claims 1 and 2, and following the arguments in the proof of Lemma B7,
we obtain (Step 1).\medskip

\noindent \textbf{Proof of Step 2:} We can follow the proof of Lemma B6 to
show that%
\begin{eqnarray*}
&&\mathbf{E}\left[ \mathbf{E}^{\ast }\left[ h^{-d/2}\int_{B_{n}}\left(
\Lambda _{A,p}(\sqrt{nh^{d}}\mathbf{z}_{N,\tau }^{\ast }(x))-\mathbf{E}%
^{\ast }\left[ \Lambda _{A,p}(\sqrt{nh^{d}}\mathbf{z}_{N,\tau }^{\ast }(x))%
\right] \right) dQ(x,\tau )\right] ^{2}\right] \\
&=&\mathbf{E}\left[ \int_{\mathcal{T}}\int_{\mathcal{T}}\int_{B_{n,\tau
_{1}}\cap B_{n,\tau _{2}}}\int_{\mathcal{U}}C_{n,\tau _{1},\tau
_{2},A,A^{\prime }}^{\ast }(x,u)dudxd\tau _{1}d\tau _{2}\right] +o(1) \\
&\leq &C\int_{\mathcal{T}}\int_{\mathcal{T}}\int_{B_{n,\tau _{1}}\cap
B_{n,\tau _{2}}}dxd\tau _{1}d\tau _{2}+o(1)\leq CQ(B_{n}),
\end{eqnarray*}%
where $C_{n,\tau _{1},\tau _{2},A,A^{\prime }}^{\ast }(x,v)$ is as defined
in (\ref{C*}). We obtain the desired result of Step 2.
\end{proof}

Let $\mathcal{C}\subset \mathbf{R}^{d},$ $\alpha _{P}\equiv P\{X\in \mathbf{R%
}^{d}\backslash \mathcal{C}\}$ and $B_{n,A}(c_{n};\mathcal{C})\ $be as
introduced prior to Lemma B8. Define%
\begin{eqnarray*}
\zeta _{n,A}^{\ast } &\equiv &\int_{B_{n,A}(c_{n};\mathcal{C})}\Lambda
_{A,p}(\sqrt{nh^{d}}\mathbf{z}_{n,\tau }^{\ast }(x))dQ(x,\tau ),\text{ and}
\\
\zeta _{N,A}^{\ast } &\equiv &\int_{B_{n,A}(c_{n};\mathcal{C})}\Lambda
_{A,p}(\sqrt{nh^{d}}\mathbf{z}_{N,\tau }^{\ast }(x))dQ(x,\tau ).
\end{eqnarray*}%
Let $\mu _{A}$'s be real numbers indexed by $A\subset \mathbb{N}_{J}$. We
also define $B_{n,A}(c_{n};\mathcal{C})$ as prior to Lemma B8 and let 
\begin{eqnarray*}
S_{n}^{\ast } &\equiv &h^{-d/2}\sum_{A\in \mathcal{N}_{J}}\mu _{A}\left\{
\zeta _{N,A}^{\ast }-\mathbf{E}^{\ast }\zeta _{N,A}^{\ast }\right\} , \\
U_{n}^{\ast } &\equiv &\frac{1}{\sqrt{n}}\left\{
\sum_{i=1}^{N}1\{X_{i}^{\ast }\in \mathcal{C}\}-nP^{\ast }\left\{
X_{i}^{\ast }\in \mathcal{C}\right\} \right\} ,\text{ and} \\
V_{n}^{\ast } &\equiv &\frac{1}{\sqrt{n}}\left\{
\sum_{i=1}^{N}1\{X_{i}^{\ast }\in \mathbf{R}^{d}\backslash \mathcal{C}%
\}-nP^{\ast }\left\{ X_{i}^{\ast }\in \mathbf{R}^{d}\backslash \mathcal{C}%
\right\} \right\} .
\end{eqnarray*}%
We let 
\begin{equation*}
H_{n}^{\ast }\equiv \left[ \frac{S_{n}^{\ast }}{\sigma _{n}(\mathcal{C})},%
\frac{U_{n}^{\ast }}{\sqrt{1-\alpha _{P}}}\right] .
\end{equation*}%
The following lemma is a bootstrap counterpart of Lemma B8.

\begin{LemmaD}
\label{lem-d8} \textit{Suppose that the conditions of Lemma C6 hold and that 
}$c_{n}\rightarrow \infty $, \textit{as }$n\rightarrow \infty . $

\noindent (i)\textit{\ If } $\liminf_{n\rightarrow \infty }\inf_{P\in 
\mathcal{P}}\sigma _{n}^{2}(\mathcal{C})>0$, \textit{then for all }$a>0,$ 
\begin{equation*}
\sup_{P\in \mathcal{P}}P\left\{ \sup_{t\in \mathbf{R}^{2}}\left\vert P^{\ast
}\left\{ H_{n}^{\ast }\leq t\right\} -P\left\{ \mathbb{Z}\leq t\right\}
\right\vert >a\right\} \rightarrow 0.
\end{equation*}

\noindent (ii) \textit{If } $\limsup_{n\rightarrow \infty }\sigma _{n}^{2}(%
\mathcal{C})=0$, \textit{then, for each }$(t_{1},t_{2})\in \mathbf{R}^{2}$ 
\textit{and }$a>0,$ 
\begin{equation*}
\sup_{P\in \mathcal{P}}P\left\{ \left\vert P^{\ast }\left\{ S_{n}^{\ast
}\leq t_{1}\text{ and }\frac{U_{n}^{\ast }}{\sqrt{1-\alpha _{P}}}\leq
t_{2}\right\} -1\left\{ 0\leq t_{1}\right\} P\left\{ \mathbb{Z}_{1}\leq
t_{2}\right\} \right\vert >a\right\} \rightarrow 0.
\end{equation*}
\end{LemmaD}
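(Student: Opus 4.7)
The plan is to follow the overall architecture of the proof of Lemma B8 (de-Poissonized CLT for the original statistic), but applied to the bootstrap quantities, with the appropriate bootstrap counterparts of the moment and covariance bounds, namely Lemmas C4--C7. The principal new issue is that all the relevant covariance and moment bounds now hold in expectation under $P$, because the bootstrap is defined conditional on $\{(Y_i,X_i)\}_{i=1}^n$; this means that conditional-on-data statements must be transferred to unconditional-uniform-in-$P$ statements by Markov's inequality at the outer level, which is precisely the form of the conclusion sought.

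For part (i), I would proceed in four steps mirroring the proof of Lemma B8. First, introduce a regularized bootstrap vector $H_{n,\bar\varepsilon}^\ast=[S_{n,\bar\varepsilon}^\ast/\sigma_{n,\bar\varepsilon}(\mathcal{C}),U_n^\ast/\sqrt{1-\alpha_P}]^\top$ in which $\sqrt{nh^d}\mathbf{z}_{N,\tau}^\ast(x)$ is replaced by $\sqrt{nh^d}\mathbf{z}_{N,\tau}^\ast(x;\eta_1)$, with $\eta_1\sim N(0,\bar\varepsilon I_J)$ independent of the data and bootstrap draws, exactly as in the proof of Lemma C6. Second, control $\mathrm{Cov}^\ast(S_{n,\bar\varepsilon}^\ast-S_n^\ast, U_n^\ast)$ by a Hölder bound using Lemma C5, and show $\mathbf{E}[|\mathrm{Cov}^\ast(S_{n,\bar\varepsilon}^\ast,U_n^\ast)|]=o(h^{d/2})$ by applying Sweeting's Berry--Esseen bound to the nonlinear functional obtained after $\tilde\Sigma_{n,\tau_1,\tau_2,\bar\varepsilon}^\ast$-decorrelation, just as in Steps~1--2 in the proof of Lemma B8 and using the bootstrap moment bound in Lemma C5 together with $\lambda_{\min}(\tilde\Sigma_{n,\tau_1,\tau_2,\bar\varepsilon}^\ast)\geq\bar\varepsilon$. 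Third, combine the bound in Lemma C6, which gives $\mathbf{E}|\mathrm{Var}^\ast(S_n^\ast)-\sigma_n^2(\mathcal{C})|\to 0$ uniformly in $P\in\mathcal{P}$, with the covariance control of Step~2 to conclude that the bootstrap covariance matrix $C_n^\ast=\mathbf{E}^\ast H_n^\ast H_n^{\ast\top}$ satisfies $\sup_{P\in\mathcal{P}}P\{\|C_n^\ast-I_2\|>a\}\to 0$, and in particular that $\lambda_{\min}(C_n^\ast)>c$ with high probability uniformly in $P$, whenever $\liminf_n\inf_{P\in\mathcal{P}}\sigma_n^2(\mathcal{C})>0$.

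The fourth and hardest step is the uniform conditional CLT. Following Mason and Polonik (2009) and the proof of Lemma B8, partition $\mathcal{C}$ into rectangles $\{B_{n,\mathbf{i}}\}$ of side length of order $h$ so that the arrays $\{\alpha_{n,\mathbf{i}}^\ast\}$ and $\{u_{n,\mathbf{i}}^\ast\}$ obtained by restricting the defining integrals and Poisson sums to $B_{n,\mathbf{i}}$ are $1$-dependent random fields under $P^\ast$. For any $q=(q_1,q_2)\in\mathbf{R}^2\setminus\{0\}$, apply Shergin's Berry--Esseen-type bound for $1$-dependent arrays to $\sum_{\mathbf{i}\in\mathcal{I}_n}(q_1\alpha_{n,\mathbf{i}}^\ast+q_2u_{n,\mathbf{i}}^\ast)$ under $P^\ast$; the third-moment control needed there is provided by a bootstrap analog of the bound $\sum_{\mathbf{i}}\mathbf{E}^\ast|q_1\alpha_{n,\mathbf{i}}^\ast+q_2 u_{n,\mathbf{i}}^\ast|^r=o_{P}(1)$ for some $r\in(2,(2p+2)/p]$, which follows from Lemma C5 together with the fact that $\#\mathcal{I}_n=O(h^{-d})$ and $\mathbf{E}^\ast|\alpha_{n,\mathbf{i}}^\ast|^r=O_{P}(h^{d(r/2)})$ cell-wise. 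Combining this with the eigenvalue lower bound of Step~3 and applying the uniform-in-$P$ form of Levy's continuity theorem in Lemma B2 (applied to $P^\ast$ and combined with Markov's inequality in $P$) yields the conditional convergence of $(C_n^\ast)^{-1/2}H_n^\ast$ to $N(0,I_2)$, uniformly over $t\in\mathbf{R}^2$ by continuity of the limit cdf; replacing $(C_n^\ast)^{-1/2}$ by $I_2$ using Step~3 gives (i). The main obstacle is verifying that Shergin's remainder is $o_{P^\ast}(1)$ with $P$-probability tending to one uniformly in $\mathcal{P}$: here one must be careful that the block moments are bounded in expectation (Lemma C5 delivers this), so that Markov's inequality transfers the cell-wise moment control into a uniform $P$-statement.

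For part (ii), the assumption $\limsup_n\sigma_n^2(\mathcal{C})=0$ together with Lemma C6 gives $\mathbf{E}[\mathrm{Var}^\ast(S_n^\ast)]=\sigma_n^2(\mathcal{C})+o(1)\to 0$, so by iterated Markov's inequality $\sup_{P\in\mathcal{P}}P\{P^\ast(|S_n^\ast|>a)>a\}\to 0$ for every $a>0$, i.e.\ $S_n^\ast=o_{P^\ast}(1)$ $\mathcal{P}$-uniformly. The marginal conditional CLT for $U_n^\ast/\sqrt{1-\alpha_P}$ to $\mathbb{Z}_1$ follows by applying Shergin's bound to $U_n^\ast=\sum_{\mathbf{i}}u_{n,\mathbf{i}}^\ast$, exactly as in part (ii) of the proof of Lemma B8, using the Poisson moment bound to verify Lyapunov's condition in probability. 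Joint convergence to $(0,\mathbb{Z}_1)$ at each fixed $(t_1,t_2)$ is then immediate from Slutsky applied in the bootstrap world and a final application of Markov's inequality to obtain the stated $\mathcal{P}$-uniformity.
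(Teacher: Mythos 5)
Your proposal is correct and follows essentially the same route as the paper's proof: the same four-step architecture (regularization via $\eta_1$, covariance control of $S_{n,\bar\varepsilon}^\ast-S_n^\ast$ and of $\mathrm{Cov}^\ast(S_{n,\bar\varepsilon}^\ast,U_n^\ast)$ via Sweeting's bound, variance consistency from Lemma C6, and the $1$-dependent-field/Shergin/Lemma B2 argument for the conditional CLT), with the conditional-on-data bounds transferred to uniform-in-$P$ statements by Markov/Chebyshev exactly as the paper does. Part (ii) likewise matches the paper's argument.
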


\begin{proof}[Proof of Lemma C\protect\ref{lem-d8}]
Similarly as in the proof of Lemma C8, we fix $\bar{\varepsilon}>0$ and let%
\begin{equation*}
H_{n,\bar{\varepsilon}}^{\ast }\equiv \left[ \frac{S_{n,\bar{\varepsilon}%
}^{\ast }}{\sigma _{n,\bar{\varepsilon}}(\mathcal{C})},\frac{U_{n}^{\ast }}{%
\sqrt{1-\alpha _{P}}}\right] ^{\top },
\end{equation*}%
where $S_{n,\bar{\varepsilon}}^{\ast }$ is equal to $S_{n}^{\ast }$, except
that $\zeta _{N,A}^{\ast }$ is replaced by%
\begin{equation*}
\zeta _{N,A,\bar{\varepsilon}}^{\ast }\equiv \int_{B_{n,A}(c_{n};\mathcal{C}%
)}\Lambda _{A,p}(\sqrt{nh^{d}}\mathbf{z}_{N,\tau }^{\ast }(x;\eta
_{1}))dQ(x,\tau ),
\end{equation*}%
and $\mathbf{z}_{N,\tau }^{\ast }(x;\eta _{1})$ is as defined prior to Lemma
C6. Also let 
\begin{equation*}
\tilde{C}_{n}\equiv \mathbf{E}^{\ast }H_{n}^{\ast }H_{n}^{\ast \top }\text{
and }\tilde{C}_{n,\bar{\varepsilon}}\equiv \mathbf{E}^{\ast }H_{n,\bar{%
\varepsilon}}^{\ast }H_{n,\bar{\varepsilon}}^{\ast \top }\text{.}
\end{equation*}%
First, we show the following statements.\medskip

\noindent \textbf{Step 1: }sup$_{P\in \mathcal{P}}P\left\{ |Cov^{\ast }(S_{n,%
\bar{\varepsilon}}^{\ast }-S_{n}^{\ast },U_{n}^{\ast })|>M\sqrt{\bar{%
\varepsilon}}\right\} \rightarrow 0,$ as $n\rightarrow \infty $ and $%
M\rightarrow \infty $.\medskip

\noindent \textbf{Step 2: }For any $a>0$, sup$_{P\in \mathcal{P}}P\left\{
\left\vert Cov(S_{n,\bar{\varepsilon}}^{\ast },U_{n}^{\ast })\right\vert
>ah^{d/2}\right\} \rightarrow 0,$ as $n\rightarrow \infty .$\medskip

\noindent \textbf{Step 3:\ }There exists $c>0$ such that from some large $n$
on,%
\begin{equation*}
\inf_{P\in \mathcal{P}}\lambda _{\min }(\tilde{C}_{n})>c.
\end{equation*}

\noindent \textbf{Step 4: }For any $a>0$, as $n\rightarrow \infty ,$%
\begin{equation*}
\sup_{P\in \mathcal{P}}P\left\{ \sup_{t\in \mathbf{R}^{2}}\left\vert P^{\ast
}\left\{ \tilde{C}_{n}^{-1/2}H_{n}^{\ast }\leq t\right\} \rightarrow
P\left\{ \mathbb{Z}\leq t\right\} \right\vert >a\right\} \rightarrow 0\text{.%
}
\end{equation*}

Combining Steps 1-4, we obtain (i) of Lemma B8.\medskip

\noindent \textbf{Proof of Step 1:} Observe that 
\begin{equation*}
\left\vert \zeta _{N,A,\bar{\varepsilon}}^{\ast }-\zeta _{N,A}^{\ast
}\right\vert \leq C||\eta _{1}||\int_{B_{n,A}(c_{n};\mathcal{C})}\left\Vert 
\sqrt{nh^{d}}\mathbf{z}_{N,\tau }^{\ast }(x)\right\Vert ^{p-1}dQ(x,\tau ).
\end{equation*}%
As in the proof of Step 1 in the proof of Lemma B8, we deduce that 
\begin{equation*}
\mathbf{E}^{\ast }\left[ \left\vert \zeta _{N,A,\bar{\varepsilon}}^{\ast
}-\zeta _{N,A}^{\ast }\right\vert ^{2}\right] \leq C\bar{\varepsilon}%
\int_{B_{n,A}(c_{n};\mathcal{C})}\mathbf{E}^{\ast }\left\Vert \sqrt{nh^{d}}%
\mathbf{z}_{N,\tau }^{\ast }(x)\right\Vert ^{2p-2}dQ(x,\tau ).
\end{equation*}%
Hence for some $C_{1},C_{2}>0,$%
\begin{eqnarray}
&&\mathbf{E}\left( \mathbf{E}^{\ast }\left[ \left\vert \zeta _{N,A,\bar{%
\varepsilon}}^{\ast }-\zeta _{N,A}^{\ast }\right\vert ^{2}\right] \right)
\label{dev6} \\
&\leq &C\bar{\varepsilon}\int_{B_{n,A}(c_{n};\mathcal{C})}\mathbf{E}\left( 
\mathbf{E}^{\ast }\left\Vert \sqrt{nh^{d}}\mathbf{z}_{N,\tau }^{\ast
}(x)\right\Vert ^{2p-2}\right) dQ(x,\tau )\leq C_{2}\bar{\varepsilon}  \notag
\end{eqnarray}%
by the second statement of Lemma C5.

On the other hand, observe that $\mathbf{E}^{\ast }U_{n}^{\ast 2}\leq 1.$
Hence%
\begin{equation*}
P\left\{ |Cov^{\ast }(S_{n,\bar{\varepsilon}}^{\ast }-S_{n}^{\ast
},U_{n}^{\ast })|>M\sqrt{\bar{\varepsilon}}\right\} \leq |\mathcal{N}%
_{J}|\cdot P\left\{ \max_{A\in \mathcal{N}_{J}}\mathbf{E}^{\ast }\left[
\left\vert \zeta _{N,A,\bar{\varepsilon}}^{\ast }-\zeta _{N,A}^{\ast
}\right\vert ^{2}\right] >M^{2}\bar{\varepsilon}\right\} .
\end{equation*}%
By Chebychev's inequality, the last probability is bounded by (for some $C>0$
that does not depend on $P\in \mathcal{P}$)%
\begin{equation*}
M^{-2}\bar{\varepsilon}^{-1}\sum_{A\in \mathcal{N}_{J}}\mathbf{E}\left( 
\mathbf{E}^{\ast }\left[ \left\vert \zeta _{N,A,\bar{\varepsilon}}^{\ast
}-\zeta _{N,A}^{\ast }\right\vert ^{2}\right] \right) \leq CM^{-2},
\end{equation*}%
by (\ref{dev6}). Hence we obtain the desired result.\medskip

\noindent \textbf{Proof of Step 2:} Let $\tilde{\Sigma}_{2n,\tau ,\bar{%
\varepsilon}}^{\ast }$ be the covariance matrix of $[(q_{n,\tau }^{\ast
}(x)+\eta _{1})^{\top },\tilde{U}_{n}^{\ast }]^{\top }$ under $P^{\ast }$,
where $\tilde{U}_{n}^{\ast }=U_{n}^{\ast }/\sqrt{P\{X\in \mathcal{C}\}}.$
Using Lemma C4 and following the same arguments in (\ref{dev7}), we find that%
\begin{equation*}
\sup_{(x,\tau )\in \mathcal{S}}\sup_{P\in \mathcal{P}}\mathbf{E}\left[ 
\mathbf{E}^{\ast }\left[ q_{n,\tau ,j}^{\ast }(x)\tilde{U}_{n}^{\ast }\right]
\right] \leq C_{2}h^{d/2},
\end{equation*}%
for some $C_{2}>0$. Therefore, using this result and following the proof of
Step 3 in the proof of Lemma B8, we deduce that (everywhere)%
\begin{equation}
\lambda _{\min }\left( \tilde{\Sigma}_{2n,\tau ,\bar{\varepsilon}}^{\ast
}\right) \geq \bar{\varepsilon}-\left\Vert A_{n,\tau }^{\ast }(x)\right\Vert
,  \label{ineq37}
\end{equation}%
for some random matrix $A_{n,\tau }^{\ast }(x)$ such that 
\begin{equation*}
\sup_{(x,\tau )\in \mathcal{S}}\sup_{P\in \mathcal{P}}\mathbf{E}\left[
\left\Vert A_{n,\tau }^{\ast }(x)\right\Vert \right] =O(h^{d/2}).
\end{equation*}%
Hence by (\ref{ineq37}), 
\begin{eqnarray}
&&\inf_{(x,\tau )\in \mathcal{S}}\inf_{P\in \mathcal{P}}P\left\{ \lambda
_{\min }\left( \tilde{\Sigma}_{2n,\tau ,\bar{\varepsilon}}^{\ast }\right)
\geq \bar{\varepsilon}/2\right\}  \label{lb3} \\
&\geq &\inf_{(x,\tau )\in \mathcal{S}}\inf_{P\in \mathcal{P}}P\left\{
\left\Vert A_{n,\tau }^{\ast }(x)\right\Vert \leq \bar{\varepsilon}/2\right\}
\notag \\
&\geq &1-\frac{2}{\bar{\varepsilon}}\sup_{(x,\tau )\in \mathcal{S}%
}\sup_{P\in \mathcal{P}}\mathbf{E}\left[ \left\Vert A_{n,\tau }^{\ast
}(x)\right\Vert \right] \rightarrow 1,  \notag
\end{eqnarray}%
as $n\rightarrow \infty $.

Now note that 
\begin{equation*}
\left( q_{n,\tau ,j}^{\ast }(x),\tilde{U}_{n}^{\ast }\right) \overset{%
d^{\ast }}{=}\left( \frac{1}{\sqrt{n}}\sum_{k=1}^{n}q_{n,\tau ,j}^{(k)\ast
}(x),\frac{1}{\sqrt{n}}\sum_{k=1}^{n}\tilde{U}_{n}^{(k)\ast }\right) ,
\end{equation*}%
where $(q_{n,\tau ,j}^{(k)\ast }(x),\tilde{U}_{n}^{(k)\ast })$'s with $%
k=1,\cdot \cdot \cdot ,n$ are i.i.d. copies of $(q_{n,\tau ,j}^{\ast }(x),%
\bar{U}_{n}^{\ast })$, and 
\begin{equation*}
\bar{U}_{n}^{\ast }\equiv \frac{1}{\sqrt{nP\{X\in \mathcal{C}\}}}\left\{
\sum_{1\leq i\leq N_{1}}1\{X_{i}^{\ast }\in \mathcal{C}\}-P^{\ast }\left\{
X_{i}^{\ast }\in \mathcal{C}\right\} \right\} .
\end{equation*}%
Note also that by Rosenthal's inequality, 
\begin{equation*}
\text{limsup}_{n\rightarrow \infty }\sup_{P\in \mathcal{P}}P\left\{ \mathbf{E%
}^{\ast }\left[ |\tilde{U}_{n}^{(k)\ast }|^{3}\right] >M\right\} \rightarrow
0,
\end{equation*}%
as $M\rightarrow \infty $. Define%
\begin{equation*}
W_{n,\tau }^{\ast }(x;\eta _{1})\equiv \tilde{\Sigma}_{2n,\tau ,\bar{%
\varepsilon}}^{\ast -1/2}\left[ 
\begin{array}{c}
q_{n,\tau }^{\ast }(x)+\eta _{1} \\ 
\tilde{U}_{n}^{\ast }%
\end{array}%
\right] .
\end{equation*}%
Using (\ref{lb3}) and Lemma C5, and following the same arguments in the
proof of Step 2 in the proof of Lemma B8, we deduce that%
\begin{equation*}
\text{limsup}_{n\rightarrow \infty }\sup_{(x,\tau )\in \mathcal{S}%
}\sup_{P\in \mathcal{P}}P\left\{ \mathbf{E}^{\ast }\left\Vert W_{n,\tau
}^{\ast }(x;\eta _{1})\right\Vert ^{3}>M\bar{\varepsilon}^{-3/2}h^{-d/2}%
\right\} \rightarrow 0,
\end{equation*}%
as $M\rightarrow \infty $. For any vector $\mathbf{v}=[\mathbf{v}_{1}^{\top
},v_{2}]^{\top }\in \mathbf{R}^{J+1}$, we define%
\begin{equation*}
\tilde{D}_{n,\tau ,p}(\mathbf{v})\equiv \Lambda _{p}\left( \left[ \tilde{%
\Sigma}_{2n,\tau ,\bar{\varepsilon}}^{\ast 1/2}\mathbf{v}\right] _{1}\right) %
\left[ \tilde{\Sigma}_{2n,\tau ,\bar{\varepsilon}}^{\ast 1/2}\mathbf{v}%
\right] _{2},
\end{equation*}%
where $[a]_{1}$ of a vector $a\in \mathbf{R}^{J+1}$ indicates the vector of
the first $J$ entries of $a$, and $[a]_{2}$ the last entry of $a.$ By
Theorem 1 of \citeasnoun{Sweeting:77}, we find that (with $\bar{\varepsilon}%
>0$ fixed)%
\begin{equation*}
\mathbf{E}^{\ast }\left[ \tilde{D}_{n,\tau ,p}\left( \frac{1}{\sqrt{n}}%
\sum_{i=1}^{n}W_{n,\tau }^{(i)\ast }(x;\eta _{1})\right) \right] =\mathbf{E}%
\left[ \tilde{D}_{n,\tau ,p}\left( \mathbb{Z}_{J+1}\right) \right]
+O_{P}(n^{-1/2}h^{-d/2}),\ \mathcal{P}\text{-uniformly,}
\end{equation*}%
where $\mathbb{Z}_{J+1}\sim N(0,I_{J+1})$ and $W_{n,\tau }^{(i)\ast }(x;\eta
_{1})$'s are i.i.d. copies of $W_{n,\tau }^{\ast }(x;\eta _{1})$ under $%
P^{\ast }$. Since $O(n^{-1/2}h^{-d/2})=o(h^{d/2})$,%
\begin{equation*}
Cov^{\ast }\left( \Lambda _{A,p}\left( \sqrt{nh^{d}}\mathbf{z}_{N,\tau
}^{\ast }(x;\eta _{1})\right) ,U_{n}^{\ast }\right) =\mathbf{E}^{\ast }\left[
\tilde{D}_{n,\tau ,p}\left( \frac{1}{\sqrt{n}}\sum_{i=1}^{n}W_{n,\tau
}^{(i)\ast }(x)\right) \right] +o_{P}(h^{d/2}),
\end{equation*}%
uniformly in $P\in \mathcal{P}$, and that $\mathbf{E}^{\ast }[\tilde{D}%
_{n,\tau ,p}\left( \mathbb{Z}_{J+1}\right) ]=0$, we conclude that 
\begin{equation}
\sup_{(x,\tau )\in \mathcal{S}}\left\vert Cov^{\ast }\left( \Lambda
_{A,p}\left( \sqrt{nh^{d}}\mathbf{z}_{N,\tau }^{\ast }(x;\eta _{1})\right)
,U_{n}^{\ast }\right) \right\vert =o_{P}(h^{d/2}),  \label{cv11}
\end{equation}%
uniformly in $P\in \mathcal{P}$.

Now for some $C>0,$%
\begin{equation*}
P\left\{ \left\vert Cov(S_{n,\bar{\varepsilon}}^{\ast },U_{n}^{\ast
})\right\vert >ah^{d/2}\right\} \leq P\left\{ C\sup_{(x,\tau )\in \mathcal{S}%
}\left\vert Cov^{\ast }\left( \Lambda _{A,p}\left( \sqrt{nh^{d}}\mathbf{z}%
_{N,\tau }^{\ast }(x;\eta _{1})\right) ,U_{n}^{\ast }\right) \right\vert
>ah^{d/2}\right\} .
\end{equation*}%
The last probability vanishes uniformly in $P\in \mathcal{P}$ by (\ref{cv11}%
). By applying the Dominated Convergence Theorem, we obtain (Step 2).\medskip

\noindent \textbf{Proof of Step 3:} First, we show that 
\begin{equation}
Var^{\ast }\left( S_{n}^{\ast }\right) =\sigma _{n}^{2}(\mathcal{C}%
)+o_{P}(1),  \label{varvar}
\end{equation}%
where $o_{P}(1)$ is uniform over $P\in \mathcal{P}$. Note that%
\begin{equation*}
Var^{\ast }\left( S_{n}^{\ast }\right) =\sum_{A\in \mathcal{N}%
_{J}}\sum_{A^{\prime }\in \mathcal{N}_{J}}\mu _{A}\mu _{A^{\prime
}}Cov^{\ast }(\psi _{n,A}^{\ast },\psi _{n,A^{\prime }}^{\ast }),
\end{equation*}%
where $\psi _{n,A}^{\ast }\equiv h^{-d/2}(\zeta _{N,A}^{\ast }-\mathbf{E}%
^{\ast }\zeta _{N,A}^{\ast })$. By Lemma C6, we find that for $A,A\in 
\mathcal{N}_{J},$ 
\begin{equation*}
Cov^{\ast }(\psi _{n,A}^{\ast },\psi _{n,A^{\prime }}^{\ast })=\sigma
_{n,A,A^{\prime }}(B_{n,A}(c_{n};\mathcal{C}),B_{n,A^{\prime }}(c_{n};%
\mathcal{C}))+o_{P}(1),
\end{equation*}%
uniformly in $P\in \mathcal{P}$, yielding the desired result of (\ref{varvar}%
).

Combining Steps 1 and 2, we deduce that for some $C>0$,\textbf{\ }%
\begin{equation*}
\sup_{P\in \mathcal{P}}\left\vert Cov^{\ast }(S_{n}^{\ast },U_{n}^{\ast
})\right\vert \leq \sqrt{\bar{\varepsilon}}\cdot O_{P}(1)+o_{P}(h^{d/2}).
\end{equation*}%
Let $\tilde{\sigma}_{1}^{2}\equiv Var^{\ast }(S_{n}^{\ast })$ and $\tilde{%
\sigma}_{2}^{2}\equiv 1-\tilde{\alpha}_{P}$, where $\tilde{\alpha}_{P}\equiv
P^{\ast }\left\{ X_{i}^{\ast }\in \mathbf{R}^{d}\backslash \mathcal{C}%
\right\} $. Observe that%
\begin{equation*}
\tilde{\sigma}_{1}^{2}=\sigma _{n}(\mathcal{C})+o_{P}(1)>C_{1}+o_{P}(1),\ 
\mathcal{P}\text{-uniformly,}
\end{equation*}%
for some $C_{1}>0$ that does not depend on $n$ or $P$ by the assumption of
the lemma. Also note that 
\begin{equation*}
\tilde{\alpha}_{P}=\alpha _{P}+o_{P}(1)<1-C_{2}+o_{P}(1),\ \mathcal{P}\text{%
-uniformly,}
\end{equation*}%
for some $C_{2}>0$. Therefore, following the same arguments as in (\ref%
{dev67}), we obtain the desired result.\medskip

\noindent \textbf{Proof of Step 4:} We take $\{R_{n,\mathbf{i}}:\mathbf{i}%
\in \mathbb{Z}^{d}\}$, and define%
\begin{eqnarray*}
B_{A,x}(c_{n}) &\equiv &\left\{ \tau \in \mathcal{T}:(x,\tau )\in
B_{A}(c_{n})\right\} , \\
B_{n,\mathbf{i}} &\equiv &R_{n,\mathbf{i}}\cap \mathcal{C}\text{,} \\
B_{n,A,\mathbf{i}}(c_{n}) &\equiv &(B_{n,\mathbf{i}}\times \mathcal{T})\cap
B_{A}(c_{n}),
\end{eqnarray*}%
and $\mathcal{I}_{n}\equiv \{\mathbf{i}\in \mathbb{Z}_{n}^{d}:B_{n,\mathbf{i}%
}\neq \varnothing \}$ as in the proof of Step 4 in the proof of Lemma B8.
Also, define%
\begin{equation*}
\Delta _{n,A,\mathbf{i}}^{\ast }\equiv h^{-d/2}\int_{B_{n,\mathbf{i}%
}}\int_{B_{A,x}(c_{n})}\left\{ \Lambda _{A,p}(\mathbf{z}_{N,\tau }^{\ast
}(x))-\mathbf{E}^{\ast }\left[ \Lambda _{A,p}(\mathbf{z}_{N,\tau }^{\ast
}(x))\right] \right\} d\tau dx.
\end{equation*}%
Also, define%
\begin{eqnarray*}
\alpha _{n,\mathbf{i}}^{\ast } &\equiv &\frac{\sum_{A\in \mathcal{N}_{J}}\mu
_{A}\Delta _{n,A,\mathbf{i}}^{\ast }}{\sqrt{Var^{\ast }\left( S_{n}^{\ast
}\right) }}\mathbf{\ }\text{and} \\
u_{n,\mathbf{i}}^{\ast } &\equiv &\frac{1}{\sqrt{n}}\left\{
\sum_{i=1}^{N}1\left\{ X_{i}^{\ast }\in B_{n,\mathbf{i}}\right\} -nP^{\ast
}\{X_{i}^{\ast }\in B_{n,\mathbf{i}}\}\right\}
\end{eqnarray*}%
and write%
\begin{equation*}
\frac{S_{n}^{\ast }}{\sqrt{Var^{\ast }\left( S_{n}^{\ast }\right) }}=\sum_{%
\mathbf{i}\in \mathcal{I}_{n}}\alpha _{n,\mathbf{i}}^{\ast }\text{ and }%
U_{n}^{\ast }=\sum_{\mathbf{i}\in \mathcal{I}_{n}}u_{n,\mathbf{i}}^{\ast }.
\end{equation*}%
By the properties of Poisson processes, one can see that the array $%
\{(\alpha _{n,\mathbf{i}}^{\ast },u_{n,\mathbf{i}}^{\ast })\}_{\mathbf{i}\in 
\mathcal{I}_{n}}$ is an array of $1$-dependent random field under $P^{\ast }$%
. For any $q=(q_{1},q_{2})\in \mathbf{R}^{2}\backslash \{0\}$, let $y_{n,%
\mathbf{i}}^{\ast }\equiv q_{1}\alpha _{n,\mathbf{i}}^{\ast }+q_{2}u_{n,%
\mathbf{i}}^{\ast }$ and write%
\begin{equation*}
Var^{\ast }\left( \sum_{\mathbf{i}\in \mathcal{I}_{n}}y_{n,\mathbf{i}}^{\ast
}\right) =q_{1}^{2}+q_{2}^{2}(1-\tilde{\alpha}_{P})+2q_{1}q_{2}\tilde{c}%
_{n,P},
\end{equation*}%
uniformly over $P\in \mathcal{P}$, where $\tilde{c}_{n,P}=Cov^{\ast
}(S_{n}^{\ast },U_{n}^{\ast })$. On the other hand, following the proof of
Lemma A8 of \citeasnoun{LSW} using Lemma C4, we deduce that%
\begin{equation}
\sum_{\mathbf{i}\in \mathcal{I}_{n}}\mathbf{E}^{\ast }|y_{n,\mathbf{i}%
}^{\ast }|^{r}=o_{P}(1),\ \mathcal{P}\text{-uniformly,}  \label{conv_y2}
\end{equation}%
as $n\rightarrow \infty $, for any $r\in (2,(2p+2)/p]$, uniformly over $P\in 
\mathcal{P}$. By Theorem 1 of \citeasnoun{Shergin:93}, we have%
\begin{eqnarray*}
&&\sup_{t\in \mathbf{R}}\left\vert P^{\ast }\left\{ \frac{1}{\sqrt{%
q_{1}^{2}+q_{2}^{2}(1-\tilde{\alpha}_{P})+2q_{1}q_{2}\tilde{c}_{n,P}}}\sum_{%
\mathbf{i}\in \mathcal{I}_{n}}y_{n,\mathbf{i}}^{\ast }\leq t\right\} -\Phi
^{\ast }\left( t\right) \right\vert \\
&\leq &\frac{C}{\left\{ q_{1}^{2}+q_{2}^{2}(1-\tilde{\alpha}_{P})+2q_{1}q_{2}%
\tilde{c}_{n,P}\right\} ^{r/2}}\left\{ \sum_{\mathbf{i}\in \mathcal{I}_{n}}%
\mathbf{E}^{\ast }|y_{n,\mathbf{i}}^{\ast }|^{r}\right\} ^{1/2}=o_{P}(1),
\end{eqnarray*}%
for some $C>0$ uniformly in $P\in \mathcal{P}$, by (\ref{conv_y2}). By Lemma
B2(i), we have for each $t\in \mathbf{R}$ and $q\in \mathbf{R}^{2}\backslash
\{\mathbf{0}\}$ as $n\rightarrow \infty ,$%
\begin{equation*}
\left\vert \mathbf{E}^{\ast }\left[ \exp \left( it\frac{q^{\top }H_{n}^{\ast
}}{\sqrt{q^{\top }\tilde{C}_{n}q}}\right) \right] -\exp \left( -\frac{t^{2}}{%
2}\right) \right\vert =o_{P}(1),
\end{equation*}%
uniformly in $P\in \mathcal{P}.$ Thus by Lemma B2(ii), for each $t\in 
\mathbf{R}^{2},$ we have%
\begin{equation*}
\left\vert P^{\ast }\left\{ \tilde{C}_{n}^{-1/2}H_{n}^{\ast }\leq t\right\}
-P\left\{ \mathbb{Z}\leq t\right\} \right\vert =o_{P}(1).
\end{equation*}%
Since the limit distribution of $\tilde{C}_{n}^{-1/2}H_{n}^{\ast }$ is
continuous, the convergence above is uniform in $t\in \mathbf{R}^{2}$.

\noindent (ii) We fix $P\in \mathcal{P}$ such that limsup$_{n\rightarrow
\infty }\sigma _{n}^{2}(\mathcal{C})=0.$ Then by (\ref{varvar}) above and
Lemma C6,%
\begin{equation*}
Var^{\ast }\left( S_{n}^{\ast }\right) =\sigma _{n}^{2}(\mathcal{C}%
)+o_{P}(1)=o_{P}(1).
\end{equation*}%
Hence, we find that $S_{n}^{\ast }=o_{P^{\ast }}(1)$ in $P$. The desired
result follows by applying Theorem 1 of \citeasnoun{Shergin:93} to the sum $%
U_{n}^{\ast }=\sum_{\mathbf{i}\in \mathcal{I}_{n}}u_{n,\mathbf{i}}^{\ast }$,
and then applying Lemma B2.
\end{proof}

\begin{LemmaD}
\label{lem-d9} \textit{Let }$\mathcal{C}$\textit{\ be the Borel set in Lemma
C8}.

\noindent \textit{(i) Suppose that the conditions of Lemma C8(i) are
satisfied. Then for each }$a>0$, \textit{as }$n\rightarrow \infty ,$%
\begin{equation*}
\sup_{P\in \mathcal{P}}P\left\{ \sup_{t\in \mathbf{R}}\left\vert P\left\{ 
\frac{h^{-d/2}\sum_{A\in \mathcal{N}_{J}}\mu _{A}\left\{ \zeta _{n,A}^{\ast
}-\mathbf{E}^{\ast }\zeta _{N,A}^{\ast }\right\} }{\sigma _{n}(\mathcal{C})}%
\leq t\right\} -\Phi (t)\right\vert >a\right\} \rightarrow 0.
\end{equation*}%
\noindent \textit{(ii) Suppose that the conditions of Lemma C8(ii) are
satisfied. Then for each }$a>0,$ \textit{as }$n\rightarrow \infty ,$%
\begin{equation*}
\sup_{P\in \mathcal{P}}P\left\{ \left\vert h^{-d/2}\sum_{A\in \mathcal{N}%
_{J}}\mu _{A}\left\{ \zeta _{n,A}^{\ast }-\mathbf{E}^{\ast }\zeta
_{N,A}^{\ast }\right\} \right\vert >a\right\} \rightarrow 0.
\end{equation*}
\end{LemmaD}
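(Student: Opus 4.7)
The proof parallels that of Lemma B9 via the de-Poissonization Lemma B3, but now applied conditionally on the original sample with $P^{\ast}$ playing the role of the ambient measure. The essential observation is that, under $P^{\ast}$, conditioning on the event $\{N=n\}$ collapses the Poissonized bootstrap quantity $\zeta_{N,A}^{\ast}$ to the bootstrap sample quantity $\zeta_{n,A}^{\ast}$, while the location normalizer $\mathbf{E}^{\ast}\zeta_{N,A}^{\ast}$ remains an unconditional expectation. Consequently, the conditional distribution of $S_{n}^{\ast}/\sigma_{n}(\mathcal{C})$ given $\{N=n\}$ under $P^{\ast}$ is precisely the distribution of the target quantity $h^{-d/2}\sum_{A\in\mathcal{N}_{J}}\mu_{A}\{\zeta_{n,A}^{\ast}-\mathbf{E}^{\ast}\zeta_{N,A}^{\ast}\}/\sigma_{n}(\mathcal{C})$.

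For part (i), I would apply Lemma B3(i) in the bootstrap world to the pair $(S_{n}^{\ast},U_{n}^{\ast})$. This requires two inputs. First, the bootstrap tightness parameter $\alpha_{P^{\ast}}\equiv P^{\ast}\{X_{i}^{\ast}\in \mathbf{R}^{d}\setminus \mathcal{C}\}$ must satisfy the uniform boundedness condition of Lemma B3. Since $\alpha_{P^{\ast}}$ is the empirical analogue of $\alpha_{P}$ and $\alpha_{P}\in[\varepsilon,1-\varepsilon]$ uniformly in $P\in\mathcal{P}$ by Assumption A6(ii), a Chebyshev bound shows $\alpha_{P^{\ast}}\in[\varepsilon/2,1-\varepsilon/2]$ on an event of outer-$P$-probability tending to one uniformly in $P$. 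Second, the joint convergence required by Lemma B3 is supplied by Lemma C8(i): the bootstrap joint distribution function of $H_{n}^{\ast}$ converges uniformly to $\Phi\otimes\Phi$ uniformly in $P$ in probability. Because the Berry--Esseen-type bound in Lemma B3(i) is fully explicit in $b_{n,P^{\ast}}$ and $\alpha_{P^{\ast}}$, on the good event the conditional bootstrap characteristic function of $S_{n}^{\ast}/\sigma_{n}(\mathcal{C})$ given $\{N=n\}$ converges to $\exp(-t^{2}/2)$. An application of Lemma B2(ii) then transfers characteristic-function convergence to distribution-function convergence pointwise in $t$; continuity of $\Phi$ upgrades pointwise to uniform convergence in $t$.

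For part (ii), I would argue identically but invoke Lemma B3(ii) together with Lemma C8(ii). The latter shows that $S_{n}^{\ast}$ becomes degenerate at zero under $P^{\ast}$ (in outer probability uniform in $P$) while $U_{n}^{\ast}/\sqrt{1-\alpha_{P}}$ remains asymptotically standard normal. Feeding this into Lemma B3(ii) yields $|\mathbf{E}^{\ast}[\exp(itS_{n}^{\ast}/\sigma_{n}(\mathcal{C}))\mid N=n]-1|\to 0$ in outer $P$-probability, uniform in $P$, and a second application of Lemma B2(ii) gives the desired convergence in probability of the target quantity to zero.

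The main technical obstacle will be handling two nested layers of uniformity: the inner convergence is with respect to the random bootstrap law $P^{\ast}$, while the outer statement must be uniform over $P\in\mathcal{P}$. The proof must therefore decompose events into a ``good'' event (on which the Berry--Esseen constants $b_{n,P^{\ast}}$, $\alpha_{P^{\ast}}$, and the joint distributional bound of Lemma C8 are all controlled) and its complement. Controlling the probability of the complement uniformly in $P$ and then invoking the explicit bounds of Lemma B3 on the good event is the delicate step; it works here because each ingredient---Lemma C8 and the empirical control of $\alpha_{P^{\ast}}$---has already been established with $\mathcal{P}$-uniform in-probability statements.
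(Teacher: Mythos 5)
Your proposal is correct and follows essentially the same route as the paper: the paper's proof of Lemma C9 simply states that it is identical to the proof of Lemma B9 (conditioning on $\{N=n\}$ to identify the target quantity, then combining the de-Poissonization Lemma B3 with the joint convergence from Lemma C8, and Lemma B2 for the degenerate case), with Lemma C8 replacing Lemma B8. Your additional remarks on controlling $\alpha_{P^{\ast}}$ and on the two nested layers of uniformity via a good-event decomposition are exactly the details the paper leaves implicit.
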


\begin{proof}[Proof of Lemma C\protect\ref{lem-d9}]
The proofs are precisely the same as those of Lemma B9, except that we use
Lemma C8 instead of Lemma B8 here.
\end{proof}

\begin{LemmaD}
\label{lem-d10} \textit{Suppose that the conditions of Lemma B5 hold. Then
for any small }$\nu >0,$\textit{\ there exists a positive sequence }$%
\varepsilon _{n}=o(h^{d})$\textit{\ such that for all }$r\in \lbrack 2,M/2]$%
\textit{\ (with }$M>0$\textit{\ being as in Assumption A6(i)),}%
\begin{equation*}
\sup_{(x,\tau )\in \mathcal{S}}\sup_{P\in \mathcal{P}}\mathbf{E}||\Sigma
_{n,\tau ,\varepsilon _{n}}^{-1/2}(x)q_{n,\tau }(x;\eta _{n})||^{r}=O\left(
h^{-(r-2)\left( \frac{M-1}{M-2}\right) d-\nu }\right) ,
\end{equation*}%
\textit{where }$\eta _{n}\in \mathbf{R}^{J}$ \textit{is distributed as} $%
N(0,\varepsilon _{n}I_{J})$\textit{\ and independent of }$((Y_{i}^{\top
},X_{i}^{\top })_{i=1}^{\infty },N)$\textit{\ in the definition of }$%
q_{n,\tau }(x)$,\textit{\ and} 
\begin{equation}
\Sigma _{n,\tau ,\varepsilon _{n}}(x)\equiv \Sigma _{n,\tau ,\tau
}(x,0)+\varepsilon _{n}I_{J}\text{ and }q_{n,\tau }(x;\eta _{n})\equiv
q_{n,\tau }(x)+\eta _{n}\text{.}  \label{SIGMA}
\end{equation}%
\textit{Suppose furthermore that }$\lambda _{\min }(\Sigma _{n,\tau ,\tau
}(x,0))>c>0$\textit{\ for some }$c>0$\textit{\ that does not depend on }$n$%
\textit{\ or }$P\in \mathcal{P}$.\textit{\ Then}%
\begin{equation*}
\sup_{(x,\tau )\in \mathcal{S}}\sup_{P\in \mathcal{P}}\mathbf{E}||\Sigma
_{n,\tau ,\varepsilon _{n}}^{-1/2}(x)q_{n,\tau }(x;\eta _{n})||^{r}=O\left(
h^{-(r-2)d/2}\right) .
\end{equation*}
\end{LemmaD}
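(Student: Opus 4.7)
The plan is to combine an $L^{2}$-identity coming from whitening, a crude $L^{M}$-bound from the operator inequality $\|\Sigma_{n,\tau,\varepsilon_{n}}^{-1/2}(x)\|_{\mathrm{op}}\leq \varepsilon_{n}^{-1/2}$ together with Lemma B5, and log-convexity of $L^{p}$-norms to interpolate between them. Throughout I take $\varepsilon_{n} = h^{d(1+\nu')}$ for a small $\nu' > 0$ chosen at the end; this is $o(h^{d})$ as required.

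For the base $L^{2}$-bound, by independence of $\eta_{n}$,
\begin{equation*}
\mathbf{E}\bigl[q_{n,\tau}(x;\eta_{n})q_{n,\tau}(x;\eta_{n})^{\top}\bigr] = \mathrm{Cov}\bigl(q_{n,\tau}(x)\bigr) + \varepsilon_{n}I_{J}.
\end{equation*}
A compound-Poisson variance computation (with the Poisson(1) variable $N_{1}$ in the definition of $q_{n,\tau}(x)$) gives $\mathrm{Cov}(q_{n,\tau}(x)) = h^{-d}\mathrm{Cov}(\beta_{n,x,\tau}(Y_{i},(X_{i}-x)/h))$, whose $(j,k)$-entry differs from $\rho_{n,\tau,\tau,j,k}(x,0)$ by $-h^{-d}\mathbf{E}[\beta_{n,x,\tau,j}]\mathbf{E}[\beta_{n,x,\tau,k}] = O(h^{d})$ (each $\mathbf{E}[\beta]$ is $O(h^{d})$ since, by Assumption A2, $\beta$ is supported on $x+h\mathcal{K}_{0}$). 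Hence
\begin{equation*}
\mathbf{E}\bigl\|\Sigma_{n,\tau,\varepsilon_{n}}^{-1/2}(x)q_{n,\tau}(x;\eta_{n})\bigr\|^{2} = J + O(h^{d}/\varepsilon_{n}) = J + O(h^{-d\nu'}).
\end{equation*}
For the extremal $L^{M}$-bound, Lemma B5 gives $\mathbf{E}\|q_{n,\tau}(x;\eta_{n})\|^{M} = O(h^{d(1-M/2)})$ (the Gaussian term $\mathbf{E}\|\eta_{n}\|^{M} = O(\varepsilon_{n}^{M/2}) = o(h^{dM/2})$ is absorbed), so
\begin{equation*}
\mathbf{E}\bigl\|\Sigma_{n,\tau,\varepsilon_{n}}^{-1/2}(x)q_{n,\tau}(x;\eta_{n})\bigr\|^{M} = O\bigl(\varepsilon_{n}^{-M/2}h^{d(1-M/2)}\bigr) = O\bigl(h^{-d(M-1)-Md\nu'/2}\bigr).
\end{equation*}

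The interpolation step is Lyapunov's inequality: with $X \equiv \Sigma_{n,\tau,\varepsilon_{n}}^{-1/2}(x)q_{n,\tau}(x;\eta_{n})$ and $1/r = (1-t)/2 + t/M$, so that $rt/M = (r-2)/(M-2)$,
\begin{equation*}
\mathbf{E}\|X\|^{r} \leq \bigl(\mathbf{E}\|X\|^{2}\bigr)^{r(1-t)/2}\bigl(\mathbf{E}\|X\|^{M}\bigr)^{rt/M}.
\end{equation*}
Substituting the two endpoint bounds, the leading $h$-exponent is $-d(M-1)(r-2)/(M-2)$, while the residual corrections (from the $h^{-d\nu'}$ factor in the base bound and the $h^{-Md\nu'/2}$ factor in the extremal bound) are each bounded by fixed multiples of $\nu'$ uniformly in $r\in[2,M/2]$, with the worst case $r = M/2$ permitted since $M\geq 2(p+2)\geq 6$. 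For any prescribed $\nu > 0$, choosing $\nu'$ small enough absorbs those residuals into $\nu$, proving the first claim. The second claim is immediate: when $\lambda_{\min}(\Sigma_{n,\tau,\tau}(x,0))\geq c > 0$ uniformly, $\|\Sigma_{n,\tau,\varepsilon_{n}}^{-1/2}(x)\|_{\mathrm{op}}\leq c^{-1/2}$ for every $\varepsilon_{n}\geq 0$, and Lemma B5 applied directly gives $\mathbf{E}\|q_{n,\tau}(x;\eta_{n})\|^{r} = O(h^{d(1-r/2)}) = O(h^{-d(r-2)/2})$.

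The main obstacle is the Step 1 identity: the textbook whitening relation $\mathbf{E}\|\Sigma^{-1/2}Z\|^{2} = J$ holds exactly only when $\Sigma$ is the covariance of $Z$, whereas here $\Sigma_{n,\tau,\tau}(x,0)$ is the scaled non-centered second moment matrix; the resulting $O(h^{d})$ discrepancy, once amplified by $\varepsilon_{n}^{-1}$, is what constrains how small $\varepsilon_{n}$ may be taken while still forcing the base $L^{2}$ bound to be $O(h^{-\nu})$. The calibration $\varepsilon_{n} = h^{d(1+\nu')}$ with $\nu'\downarrow 0$ is precisely what recouples the $L^{2}$ and $L^{M}$ endpoints into the advertised exponent.
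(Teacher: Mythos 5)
Your proof is correct, and it reaches the stated bound by a route that differs in mechanism, though not in spirit, from the paper's. The paper first proves a ``Fact'' by iterated Cauchy--Schwarz, namely $\mathbf{E}\|W\|^{r}\leq C_{m}\bigl(\mathbf{E}\|W\|^{a_{m}(r)}\bigr)^{1/2^{m}}$ with $a_{m}(r)=2^{m}(r-2)+2$, valid whenever $\mathbf{E}\|W\|^{2}$ is bounded, and then applies it to the whitened vector with the largest integer $m$ satisfying $a_{m}(r)\leq M$, combining $\lambda_{\max}(\Sigma_{n,\tau,\varepsilon_{n}}^{-1}(x))\leq\varepsilon_{n}^{-1}$ with Lemma B5 at level $a_{m}(r)$ and the calibration $\varepsilon_{n}=h^{d+\nu_{1}}$. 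That Fact is precisely a dyadic instance of the log-convexity of moments you invoke, so both arguments interpolate between an $O(1)$ second moment of the whitened vector and a crude high moment obtained from $\|\Sigma_{n,\tau,\varepsilon_{n}}^{-1/2}(x)\|_{\mathrm{op}}\leq\varepsilon_{n}^{-1/2}$ plus Lemma B5. What your continuous Lyapunov interpolation buys is that the weight on the high endpoint is exactly $(r-2)/(M-2)$, yielding the exponent $(r-2)(M-1)/(M-2)$ on the nose; the paper's integer constraint only guarantees $2^{-m}\in[(r-2)/(M-2),\,2(r-2)/(M-2))$, so its exponent $2^{-m}+(r-2)$ coincides with the target when $(M-2)/(r-2)$ is a power of two and otherwise carries a rounding slack that the arbitrarily small $\nu$ does not obviously absorb --- your version is the cleaner and tighter of the two. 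Two minor remarks: your base computation can be sharpened to $\mathbf{E}\|\Sigma_{n,\tau,\varepsilon_{n}}^{-1/2}(x)q_{n,\tau}(x;\eta_{n})\|^{2}\leq J$ exactly, since the discrepancy $-h^{-d}\mathbf{E}\beta\,\mathbf{E}\beta^{\top}$ between the covariance of $q_{n,\tau}(x)$ and $\Sigma_{n,\tau,\tau}(x,0)$ is negative semidefinite, so its trace against $\Sigma_{n,\tau,\varepsilon_{n}}^{-1}(x)$ is nonpositive (your $J+O(h^{-d\nu'})$ bound is of course sufficient, as the correction enters with exponent $(M-r)/(M-2)\leq 1$ and is absorbed into $\nu$); and your direct treatment of the second claim, replacing the $\varepsilon_{n}^{-1/2}$ operator bound by $c^{-1/2}$ and quoting Lemma B5 at level $r$, is exactly what the paper's argument reduces to in that case.
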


\begin{proof}[Proof of Lemma C\protect\ref{lem-d10}]
We first establish the following fact.

\noindent \textbf{Fact:} Suppose that $W$ is a random vector such that $%
\mathbf{E}||W||^{2}\leq c_{W}$ for some constant $c_{W}>0$. Then, for any $%
r\geq 2$ and a positive integer $m\geq 1,$ 
\begin{equation*}
\mathbf{E}\left[ ||W||^{r}\right] \leq C_{m}\left( \mathbf{E}\left[
||W||^{a_{m}(r)}\right] \right) ^{1/(2^{m})},
\end{equation*}%
where $a_{m}(r)=2^{m}(r-2)+2$, and $C_{m}>0$ is a constant that depends only
on $m$ and $c_{W}$.\medskip \newline
\textbf{Proof of Fact:}\ The result follows by repeated application of
Cauchy-Schwarz inequality:%
\begin{equation*}
\mathbf{E}||W||^{r}\leq \left( \mathbf{E}||W||^{2(r-1)}\right) ^{1/2}\left( 
\mathbf{E}||W||^{2}\right) ^{1/2}\leq c_{W}^{1/2}\left( \mathbf{E}%
||W||^{2(r-1)}\right) ^{1/2},
\end{equation*}%
where we replace $r$ on the left hand side by $2(r-1)$, and repeat the
procedure to obtain Fact.\medskip \newline
Let us consider the first statement of the lemma. Using Fact, we take a
small $\nu _{1}>0$ and $\varepsilon _{n}=h^{d+\nu _{1}}$, and choose a
largest integer $m\geq 1$ such that $a_{m}(r)\leq M$. Such an $m$ exists
because $2\leq r\leq M/2$. We bound%
\begin{equation*}
\mathbf{E}||\Sigma _{n,\tau ,\varepsilon _{n}}^{-1/2}(x)q_{n,\tau }(x;\eta
_{n})||^{r}\leq C_{m}\left( \mathbf{E}||\Sigma _{n,\tau ,\varepsilon
_{n}}^{-1/2}(x)q_{n,\tau }(x;\eta _{n})||^{a_{m}(r)}\right) ^{1/(2^{m})}.
\end{equation*}%
By Lemma B5, we find that 
\begin{eqnarray}
&&\sup_{(x,\tau )\in \mathcal{S}}\sup_{P\in \mathcal{P}}\mathbf{E}||\Sigma
_{n,\tau ,\varepsilon _{n}}^{-1/2}(x)q_{n,\tau }(x;\eta _{n})||^{a_{m}(r)}
\label{ineqs4} \\
&\leq &\sup_{(x,\tau )\in \mathcal{S}}\sup_{P\in \mathcal{P}}\lambda _{\max
}^{a_{m}(r)/2}\left( \Sigma _{n,\tau ,\varepsilon _{n}}^{-1}(x)\right) 
\mathbf{E}||q_{n,\tau }(x;\eta _{n})||^{a_{m}(r)}  \notag \\
&\leq &\lambda _{\min }^{-a_{m}(r)/2}\left( \varepsilon _{n}I_{J}\right)
h^{(1-(a_{m}(r)/2))d}.  \notag
\end{eqnarray}%
By the definition of $\varepsilon _{n}=h^{d+\nu _{1}}$,%
\begin{equation*}
\varepsilon
_{n}^{-a_{m}(r)/2}h^{(1-(a_{m}(r)/2))d}=h^{(1-a_{m}(r))d-a_{m}(r)\nu _{1}/2}.
\end{equation*}%
We conclude that 
\begin{eqnarray*}
\mathbf{E}||\Sigma _{n,\tau ,\varepsilon _{n}}^{-1/2}(x)q_{n,\tau }(x;\eta
_{n})||^{r} &\leq &C_{m}\left( h^{(1-a_{m}(r))d-a_{m}(r)\nu _{1}/2}\right)
^{1/2^{m}} \\
&=&C_{m}\left( h^{(-1-2^{m}(r-2))d-(2^{m}(r-2)+2)\nu _{1}/2}\right)
^{1/2^{m}} \\
&=&C_{m}h^{(-2^{-m}-(r-2))d-((r-2)+2^{-m+1})\nu _{1}/2}.
\end{eqnarray*}%
Since $a_{m}(r)\leq M$, or $2^{-m}\geq (r-2)/(M-2)$, the last term is
bounded by%
\begin{equation*}
C_{m}h^{-(r-2)\left( \frac{M-1}{M-2}\right) d-\left( (r-2)+\frac{2(r-2)}{M-2}%
\right) \nu _{1}/2}.
\end{equation*}%
By taking $\nu _{1}$ small enough, we obtain the desired result.

Now, let us turn to the second statement of the lemma. Since, under the
additional condition, 
\begin{equation*}
\lambda _{\max }^{a_{m}(r)/2}\left( \Sigma _{n,\tau ,\varepsilon
_{n}}^{-1}(x)\right) <c^{-a_{m}(r)/2},
\end{equation*}%
the last bound in (\ref{ineqs4}) turns out to be 
\begin{equation*}
c^{-a_{m}(r)/2}h^{(1-(a_{m}(r)/2))d}.
\end{equation*}%
Therefore, we conclude that 
\begin{eqnarray*}
\mathbf{E}||\Sigma _{n,\tau ,\varepsilon _{n}}^{-1/2}(x)q_{n,\tau }(x;\eta
_{n})||^{r} &\leq &C_{m}\left( c^{-a_{m}(r)/2}h^{(1-(a_{m}(r)/2))d}\right)
^{1/2^{m}} \\
&=&C_{m}c^{-\{(r-2)+2^{1-m}\}/2}h^{(2^{-m}-\{(r-2)+2^{1-m}\}/2)d} \\
&=&C_{m}c^{-\{(r-2)+2^{1-m}\}/2}h^{-(r-2)d/2}.
\end{eqnarray*}%
Again, using the inequality $2^{-m}\geq (r-2)/(M-2)$, we obtain the desired
result.
\end{proof}

\begin{LemmaD}
\label{lem-d11} \textit{Suppose that the conditions of Lemma C5 hold. Then
for any small }$\nu >0,$\textit{\ there exists a positive sequence }$%
\varepsilon _{n}=o(h^{d})$\textit{\ such that for all }$r\in \lbrack 2,M/2]$%
\textit{\ (with }$M>0$\textit{\ being as in Assumption A6(i)),}%
\begin{equation*}
\sup_{(x,\tau )\in \mathcal{S}}\mathbf{E}^{\ast }||\tilde{\Sigma}_{n,\tau
,\varepsilon _{n}}^{-1/2}(x)q_{n,\tau }^{\ast }(x;\eta
_{n})||^{r}=O_{P}\left( h^{-(r-2)\left( \frac{M-1}{M-2}\right) d-\nu
}\right) ,\text{\textit{\ uniformly in }}P\in \mathcal{P,}
\end{equation*}%
\textit{where }$\eta _{n}\in \mathbf{R}^{J}$ \textit{is distributed as} $%
N(0,\varepsilon _{n}I_{J})$\textit{\ and independent of }$((Y_{i}^{\ast \top
},X_{i}^{\ast \top })_{i=1}^{n},(Y_{i}^{\top },X_{i}^{\top })_{i=1}^{n},N)$%
\textit{\ in the definition of }$q_{n,\tau }^{\ast }(x)$,\textit{\ and}%
\begin{equation*}
\tilde{\Sigma}_{n,\tau ,\varepsilon _{n}}(x)\equiv \tilde{\Sigma}_{n,\tau
,\tau }(x,0)+\varepsilon _{n}I_{J}.
\end{equation*}%
\textit{Suppose furthermore that }%
\begin{equation*}
\sup_{(x,\tau )\in \mathcal{S}}\sup_{P\in \mathcal{P}}P\left\{ \lambda
_{\min }(\tilde{\Sigma}_{n,\tau ,\tau }(x,0))>c\right\} \rightarrow 0,
\end{equation*}%
\textit{for some }$c>0$\textit{\ that does not depend on }$n$\textit{\ or }$%
P\in \mathcal{P}$.\textit{\ Then}%
\begin{equation*}
\sup_{(x,\tau )\in \mathcal{S}}\mathbf{E}^{\ast }||\tilde{\Sigma}_{n,\tau
,\varepsilon _{n}}^{-1/2}(x)q_{n,\tau }^{\ast }(x;\eta
_{n})||^{r}=O_{P}\left( h^{-(r-2)d/2}\right) ,\text{\textit{\ uniformly in }}%
P\in \mathcal{P.}
\end{equation*}
\end{LemmaD}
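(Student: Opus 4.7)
The plan is to mirror the proof of Lemma C10 step by step, replacing every $\mathbf{E}$ by the bootstrap expectation $\mathbf{E}^{\ast}$, and converting every deterministic bound uniform in $P$ into an $O_P$ bound uniform in $P$. First, I would establish the bootstrap analogue of the iterated Cauchy--Schwarz \textsc{Fact} in the proof of Lemma C10: conditional on the sample, for any random vector $W^{\ast}$ and integer $m\geq 1$,
\[
\mathbf{E}^{\ast}\lVert W^{\ast}\rVert^{r}\;\leq\;C_{m}\bigl(\mathbf{E}^{\ast}\lVert W^{\ast}\rVert^{a_{m}(r)}\bigr)^{1/2^{m}},\qquad a_{m}(r)=2^{m}(r-2)+2,
\]
provided $\mathbf{E}^{\ast}\lVert W^{\ast}\rVert^{2}$ is bounded (which holds, in our case, by Lemma C5 combined with the deterministic bound $\lambda_{\max}(\tilde{\Sigma}_{n,\tau,\varepsilon_{n}}^{-1})\leq\varepsilon_{n}^{-1}$). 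Since $r\in[2,M/2]$, I would pick the largest integer $m$ with $a_{m}(r)\leq M$, exactly as in Lemma C10.

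Next, apply this to $W^{\ast}=\tilde{\Sigma}_{n,\tau,\varepsilon_{n}}^{-1/2}(x)q_{n,\tau}^{\ast}(x;\eta_{n})$ and use the operator bound
\[
\mathbf{E}^{\ast}\lVert\tilde{\Sigma}_{n,\tau,\varepsilon_{n}}^{-1/2}(x)q_{n,\tau}^{\ast}(x;\eta_{n})\rVert^{a_{m}(r)}\;\leq\;\lambda_{\max}^{a_{m}(r)/2}\!\bigl(\tilde{\Sigma}_{n,\tau,\varepsilon_{n}}^{-1}(x)\bigr)\,\mathbf{E}^{\ast}\lVert q_{n,\tau}^{\ast}(x;\eta_{n})\rVert^{a_{m}(r)}.
\]
Because $\tilde{\Sigma}_{n,\tau,\varepsilon_{n}}(x)\geq\varepsilon_{n}I_{J}$ holds \emph{deterministically} (the bootstrap covariance matrix is positive semidefinite), the maximum-eigenvalue factor is bounded by $\varepsilon_{n}^{-a_{m}(r)/2}$ almost surely; this is the main reason the regularization $\varepsilon_{n}I_{J}$ is added. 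The bootstrap-expectation of $\lVert q_{n,\tau}^{\ast}\rVert^{a_{m}(r)}$ is then controlled by Lemma C5, whose first display gives $\sqrt{\mathbf{E}[(\mathbf{E}^{\ast}\lVert q_{n,\tau}^{\ast}(x)\rVert^{a_{m}(r)})^{2}]}=O(h^{d(1-a_{m}(r)/2)})$ uniformly over $(x,\tau)\in\mathcal{S}$ and $P\in\mathcal{P}$; Markov's inequality then yields $\mathbf{E}^{\ast}\lVert q_{n,\tau}^{\ast}(x;\eta_{n})\rVert^{a_{m}(r)}=O_{P}(h^{d(1-a_{m}(r)/2)})$ uniformly in $P$. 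Choosing $\varepsilon_{n}=h^{d+\nu_{1}}$ for small $\nu_{1}>0$, combining the three factors, and using $2^{-m}\geq(r-2)/(M-2)$ gives the exponent $-(r-2)\tfrac{M-1}{M-2}d$ plus an arbitrarily small term proportional to $\nu_{1}$, which can be absorbed into $\nu$; this yields the first statement.

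For the second statement, the only change is that the hypothesis on $\lambda_{\min}(\tilde{\Sigma}_{n,\tau,\tau}(x,0))$ (which I read as $\lambda_{\min}\geq c$ with probability tending to one, uniformly in $(x,\tau)$ and $P$) allows one to replace $\lambda_{\max}(\tilde{\Sigma}_{n,\tau,\varepsilon_{n}}^{-1})\leq\varepsilon_{n}^{-1}$ by $\lambda_{\max}(\tilde{\Sigma}_{n,\tau,\varepsilon_{n}}^{-1})\leq 1/c$ on an event of probability approaching one. The $\varepsilon_{n}^{-a_{m}(r)/2}$ blow-up disappears and the exponent improves to $-(r-2)d/2$, matching the second conclusion of Lemma C10.

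The main obstacle is passing from the pointwise bounds in $(x,\tau)$ supplied by Lemma C5 to the $\sup_{(x,\tau)\in\mathcal{S}}$ inside the $O_{P}$ statement. I would handle this via a chaining argument using compactness of $\mathcal{S}$ and the local $L_{2}$-continuity of $\beta_{n,x,\tau,j}$ assumed in Lemma 2: cover $\mathcal{S}$ by an $h^{\kappa}$-net of cardinality $O(h^{-\kappa\dim\mathcal{S}})$, apply Markov's inequality at each net point (with the uniform constant from Lemma C5), and bound the oscillation of $\mathbf{E}^{\ast}\lVert q_{n,\tau}^{\ast}(x;\eta_{n})\rVert^{a_{m}(r)}$ between net points by the continuity modulus. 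The resulting polynomial loss in $h$ can be absorbed into the arbitrarily small $\nu$ by choosing $\nu_{1}$ and $\kappa$ small enough. The secondary technical issue is verifying that the bootstrap moments of $\eta_{n}$ interact harmlessly with those of $q_{n,\tau}^{\ast}$, which follows from independence of $\eta_{n}$ from the bootstrap sample combined with the Gaussian moment identities $\mathbf{E}\lVert\eta_{n}\rVert^{k}=O(\varepsilon_{n}^{k/2})=O(h^{k(d+\nu_{1})/2})$.
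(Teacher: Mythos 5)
Your proposal is correct and follows essentially the same route as the paper: the paper's entire proof of this lemma is the single remark that it is ``precisely the same as that of Lemma C10, where we use Lemma C5 instead of Lemma B5,'' and your reconstruction (bootstrap version of the iterated Cauchy--Schwarz \textsc{Fact}, the deterministic bound $\lambda_{\max}(\tilde{\Sigma}_{n,\tau,\varepsilon_n}^{-1}(x))\leq\varepsilon_n^{-1}$, the choice $\varepsilon_n=h^{d+\nu_1}$, and the inequality $2^{-m}\geq (r-2)/(M-2)$) is exactly what that remark implies. The one place you go beyond the paper is the chaining step for moving from the pointwise-in-$(x,\tau)$ bound of Lemma C5 (where $\sup_{(x,\tau)}$ sits outside the outer expectation $\mathbf{E}[(\mathbf{E}^{\ast}[\cdot])^{2}]$) to the $\sup_{(x,\tau)}$ inside the $O_P$ statement; the paper passes over this point silently, so your treatment is, if anything, more careful. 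Be aware, however, that the local $L_2$-continuity condition (\ref{Lp-Cont}) you invoke for the oscillation control is a hypothesis of Lemma 2, not of Lemma C5, so it is not formally among the stated conditions of this lemma; if you want to keep the argument self-contained you should either add that condition explicitly or note that it is verified in all the paper's applications.
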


\begin{proof}[Proof of Lemma C\protect\ref{lem-d11}]
The proof is precisely the same as that of Lemma C10, where we use Lemma C5
instead of Lemma B5.
\end{proof}

We let for a sequence of Borel sets $B_{n}$ in $\mathcal{S}$ and $\lambda
\in \{0,d/4,d/2\}$, $A\subset \mathbb{N}_{J}$, and a fixed bounded function $%
\delta $ on $\mathcal{S}$, 
\begin{eqnarray*}
a_{n}^{R}(B_{n}) &\equiv &\int_{B_{n}}\mathbf{E}\left[ \Lambda _{A,p}(\sqrt{%
nh^{d}}\mathbf{z}_{N,\tau }(x)+h^{\lambda }\delta (x,\tau ))\right]
dQ(x,\tau )\text{ } \\
a_{n}^{R\ast }(B_{n}) &\equiv &\int_{B_{n}}\mathbf{E}^{\ast }\left[ \Lambda
_{A,p}(\sqrt{nh^{d}}\mathbf{z}_{N,\tau }^{\ast }(x)+h^{\lambda }\delta
(x,\tau ))\right] dQ(x,\tau ),\text{ and} \\
a_{n}(B_{n}) &\equiv &\int_{B_{n}}\mathbf{E}\left[ \Lambda _{A,p}(\mathbb{W}%
_{n,\tau ,\tau }^{(1)}(x,0)+h^{\lambda }\delta (x,\tau ))\right] dQ(x,\tau ),
\end{eqnarray*}%
where $\mathbf{z}_{N,\tau }^{\ast }(x)$ is a random vector whose $j$-th
entry is given by%
\begin{equation*}
z_{N,\tau ,j}^{\ast }(x)\equiv \frac{1}{nh^{d}}\sum_{i=1}^{N}\beta
_{n,x,\tau ,j}(Y_{ij}^{\ast },(X_{i}^{\ast }-x)/h)-\frac{1}{h^{d}}\mathbf{E}%
^{\ast }\left[ \beta _{n,x,\tau ,j}(Y_{ij}^{\ast },(X_{i}^{\ast }-x)/h)%
\right] \text{.}
\end{equation*}

\begin{LemmaD}
\label{lem-d12} \textit{Suppose that the conditions of Lemmas C10 and C11
hold and that }%
\begin{equation*}
n^{-1/2}h^{-\left( \frac{3M-4}{2M-4}\right) d-\nu }\rightarrow 0,
\end{equation*}%
\textit{\ as }$n\rightarrow \infty $, \textit{for some small }$\nu >0$. 
\textit{Then for any sequence of Borel sets }$B_{n}$ \textit{in} $\mathcal{S}
$\textit{,}%
\begin{eqnarray*}
\sup_{P\in \mathcal{P}}\left\vert a_{n}^{R}(B_{n})-a_{n}(B_{n})\right\vert
&=&o(h^{d/2})\text{ \textit{and}} \\
\sup_{P\in \mathcal{P}}P\left\{ \left\vert a_{n}^{R\ast
}(B_{n})-a_{n}(B_{n})\right\vert >ah^{d/2}\right\} &=&o(1).
\end{eqnarray*}
\end{LemmaD}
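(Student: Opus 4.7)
The plan is to mimic the regularization-and-Berry--Esseen step used in the proof of Lemma B6 (Step 1), with Lemma C10 (and its bootstrap analogue Lemma C11) providing the critical moment bound that is matched to the bandwidth condition of Assumption B4. I would carry out four steps.

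\emph{Step 1 (Regularization setup).} With $\varepsilon_n = o(h^d)$ chosen as in Lemmas C10 and C11, introduce an independent $\eta_n \sim N(0, \varepsilon_n I_J)$ and define the regularized intermediates
\begin{align*}
a_n^{R,\varepsilon_n}(B_n) &\equiv \int_{B_n}\mathbf{E}\bigl[\Lambda_{A,p}\bigl(\sqrt{nh^d}\,\mathbf{z}_{N,\tau}(x) + \eta_n + h^{\lambda}\delta(x,\tau)\bigr)\bigr]\,dQ(x,\tau),\\
a_n^{\varepsilon_n}(B_n) &\equiv \int_{B_n}\mathbf{E}\bigl[\Lambda_{A,p}\bigl(\mathbb{W}_{n,\tau,\varepsilon_n}(x) + h^{\lambda}\delta(x,\tau)\bigr)\bigr]\,dQ(x,\tau),
\end{align*}
where $\mathbb{W}_{n,\tau,\varepsilon_n}(x)$ is a centered Gaussian with covariance $\Sigma_{n,\tau,\varepsilon_n}(x)$ as in (\ref{SIGMA}). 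Decompose
\[
a_n^R(B_n) - a_n(B_n) = \bigl[a_n^R - a_n^{R,\varepsilon_n}\bigr] + \bigl[a_n^{R,\varepsilon_n} - a_n^{\varepsilon_n}\bigr] + \bigl[a_n^{\varepsilon_n} - a_n\bigr].
\]

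\emph{Step 2 (Perturbation pieces).} For the first and third differences, apply the mean-value-type expansion $|\Lambda_{A,p}(\mathbf{v}+\eta) - \Lambda_{A,p}(\mathbf{v})| \leq C\|\eta\|(\|\mathbf{v}\|^{p-1} + \|\mathbf{v}+\eta\|^{p-1})$, then Hölder together with the moment bounds in Lemma B5 and Lemma B1 (which gives $\|\Sigma_{n,\tau,\varepsilon_n}^{1/2}(x) - \Sigma_{n,\tau,\tau}^{1/2}(x,0)\| \leq \sqrt{J\varepsilon_n}$). The resulting bounds are $O(\sqrt{\varepsilon_n}\,Q(B_n)) = o(h^{d/2})$ uniformly in $P \in \mathcal{P}$, since $\varepsilon_n = o(h^d)$ and $Q(B_n)$ is bounded.

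\emph{Step 3 (Gaussian approximation).} For the middle piece, use the Poisson split to write $\sqrt{nh^d}\,\mathbf{z}_{N,\tau}(x) + \eta_n \stackrel{d}{=} n^{-1/2}\sum_{k=1}^n (q_{n,\tau}^{(k)}(x) + \eta_n^{(k)})$ with i.i.d.\ summands, and pre-multiply by $\Sigma_{n,\tau,\varepsilon_n}^{-1/2}(x)$ to obtain summands with identity covariance. Apply Sweeting's (1977) Berry--Esseen bound to the smooth functional $v \mapsto \Lambda_{A,p}(\Sigma_{n,\tau,\varepsilon_n}^{1/2}(x)v + h^{\lambda}\delta(x,\tau))$, exactly as in the proof of Lemma B6. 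The resulting pointwise error is bounded by
\[
Cn^{-1/2}\,\mathbf{E}\|\Sigma_{n,\tau,\varepsilon_n}^{-1/2}(x)(q_{n,\tau}(x) + \eta_n)\|^r
\]
for an appropriate $r \in [2, M/2]$ matched to the polynomial degree of $\Lambda_{A,p}$. By Lemma C10 this is of order $n^{-1/2}h^{-(r-2)((M-1)/(M-2))d - \nu}$, and the bandwidth condition $n^{-1/2}h^{-(3M-4)d/(2M-4) - \nu_2}\to 0$ of Assumption B4 is exactly what drives this to $o(h^{d/2})$ after integration over the bounded set $B_n$. For the second (bootstrap) claim, repeat Steps 1--3 under $P^*$, replacing $\mathbf{z}_{N,\tau}$, $q_{n,\tau}$, $\Sigma_{n,\tau,\varepsilon_n}$ by $\mathbf{z}_{N,\tau}^*$, $q_{n,\tau}^*$, $\tilde{\Sigma}_{n,\tau,\varepsilon_n}^*$, and invoking Lemmas C5 and C11 in place of Lemmas B5 and C10; since the bootstrap moment bounds hold uniformly in $P$ only in probability, the deterministic $o(h^{d/2})$ upgrade becomes the required convergence in probability, $\mathcal{P}$-uniformly.

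The main obstacle is Step 3. When $\Sigma_{n,\tau,\tau}(x,0)$ is allowed to be singular in the limit, a direct Berry--Esseen on the unnormalized sum is unavailable; the ridge $\varepsilon_n I_J$ restores invertibility but inflates the normalized third-moment from the non-degenerate rate $h^{-d/2}$ to $h^{-((M-1)/(M-2))d - \nu}$. Carefully quantifying this inflation via the Hölder-iterated bounds in Lemma C10 and balancing it against the $n^{-1/2}$ Berry--Esseen rate is precisely why Assumption B4 is imposed in the stronger form rather than the weaker Assumption A4(i).
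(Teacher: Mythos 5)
Your proposal follows essentially the same route as the paper's proof: regularize with $\varepsilon_n = o(h^d)$, control the two perturbation discrepancies by $C\sqrt{\varepsilon_n}$ via the Lipschitz-type bound on $\Lambda_{A,p}$, and handle the middle piece with Sweeting's Berry--Esseen theorem combined with the moment bounds of Lemmas C10/C11 and the bandwidth condition (the paper additionally reduces first to a pointwise supremum over $(x,\tau)$ before integrating, and its Sweeting bound carries separate third-moment, $r$-th-moment, and modulus-of-continuity terms, but these are presentational differences only). The argument is correct as proposed.
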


\begin{proof}[Proof of Lemma C\protect\ref{lem-d12}]
For the statement, it suffices to show that uniformly in $P\in \mathcal{P}$,%
\begin{eqnarray}
\sup_{(x,\tau )\in \mathcal{S}}\left\vert 
\begin{array}{c}
\mathbf{E}\Lambda _{A,p}(\sqrt{nh^{d}}\mathbf{z}_{N,\tau }(x)+h^{\lambda
}\delta (x,\tau )) \\ 
-\mathbf{E}\Lambda _{A,p}(\mathbb{W}_{n,\tau ,\tau }^{(1)}(x,0)+h^{\lambda
}\delta (x,\tau ))%
\end{array}%
\right\vert &=&o(h^{d/2})\text{,}  \label{convs2} \\
\sup_{(x,\tau )\in \mathcal{S}}\left\vert 
\begin{array}{c}
\mathbf{E}^{\ast }\Lambda _{A,p}(\sqrt{nh^{d}}\mathbf{z}_{N,\tau }^{\ast
}(x)+h^{\lambda }\delta (x,\tau )) \\ 
-\mathbf{E}\Lambda _{A,p}(\mathbb{W}_{n,\tau ,\tau }^{(1)}(x,0)+h^{\lambda
}\delta (x,\tau ))%
\end{array}%
\right\vert &=&o_{P}(h^{d/2}),  \notag
\end{eqnarray}%
uniformly in $P\in \mathcal{P}$. We prove the first statement of (\ref%
{convs2}). The proof of the second statement of (\ref{convs2}) can be done
in a similar way.

Take small $\nu >0.$ We apply Lemma C10 by choosing a positive sequence $%
\varepsilon _{n}=o(h^{d})$ such that for any $r\in \lbrack 2,M/2],$%
\begin{equation}
\sup_{(x,\tau )\in \mathcal{S}}\sup_{P\in \mathcal{P}}\mathbf{E}||\Sigma
_{n,\tau ,\varepsilon _{n}}^{-1/2}(x)q_{n,\tau }(x;\eta _{n})||^{r}=O\left(
h^{-(r-2)\left( \frac{M-1}{M-2}\right) d-\nu }\right) ,  \label{rate}
\end{equation}%
where $q_{n,\tau }(x;\eta _{n})$ and $\Sigma _{n,\tau ,\varepsilon _{n}}(x)$
are as in Lemma C10. We follow the arguments in the proof of Step 2 in Lemma
B6 to bound the left-hand side in the first supremum in (\ref{convs2}) by 
\begin{equation*}
\sup_{(x,\tau )\in \mathcal{S}}\sup_{P\in \mathcal{P}}\left\vert \mathbf{E}%
\Lambda _{A,p}(\sqrt{nh^{d}}\mathbf{z}_{N,\tau }(x;\eta _{n})+h^{\lambda
}\delta (x,\tau ))-\mathbf{E}\Lambda _{A,p}(\mathbb{W}_{n,\tau ,\tau
,\varepsilon _{n}}^{(1)}(x,0)+h^{\lambda }\delta (x,\tau ))\right\vert +C%
\sqrt{\varepsilon _{n}},
\end{equation*}%
for some $C>0$, where\ 
\begin{equation*}
\mathbf{z}_{N,\tau }(x;\eta _{n})\equiv \mathbf{z}_{N,\tau }(x)+\eta _{n}/%
\sqrt{nh^{d}},
\end{equation*}%
and $\mathbb{W}_{n,\tau ,\tau ,\varepsilon _{n}}^{(1)}(x,0)$ is as defined
in (\ref{Ws}). Let%
\begin{eqnarray*}
\mathbf{\xi }_{N,\tau }(x;\eta _{n}) &\equiv &\sqrt{nh^{d}}\Sigma _{n,\tau
,\varepsilon _{n}}^{-1/2}(x)\cdot \mathbf{z}_{N,\tau }(x;\eta _{n})\text{ and%
} \\
\mathbb{Z}_{n,\tau ,\tau ,\varepsilon _{n}}^{(1)}(x,0) &\equiv &\Sigma
_{n,\tau ,\varepsilon _{n}}^{-1/2}(x)\cdot \mathbb{W}_{n,\tau ,\tau
,\varepsilon _{n}}^{(1)}(x,0).
\end{eqnarray*}%
We rewrite the previous absolute value as%
\begin{equation}
\sup_{(x,\tau )\in \mathcal{S}}\sup_{P\in \mathcal{P}}\left\vert \mathbf{E}%
\Lambda _{A,n,p}^{\Sigma }(\sqrt{nh^{d}}\mathbf{\xi }_{N,\tau }(x;\eta
_{n}))-\mathbf{E}\Lambda _{n,p}^{\Sigma }(\mathbb{Z}_{n,\tau ,\tau
,\varepsilon _{n}}^{(1)}(x,0))\right\vert ,  \label{bdd4}
\end{equation}%
where $\Lambda _{A,n,p}^{\Sigma }(\mathbf{v})\equiv \Lambda _{A,p}(\Sigma
_{n,\tau ,\varepsilon _{n}}^{1/2}(x)\mathbf{v}+h^{\lambda }\delta (x,\tau ))$%
. Note that the condition for $M$ in Assumption A6(i) that $M\geq 2(p+2)$,
we can choose $r=\max \{p,3\}$. Then $r\in \lbrack 2,M/2]$ as required.
Using Theorem 1 of \citeasnoun{Sweeting:77}, we bound the above supremum by
(with $r=\max \{p,3\}$)%
\begin{eqnarray*}
&&\frac{C_{1}}{\sqrt{n}}\sup_{(x,\tau )\in \mathcal{S}}\sup_{P\in \mathcal{P}%
}\mathbf{E}||\Sigma _{n,\tau ,\varepsilon _{n}}^{-1/2}(x)q_{n,\tau }(x;\eta
_{n})||^{3} \\
&&+\frac{C_{2}}{\sqrt{n^{r-2}}}\sup_{(x,\tau )\in \mathcal{S}}\sup_{P\in 
\mathcal{P}}\mathbf{E}||\Sigma _{n,\tau ,\varepsilon
_{n}}^{-1/2}(x)q_{n,\tau }(x;\eta _{n})||^{r} \\
&&+C_{3}\sup_{(x,\tau )\in \mathcal{S}}\sup_{P\in \mathcal{P}}\mathbf{E}%
\omega _{n,p}\left( \mathbb{Z}_{n,\tau ,\tau ,\varepsilon _{n}}^{(1)}(x,0);%
\frac{C_{4}}{\sqrt{n}}\mathbf{E}||\Sigma _{n,\tau ,\varepsilon
_{n}}^{-1/2}(x)q_{n,\tau }(x;\eta _{n})||^{3}\right) ,
\end{eqnarray*}%
for some positive constants $C_{1},C_{2},C_{3},$ and $C_{4}$, where%
\begin{equation*}
\omega _{n,p}\left( \mathbf{v};c\right) \equiv \sup \left\{ |\Lambda
_{A,n,p}^{\Sigma }(\mathbf{v})-\Lambda _{A,n,p}^{\Sigma }(\mathbf{y})|:%
\mathbf{y}\in \mathbf{R}^{|A|},||\mathbf{v}-\mathbf{y}||\leq c\right\} .
\end{equation*}%
The proof is complete by (\ref{rate}) and by the condition $%
n^{-1/2}h^{-\left( \frac{3M-4}{2M-4}\right) d-\nu }\rightarrow 0$.
\end{proof}


\section{Proof of Theorem AUC\ref{thm:AUC1}}\label{sec:appendix-D}

The conclusion of Theorem AUC\ref{thm:AUC1} follows immediately from 
Theorem 1, provided that all the regularity conditions in Theorem 1 are satisfied.
The following lemma shows that Assumptions AUC1-AUC4 are sufficient conditions for that purpose. 
One key condition to check regularity condition of Theorem 1 is to establish 
asymptotic linear representations in
Assumptions A1 and B1. We borrow the results from  \citeasnoun{LSW-quantile:15}.

\begin{LemmaAUC}
Suppose that Assumptions AUC1-AUC4 hold. Then Assumptions A1-A6
and B1-B4 hold with the following definitions: $%
J=2,\ r_{n,j}\equiv \sqrt{nh^{d}},$%
\begin{eqnarray*}
v_{n,\tau ,1}(x) &\equiv &\mathbf{e}_{1}^{\top }\{\gamma _{\tau
,2}(x)-\gamma _{\tau ,3}(x)\}, \\
v_{n,\tau ,2}(x) &\equiv &\underline{b}-\mathbf{e}_{1}^{\top }\{2\gamma
_{\tau ,2}(x)-\gamma _{\tau ,3}(x)\}, \\
\beta _{n,x,\tau ,1}(Y_{i},z) &\equiv &\alpha _{n,x,\tau ,2}(Y_{i},z)-\alpha
_{n,x,\tau ,3}(Y_{i},z),\text{ \textit{and}} \\
\beta _{n,x,\tau ,2}(Y_{i},z) &\equiv &-2\alpha _{n,x,\tau
,2}(Y_{i},z)+\alpha _{n,x,\tau ,3}(Y_{i},z)\text{\textit{,}}
\end{eqnarray*}%
\textit{where }$\tilde{l}_{\tau }(u)\equiv \tau -1\{u\leq 0\},$ $%
Y_{i}=\{(B_{\ell i},L_{i}):\ell =1,\ldots ,L_{i}\}$\textit{, and}%
\begin{equation*}
\alpha _{n,x,\tau ,k}(Y_{i},z)\equiv -1\left\{ L_{i}=k\right\} \sum_{l=1}^{k}%
\tilde{l}_{\tau }\left( B_{\ell i}-\gamma _{\tau ,k}^{\top }(x)\cdot H\cdot
c\left( z\right) \right) \mathbf{e}_{1}^{\top }M_{n,\tau ,k}^{-1}(x)c\left(
z\right) K\left( z\right) \text{.}
\end{equation*}
\end{LemmaAUC}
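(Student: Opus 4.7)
The strategy is to verify each of Assumptions A1--A6 and B1--B4 directly by translating the primitive conditions AUC1--AUC4 into the high-level conditions required. The identification of $v_{n,\tau,j}(x)$ and $\beta_{n,x,\tau,j}$ proposed in the statement comes from the Bahadur-type representation for the local polynomial quantile regression estimators $\hat{\gamma}_{\tau,k}(x)$. Specifically, one has
\begin{equation*}
\sqrt{nh^{d}}\left(\hat{\gamma}_{\tau,k}(x)-\gamma_{\tau,k}(x)\right)
=\frac{1}{\sqrt{nh^{d}}}\sum_{i=1}^{n}M_{n,\tau,k}^{-1}(x)\,\psi_{n,x,\tau,k}(Y_{i},(X_{i}-x)/h)+R_{n,\tau,k}(x),
\end{equation*}
where $\psi_{n,x,\tau,k}(\cdot,z)$ is the usual score for the check function weighted by $c(z)K(z)$, $M_{n,\tau,k}(x)$ is the Hessian-type matrix whose inverse is bounded uniformly by Assumption AUC1(i)--(ii), and $R_{n,\tau,k}(x)$ is the remainder. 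The first component of this linear expansion yields precisely $\alpha_{n,x,\tau,k}$ as defined, and taking the appropriate linear combinations delivers the proposed $\beta_{n,x,\tau,1}$ and $\beta_{n,x,\tau,2}$. The proof plan is therefore to (a) establish this Bahadur expansion with the required $o_{P}(\sqrt{h^{d}})$-remainder uniformly in $(x,\tau)\in\mathcal{S}$ and in $P\in\mathcal{P}$, and (b) check the remaining regularity conditions one by one.

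For step (a), the plan is to invoke the uniform Bahadur representation of \citeasnoun{LSW-quantile:15} for local polynomial quantile regression. Their conditions translate into AUC1 (smoothness and boundedness of $q_{k}(\tau|\cdot)$, $f(\cdot)$, $f_{\tau,k}(\cdot|\cdot)$, and the proportion function $L$), AUC2(i) (kernel), AUC1(iv) (local uniform $L_{2}$-continuity in $\tau$), and AUC2(ii) (bandwidth). The resulting representation has a stochastic remainder of order $o_{P}((nh^{d})^{-1/2}h^{d/2})$ and a bias remainder of order $O(h^{r+1})$; the two bandwidth inequalities in AUC2(ii), namely $n^{-1/2}h^{-3(d+\nu)/2}\to 0$ and $\sqrt{n}h^{r+d+1}/\sqrt{\log n}\to 0$, are precisely what is needed to make both remainders $o_{P}(\sqrt{h^{d}})$, $\mathcal{P}$-uniformly. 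Together with Assumption AUC3 controlling $\hat{\underline{b}}-\underline{b}=o_{P}(n^{-1/2})=o_{P}(\sqrt{h^{d}}/r_{n,j})$, this verifies Assumption A1 in the forms stated for $v_{n,\tau,1}$, $v_{n,\tau,2}$, $\beta_{n,x,\tau,1}$, $\beta_{n,x,\tau,2}$.

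For step (b), the remaining assumptions are quick. A2 (compact-support kernel condition) follows from AUC2(i) and the fact that $\alpha_{n,x,\tau,k}$ contains $K(z)$ and $c(z)K(z)$ as multiplicative factors. A3 is an immediate consequence of Lemma 2 in the paper: the local $L_{2}$-continuity condition (\ref{Lp-Cont}) for $\alpha_{n,x,\tau,k}$ is verified using AUC1(iv) (Lipschitz continuity of $f_{\cdot,k}(\cdot|\cdot)$ and $\gamma_{\cdot,k}(\cdot)$) together with the Lipschitz continuity of $K$ and the smoothness of $\gamma_{\tau,k}$ from AUC1(iii), giving $\delta_{n,j}=O(h^{-C})$ for some $C$ and thus the polynomial rate needed. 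A4(i) follows from AUC2(ii), and A4(ii) is exactly AUC4(i). A5 is trivial since we set $\hat{\sigma}_{\tau,j}(x)\equiv 1$, so $\sigma_{n,\tau,j}(x)\equiv 1$ works. A6(i) follows from bounded support of $B_{\ell i}$ (implicitly through AUC1 bounding conditional densities) together with AUC1(ii) ensuring $M_{n,\tau,k}(x)^{-1}$ is bounded uniformly, so all moments of $\alpha_{n,x,\tau,k}$ are uniformly bounded (any $M$ works). A6(ii) is exactly AUC4(ii).

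The bootstrap counterparts B1--B4 are obtained by repeating the same arguments conditional on the sample. B1 is a bootstrap version of the Bahadur representation, proved by applying the same uniform arguments from \citeasnoun{LSW-quantile:15} to the bootstrap empirical distribution; the conditions AUC1--AUC4 carry over, $\mathcal{P}$-uniformly, because the same moment and smoothness conditions are invariant under the resampling. B2 follows from Lemma 2(ii) in the paper, given B1 and B3. B3 holds trivially (scale normalization is $1$). B4 is directly the bandwidth condition imposed in AUC2(ii) (which is slightly stronger than A4(i) because of the exponent $(3M-4)/(2M-4)$; since AUC1 and AUC6 allow any $M$, one can take $M$ large enough so that $(3M-4)/(2M-4)$ is arbitrarily close to $3/2$, and AUC2(ii)'s requirement $n^{-1/2}h^{-3(d+\nu)/2}\to 0$ covers B4).

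The main obstacle is step (a): obtaining the Bahadur representation with remainder of order $o_{P}(\sqrt{h^{d}})$ (not merely $o_{P}(1)$) uniformly in $(x,\tau)\in\mathcal{S}$ and $P\in\mathcal{P}$. The standard pointwise Bahadur rate is too weak; the required sharper rate is what drives the restrictive bandwidth condition AUC2(ii) and also the smoothness order $r>3d/2-1$ in AUC1(iii). Here the plan is to decompose the estimator error into a linear term plus a remainder arising from the difference between check-function subgradients and their smoothed counterparts, and then control the remainder uniformly using empirical process bounds (e.g., maximal inequalities for VC-type classes indexed by $\tau$), relying on the uniform local $L_{2}$-continuity condition in AUC1(iv) to obtain stochastic equicontinuity uniformly over $P\in\mathcal{P}$. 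The details can be imported verbatim from \citeasnoun{LSW-quantile:15}, which handles exactly this uniformity. Once (a) is in hand, the verification of the remaining conditions is routine, and Theorem \ref{Thm1} applies directly to yield both inequalities in the statement.
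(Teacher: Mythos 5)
Your proposal is correct and follows essentially the same route as the paper: both verify A1/B1 by importing the uniform Bahadur representation (and its bootstrap version) from \citeasnoun{LSW-quantile:15}, check A3/B2 via Lemma 2 using the local $L_{2}$-continuity of $\alpha_{n,x,\tau,k}$ derived from the Lipschitz conditions in AUC1(iv), and dispose of A2, A4--A6, B3, B4 by direct inspection, with the boundedness of $\beta_{n,x,\tau,j}$ reducing the B4 exponent toward $3d/2$ so that AUC2(ii) suffices. The only blemish is the reference to a nonexistent ``AUC6''; the substance of that remark (any $M$ works because $\beta$ is bounded) matches the paper.
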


\begin{proof}[Proof of Lemma AUC1]
First, let us turn to Assumption A1. By Assumptions AUC2 and AUC3, it
suffices to consider $\hat{v}_{\tau ,2}(x)$ that uses $\underline{b}$
instead of $\underline{\hat{b}}$. The asymptotic linear representation in
Assumption A1 follows from Theorem 1 of \citeasnoun{LSW-quantile:15}. The error rate $o_{P}(\sqrt{%
h^{d}})$ in Assumption A1 is satisfied, because 
\begin{equation}
h^{-d/2}\left( \frac{\log ^{1/2}n}{n^{1/4}h^{d/4}}\right)
=n^{-1/4}h^{-3d/4}\log ^{1/2}n\rightarrow 0,  \label{error}
\end{equation}%
by Assumption AUC2(ii) and the condition $r>3d/2-1$. Assumption A2 follows
because both $\beta _{n,x,\tau ,1}(Y_{i},z)$ and $\beta _{n,x,\tau
,2}(Y_{i},z)$ have a multiplicative component of $K(z)$ which has a compact
support by Assumption AUC2(i). As for Assumption A3, we use Lemma 2. First define 
\begin{eqnarray*}
e_{x,\tau ,k,li} &\equiv &1\left\{ L_{i}=k\right\} \tilde{l}_{\tau }\left(
B_{li}-\gamma _{\tau ,k}^{\top }(x)\cdot H\cdot c\left( \frac{X_{i}-x}{h}%
\right) \right) \text{ and} \\
\xi _{x,\tau ,k,i} &\equiv &\mathbf{e}_{1}^{\top }M_{n,\tau
,k}^{-1}(x)c\left( \frac{X_{i}-x}{h}\right) K\left( \frac{X_{i}-x}{h}\right)
\end{eqnarray*}%
First observe that for each fixed $x_{2}\in \mathbf{R}^{d},\tau _{2}\in 
\mathcal{T}$, and $\lambda >0$,%
\begin{eqnarray}
&&\mathbf{E}\left[ \sup_{||x_{2}-x_{3}||+||\tau _{2}-\tau _{3}||\leq \lambda
}\left( \alpha _{n,x_{2},\tau _{2},2}\left( Y_{i,}\frac{X_{i}-x_{2}}{h}%
\right) -\alpha _{n,x_{3},\tau _{3},2}\left( Y_{i,}\frac{X_{i}-x_{3}}{h}%
\right) \right) ^{2}\right]  \label{develop} \\
&\leq &2\sum_{l=1}^{k}\mathbf{E}\left[ \mathbf{E}\left[
\sup_{||x_{2}-x_{3}||+||\tau _{2}-\tau _{3}||\leq \lambda }\left(
e_{x_{2},\tau _{2},k,li}-e_{x_{3},\tau _{3},k,li}\right) ^{2}|X_{i}\right]
\xi _{x_{2},\tau _{2},k,i}^{2}\right]  \notag \\
&&+2\sum_{l=1}^{k}\mathbf{E}\left[ \sup_{||x_{2}-x_{3}||+||\tau _{2}-\tau
_{3}||\leq \lambda }\left( \xi _{x_{2},\tau _{2},k,i}-\xi _{x_{3},\tau
_{3},k,i}\right) ^{2}\right] .  \notag
\end{eqnarray}%
Using Lipschitz continuity of the conditional density of $B_{li}$ given $%
L_{i}=k$ and $X_{i}=x$ in $(x,\tau )$ and Lipschitz continuity of $\gamma
_{\tau ,k}(x)$ in $(x,\tau )$ (Assumption AUC1), we find that the first term
is bounded by $Ch^{-s_{1}}\lambda $ for some $C>0$ and $s_{1}>0$. Since 
\begin{equation*}
M_{n,\tau ,k}(x)=kP\left\{ L_{i}=k|X_{i}=x\right\} f_{\tau ,k}(0|x)f(x)\int
K(t)c(t)c(t)^{\top }dt+o(1),
\end{equation*}%
we find that $M_{n,\tau ,k}^{-1}(x)$ is Lipschitz continuous in $(x,\tau )$
by Assumptions AUC1. Hence the last term in (\ref{develop}) is also bounded
by $Ch^{-s_{2}}\lambda ^{2}$ for some $C>0$ and $s_{2}>0$. Therefore, if we
take%
\begin{equation*}
b_{n,ij}(x,\tau )=\alpha _{n,x,\tau ,2}\left( Y_{i,}\frac{X_{i}-x}{h}\right)
,
\end{equation*}%
this function satisfies the condition in Lemma 2. Also, observe that%
\begin{equation*}
\mathbf{E}\left[ \left\vert \alpha _{n,x,\tau ,2}\left( Y_{i,}\frac{X_{i}-x}{%
h}\right) \right\vert ^{4}\right] \leq C,
\end{equation*}%
because $\alpha _{n,x,\tau ,2}(\cdot ,\cdot )$ is uniformly bounded. We also
obtain the same result for $\alpha _{n,x,\tau ,3}(\cdot ,\cdot )$. Thus the
conditions of Lemma 2 are satisfied with $b_{n,ij}(x,\tau )$ taken to
be $\beta _{n,x,\tau ,1}(Y_{i},(X_{i}-x)/h)$ or $\beta _{n,x,\tau
,2}(Y_{i},(X_{i}-x)/h)$. Now Assumption A3 follows from Lemma 2(i). The rate
condition in Assumption A4(i) is satisfied by Assumption AUC2(ii).
Assumption A4(ii) is imposed directly by Assumption AUC4(i). Since we are taking $\hat{\sigma}_{\tau ,j}(x)=%
\hat{\sigma}_{\tau ,j}^{\ast }(x)=1$, it suffices to take $\sigma _{n,\tau
,j}(x)=1$ in Assumption A5 and Assumption B3.$\ $Assumption A6(i) is
satisfied because $\beta _{n,x,\tau ,j}$ is bounded. 
Assumption A6(ii) is imposed directly by Assumption AUC4(ii).
Assumption B1 follows
by Lemma QR2  of \citeasnoun{LSW-quantile:15}. Assumption B2 follows from Lemma 2(ii). Assumption B4
follows from the rate condition in Assumption AUC2(ii). In fact, when $\beta
_{n,x,\tau ,j}$ is bounded, the rate condition in Assumption B4 is reduced
to $n^{-1/2}h^{-3d/2-\nu }\rightarrow 0$, as $n\rightarrow \infty $, for
some small number $\nu >0$.
\end{proof}

\bibliographystyle{econometrica}
\bibliography{LSW_30Aug2014}

\end{document}